\newcommand{\leqnomode}{\tagsleft@true}
\newcommand{\reqnomode}{\tagsleft@false}
\newcommand{\sSet}{\mathsf{sSet}}
\newcommand{\sVect}{\mathsf{sVect}}
\newcommand{\ssVect}{\mathsf{ssVect}}
\newcommand{\soVect}{\mathsf{s}_0\mathsf{Vect}}
\newcommand{\Lie}{\mathsf{Lie}}
\newcommand{\rLie}{\mathsf{Lie}^\mathit{r}}
\newcommand{\srLie}{\mathsf{sLie}^\mathit{r}}
\newcommand{\ssrLie}{\mathsf{ssLie}^\mathit{r}}
\newcommand{\srfinLie}{\mathsf{sLie}^\mathit{r}_{\mathit{fin}}}
\newcommand{\Vect}{\mathsf{Vect}}
\DeclareMathOperator{\Alg}{\mathsf{Alg}}
\DeclareMathOperator{\sAlg}{\mathsf{sAlg}}
\newcommand{\CoAlg}{\mathsf{CoAlg}}
\newcommand{\trcoalg}{\mathsf{CoAlg}^{\mathit{tr}}}
\newcommand{\Hopf}{\mathsf{Hopf}}
\newcommand{\prHopf}{\mathsf{Hopf}^{\mathit{pr}}}
\newcommand{\Grp}{\mathsf{Grp}}
\newcommand{\csSet}{\mathsf{csSet}}
\newcommand{\strcoalg}{\mathsf{sCoAlg}^{\mathit{tr}}}
\newcommand{\sotrcoalg}{\mathsf{s}_{0}\mathsf{CoAlg}^{\mathit{tr}}}
\newcommand{\csotrcoalg}{\mathsf{cs}_{0}\mathsf{CoAlg}^{\mathit{tr}}}
\newcommand{\sstrcoalg}{\mathsf{ssCoAlg}^{\mathit{tr}}}
\newcommand{\sostrcoalg}{\mathsf{s}_{0}\mathsf{sCoAlg}^{\mathit{tr}}}
\newcommand{\scoalg}{\mathsf{sCoAlg}}
\newcommand{\ccoalg}{\mathsf{cCoAlg}}
\newcommand{\sprHopf}{\mathsf{sHopf}^{\mathit{pr}}}
\newcommand{\Mod}{\mathsf{Mod}}
\newcommand{\sMod}{\mathsf{sMod}}
\newcommand{\sC}{\mathsf{sC}}
\newcommand{\sL}{\mathit{s}\EuScript{L}}
\newcommand{\sCA}{\mathit{s}\EuScript{CA}}
\newcommand{\sLxi}{\mathit{s}\EuScript{L}_{\xi}}
\newcommand{\free}{\mathit{L^r}}
\newcommand{\oblv}{\mathsf{oblv}}
\newcommand{\triv}{\mathsf{triv}}
\newcommand{\trivxi}{\mathsf{triv}_{\xi}}
\newcommand{\barW}{\overline{W}}
\newcommand{\Sym}{\mathrm{Sym}}
\newcommand{\Ab}{\mathsf{Ab}}
\newcommand{\Abxi}{\mathsf{Ab}_{{\xi}}}
\newcommand{\Tor}{\mathrm{Tor}}
\newcommand{\Ext}{\mathrm{Ext}}
\newcommand{\Prim}{\mathsf{Prim}}
\newcommand{\aug}{\mathit{aug}}
\newcommand{\Dec}{\mathrm{Dec}}
\newcommand{\sk}{\mathrm{sk}}
\newcommand{\cosk}{\mathrm{cosk}}
\newcommand{\Fun}{\mathrm{Fun}}
\DeclareMathOperator{\map}{\mathrm{map}}
\DeclareMathOperator{\Map}{\mathrm{Map}}
\DeclareMathOperator{\Hom}{\mathrm{Hom}}
\newcommand{\RHom}{\mathbb{R}\mathrm{Hom}}
\DeclareMathOperator{\id}{\mathsf{id}}
\newcommand{\6}{\partial}
\newcommand{\holim}{\operatorname*{holim}}
\newcommand{\colim}{\operatorname*{colim}}
\newcommand{\fib}{\operatorname*{fib}}
\newcommand{\cofib}{\operatorname*{cofib}}
\newcommand{\Tot}{\operatorname*{Tot}}
\DeclareMathOperator{\bfLie}{\mathbf{Lie}}
\DeclareMathOperator{\F}{\mathbf F}
\DeclareMathOperator{\N}{\mathbf N}
\DeclareMathOperator{\Z}{\mathbf Z}
\DeclareMathOperator{\Q}{\mathbf Q}
\newcommand{\bbT}{\mathbb T}
\newcommand{\bbR}{\mathbb R}
\newcommand{\kk}{\mathbf F}
\newcommand{\calC}{\EuScript{C}}
\newcommand{\calF}{\EuScript{F}}
\newcommand{\calA}{\mathcal{A}}
\newcommand{\calM}{\mathcal{M}}
\newcommand{\calU}{\mathcal{U}}
\newcommand{\calK}{\mathcal{K}}
\DeclareMathOperator{\ad}{\mathrm{ad}}
\DeclareMathOperator{\im}{\mathrm{im}}
\newcommand{\e}{\varepsilon}
\DeclareMathOperator{\coker}{\mathrm{coker}}
\mathchardef\mhyphen="2D
\numberwithin{equation}{section}% reset equation counter for sections
\numberwithin{equation}{subsection}
\renewcommand*{\theequation}{%
  \ifnum\value{subsection}=0 %
    \thesection
  \else
    \thesubsection
  \fi
  .\arabic{equation}%
}
\newtheorem{prop}[equation]{Proposition}
\newtheorem*{prop*}{Proposition}
\newtheorem{lmm}[equation]{Lemma}
\newtheorem{thm}[equation]{Theorem}
\newtheorem{varthm}{Theorem}
\newtheorem*{thm*}{Theorem}
\newtheorem*{conj*}{Conjecture}
\newtheorem{cor}[equation]{Corollary}
\newtheorem*{varcor}{Corollary}
\theoremstyle{definition}
\newtheorem{dfn}[equation]{Definition}
\newtheorem{exmp}[equation]{Example}
\newtheorem*{ntt*}{Notation}
\theoremstyle{remark}
\newtheorem{rmk}[equation]{Remark}
\title{Koszul duality for simplicial restricted Lie algebras}
\author{Nikolay Konovalov}
\address{\parbox{\linewidth}{Faculty of Mathematics, HSE University, \\6 Usacheva ulitsa, Moscow 119048, Russia \\
\\
Max Planck Institute for Mathematics, \\Vivatsgasse 7, 53111 Bonn, Germany}}
\email{nikolay.konovalov.p@gmail.com, konovalov@mpim-bonn.mpg.de}
\date{}
\begin{document}
\maketitle
\begin{abstract}

Let $\mathsf{s}_0\mathsf{Lie}^r$ be the category of $0$-reduced simplicial restricted Lie algebras over a fixed perfect field of positive characteristic $p$. We prove that there is a full subcategory $\mathrm{Ho}(\mathsf{s}_0\mathsf{Lie}^r_{\xi})$ of the homotopy category $\mathrm{Ho}(\mathsf{s}_0\mathsf{Lie}^r)$ and an equivalence $\mathrm{Ho}(\mathsf{s}_0\mathsf{Lie}^r_{\xi})\simeq\mathrm{Ho}(\mathsf{s}_1\mathsf{CoAlg}^{tr})$. Here $\mathsf{s}_1\mathsf{CoAlg}^{tr}$ is the category of $1$-reduced simplicial truncated coalgebras; informally, a coaugmented cocommutative coalgebra $C$ is truncated if $x^p=0$ for any $x$ from the augmentation ideal of the dual algebra $C^*$. Moreover, we provide a sufficient and necessary condition in terms of the homotopy groups $\pi_*(L_\bullet)$ for $L_\bullet \in \mathrm{Ho}(\mathsf{s}_0\mathsf{Lie}^r)$ to lie in the full subcategory $\mathrm{Ho}(\mathsf{s}_0\mathsf{Lie}^r_{\xi})$. 

As an application of the equivalence above, we construct and examine an analog of the unstable Adams spectral sequence of A.~K.~Bousfield and D.~Kan in the category $\srLie$. We use this spectral sequence to recompute the homotopy groups of a free simplicial restricted Lie algebra.
\end{abstract}

\tableofcontents

\section{Introduction}\label{section: introduction}

%Recall that a differential graded Lie algebra $L_*$ is a chain complex $(L_*,d)$ equipped with a graded Lie bracket $[-,-]\colon L_*\times L_* \to L_*$ such that the differential $d$ satisfies the Leibniz identity.

%Let us denote by $\mathsf{DGL}_1$ the category of differential graded Lie algebras 

In~\cite[Theorem~I]{Quillen_rational}, D.~Quillen proved that there exists an equivalence of homotopy categories:
\begin{equation}\label{equation: qullen equivalence}
\mathrm{Ho}(\mathsf{DGL}_0) \simeq \mathrm{Ho}(\mathsf{DGC}_1),
\end{equation}
where $\mathsf{DGL}_0$ is the category of $0$-connected differential graded Lie algebras over the rationals $\Q$, $\mathsf{DGC}_1$ is the category of $1$-connected differential graded cocommutative coalgebras over $\Q$, and the homotopy categories are taken with respect to quasi-isomorphisms.

Later, the equivalence~\eqref{equation: qullen equivalence} was generalized in a wide range of new contexts; an interested reader might check the following long but not exhaustive list of references:~\cite{Moore70}, \cite{Freese_koszul}, \cite{GF12}, \cite{AyalaFrancis}, and~\cite{HarperChing}. We refer to phenomena like~\eqref{equation: qullen equivalence} as \emph{(derived) Koszul duality}. %The goal of this paper is to extend the Koszul duality~\eqref{equation: qullen equivalence} to a one more setting.
We notice that each of these contexts of Koszul duality from the list above can be formally regarded as relating algebras over an operad with divided power coalgebras over a cooperad.  The purpose of this paper is to study Koszul duality for restricted Lie algebras, which can be regarded as a divided power algebra over an operad, and therefore does not fit into the previously studied contexts. 

Quillen's work provides a \emph{Lie model} for the homotopy category $\mathrm{Ho}(\EuScript{S}_{\Q})$ of rational simply-connected topological spaces. The existence of a Lie model for the category $\mathrm{Ho}(\EuScript{S}^{\wedge}_p)$ of \emph{$p$-adic homotopy types} is a more difficult problem and it still remains unresolved, see e.g.~\cite{Mandell_cochains}. However, A.~K.~Bousfield and E.~Curtis~\cite{BC70} showed that there is a certain relation between $\mathrm{Ho}(\EuScript{S}^{\wedge}_p)$ and the (homotopy) category $\mathrm{Ho}(\srLie)$ of simplicial restricted Lie algebras; in particular, they constructed the unstable Adams spectral sequence whose $E_1$-term is the homotopy groups of a free simplicial restricted Lie algebra and which converges to the homotopy groups of a space. Moreover, they showed that the $E_1$-term can be expressed in terms of an explicit differential graded algebra; namely, in terms of the lambda algebra $\Lambda$. The Koszul duality described in this paper will explain and conceptualize some of their calculations.  Furthermore, the results of this paper are used in~\cite{AGSS} to study the interaction of the unstable Adams spectral sequence and the Goodwillie tower. 

%Quillen's work essentially relates the rational homotopy groups of a space to its rational homology groups through the rational unstable Adams spectral sequence.  The work of A.~K.~Bousfield and E.~Curtis~\cite{BC70} relates the mod-$p$ unstable Adams spectral sequence to simplicial restricted Lie algebras, and in particular they provided an explicit differential graded algebra (the lambda algebra $\Lambda$) whose cohomology gives the $E_2$-page of this spectral sequence.  The Koszul duality described in this paper will explain and conceptualize this mathematics.  It will be applied in [ref to paper] to study the interaction of the unstable Adams spectral sequence and the Goodwillie tower. 

\subsection{Results}\label{section: results} Throughout this paper, $p$ is a fixed prime number and $\kk$ is a fixed perfect field of characteristic $p$. Recall from~\cite[Definition~V.4]{Jacobson79} that a \emph{restricted Lie algebra} $(L,\xi)$ over $\kk$ is a Lie algebra $L$ equipped with a (non-additive, in general) \emph{$p$-operation} $\xi\colon L\to L$ (Definition~\ref{definition:p-operation}). We write $\rLie$ for the category of restricted Lie algebras (over $\kk$) and we denote by $\srLie$ the category of \emph{simplicial objects} in $\rLie$; i.e. $\srLie$ is the category of contravariant functors from the simplex category~$\Delta$ to $\rLie$. The category $\srLie$ will be the main object of this paper.

In~\cite{Hochschild54} G.~Hochschild defined the cohomology groups $H^*(L;\kk)$ for a restricted Lie algebra $L\in\rLie$, and later, his definition was extended to simplicial restricted Lie algebras by S.~Priddy in~\cite{Priddy70long}. More precisely, S.~Priddy constructed a functor
\begin{equation}\label{equation: intro, wu^r}
\barW U^r\colon \srLie \to \scoalg^{aug}
\end{equation}
such that 
$$H^*(L;\kk) \cong \Hom(\pi_*(\barW U^r(L)),\kk),\; L\in \rLie. $$
Here $\CoAlg^{aug}$ is the category of coaugmented cocommutative coalgebras over $\kk$ and $\scoalg^{aug}$ is the category of simplicial objects in $\CoAlg^{aug}$.

Let $C=(C,\eta\colon \kk \to C)\in \CoAlg^{aug}$ be a \emph{finite-dimensional} coaugmented cocommutative coalgebra and let $C^*=(C^*,\eta^*\colon C^* \to \kk)$ be its dual augmented algebra. We say that $C$ is \emph{truncated} if, for every $x\in \ker(\eta^*)$, we have $x^p=0$. An infinite-dimensional coalgebra $C\in \CoAlg^{aug}$ is called truncated if $C$ is a union of finite-dimensional truncated sub-coalgebras. We write $\trcoalg$ for the full subcategory of $\CoAlg^{aug}$ spanned by truncated ones. (In the main text, we will use a different but equivalent Definition~\ref{definition: truncated coalgebra} for truncated coalgebras.)

By~\cite[Proposition~5.10]{Priddy70long} and~\cite[Lemma~8.4]{May70operations}, the essential image of the functor $\overline{W} U^r$ is contained in the full subcategory $\strcoalg\subset \scoalg^{aug}$ of simplicial truncated coalgebras. Moreover, for every $L_\bullet\in \srLie$, the simplicial coalgebra $\barW U^r(L_\bullet)$ is \emph{reduced}; i.e. the coalgebra $\barW U^r(L_\bullet)_0$ of $0$-simplices is isomorphic to~$\kk$. 

We write $\sotrcoalg$ for the category of reduced simplicial truncated coalgebras and we say that a map $f\colon C_\bullet \to D_\bullet$ in $\sotrcoalg$ is a \emph{weak equivalence} if $f$ is a weak equivalence of underlying simplicial vector spaces, i.e. the induced map $f_*\colon \pi_*(C_\bullet) \to \pi_*(D_\bullet)$ is an isomorphism.

\begin{varthm}[Theorem~\ref{theorem: kan loop}]\label{theorem: intro, A}
The functor $\barW U^r\colon \srLie \to \sotrcoalg$ has a left adjoint
$$PG\colon \sotrcoalg \to \srLie $$
such that the unit map
\begin{equation}\label{equation: intro, unit}
\eta\colon C_\bullet \to \barW U^r \circ PG(C_\bullet) 
\end{equation}
%induced from the identity map $\id\colon PG(C_\bullet)\to PG(C_\bullet)$ by the adjunction $PG \dashv \barW U^r$
is a weak equivalence for any reduced simplicial truncated coalgebra $C_\bullet\in \sotrcoalg$.
\end{varthm}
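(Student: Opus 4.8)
The plan is to realize $PG$ as the composite of a simplicial loop--group construction, performed internally to coalgebras, with the primitives functor, and then to identify the unit $\eta$ with an instance of bar--cobar duality. Recall the adjunction $U^r\colon\rLie\rightleftarrows\prHopf\colon\prim$ between restricted Lie algebras and cocommutative conilpotent Hopf algebras over $\kk$. By the restricted Poincar\'e--Birkhoff--Witt theorem one has $\prim\circ U^r\cong\id$, so $U^r$ is fully faithful with essential image the primitively generated Hopf algebras; the same statements hold levelwise for $\srLie$ and $\sprHopf\subseteq\sHopf$. I would then build $G\colon\socoalg\to\sHopf$ by imitating Kan's loop--group construction inside coalgebras: writing $F$ for the free cocommutative conilpotent Hopf algebra on a coaugmented conilpotent coalgebra (the algebra underlying $F(C)$ being the tensor algebra on the coaugmentation coideal of $C$), one sets $(GC_\bullet)_n$ to be the quotient of $F(C_{n+1})$ by the Hopf ideal generated by the image of $F(s_0C_n)$, with the twisted faces and degeneracies exactly as for simplicial sets. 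A twisting--function bookkeeping argument, identical in form to the simplicial--set case, yields $G\dashv\barW$. Granting the key lemma below, $G$ restricted to $\sotrcoalg$ takes values in $\sprHopf$, so there is a unique $PG\colon\sotrcoalg\to\srLie$ with $U^r\circ PG\cong G$; combining this with $G\dashv\barW$ and the full faithfulness of $U^r$ gives, naturally in $C_\bullet\in\sotrcoalg$ and $L_\bullet\in\srLie$,
\[
\Hom_{\srLie}(PG(C_\bullet),L_\bullet)\cong\Hom_{\sHopf}(G(C_\bullet),U^r(L_\bullet))\cong\Hom_{\sotrcoalg}(C_\bullet,\barW U^r(L_\bullet)),
\]
so $PG$ is left adjoint to $\barW U^r$.

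\textit{The key lemma, and where the hypothesis is used.} The crux is the following: if $C$ is a truncated coalgebra, then $F(C)$ is primitively generated; equivalently, $F(C)$ is isomorphic to $U^r$ of the free restricted Lie algebra on the coaugmentation coideal $\overline{C}$. Using the equivalent Definition~\ref{definition: truncated coalgebra}, one reduces via the coradical filtration to finite--dimensional monogenic pieces, where the obstruction to replacing a coalgebra generator of $C$ by a primitive element of $F(C)$ is controlled by binomial coefficients that remain invertible precisely because truncatedness bounds the relevant ``lengths'' below $p$. This is where the hypothesis has real content, and it fails without it: for a general (non--truncated) conilpotent $C$ the Hopf algebra $F(C)$ acquires extra, non--primitive algebra generators, and falls outside the essential image of $U^r$. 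Since each structure map $F(s_0C_n)\to F(C_{n+1})$ is a morphism of Hopf algebras it sends primitives to primitives, so $(GC_\bullet)_n$ --- the quotient of a primitively generated Hopf algebra by a Hopf ideal generated by primitives --- is again primitively generated, i.e.\ $GC_\bullet\in\sprHopf$. I expect this lemma, rather than any of the formal steps, to be the main obstacle.

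\textit{The unit is a weak equivalence.} It suffices to check that the underlying map of simplicial $\kk$--vector spaces $\eta\colon C_\bullet\to\barW U^rPG(C_\bullet)=\barW(GC_\bullet)$ induces an isomorphism on homotopy groups. Under the Dold--Kan correspondence write $N$ for normalized chains. Generalized Eilenberg--Zilber comparisons identify $N(\barW\Gamma_\bullet)$ with the bar construction $B$ applied to the dg--algebra $N(\Gamma_\bullet)$, for any $\Gamma_\bullet\in\sHopf$, and --- an algebraic incarnation of Adams's cobar equivalence --- identify $N(GC_\bullet)$ with $\Cobar(NC_\bullet)$; hence there is a natural zig--zag of quasi--isomorphisms
\[
N(\barW GC_\bullet)\;\simeq\;B\bigl(\Cobar(NC_\bullet)\bigr).
\]
Since $C_\bullet$ is truncated, $NC_\bullet$ is a conilpotent coaugmented dg--coalgebra, so bar--cobar duality supplies a natural quasi--isomorphism $NC_\bullet\to B(\Cobar(NC_\bullet))$, the bar--cobar unit; chasing the identifications shows $N(\eta)$ corresponds to this unit, hence is a quasi--isomorphism, and therefore $\eta$ is a weak equivalence. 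This is precisely the algebraic shadow of Kan's theorem that $X\to\barW GX$ is a weak equivalence for every reduced simplicial set $X$; alternatively one can run Kan's argument directly in $\sVect$, using that the d\'ecalage $\Dec\,\barW\Gamma_\bullet$, the total space of the universal twisted bundle over $\barW\Gamma_\bullet$, is simplicially contractible for every $\Gamma_\bullet\in\sHopf$, so that $\barW\Gamma_\bullet$ is the base of a principal $\Gamma_\bullet$--bundle and the standard comparison applies. Besides the key lemma, the second nontrivial ingredient is thus the pair of generalized Eilenberg--Zilber identifications together with the convergence of bar--cobar duality, which is where conilpotence --- again a consequence of truncatedness --- is used a second time.
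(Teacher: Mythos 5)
Your overall architecture matches the paper's: factor $\barW U^r$ through the equivalence $U^r\colon\rLie\simeq\prHopf$ (Theorem~\ref{theorem: hopf are equivalent to lie}), build a Kan-style loop-group functor $G$ left adjoint to $\barW$, show $G$ lands in $\sprHopf$ when fed a truncated coalgebra, and then identify $PG$ via full faithfulness of $U^r$. Your ``key lemma'' --- that the free Hopf algebra on a truncated coalgebra is primitively generated --- is exactly the content of Propositions~\ref{proposition: truncated and primitively generated} and~\ref{proposition: free group objects}, so the statement is right; but the argument you sketch (coradical filtration plus invertibility of binomial coefficients bounded by $p$) is not what is going on. The paper's argument is that $H(C)\cong\coprod C^{\otimes n}$ as a coaugmented coalgebra, which is truncated because $\trcoalg$ is closed under coproducts and tensor products, and then invokes the dual of Milnor--Moore's~\cite[Proposition~4.20]{MilnorMoore65} to pass from truncated to primitively generated via the conilpotent filtration. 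There are no binomial coefficients in sight, and your heuristic does not obviously survive scrutiny.

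The real gap is in the weak-equivalence step. You assert, as if they were bookkeeping facts, two chain-level identifications: $N(\barW\Gamma_\bullet)\simeq B(N\Gamma_\bullet)$ and, crucially, $N(GC_\bullet)\simeq\Cobar(NC_\bullet)$. The first is plausible (it is an Eilenberg--Zilber argument combined with the $d\simeq T$ comparison, essentially Remark~\ref{remark: spectral sequence for barW}), but the second is a serious theorem --- the linear-coalgebra analogue of Szczarba's twisting-cochain theorem, or of the Hess--Tonks extended cobar equivalence --- and it is not a formal consequence of anything you have set up. Proving it is roughly as hard as proving Theorem~\ref{theorem: kan loop} itself. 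The paper sidesteps this entirely by staying at the simplicial level and following Stevenson's factorization of the unit through the total d\'ecalage and the Artin--Mazur codiagonal: $C_\bullet\to T\Dec C_\bullet\to T\iota R\Dec C_\bullet\to T\iota NGZ R\Dec C_\bullet$. The first arrow is a weak equivalence by~\cite{OR20}, the second because $\Dec_0(C_\bullet)$ deformation retracts to $C_0\cong\kk$, and the third --- where all the work happens --- reduces to Corollary~\ref{corollary: decalage is EM}, whose proof hinges on Proposition~\ref{proposition: GZ of decalage}: one shows $GZ$ carries a certain weak equivalence of simplicial coalgebras to an \emph{isomorphism} of Hopf algebras by explicitly constructing a left inverse out of the antipode of the free Hopf algebra. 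This antipode trick has no counterpart in your sketch. Your closing remark (``alternatively one can run Kan's argument directly in $\sVect$, using that $\Dec\,\barW\Gamma_\bullet$ is contractible'') is indeed the direction the paper actually takes, but it establishes the counit equivalence $G\barW\Gamma_\bullet\simeq\Gamma_\bullet$ rather than the unit, and passing from one to the other still requires the d\'ecalage/codiagonal machinery and something like Proposition~\ref{proposition: GZ of decalage}, which you have not supplied.
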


Our proof of Theorem~\ref{theorem: intro, A} has two main ingredients. The first one is the classical observation made in~\cite{MilnorMoore65} that the category $\prHopf$ of primitively generated Hopf algebras is simultaneously  equivalent to $\rLie$ and to the category $\Grp(\trcoalg)$ of \emph{group objects} in $\trcoalg$. The second ingredient was inspired by~\cite{Stevenson12}. Namely, we adapt to our setting Stevenson's proof of the Kan theorem~\cite[Theorem~7.1]{Kan_combinatorial} (see also~\cite[Corollary~V.6.4]{GoerssJardine}), which states that the homotopy category $\mathrm{Ho}(\mathsf{s}_0\mathsf{Set})$ of reduced simplicial sets is equivalent to the homotopy category $\mathrm{Ho}(\mathsf{sGrp})$ of simplicial groups.

Similarly, we say that $f\colon L'_\bullet \to L_\bullet \in \srLie$ is a weak equivalence if $f_*\colon \pi_*(L'_\bullet) \to \pi_*(L_\bullet)$ is an isomorphism. We notice, however, that the dual of Theorem~\ref{theorem: intro, A} is not fulfilled. Namely, according to Example~\ref{example: lie algebra with trivial coeffients}, there is a simplicial restricted Lie algebra $L_\bullet \in \srLie$ such that the counit map
$$PG\circ \barW U^r(L_\bullet) \to L_\bullet $$
is not a weak equivalence in $\srLie$. Therefore we introduce the notion of an \emph{$\kk$-equivalence}: a map $f\colon L'_\bullet \to L_\bullet$ in $\srLie$ is an $\kk$-equivalence if and only if $\barW U^r(f)$ is a weak equivalence in $\sotrcoalg$ (Definition~\ref{definition: barW-equivalence}). By Corollary~\ref{corollary: barW Ur preserves weak equivalences}, any weak equivalence in $\srLie$ is an $\kk$-equivalence.

\begin{varthm}\label{theorem: intro, B}
Let $\mathcal{W}_{\srLie}$ (resp. $\mathcal{W}_{\kk}$) be the class of weak equivalences (resp. $\kk$-equivalences) in $\srLie$. Then there are model structures $(\srLie, \mathcal{W}_{\srLie}, \calC, \calF)$ and $(\srLie, \mathcal{W}_{\kk}, \calC_{\kk}, \calF_{\kk})$ on the category $\srLie$ such that
\begin{enumerate}
\item $f \in \calF$ if and only if $f$ is a fibration in $\sVect_{\kk}$ (Remark~\ref{remark: surjective on components});
\item the classes of cofibrations coincide, $\calC_{\kk}=\calC$;
\item there is an inclusion $\calF_{\kk}\subset \calF$;
\item both model structures are simplicial and combinatorial;
\item the model structure $(\mathcal{W}_{\srLie}, \calC, \calF)$ is right proper and $(\mathcal{W}_{\kk}, \calC_{\kk}, \calF_{\kk})$ is left proper.
\end{enumerate}
\end{varthm}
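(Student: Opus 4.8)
The plan is to construct both model structures by transfer along adjunctions, so that the homotopy-theoretic content is imported from well-understood categories rather than built by hand. First I would exhibit the weak-equivalence model structure $(\srLie, \mathcal W_{\srLie}, \calC, \calF)$. Since $\rLie$ is an algebraic category — it is the category of algebras over a (non-symmetric, divided-power) monad on $\Vect_{\kk}$, equivalently the category $\prHopf$ of primitively generated Hopf algebras by Milnor--Moore — the forgetful functor $\oblv\colon \srLie \to \sVect_{\kk}$ has a left adjoint $\free$ (the free simplicial restricted Lie algebra functor), and $\sVect_{\kk}$ carries its standard (projective) model structure in which weak equivalences are the $\pi_*$-isomorphisms and fibrations are the degreewise surjections. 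I would apply Quillen's transfer criterion (the small-object argument together with a path-object or a "fill-in" argument showing that relative $\free(J)$-cell complexes are weak equivalences, where $J$ is the set of generating acyclic cofibrations of $\sVect_{\kk}$). The acyclicity-of-pushouts step is the standard one for simplicial objects in an algebraic category: one uses that $\oblv$ preserves filtered colimits and reflexive coequalizers and that a pushout along a free map on a contractible simplicial vector space can be analyzed degreewise; alternatively, one can cite the general machinery of Quillen and Schwede--Shipley for simplicial algebras over a monad that preserves $\pi_*$-isomorphisms of free objects, which holds here because $\free$ is built from symmetric/divided powers that are homotopy-meaningful in the simplicial (as opposed to chain-complex) setting. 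This gives (1) — fibrations are degreewise surjections — essentially by construction, and it gives cofibrations generated by $\free$ applied to the generating cofibrations of $\sVect_{\kk}$. Left properness and simpliciality follow from the fact that $\srLie$ is tensored and cotensored over $\sSet$ in the usual way (using the simplicial structure on $\sVect_{\kk}$) and that the transferred structure inherits left properness from $\sVect_{\kk}$ because all objects are fibrant is \emph{not} the right reason — rather, left properness is checked by hand using that cofibrations are retracts of $\free$-cell complexes and $\pi_*$ behaves well under such pushouts; combinatoriality is automatic since $\srLie$ is locally presentable and the generating (acyclic) cofibrations form a set.

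Next I would construct the $\kk$-local model structure $(\srLie, \mathcal W_{\kk}, \calC_{\kk}, \calF_{\kk})$ by left Bousfield localization of the model structure just built, at the class $\mathcal W_{\kk}$ of $\barW U^r$-equivalences. The key input is Theorem~\ref{theorem: intro, A}: the adjunction $PG \dashv \barW U^r$ between $\sotrcoalg$ and $\srLie$ exhibits, after passing to homotopy categories, the $\kk$-local homotopy theory of $\srLie$ as (a localization of, and on an appropriate subcategory equivalent to) that of $\sotrcoalg$ with its $\pi_*$-isomorphisms. Concretely, $\mathcal W_{\kk} = (\barW U^r)^{-1}(\mathcal W_{\sVect})$, so $\mathcal W_{\kk}$ is accessible and closed under the 2-out-of-3 property, retracts, and — crucially, since $\barW U^r$ is a \emph{right} Quillen-type functor with a left adjoint $PG$ — one verifies the hypotheses for a left Bousfield localization of a left proper combinatorial model category (cf. Hirschhorn, Barwick) at the set of maps $S$ detecting $\mathcal W_{\kk}$; such a set exists because $\mathcal W_{\kk}$ is the class of $L$-local equivalences for the endofunctor $L = PG \circ \barW U^r$ (up to cofibrant replacement), which is accessible. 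The localized model structure automatically has the \emph{same cofibrations}, $\calC_{\kk} = \calC$, giving (2); its weak equivalences contain $\mathcal W_{\srLie}$ by Proposition~\ref{remark: barW Ur preserves weak equivalences}, hence its fibrations form a subclass of $\calF$ (more fibrations are killed when more maps become weak equivalences — wait, that is backwards: fewer fibrant objects, but the class of fibrations between \emph{all} objects shrinks because acyclic cofibrations grow), giving the inclusion $\calF_{\kk}\subset \calF$ of (3); and left Bousfield localization preserves left properness, combinatoriality, and simpliciality, giving (4). That $\calF_{\kk}$ still consists of degreewise surjections restricted appropriately is subsumed by $\calF_{\kk}\subset\calF$ and does not need separate proof.

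The main obstacle I anticipate is the \emph{acyclicity step in the transfer} for the first model structure, i.e.\ showing that a pushout in $\srLie$ of a map of the form $\free(f)$, with $f$ a generating acyclic cofibration of $\sVect_{\kk}$, is again a weak equivalence. Unlike the associative or commutative cases, the restricted-Lie monad involves divided $p$-th power operations, and one must check that these do not destroy $\pi_*$-isomorphisms along such pushouts. The cleanest route is to use the Milnor--Moore identification $\prHopf\simeq\rLie$ to rewrite $\free$ in terms of the tensor (free associative) algebra functor, whose interaction with pushouts and $\pi_*$ is classical (via the Künneth theorem and the PBW-type filtration on $U^r$), and then transport the acyclicity statement back; alternatively one invokes a general theorem on model structures on simplicial algebras over a monad that is "excisive" or preserves weak equivalences of simplicial modules, which applies because the functors $\Sym$ and its divided-power variants are homotopy-invariant on simplicial $\kk$-modules (this uses perfectness of $\kk$ and characteristic $p$ in an essential way, via the Dold--Puppe / derived-functor analysis of symmetric powers). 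A secondary subtlety is verifying that the \emph{set} $S$ exists for the Bousfield localization — i.e.\ that $\mathcal W_{\kk}$ is generated as a class of local equivalences by a set — which follows from accessibility of the functor $\barW U^r$ and of $\mathcal W_{\sVect}$, but deserves an explicit argument citing Barwick's or Lurie's theory of combinatorial model categories and accessible localizations.
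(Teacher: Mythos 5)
Your proposal follows the same two-step strategy as the paper — transfer a model structure from $\sVect_{\kk}$ along $\free\dashv\oblv$, then localize at the $\kk$-equivalences — so the overall architecture is right, but at both steps the paper uses a noticeably simpler mechanism than the one you sketch.

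For the transfer, you treat the acyclicity condition (pushouts of $\free(J_\Vect)$-cells are weak equivalences) as the main obstacle, and propose to route around the divided-power operations in the restricted-Lie monad via the Milnor--Moore equivalence, PBW filtration, and a homotopy-invariance result for symmetric/divided power functors. None of that is needed. The paper's argument in Theorem~\ref{theorem:modelsrlie} is the standard simplicial-enrichment one: every map in $J_\Vect$ is a \emph{simplicial homotopy equivalence}, $\free$ is a simplicial left adjoint and hence preserves simplicial homotopy equivalences, and pushouts and transfinite compositions of simplicial homotopy equivalences along arbitrary maps remain weak equivalences; so $\oblv$ carries relative $J_\Lie$-cell complexes to weak equivalences and Hirschhorn's transfer criterion (\cite[Theorem~11.3.2]{Hirschhorn03}) applies. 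Your worry that the $p$-operation could destroy $\pi_*$-isomorphisms along such pushouts simply does not arise when the generating acyclic cofibrations are honest homotopy equivalences. Your left-properness discussion is also vaguer than the paper's: the paper invokes Rezk's criterion \cite[Theorem~9.1]{Rezk02}, reducing to homotopy invariance of $L_\bullet\mapsto L_\bullet\sqcup\free(\kk)$, which follows from the explicit splitting $\oblv\circ\free(V_\bullet)\cong\bigoplus_n L^r_n(V_\bullet)$ of Remark~\ref{remark: free lie algebras, fresse} together with the fact that weak equivalences of simplicial vector spaces are homotopy equivalences.

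For the second model structure, you propose a Hirschhorn/Barwick-style left Bousfield localization at a set $S$ of maps generating $\mathcal W_{\kk}$, and you correctly flag that the existence of such a set is the delicate point and must come from accessibility. The paper instead applies Lurie's Proposition~A.2.6.15 from~\cite{HTT} directly, which constructs the combinatorial model structure on $\srLie$ with prescribed cofibrations $I_\Lie$ and weak equivalences $\mathcal W_{\kk}$ once one verifies: (i) $\mathcal W_{\kk}=(\barW U^r)^{-1}(\mathcal W_{\CoAlg})$ is a \emph{perfect} class, via Corollaries A.2.6.8 and A.2.6.14 of \cite{HTT}; (ii) pushouts of $\kk$-equivalences along cofibrations remain $\kk$-equivalences, which the paper checks by reducing to almost-free data and using Proposition~\ref{proposition: chain coalgebra and chain complex} to identify $\barW U^r$ with $\Sigma_\bullet\Abxi$ on almost-free objects, then invoking left properness of $\sVect_{\kk}$; (iii) $I_\Lie$-injectives lie in $\mathcal W_{\kk}$, by Proposition~\ref{remark: barW Ur preserves weak equivalences}. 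This route has the practical advantage of never having to exhibit a generating set $S$ of $\kk$-equivalences nor prove that the $S$-local equivalences recover exactly $\mathcal W_{\kk}$, which is where your sketch is thinnest. Your consequences (2) $\calC_\kk=\calC$, (3) $\calF_\kk\subset\calF$, and (4) left properness and simpliciality are nevertheless correct formal consequences in either framework (modulo the momentary wobble in your justification of (3), which you catch).
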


Theorem~\ref{theorem: intro, B} is a combination of Theorems~\ref{theorem:modelsrlie} and~\ref{theorem:model structure srlie, barW-equivalence} from the main text. We will abuse notation and denote by $\srLie$ (resp. by $\srLie_\xi$) the model category $(\srLie, \mathcal{W}_{\srLie}, \calC, \calF)$ (resp. $(\srLie, \mathcal{W}_{\kk}, \calC_{\kk}, \calF_{\kk})$) from Theorem~\ref{theorem: intro, B}. We notice that the model category $\srLie_\xi$ is a (left) Bousfield localization  of $\srLie$ (\cite[Definition~3.3.1]{Hirschhorn03}), and so the homotopy category $\mathrm{Ho}(\srLie_\xi)$ is a full subcategory of $\mathrm{Ho}(\srLie)$. It follows from Theorem~\ref{theorem: intro, A} that functors $\barW U^r$ and $PG$ induce an equivalence of homotopy categories:
\begin{equation}\label{equation: intro, trcoalg vs F-complete} 
\mathrm{Ho}(\sotrcoalg) \simeq \mathrm{Ho}(\srLie_\xi).
\end{equation}
Moreover, there is a simplicial combinatorial model structure on $\sotrcoalg$ and the equivalence~\eqref{equation: intro, trcoalg vs F-complete} can be enhanced to an equivalence between the underlying $\infty$-categories (see \cite[Section~A.2]{HTT}) of simplicial model categories $\sotrcoalg$ and $\srLie_\xi$. The next theorem follows from Propositions~\ref{proposition: coalgebras and lie, infty-categorical, part1},~\ref{proposition: F-complete is an accessible localization}, and Theorem~\ref{theorem: coalgebras and lie algebras}. 

\begin{varthm}\label{theorem: intro, C}
Let us denote by $\sL$ (resp. $\sLxi$, $\sCA_0$) the underlying $\infty$-category of the simplicial model category $\srLie$ (resp. $\srLie_\xi$, $\sotrcoalg$). Then there is an equivalence of presentable $\infty$-categories:
$$\sCA_0 \simeq \sLxi$$
and $\sLxi\subset \sL$ is a localization of the presentable $\infty$-category $\sL$.
\end{varthm}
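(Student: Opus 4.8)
The plan is to deduce Theorem~\ref{theorem: intro, C} from its $1$-categorical precursors. First I would invoke the general machinery relating simplicial model categories to presentable $\infty$-categories (\cite[Section~A.2, Theorem~A.3.7.6]{HTT}): since by Theorem~\ref{theorem: intro, B} both $\srLie$ and $\srLie_\xi$ are simplicial, combinatorial model categories, their underlying $\infty$-categories $\sL$ and $\sLxi$ are presentable; the same applies to $\sotrcoalg$ once we cite the assumed simplicial combinatorial model structure on it (Proposition~\ref{proposition: coalgebras and lie, infty-categorical, part1}), giving presentability of $\sCA_0$. This settles the presentability claims and reduces the theorem to producing (a) an equivalence $\sCA_0\simeq\sLxi$ and (b) the identification of $\sLxi$ as an accessible localization of $\sL$.

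For part (b), the key point is that $\srLie_\xi$ is a left Bousfield localization of $\srLie$ (as already noted in the text), with the same cofibrations and a larger class of weak equivalences $\mathcal{W}_\kk\supset\mathcal{W}_{\srLie}$. A left Bousfield localization of a combinatorial simplicial model category induces, on underlying $\infty$-categories, an accessible localization in the sense of \cite[Section~5.5.4]{HTT}: the right adjoint $\sLxi\hookrightarrow\sL$ is fully faithful with an accessible left adjoint (the localization functor). I would make this precise by identifying the local objects — the $\kk$-fibrant objects — and checking that the localization is accessible because it is a Bousfield localization of a combinatorial model category, which is automatically accessible. This is essentially formal given Theorem~\ref{theorem: intro, B} and standard $\infty$-categorical translation of Bousfield localization; I would cite Proposition~\ref{proposition: F-complete is an accessible localization} for the bookkeeping.

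For part (a), the equivalence $\sCA_0\simeq\sLxi$, I would use the Quillen adjunction $PG\dashv\barW U^r$ between $\sotrcoalg$ and $\srLie_\xi$. By Theorem~\ref{theorem: intro, A} the unit $\eta\colon C_\bullet\to\barW U^r PG(C_\bullet)$ is a weak equivalence for every reduced simplicial truncated coalgebra, i.e. the derived unit is an equivalence; and by the definition of $\kk$-equivalences, a map $f$ in $\srLie$ lies in $\mathcal{W}_\kk$ precisely when $\barW U^r(f)$ is a weak equivalence, which forces the derived counit of $PG\dashv\barW U^r$, viewed in $\srLie_\xi$, to be a $\kk$-equivalence hence an equivalence in $\sLxi$. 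Having both the derived unit and the derived counit be equivalences, the Quillen adjunction is a Quillen equivalence, and passing to underlying $\infty$-categories yields the equivalence $\sCA_0\simeq\sLxi$; here I would cite Theorem~\ref{theorem: coalgebras and lie algebras} for the Quillen-equivalence statement.

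The main obstacle is not in any one of these steps individually — each is a standard translation — but in verifying that the $\infty$-categorical statements genuinely follow from the model-categorical inputs without hidden hypotheses: one must be careful that $\barW U^r$ and $PG$ really do form a Quillen pair for the $\kk$-local model structure (rather than only for $\mathcal{W}_{\srLie}$), and that the accessibility of the localization is not lost when passing from $\srLie$ to the full subcategory $\srLie_\xi$. I expect these checks to be the part requiring genuine attention, with everything else being a citation to \cite{HTT} and to the earlier results of the paper.
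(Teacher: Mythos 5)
Your treatment of presentability and of the accessible localization $\sLxi\subset\sL$ matches the paper: presentability comes from \cite[Theorem~1.3.4.20]{HigherAlgebra} and \cite[Proposition~A.3.7.6]{HTT} applied to the three combinatorial simplicial model structures, and the inclusion $\sLxi\hookrightarrow\sL$ is fully faithful with accessible left adjoint because $\srLie_\xi$ is a Bousfield localization of $\srLie$ (\cite[Appendix~A.3.7]{HTT}). Those parts are fine.

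The gap is in your argument for the equivalence $\sCA_0\simeq\sLxi$. You route it through the claim that $PG\dashv\barW U^r$ is a Quillen adjunction between $\sotrcoalg$ and $\srLie_\xi$, and then upgrade it to a Quillen equivalence using the derived unit and counit. But the paper explicitly records (Remark~\ref{remark: not quillen adjunction}) that it is \emph{not known} whether $PG\dashv\barW U^r$ is a Quillen adjunction for either model structure on $\srLie$ — this is precisely the check you defer to the end of your proposal, and it cannot currently be carried out. Without it, "derived unit" and "derived counit" are not even defined in the Quillen-adjunction sense, and the passage to underlying $\infty$-categories of fibrant--cofibrant objects does not apply. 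The paper's actual argument (Theorem~\ref{theorem: coalgebras and lie algebras}) avoids model structures entirely at this step: it presents $\sCA_0=\sotrcoalg[\mathcal{W}^{-1}_{\CoAlg}]$ and $\sLxi=\srLie[\mathcal{W}^{-1}_{\kk}]$ as $\infty$-categorical localizations of the $1$-categories, observes that $\barW U^r(\mathcal{W}_{\kk})=\mathcal{W}_{\CoAlg}$ by the very definition of $\kk$-equivalence and that $PG(\mathcal{W}_{\CoAlg})\subset\mathcal{W}_{\kk}$ by Theorem~\ref{theorem: kan loop}, so both functors and the strict unit/counit transformations descend to the localizations, where Theorem~\ref{theorem: kan loop} makes them equivalences. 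If you replace your Quillen-equivalence step by this direct localization argument, the rest of your outline goes through.
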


Our next goal is to describe the full subcategory $\sLxi\subset \sL$ together with the \emph{$\kk$-completion} functor $$L_\xi \colon \sL \to \sLxi \hookrightarrow \sL,$$
see Definition~\ref{definition: F-completion}. Since any object in $\srLie$ is fibrant, this problem is equivalent to identifying fibrant objects and fibrant replacements in the model category $\srLie_\xi$. It seems to be difficult in general, so we restrict ourselves to the case of \emph{connected} simplicial restricted Lie algebras. Here we say that a simplicial restricted Lie algebra $L_\bullet\in \srLie$ is connected if $\pi_0(L_\bullet)=0$.

Let $L_\bullet\in\srLie$ be a simplicial restricted Lie algebra. The $p$-operation $\xi\colon L_\bullet \to L_\bullet$ is a map of simplicial sets, and so it induces a map of homotopy groups:
$$\xi_*\colon \pi_{n}(L_\bullet) \to \pi_n(L_\bullet), \; n\geq 0 $$
which is additive for $n\geq 1$. Moreover, since the $p$-operation $\xi$ is semi-linear, the map $\xi_*$ is semi-linear as well, i.e. $\xi_*(ax)=a^p\xi_*(x)$, $a\in \kk$, $x\in \pi_*(L_\bullet)$. In this way, all homotopy groups $\pi_n(L_\bullet), n\geq 1$ are naturally left modules over the ring of twisted polynomials $\kk\{\xi\}$, see Definition~\ref{definition: twisted polynomial ring}. 

The ring $\kk\{\xi\}$ is non-commutative (if $\kk\neq \F_p$), however it still shares a lot of common properties with the polynomial ring $\kk[t]$, see Section~\ref{section: xi-complete modules}. In particular, one can still define the \emph{$\xi$-adic completion}
$$\widehat{M}=\lim_r M/\xi^r(M)$$
of a left $\kk\{\xi\}$-module $M\in \Mod_{\kk\{\xi\}}$, see Definition~\ref{definition: xi-completion}. %We show that the functor $M \mapsto \widehat{M}$ is exact for finitely generated modules (Proposition~\ref{proposition: xi-completion is exact}). However, for all left $\kk\{\xi\}$-modules, 
The $\xi$-adic completion is not an exact functor, and so we introduce in Section~\ref{section: derived xi-complete modules} its \emph{left derived functors} $L_0$ and $L_1$. The functor $L_0$ is equipped with a natural transformation
\begin{equation}\label{equation: intro, completion transform}
\phi_M\colon M\to L_0(M),\; M\in \Mod_{\kk\{\xi\}};
\end{equation}
and we say that a left $\kk\{\xi\}$-module $M$ is \emph{derived $\xi$-adic complete} if $L_1(M)=0$ and the map $\phi_M$ is an isomorphism.

We say that $L_\bullet \in \srLie$ is \emph{$\kk$-complete} if $L_\bullet$ is a fibrant object of $\srLie_\xi$, see Section~\ref{section: F-completion}.
\begin{varthm}[Corollary~5.3.12]\label{theorem: intro, D}
Let $L_\bullet \in \srLie$ be a connected simplicial restricted Lie algebra, $\pi_0(L_\bullet)=0$. Then $L_\bullet$ is $\kk$-complete if and only if all homotopy groups $\pi_n(L_\bullet), n\geq 1$ are derived $\xi$-adic complete left modules over the ring $\kk\{\xi\}$.
\end{varthm}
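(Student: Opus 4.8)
The plan is to reduce the statement to a computation about the homotopy groups of the $\kk$-completion tower, by combining the Koszul duality equivalence of Theorem~\ref{theorem: intro, C} with a careful analysis of the functor $\barW U^r$ on homotopy groups. Recall that $L_\bullet$ is $\kk$-complete if and only if the unit map $L_\bullet \to L_\xi(L_\bullet)$ is a weak equivalence in $\srLie$, equivalently (using the equivalence $\sCA_0\simeq\sLxi$ and Theorem~\ref{theorem: intro, A}) if and only if the derived unit $L_\bullet \to PG\circ \barW U^r(L_\bullet)$ is a weak equivalence. So the first step is to express both sides in terms of an explicit tower: the $\kk$-completion is computed as a homotopy limit of a tower whose layers are controlled by the cosimplicial (or Postnikov-type) filtration coming from $\barW U^r$, and one wants to identify the associated spectral sequence. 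I would set this up so that the $E_2$-page of the relevant spectral sequence (the analog of the unstable Adams spectral sequence promised in the abstract, or more simply the Bousfield--Kan $\kk$-completion tower spectral sequence for $\srLie$) has columns given by the derived $\xi$-adic completion functors $L_0$ and $L_1$ applied to $\pi_n(L_\bullet)$.

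The key algebraic input is that, on the level of a single homotopy group $M=\pi_n(L_\bullet)$ viewed as a $\kk\{\xi\}$-module, the operation of passing to $\barW U^r$ and back precisely implements the $\xi$-adic completion. Concretely, I would argue that for an Eilenberg--MacLane object $K(M,n)$ in $\srLie$ (an abelian simplicial restricted Lie algebra with a single homotopy group $M$ in degree $n$, with its $\kk\{\xi\}$-action), the $\kk$-completion $L_\xi(K(M,n))$ has homotopy groups $L_0(M)$ in degree $n$ and $L_1(M)$ in degree $n+1$, and nothing else. This is the heart of the matter: it is where the ring-theoretic results of Section~\ref{section: xi-complete modules} and Section~\ref{section: derived xi-complete modules} (that $\kk\{\xi\}$ behaves like a PID/$\kk[t]$, so that derived completion has cohomological dimension $1$ and is given by an explicit $\lim$--$\lim^1$) get used. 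The "only two derived functors'' fact forces the tower to degenerate enough that one gets a clean answer. One then bootstraps from Eilenberg--MacLane objects to general connected $L_\bullet$ by induction up the Postnikov tower: each Postnikov stage is a principal fibration classified by a $k$-invariant, and $\kk$-completion (being a left Bousfield localization, hence preserving homotopy pullbacks along fibrations up to the usual caveats) interacts well with these fibration sequences, so one runs a five-lemma / Mittag-Leffler argument degree by degree.

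With that in hand the proof of Corollary~5.3.12 is a formal consequence. If all $\pi_n(L_\bullet)$, $n\geq 1$, are derived $\xi$-adic complete, then at each Postnikov stage the map to its $\kk$-completion is an iso on the relevant homotopy groups (by the Eilenberg--MacLane computation with $L_0(M)=M$, $L_1(M)=0$), the spectral sequence / tower argument shows the inverse limit is attained, and hence $L_\bullet \to L_\xi(L_\bullet)$ is a weak equivalence, i.e. $L_\bullet$ is $\kk$-complete. Conversely, if $L_\bullet$ is $\kk$-complete, then reading off the homotopy groups of $L_\xi(L_\bullet)\simeq L_\bullet$ through the same spectral sequence — which, because of the vanishing of $L_{\geq 2}$, has no room for differentials or extension ambiguities beyond adjacent degrees — forces $\pi_n(L_\bullet)\cong L_0(\pi_n(L_\bullet))$ and $L_1(\pi_n(L_\bullet))=0$ for each $n\geq 1$; the connectivity hypothesis $\pi_0(L_\bullet)=0$ is what lets the induction start and removes the non-additive/degree-zero anomalies (cf.\ the failure recorded in Example~\ref{example: lie algebra with trivial coeffients}).

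The main obstacle I anticipate is precisely the Eilenberg--MacLane computation and, relatedly, controlling the convergence of the $\kk$-completion tower: one must show that the tower $\{L_\bullet/\xi\text{-stage}\}$ has homotopy groups given \emph{exactly} by $L_0$ and $L_1$ with no higher contributions and no lim$^1$ obstruction to convergence. This requires knowing that $\barW U^r$ sends the relevant fibration sequences to the expected cofibration/fibration sequences of simplicial truncated coalgebras (so that one can compute $\pi_*\barW U^r(K(M,n))$ via a bar-type resolution), and it leans essentially on the homological-dimension-$1$ property of $\kk\{\xi\}$ established earlier. Everything else — the bootstrapping along Postnikov towers, the five-lemma arguments, and assembling the "if and only if'' — is routine once this core input is secured.
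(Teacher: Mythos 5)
Your proposal follows essentially the same route as the paper: the core is the Eilenberg--MacLane computation $\pi_n(L_\xi K(M,n))\cong L_0M$, $\pi_{n+1}(L_\xi K(M,n))\cong L_1M$ (obtained in the paper from a two-term free resolution of $M$, using that $\kk\{\xi\}$ has global dimension $1$), followed by induction up the Postnikov tower using the $k$-invariants, a homotopy limit, the resulting short exact sequence $0\to L_1\pi_{i-1}\to\pi_i(L_\xi L_\bullet)\to L_0\pi_i\to 0$, and the closure of derived $\xi$-complete modules under extensions and quotients for the converse.

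One justification you give is wrong as stated: a left Bousfield localization does \emph{not} in general preserve homotopy fiber sequences, so "$\kk$-completion interacts well with these fibration sequences" cannot be deduced from formal properties of localization. What the paper actually does is \emph{define} $L_\xi L^{\leq(n+1)}_\bullet$ as the fiber of the completed $k$-invariant and then \emph{prove} the induced map on fibers is an $\kk$-equivalence by comparing Serre spectral sequences for principal fibrations (Corollary~\ref{corollary: SSS fiber is EM}); establishing that spectral sequence for the category $\srLie$ is the content of Sections~\ref{section: principal fibration}--\ref{section: serre spectral sequence} and is the real engine of the argument. You do correctly flag this as the main obstacle in your last paragraph, so the gap is one of supplying the tool rather than of misidentifying where it is needed.
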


Let $\sCA_1$ be the $\infty$-category of $1$-connected simplicial truncated coalgebras, i.e. $C_\bullet \in \sCA_1$ if and only if $\pi_0(C_\bullet)\cong \kk$ and $\pi_1(C_\bullet)=0$. Combining Theorem~\ref{theorem: intro, C} with Theorem~\ref{theorem: intro, D} yields the following corollary. 
\begin{varcor}
There is an equivalence of $\infty$-categories 
$$\sCA_1 \simeq \sL_{\xi,0},$$
where $\sL_{\xi,0}$ is the full subcategory of $\sL$ spanned by connected simplicial restricted Lie algebras whose homotopy groups are derived $\xi$-complete. 
\end{varcor}

We note that it seems plausible to derive the last corollary from the results of~\cite{BM19} on the deformation theory of simplicial commutative $\kk$-algebras. The comparison between this paper and~\cite{BM19} will be presented elsewhere. 

\subsection{Applications}\label{section: applications} At the end of the paper, we provide several applications of Theorem~\ref{theorem: intro, C}. Following~\cite{Priddy70long}, we define the reduced \emph{cohomology groups} $\widetilde{H}^*(L_\bullet;\kk)$ of a simplicial restricted Lie algebra $L_\bullet\in \srLie$ by the formula
$$\widetilde{H}^q(L_\bullet;\kk)=\Hom(\pi_q(\barW U^r(L_\bullet);\kk), \; q\geq 1 \;\; \text{and} \;\; \widetilde{H}^0(L_\bullet;\kk)=0. $$
By the Eilenberg-Zilber theorem, we observe that $\widetilde{H}^*(L_\bullet;\kk)$ is a non-unital graded commutative algebra over $\kk$. Moreover, by~\cite[Proposition~5.3]{Priddy70long} and~\cite[Theorem~8.5]{May70operations}, there is an action of the Steenrod operations $\beta^{\e}P^a, a\geq 0, \e=0,1$ (resp. $Sq^a, a\geq 0$ if $p=2$) on the cohomology groups $\widetilde{H}^*(L_\bullet;\kk)$ such that $P^0$ (resp. $Sq^0$) acts by zero. These Steenrod operations are semi-linear and still satisfy the Adem relations, see Section~\ref{section: steenrod}. So, the cohomology groups $\widetilde{H}^*(L_\bullet;\kk)$ is also a left module (as an $\F_p$-vector space) over the \emph{homogenized mod-$p$ Steenrod algebra}~$\calA^h_p$, see Definition~\ref{definition: homogenized steenrod algebra}. We recall that the classical mod-$p$ Steenrod algebra~$\calA_p$ has $P^0=1$ (resp. $Sq^0=1$), and so Adem relations in $\calA_p$ have both quadratic and linear parts, while Adem relations in $\calA^h_p$ have only the quadratic part (Remark~\ref{remark: filtration on steenrod}).

We say that a positively graded (non-unital) commutative $\kk$-algebra $A_*=\oplus_{q>0}A_q$ is an \emph{unstable $\calA^h_p$-algebra} (Definition~\ref{definition: unstable algebra}) if $A_*$ is equipped with a semi-linear action of the homogenized mod-$p$ Steenrod algebra $\calA^h_p$ such that the following non-stability relations are satisfied:
\begin{enumerate}
\item $\beta^{\e}P^a(x)=0$ if $2a+\e>|x|$ (resp. $Sq^a(x)=0$ if $a>|x|$ and $p=2$);
\item $P^a(x)=x^p$ if $2a=|x|$ (resp. $Sq^a(x)=x^2$ if $a=|x|$).
\end{enumerate}
%We write $\calU^h$ for the category of unstable $\calA^h_p$-algebras and
We observe that a cohomology ring $\widetilde{H}^*(L_\bullet;\kk), L_\bullet\in \srLie$ is an unstable $\calA^h_p$-algebra (Example~\ref{example: steenrod algebra, restricted Lie}). We use Theorem~\ref{theorem: intro, C} together with~\cite[Proposition~6.2.1]{Priddy73} in order to construct an analog of the Bousfield-Kan spectral sequence~\cite{BK72_spectral_sequence} in the setting of the category $\srLie$.

\begin{varthm}[Corollary~\ref{corollary: BKSS, restricted Lie algebra}]\label{theorem: intro, E}
Let $L_\bullet$ be an $\kk$-complete simplicial restricted Lie algebra such that its cohomology groups $\widetilde{H}^*(L_\bullet;\kk)$ are degreewise finite-dimensional. Then there is a completely convergent spectral sequence
\begin{equation*}
E^2_{s,t}=\Ext^s_{\calU^h}(\widetilde{H}^*(L_\bullet;\kk), \Sigma^{t+1} \kk) \Rightarrow \pi_{t-s}(L_\bullet), \;\; d^r\colon E^r_{s,t} \to E^r_{s+r,t+r-1}.
\end{equation*}
Here $\Ext^*_{\calU^h}$ are non-abelian $\Ext$-groups in the category of unstable $\calA^h_p$-algebras, see Definition~\ref{definition: unstable exts}.
\end{varthm}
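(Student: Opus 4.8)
The plan is to realize the spectral sequence as the homotopy spectral sequence of the cosimplicial resolution underlying the $\kk$-completion, in the style of Bousfield and Kan's spectral sequence of a cosimplicial space (\cite{BK72_spectral_sequence}), with the $E_2$-page identified by a non-abelian derived functor computation in the category $\calU^h$ of unstable $\calA^h_p$-algebras. First I would reduce via Koszul duality: since $L_\bullet$ is $\kk$-complete it is fibrant in $\srLie_\xi$, hence lies in $\sLxi\subset\sL$, and by Theorems~\ref{theorem: intro, A} and~\ref{theorem: intro, C} it is classified by the simplicial truncated coalgebra $C_\bullet=\barW U^r(L_\bullet)\in\sotrcoalg$. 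Under the finite type hypothesis, $\pi_*(C_\bullet)$ is the graded dual of $\widetilde{H}^*(L_\bullet;\kk)$ up to the degree shift intrinsic to $\barW$, which is precisely why the unstable $\calA^h_p$-algebra $\widetilde{H}^*(L_\bullet;\kk)$ is the correct invariant to resolve. The rest of the argument can be run equivalently inside $\srLie_\xi$ or inside $\sotrcoalg$, both of which are simplicial, combinatorial and left proper by Theorem~\ref{theorem: intro, B}, so the abstract cosimplicial homotopy spectral sequence is available.

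Next I would feed in the resolution. By construction (Section~\ref{section: F-completion}) the $\kk$-completion $L_\xi(L_\bullet)$ is the totalization $\Tot$ of a cosimplicial object in $\srLie$ whose term in codegree $s$ is ``cohomologically free'' --- built, on the coalgebra side, from cofree simplicial truncated coalgebras, and on the Lie side from free simplicial restricted Lie algebras. Since $L_\bullet$ is $\kk$-complete we have $L_\bullet\simeq L_\xi(L_\bullet)=\Tot$ of this resolution, so the tower of partial totalizations yields a spectral sequence with $E_1^{s,t}=\pi_t$ of the codegree-$s$ term and differential of bidegree $(s+r,\,t+r-1)$, conditionally converging to $\pi_{t-s}(L_\bullet)$. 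Because $\widetilde{H}^*(L_\bullet;\kk)$ has finite type, each $E_1^{s,t}$ is finite-dimensional and the tower is pro-constant in every total degree, which promotes this to complete convergence by Bousfield's criterion.

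It then remains to identify $E_2^{s,t}$, the $s$-th cohomotopy of the cosimplicial graded vector space given by the homotopy of the resolution. Here I would invoke~\cite[Proposition~6.2.1]{Priddy73} together with the description of the semi-linear Steenrod action on $\widetilde{H}^*(\barW U^r(-);\kk)$ recalled in Section~\ref{section: steenrod}: the cohomology of a free simplicial restricted Lie algebra is a free unstable $\calA^h_p$-algebra, so after dualizing in the finite type range the homotopy of the resolution assembles, one internal degree at a time, into a free simplicial resolution of $\widetilde{H}^*(L_\bullet;\kk)$ in $\calU^h$. Since the non-abelian $\Ext$-groups of Definition~\ref{definition: unstable exts} are computed by cotriple homology in $\calU^h$, this produces
\[
  E_2^{s,t}\;\cong\;\Ext^s_{\calU^h}\big(\widetilde{H}^*(L_\bullet;\kk),\,\Sigma^{t+1}\kk\big),
\]
the extra shift $\Sigma^{t+1}$ being exactly the $\barW$-shift noted above. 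The main obstacle is this last step: one must check that the cotriple resolution coming from the $\kk$-completion becomes, after passage to cohomology, a genuinely \emph{free} resolution in $\calU^h$ with the correct internal grading, and that cotriple homology in $\calU^h$ agrees with Definition~\ref{definition: unstable exts}; this requires controlling the free objects and the semi-linearity of the $\calA^h_p$-action in $\calU^h$, and carefully tracking both the $\barW$-degree shift and the behaviour of graded duality under the finite type hypothesis. Granting this, the corollary follows formally from Theorems~\ref{theorem: intro, A}--\ref{theorem: intro, D}, the cited result of Priddy, and the Bousfield--Kan formalism.
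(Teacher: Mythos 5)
Your overall architecture is the paper's: pass to the coalgebra side via the equivalence $\sCA_0\simeq\sLxi$, resolve by cofree truncated coalgebras using the comonad $\oblv\circ\Sym^{tr}$, take the $\Tot$-tower homotopy spectral sequence of the resulting cosimplicial mapping space, identify $E_2$ via Priddy's computation, and get complete convergence from finite-dimensionality of the $E_2$-entries. Two points need correction, though. First, the sentence ``the cohomology of a free simplicial restricted Lie algebra is a free unstable $\calA^h_p$-algebra'' is false: by Example~\ref{example: PG of triv} and Proposition~\ref{proposition: chain coalgebra and chain complex}, $\widetilde{H}^*(\free(V_\bullet);\kk)\cong\Sigma\pi^*(V_\bullet)$ is the \emph{trivial} (square-zero) algebra. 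What Theorem~\ref{theorem: cohomotopy of free truncated is free unstable} actually says, and what the $E_2$-identification requires, is that the \emph{cohomotopy of the cofree truncated coalgebra} $\Sym^{tr}(V_\bullet)$ is the free unstable algebra $F_{\calU^h}(\pi^*(V_\bullet))$; free Lie algebras and cofree coalgebras are not interchanged by $\barW U^r$ (the former correspond to trivial coalgebras on Kan suspensions). Since your resolution is by cofree coalgebras, the fix is local, but as written the key step cites the wrong freeness statement. Second, the paper does not construct the $\kk$-completion as $\Tot$ of this cosimplicial resolution (Section~\ref{section: F-completion} builds $L_\xi$ by Postnikov towers); rather, one resolves the target $D_\bullet=\barW U^r(L_\bullet)$ by the cobar construction $\bbR^\bullet(D_\bullet)$, uses that $D_\bullet\to\Tot\bbR^\bullet(D_\bullet)$ is a weak equivalence for \emph{any} $D_\bullet$, and then maps in from $\barW U^r(L_\xi\free(\kk))$; the hypothesis that $L_\bullet$ is $\kk$-complete is used only to identify the abutment $\pi_{t-s}\map_{\sLxi}$ with $\pi_{t-s}(L_\bullet)$. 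Your framing would require justifying that such a ``cohomologically free'' cosimplicial model of $L_\xi L_\bullet$ exists, which the paper never establishes.
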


Finally, we use Theorem~\ref{theorem: intro, E} to tie together two classical computations. Let $\free(V_\bullet)\in\srLie$ be a free simplicial restricted Lie algebra (Example~\ref{example:freelie}) generated by a simplicial vector space $V_\bullet\in \sVect_{\kk}$. The homotopy groups 
$$\pi_*(\free(V_\bullet)) $$
were computed in~\cite[Theorem~8.5]{BC70} and~\cite[Proposition~13.2]{Wellington82} in terms of the algebra $\Lambda$ of~\cite{6authors} and $\pi_*(V_\bullet)$. At the same time, by~\cite[Section~7]{Priddy70}, the algebra $\Lambda$ is anti-isomorphic to the Koszul dual algebra $\calK^*_p$ of $\calA^h_p$, see Section~\ref{section: unstable koszul}. In Corollaries~\ref{corollary: ASS, degenerates, p=2} and~\ref{corollary: ASS, degenerates, p is odd, l is odd}, we use the Curtis theorem~\cite{Curtis_lower}, Theorem~\ref{theorem: intro, E}, and the paper~\cite{Priddy70} to redo the computations of A.~K.~Bousfield, E.~Curtis, and R.~Wellington. Our approach is not easier than theirs, but perhaps, it is more fundamental and flexible for plausible generalizations. In particular, we derive the following theorem from Corollaries~\ref{corollary: ASS, degenerates, p=2} and~\ref{corollary: ASS, degenerates, p is odd, l is odd}.

\begin{varthm}\label{theorem: intro, F}
Let $V_\bullet\in \sVect_{\kk}$ be a simplicial vector space such that $\pi_*(V_\bullet)$ is one-dimensional. Then the spectral sequence of Theorem~\ref{theorem: intro, E} 
\begin{equation}\label{equation: intro, ASS}
E^2_{s,t}=\Ext^s_{\calU^h}(\widetilde{H}^*(\free(V_\bullet);\kk), \Sigma^{t+1} \kk) \Rightarrow \pi_{t-s}(L_\xi \free(V_\bullet))
\end{equation}
degenerates at the second page. Here $L_\xi \free(V_\bullet)$ is the $\kk$-completion of the free simplicial restricted Lie algebra $\free(V_\bullet)$.
\end{varthm}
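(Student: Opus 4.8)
The plan is to reduce Theorem~\ref{theorem: intro, F} to a purely algebraic computation of the $E^2$-page of the spectral sequence~\eqref{equation: intro, ASS} and then to observe, by a degree count, that no differential can be nonzero. First I would identify the input $\widetilde{H}^*(\free(V_\bullet);\kk)$ as an unstable $\calA^h_p$-algebra. Since $\pi_*(V_\bullet)$ is one-dimensional, say concentrated in degree $n$, the space $V_\bullet$ is weakly equivalent to an Eilenberg--MacLane object, and $\free(-)$ preserves weak equivalences between cofibrant objects; thus $\widetilde{H}^*(\free(V_\bullet);\kk)$ is computed by the free unstable $\calA^h_p$-algebra on a single generator $\iota_n$ in degree $n$. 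Invoking the description of this free object via admissible monomials in the homogenized Steenrod operations (the analog of the classical computation of $H^*(K(\kk,n))$), one gets an explicit bigraded $\kk$-vector space.

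Next I would compute $\Ext^*_{\calU^h}(\widetilde{H}^*(\free(V_\bullet);\kk), \Sigma^{t+1}\kk)$. Here the key input is the identification, recalled in the excerpt, of the Koszul dual $\calK^*_p$ of $\calA^h_p$ with (the opposite of) the lambda algebra $\Lambda$ of~\cite{6authors}, together with~\cite[Section~7]{Priddy70}. Using the Koszul resolution one rewrites the non-abelian $\Ext$-groups over $\calU^h$ as the cohomology of an explicit cobar-type complex built from $\Lambda$, and for a free unstable algebra on one generator this complex has trivial differential — this is exactly the content of the Bousfield--Curtis--Wellington computation of $\pi_*(\free(V_\bullet))$ in~\cite[Theorem~8.5]{BC70} and~\cite[Proposition~13.2]{Wellington82}, read through the Curtis theorem~\cite{Curtis_lower}. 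So the $E^2$-page is, additively, a free module over $\Lambda$ (suitably bigraded) on one generator.

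Finally, the degeneration. I would compare $E^2$ with the target $\pi_*(L_\xi\free(V_\bullet))$. By Theorem~\ref{theorem: intro, C} and Theorem~\ref{theorem: intro, D}, $\pi_*(L_\xi\free(V_\bullet))$ is the derived $\xi$-adic completion of $\pi_*(\free(V_\bullet))$; by the Bousfield--Curtis--Wellington formula the latter is, as a bigraded vector space, of exactly the same total dimension in each internal degree $t-s$ as the $E^2$-page computed above (the $\xi$-completion only reindexes/completes along the $\xi$-action and does not change dimensions in the relevant range since $\pi_*(\free(V_\bullet))$ is already built from the $\xi$-free module structure on $\Lambda$). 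Since the spectral sequence converges completely (Theorem~\ref{theorem: intro, E}) and $\dim E^2 = \dim E^\infty$ in each total degree, every differential $d^r$, $r\geq 2$, must vanish. I expect the main obstacle to be the bookkeeping in this last comparison: one must match the $(s,t)$-bigrading coming from the Koszul/lambda-algebra resolution with the filtration degree and internal degree on $\pi_*(\free(V_\bullet))$ arising from the unstable Adams spectral sequence of~\cite{BC70}, and in particular check that the $\xi$-adic completion on the target is accounted for correctly so that no "extra" classes appear or disappear. Once the dictionary between $\calK^*_p$, $\Lambda$, and the homotopy of free objects is set up carefully — which is the substance of Corollaries~\ref{corollary: ASS, degenerates, p=2} and~\ref{corollary: ASS, degenerates, p is odd, l is odd} — the degeneration is forced.
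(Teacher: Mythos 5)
Your reduction of the problem to a computation of the $E^2$-page via the Koszul dual of $\calA^h_p$ and the lambda algebra matches the paper (Section~\ref{section: unstable koszul} and Corollary~\ref{corollary: unstable exts between trivial}), but the final step --- deducing degeneration from a dimension count --- has a genuine gap. You compare $\dim E^2$ with $\dim E^\infty$ \emph{in each total degree} $t-s$. However, for fixed $i=t-s$ the target $\pi_i(L_\xi\free(V_\bullet))\cong\prod_{n}\pi_{i,n}(\free(V_\bullet))$ is an infinite product over the Lie-power weights and is typically infinite-dimensional (e.g.\ the classes $\iota_l,\xi\iota_l,\xi^2\iota_l,\dots$ all live in total degree $l$), and likewise $\bigoplus_s E^2_{s,s+i}$ is infinite-dimensional. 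An abstract isomorphism of two countably infinite-dimensional vector spaces carries no information, so equality of dimensions in total degree does not force $E^\infty_{s,t}=E^2_{s,t}$ bidegree by bidegree: one can lose finitely many dimensions in each filtration degree and still have $\prod_s E^\infty_{s,s+i}\cong\prod_s E^2_{s,s+i}$. To make a counting argument work you would have to show that the Adams filtration degree $s$ on the target corresponds exactly to the weight $p^s$ in the Lie-power decomposition, and that identification is essentially the statement being proved.

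The paper closes this gap differently (Corollaries~\ref{corollary: ASS, degenerates, p=2} and~\ref{corollary: ASS, degenerates, p is odd, l is odd}): by Corollary~\ref{corollary: unstable exts between trivial} the $E^2$-term is naturally of the form $\pi_*(V_\bullet)^{(s)}\otimes(\text{const})$, so a differential $d^r$ would be a natural transformation from the $s$-th Frobenius twist of the identity functor on $\Vect_{\kk}$ to the $(s+r)$-th; over an algebraically closed field there are no nonzero such transformations, hence all $d^r$ vanish, and one descends to general perfect $\kk$ by base change. The identification $E^\infty_{s,t}\cong\pi_{t-s,p^s}$ is then extracted afterwards by comparing $\kk^\times$-isotypic components, with the target computed from the Curtis connectivity theorem (Proposition~\ref{proposition: homotopy groups of free restricted as xi-module}) rather than by quoting Bousfield--Curtis--Wellington, whose result the paper is re-deriving. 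Two smaller points: the cochain complex computing $\Ext_{\calU^h}$ does not have "trivial differential" for free; one needs the acyclicity of the unstable Koszul resolution (Proposition~\ref{proposition: unstable koszul resolution, p is odd}). And for $p$ odd with the generator in odd degree, $\widetilde H^*(\free(V_\bullet);\kk)\cong\kk[x]/x^2$ is not $\mathcal F$ of an unstable module, so the computation requires the extra Andr\'e--Quillen/Grothendieck spectral sequence step of Lemma~\ref{lemma: grothendieck ss, 1-dim}, which your sketch omits.
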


The assumption that $\pi_*(\free(V_\bullet))$ is one-dimensional is essential here; if it is not fulfilled, then it seems likely that this spectral sequence is highly non-trivial, see Remark~\ref{remark: ASS, highly dimensionsional}.

\subsection{Organization}\label{section: organization}

%The paper is organized as follows. 
In Section~\ref{section: algebraic background} we recall crucial facts about the categories $\rLie$ and $\trcoalg$ needed to prove Theorem~\ref{theorem: intro, A}. First, in Section~\ref{section: restricted lie algebras} we recall that there is an equivalence
\begin{equation*}%\label{equation: intro, primitive and ur}
\begin{tikzcd}
P: \prHopf \arrow[shift left=.6ex]{r}
&\rLie :U^r. \arrow[shift left=.6ex,swap]{l}
\end{tikzcd}
\end{equation*}
between the category of restricted Lie algebras $\rLie$ and the category $\prHopf$ of primitively generated Hopf algebras. Here $U^r$ is the universal enveloping algebra functor and $P$ is the functor of primitive elements. After that, in Section~\ref{section: truncated coalgebras} we define truncated coalgebras and prove several basic facts about them, and in Section~\ref{section: primitively generated Hopf algebras} we show that the category $\prHopf$ is also equivalent to the category $\mathsf{Grp}(\trcoalg)$ of \emph{group objects} in $\trcoalg$ (Corollary~\ref{corollary: group objects in trcoalg}). Finally, in Proposition~\ref{proposition: free group objects}, we prove that the forgetful functor from $\prHopf$ to $\trcoalg$ has a left adjoint $H$ and the algebra $H(C), C\in \trcoalg$ is free associative if we forget about the comultiplication in $H(C)$.

In Section~\ref{section: kan loop functor} we adapt the argument from~\cite{Stevenson12} to our context and prove Theorem~\ref{theorem: intro, A} as Theorem~\ref{theorem: kan loop}.

In Section~\ref{section: model structures} we construct simplicial combinatorial model structures on the categories $\srLie$ and $\sotrcoalg$, see Theorems~\ref{theorem:modelsrlie} and~\ref{theorem:modelsotrcoalg} respectively. We also introduce the notion of an $\kk$-equivalence (Definition~\ref{definition: barW-equivalence}) and construct the model category $\srLie_\xi$ in Theorem~\ref{theorem:model structure srlie, barW-equivalence}. These results imply Theorem~\ref{theorem: intro, B}. At the end of Section~\ref{section: model structures}, we prove Propositions~\ref{proposition: coalgebras and lie, infty-categorical, part1},~\ref{proposition: F-complete is an accessible localization}, and Theorem~\ref{theorem: coalgebras and lie algebras}, which together imply Theorem~\ref{theorem: intro, C}.

In Section~\ref{section:homotopy theory} we provide technical tools needed for the proof of Theorem~\ref{theorem: intro, D}. In Section~\ref{section: principal fibration} we introduce the notion of \emph{principal fibrations} in the category $\srLie$ (Definition~\ref{definition: lie principal fibration}). Then we show that a connected simplicial restricted Lie algebra has a \emph{Postnikov tower} (Corollary~\ref{corollary: postnikov tower}) and each stage in this tower is weakly equivalent to a principal fibration (Corollary~\ref{corollary: fibration with KM is principal}). Finally, in Section~\ref{section: serre spectral sequence} we construct an analog of the \emph{Serre spectral sequence} for principal fibrations (Corollary~\ref{corollary: SSS fiber is EM}) in the category $\srLie$.

%We prove Theorem~\ref{theorem: intro, D} in Section~\ref{section: F-complete}. 
In Section~\ref{section: xi-complete modules} we prove a few basic facts about the ring of twisted polynomials $\kk\{\xi\}$ and we define the $\xi$-adic completion in Definition~\ref{definition: xi-completion}. In Section~\ref{section: derived xi-complete modules} we define the left derived functors for the $\xi$-adic completion. Finally, we prove Theorem~\ref{theorem: intro, D} in Section~\ref{section: F-completion} as Corollary~\ref{corollary: F-complete are derived Xi-complete}. Our proof is based on the classical proof (given e.g. in~\cite[Theorem~11.1.1]{MayPonto}) that a simply-connected space $X$ is $p$-complete if and only if its homotopy groups $\pi_n(X),n\geq 2$ are derived $p$-complete.

In Section~\ref{section:adams} we illustrate possible applications of the previous results. In Section~\ref{section: steenrod} we recall properties of the Steenrod operations and we show that the cohomology ring $\widetilde{H}^*(L_\bullet;\kk), L_\bullet\in\srLie$ is a left $\calA^h_p$-module over the homogenized mod-$p$ Steenrod algebra $\calA^h_p$, see Example~\ref{example: steenrod algebra, restricted Lie}. Moreover, we introduce the category $\calU^h$ of unstable $\calA^h_p$-algebra (Definition~\ref{definition: unstable algebra}), the category $\calM^h$ of \emph{unstable $\calA^h_p$-modules} (Definition~\ref{definition: unstable module}), and the category $\calM^h_0$ of \emph{strongly unstable $\calA^h_p$-modules} (Definition~\ref{definition: strongly unstable module}). Both categories $\calM^h$ and $\calM^h_0$ are abelian and closely related to $\calU^h$, see Remark~\ref{remark: unstable algebras and unstable modules}. In Section~\ref{section: BKSS} we prove Theorem~\ref{theorem: intro, E} as Corollary~\ref{corollary: BKSS, restricted Lie algebra}.

In Section~\ref{section: unstable koszul} we recall the definition of the lambda algebra $\Lambda$ of~\cite{6authors}. (We point out that in this work we use the convention for $\Lambda$ from~\cite[Definition~7.1]{Wellington82} but not from the original paper.) Then, we compute unstable abelian $\Ext$-groups $\Ext^s_{\calM^h}(W,\Sigma^t\kk)$ and $\Ext^s_{\calM^h_0}(W,\Sigma^t\kk)$ in terms of the algebra $\Lambda$ for a trivial $\calA^h_p$-module $W\in \Vect^{gr}_{\kk}$ (Corollary~\ref{corollary: unstable exts between trivial}). 

In Section~\ref{section: free lie} we apply the spectral sequence of Theorem~\ref{theorem: intro, E} to a free simplicial restricted Lie algebra $L_\bullet=\free(V_\bullet)$ generated by a simplicial vector space $V_\bullet\in \sVect_\kk$. Finally, we derive Theorem~\ref{theorem: intro, F} from the Curtis theorem~\cite{Curtis_lower} and previous computations.

\subsection{Acknowledgments} %The author would like to thank Dmitry Kaledin for posing the problem and Mark Behrens for helpful discussions during the preparation of the work. The author is also grateful to Kirill Magidson and Artem Prikhodko for pointing out a crucial misconception in an early version of the paper.
The author is grateful to Mark Behrens, Dmitry Kaledin, Kirill Magidson, and Artem Prikhodko for many helpful conversations. The author also thanks the Max Planck Institute for Mathematics in Bonn for its hospitality and financial support.

\subsection{Notation}\label{section: notation} Here we describe some notation that will be used throughout the paper. As was said, $p$ is a fixed prime number and $\kk$ is a fixed perfect field of characteristic $p$. We denote by $\Vect_{\kk}$ the category of vector spaces over $\kk$; $\Vect^{gr}_{\kk}$ (resp. $\Vect^{>0}_{\kk}$) is the category of non-negatively (resp. positively) graded vector spaces over $\kk$. We usually denote by $V_*=\oplus_{q\geq 0} V_q$ an object of $\Vect^{gr}_{\kk}$. 

Throughout most of the paper, except Section~\ref{section: serre spectral sequence}, we write $\Sigma V_*$ for the shift of $V_*\in \Vect^{gr}_{\kk}$, i.e. $(\Sigma V_*)_{q} = V_{q-1}$, $q\geq 0. $ Moreover, $\Sigma^tV_*=\Sigma(\Sigma^{t-1}V_*)$, $t\geq 0$; we extend this notation to all integers in an usual way. In Section~\ref{section: serre spectral sequence}, for better readability of formulas, we denote the shift $\Sigma^t V_*$ by $V_*[t]$.

We denote by $\CoAlg$ the category of cocommutative coalgebras over $\kk$ and we denote by $\Alg$ the category of associative algebras over $\kk$. If it is not said otherwise, all algebras are unital and all coalgebras are counital. We denote by $\CoAlg^{aug}$ the category of coaugmented cocommutative coalgebras; an object of $\CoAlg^{aug}$ is a pair $(C,\eta\colon \kk \to C)$, where $C\in \CoAlg$ and $\eta$ is a map of coalgebras. 

In this paper, all Hopf algebras are cocommutative, but not necessary commutative; we denote by $\Hopf$ the category of (cocommutative) Hopf algebras over~$\kk$. We notice that $\Hopf$ is equivalent to the category $\Grp(\CoAlg)$ of group objects in $\CoAlg$, since the direct product of cocommutative coalgebras $C$ and $D$ in the category $\CoAlg$ is the tensor product $C\otimes_\kk D$, see e.g.~\cite[Theorem~6.4.5]{Sweedler69}.

%In this paper, we include the alternating condition $[x,x]=0$ in the definition of a Lie algebra. %$$L$ over $\kk$ is called \emph{isotropic} if $[x,x]=0$ for all $x\in L$.. Note that if $p\neq 2$, then any Lie algebra is isotropic.

We define Lie algebras so that they satisfy the alternating condition $[x,x]=0$, which always implies the antisymmetry $[x,y]=-[y,x]$, but is equivalent to it only over a field of characteristic $p\neq 2$. We write $\Lie$ for the category of Lie algebras over the field $\kk$. 

We denote by $\Delta$ the simplex category of finite nonempty linearly ordered sets. We denote by $[n], n\geq 0$ the object of $\Delta$ with $(n+1)$ elements. Let $\mathsf{C}$ be a category, then  $\mathsf{sC}$ (resp. $\mathsf{cC}$) is the category of simplicial (resp. cosimplicial) objects in $\mathsf{C}$, i.e. $\mathsf{sC}$ (resp. $\mathsf{cC}$) is the category of contravariant (resp. covariant) functors from $\Delta$ to~$\mathsf{C}$: $$\mathsf{sC}=\Fun(\Delta^{op},\mathsf{C}), \;\; \text{and} \;\; \mathsf{cC}=\Fun(\Delta, \mathsf{C}).$$
We usually denote by $X_\bullet$ (resp. $X^\bullet$) an object of $\mathsf{sC}$ (resp. $\mathsf{cC}$), where $X_n=X_\bullet([n])$ (resp. $X^n=X^\bullet([n])$) for $n\geq 0$. If the category $\mathsf{C}$ has a terminal object $*\in C$, then $\mathsf{s}_0\mathsf{C}\subset \mathsf{sC}$ is the full subcategory of reduced simplicial objects, $X_\bullet \in \mathsf{s}_0\mathsf{C}$ if and only if $X_0\cong *$. This notation can be nested, e.g. $\mathsf{csC}$ is the category of cosimplicial simplicial objects in $\mathsf{C}$.

Moreover, if the category $\mathsf{C}$ is complete and cocomplete, then the category $\mathsf{sC}$ is enriched, tensored, and cotensored over $\sSet$. Therefore there is a canonical notion of homotopy between maps in $\mathsf{sC}$, see~\cite[Definition~4, Section~II.1]{Quillen67}. In this case, we define (strong) deformation retracts in $\mathsf{sC}$ in a usual way. If $\mathsf{sC}$ is a simplicial model category, then deformation retracts are weak equivalences.

Let us denote by $\mathsf{Ch}_{\geq 0}$ (resp. $\mathsf{Ch}^{\geq 0}$) the category of connective chain (resp. cochain) complexes over $\kk$. An object of $\mathsf{Ch}_{\geq 0}$ (resp. $\mathsf{Ch}^{\geq 0}$) is a pair $C_\bullet=(C_*,d)$ (resp. $C^\bullet = (C^*, d)$), where $C_\bullet \in \Vect^{gr}_{\kk}$ and $d$ is a differential 
$$d\colon C_{n+1}\to C_{n} \;\; \text{(resp. $d\colon C^{n} \to C^{n+1}$)}, \; n\geq 0$$ such that $d^2=0$. We denote by $H_*(C_\bullet)$ (resp. $H^*(C^\bullet)$) the homology (resp. cohomology) groups of $C_\bullet$ (resp. $C^\bullet$).

Let $V_\bullet \in \sVect_{\kk}$ be a simplicial vector space. We denote by $\pi_*(V_\bullet)$ the homotopy groups of the underlying simplicial set. The normalized chain complex functor 
$$N\colon \sVect_{\kk} \to \mathsf{Ch}_{\geq 0}$$
is given by $$(NV_\bullet)_n = \bigcap_{i=1}^n \ker(d_i), \;\; d=d_0\colon (NV_\bullet)_n \to (NV_\bullet)_{n-1}.$$
By the Dold-Kan correspondence, $N$ is an equivalence and we denote by $\Gamma\colon \mathsf{Ch}_{\geq 0} \to \sVect_{\kk}$ its inverse. Moreover, we recall that there is an isomorphism $$\pi_*(V_\bullet) \cong H_*(NV_\bullet), \; V_\bullet \in \sVect_{\kk}. $$

We write $F\dashv G$ or 
\begin{equation*}
\begin{tikzcd}
F: \mathsf{C} \arrow[shift left=.6ex]{r}
&\mathsf{D} :G \arrow[shift left=.6ex,swap]{l}
\end{tikzcd}
\end{equation*}
if the functor $F$ is the left adjoint to the functor $G$.  We write $\oblv$ (abbrv. to obliviate) for various forgetful functors. We note that an underlying vector space is not always a result of applying $\oblv$. For example, suppose $(C,\eta)\in \CoAlg^{aug}$ is a coaugmented coalgebra, then the underlying vector space of $(C,\eta)$ is $C$, but $\oblv (C,\eta)$ is $\coker(\eta)$.

\section{Algebraic background}\label{section: algebraic background}
In this section we provide algebraic background for restricted Lie algebras, primitively generated Hopf algebras, and truncated coalgebras. At the end of Section~\ref{section: restricted lie algebras}, we recall that the category $\rLie$ of restricted Lie algebras is equivalent to the category $\prHopf$ of primitively generated Hopf algebras. In Section~\ref{section: truncated coalgebras}, for a cocommutative coalgebra $C$, we define the \emph{Verschiebung} operator $V\colon C \to C$ (Definition~\ref{definition: coalgebra verschiebung}); $C$ is called \emph{truncated} if $V$ is trivial (Definition~\ref{definition: truncated coalgebra}). The main results of Section~\ref{section: truncated coalgebras}
are Propositions~\ref{proposition: cofree truncated coalgebra} and~\ref{proposition:category property of trcoalg}, where we describe a cofree truncated coalgebra $\Sym^{tr}(W)$ and show that the category $\trcoalg$ of truncated coalgebras is locally presentable. In Section~\ref{section: primitively generated Hopf algebras} we derive from~\cite[Proposition~4.20]{MilnorMoore65} that the category $\prHopf$ is equivalent to the category $\Grp(\trcoalg)$ of group objects in $\trcoalg$ (Proposition~\ref{proposition: truncated and primitively generated}). Finally, in Proposition~\ref{proposition: free group objects}, we describe a \emph{free Hopf algebra} $H(C)$ generated by $C\in\trcoalg$.

\subsection{Restricted Lie algebras}\label{section: restricted lie algebras} 
Let $L$ be a Lie algebra and let $x\in L$. We denote by $\ad(x)\colon L\to L$ the map given by $y\mapsto \ad(x)(y)=[y,x]$.

\begin{dfn}\label{definition:p-operation}
Let $L$ be a Lie algebra over $\kk$. A \emph{$p$-operation} on $L$ is a map $\xi\colon L \to L$ such that 

\begin{itemize}
\item $\xi(ax)=a^p\xi(x)$, $a\in \kk, x\in L$;
\item $\ad(\xi(x))=\ad(x)^{\circ p}\colon L \to L$;
\item $\xi(x+y)=\xi(x)+\xi(y)+\sum_{i=1}^{p-1}\dfrac{s_i(x,y)}{i}$, for all $x,y\in L$, where $s_{i}(x,y)$ is the coefficient of $t^{i-1}$ in the formal expression $\ad(tx+y)^{\circ(p-1)}(x)$.
\end{itemize}

\end{dfn}

\begin{dfn}[{{\cite[Definition~V.4]{Jacobson79}}}]\label{definition:lier} A \emph{restricted Lie algebra} $(L,\xi)$ is a Lie algebra $L$ (over $\kk$) equipped with a $p$-operation $\xi\colon L \to L$. A linear map $$f\colon (L',\xi_{L'})\to (L,\xi_L)$$ is a homomorphism of restricted Lie algebras if
$$[f(x),f(y)]=f([x,y]), \; x,y\in L', $$
and $\xi_L(f(x))=f(\xi_{L'}(x))$, $x\in L'$.
We will denote by $\rLie$ the category of restricted Lie algebras.
\end{dfn}

%\begin{exmp}\label{example:abelian}
%We say that a restricted Lie algebra $L$ is \emph{abelian}, if $L$ has trivial bracket and $\xi=0$. We denote by $\Ab(V)$ a unique abelian restricted Lie algebra with the underlying vector space equal to $V\in \Vect_{\kk}$.
%\end{exmp}

%We generalize the previous example as follows. 
Recall that a Lie algebra $L$ is called \emph{abelian} if $L$ is equipped with zero bracket. First, we will describe abelian restricted Lie algebras.

\begin{dfn}\label{definition: twisted polynomial ring}
The \emph{twisted polynomial ring} $\kk\{\xi\}$ is defined as the set of polynomials in the variable $\xi$ and coefficients in $\kk$. It is endowed with a ring structure with the usual addition and with a non-commutative multiplication that can be summarized with the relation: $$\xi a = a^{p} \xi, \; \; a\in \kk.$$
We denote by $\Mod_{\kk\{\xi\}}$ (resp. $\Mod^{\kk\{\xi\}}$) the abelian category of \emph{left}  (resp. \emph{right}) $\kk\{\xi\}$-modules.
\end{dfn}

The full subcategory of abelian restricted Lie algebras is equivalent to $\Mod_{\kk\{\xi\}}$ because, if $L$ is an abelian restricted Lie algebra, then the $p$-operation $\xi\colon L\to L$ is additive. We denote by $\trivxi(M)$ a unique abelian restricted Lie algebra with the underlying left $\kk\{\xi\}$-module equal to $M\in \Mod_{\kk\{\xi\}}$. %This notation will be explained {\bf LATER}. 
Finally, we say that an abelian restricted Lie algebra is \emph{$p$-abelian} if the $p$-operation $\xi\colon L\to L$ is trivial, i.e. $\xi=0$.

We proceed with more examples of restricted Lie algebras.

\begin{exmp}\label{example:assislie}
Given an associative $\kk$-algebra $A$, we write $A^{\circ}$ for the restricted Lie algebra whose underlying vector space is $A$ equipped with the bracket $[x,y]=xy-yx$ and the $p$-operation $\xi(x)=x^p$.
\end{exmp}

\begin{dfn}\label{definition: primitive elements}
Let $(C, \eta\colon \kk \to C)$ be a coaugmented cocommutative coalgebra over $\kk$ with comultiplication $\Delta\colon C\to C\otimes C$. Recall that an element $x\in C$ is called \emph{primitive} if
$$\Delta(x)=1\otimes x + x\otimes 1. $$
Here  $1=\eta(1)\in C$. We denote by $P(C)$ the set of primitive elements in $C$.
\end{dfn}

\begin{exmp}\label{example:hopfislie}
Let $H$ be a cocommutative Hopf algebra over $\kk$. Then the set of primitive elements $P(H)$ is a restricted Lie subalgebra in $H^{\circ}$, see Example~\ref{example:assislie}. %with the bracket $[x,y]=xy-yx$ and the $p$-operation $\xi(x)=x^p$.
\end{exmp}

\begin{exmp}\label{example:freelie}
Let $V$ be a vector space over $\kk$ and let $T(V)$ be the tensor algebra generated by $V$. It is well-known that $T(V)$ has a unique structure of a cocommutative Hopf algebra such that generators $v\in V$ are primitive elements. Therefore $P(T(V))$ is a restricted Lie algebra, which we will denote by $\free(V)$.

We say that a restricted Lie algebra $L$ is \emph{free} if $L$ is isomorphic to $\free(V)$ for some $V\in \Vect_{\kk}$. This terminology is justified because the functor $$\free\colon \Vect_{\kk} \to \rLie$$ is the left adjoint to the forgetful functor $$\oblv\colon \rLie \to \Vect_{\kk}.$$
\end{exmp}

\begin{prop}\label{proposition: free lie algebras, fresse}
The underlying vector space of a free restricted Lie algebra $\free(V),V\in \Vect_{\kk}$ splits as follows
$$\oblv\circ\free(V)\cong \bigoplus_{n\geq 1}L^r_n(V) = \bigoplus_{n\geq 1} (\bfLie_n \otimes V^{\otimes n})^{\Sigma_n}.$$
Here $\bfLie_n \in \Vect_{\kk}$ is the $n$-th space of the Lie operad.
\end{prop}

\begin{proof}
\cite[Theorem~1.2.5]{Fresse00}. 
\end{proof}

The next proposition is standard, cf.~\cite[Chapter 6]{MilnorMoore65}.
\begin{prop}\label{proposition:category property of rLie} The category $\rLie$ is monadic over $\Vect_{\kk}$ via the adjunction $\free \dashv \oblv$. The category $\rLie$ is complete and cocomplete and the forgetful functor $\oblv$ creates limits and sifted colimits. Moreover, $\rLie$ is locally presentable. \qed
\end{prop}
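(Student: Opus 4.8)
The plan is to verify each assertion by reducing it to the corresponding structural fact about $\Vect_\kk$ via the free-forgetful adjunction $\free \dashv \oblv$. First I would establish that $\rLie$ is monadic over $\Vect_\kk$ by applying the Beck monadicity theorem to $\oblv$. The right adjointness of $\oblv$ is Example~\ref{example:freelie}; it remains to check that $\oblv$ is conservative (immediate, since a bijective map of restricted Lie algebras has an inverse that automatically respects the bracket and the $p$-operation, these being structure maps rather than extra data) and that $\oblv$ creates coequalizers of $\oblv$-split pairs. For the latter, given an $\oblv$-split coequalizer diagram in $\Vect_\kk$ whose parallel pair lifts to $\rLie$, one transports the bracket and the $p$-operation along the splitting and checks that the axioms of Definition~\ref{definition:p-operation} are preserved; here one uses that $\kk$ is a field so the split coequalizer is just a direct-sum decomposition, making the transport of the (non-additive but semi-linear) $p$-operation unproblematic. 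This yields the monad $T^r = \oblv \circ \free$ and the equivalence $\rLie \simeq \Vect_\kk^{T^r}$.

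Next, completeness and the claim that $\oblv$ creates limits: since $\Vect_\kk$ is complete and $\rLie$ is the category of algebras over the monad $T^r$, the forgetful functor from a category of algebras over a monad always creates limits, so $\rLie$ is complete and $\oblv$ creates all small limits. For sifted colimits, I would invoke the general fact (see e.g.\ \cite[Proposition~5.5.8.15]{HTT} or the classical statement for monads on locally presentable categories) that if the monad $T^r$ preserves sifted colimits, then $\oblv$ creates sifted colimits and $\rLie$ has all sifted colimits. By Remark~\ref{remark: free lie algebras, fresse}, the endofunctor $\oblv \circ \free$ is the direct sum $\bigoplus_{n\geq 1}(\bfLie_n \otimes (-)^{\otimes n})^{\Sigma_n}$; each summand is a composite of the $n$-fold tensor power $V \mapsto V^{\otimes n}$ (which preserves sifted, in fact filtered and reflexive, colimits since tensor product in $\Vect_\kk$ does so in each variable) with the exact functor of taking $\Sigma_n$-invariants — exactness of invariants holds because $\kk$ has characteristic $p$ but we are only claiming preservation of sifted colimits, and $(-)^{\Sigma_n}$ on $\Vect_\kk$ commutes with filtered colimits, while the sifted case reduces to the filtered and reflexive-coequalizer cases, the latter again by the characteristic-free fact that $(-)^{\Sigma_n}$ preserves reflexive coequalizers of $\kk$-vector spaces. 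Summing over $n$ preserves sifted colimits as well. Hence $T^r$ preserves sifted colimits, giving cocompleteness together with coequalizers built from $\oblv$ plus the monad structure.

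Finally, presentability: $\Vect_\kk$ is locally presentable, and $\rLie$ is monadic over it via a monad $T^r$ that, being a sum of finitary-up-to-$\Sigma_n$-invariants functors, is accessible (it preserves $\kappa$-filtered colimits for suitable $\kappa$, e.g.\ $\kappa = \aleph_0$ as just shown); by the standard result that the category of algebras over an accessible monad on a locally presentable category is again locally presentable (\cite[Theorem~5.5.9.1]{HTT}, or Adámek–Rosický), $\rLie$ is presentable. The main obstacle — and the only place requiring genuine care rather than citation — is the verification that the non-additive $p$-operation behaves well under the colimit manipulations in the monadicity and sifted-colimit arguments; once one observes that over a field all the relevant colimits split or are reflexive, and that the polynomial nature of the $s_i(x,y)$ in Definition~\ref{definition:p-operation} is compatible with the multilinear decomposition of $\oblv\circ\free$, everything goes through. \qed
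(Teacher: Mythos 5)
The paper does not actually prove this proposition --- it records it as standard with a pointer to Milnor--Moore --- so your proposal is supplying an argument rather than competing with one, and its architecture (Beck monadicity for $\oblv$, creation of limits by monadic forgetful functors, preservation of sifted colimits by the monad $\oblv\circ\free$, accessibility for presentability) is the right one. But there is a genuine gap at the one step you yourself identify as delicate. You justify preservation of reflexive coequalizers by ``the characteristic-free fact that $(-)^{\Sigma_n}$ preserves reflexive coequalizers of $\kk$-vector spaces.'' No such fact exists: invariants under a finite group is a limit, and in characteristic $p$ it is not right exact. Concretely, present the augmentation $\kk[\Sigma_n]\twoheadrightarrow\kk$ (for $n\geq p$) as a reflexive coequalizer of $\Sigma_n$-representations; on invariants the map sends the norm element to $n!=0$, so $(-)^{\Sigma_n}$ does not even preserve the surjectivity of this coequalizer. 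The composite functors $V\mapsto(\bfLie_n\otimes V^{\otimes n})^{\Sigma_n}$ \emph{do} preserve reflexive coequalizers, but this cannot be deduced by factoring through exactness properties of $(-)^{\Sigma_n}$; it needs an argument that uses the specific shape of these functors. (The adjacent sentence ``exactness of invariants holds because $\kk$ has characteristic $p$'' is also backwards --- characteristic $p$ is exactly when exactness fails.)

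The cleanest repair bypasses the explicit formula entirely: $\rLie$ is a finitary variety of universal algebra over $\mathsf{Set}$. The operations are addition, the scalar multiplications, the bracket, and $\xi$, and every axiom of Definitions~\ref{definition:p-operation} and~\ref{definition:lier} is an equation between derived operations --- including semi-linearity $\xi(ax)=a^p\xi(x)$ and the sum formula, since each $s_i(x,y)$ is a specific Lie polynomial in $x$ and $y$. For such a variety the forgetful functor to $\mathsf{Set}$ creates limits and sifted colimits (sifted colimits commute with finite products in $\mathsf{Set}$), and the category of models is locally finitely presentable; since the same holds for $\Vect_{\kk}$, the functor $\oblv\colon\rLie\to\Vect_{\kk}$ creates limits and sifted colimits. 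This gives completeness, cocompleteness, the creation statement, and presentability in one stroke, and monadicity then follows from the crude monadicity theorem, since $\oblv$ is conservative, has the left adjoint $\free$, and now provably preserves reflexive coequalizers. Your Beck-monadicity and creation-of-limits paragraphs survive unchanged; only the sifted-colimit paragraph needs to be replaced by (or rerouted through) this observation.
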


For instance, the direct product $L_1\times L_2$ is the direct sum $L_1\oplus L_2$ as a vector space, with $[l_1,l_2]=0,l_1\in L_1, l_2\in L_2$ and $p$-operation acting componentwise. We fix the following observation for later purposes (Proposition~\ref{proposition: cofibrant in srlie with action}).

\begin{prop}\label{proposition: pushouts and products in rlie}
Let 
$$
\begin{tikzcd}
L \arrow{r} \arrow{d}
& L_1 \arrow{d} \\
L_2 \arrow{r}
& L_{12}
\end{tikzcd}
$$
be a pushout square in $\rLie$ and let $M\in \rLie$ be a restricted Lie algebra. Then the commutative diagram 
$$
\begin{tikzcd}
M\times L \arrow{r} \arrow{d}
& M\times L_1 \arrow{d} \\
M\times L_2 \arrow{r}
& M\times L_{12}
\end{tikzcd}
$$
is again a pushout square in $\rLie$.
\end{prop}

\begin{proof}
We show that the natural map
$$(M\times L_1) \coprod_{M\times L} (M\times L_2) \to M\times (L_1\coprod_L L_2) = M\times L_{12} $$
is an isomorphism. Let $$f\colon M\coprod_M M \to (M\times L_1) \coprod_{M\times L} (M\times L_2)$$ be the map of restricted Lie algebras induced by the maps from the zero Lie algebra to $L$, $L_1$, and $L_2$, and let 
$$g\colon M \to M\times L_{12}, \; g(m)=(m,0), \; m\in M$$ be the canonical embedding. Consider the following commutative diagram
$$
\begin{tikzcd}%[column sep=large]
	M \coprod_M M \arrow[shift left=.75ex]{r}{f}
\arrow[shift right=.75ex,swap]{r}{0} \arrow{d}{\cong}
&
(M\times L_1) \coprod_{M\times L} (M\times L_2) \arrow{r} \arrow{d}
&
L_1\coprod_L L_2 \arrow{d}{\cong}\\
	M \arrow[shift left=.75ex]{r}{g}
\arrow[shift right=.75ex,swap]{r}{0}
&
M\times L_{12} \arrow{r}
&
L_{12}.
\end{tikzcd}
$$
Since $g(M)$ is a (restricted) Lie ideal in $M\times L_{12}$, the bottom row is a coequalizer diagram both in~$\rLie$ and in~$\Vect_{\kk}$. Furthermore, the top row is also a coequalizer diagram in $\rLie$ because pushouts commute with coequalizers. Since the outer vertical arrows are isomorphisms and $f$ is a split monomorphism, it suffices to show that the top row is also a coequalizer diagram in~$\Vect_{\kk}$, i.e. the image $\im(f)$ is a (restricted) Lie ideal in the pushout $(M\times L_1) \coprod_{M\times L} (M\times L_2)$. However, this pushout is generated as a restricted Lie algebras by $\im(f)$, $L_1$, and $L_2$. Finally, we observe that $[m,l]=0$ for any $m \in \im(f)$ and $l\in L_i$, $i=1,2$, which imply the proposition.
\end{proof}

\begin{dfn}\label{definition:UEA}
The \emph{universal enveloping algebra} of $(L,\xi)\in \rLie$ is the quotient algebra $$U^r(L)=T(L)/\langle x\otimes y -y\otimes x -[x,y],\;\;z^p-\xi(z),\;\; x,y,z\in L\rangle.$$
\end{dfn}

This construction is natural and the functor $U^r\colon \rLie\to \Alg$ is the left adjoint to the functor $(-)^{\circ} \colon \Alg \to \rLie$ of Example~\ref{example:assislie}. In particular, $U^r\free(V)\cong T(V)$.

We also note that $U^r$ preserves colimits and takes direct products to the tensor product of algebras. In particular, $U^r(L)$ is an augmented cocommutative Hopf algebra via the diagonal map $\Delta\colon L\to L\times L$.

We write $\Hopf$ for the category of cocommutative Hopf algebras. By the previous paragraph, we have the functor $$U^r\colon \rLie \to \Hopf,$$ and by Example~\ref{example:hopfislie}, we have the functor
$$P\colon \Hopf \to \rLie, \; H\mapsto P(H) $$
in the opposite direction. 

\begin{dfn}\label{definition: primitevely generated}
We say that a Hopf algebra $H\in \Hopf$ is \emph{primitively generated} if the subset of primitive elements  $P(H)\subset H$ generates $H$, i.e. the natural map $T(P(H)) \to H$ is surjective. We denote by $\prHopf$ the full subcategory of $\Hopf$ spanned by primitively generated Hopf algebras.
\end{dfn}

Since $U^r(L)$ is a quotient of a tensor algebra by a Hopf ideal, the Hopf algebra $U^r(L)$ is primitively generated for all $L\in \rLie$. %Thus we obtain the adjoint pair:
%\begin{equation}\label{equation: primitive universal enveloping adjunction}
%\begin{tikzcd}
%P: \prHopf \arrow[shift left=.6ex]{r}
%&\rLie :U^r. \arrow[shift left=.6ex,swap]{l}
%\end{tikzcd}
%\end{equation}

\begin{thm}\label{theorem: hopf are equivalent to lie}
The functors $U^r\colon \rLie \to \prHopf$ and $P\colon \prHopf \to \rLie$ are inverse to each other. In other words, they provide the following equivalence of categories
$$
\begin{tikzcd}
P: \prHopf \arrow[shift left=.6ex]{r}
&\rLie :U^r. \arrow[shift left=.6ex,swap]{l}
\end{tikzcd}
$$
%The adjoint pair~\eqref{equation: primitive universal enveloping adjunction} is an equivalence of categories.
\end{thm}

\begin{proof} \cite[Theorem~6.11]{MilnorMoore65}.
\end{proof}

\subsection{Truncated coalgebras}\label{section: truncated coalgebras}

Recall that $\mathrm{char}(\kk)=p$. 
\begin{dfn}\label{definition: frobenius twist}
Let $W\in \Vect_{\kk}$ be a vector space over $\kk$. We define the \emph{$(-1)$-th Frobenius twist} $W^{(-1)}\in \Vect_{\kk}$ of $W$ as follows. As an abelian group, $W^{(-1)}=W$ and we endow it with a new $\kk$-action 
$$-\cdot-\colon \kk \times W^{(-1)} \to W^{(-1)} $$
given by $a\cdot w = a^p w$, $a\in \kk$, $w\in W^{(-1)}=W$. Since the field $\kk$ is perfect, there also exists the inverse operation; namely, we define the \emph{Frobenius twist} of $W$ as a unique $\kk$-vector space $W^{(1)}$ such that $(W^{(1)})^{(-1)}= W$.
\end{dfn}

For any vector space $W\in \Vect_{\kk}$ we have a natural $\kk$-linear map 
\begin{equation}\label{equation: linear frobenius}
W^{(1)} \to \mathrm{Sym}^p(W)=(W^{\otimes p})_{\Sigma_p}
\end{equation}
sending $w$ to $w^{\otimes p}$. If $W$ is finite-dimensional, we have the dual map
\begin{equation}\label{equation: linear verschiebung}
\Gamma^p(W)=(W^{\otimes p})^{\Sigma_p} \to W^{(1)},
\end{equation}
and we extend it to all vector spaces to taking filtered colimits.

\begin{dfn}\label{definition: coalgebra verschiebung}
Let $C$ be a cocommutative coalgebra over $\kk$ with comultiplication $\Delta\colon C\to C\otimes C$ and counit $\e\colon C \to \kk$. We define the \emph{Verschiebung} operator $V\colon C\to C^{(1)}$ as follows:
$$V\colon C\xrightarrow{\Delta^{p-1}} (C^{\otimes p})^{\Sigma_p} \xrightarrow{\eqref{equation: linear verschiebung}} C^{(1)}. $$
We note that $V$ is a $\F_p$-linear coalgebra homomorphism.
\end{dfn}

\begin{dfn}\label{definition: truncated coalgebra}
A cocommutative coalgebra $C$ is called \emph{truncated} if $$\ker(V)=\ker(\e),$$ where $\e\colon C\to \kk$ is the counit. We write $\trcoalg$ for the full subcategory in the category of cocommutative coalgebras $\CoAlg$ spanned by \emph{non-zero} truncated ones.
\end{dfn}

\begin{rmk}\label{remark: algebras with zero frobenius}
Let $C$ be a finite-dimensional coalgebra, and let $C^{*}$ be its dual algebra. Then $C$ is truncated if and only if for any $x\in C^*$ the $p$-th power $x^p$ is some scalar multiple of the unit $1\in C^*$. 
\end{rmk}

\begin{exmp}\label{example: trivial coalgebra} 
Let $W$ be a vector space over $\kk$. We define the \emph{trivial} coalgebra $\triv(W)$ as follows. The underlying vector space of $\triv(W)$ is $W\oplus \kk$, where the second summand is spanned by the element $1\in \kk$. We define the comultiplication $\Delta\colon \triv(W)\to \triv(W)\otimes \triv(W)$ and the counit $\e\colon \triv(W)\to \kk$ as follows $$\Delta(1)=1, \;\; \Delta(w)=1\otimes w + w\otimes 1, w\in W;$$
$$\e(w)=0,w\in W, \;\; e(1)=1.$$ This is easy to see that the coalgebra $\triv(W)$ is truncated.
\end{exmp}

\begin{dfn}[Chapter~8, \cite{Sweedler69}]\label{definition: simple and pointed} A non-zero coalgebra $C\in \CoAlg$ is called \emph{simple} if $C$ has no non-zero proper subcoalgebras and $C\in \CoAlg$ is called \emph{pointed} if all simple subcoalgebras of $C$ are $1$-dimensional. 
\end{dfn}

\begin{prop}\label{proposition: truncated are pointed and irriducible}
$\quad$
\begin{enumerate}
\item Any simple truncated coalgebra is $1$-dimensional. In particular, any truncated coalgebra is pointed.
\item Any non-zero truncated coalgebra has a \emph{unique} simple subcoalgebra.
\end{enumerate}
\end{prop}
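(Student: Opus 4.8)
The plan is to exploit the structural theory of pointed coalgebras from \cite{Sweedler69} together with the defining property of truncatedness, namely that the Verschiebung $V$ restricted to $\ker(\e)$ vanishes. For part (1), let $C$ be a simple truncated coalgebra. Since $C$ is non-zero, pick a group-like element $g \in C$ (it exists: a minimal non-zero subcoalgebra of any coalgebra is finite-dimensional and, being simple, its dual is a finite-dimensional simple algebra, which over a perfect field is a matrix algebra; truncatedness forces this matrix algebra to have vanishing $p$-th powers in its augmentation ideal, which is only possible if it is $\kk$ itself, so $C$ has a one-dimensional subcoalgebra $\kk g$). First I would argue that the one-dimensional subcoalgebra $\kk g \subseteq C$ is already all of $C$ by simplicity, giving the claim. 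Actually it is cleaner to run the finite-dimensional argument directly: a simple coalgebra is finite-dimensional, its dual $C^*$ is a finite-dimensional simple (associative, unital) $\kk$-algebra, hence a matrix algebra $M_n(D)$ over a division algebra $D$ by Wedderburn; truncatedness via Remark~\ref{remark: algebras with zero frobenius} says every element of the augmentation ideal of $C^*$ has $p$-th power a scalar, but in $M_n(D)$ with $n \geq 2$ one finds nilpotent elements whose $p$-th powers are $0$ yet which are not scalars unless forced — more to the point, a matrix algebra has no algebra augmentation $C^* \to \kk$ at all unless $n=1$ and $D = \kk$. So $C^* \cong \kk$ and $C$ is one-dimensional. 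The "in particular" follows since every simple subcoalgebra of any truncated coalgebra is itself truncated (truncatedness passes to subcoalgebras, as $V$ is compatible with the inclusion), hence one-dimensional, which is precisely the definition of pointed.

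For part (2), the coradical $C_0$ of a truncated coalgebra $C$ is the sum of its simple subcoalgebras, each of which is one-dimensional and hence of the form $\kk g$ for a group-like $g$. I would show there is exactly one such $g$. Suppose $g, h \in C$ are two distinct group-like elements. Then $g - h \in \ker(\e)$, so truncatedness gives $V(g-h) = 0$ in $C^{(1)}$. But $V$ is additive and $\F_p$-linear, and on a group-like element $g$ one computes $V(g) = g$ (viewed in $C^{(1)}$): indeed $\Delta^{p-1}(g) = g^{\otimes p}$, and the map \eqref{equation: linear verschiebung} sends the symmetric tensor $g^{\otimes p}$ to the class of $g$ in $C^{(1)}$. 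Hence $V(g-h) = g - h$ in $C^{(1)}$, forcing $g = h$. Therefore $C$ has a unique group-like element, so a unique one-dimensional subcoalgebra, which by part (1) is its unique simple subcoalgebra. (Non-emptiness of the set of group-likes, i.e. $C_0 \neq 0$, holds for any non-zero coalgebra.)

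The main obstacle I anticipate is being careful with the Frobenius twist bookkeeping in the identity $V(g) = g$, and more generally making sure the computation of $V$ on group-like elements is correct over a general perfect field rather than just $\F_p$ — the target lives in $C^{(1)}$, not $C$, so the semi-linearity of $V$ must be tracked, though this does not affect the additivity argument used to separate $g$ and $h$. A secondary point requiring care is justifying that a simple coalgebra is finite-dimensional and that truncatedness is inherited by subcoalgebras; both are standard (the first is \cite[Chapter~8]{Sweedler69}, the second is immediate from naturality of $\Delta$ and of the map \eqref{equation: linear verschiebung}), so I would state them without belaboring the verification.
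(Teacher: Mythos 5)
Your part (2) is the paper's argument verbatim: two group-likes $g,h$ satisfy $V(g)=g$, $V(h)=h$, and $g-h\in\ker(\e)$, so truncatedness kills $g-h$. No issues there. The problems are concentrated in part (1), where you replace the paper's route (for a \emph{cocommutative} simple coalgebra, $C^*$ is a finite \emph{field} extension of $\kk$ by \cite[Lemma~8.0.1]{Sweedler69}, and then the Frobenius argument applies) by a general Wedderburn decomposition $C^*\cong M_n(D)$, and the execution of that route has three concrete gaps. First, your witness against $n\geq 2$ is wrong: Remark~\ref{remark: algebras with zero frobenius} says every $x\in C^*$ has $x^p\in\kk\cdot 1$, and a nilpotent with $x^p=0$ does \emph{not} violate this, since $0$ is a scalar multiple of the unit. (A correct witness would be an idempotent such as a diagonal matrix unit $E_{11}$, whose $p$-th power is itself and is not scalar; but note that since all coalgebras here are cocommutative, $C^*$ is commutative and Wedderburn already forces $n=1$ and $D$ a field, so the whole matrix-algebra case is vacuous.) Second, your fallback --- that $M_n(D)$ admits no augmentation unless it is $\kk$ --- presupposes that $C^*$ carries an augmentation, equivalently that $C$ has a group-like element. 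That is essentially the content of the proposition (the canonical coaugmentation of a truncated coalgebra is only available \emph{after} this result), and Remark~\ref{remark: algebras with zero frobenius} is stated for all of $C^*$, not for an augmentation ideal, precisely because no augmentation is assumed.

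Third, and most importantly, you conclude ``so $C^*\cong\kk$'' from $n=1$ without any argument that the division algebra (in fact field) $D$ equals $\kk$. This is exactly the step where the paper's proof does its work and where the standing hypothesis that $\kk$ is \emph{perfect} enters: the Frobenius $(-)^p\colon C^*\to C^*$ is injective because $C^*$ is a field, its image lies in $\kk$ by truncatedness, and $\kk=\kk^p$ by perfectness, whence $C^*=\kk$. Without perfectness the claim is false --- for $a\notin\kk^p$ the extension $\kk(a^{1/p})$ is a finite field extension all of whose $p$-th powers lie in $\kk$. Your proposal never invokes perfectness of $\kk$ in part (1), so this is a genuine missing step rather than an omitted routine verification. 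The remaining scaffolding (simple coalgebras are finite-dimensional, truncatedness is inherited by subcoalgebras, pointedness follows) is fine.
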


\begin{proof}
If $C$ is simple, then $C$ is finite-dimensional, and the dual algebra $C^*$ is a finite field extension of $\kk$, see~\cite[Lemma~8.0.1]{Sweedler69}. In particular, the Frobenius map $$(-)^p \colon C^* \to C^*$$ is injective. However, since the coalgebra $C$ is truncated, the image of this map is $\kk\subseteq C^*$ by Remark~\ref{remark: algebras with zero frobenius}. Therefore $C^*\cong \kk$. This proves the first part.

For the second part, we show that $C\in \trcoalg$ has at most one grouplike element. Recall that $0\neq x \in C$ is called \emph{grouplike} if 
$$\Delta(x)=x\otimes x; $$
and the set $G(C)\subset C$ of grouplike elements in $C$ one-to-one corresponds to simple subcoalgebras of $C$, see~\cite[Lemma~8.0.1]{Sweedler69}. If $x\in C$ is grouplike, then $V(x)=x$ and $\e(x)=1$. Therefore, if $x,y\in C$ are grouplike, then $x-y\in \ker(\e)$. Since $C$ is truncated, this implies $V(x-y)=0$. So, we obtain
$$x-y=V(x)-V(y)=V(x-y)=0. $$
Since any non-zero coalgebra contains a simple subcoalgebra, the proposition follows.
\end{proof}

In particular, there is an exactly one coalgebra map $\kk \to C$ for any $C\in \trcoalg$; that is any non-zero truncated coalgebra is \emph{canonically} coaugmented. Therefore we will consider the category $\trcoalg$ as a full subcategory of coaugmented cocommutative coalgebras $\CoAlg^{aug}$.

\begin{prop}\label{proposition: left and right adjoints, truncated}
The fully faithful embedding
$$\trcoalg \subset \CoAlg^{aug} $$
has both left and right adjoints. In particular, the category $\trcoalg$ has all limits, which can be computed in $\CoAlg^{aug}$. Similarly, for all colimits in $\trcoalg$.
\end{prop}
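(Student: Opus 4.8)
The plan is to reformulate truncatedness as the vanishing of a natural transformation and then play this against the elementary structure theory of Proposition~\ref{proposition: truncated are pointed and irriducible}. Since $\kk$ is perfect, the Frobenius twist $\Phi\colon C\mapsto C^{(1)}$ is an autoequivalence of $\CoAlg^{aug}$, and it fixes the zero object $\kk$; moreover the Verschiebung assembles into a natural transformation $V\colon\id\Rightarrow\Phi$. I would first observe that $\ker(V_C)\subseteq\ker(\e_C)$ for \emph{every} $C\in\CoAlg^{aug}$: writing $c=\e(c)\cdot 1+c_0$ with $c_0\in\ker\e$, one has $V(1)=1$ (the Verschiebung fixes grouplike elements) and $\e\circ V=\e$ at the level of underlying maps (as $V$ is a coalgebra morphism), so $V(c)=0$ forces $\e(c)=0$. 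Hence $C$ is truncated precisely when $V_C$ annihilates the augmentation ideal, which — since $\kk$ is a zero object — is exactly the assertion that $V_C$ equals the zero morphism $C\to\kk\to C^{(1)}$ in $\CoAlg^{aug}$. In other words
\[
\trcoalg=\{\, C\in\CoAlg^{aug} \mid V_C=0 \,\},
\]
the equifier of the two natural transformations $V$ and $0$ from $\id$ to $\Phi$.

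From this description the closure of $\trcoalg$ under all limits and colimits formed in $\CoAlg^{aug}$ is formal. Recall that $\CoAlg^{aug}$ is complete, cocomplete, and presentable (cocommutative coalgebras over a field form a locally finitely presentable category by the fundamental theorem of coalgebras, cf.~\cite{Sweedler69}, and this passes to the coslice), and that $\Phi$, being an equivalence, preserves all limits and colimits. If $C=\lim_j C_j$ with the $C_j$ truncated and legs $p_j$, then naturality of $V$ gives $\Phi(p_j)\circ V_C=V_{C_j}\circ p_j=0$ for every $j$; since a morphism into $\Phi(C)=\lim_j\Phi(C_j)$ that vanishes after every projection is itself zero, $V_C=0$, so $C$ is truncated. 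Dually, if $C=\colim_j C_j$ with legs $\iota_j$, then $V_C\circ\iota_j=\Phi(\iota_j)\circ V_{C_j}=0=0\circ\iota_j$ for every $j$, and maps out of a colimit are detected on the $C_j$, so again $V_C=0$. Thus $\trcoalg$ is complete and cocomplete with all limits and colimits computed in $\CoAlg^{aug}$, which already yields the ``in particular'' clause of the statement.

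For the right adjoint I would give an explicit coreflection. For $C\in\CoAlg^{aug}$ with coaugmentation point $1_C$, let $R(C)\subseteq C$ be the sum of all truncated subcoalgebras of $C$ containing $1_C$. The crux is that $R(C)$ is still truncated: by Proposition~\ref{proposition: truncated are pointed and irriducible}(2) each such subcoalgebra $D$ has $1_C$ as its \emph{unique} grouplike, so $I_D=D\cap\ker(\e_C)$ and hence $I_{D_1+D_2}=I_{D_1}+I_{D_2}$, a subspace on which $V$ already vanishes; so finite sums of such subcoalgebras stay truncated, while a filtered union of truncated subcoalgebras is truncated because $V$ and $\e$ commute with filtered colimits. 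Thus $R(C)$ is the largest truncated subcoalgebra of $C$ through $1_C$. Since the image of a coalgebra morphism out of a truncated coalgebra is a truncated subcoalgebra of the target (images are subcoalgebras, and the condition $V=0$ passes to quotients), every morphism $D\to C$ in $\CoAlg^{aug}$ with $D\in\trcoalg$ factors uniquely through the monomorphism $R(C)\hookrightarrow C$; together with the evident functoriality of $R$, this exhibits $R$ as right adjoint to the inclusion. For the left adjoint it is cleanest to argue abstractly: the inclusion $\trcoalg\hookrightarrow\CoAlg^{aug}$ preserves limits by the second step and is accessible (the equifier of $V$ and $0$ is an accessibly embedded, hence presentable, subcategory), so, both categories being presentable, it admits a left adjoint. (Concretely, one may instead build the reflection from the fundamental theorem of coalgebras, putting $L(C):=\big(\{\, x\in C^{*}\mid x^{p}\in\kk\cdot 1 \,\}\big)^{*}$ for finite-dimensional $C$ — a subalgebra of $C^{*}$ because in a commutative algebra of characteristic $p$ the Frobenius is additive, hence a truncated quotient of $C$ by Remark~\ref{remark: algebras with zero frobenius}, universal among such — and then passing to the filtered colimit over the finite-dimensional subcoalgebras of an arbitrary $C$.)

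The step I expect to be the genuine obstacle is the verification that $R(C)$ is truncated: it rests squarely on the uniqueness of the grouplike element from Proposition~\ref{proposition: truncated are pointed and irriducible}(2). Without it a coalgebra could carry truncated subcoalgebras sitting over distinct grouplikes, whose sum need no longer be truncated, and the naive coreflection would break down. The remaining ingredients — that $\Phi$ is an autoequivalence fixing $\kk$, that $V$ fixes grouplikes and commutes with filtered colimits, that images of truncated coalgebras are truncated, and the accessibility bookkeeping for the reflection — are routine.
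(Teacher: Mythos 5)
Your proof is correct, and its core idea coincides with the paper's: truncatedness of $C$ is exactly the statement that the Verschiebung $V_C\colon C\to C^{(1)}$ equals the composite of the counit and the coaugmentation, so $\trcoalg$ is cut out of $\CoAlg^{aug}$ by equating $V$ with the zero transformation, and the adjoints are the universal objects enforcing this. Where you diverge is in execution. The paper simply exhibits the right adjoint as the equalizer $r(C)\to C\rightrightarrows C^{(1)}$ of $V$ and $\eta\circ\e$, and the left adjoint as the coequalizer $C^{(-1)}\rightrightarrows C\to l(C)$ of $V^{(-1)}$ and $\eta\circ\e$, computed in $\CoAlg^{aug}$, leaving all verification implicit. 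Your coreflection $R(C)$ (the sum of truncated subcoalgebras through $1_C$) is the same object as the paper's equalizer --- equalizers of coalgebra maps are the largest subcoalgebra of the vector-space equalizer --- but you actually check it is truncated, correctly isolating Proposition~\ref{proposition: truncated are pointed and irriducible}(2) as the ingredient that makes the sum of two truncated subcoalgebras truncated. For the left adjoint you substitute the adjoint functor theorem (plus the standard fact that equifiers of accessible functors are accessibly embedded) for the paper's explicit coequalizer; this is heavier machinery but imports nothing from later in the paper, and your parenthetical dual construction $L(C)=(\{x\in C^*\mid x^p\in\kk\cdot 1\})^*$ on finite-dimensional pieces is a valid concrete alternative. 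The trade-off: the paper's formulas are shorter and immediately usable (e.g.\ in the proof of Proposition~\ref{proposition: cofree truncated coalgebra}, which reuses the same equalizer), while your version supplies the omitted justifications and the structural explanation of why the (co)reflections exist.
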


\begin{proof}
The left adjoint $l\colon \CoAlg^{aug} \to \trcoalg$ can be given as the coequalizer of the following diagram:
$$
\begin{tikzcd}%[column sep=large]
C^{(-1)} \arrow[shift left=.75ex]{r}{V^{(-1)}}
  \arrow[shift right=.75ex,swap]{r}
&
C \arrow{r}
&
l(C),
\end{tikzcd}
$$
where $C\in \CoAlg^{aug}$, and the lower arrow is the composition the counit map and the coaugmentation map. Similarly, the right adjoint $r\colon \CoAlg^{aug} \to \trcoalg$ can be defined as the equalizer of the same diagram:
$$
\begin{gathered}[b]
\begin{tikzcd}%[column sep=large]
r(C) \arrow{r}
&
C \arrow[shift left=.75ex]{r}{V}
  \arrow[shift right=.75ex,swap]{r}
&
C^{(1)}.
\end{tikzcd}\\[-\dp\strutbox]
\end{gathered}\eqno\qedhere
$$
\end{proof}

\begin{dfn}\label{definition: conilpotent coalgebra}
A non-counital coalgebra $(C,\Delta\colon C\to C\otimes C)$ is called \emph{conilpotent} if for any $x\in C$ there exists $n$ such that $$\Delta^{n-1}(x)=0,$$ where $\Delta^{n-1}\colon C \to C^{\otimes n}$ is the $(n-1)$-fold composition of the comultiplication. A coaugmented coalgebra $(C,\eta\colon \kk \to C)$ is called conilpotent if the non-counital coalgebra $\coker(\eta)$ is conilpotent.
\end{dfn}

\begin{lmm}\label{lemma: criteria for conilpotent}
A finite-dimensional coaugmented coalgebra $(C,\eta)$ is conilpotent if and only if the dual augmented algebra $(C^*,\eta^*)$ has the \emph{nilpotent} augmentation ideal $I=\ker(\eta^*)\subset C^*$, i.e. $I^n=0$ for some $n\in \N$. \qed
\end{lmm}

\begin{prop}\label{proposition: truncated coalgebra is conilpotent}
Any truncated coalgebra $C$ is conilpotent (as a coaugmented coalgebra).
\end{prop}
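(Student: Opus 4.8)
The plan is to reduce the statement to the finite-dimensional case and then translate it into a statement about the dual algebra. First I would observe that any coalgebra is the filtered union of its finite-dimensional subcoalgebras, and that conilpotence is detected on finite-dimensional subcoalgebras: a coaugmented coalgebra $C$ is conilpotent if and only if every finite-dimensional subcoalgebra is. (Here one uses the fundamental theorem of coalgebras, together with the fact that a subcoalgebra of a truncated coalgebra is again truncated, since the Verschiebung $V$ restricts along subcoalgebra inclusions; this is where Proposition~\ref{proposition: truncated are pointed and irriducible} is useful, as it pins down the unique simple subcoalgebra so that any finite-dimensional subcoalgebra can be enlarged to one containing the canonical coaugmentation.) So without loss of generality $C$ is finite-dimensional.

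For finite-dimensional $C$, I would pass to the dual: by the discussion just before the statement, $C$ is conilpotent if and only if the augmentation ideal $I=\ker(\e^*)\subset C^*$ is nilpotent. Since $C$ is pointed with a unique grouplike element by Proposition~\ref{proposition: truncated are pointed and irriducible}, the dual algebra $C^*$ is local: its unique simple subcoalgebra dualizes to the unique maximal (two-sided, since $C^*$ is finite-dimensional) ideal, which is exactly $I$. Hence $I$ is the Jacobson radical of the finite-dimensional algebra $C^*$, and the only thing left to check is that $I$ is nilpotent. For a finite-dimensional algebra the Jacobson radical is automatically nilpotent, so in fact one gets this for free — but the truncated hypothesis makes it completely explicit: by Remark~\ref{remark: algebras with zero frobenius}, $x^p\in\kk\cdot 1$ for every $x\in C^*$, and for $x\in I$ we have $\e^*(x^p)=\e^*(x)^p=0$, so $x^p=0$. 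Thus $I$ consists of elements that are $p$-nilpotent.

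The remaining step is to upgrade "$I$ is spanned by $p$-nilpotent elements in a finite-dimensional algebra" to "$I$ is a nilpotent ideal." I would argue as follows: $C^*/I$ is a finite field extension of $\kk$ (the dual of the unique simple subcoalgebra), hence $I$ is a maximal ideal and $C^*$ is local with residue field $C^*/I$; for a finite-dimensional local algebra the maximal ideal is nilpotent. Alternatively, and more self-containedly, one can induct on $\dim_\kk C^*$: pick $x\in I$ nonzero; since $x^p=0$, the ideal generated by $x^{p-1}$ is a nonzero ideal annihilated by $I$ on which $I$ acts nilpotently, pass to the quotient, and conclude. I expect the only genuine subtlety — the "main obstacle" — to be the bookkeeping in the reduction to the finite-dimensional case: one must make sure the coaugmentation and the Verschiebung behave well under restriction to finite-dimensional subcoalgebras and under the filtered colimit, so that conilpotence of $\coker(\eta)$ really is detected locally. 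Everything after that reduction is standard finite-dimensional (local) algebra, made transparent by the truncated hypothesis.
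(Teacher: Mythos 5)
Your proposal is correct and follows the same route as the paper's proof: reduce to the finite-dimensional case (the paper cites Sweedler's fundamental theorem, applied to the subcoalgebra generated by a given element), dualize, and show that the augmentation ideal $I\subset C^*$ is nilpotent. The paper's final step avoids the Jacobson radical entirely: since $x^p=0$ for all $x\in I$ and $C^*$ is commutative (because $C$ is cocommutative), a pigeonhole count on a basis of $I$ gives $I^{pm}=0$ where $m=\dim I$, whereas you identify $I$ with the radical via Proposition~\ref{proposition: truncated are pointed and irriducible} and then cite the general nilpotence of the radical of a finite-dimensional algebra. One small caution about your alternative inductive argument: the ideal $(x^{p-1})$ is \emph{not} annihilated by all of $I$ (only by $x$ itself), so the stated justification does not hold; what actually makes the induction run is that $(x^{p-1})$ is square-zero, since $x^{2p-2}=0$ for $p\geq 2$, so one quotients by it, applies the inductive hypothesis, and then lifts nilpotence back.
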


\begin{proof}
Let $x\in C$, we have to show that $\Delta^{n-1}(x)=0 \in (C/\kk)^{\otimes n}$ for some $n\in \N$. We can assume that $C$ is generated by $x$. Then by~\cite[Theorem~2.2.1]{Sweedler69}, the coalgebra $C$ is finite-dimensional. Therefore, by Lemma~\ref{lemma: criteria for conilpotent}, it suffices to show that the augmentation ideal $I$ in the dual algebra $C^*$ is nilpotent. Since $C$ is truncated, $x^p=0$ for any $x\in I$. Then $I^{pm}=0$, where $m=\dim(I)$.
\end{proof}

Let $W$ be a vector space over $\kk$. Then the symmetric algebra $\Sym(W)$ is a (cocommutative) Hopf algebra, which is truncated as a coalgebra. Since $w^p\in \Sym(W)$ is a primitive element for any $w\in W$, the ideal
$$I =(w^p\;|\; w\in W)\subset \Sym(W) $$
generated by all $p$-th powers is a Hopf ideal. Therefore $\Sym^{tr}(W)=\Sym(W)/I$ is a Hopf algebra, which is truncated as a coalgebra.

\begin{prop}\label{proposition: cofree truncated coalgebra}
The functor $$\Sym^{tr}\colon \Vect_{\kk} \to \trcoalg $$ is the right adjoint to the forgetful functor $$\oblv\colon \trcoalg \hookrightarrow \CoAlg^{aug} \to \Vect_{\kk} $$ given by $(C,\eta\colon\kk \to C) \mapsto \coker(\eta)$.
\end{prop}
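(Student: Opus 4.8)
The plan is to exhibit, for a fixed vector space $W\in \Vect_{\kk}$ and a truncated coalgebra $(C,\eta)\in \trcoalg$, a natural bijection between coalgebra maps $C\to \Sym^{tr}(W)$ and linear maps $\coker(\eta)\to W$. First I would recall the classical fact that $\Sym(W)$, with its standard Hopf structure, is the cofree conilpotent coalgebra on $W$ in the category of coaugmented cocommutative coalgebras \emph{over a field of characteristic $0$}; in characteristic $p$ this is false, and the correct statement (see e.g. the divided-power vs.\ symmetric-algebra dichotomy) is that the cofree \emph{truncated} coaugmented cocommutative coalgebra on $W$ is $\Sym^{tr}(W)$. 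Concretely, given $f\colon \coker(\eta)\to W$, I would lift it to $\tilde f\colon C\to C/\kk\to W$ and define the adjunct $C\to \Sym^{tr}(W)$ by the formula $x\mapsto \sum_{n\geq 0}\,\tfrac{1}{?}\,\tilde f^{\otimes n}\circ \Delta^{n-1}(x)$ landing in $\bigoplus_n \Sym^n(W)/I$; the sum is finite because, by Proposition~\ref{proposition: truncated coalgebra is conilpotent}, $C$ is conilpotent, so $\Delta^{n-1}(x)=0$ in $(C/\kk)^{\otimes n}$ for $n\gg 0$.

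The key steps, in order, are: (1) check this formula is well-defined as a map into $\Sym^{tr}(W)$ rather than $\Sym(W)$ — this is exactly where truncation is used, and it is the crux; (2) verify it is a map of (coaugmented) coalgebras; (3) check naturality in both variables; (4) produce the inverse construction (composing a coalgebra map $C\to \Sym^{tr}(W)$ with the projection $\Sym^{tr}(W)\to \Sym^1(W)=W$ and then to $\coker(\eta)\to W$); and (5) verify the two composites are identities. For step (1), the point is that in $\Sym(W)$ the diagonal is $\Delta(w)=1\otimes w+w\otimes 1$, so the "exponential" formula naturally produces divided powers $w^{[n]}$, not $w^n$; but in characteristic $p$ one must instead work with the honest symmetric powers, and the subtlety is that the comultiplication on $C$, restricted through the Verschiebung, controls precisely whether $p$-th powers survive. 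I would argue that because $V\colon C\to C^{(1)}$ kills $\ker(\e)$ (the defining truncation property, Definition~\ref{definition: truncated coalgebra}), the composite $C\to C^{\otimes p}\xrightarrow{(\text{linear Verschiebung})^{\vee}}\cdots$ forces the would-be $p$-th-power terms in the adjunct to vanish, i.e.\ the image lands in the quotient $\Sym(W)/I=\Sym^{tr}(W)$. Dually: the ideal $I=(w^p)$ is the annihilator of the truncation condition, so truncated $C$ see only $\Sym/I$.

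The cleanest route to steps (1)–(2) is probably to dualize in the finite-dimensional case and then pass to filtered colimits. If $W$ and $C$ are finite-dimensional, then $\Sym^{tr}(W)^*$ is the truncated polynomial algebra $\kk[x_1,\dots,x_d]/(x_i^p)$ on a basis of $W^*$, which is the \emph{free} object on $W^*$ in the category of commutative augmented $\kk$-algebras satisfying $y^p=0$ for $y$ in the augmentation ideal; and by Remark~\ref{remark: algebras with zero frobenius} the duals of truncated coalgebras are exactly such algebras, so $\Hom_{\trcoalg}(C,\Sym^{tr}(W))\cong \Hom_{\text{aug-alg},\,y^p=0}(\Sym^{tr}(W)^*,C^*)\cong \Hom_{\Vect}(W^*,\ker(\e^*))\cong \Hom_{\Vect}(\coker(\eta),W)$, using that the free functor is left adjoint to the forgetful functor to augmentation ideals. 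Every truncated coalgebra is a filtered colimit (union) of its finite-dimensional truncated subcoalgebras, $\Sym^{tr}$ preserves the relevant data, and $\Hom_{\trcoalg}(-,\Sym^{tr}(W))$ turns this colimit into a limit, so the adjunction bijection for general $C$ follows; finally one reduces general $W$ to the finite-dimensional case since $\Sym^{tr}(W)=\colim \Sym^{tr}(W')$ over finite-dimensional $W'\subseteq W$ and any coalgebra map out of a finite-dimensional (or conilpotent) $C$ has image in finitely many $\Sym^{tr}(W')$. The main obstacle is making the finite-dimensionality reductions on \emph{both} sides precise — in particular checking that a coalgebra map $C\to \Sym^{tr}(W)$ from an infinite-dimensional $C$ restricts compatibly on finite-dimensional subcoalgebras and that its components factor through finite-dimensional pieces of $W$; this is where the conilpotence from Proposition~\ref{proposition: truncated coalgebra is conilpotent} and the fundamental theorem of coalgebras (\cite[Theorem~2.2.1]{Sweedler69}) do the real work.
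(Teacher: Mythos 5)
Your proposal is correct in substance but takes a genuinely different route from the paper. The paper does not verify the universal property by hand: it observes that, since truncated coalgebras are conilpotent (Proposition~\ref{proposition: truncated coalgebra is conilpotent}) and the inclusion $\trcoalg\subset\CoAlg^{aug}$ already has a right adjoint given by the equalizer of the Verschiebung against the trivial map (Proposition~\ref{proposition: left and right adjoints, truncated}), the desired right adjoint must be $R(W)=\mathrm{eq}\bigl(\Gamma(W)\rightrightarrows\Gamma(W)^{(1)}\bigr)$ inside the divided power coalgebra $\Gamma(W)=\bigoplus_i (W^{\otimes i})^{\Sigma_i}$, the cofree \emph{conilpotent} coalgebra; it then computes this equalizer explicitly from the relation $V(\gamma_{pn}(w))=\gamma_n(w)$, obtaining the subalgebra generated by $\gamma_0(w),\dots,\gamma_{p-1}(w)$, i.e.\ $\Sym^{tr}(W)$. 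Your route instead establishes the adjunction bijection directly by linear duality in the finite-dimensional case --- using that $\Sym^{tr}(W)^*\cong\kk[x_1,\dots,x_d]/(x_i^p)$ is free among augmented commutative algebras with vanishing $p$-th powers on the augmentation ideal --- and then passes to filtered colimits via the fundamental theorem of coalgebras. Both work; the paper's argument is shorter because it reuses the formal adjunctions already in place and yields the useful description of $\Sym^{tr}(W)$ as the Verschiebung-equalizer inside $\Gamma(W)$, whereas yours is more self-contained and makes the universal property explicit at the cost of the finite-dimensionality bookkeeping. Two small cautions: the ``exponential'' formula in your first paragraph (with the unspecified coefficient $1/?$) cannot be repaired in characteristic $p$ --- it is the formula for the cofree conilpotent coalgebra and lands in $\Gamma(W)$, not $\Sym(W)$ --- so the dualization argument is the one you must commit to; and the augmentation ideal of $C^*$ is $\ker(\eta^*)\cong(\coker\eta)^*$, not $\ker(\e^*)$. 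Finally, over a perfect field $\kk\neq\F_p$ the $p$-th power map is only semilinear, so a Frobenius twist appears in the duality; it does not affect the bijection but should be acknowledged.
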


\begin{proof}
By Propositions~\ref{proposition: left and right adjoints, truncated} and~\ref{proposition: truncated coalgebra is conilpotent}, the right adjoint $R\colon \Vect_{\kk} \to \trcoalg$ for the functor $\oblv$ is the equalizer of the diagram
$$ 
\begin{tikzcd}%[column sep=large]
R(W) \arrow{r}
&
\Gamma(W) \arrow[shift left=.75ex]{r}{V}
  \arrow[shift right=.75ex,swap]{r}
&
\Gamma(W)^{(1)},
\end{tikzcd}
$$
where $W\in \Vect_{\kk}$ and 
$$\Gamma(W)=\bigoplus_{i=0}^{\infty} (W^{\otimes i})^{\Sigma_i}$$ is a cofree conilpotent cocommutative coalgebra. We recall that $\Gamma(W)$ is a commutative Hopf algebra also known as the divided power algebra generated by $W$. Explicitly, $\Gamma(W)$ is generated by elements $\gamma_{n}(w)$, $w\in W$, $n\geq0$ subjects to relations:
\begin{itemize}
\item $\gamma_0(w)=1$
\item $\gamma_n(aw)=a^n \gamma_n(w)$, $w\in W$, $a\in \kk$;
\item $\gamma_n(w+w')=\sum_{i=0}^n\gamma_i(w)\gamma_{n-i}(w')$, $w,w'\in W$;
\item $\gamma_m(w)\gamma_n(w)=\dfrac{(m+n)!}{m!n!}\gamma_{m+n}(w)$, $w\in W$;
\item $\Delta(\gamma_n(w))=\sum_{i=0}^{n}\gamma_i(w)\otimes \gamma_{n-i}(w)$, $w\in W$.
\end{itemize}

In particular, $V(\gamma_{pn}(w))=\gamma_n(w)$ and $R(W)$ is the sub-Hopf algebra generated by $\gamma_0(w),\ldots,\gamma_{p-1}(w), w\in W$. The latter is isomorphic to $\Sym^{tr}(W)$.
\end{proof}

\begin{prop}\label{proposition:category property of trcoalg} The category $\trcoalg$ is comonadic over $\Vect_{\kk}$ via the adjunction $\oblv \dashv \Sym^{tr}$. The category $\trcoalg$ is complete and cocomplete and the forgetful functor $\oblv$ creates colimits. Moreover, $\trcoalg$ is locally presentable. \qed
\end{prop}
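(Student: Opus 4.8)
The plan is to bootstrap everything from the adjunction $\oblv \dashv \Sym^{tr}$ established in Proposition~\ref{proposition: cofree truncated coalgebra} together with the completeness/cocompleteness already granted by Proposition~\ref{proposition: left and right adjoints, truncated}. First I would record that $\trcoalg$ has all small limits and colimits: by Proposition~\ref{proposition: left and right adjoints, truncated} the inclusion $\trcoalg \hookrightarrow \CoAlg^{aug}$ has both adjoints, and $\CoAlg^{aug}$ is complete and cocomplete (it is the category of coalgebras over a comonad on the presentable category $\Vect_{\kk} \times \ast$, or simply a slice of the well-known complete cocomplete category $\CoAlg$), so limits in $\trcoalg$ are computed by applying the right adjoint $r$ to limits formed in $\CoAlg^{aug}$ and colimits by applying the left adjoint $l$ to colimits formed there. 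This gives completeness and cocompleteness.

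Next I would verify that $\oblv \colon \trcoalg \to \Vect_{\kk}$ creates colimits. The cleanest route is to observe that $\oblv$ is itself a \emph{left} adjoint (to $\Sym^{tr}$), hence preserves all colimits; to upgrade "preserves" to "creates," I would argue that a diagram $D \colon \mathcal{I} \to \trcoalg$ has a colimit whose underlying vector space is $\colim \oblv D$, and that this identification is forced. Concretely: form $\colim D$ in $\trcoalg$; since $\oblv$ preserves it we get a canonical iso $\oblv(\colim D) \cong \colim(\oblv D)$, and a cocone in $\trcoalg$ is colimiting iff it is so after applying $\oblv$ because $\oblv$ is faithful and reflects isomorphisms on the relevant comparison maps — here one uses that a map of truncated coalgebras is an isomorphism iff it is a linear isomorphism, which follows since $\trcoalg$ is a full subcategory of $\CoAlg^{aug}$ and the underlying functor to $\Vect_{\kk}$ there reflects isos. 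This is the step I expect to require the most care, since "reflects isos on underlying vector spaces" for coalgebras is a genuine (though standard) fact about the inverse of a coalgebra map being automatically a coalgebra map.

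For comonadicity over $\Vect_{\kk}$ via $\oblv \dashv \Sym^{tr}$, I would invoke the (dual) Barr–Beck monadicity theorem: $\oblv$ has a right adjoint, and by the previous paragraph $\oblv$ creates colimits — in particular it creates (reflects and preserves) the colimits of $\oblv$-split cosimplicial... dually, one checks $\oblv$ creates equalizers of $\oblv$-split pairs, which again follows from $\oblv$ preserving all colimits and reflecting isos, or more directly from the fact that such split equalizers are absolute and $\oblv$ preserves the relevant colimit presentation. Finally, presentability: $\Vect_{\kk}$ is presentable, $\Sym^{tr}$ is an accessible right adjoint (it preserves filtered colimits since $\Sym^{tr}(W) = \Sym(W)/I$ is built from finite tensor constructions, so it commutes with filtered colimits of vector spaces), and $\trcoalg$ is comonadic over a presentable category via an accessible comonad; by the comonadic analog of the standard result that categories of algebras over accessible monads on presentable categories are presentable (see e.g.~\cite[Section~A.2]{HTT} or~\cite[Corollary~2.78]{Adamek} for the monadic case, dualized, using that $\trcoalg$ is also cocomplete and has a generator coming from $\Sym^{tr}$ applied to a generator of $\Vect_{\kk}$), the category $\trcoalg$ is presentable.
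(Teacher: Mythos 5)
The paper gives no proof of this proposition: the \verb|\qed| in the statement signals that it is taken as standard, in parallel with Proposition~\ref{proposition:category property of rLie}. So there is nothing of the author's to compare against, and your write-up is essentially the argument the author is implicitly relying on. Its architecture is right: (co)completeness from Proposition~\ref{proposition: left and right adjoints, truncated}, creation of colimits from ``$\oblv$ is a left adjoint, is faithful, and reflects isomorphisms'' (your remark that a bijective map of coaugmented coalgebras is an isomorphism is the correct key point, and it works here because $\oblv(C)=\coker(\eta)\cong\ker(\e)$ and every map respects the splitting $C\cong\kk\oplus\ker(\e)$), dual Barr--Beck for comonadicity, and accessibility for presentability.

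Two justifications need repair. In the comonadicity step you assert that creation of equalizers of $\oblv$-split pairs ``follows from $\oblv$ preserving all colimits''; equalizers are limits, so preservation of colimits is beside the point. Your fallback is the correct argument and should be the only one: an $\oblv$-split pair has a split equalizer in $\Vect_{\kk}$, split equalizers are absolute, and the splitting lets one transport the truncated-coalgebra structure onto the equalizer object; together with conservativity of $\oblv$ this is exactly the dual Beck condition. More seriously, presentability of a category of coalgebras over an accessible comonad is \emph{not} obtained by ``dualizing'' the monadic statement: local presentability is badly non-self-dual (the opposite of a presentable category is essentially never presentable), and $\Sym^{tr}$ applied to a generator of $\Vect_{\kk}$ yields a \emph{cogenerator} of $\trcoalg$, not a generator. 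The statement you want (coalgebras for an accessible comonad on a locally presentable category form a locally presentable category) is a genuine theorem of Ad\'amek--Porst, but the paper has already set up a more elementary route: by Sweedler's fundamental theorem of coalgebras, invoked in the proof of Proposition~\ref{proposition: truncated coalgebra is conilpotent}, every truncated coalgebra is the filtered union of its finite-dimensional truncated subcoalgebras; since $\oblv$ creates filtered colimits, the finite-dimensional truncated coalgebras are compact and form a strong generator, so $\trcoalg$ is locally finitely presentable. I would replace your last paragraph with that argument.
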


\begin{exmp}\label{example: coalgebras product and coproduct}
If $C,D\in \trcoalg$, then their Cartesian product $C\times D$ is the tensor product $C\otimes_{\kk} D$ equipped with the obvious comultiplication. The coCartesian coproduct $C\sqcup D$ is the wedge sum of augmented vector spaces $(C\oplus D)/\kk$ equipped with the obvious comultiplication.
\end{exmp}

\subsection{Primitively generated Hopf algebras}\label{section: primitively generated Hopf algebras}
Let $L$ be a restricted Lie algebra, then the universal enveloping algebra $U^r(L)$ is naturally equipped with the increasing \emph{Lie filtration} $F_\bullet$, which is inductively defined as follows:
\begin{itemize}
\item $F_sU^r(L)=0$ if $s<0$;
\item $F_0U^r(L)=\kk$;
\item $F_{s+1}U^r(L)=F_sU^r(L)+L\cdot F_sU^r(L) \subset U^r(L)$.
\end{itemize}
We denote by $E_0U^r(L)$ the associated graded Hopf algebra. Since $E_0U^r(L)$ is commutative, there is a natural map
\begin{equation}\label{equation: associated graded of universal enveloping}
\Sym^{tr}(L) \to E_0U^r(L).
\end{equation}

\begin{thm}[Poincar\'{e}-Birkhoff-Witt]\label{theorem: poincare-birkhoff-witt}
The homomorphism~\eqref{equation: associated graded of universal enveloping} is an isomorphism of Hopf algebras.
\end{thm}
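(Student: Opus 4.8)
The plan is to prove the Poincaré–Birkhoff–Witt theorem for restricted Lie algebras in essentially the classical way, reducing the statement to the free case and then checking it there by an explicit basis count. First I would observe that the map~\eqref{equation: associated graded of universal enveloping} is always surjective: by construction $F_{s+1}U^r(L)$ is generated over $F_sU^r(L)$ by products $x\cdot u$ with $x\in L$, so the associated graded algebra $E_0U^r(L)$ is generated by the image of $L$ in degree $1$, and since $x^p-\xi(x)=0$ in $U^r(L)$ with $\xi(x)\in F_1$, the $p$-th powers vanish in $E_0$, so the map factors through $\Sym^{tr}(L)$ and is onto. The content is therefore injectivity, equivalently that the two sides have the same dimension in each filtration degree.

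Next I would reduce to the free case. Choose a presentation of $L$ as a coequalizer (reflexive coequalizer, or just a quotient) of free restricted Lie algebras $\free(W_1)\rightrightarrows \free(W_0)\to L$, which exists by Proposition~\ref{proposition:category property of rLie}. Since $U^r$ is a left adjoint it preserves colimits, and both $\Sym^{tr}$ and the formation of the Lie-filtered associated graded are compatible with the relevant colimits and with the filtrations (the Lie filtration is functorial and the quotient map is filtered). Comparing the coequalizer presentations degree by degree, it suffices to know that~\eqref{equation: associated graded of universal enveloping} is an isomorphism when $L=\free(V)$ is free. In that case $U^r\free(V)\cong T(V)$ with its standard filtration, and one must identify $E_0 T(V)$ with $\Sym^{tr}(V)$.

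For the free case I would use the explicit basis of $\free(V)$ coming from Remark~\ref{remark: free lie algebras, fresse}: picking a homogeneous basis of $\oblv\free(V)=\bigoplus_{n\ge 1} L^r_n(V)$ and totally ordering it, one shows that ordered monomials in these basis elements, with each factor repeated fewer than $p$ times, span $T(V)$; this is the restricted version of the usual PBW spanning argument, using $x^p=\xi(x)\in F_{\mathrm{lower}}$ to cut exponents down below $p$ and the commutator relations to reorder. This gives a surjection from (the underlying space of) $\Sym^{tr}(\free(V))$ onto $E_0 T(V)$ refining~\eqref{equation: associated graded of universal enveloping}; injectivity then follows from a dimension count, comparing the Poincaré series of $\Sym^{tr}$ applied to $\bigoplus L^r_n(V)$ with that of $T(V)=\bigoplus_{n\ge 0} V^{\otimes n}$. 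The needed identity of generating functions is the classical ``restricted Witt formula'', equivalent to the cyclotomic identity $\prod_{n\ge 1}\bigl(\tfrac{1-t^{pn}}{1-t^n}\bigr)^{\dim L^r_n} = \tfrac{1}{1-\dim V\, t}$; I would either cite this or derive it from the ordinary (non-restricted) PBW/Witt formula by the substitution that turns $\Sym$ into $\Sym^{tr}$.

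The main obstacle is the free-case spanning argument: making precise that the straightening/rewriting process (reorder a word using brackets, then reduce any exponent $\ge p$ using $x^p=\xi(x)$) terminates and does not increase filtration degree, so that it genuinely computes $E_0$ and not just $U^r$ itself. This is the same bookkeeping that appears in the ordinary PBW proof, with the extra wrinkle that $\xi(x)$ is only semilinear and lands in a strictly lower filtration stage; once one is careful that every rewriting move is filtration-nonincreasing and lowers a suitable well-ordered complexity, the rest is the dimension count, which is purely formal given the cyclotomic identity. Everything else — surjectivity, functoriality of the filtration, reduction along the free presentation — is routine.
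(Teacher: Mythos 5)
The paper offers no argument of its own here: the proof is a citation of \cite{MilnorMoore65}, Proposition~6.12. Within your proposal, the surjectivity argument, the free case (straightening to ordered monomials with exponents $<p$, then the telescoping Witt/cyclotomic identity $\prod_m(1-t^m)^{-\ell_m}=\tfrac{1}{1-dt}$ applied tensor-degree by tensor-degree), and the remark that an algebra isomorphism between Hopf algebras generated by primitives in filtration degree $1$ is automatically a Hopf isomorphism are all sound.

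The genuine gap is the reduction to the free case. Writing $L$ as a coequalizer $\free(W_1)\rightrightarrows\free(W_0)\to L$, the functor $U^r$ does preserve this colimit and the induced map $U^r(\free(W_0))\to U^r(L)$ is filtered and surjective on each $F_s$; but passing to the associated graded does \emph{not} commute with coequalizers, and this is precisely where the content of the theorem sits. For a surjection $L'\twoheadrightarrow L$ with kernel $K$ one only gets a surjection $E_0U^r(L')\twoheadrightarrow E_0U^r(L)$; identifying its kernel with the ideal of $\Sym^{tr}(L')$ generated by $K$ --- equivalently, showing that the associated graded of the two-sided ideal $(K)\subset U^r(L')$ is the ideal generated by the degree-one symbols of $K$ --- is exactly PBW for $L$, so asserting ``compatibility with the filtrations'' at this step is circular. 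A sanity check that something must be missing: your reduction uses no property of the base beyond what holds over an arbitrary commutative ring (free restricted Lie algebras satisfy PBW over any base), yet ordinary PBW is known to fail for non-flat Lie algebras over general rings; the hypothesis that $\kk$ is a field never enters your argument outside the free case, so it cannot be doing the work it must do. The standard repairs work with a basis of $L$ itself rather than with a free resolution: either run your straightening/diamond-lemma argument directly on an ordered basis of an arbitrary $L$ (Jacobson's proof), replacing the dimension count by a confluence check or by exhibiting a $U^r(L)$-module structure on $\Sym^{tr}(L)$; or deduce the restricted PBW theorem from the ordinary one by observing that the elements $x^p-\xi(x)$ are central in $U(L)$ and that $U(L)$ is free over the polynomial subalgebra they generate --- essentially the route taken in the cited source.
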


\begin{proof}
\cite[Proposition~6.12]{MilnorMoore65}.
\end{proof}

%Moreover, since both $U^r(L)$ and $E_0U^r(L)$ are generated by primitive elements $L\subset F_1U^r(L)$, we obtain that $U^r(L)$ is isomorphic to $E_0U^r(L)$ as a coalgebra. Therefore, there exists a natural coalgebra isomorphism:
%\begin{equation}\label{equation: hopf algebra is cofree}
%\Sym^{tr}(L) \xrightarrow{\cong} U^r(L). 
%\end{equation}

%\begin{cor}\label{corollary: primitively generated are cofree}
%Any primitively generated Hopf algebra is a cofree truncated coalgebra. \qed
%\end{cor}

\begin{prop}\label{proposition: truncated and primitively generated}
A Hopf algebra $H$ is primitively generated if and only if $H$ is truncated as a cocommutative coalgebra.
\end{prop}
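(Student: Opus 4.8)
The plan is to prove both implications using the structural results established earlier, with the Poincaré–Birkhoff–Witt theorem (Theorem~\ref{theorem: poincare-birkhoff-witt}) doing the main work in one direction. First, suppose $H$ is primitively generated. By Theorem~\ref{theorem: hopf are equivalent to lie}, $H\cong U^r(L)$ for $L=P(H)\in\rLie$. The Lie filtration $F_\bullet$ on $U^r(L)$ is an exhaustive filtration by subcoalgebras, and by PBW the associated graded coalgebra is $\Sym^{tr}(L)$, which is truncated as a coalgebra. Since the Verschiebung $V$ is a coalgebra homomorphism compatible with filtrations, and $\ker(V)=\ker(\e)$ holds on each associated graded piece $\Sym^{tr}(L)$, I would argue inductively up the filtration that $\ker(V)=\ker(\e)$ on all of $U^r(L)$: given $x\in F_{s+1}U^r(L)$ with $\e(x)=0$, its image in $\Sym^{tr}(L)_{s+1}$ lies in $\ker(V)=\ker(\e)$ there, hence $V(x)\in F_sU^r(L)$; but $V(x)\in\ker(\e)$ as well, so by induction... — actually the cleaner route is to check directly that $V(x)=0$ in the associated graded forces $V(x)\in F_sU^r(L)^{(1)}$, then descend. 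In any case, truncatedness passes through the exhaustive filtered colimit because a coalgebra is truncated iff each finite-dimensional (equivalently, each finitely generated) subcoalgebra is, by the argument in Remark~\ref{remark: algebras with zero frobenius} and the fact that every element lies in a finite-dimensional subcoalgebra (\cite[Theorem~2.2.1]{Sweedler69}).

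For the converse, suppose $H$ is a cocommutative Hopf algebra that is truncated as a coalgebra. I want to show $P(H)$ generates $H$. By Proposition~\ref{proposition: truncated coalgebra is conilpotent}, $H$ is conilpotent as a coaugmented coalgebra, i.e. $H=\colim_n F_nH$ where $F_nH=\ker(\overline{\Delta}^n\colon H\to (H/\kk)^{\otimes(n+1)})$ is the coradical filtration; since $H$ is pointed irreducible (Proposition~\ref{proposition: truncated are pointed and irriducible}), this is the standard setup of \cite[Chapter~6]{MilnorMoore65}. The key classical fact (\cite[Proposition~4.20]{MilnorMoore65}, as flagged in the section introduction) is that over a field, a cocommutative pointed irreducible Hopf algebra is generated by its primitives precisely when a certain obstruction vanishes — and that obstruction is exactly the failure of the Verschiebung to be trivial on the indecomposables, i.e. the presence of nontrivial $p$-th powers in the dual. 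More concretely: let $QH=H^+/(H^+\cdot H^+)$ be the indecomposables; the Frobenius on $H$ induces a map $\xi\colon QH\to QH$ (Verschiebung-dual), and $H$ is primitively generated iff $\xi=0$ on $QH$. I would then translate the hypothesis $\ker(V)=\ker(\e)$ on $H$ into the statement that the $p$-th power map on $H^*$ lands in the unit, which forces $\xi=0$ on indecomposables, completing the argument.

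The main obstacle I expect is the converse direction, specifically pinning down exactly why truncatedness of the coalgebra structure is equivalent to the vanishing of Verschiebung on indecomposables rather than merely on primitives. One has to be careful: truncated means $V$ kills the whole augmentation coideal, which is a priori stronger than $V=0$ on primitives but one needs it on the associated graded / indecomposable level to invoke the Milnor–Moore generation criterion. The bridge is that the comultiplication is cocommutative and $H$ is conilpotent, so $V$ descends to a well-defined map on each layer of the coradical filtration, and a diagram chase (or passing to $H^*$, where everything becomes the statement "the augmentation ideal is spanned by elements whose $p$-th power vanishes, hence the algebra is generated in a way dual to primitive generation") closes the gap. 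I would handle the infinite-dimensional case throughout by reducing to finitely generated sub-Hopf-algebras, which are finite-dimensional by conilpotence plus Theorem~\ref{theorem: poincare-birkhoff-witt} applied in reverse, and dualizing via Remark~\ref{remark: algebras with zero frobenius}.
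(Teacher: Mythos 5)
Your forward direction ($H$ primitively generated $\Rightarrow$ $H$ truncated) has a genuine gap, and you half-acknowledge it yourself. The filtration induction cannot close as stated: from $V(\bar x)=0$ in the associated graded $\Sym^{tr}(L)$ you only learn that $V(x)$ drops into $F_sU^r(L)^{(1)}$, and the inductive hypothesis ("$V$ kills $F_s\cap\ker\e$") then applies to the element $V(x)$ to give $V(V(x))=0$ --- not $V(x)=0$, which is what you need. No amount of descending the filtration fixes this, because at each stage you only control the image of $V(x)$ in the next associated graded piece. The missing idea is that on a cocommutative Hopf algebra the Verschiebung $V\colon H\to H^{(1)}$ is not just a coalgebra map but an \emph{algebra} (hence Hopf algebra) homomorphism. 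Since $V(P(H))=0$ and the augmentation ideal $I$ is generated as an ideal by $P(H)$, multiplicativity gives $V(I)=0$ immediately; this is the paper's one-line argument, and it makes PBW and the Lie filtration unnecessary in this direction. (A minor side remark: your reduction "finitely generated sub-Hopf-algebras are finite-dimensional" is false --- $\kk[x]$ with $x$ primitive is a counterexample; what is true, and what you actually need, is that finitely generated sub\emph{coalgebras} are finite-dimensional.)

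Your converse is essentially the paper's route: conilpotence (Proposition~\ref{proposition: truncated coalgebra is conilpotent}), the coradical/conilpotent filtration, and the linear dual of \cite[Proposition~4.20]{MilnorMoore65}. The one point to be careful about is that Milnor--Moore's criterion is a statement about \emph{graded connected} Hopf algebras, so it should be applied to the associated graded $E_{(\bullet)}H$ of the conilpotent filtration (which inherits truncatedness from $H$), not to $H$ itself; surjectivity of $U^rP(H)\to H$ is then recovered from surjectivity on indecomposables of the associated graded, using that for a primitively generated graded Hopf algebra the composite $P\to Q$ is onto. Your "diagram chase closing the gap" is exactly this step, so once you make it explicit the converse is fine.
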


\begin{proof} 
%Dual to Proposition~4.20 in~\cite{MilnorMoore65}. 
%If $H$ is primitively generated, then $H=U^r(P(H))$ by Theorem~\ref{theorem: hopf are equivalent to lie}. By Corollary~\ref{corollary: primitively generated are cofree}, $U^r(P(H))$ is a truncated coalgebra.
Let $H$ be a primitively generated Hopf algebra, and let $I\subset H$ be the augmentation ideal. We observe that $V(I)=0$ because $I$ is generated by the set of primitive elements $P(H)\subset I$, the Verschiebung operator $V\colon H \to H^{(1)}$ is a Hopf algebra homomorphism, and $V(P(H))=0$. %, we get $V(I)=0$, 
Therefore $H$ is a truncated coalgebra.

Suppose now that $H$ is a truncated coalgebra. It suffices to show that the natural morphism
\begin{equation}\label{equation: proof, equation 1}
U^r(P(H)) \to H 
\end{equation}
is surjective. %By Theorem~\ref{theorem: poincare-birkhoff-witt}, the map~\eqref{equation: proof, equation 1} is an injection, so it suffices to show that~\eqref{equation: proof, equation 1} is surjective. 
For a coaugmented cocommutative coalgebra $C$, one can consider a natural (increasing) \emph{conilpotent} filtration on $C$:
$$
\begin{tikzcd}%[column sep=large]
F_{(n)}C=\mathrm{eq}(C \arrow[shift left=.75ex]{r}{\Delta^{n}}
  \arrow[shift right=.75ex,swap]{r}
&
C^{\otimes n+1}).
\end{tikzcd}
$$
A coalgebra $C$ is conilpotent if and only if the conilpotent filtration $F_{(\bullet)}C$ is exhaustive. Moreover, for a Hopf algebra $H$ the filtration $F_{(\bullet)}H$ is both multiplicative and comultiplicative. Therefore the associated graded Hopf algebra $E_{(\bullet)} H$ is a connected graded Hopf algebra, $P(E_{(\bullet)} H)=P(H)$.

By Proposition~\ref{proposition: truncated coalgebra is conilpotent}, both Hopf algebras $U^rP(H)$ and $H$ are conilpotent, hence the map~\eqref{equation: proof, equation 1} is surjective if and only if the map
$$
E_{(\bullet)}U^rP(H) \to E_{(\bullet)}H
$$
is surjective; or equivalently, if and only if the induced map on the module of indecomposable elements
\begin{equation}\label{equation: proof, equation 2}
Q(E_{(\bullet)}U^rP(H)) \to Q(E_{(\bullet)}H)
\end{equation}
is surjective. 

Since $H$ is truncated as a coalgebra, the Hopf algebra $E_{(\bullet)} H$ is truncated as well. The linear dual of~\cite[Proposition~4.20]{MilnorMoore65} implies that the algebras $E_{(\bullet)} H$, $E_{(\bullet)}U^rP(H)$ are primitively generated. Finally, the commutative diagram
$$
\begin{tikzcd}
P(E_{(\bullet)}U^rP(H)) \arrow[equal]{r} \arrow[two heads]{d}
& P(E_{(\bullet)}H) \arrow[two heads]{d} \\
Q(E_{(\bullet)}U^rP(H)) \arrow{r}
& Q(E_{(\bullet)}H)
\end{tikzcd}
$$
implies that the map~\eqref{equation: proof, equation 2} is surjective.
\end{proof}

Let $\mathsf{C}$ be a 1-category with finite products. Recall that a \emph{group object} of $\mathsf{C}$ is an object $X\in \mathsf{C}$ equipped with a multiplication map $m\colon X\times X \to X$, a unit map $e\colon *\to X$, and with an inverse map $i\colon X\to X$, such that $m$ is associative and unital. We denote by $\Grp(\mathsf{C})$ the category of group objects in $\mathsf{C}$.

\begin{cor}\label{corollary: group objects in trcoalg}
The category $\Grp(\trcoalg)$ is equivalent to the category $\prHopf$ of primitively generated Hopf algebras. \qed
\end{cor}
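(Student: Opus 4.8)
The plan is to deduce Corollary~\ref{corollary: group objects in trcoalg} from Proposition~\ref{proposition: truncated and primitively generated} by unwinding the definition of a group object and matching it against the Hopf algebra axioms. Recall that the category $\Hopf$ of cocommutative Hopf algebras is, essentially by definition, the category $\Grp(\CoAlg)$ of group objects in cocommutative coalgebras: a bialgebra is precisely a monoid object in $\CoAlg$, and the antipode is exactly the inverse map making such a monoid object into a group object (the relevant fact being that for a cocommutative bialgebra, possessing an antipode is a property, not extra structure, and the antipode is automatically a coalgebra map). Since $\trcoalg \subset \CoAlg^{aug}$ is a full subcategory and, by Proposition~\ref{proposition:category property of trcoalg}, it has all finite products (computed as the tensor product over $\kk$, as in Example~\ref{example: coalgebras product and coproduct}, which agree with those of $\CoAlg^{aug}$ by Proposition~\ref{proposition: left and right adjoints, truncated}), the inclusion preserves finite products. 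Hence $\Grp(\trcoalg)$ is the full subcategory of $\Grp(\CoAlg^{aug}) \simeq \Grp(\CoAlg) = \Hopf$ spanned by those Hopf algebras whose underlying coalgebra is truncated.

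First I would make precise that a group object in $\trcoalg$ is the same datum as a Hopf algebra whose underlying coalgebra is truncated. The multiplication $m\colon C \times C \to C$, which under the identification $C\times C = C\otimes_\kk C$ is a coalgebra map $C\otimes C \to C$, together with the unit $\kk \to C$ (the canonical coaugmentation, which is unique by Proposition~\ref{proposition: truncated are pointed and irriducible}), makes $C$ an associative unital algebra for which $\Delta$ and $\e$ are algebra maps; that is exactly the structure of a cocommutative bialgebra on $C$. The inverse map $i\colon C \to C$ is then a coalgebra endomorphism satisfying the antipode identity, so $C$ becomes a cocommutative Hopf algebra. Conversely, any cocommutative Hopf algebra $H$ with truncated underlying coalgebra is a group object in $\trcoalg$: multiplication and antipode are coalgebra maps, multiplication is a map $H\otimes H \to H$, i.e. a map $H\times H\to H$ in $\trcoalg$, and the group axioms are the bialgebra plus antipode axioms. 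Naturality of all these identifications in $C$ is immediate since everything is expressed in terms of the same structure maps.

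Then I would invoke Proposition~\ref{proposition: truncated and primitively generated}, which says that a cocommutative Hopf algebra $H$ has truncated underlying coalgebra if and only if $H$ is primitively generated. Combining this with the previous paragraph, the full subcategory of $\Hopf$ consisting of group objects in $\trcoalg$ is exactly $\prHopf$, which gives the claimed equivalence $\Grp(\trcoalg) \simeq \prHopf$.

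The only genuinely delicate point is the bookkeeping around coaugmentations and antipodes: one must check that the unit map required for a monoid/group object is forced to be the canonical coaugmentation $\kk\to C$ (which is fine, since by Proposition~\ref{proposition: truncated are pointed and irriducible} there is a unique such coalgebra map), and that the antipode data matches the inverse-map data — i.e., that for a cocommutative bialgebra the existence of an antipode is equivalent to the existence of an inverse map in the categorical sense, and that it is automatically a coalgebra morphism. This is classical (it holds for any cocommutative bialgebra whose underlying coalgebra is conilpotent, which is guaranteed here by Proposition~\ref{proposition: truncated coalgebra is conilpotent}), so I expect the main obstacle to be purely expository: phrasing the dictionary cleanly rather than proving anything substantial. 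Since the corollary is stated with a \qed and no proof, I would keep this to a one-line remark that $\Grp(\trcoalg)$ is the full subcategory of $\Hopf = \Grp(\CoAlg)$ on truncated coalgebras, and then cite Proposition~\ref{proposition: truncated and primitively generated}.
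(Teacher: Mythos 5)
Your proposal is correct and matches the paper's (implicit) argument: the corollary is stated with a \qed precisely because it follows from Proposition~\ref{proposition: truncated and primitively generated} together with the identification $\Hopf\simeq\Grp(\CoAlg)$ recorded in the paper's notation section and the fact that finite products in $\trcoalg$ are tensor products (Example~\ref{example: coalgebras product and coproduct}, Proposition~\ref{proposition: left and right adjoints, truncated}). Your extra bookkeeping about the forced coaugmentation and the antipode being property rather than structure is sound but not needed beyond what the paper already takes as standard.
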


Finally, we will describe free group objects in $\Grp(\trcoalg)$.

\begin{dfn}\label{definition: smash product coalgebras}
Let $C,D\in \trcoalg$ be truncated coalgebras. We define the \emph{smash product} $C\wedge D\in \trcoalg$ as the following coequalizer
$$
\begin{tikzcd}%[column sep=large]
C\wedge D = \mathrm{coeq}(C \sqcup D \arrow[shift left=.75ex]{r}{\mathrm{can}}
  \arrow[shift right=.75ex,swap]{r}
&
C \times D),
\end{tikzcd}
$$
where the upper arrow is the canonical map from coproduct to product and the lower arrow is the composition the counit map and the coaugmentation map. In particular, $\oblv(C\wedge D)\cong \oblv(C)\otimes \oblv(D)$
\end{dfn}

\begin{prop}\label{proposition: free group objects}
The forgetful functor $\prHopf \to \trcoalg$ has a left adjoint $$H\colon \trcoalg \to \prHopf.$$ Moreover, as an algebra $H(C), C\in\trcoalg$ is the tensor algebra $T(\oblv(C))$ generated by the quotient vector space $\oblv(C)=C/\kk$.
\end{prop}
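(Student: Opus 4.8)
The plan is to construct the left adjoint $H$ by first building a left adjoint to the forgetful functor $\prHopf \to \trcoalg$ through the chain of adjunctions already assembled in the excerpt, and then to identify the underlying algebra of $H(C)$ explicitly via the Poincaré–Birkhoff–Witt theorem (Theorem~\ref{theorem: poincare-birkhoff-witt}). For the first part, recall that $\prHopf \simeq \rLie$ by Theorem~\ref{theorem: hopf are equivalent to lie}, so it suffices to produce a left adjoint to the composite $\rLie \xrightarrow{U^r} \prHopf \to \trcoalg$, where the second functor is the forgetful functor (well-defined on objects by Proposition~\ref{proposition: truncated and primitively generated}). I would factor this composite as $\rLie \xrightarrow{\triv} \ldots$; more precisely, I claim the forgetful functor $\prHopf \to \trcoalg$ is simply $U^r$ followed by "view the Hopf algebra as a coalgebra", and that this coalgebra-underlying functor $\rLie \to \trcoalg$ is right adjoint to $\free \circ \oblv$, where $\oblv\colon \trcoalg \to \Vect_\kk$ is the functor $C \mapsto C/\kk$ of Proposition~\ref{proposition: cofree truncated coalgebra} and $\free\colon \Vect_\kk \to \rLie$ is the free restricted Lie algebra functor of Example~\ref{example:freelie}. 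Indeed, combining the adjunction $\free \dashv \oblv$ on vector spaces with the fact that $\Hom_{\Vect_\kk}(\oblv(C), \oblv(L)) \cong \Hom_{\trcoalg}(C, \Sym^{tr}\oblv(L))$ and that for $L \in \rLie$ the underlying coalgebra of $U^r(L)$ maps to $\Sym^{tr}(\oblv L)$... the cleanest route is a direct computation of the hom-set.

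The key computation: for $C \in \trcoalg$ and $L \in \rLie$, a map of coalgebras $C \to U^r(L)$ landing in a primitively generated Hopf algebra is determined by a linear map $\oblv(C) = C/\kk \to \oblv(U^r(L))$ compatible with the coalgebra structure, and because $C$ is conilpotent (Proposition~\ref{proposition: truncated coalgebra is conilpotent}) and truncated, such maps correspond to linear maps $C/\kk \to L$ via the primitive filtration argument already used in the proof of Proposition~\ref{proposition: truncated and primitively generated}. So $\Hom_{\trcoalg}(C, U^r L) \cong \Hom_{\Vect_\kk}(C/\kk, \oblv L) \cong \Hom_{\rLie}(\free(C/\kk), L)$. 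Hence the left adjoint $\rLie \to \prHopf$ to $\prHopf \to \trcoalg$ is $C \mapsto U^r \free(C/\kk)$, and transporting along $\prHopf \simeq \rLie$ gives $H(C) = U^r\free(\oblv C)$. Since $\free(V) = P(T(V))$ and $U^r\free(V) \cong T(V)$ (noted right after Definition~\ref{definition:UEA}), we get $H(C) \cong T(\oblv C)$ as an algebra, which is precisely the claimed description.

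The main obstacle is the hom-set identification $\Hom_{\trcoalg}(C, U^r L) \cong \Hom_{\Vect_\kk}(C/\kk, \oblv L)$, i.e.\ showing that a coalgebra map $C \to U^r(L)$ is the same data as a linear map from the augmentation coideal of $C$ to $L$ (and not merely to all of $U^r(L)$). The forward direction—restricting a coalgebra map and checking it lands among primitives—uses that $C$ is conilpotent, so one can argue stage by stage along the conilpotent filtration $F_{(\bullet)}C$: an element $x$ with $\Delta(x) = 1 \otimes x + x \otimes 1$ must map to a primitive of $U^r(L)$, and $P(U^r L) = L$ by Theorem~\ref{theorem: poincare-birkhoff-witt} (the PBW filtration identifies primitives of $U^r L$ with $L$). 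The reverse direction—extending a linear map $C/\kk \to L \hookrightarrow U^r(L)$ to a coalgebra map on all of $C$—is where I expect to spend the most care: one builds the extension inductively over the conilpotent filtration using the comultiplication, exactly as in the construction of the map $U^r P(H) \to H$ in Proposition~\ref{proposition: truncated and primitively generated}, and must check the truncatedness of $C$ is exactly what makes the $p$-operation relations in $U^r(L)$ compatible with the extension. Once the hom-set bijection is established and checked to be natural in both variables, the rest is formal.
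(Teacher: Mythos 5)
Your reduction to the Lie side is legitimate as far as it goes: under the equivalence $\prHopf\simeq\rLie$ the forgetful functor becomes $L\mapsto U^r(L)$ (underlying truncated coalgebra), so it would indeed suffice to establish
$$\Hom_{\trcoalg}(C,U^r L)\cong\Hom_{\Vect_\kk}(C/\kk,\oblv L).$$
The problem is that this bijection \emph{is} the proposition, in disguise: since $\free\circ\oblv\colon\trcoalg\to\rLie$ is already known to be left adjoint to $\Sym^{tr}\circ\oblv$, your claimed adjunction is equivalent (by uniqueness of right adjoints) to a natural isomorphism of coalgebras $U^r(L)\cong\Sym^{tr}(\oblv L)$, i.e.\ to the assertion that $U^r(L)$ is the \emph{cofree} truncated coalgebra on its primitives. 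That is a coalgebra-level PBW statement, strictly stronger than Theorem~\ref{theorem: poincare-birkhoff-witt}, which only identifies the associated graded. Your sketch does not supply it. Concretely: in the ``forward direction'' you cannot obtain a linear map $C/\kk\to L$ by restricting a coalgebra map $f\colon C\to U^r(L)$, because only the \emph{primitives} of $C$ are forced to land in $L=P(U^rL)$; a general element of $C/\kk$ is not primitive and lands in higher filtration, and there is no a priori natural linear retraction $U^r(L)\to L$. In the ``backward direction'', extending $\phi\colon C/\kk\to L$ to a coalgebra map $C\to U^r(L)$ by induction on the conilpotent filtration is exactly the exponential/symmetrization construction, which requires dividing by factorials and fails in characteristic $p$; the only characteristic-free way to produce the extension is to factor through a coalgebra map $C\to T(C/\kk)$ and apply $T(\phi)$ --- but the existence of that coalgebra structure on $T(C/\kk)$ is precisely the content of the proposition you are trying to prove. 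The appeal to ``the argument of Proposition~\ref{proposition: truncated and primitively generated}'' does not help: there the map $U^rP(H)\to H$ is an \emph{algebra} map coming from the universal property of $U^r$, not a coalgebra map built out of a universal property of $C$.

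For contrast, the paper's proof stays entirely inside $\trcoalg$ and sidesteps PBW for the coalgebra structure: it equips $T(\oblv C)$ with the bialgebra structure of the free monoid on the coalgebra $C$ (using that the Cartesian product in $\trcoalg$ is $\otimes_\kk$, so the free monoid has underlying algebra $T(C/\kk)$ and receives a tautological coalgebra map from $C$), observes that the result is truncated and hence primitively generated by Proposition~\ref{proposition: truncated and primitively generated}, and then obtains the antipode from pointedness and uniqueness of the grouplike element via Proposition~\ref{proposition: truncated are pointed and irriducible} and Sweedler. If you want to salvage your route, you would need an independent proof of the natural coalgebra isomorphism $U^r(L)\cong\Sym^{tr}(\oblv L)$; absent that, the argument is circular.
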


\begin{proof}
The tensor algebra $T(\oblv(C))$ has a unique bialgebra structure 
%$$\Delta \colon T(\ker(\e)) \to T(\ker(\e))\otimes T(\ker(\e)) $$
%$T(\ker(\e))$ 
such that the map $C \to T(\oblv(C))$ is a coalgebra homomorphism. We denote the resulting bialgebra by $H(C)$. More precisely, as a coaugmented coalgebra $H(C)$ is isomorphic to
$$\coprod_{n=0}^{\infty} C^{\wedge n} = \coprod_{n=0}^{\infty} C^{\otimes n} / \sim. $$ In particular, $H(C)$ is truncated, and so $H(C)$ is the free bialgebra generated by $C$. Therefore it is enough to show that $H(C)$ admits an antipode (which is unique if it exists).

By Proposition~\ref{proposition: truncated are pointed and irriducible}, the bialgebra $H(C)$ is pointed and has a unique grouplike element. By~\cite[Proposition~9.2.5]{Sweedler69}, this implies that $H(C)$ is a Hopf algebra.
\end{proof}

\section{Koszul duality}\label{section: koszul duality}

In this section we state and prove our main results summarized in Theorems~\ref{theorem: intro, A},~\ref{theorem: intro, B}, and~\ref{theorem: intro, C} from the introduction. 

Section~\ref{section: kan loop functor} is devoted to the proof of Theorem~\ref{theorem: intro, A}. By~\cite{Priddy70long}, the functor $\barW U^r$ is the composite of two functors
$$\srLie \xrightarrow{U^r} \sprHopf  \xrightarrow{\barW} \sotrcoalg, $$
and by~\cite[Theorem~6.11]{MilnorMoore65}, the leftmost arrow here is an equivalence. Inspired by~\cite{Stevenson12}, we present the functor $\barW$ as the composite of simpler functors:
$$\overline{W} \colon \sprHopf \xrightarrow{N} \sostrcoalg \xhookrightarrow{\iota} \sstrcoalg \xrightarrow{T} \strcoalg, $$
see Definition~\ref{definition: twisted bar construction} and Remark~\ref{remark: barW is Eilenberg-MacLane}. Here $N$ is the \emph{nerve functor}~\eqref{equation: nerve functor}, $\iota$ is the embedding~\eqref{equation: horizontally reduced} of the full subcategory of \emph{horizontally reduced} bisimplicial coalgebras into all, and $T$ is the \emph{Artin-Mazur codiagonal}~\eqref{equation: artin-mazur codiagonal}.
Therefore the left adjoint $G$ to $\barW$ is the composite
$$G\colon \strcoalg \xrightarrow{\Dec} \sstrcoalg \xrightarrow{R} \sostrcoalg \xrightarrow{GZ} \sprHopf,$$
where $\Dec$ is the \emph{total d\'{e}calage}~\eqref{equation: total decalage}, $R$ is given in~\eqref{equation: left adjoint to iota}, and $GZ$ is the functor from~Proposition~\ref{proposition: GZ fundamental group}. 

By~\cite{OR20}, the unit map $C_\bullet \to T\Dec(C_\bullet)$ is a weak equivalence for any $C_\bullet\in \strcoalg$ (Theorem~\ref{theorem: artin-mazur codiagonal}). We examine the adjunction $GZ\dashv N$ in Lemma~\ref{lemma: GZ of a suspension} and Corollary~\ref{corollary: decalage is EM}, and we show that the unit map for this adjunction is a weak equivalence for certain simplicial coalgebras. These observations and formal properties of functors $\Dec$ and $T$ suffice to prove the main theorem of this section, Theorem~\ref{theorem: kan loop}.

The main results of Section~\ref{section: model structures} are Theorems~\ref{theorem:modelsrlie},~\ref{theorem:modelsotrcoalg}, and~\ref{theorem:model structure srlie, barW-equivalence}, where we construct simplicial model structures on the categories $\srLie$ and $\sotrcoalg$ equipped with various notions of weak equivalences. The model structure of Theorem~\ref{theorem:modelsrlie} (resp. of Theorem~\ref{theorem:modelsotrcoalg}) is a right (resp. a left) transferred model structure from $\sVect_{\kk}$; for the proof, we apply~\cite[Theorem~11.3.2]{Hirschhorn03} (resp.~\cite[Theorem~2.2.1]{HKRS17}).

In Definition~\ref{definition: barW-equivalence}, we introduce the notion of an \emph{$\kk$-equivalence}. Following \cite{Priddy70long}, we demonstrate in Corollary~\ref{corollary: barW Ur preserves weak equivalences} and Proposition~\ref{proposition: chain coalgebra and chain complex} that the class of $\kk$-equivalences is well-behaved. Using~\cite[Proposition~A.2.6.15]{HTT} we construct a simplicial model category $\srLie_\xi$ as a Bousfield localization of $\srLie$. At the end of the section, we prove Theorem~\ref{theorem: coalgebras and lie algebras} and Proposition~\ref{proposition: F-complete is an accessible localization}, which constitute together Theorem~\ref{theorem: intro, C} from the introduction.

\subsection{Kan loop group functor}\label{section: kan loop functor}

\begin{dfn}\label{definition: simplicial coalgebras}
Let $\strcoalg=\Fun(\Delta^{op}, \trcoalg)$ denote the category of \emph{simplicial objects} in the category $\trcoalg$ of non-zero truncated coalgebras over the field~$\kk$. A simplicial coalgebra $C_\bullet\in \strcoalg$ is \emph{reduced} if $C_0\cong \kk$. Let $\sotrcoalg$ denote the full subcategory of $\strcoalg$ spanned by reduced objects.
\end{dfn}

\begin{dfn}\label{definition: nerve}
	Let $H\in \prHopf$ be a primitively generated Hopf algebra and let $\e\colon H\to \kk$ be the counit. The \emph{nerve} of $H$  is a reduced simplicial coalgebra $N_\bullet H$ defined as follows:
$$N_0H=\kk,\;\; N_{q}H=H^{\otimes q},\;\; q> 0; $$
with face and degeneracy maps given by
$$d_0(h_1\otimes\ldots \otimes h_q)=\e(h_1)h_{2}\otimes \ldots \otimes h_q, \;\; q>0; $$
$$d_{i}(h_1\otimes\ldots\otimes h_q)=h_1\otimes \ldots \otimes h_{i-1} \otimes h_{i}h_{i+1}\otimes \ldots \otimes h_q, \;\; 1 \leq i\leq q-1,q>0; $$
$$d_q(h_1\otimes\ldots \otimes h_q)=\e(h_q)h_{1}\otimes \ldots \otimes h_{q-1}, \;\; q>0; $$
$$s_0(h_1\otimes\ldots\otimes h_q) = 1 \otimes h_1\otimes \ldots \otimes h_q,\;\; q\geq 0;$$
$$s_{i+1}(h_1\otimes\ldots\otimes h_q) = h_1 \otimes \ldots \otimes h_{i}\otimes 1 \otimes h_{i+1}\otimes \ldots \otimes h_q,\;\; i,q\geq 0. $$
\end{dfn}

By Proposition~\ref{proposition: truncated and primitively generated}, all coalgebras $N_qH,q\geq 0$ are truncated, and so we obtain the \emph{nerve functor} 
\begin{equation}\label{equation: nerve functor}
N\colon \prHopf \to \sotrcoalg.
\end{equation}
As usual, we observe that the simplicial set $N_\bullet H$ is $2$-coskeletal, see e.g.~\cite[Proposition~2.2.3]{Illusie72}. 

\begin{prop}\label{proposition: GZ fundamental group}
The nerve functor $N$ has a left adjoint 
$$GZ\colon \sotrcoalg \to \prHopf. $$
Namely, $GZ(C_\bullet), C_\bullet\in \sotrcoalg$ is the quotient of free Hopf algebra $H(C_1)$ of Proposition~\ref{proposition: free group objects} by the two-sided Hopf ideal $I\subset H(C_1)$ generated by elements
$$r(c)=d_1(c)-\sum_{i}d_0(c^{(1)}_i)d_2(c^{(2)}_i), $$
where $c\in C_2$, and $\Delta(c)=\sum_{i}c^{(1)}_i \otimes c^{(2)}_i$.
\end{prop}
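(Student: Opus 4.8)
The plan is to exhibit $GZ$ explicitly by the formula in the statement and then verify the adjunction $GZ \dashv N$ directly on hom-sets. First I would note that by Proposition~\ref{proposition: free group objects} we already have the free functor $H\colon \trcoalg \to \prHopf$ left adjoint to the forgetful functor, so for any $C_\bullet \in \sotrcoalg$ and $K \in \prHopf$ a map of simplicial coalgebras $C_\bullet \to N_\bullet K$ is determined by its effect in low degrees: since $N_\bullet K$ is $2$-coskeletal (indeed determined by $N_0 K = \kk$, $N_1 K = K$, $N_2 K = K^{\otimes 2}$ together with the simplicial identities), and since $C_0 \cong \kk$ is forced to go to $N_0 K = \kk$, such a map amounts to a coalgebra map $f_1 \colon C_1 \to K$ together with a coalgebra map $f_2 \colon C_2 \to K^{\otimes 2}$ compatible with all faces and degeneracies. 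The compatibility with $s_0 \colon C_1 \to C_2$ and with the two "outer" degeneracies, and with $d_0, d_1, d_2 \colon N_2 K \to N_1 K$, pins down $f_2$ in terms of $f_1$ (it must be $(d_0 \times d_2)$-compatible, i.e. $f_2 = (f_1 d_0) \cdot (f_1 d_2)$ after using the comultiplication of $C_2$), and the one remaining genuine condition is exactly that $f_1 d_1(c) = \sum_i f_1 d_0(c_i^{(1)}) \cdot f_1 d_2(c_i^{(2)})$ in $K$ for all $c \in C_2$.

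The key steps, in order, are therefore: (1) use $2$-coskeletality of $N_\bullet(-)$ to reduce a simplicial map $C_\bullet \to N_\bullet K$ to the datum of a single coalgebra map $C_1 \to K$ subject to the degree-$2$ relation above; (2) by the free-Hopf-algebra adjunction of Proposition~\ref{proposition: free group objects}, reinterpret a coalgebra map $C_1 \to K$ as a Hopf algebra map $H(C_1) \to K$; (3) observe that the degree-$2$ relation says precisely that this Hopf algebra map kills the elements $r(c) = d_1(c) - \sum_i d_0(c_i^{(1)}) d_2(c_i^{(2)})$, hence factors uniquely through the quotient $H(C_1)/I$ by the two-sided Hopf ideal they generate; (4) check that $I$ is genuinely a Hopf ideal — it is a two-sided ideal by construction, and one must verify $\Delta(I) \subseteq I \otimes H(C_1) + H(C_1) \otimes I$ and $\e(I) = 0$, which follows from the fact that each $r(c)$ is a difference of two "grouplike-like" products built from coalgebra maps out of $C_2$, so that $r$ is a coalgebra map into the primitives-plus-relations and comultiplication of $H(C_1)$ respects the relation; (5) conclude $\Hom_{\prHopf}(GZ(C_\bullet), K) \cong \Hom_{\sotrcoalg}(C_\bullet, N_\bullet K)$ naturally in both variables, i.e. $GZ \dashv N$.

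I expect the main obstacle to be step~(4): checking carefully that $I$ is a \emph{Hopf} ideal (in particular a coideal) rather than merely a two-sided algebra ideal, so that $H(C_1)/I$ is again an object of $\prHopf$. One has to compute $\Delta(r(c))$ using that $\Delta \colon C_2 \to C_2 \otimes C_2$ is cocommutative and that $d_0, d_1, d_2$ are coalgebra maps, and then massage the resulting expression — which a priori lives in $H(C_1)^{\otimes 2}$ — into the coideal form modulo terms already in $I \otimes H(C_1) + H(C_1) \otimes I$; the bookkeeping with the Sweedler notation $\Delta(c) = \sum_i c_i^{(1)} \otimes c_i^{(2)}$ applied twice is where the only real work lies. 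A secondary, more routine point is verifying that the candidate unit and counit maps one extracts from the hom-set bijection are natural and satisfy the triangle identities, but this is formal once the bijection is established. The remaining verifications — that $N_\bullet K$ is indeed $2$-coskeletal (cited from \cite[Proposition~2.2.3]{Illusie72}), and that the reduced simplicial coalgebra structure is the correct one — are quotations or immediate.
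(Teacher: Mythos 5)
Your proposal is correct and follows essentially the same route as the paper's proof: identify $\Hom_{\sotrcoalg}(C_\bullet,N_\bullet K)$ with the coalgebra maps $f\colon C_1\to K$ satisfying the degree-$2$ relation (using that the nerve is $2$-coskeletal and that $K^{\otimes 2}$ is the product in $\CoAlg$), then invoke the free Hopf algebra adjunction of Proposition~\ref{proposition: free group objects} and pass to the quotient by the Hopf ideal generated by the $r(c)$. Your step~(4) is the only point the paper leaves implicit, and it goes through as you expect because both $d_1$ and the convolution product $c\mapsto\sum_i d_0(c^{(1)}_i)d_2(c^{(2)}_i)$ are coalgebra maps $C_2\to H(C_1)$, so $\Delta(r(c))=\sum r(c^{(1)})\otimes d_1(c^{(2)})+\sum g(c^{(1)})\otimes r(c^{(2)})$ lies in $I\otimes H(C_1)+H(C_1)\otimes I$.
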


\begin{proof}
The straightforward computation shows that the set of simplicial coalgebra maps $\Hom_{\sotrcoalg}(C_\bullet,N_\bullet H)$ is a subset of $\Hom_{\trcoalg}(C_1,H)$, and a map $f\in \Hom_{\trcoalg}(C_1,H)$ belongs to $\Hom_{\sotrcoalg}(C_\bullet,N_\bullet H)$ if and only if 
$$f(d_1(c))=\sum_if(d_0(c^{(1)}_i))f(d_2(c^{(2)}_i)) $$
for each $c\in C_2$, $\Delta(c)=\sum_{i}c^{(1)}_i\otimes c^{(2)}_i$. Now, the proposition follows by Proposition~\ref{proposition: free group objects}, which guarantees the existence of a free Hopf algebra.
\end{proof}

\begin{rmk}\label{remark: GZ set of relations}
Note that $r(c_1+c_2)=r(c_1)+r(c_2),c_1,c_2\in C_2$, and $r(c)=0$ if $c\in C_2$ is degenerate, i.e. $c=s_i(c'), c'\in C_1, i=0,1$.
\end{rmk}

\begin{lmm}\label{lemma: GZ indecomposables}
The module of indecomposable elements $Q(GZ(C_\bullet))$ for the Hopf algebra $GZ(C_\bullet)$, $C_\bullet \in \strcoalg$ is naturally isomorphic to $\pi_1(C_\bullet)$.
\end{lmm}

\begin{rmk}\label{remark: GZ is a fundamental group}
For the category $\mathsf{Grp}$ of groups the nerve construction is right adjoint to the fundamental group functor. Likewise, we regard $GZ$ as a fundamental group construction.
\end{rmk}

\begin{exmp}\label{example: GZ of suspension}
Let $C\in \trcoalg$ be a truncated coalgebra. We will abbreviate $\oblv(C)\in \Vect_\kk$ by $\widetilde{C}$, $C=\widetilde{C}\oplus \kk$. %the kernel $\ker(\e)$ of the counit $\e\colon C\to \kk.$ 
Recall from~\cite[Section~III.5]{GoerssJardine} that the \emph{Kan suspension} $\Sigma_\bullet C$ of $C$ is a reduced simplicial coalgebra defined as follows:
$$\Sigma_0C=\kk,\;\; \Sigma_{q}C=\coprod_{i=1}^{q} C\cong \kk\oplus \bigoplus_{i=1}^q \widetilde{C},\;\; q> 0; $$
with face and degeneracy maps constant on the first summand and given by the following formulas on the second summand:
$$d_0(c_1,\ldots , c_q)= d_q(c_1,\ldots, c_q) =0, \;\; q>0; $$
$$d_{i}(c_1,\ldots, c_q)=(c_1,\ldots, c_{i-1}, c_{i}+c_{i+1}, \ldots, c_q), \;\; 1 \leq i\leq q-1,q>0; $$
$$s_0(c_1,\ldots, c_q) = (0, c_1, \ldots, c_q),\;\; q\geq 0;$$
$$s_{i+1}(c_1,\ldots, c_q) = (c_1, \ldots, c_{i}, 0, c_{i+1}, \ldots, c_q),\;\; i,q\geq 0. $$
Notice that the homotopy groups $\pi_*(\Sigma_\bullet C)$ of the underlying simplicial vector space are equal to:
$$\pi_*(\Sigma_\bullet C)=\left\{
\begin{array}{ll}
\kk & \mbox{if $\ast=0$,}\\
\widetilde{C} & \mbox{if $\ast=1$,}\\
0 & \mbox{if $\ast>1$.}
\end{array}
\right.
$$
By Remark~\ref{remark: GZ set of relations}, we obtain that $GZ(\Sigma_\bullet C)\cong H(C)$, the free Hopf algebra generated by $C$.
\end{exmp}

\begin{dfn}\label{definition: weak equivalence of simplicial coalgebras}
A map $f\colon C_\bullet \to D_\bullet $ of simplicial coalgebras $C_\bullet,D_\bullet \in \scoalg$ is a \emph{weak equivalence} if $f$ is a weak equivalence of the underlying simplicial vector spaces, i.e. $\pi_*(f)$ is an isomorphism.
\end{dfn}

\begin{lmm}\label{lemma: GZ of a suspension}
Let $C\in \trcoalg$, then the natural map
$$\Sigma_\bullet C \to N_\bullet GZ(\Sigma_\bullet C) $$
is a weak equivalence of simplicial coalgebras.
\end{lmm}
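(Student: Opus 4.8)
The plan is to compute both sides explicitly using the description of $GZ(\Sigma_\bullet C)$ from Example~\ref{example: GZ of suspension}. Recall that $GZ(\Sigma_\bullet C)\cong H(C)$, the free Hopf algebra on $C$, which as a coalgebra is $\coprod_{n\geq 0}C^{\otimes n}$ and as an algebra is the tensor algebra $T(\widetilde{C})$ on $\widetilde{C}=\coker(\kk\to C)$. Therefore $N_\bullet GZ(\Sigma_\bullet C)=N_\bullet H(C)$ has $N_q=H(C)^{\otimes q}$ in simplicial degree $q$, and I must show that the canonical map from $\Sigma_\bullet C$ — whose homotopy is $\kk$ in degree $0$ and $\widetilde{C}$ in degree $1$ — induces an isomorphism on $\pi_*$.

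First I would identify the homotopy groups of $N_\bullet H(C)$. Passing to the normalized chain complex and using that $N_\bullet H$ is $2$-coskeletal, the only possibly nonzero homotopy groups sit in degrees $0$, $1$, and $2$; degree $0$ gives $\kk$ (the coalgebra is reduced), and degree $1$ gives the module of primitives, which by Remark~\ref{remark: GZ indecomposables} is $\pi_1=Q(H(C))$. Since $H(C)=T(\widetilde{C})$ as an algebra, the indecomposables $Q(T(\widetilde{C}))\cong \widetilde{C}$, so $\pi_1(N_\bullet H(C))\cong \widetilde{C}$ and the map from $\pi_1(\Sigma_\bullet C)=\widetilde{C}$ is the identity. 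The remaining point — and I expect this to be the main obstacle — is to show that $\pi_2(N_\bullet H(C))=0$, i.e. that the nerve of this Hopf algebra has no homotopy in degree $2$. This is the algebraic analog of the classical statement that the classifying space of a free group (equivalently, the nerve of a free group) is a $K(\pi,1)$; here the role of "free group" is played by the free group object $H(C)=GZ(\Sigma_\bullet C)$ in $\trcoalg$, which by Proposition~\ref{proposition: free group objects} is genuinely free.

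To handle the vanishing of $\pi_2$, I would argue as follows. The normalized complex of $N_\bullet H$ in low degrees reads
$$
H^{\otimes 3} \xrightarrow{\ \partial\ } H^{\otimes 2} \xrightarrow{\ \partial\ } H,
$$
and $\pi_2$ is computed from the subcomplex of $NN_\bullet H$; since $N_\bullet H$ is $2$-coskeletal, it suffices to check that every $2$-cycle is a boundary. Here I would use the freeness of $H=H(C)=T(\widetilde{C})$: a $2$-cocycle (in the bar-complex sense, with coalgebra coefficients) on a free algebra is automatically trivializable. Concretely, a normalized $2$-cycle is an element $z\in H^{\otimes 2}$ with the faces $d_0 z = d_1 z = d_2 z = 0$ (after normalization), and one uses the universal property of $T(\widetilde{C})$ to build a primitive-valued $1$-chain bounding it — this is the coalgebraic version of the fact that the bar construction of a free monoid is contractible above degree $1$. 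Alternatively, and perhaps more cleanly, I would invoke the Artin--Mazur codiagonal picture from the start of Section~\ref{section: koszul duality}: $\barW H(C)=T\iota N_\bullet H(C)$ computes the classifying coalgebra of the free group object, and one checks directly on the Kan suspension model that $G(\barW H(C))$ recovers $H(C)$ up to weak equivalence, so the unit map is an equivalence. Either way, once $\pi_2(N_\bullet GZ(\Sigma_\bullet C))=0$ is established, combining it with the computations of $\pi_0$ and $\pi_1$ above shows that $\Sigma_\bullet C \to N_\bullet GZ(\Sigma_\bullet C)$ induces an isomorphism on all homotopy groups, which is exactly the asserted weak equivalence.
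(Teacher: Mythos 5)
Your core strategy is the paper's: identify $GZ(\Sigma_\bullet C)$ with the free Hopf algebra $H(C)$ via Example~\ref{example: GZ of suspension}, note that the underlying simplicial vector space of $N_\bullet H$ depends only on the underlying augmented algebra $H(C)\cong T(\widetilde{C})$ (Proposition~\ref{proposition: free group objects}), and use freeness to show the nerve has no homotopy above degree $1$. However, the step where you invoke $2$-coskeletality to confine the possible homotopy to degrees $\leq 2$ does not work. The nerve $N_\bullet H$ is $2$-coskeletal only as a simplicial object of $\trcoalg$, where coskeleta are computed with limits of coalgebras (products are tensor products); nothing follows about coskeletality of the underlying simplicial vector space, and indeed nothing can: $\pi_i(N_\bullet H)\cong \Tor^H_i(\kk,\kk)$ (Remark~\ref{remark: spectral sequence for barW} applied to a constant simplicial Hopf algebra), which is nonzero in arbitrarily high degrees for, say, a truncated polynomial Hopf algebra. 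Conversely, if the underlying simplicial vector space really were $2$-coskeletal, then by the discussion of $\cosk_{n+1}$ in Section~\ref{section: postnikov system} all $\pi_i$ with $i\geq 2$ would vanish automatically, so $\pi_2$ could not be ``the main obstacle.'' Either way the reduction is broken, and in any case you must kill $\pi_i$ for \emph{every} $i\geq 2$, not just $i=2$.

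Fortunately your own fallback does exactly this and is the paper's actual argument: since $\pi_i(N_\bullet H)\cong \Tor^H_i(\kk,\kk)$ and $H(C)\cong T(\widetilde{C})$ as an augmented algebra, the length-one free resolution $0\to T(\widetilde{C})\otimes \widetilde{C}\to T(\widetilde{C})\to \kk\to 0$ gives $\Tor_i=0$ for all $i\geq 2$ and $\Tor_1\cong \widetilde{C}$ in one stroke (this is the precise form of ``the bar construction of a free algebra is contractible above degree $1$''). Combined with your correct computations of $\pi_0$ and $\pi_1$, this finishes the proof; I would simply delete the coskeletality reduction. I would also avoid your proposed Artin--Mazur alternative: Theorem~\ref{theorem: kan loop} (that the unit of $G\dashv \barW$ is a weak equivalence) is itself deduced from the present lemma via Corollary~\ref{corollary: decalage is EM}, so routing the argument through $\barW H(C)$ is circular.
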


\begin{proof}
First, note that the underlying simplicial vector space of $N_\bullet H, H\in \prHopf$ only depends on the underlying augmented algebra structure of $H$. Second, by Example~\ref{example: GZ of suspension}, the Hopf algebra $GZ(\Sigma_\bullet C)$ is the free Hopf algebra $H(C)$, and by Proposition~\ref{proposition: free group objects}, there is an algebra isomorphism $H(C)\cong T(\oblv(C)).$ Finally, for the free associative algebra $T(\oblv(C))$, we have $$\pi_i(N_\bullet T(\oblv(C)))=0, \; i>1,$$ and a natural isomorphism $\pi_1(N_\bullet T(\oblv(C)))\cong \oblv(C)$. %This implies the lemma.
\end{proof}

\begin{dfn}\label{definition: bisimplicial coalgebras}
Let $\sstrcoalg$ denote the category of \emph{bisimplicial} truncated coalgebras over~$\kk$, i.e. $\sstrcoalg=\Fun(\Delta^{op}\times\Delta^{op}, \trcoalg)$. A bisimplicial map $f\colon C_{\bullet,\bullet} \to D_{\bullet,\bullet}$ is a \emph{vertical} weak equivalence if the maps $$f_{n,\bullet}\colon C_{n,\bullet} \to D_{n,\bullet} $$ are weak equivalences for each $n\geq 0$. Similarly, $f$ is a \emph{horizontal} weak equivalence if the maps $$f_{\bullet, m}\colon C_{\bullet, m} \to D_{\bullet, m} $$ are weak equivalences for each $m\geq 0$. Finally, $f$ is a \emph{levelwise} weak equivalence if $f$ is either a vertical or a horizontal weak equivalence.
\end{dfn}

Let $\sigma\colon \Delta \times \Delta \to \Delta$ denote the ordinal sum functor on the category $\Delta$, described on objects via $\sigma([n],[m])=[n+1+m]$. The induced functor 
\begin{equation}\label{equation: total decalage}
\Dec = \sigma^*\colon \strcoalg \to \sstrcoalg 
\end{equation}
is called \emph{total d\'{e}calage}, see~\cite[Chapitre~VI.1.5]{Illusie72}. In other words, the coalgebra of $(n,m)$-bisimplices of $\Dec(C_\bullet)$ is $\Dec(C_\bullet)_{n,m}=C_{n+1+m}$ and the degeneracy and face maps in $\Dec(C_\bullet)_{\bullet,\bullet}$ are recombinations of those for $C_\bullet\in \strcoalg$.

We denote by $\Dec_m\colon \strcoalg \to \strcoalg$ a functor given by $$C_\bullet \mapsto \Dec(C)_{\bullet, m}.$$ Note that
\begin{equation}\label{equation: partial decalage, 1}
\Dec_{m+1}(C_\bullet)=\Dec_0(\Dec_m(C_\bullet)).
\end{equation}
Moreover, the simplicial coalgebra $\Dec_0(C_\bullet)$ is an augmented simplicial object via the map $d_0\colon \Dec_0(C_\bullet) \to C_0$, e.g. see~\cite[Section~2]{Stevenson12}. In particular, $d_0$ is a (strong) deformation retract in simplicial coalgebras.

Let $C_\bullet\in \strcoalg$ be a simplicial coalgebra and let $\sk_1 C_\bullet \in \strcoalg$ be the $1$-skeleton of $C_\bullet$. The natural map $\sk_1 C_\bullet \to C_\bullet$ induces the map 
$$\Dec_0(\sk_1 C_\bullet) \to \Dec_0(C_\bullet) $$
such that $\Dec_0(\sk_1 C_\bullet)_0 =\Dec_0(C_\bullet)_0 = C_1$ and both $\Dec_0(\sk_1 C_\bullet)$ and $\Dec_0(C_\bullet)$ are deformation retracts of $C_0$. 

\begin{lmm}\label{lemma: zero decalage of 1-skeleta}
The reduced simplicial coalgebra $\Dec_0(\sk_1 C_\bullet)/C_1$ is isomorphic to the Kan suspension $\Sigma_\bullet (C_1/s_0C_0)$ of the quotient coalgebra $C_1/s_0 C_0$.
\end{lmm}

\begin{proof}
Straightforward computation.
\end{proof}

\begin{lmm}\label{lemma: decalage and skeleton}
The map $$f\colon \Dec_0(\sk_1 C_\bullet)/C_1 \to \Dec_0(C_\bullet)/C_1 $$ is a weak equivalence of reduced simplicial coalgebras. 
\end{lmm}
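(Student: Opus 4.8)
The plan is to reduce the statement to a claim about homotopy groups, since weak equivalences in $\strcoalg$ are detected on underlying simplicial vector spaces. First I would unwind the definitions of both sides. The target $\Dec_0(C_\bullet)/C_1$ has, in each simplicial degree $q$, the coalgebra $C_{q+1}$ (the image under $d_0\colon\Dec_0(C_\bullet)_0 = C_1$ being collapsed), while the source $\Dec_0(\sk_1 C_\bullet)/C_1$ is, by the computation recalled just before the lemma, isomorphic to the Kan suspension $\Sigma_\bullet(C_1/s_0 C_0)$. By Example~\ref{example: GZ of suspension}, the homotopy groups of $\Sigma_\bullet(C_1/s_0 C_0)$ are concentrated in degrees $0$ and $1$, with $\pi_1 \cong C_1/s_0 C_0$ (as a vector space, forgetting the coalgebra structure). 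So the content of the lemma is that $\pi_*(\Dec_0(C_\bullet)/C_1)$ is also concentrated in degrees $0,1$, with $\pi_1$ naturally identified with $C_1/s_0 C_0$, and that the map $f$ realizes this identification.

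For the main computation I would pass to normalized chain complexes via Dold--Kan and exploit the augmentation $d_0\colon \Dec_0(C_\bullet)\to C_0$, which (as recalled) is a deformation retract; the same holds for $\Dec_0(\sk_1 C_\bullet)$. Thus both $\Dec_0(\sk_1 C_\bullet)$ and $\Dec_0(C_\bullet)$ are weakly equivalent to the constant object $C_0$, and taking the quotient by $C_1$ is, after passing to cokernels of $s_0\colon C_0\to C_1$ in degree $0$, essentially a cofiber-type construction. Concretely, there is a short exact sequence of simplicial vector spaces $0\to C_1 \to \Dec_0(C_\bullet)\to \Dec_0(C_\bullet)/C_1\to 0$ (here $C_1$ sits as the constant simplicial subspace, and quotients of truncated coalgebras are computed on underlying vector spaces by Proposition~\ref{proposition:category property of trcoalg}), and likewise for $\sk_1$. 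The long exact sequences in homotopy, together with $\pi_*(\Dec_0(-))\cong\pi_*(\mathrm{const}\,C_0)$ concentrated in degree $0$, force $\pi_q(\Dec_0(C_\bullet)/C_1)=0$ for $q\geq 2$ and give $\pi_1(\Dec_0(C_\bullet)/C_1)\cong\operatorname{coker}(C_0 = \pi_0(C_1)\to \pi_0(\Dec_0(C_\bullet))) $ — but since $\Dec_0(C_\bullet)\simeq C_0$ and the map $C_1\to\Dec_0(C_\bullet)$ hits $\pi_0$ via $s_0$ followed by the retraction, one computes $\pi_1(\Dec_0(C_\bullet)/C_1)\cong C_1/s_0 C_0$, matching the source. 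The naturality of the boundary maps in these long exact sequences, applied to the map $\sk_1 C_\bullet\to C_\bullet$, shows that $f_*$ is exactly the identity on $C_1/s_0 C_0$ in degree $1$ (and trivially an isomorphism in degrees $0$ and $\geq 2$, where both sides vanish or equal $\kk$).

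The step I expect to be the main obstacle is verifying carefully that the connecting homomorphisms behave as claimed — i.e. that $\pi_0(C_1)\to\pi_0(\Dec_0(C_\bullet))\cong C_0$ really is (up to the deformation retraction) induced by $s_0$, so that its cokernel is $C_1/s_0 C_0$ rather than something larger, and symmetrically for $\sk_1 C_\bullet$. This is a chain-level bookkeeping exercise with the explicit face maps of $\Dec_0$ (recall $\Dec_0(C_\bullet)_q = C_{q+1}$ with faces $d_1,\dots,d_{q+1}$ of $C_\bullet$ and the augmentation $d_0$), and the key point is that $\sk_1 C_\bullet$ and $C_\bullet$ agree in degrees $0$ and $1$, so $\Dec_0(\sk_1 C_\bullet)$ and $\Dec_0(C_\bullet)$ agree through degree $0$ and the comparison map is the identity there; combined with both being retracts onto the same $C_0$, this pins down $f_*$ on all homotopy groups. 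Once this is in place the lemma follows immediately.
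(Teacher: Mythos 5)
Your proposal is correct and takes essentially the same approach as the paper: a short exact sequence of simplicial vector spaces with constant $C_1$ as the subobject, the long exact sequence in homotopy groups, and the fact that $\Dec_0$ retracts onto the constant object $C_0$, all applied simultaneously to $C_\bullet$ and to $\sk_1 C_\bullet$ (which agree in degrees $0,1$, making the comparison map the identity where it matters). Two small slips worth noting: the paper works with the counit kernel $\widetilde{C}_\bullet$ so that the short exact sequence is exact on the nose (the coalgebra quotient carries an extra $\kk$ summand in the underlying vector space that you glossed over), and in the long exact sequence $\pi_1(\Dec_0(\widetilde C_\bullet)/\widetilde C_1)$ is the \emph{kernel} of the map $\pi_0(\widetilde C_1)=\widetilde C_1\xrightarrow{d_0}\widetilde C_0=\pi_0(\Dec_0\widetilde C_\bullet)$, not its cokernel; it equals $\widetilde C_1/s_0\widetilde C_0$ only after invoking the $s_0$-splitting $\widetilde C_1 = s_0\widetilde C_0\oplus\ker d_0$. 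Neither slip affects the final answer, and your overall argument is the paper's.
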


\begin{proof}
We denote by $\widetilde{C}_\bullet\subset C_\bullet$ the kernel of the counit $\e\colon C_\bullet \to \kk$. Then the underlying simplicial vector space of the quotient coalgebra $\Dec_0(C_\bullet)/C_1$ is a direct sum $(\Dec_0(\widetilde{C}_\bullet)/\widetilde{C}_1) \oplus \kk$. We compute the homotopy groups $\pi_*(\Dec_0(\widetilde{C}_\bullet)/\widetilde{C}_1)$ via the long exact sequence:
$$\ldots \to \pi_{*+1}(\Dec_0(\widetilde{C}_\bullet)/\widetilde{C}_1) \to \pi_*(\widetilde{C}_1) \to \pi_*(\Dec_0(\widetilde{C}_\bullet)) \to\pi_*(\Dec_0(\widetilde{C}_\bullet)/\widetilde{C}_1) \to \ldots$$
Since $\Dec_0(\widetilde{C}_\bullet)$ is a deformation retract of the constant simplicial vector space $\widetilde{C}_0$, we have $\pi_0(\Dec_0(\widetilde{C}_\bullet))=\widetilde{C}_0$, $\pi_0(\widetilde{C}_1)=\widetilde{C}_1$, and $\pi_*(\Dec_0(\widetilde{C}_\bullet))=\pi_*(\widetilde{C}_1)=0$ if $\ast>0$. Therefore, $$\pi_1(\Dec_0(\widetilde{C}_\bullet)/\widetilde{C}_1)=\ker{d_0}=\coker{s_0}=\widetilde{C}_1/s_0\widetilde{C}_0$$ and $\pi_i(\Dec_0(\widetilde{C}_\bullet)/\widetilde{C}_1)=0$ if $i\neq 1$.
This implies that $f$ is a weak equivalence.
\end{proof}

\begin{prop}\label{proposition: GZ of decalage}
The induced map $$GZ(f)\colon GZ(\Dec_0(\sk_1 C_\bullet)/C_1) \to GZ(\Dec_0(C_\bullet)/C_1)$$ 
is an isomorphism of Hopf algebras.
\end{prop}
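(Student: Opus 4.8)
\emph{Proof strategy.} The plan is to identify both Hopf algebras explicitly and then recognize $GZ(f)$ as the resulting comparison map. Since $\Dec_0(\sk_1 C_\bullet)/C_1\cong\Sigma_\bullet(C_1/s_0 C_0)$ (the computation preceding the statement), Example~\ref{example: GZ of suspension} identifies the source $GZ(\Dec_0(\sk_1 C_\bullet)/C_1)$ with the free Hopf algebra $H(C_1/s_0 C_0)$, whose underlying algebra is the tensor algebra $T(\oblv(C_1/s_0 C_0))$ by Proposition~\ref{proposition: free group objects}. For the target I would apply Proposition~\ref{proposition: GZ fundamental group}: writing $E_\bullet:=\Dec_0(C_\bullet)/C_1$, one has $E_1\cong C_2/s_1 C_1$ and $E_2\cong C_3/s_2 s_1 C_1$, with the face maps $E_2\to E_1$ induced from the face maps of $C_\bullet$ (shifted according to the décalage conventions), so that $GZ(E_\bullet)$ is the quotient of $H(E_1)$ by the two-sided Hopf ideal generated by the relations $r(\bar c)$, $c\in C_3$. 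Functoriality of $GZ$, together with the freeness of $GZ(\Sigma_\bullet(-))$, shows that $GZ(f)$ is the algebra map determined on generators by $f_1$; in particular its image is the sub-Hopf-algebra of $GZ(E_\bullet)$ generated by $\{\overline{s_0 a}:a\in C_1\}$.

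The heart of the argument is a single family of relations. Because $s_0\colon C_2\to C_3$ is a degeneracy of $C_\bullet$ but \emph{not} of $\Dec_0(C_\bullet)$, each $s_0 x$ with $x\in C_2$ produces a non-trivial relation; unwinding $r(\overline{s_0 x})$ by means of the simplicial identities for $d_i s_0$ and the reduced diagonal $\bar\Delta_{C_2}(x)=\sum x'\otimes x''$ yields
\[
\bar x \;=\; \overline{s_0 d_1 x}\;-\;\overline{s_0 d_2 x}\;-\;\sum\overline{x'}\cdot\overline{s_0 d_2 x''}\qquad\text{in } GZ(E_\bullet).
\]
Here $d_1 x$, $d_2 x$ and $d_2 x''$ lie in $C_1$, so the terms $\overline{s_0(\cdots)}$ all lie in $\im(GZ(f))$. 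Surjectivity of $GZ(f)$ then follows by induction on the conilpotent filtration degree of $x$ in $C_2$ (which is exhaustive by Proposition~\ref{proposition: truncated coalgebra is conilpotent}, the reduced diagonal strictly lowering it): for $x$ primitive the sum is empty and $\bar x\in\im(GZ(f))$; in general each $\overline{x'}$ has strictly smaller degree, hence lies in $\im(GZ(f))$ by induction, and therefore so does $\bar x$. Since the elements $\bar x$, $x\in C_2$, generate $GZ(E_\bullet)$ as an algebra by Proposition~\ref{proposition: free group objects}, the map $GZ(f)$ is surjective.

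The remaining and more delicate point is injectivity, which I expect to be the main obstacle. The plan is to construct a one-sided inverse. By the adjunction $GZ\dashv N$ of Proposition~\ref{proposition: GZ fundamental group}, a Hopf-algebra map $GZ(E_\bullet)\to H(C_1/s_0 C_0)$ is the same datum as a map of reduced simplicial coalgebras $\sigma\colon E_\bullet\to N_\bullet H(C_1/s_0 C_0)$, i.e.\ a coalgebra map $\psi\colon E_1\to H(C_1/s_0 C_0)$ obeying the cocycle identity over $E_2$; and $GZ(\sigma)$ retracts $GZ(f)$ precisely when $\sigma\circ f$ is the unit of $GZ\dashv N$ at $\Sigma_\bullet(C_1/s_0 C_0)$ (equivalently, $\psi\circ f_1$ is the canonical coalgebra map $C_1/s_0 C_0\to H(C_1/s_0 C_0)$). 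I would define $\psi$ using the right-hand side of the displayed relation — reading $\overline{s_0 a}$ as the image of $a\in C_1$ and recursing on the conilpotent filtration for the terms $\overline{x'}$ — and then verify that this is a well-defined coalgebra map and satisfies the cocycle identity; the latter check, which reduces to systematic bookkeeping with the (dé)calage and simplicial identities on $C_3$, is where the real work lies. Given such a $\sigma$, the surjection $GZ(f)$ becomes a split monomorphism, hence an isomorphism. As an alternative to producing $\sigma$ directly, one can instead argue via Poincaré–Birkhoff–Witt (Theorem~\ref{theorem: poincare-birkhoff-witt}): $GZ(f)$ is automatically filtered for the Lie filtrations, its associated graded is $\Sym^{tr}$ applied to the induced map of restricted Lie algebras $P(GZ(f))$, and it then suffices to see that $P(GZ(f))$ is injective, after which exhaustiveness and boundedness of the Lie filtrations forces $GZ(f)$ to be injective. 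Either way the proposition follows, and it is used in this form in the proof of Theorem~\ref{theorem: kan loop}.
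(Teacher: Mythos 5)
Your surjectivity argument is sound and genuinely different from the paper's. You chase the relation $r$ evaluated on the extra degeneracy of $x\in C_2$ inside $\Dec_0(C_\bullet)$ and induct on the conilpotent filtration (which is legitimate by Proposition~\ref{proposition: truncated coalgebra is conilpotent}); the paper instead deduces surjectivity abstractly from the fact that $Q(GZ(f))$ is an isomorphism, via Remark~\ref{remark: GZ indecomposables} and Lemma~\ref{lemma: decalage and skeleton}. Both routes work; yours is more explicit, modulo index bookkeeping (in the paper's conventions the surviving degeneracy defining $GZ(f)$ is $s_1$ and the quotient is by $s_0C_1$, so the roles of $s_0$ and $s_1$ in your displayed relation are mirrored, but this is cosmetic).

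The gap is in injectivity. You correctly identify that the proof should produce a one-sided inverse, but you do not construct it: your prescription for $\psi$ is recursive, defined on non-primitive classes only by appeal to the very relation whose consequences you are trying to control, and you explicitly defer the verification that it is a well-defined coalgebra map satisfying the cocycle identity over $E_2$. That verification is not routine bookkeeping once the definition is recursive --- it is where the entire content of the proposition sits, and a recursion on the conilpotent filtration of $C_2$ does not by itself guarantee compatibility with the relations coming from $C_3$. The paper resolves this with a closed, non-recursive formula: $d'=\nabla\circ(S\otimes\id)\circ(d_0\otimes d_1)\circ\Delta\colon H(C_2)\to H(C_1)$, where $S$ is the antipode of the free Hopf algebra and the two face maps are fed through the anti-automorphism of Proposition~\ref{proposition: free group objects}. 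With this formula the identities $d'(s_0c)=\e(c)$ and $d'(r(c))=0$ for $c\in C_3$ reduce to a finite computation with simplicial identities, and $d\circ s_1=\id$ exhibits the retraction. Your fallback via Poincar\'e--Birkhoff--Witt does not close the gap either: reducing injectivity of $GZ(f)$ to injectivity of $P(GZ(f))$ is formally correct, but identifying the primitives of the target (a quotient of a free Hopf algebra by a Hopf ideal generated by the elements $r(c)$) is exactly as hard as the original problem, and you offer no independent handle on it. To complete the proof you need to write down an explicit candidate for the inverse --- the antipode-twisted formula above, or an equivalently concrete one --- and carry out the two verifications.
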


\begin{proof}
By Lemma~\ref{lemma: GZ indecomposables} and Lemma~\ref{lemma: decalage and skeleton}, the induced map 
$$QGZ(f)\colon QGZ(\Dec_0(\sk_1 C_\bullet)/C_1) \to QGZ(\Dec_0(C_\bullet)/C_1) $$
of the modules of indecomposable elements is an isomorphism. Therefore $GZ(f)$ is surjective. We show that $GZ(f)$ is injective as well.

By Lemma~\ref{lemma: zero decalage of 1-skeleta} and Example~\ref{example: GZ of suspension}, the left hand side $GZ(\Dec_0(\sk_1 C_\bullet)/C_1)$ is the free Hopf algebra $H(C_1/s_0C_0)$ generated by the quotient coalgebra $C_1/s_0C_0$. Whereas, the right hand side $GZ(\Dec_0(C_\bullet)/C_1)$ is the quotient of the free Hopf algebra $H(C_2/s_0 C_1)$ subject to relations $$r(c)=d_1(c)-\sum_{i}d_0(c^{(1)}_i)d_2(c^{(2)}_i), $$
where $c\in C_3/s^2_0 C_1$, and $\Delta(c)=\sum_{i}c^{(1)}_i \otimes c^{(2)}_i$. Finally, the Hopf algebra homomorphism $GZ(f)$ maps a generator $c\in C_1/s_0 C_0$ to the generator $s_1(c)\in C_2/s_0 C_1$. We will construct a left inverse $$d\colon GZ(\Dec_0(C_\bullet)/C_1) \to GZ(\Dec_0(\sk_1 C_\bullet)/C_1)$$ to the map $GZ(f)$.

Note that a tensor algebra $T(V), V\in \Vect_\kk$ has a canonical anti-automorphism $T(V) \xrightarrow{\cong} T(V)^{op} $ given on monomials by $$v_1 v_2\cdot\ldots \cdot v_n \mapsto v_nv_{n-1}\cdot\ldots \cdot v_1.$$ Since as an algebra a free Hopf algebra $H(C), C\in \trcoalg$ is a tensor algebra $T(\oblv(C))$, there exists the similar anti-automorphism of Hopf algebras: 
\begin{equation}\label{equation: free hopf algebra op = id}
H(C) \xrightarrow{\cong} H(C)^{op}.
\end{equation}

Next, the face operators $d_0,d_1\colon C_2 \to C_1$ together with the map~\eqref{equation: free hopf algebra op = id} produce maps
$$d_0\colon H(C_2) \to H(C_1) \cong H(C_1)^{op}, $$
$$d_1\colon H(C_2) \to H(C_1) $$
of Hopf algebras. We define a map $d'\colon H(C_2) \to H(C_1)$ as the following composite:
\begin{align*}
d'\colon H(C_2) \xrightarrow{\Delta} H(C_2)\otimes H(C_2) &\xrightarrow{d_0 \otimes d_1} H(C_1)^{op} \otimes H(C_1) \\
&\xrightarrow{S\otimes \id} H(C_1)\otimes H(C_1) \\
&\xrightarrow{\nabla} H(C_1).
\end{align*}
Here $S\colon H(C_1)^{op} \to H(C_1)$ is the antipode map for the free Hopf algebra $H(C_2)$, and $\nabla\colon H(C_1)\otimes H(C_1) \to H(C_1)$ is the multiplication map.

A straightforward computation with simplicial relations shows that
$$d'(s_0(c)) = \e(c), \;\; c\in C_1 \;\; \text{and} \;\; d'(r(c)) = 0, \;\;c\in C_3. $$
Therefore the map $d'\colon H(C_2) \to H(C_1)$ factors through a unique map
$$d\colon H(C_2/s_0 C_1)/I \to H(C_1/s_0 C_0),  $$
where $I \subset H(C_2/s_0 C_1)$ is the two-sided ideal generated by relations $r(c)=0, c\in C_3$. We show that $d$ is a left inverse for the map $GZ(f)=s_1$, i.e $$ds_1(x)=x, \; x\in H(C_1/s_0 C_0).$$ We can assume that $x\in H(C_1/s_0 C_0)$ is a generator, i.e $x\in C_1/s_0 C_0$. Let $\Delta(x)=\sum_i x^{(1)}_i\otimes x^{(2)}_i$, then
\begin{align*}
ds_1(x)&=\sum_i S(d_0s_1(x^{(1)}_i))d_1s_1(x^{(2)}_i)\\
&=\sum_i S(s_0d_1(x^{(1)}_i))(x^{(2)}_i)\\
&=\sum_i \e(x^{(1)}_i)(x^{(2)}_i) = x. \qedhere
\end{align*}
\end{proof}

\begin{cor}\label{corollary: decalage is EM} Let $C_\bullet\in \strcoalg$, then the induced map
$$\Dec_0(C_\bullet)/C_1 \to NGZ(\Dec_0(C_\bullet)/C_1) $$
is a weak equivalence.
\end{cor}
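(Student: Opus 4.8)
The plan is to obtain this corollary as a formal consequence of the three preceding results, using the naturality of the unit $\eta\colon\id\Rightarrow NGZ$ of the adjunction $GZ\dashv N$. Applying the functor $\Dec_0(-)/C_1$ to the skeletal inclusion $\sk_1 C_\bullet\hookrightarrow C_\bullet$ produces exactly the map $f$ of Lemma~\ref{lemma: decalage and skeleton}, so naturality of $\eta$ yields a commutative square
\[
\begin{tikzcd}
\Dec_0(\sk_1 C_\bullet)/C_1 \arrow{r}{f} \arrow{d}{\eta}
& \Dec_0(C_\bullet)/C_1 \arrow{d}{\eta} \\
NGZ(\Dec_0(\sk_1 C_\bullet)/C_1) \arrow{r}{NGZ(f)}
& NGZ(\Dec_0(C_\bullet)/C_1),
\end{tikzcd}
\]
whose right-hand vertical edge is the map appearing in the statement. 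Since weak equivalences in $\strcoalg$ are detected on underlying simplicial vector spaces, the class of weak equivalences satisfies two-out-of-three, so it suffices to prove that the top, bottom, and left-hand edges are weak equivalences.

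The top edge $f$ is a weak equivalence by Lemma~\ref{lemma: decalage and skeleton}. For the bottom edge, Proposition~\ref{proposition: GZ of decalage} says that $GZ(f)$ is an \emph{isomorphism} of Hopf algebras, hence $NGZ(f)$ is an isomorphism of simplicial coalgebras and in particular a weak equivalence. For the left-hand edge, recall from the discussion just before Lemma~\ref{lemma: decalage and skeleton} that there is a natural isomorphism $\Dec_0(\sk_1 C_\bullet)/C_1\cong\Sigma_\bullet(C_1/s_0 C_0)$ identifying this simplicial coalgebra with the Kan suspension of the truncated coalgebra $C_1/s_0 C_0$; under this identification the unit map becomes $\Sigma_\bullet(C_1/s_0 C_0)\to N_\bullet GZ(\Sigma_\bullet(C_1/s_0 C_0))$, which is a weak equivalence by Lemma~\ref{lemma: GZ of a suspension}. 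Two-out-of-three applied to the square then shows that $\Dec_0(C_\bullet)/C_1\to NGZ(\Dec_0(C_\bullet)/C_1)$ is a weak equivalence.

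No genuinely difficult step arises: all the substantive content — the homotopy computation for $\Dec_0$, the algebraic comparison $GZ(f)$, and the suspension computation — has already been carried out in Lemmas~\ref{lemma: GZ of a suspension} and~\ref{lemma: decalage and skeleton} and Proposition~\ref{proposition: GZ of decalage}, so the corollary is purely their formal combination. The only points requiring care are bookkeeping: that $\Dec_0(-)/C_1$ applied to the skeletal inclusion really is the map $f$ (immediate from the definition of $f$), and that the isomorphism $\Dec_0(\sk_1 C_\bullet)/C_1\cong\Sigma_\bullet(C_1/s_0 C_0)$ intertwines the respective units of $GZ\dashv N$ (which follows from naturality of $\eta$ together with the naturality of that isomorphism in $C_\bullet$). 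This establishes that the unit of $GZ\dashv N$ is a weak equivalence on simplicial coalgebras of the form $\Dec_0(C_\bullet)/C_1$, which is one of the ingredients in the proof of Theorem~\ref{theorem: kan loop}.
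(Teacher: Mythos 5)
Your proof is correct and is essentially identical to the paper's: both form the naturality square for the unit of $GZ\dashv N$ along the map $f$ of Lemma~\ref{lemma: decalage and skeleton}, invoke Lemma~\ref{lemma: GZ of a suspension} for the unit on the Kan suspension $\Dec_0(\sk_1 C_\bullet)/C_1$, Proposition~\ref{proposition: GZ of decalage} for $NGZ(f)$ being an isomorphism, and conclude by two-out-of-three. The only difference is cosmetic (your square is the transpose of the paper's).
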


\begin{proof}
In the commutative diagram
$$
\begin{tikzcd}
\Dec_0(\sk_1 C_\bullet)/C_1) \arrow{d} \arrow{r}
&NGZ(\Dec_0(\sk_1 C_\bullet)/C_1) \arrow{d} \\
\Dec_0(C_\bullet)/C_1 \arrow{r}
& NGZ(\Dec_0(C_\bullet)/C_1),
\end{tikzcd}
$$
the top arrow is a weak equivalence by Lemma~\ref{lemma: GZ of a suspension}, the left vertical arrow is a weak equivalence by Lemma~\ref{lemma: decalage and skeleton}, and the right vertical arrow is an isomorphism by Proposition~\ref{proposition: GZ of decalage}.
\end{proof}

Since the category $\trcoalg$ is complete, the total d\'{e}calage functor $\Dec$ has a right adjoint
\begin{equation}\label{equation: artin-mazur codiagonal}
T\colon \sstrcoalg \to \strcoalg, 
\end{equation}
which is called the \emph{Artin-Mazur codiagonal}, see~\cite{Stevenson12}.

\begin{thm}\label{theorem: artin-mazur codiagonal}
%The functor $\Dec$ admits the right adjoint
%$$T\colon \sstrcoalg \to \strcoalg.$$
The Artin-Mazur codiagonal functor $T$ satisfies following properties:
\begin{enumerate}
\item the unit map $C_\bullet \to T\Dec(C_\bullet)$ is a weak equivalence for any $C_\bullet\in \strcoalg$.
\item there is a natural weak equivalence $d\to T$, where $d\colon \sstrcoalg \to \strcoalg$ is the diagonal functor, i.e. $d(C_{\bullet,\bullet})_n=C_{n,n}$, $C_{\bullet,\bullet}\in \sstrcoalg$.
\item $T$ maps levelwise weak equivalences of bisimplicial coalgebras to weak equivalences of simplicial coalgebras.
\end{enumerate}
\end{thm}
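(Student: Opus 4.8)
The plan is to deduce all three assertions from the corresponding classical facts about bisimplicial sets by reducing to underlying (bi)simplicial vector spaces, where a map is a weak equivalence iff it is an isomorphism on homotopy groups (equivalently, on the homology of the normalized complex). Throughout, I would use that the diagonal $d$ and the total d\'ecalage $\Dec=\sigma^*$ are restriction functors along $\Delta\to\Delta\times\Delta$ and $\sigma$, hence computed levelwise and compatible with the forgetful functor to (bi)simplicial vector spaces. The codiagonal $T=\sigma_*$, being a right Kan extension, is subtler: $T(C_{\bullet,\bullet})_n$ is a limit formed in $\trcoalg$, i.e.\ in $\CoAlg^{aug}$ rather than in $\Vect_{\kk}$. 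So the first step is to identify $T(C_{\bullet,\bullet})$ explicitly through the Artin--Mazur zigzag (a finite iterated pullback, by the cofinality argument of~\cite{Illusie72,Stevenson12}) and to verify, as in~\cite{OR20}, that this particular connected limit is created by the forgetful functor $\trcoalg\to\Vect_{\kk}$ --- for instance by rewriting it as an equalizer that the forgetful functor respects, or by reducing to finite-dimensional subcoalgebras and dualizing to truncated algebras. With that in hand, $T$ computes the set-/vector-space codiagonal underneath, and the rest is formal.

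For~(2), I would write down the natural transformation $d\to T$ by the same universal formula in the simplicial structure maps that defines it for bisimplicial sets, and check it is a weak equivalence on underlying simplicial vector spaces; there it follows from the classical comparison between the diagonal and the $\barW$-construction for bisimplicial sets (Eilenberg--Zilber). For~(3), given~(2) it suffices to show the diagonal $d$ carries vertical (or horizontal) weak equivalences to weak equivalences, which for bisimplicial vector spaces is the standard double-complex spectral sequence argument applied to the complex obtained by normalizing in both directions; the natural weak equivalence $d\to T$ then transports this to $T$. For~(1), I would follow~\cite{Stevenson12}: the rows and columns of $\Dec(C_\bullet)$ --- the partial d\'ecalages $\Dec_m(C_\bullet)$ and their transposes --- are contractible, deformation-retracting onto constant objects, so the iterated pullback computing $T\Dec(C_\bullet)_n$ is a pullback along deformation retracts and collapses, inductively, to a natural weak equivalence $C_\bullet\xrightarrow{\sim}T\Dec(C_\bullet)$; one then checks this equivalence is the unit. (Alternatively, one can combine~(2) with the classical fact that the diagonal of the total d\'ecalage --- the edgewise subdivision $[n]\mapsto C_{2n+1}$ --- is naturally weakly equivalent to $C_\bullet$.)

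\textbf{Main obstacle.} Every step except the behaviour of the codiagonal $T$ under the forgetful functor $\trcoalg\to\Vect_{\kk}$ is a formal transfer of statements about (bi)simplicial sets or vector spaces. That compatibility is the real content: because $\trcoalg$ computes limits in $\CoAlg^{aug}$ --- where products are tensor products, not direct sums --- the underlying vector space of $T(C_{\bullet,\bullet})$ is a priori only a subspace of the corresponding vector-space limit, and making the identification precise for the connected limits appearing in the Artin--Mazur codiagonal is exactly where the analysis of~\cite{OR20} is needed.
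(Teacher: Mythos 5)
There is a genuine gap at the foundation of your argument. You correctly identify the behaviour of $T$ under the forgetful functor as the crux, but the resolution you propose --- showing that the connected limits computing $T(C_{\bullet,\bullet})_n$ are created by $\oblv\colon\trcoalg\to\Vect_{\kk}$, so that ``$T$ computes the vector-space codiagonal underneath'' --- is not available, because the statement is false. Limits in $\trcoalg$ are computed in $\CoAlg^{aug}$ (Proposition~\ref{proposition: left and right adjoints, truncated}), where the product of coalgebras is their \emph{tensor} product and the equalizer of two maps is the largest subcoalgebra on which they agree; consequently the underlying vector space of the iterated fiber product defining $T(C_{\bullet,\bullet})_n$ is a subcoalgebra of $\bigotimes_{i} C_{i,n-i}$, not a subspace of $\bigoplus_i C_{i,n-i}$, and it does not map isomorphically (or at all, naturally) onto the vector-space codiagonal. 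Remark~\ref{remark: barW is Eilenberg-MacLane} makes this concrete: $T\iota N(H_\bullet)$ is Priddy's twisted bar construction, whose coalgebra of $n$-simplices is $H_{n-1}\otimes\cdots\otimes H_0$ --- visibly not the vector-space codiagonal of the underlying bisimplicial vector space of $N(H_\bullet)$, which sits inside $\bigoplus_i H_{n-i}^{\otimes i}$. Since your proofs of~(2) and~(3) consist of transferring the Eilenberg--Zilber comparison from bisimplicial vector spaces along this identification, they do not go through; your first route to~(1) (collapsing the iterated pullback along deformation retracts) presupposes the same control of coalgebra pullbacks on underlying vector spaces, and your alternative route to~(1) inherits the gap from~(2).

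The paper's proof avoids computing $\oblv\circ T$ altogether. For~(1) it invokes \cite[Corollary~2.3]{OR20}: the retraction of the unit $C_\bullet\to T\Dec(C_\bullet)$ and the simplicial homotopy exhibiting it as a deformation retract are built from natural simplicial operators and the adjunction, hence exist verbatim in $\strcoalg$, and a deformation retract is carried by the simplicial forgetful functor to a simplicial homotopy equivalence of simplicial vector spaces. For~(2) it cites Stevenson's construction of $d\to T$ (which, as a cone into a limit, exists in any complete category, as you note) together with his proof that this map is a weak equivalence in the algebraic setting; (3) then follows by two-out-of-three exactly as in your last step, because $d$ --- unlike $T$ --- is a restriction functor and genuinely commutes with $\oblv$, hence preserves levelwise weak equivalences by the standard bisimplicial vector space argument. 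If you want to give a self-contained proof of~(2), the argument has to be run on the coalgebra-level object $T(C_{\bullet,\bullet})$ itself (e.g.\ via the skeletal filtration or the spectral sequence of Remark~\ref{remark: spectral sequence for barW}), not by pretending $T$ is computed underneath in $\Vect_{\kk}$.
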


\begin{proof}
By~\cite[Corollary~2.3]{OR20}, the unit map $C_\bullet \to T\Dec(C_\bullet)$ is a deformation retract, and so it is a weak equivalence.

The natural weak equivalence $d \to T $ of simplicial coalgebras was constructed in the proof of~\cite[Theorem~1.1]{Stevenson12}. Finally, $T$ preserves levelwise weak equivalence because $d$ does so.
\end{proof}

Let us denote by $\sprHopf$ the category of simplicial objects in the category $\prHopf$ of primitively generated Hopf algebras. Furthermore, let 
\begin{equation}\label{equation: horizontally reduced}
\sostrcoalg \subset \sstrcoalg
\end{equation}
be the full subcategory of bisimplicial coalgebras spanned by \emph{horizontally} reduced ones, i.e. $C_{\bullet,\bullet}\in \sstrcoalg$ lies in $\sostrcoalg$ if and only if $C_{0,m}=\kk$ for all $m\geq 0$. Finally, we extend the nerve functor~\eqref{equation: nerve functor} on simplicial Hopf algebras as follows
\begin{align*}
N\colon \sprHopf &\to \sostrcoalg, \\
N(H_\bullet)_{n, m} &= N_n(H_m).
\end{align*} 

\begin{dfn}\label{definition: twisted bar construction}
The \emph{twisted bar construction} $\overline{W}\colon \sprHopf \to \strcoalg$ is given by the following composite:
$$\overline{W} \colon \sprHopf \xrightarrow{N} \sostrcoalg \xhookrightarrow{\iota} \sstrcoalg \xrightarrow{T} \strcoalg. $$
\end{dfn}

Let $\sAlg^{aug}$ be the category of simplicial (augmented) associative algebras over a field $\kk$. Similar to Definition~\ref{definition: twisted bar construction}, we define the twisted bar construction for augmented associative algebras
\begin{equation}\label{equation: twisted bar associative algebras}
\overline{W}\colon \sAlg^{aug} \to \sVect_{\kk}.
\end{equation}
Note that the following diagram
$$
\begin{tikzcd}
	\sprHopf \arrow{d}{\oblv} \arrow{r}{\overline{W}}
	&\strcoalg \arrow{d}{\oblv} \\
	\sAlg^{aug} \arrow{r}{\overline{W}}
	& \sVect_{\kk}
\end{tikzcd}
$$
commutes.

\begin{rmk}\label{remark: barW is Eilenberg-MacLane}
By~\cite{Duskin75} or~\cite[Lemma~5.2]{Stevenson12}, the definition of the twisted bar construction $\barW$ given here coincides with the one given in~\cite[Section~2.3]{Priddy70long} (see also~\cite[p.12-05]{Moore54}).
\end{rmk}

\begin{prop}\label{proposition: twisted bar preserves weak equivalences}
The twisted bar construction $\barW\colon \sAlg^{aug} \to \sVect_{\kk}$ (resp. $\overline{W} \colon \sprHopf \to \strcoalg$) preserves weak equivalences. 
\end{prop}
\begin{proof}
Let $f\colon A_\bullet \to A'_\bullet$ be a weak equivalence of simplicial augmented associative algebras. Then $N(f)$ is a vertical weak equivalence of bisimplicial vector spaces, and so is $\barW(f)=TN(f)$ by Theorem~\ref{theorem: artin-mazur codiagonal} (see also~\cite[Theorem~1.1]{Stevenson12}.
\end{proof}

\iffalse
\begin{rmk}\label{remark: spectral sequence for barW}
Using part~(2) of Theorem~\ref{theorem: artin-mazur codiagonal} and the spectral sequence of a bisimplicial set one can get a strongly convergent spectral sequence
$$E^2_{s,t} = \pi_t(\Tor^{H_\bullet}_s(\kk,\kk)) \Rightarrow \pi_{s+t}(dN_\bullet H_\bullet) \cong \pi_{s+t}(\barW H_\bullet),$$
where $H_\bullet\in \sprHopf$, see~\cite{Quillen66} or~\cite[Section~IV.2.2]{GoerssJardine}.
\end{rmk}
\fi

\begin{dfn}\label{definition: indecomposables}
Let $Q\colon \sAlg^{aug} \to \sVect_{\kk}$ denote the functor of \emph{indecomposable elements}, i.e. $Q(A_\bullet)=I_\bullet/I^2_\bullet$, where $I_\bullet \subset A_\bullet$ is the augmentation ideal.
\end{dfn}

\begin{prop}\label{proposition: twisted bar and indecomposables}
There is a natural transformation
$$\eta_{A_\bullet}\colon \barW(A_\bullet)/\kk \to \Sigma_{\bullet} Q(A_\bullet),\;\; A_\bullet\in \sAlg^{\aug}, $$
where $\kk \subset \barW(A_\bullet)$ is a constant simplicial vector space spanned by the unit elements in $A_\bullet$ and $\Sigma_{\bullet} Q(A_\bullet) \in \sVect_{\kk}$ is the Kan suspension of the simplicial vector space $Q(A_\bullet)$. Moreover, $\eta_{A_\bullet}$ is a weak equivalence if $A_\bullet\in \sAlg^{\aug}$ is a degreewise free associative algebra.
\end{prop}
 
\begin{proof}
There is a natural map of vector spaces
$$A/\kk \cong I  \to I/I^2 =Q(A), \;\; A\in \sAlg^{aug},$$
where $I\subset A$ is the augmentation ideal. This map induces the following map of simplicial vector spaces
\begin{equation}\label{equation: indecomposables coincide, eq1}
\barW_\bullet(A_\bullet)/\kk \to \Sigma_\bullet Q(A_\bullet).
\end{equation}

We show now that the map~\eqref{equation: indecomposables coincide, eq1} is a weak equivalence if $A_t\cong T(V_t)$ is a tensor algebra, $V_t \in \Vect_\kk$, $t\geq 0$. By part~(2) of Theorem~\ref{theorem: artin-mazur codiagonal} and the spectral sequence of a bisimplicial set, we have a strongly convergent spectral sequence
$$E^1_{s,t} = \Tor^{A_t}_s(\kk,\kk) \Rightarrow \pi_{s+t}(dN_\bullet A_\bullet) \cong \pi_{s+t}(\barW A_\bullet),$$
where $A_\bullet\in \sAlg^{aug}$, see e.g.~\cite{Quillen66} or~\cite[Section~IV.2.2]{GoerssJardine}.
%We have the spectral sequence of Remark~\ref{remark: spectral sequence for barW}:
%$$E^2_{s,t}=\Tor^{U^r(L_t)}_s(\kk,\kk)\Rightarrow \pi_{s+t}(\barW U^r(L_\bullet)). $$
%We observe that $E^2_{s,t}=0$ for all $s>1$. Indeed, $U^r(L_t) =U^r\free(V_t)\cong T(V_t)$ is a free associative algebra, so 
%$$\Tor_s^{T(V_t)}(\kk,\kk)=V_t, $$
%if $s=1$, and vanishes otherwise. This implies the proposition. %Therefore, $$E^2_{s,t}=\pi_t(\Tor^{T(V_\bullet)}_s(\kk,\kk))=0$$
%if $s>1$. The proposition now follows.
Since $A_t\cong T(V_t)$ is a free associative algebra, we have $E^1_{s,t}=0$ for $s>1$ and $E^1_{1,t}\cong V_t$, see e.g.~\cite[Example~2.2(1)]{Priddy70} or~\cite[\S1.2, 1.5]{PP05}. This implies the proposition.
\end{proof}

Note that the fully faithful embedding $\iota\colon \sostrcoalg \hookrightarrow \sstrcoalg$ has a left adjoint $R\colon \sstrcoalg \to \sostrcoalg$ given by
\begin{equation}\label{equation: left adjoint to iota}
R(C_{\bullet,\bullet})_{n,m}=C_{n,m}/C_{0,m} 
\end{equation}
for all $C_{\bullet,\bullet} \in \sstrcoalg$. Therefore the twisted bar construction $\overline{W}$ has a left adjoint
\begin{equation}\label{equation: Kan loop group}
G\colon \strcoalg \to \sprHopf, 
\end{equation}
which is given by $G(C_\bullet)_m=GZ(\Dec_m(C_\bullet)/C_{m+1})$, $C_\bullet \in \strcoalg$.

\begin{dfn}\label{definition: Hopf-Kan loop algebra}
Let $C_\bullet$ be a simplicial truncated coalgebra, then $G(C_\bullet)\in \sprHopf$ is called the \emph{Hopf-Kan loop algebra} of $C_\bullet$.
\end{dfn}

\begin{thm}\label{theorem: kan loop}
Let $C_\bullet \in \sotrcoalg$ be a reduced simplicial truncated coalgebra. Then the unit map
$$\eta\colon C_\bullet \to \overline{W}G(C_\bullet) $$
is a weak equivalence.
\end{thm}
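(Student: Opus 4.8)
The plan is to factor the unit map $\eta\colon C_\bullet \to \barW G(C_\bullet)$ through the $d\'ecalage$ tower and to analyze it one simplicial level (or rather one $\Dec_0$-stage) at a time, using the local computations already established. Recall that $\barW = T \circ \iota \circ N$ and $G(C_\bullet)_m = GZ(\Dec_m(C_\bullet)/C_{m+1})$. By part~(1) of Theorem~\ref{theorem: artin-mazur codiagonal}, the unit map $C_\bullet \to T\Dec(C_\bullet)$ is a weak equivalence, so it suffices to produce a weak equivalence $T\Dec(C_\bullet) \to \barW G(C_\bullet) = TN G(C_\bullet)$ compatible with $\eta$, and by part~(3) of the same theorem it is enough to exhibit a levelwise (say, horizontal) weak equivalence of bisimplicial coalgebras $\Dec(C_\bullet) \to NG(C_\bullet)$ after applying $R$ (since $\Dec(C_\bullet)$ is not horizontally reduced but $\barW$ incorporates $R$). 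Concretely, in horizontal degree $m$ one compares $\Dec_m(C_\bullet)$ with $N_\bullet(G(C_\bullet)_m) = N_\bullet GZ(\Dec_m(C_\bullet)/C_{m+1})$.

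The first key step is therefore to show: for each fixed $m\geq 0$, the composite
$$
\Dec_m(C_\bullet)/C_{m+1} \longrightarrow N_\bullet GZ\bigl(\Dec_m(C_\bullet)/C_{m+1}\bigr)
$$
is a weak equivalence of reduced simplicial truncated coalgebras. By~\eqref{equation: partial decalage, 1} we have $\Dec_{m}(C_\bullet) = \Dec_0(\Dec_{m-1}(C_\bullet))$, and $\Dec_0$ of any simplicial coalgebra is augmented over its $0$-coalgebra with the augmentation a deformation retract; so $\Dec_m(C_\bullet)/C_{m+1}$ is exactly of the form $\Dec_0(D_\bullet)/D_1$ for $D_\bullet = \Dec_{m-1}(C_\bullet)$ (for $m=0$ we take $D_\bullet = C_\bullet$ itself, using reducedness). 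This is precisely the situation of Corollary~\ref{corollary: decalage is EM}, which tells us $\Dec_0(D_\bullet)/D_1 \to NGZ(\Dec_0(D_\bullet)/D_1)$ is a weak equivalence. Thus the comparison map is a weak equivalence in each horizontal degree $m$.

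The second step is to assemble these degreewise weak equivalences into a statement about the codiagonal. The map $\Dec(C_\bullet) \to NG(C_\bullet)$ of bisimplicial objects (after passing to horizontally reduced quotients via $R$, which is harmless since $R$ agrees with the degreewise quotient $C_{n,m}\mapsto C_{n,m}/C_{0,m}$ used implicitly above) is a horizontal weak equivalence by the previous paragraph. Applying $T$ and invoking part~(3) of Theorem~\ref{theorem: artin-mazur codiagonal}, we get a weak equivalence $T\Dec(C_\bullet) \to TNG(C_\bullet) = \barW G(C_\bullet)$. Composing with the weak equivalence $C_\bullet \to T\Dec(C_\bullet)$ from part~(1) and checking (via the adjunction $G \dashv \barW$ and naturality of the $\d\'ecalage$ unit) that this composite is indeed the unit $\eta$, we conclude that $\eta$ is a weak equivalence.

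The main obstacle I expect is the bookkeeping in the second step: one must verify that the ``levelwise'' comparison $\Dec(C_\bullet) \to NG(C_\bullet)$ really is a map of \emph{bisimplicial} coalgebras (compatibility with the horizontal face and degeneracy maps, not just fixed-level weak equivalences), and that after applying $T$ the resulting map genuinely agrees with the unit $\eta$ of the adjunction $G \dashv \barW$. The first point requires tracing through how $GZ$ and $N$ interact with the extra face map $d_0$ coming from $\Dec_0$, which is where the anti-automorphism~\eqref{equation: free hopf algebra op = id} and the explicit relations $r(c)$ enter; the identifications are natural but the verification is combinatorial. The weak-equivalence input is entirely local and already done (Lemmas~\ref{lemma: GZ of a suspension}, \ref{lemma: decalage and skeleton}, Proposition~\ref{proposition: GZ of decalage}, Corollary~\ref{corollary: decalage is EM}), so no new homotopical difficulty arises there; the work is in the formal glue.
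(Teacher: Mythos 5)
Your proposal is correct and follows essentially the same route as the paper: factor $\eta$ through the units of $\Dec\dashv T$, $R\dashv\iota$, and $GZ\dashv N$, use Theorem~\ref{theorem: artin-mazur codiagonal}(1) for the first, reducedness of $C_\bullet$ for the (vertical) second, and Corollary~\ref{corollary: decalage is EM} together with~\eqref{equation: partial decalage, 1} for the (horizontal) third, then apply $T$ via Theorem~\ref{theorem: artin-mazur codiagonal}(3). The ``formal glue'' you worry about is automatic, since the comparison maps are degreewise units of adjunctions and the unit of the composite adjunction $G\dashv\barW$ is the composite of the three units.
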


\begin{proof}
The units of the adjunctions $\Dec\dashv T$, $R\dashv \iota$, and $N\dashv GZ$ give a factorization of $\eta$   
$$
C_\bullet\to T\Dec C_\bullet\to T \iota R \Dec(C_\bullet)\to T \iota N GZ R \Dec(C_\bullet)  
$$
in $\strcoalg$.  The map $C_\bullet\to T \Dec C_\bullet$ is a weak equivalence by Theorem~\ref{theorem: artin-mazur codiagonal}. 
The maps $T \Dec(C_\bullet)\to T \iota R\Dec(C_\bullet)$ and 
$T \iota R\Dec(C_\bullet)\to T \iota N GZ R \Dec(C_\bullet)$ are induced by the maps 
$$ 
\Dec(C_\bullet)\to \iota R \Dec(C_\bullet)\;\; \text{and}\;\; R\Dec(C_\bullet)\to  N GZ R \Dec(C_\bullet) 
$$ 
in $\sstrcoalg$. We will show that both of these maps are levelwise weak equivalences.  

The first map is a vertical weak equivalence, since $$(R\Dec(C_\bullet))_{n,\bullet}=\Dec(C_\bullet)_{n,\bullet}/\Dec(C_\bullet)_{0,\bullet}$$ and $\Dec(C_\bullet)_{0,\bullet}$ is a deformation retract of $C_0\cong \kk$.

The second map is a horizontal weak equivalence due to Corollary~\ref{corollary: decalage is EM} and the observation~\eqref{equation: partial decalage, 1}. Since $T$ maps levelwise weak equivalences of bisimplicial coalgebras to weak equivalences, the theorem follows.
\end{proof}

Using Example~\ref{example: GZ of suspension} one can check the following.
\begin{prop}\label{proposition: kan loop group is almost free}
The Hopf-Kan loop functor $G\colon \strcoalg \to \sprHopf $ is given as follows. 
\begin{enumerate}
\item The Hopf algebra $G(C_\bullet)_m$ is the free Hopf algebra $H(C_{m+1}/s_0C_m)$ generated by the quotient coalgebra $C_{m+1}/s_0C_m$. 
\item The face and degeneracy operators are defined on the generators of $G(C_\bullet)$ by the following formulas
$$d_i[x] = [d_{i+1}x] \; \; \text{if} \;\; i>0, \;\; s_i[x] = [s_{i+1}x] \; \; \text{if} \;\; i\geq 0, $$
$$d_0[x] = \sum_i S([d_0(x^{(1)}_i)])[d_1(x^{(2)}_i)];$$
where $x\in C_{m+1}$, $\Delta(x)=\sum_i x^{(1)}_i\otimes x^{(2)}_i$, $[y]$ is the class of $y\in C_{n+1}$ in $C_{n+1}/s_0C_n$, and $$S\colon H(C_{m}/s_0C_{m-1})^{op} \to H(C_{m}/s_0C_{m-1})$$ is the antipode. \qed
\end{enumerate}
\end{prop}

\subsection{Model structures}\label{section: model structures} In this section we will construct model structures for categories of simplicial restricted Lie algebras $\srLie$ and reduced simplicial truncated coalgebras $\sotrcoalg$. We refer the reader to~\cite{Quillen67}, \cite{Hirschhorn03}, and~\cite[Appendices~A.2-3]{HTT} for most of definitions in this section.

\begin{dfn}[J.~H.~Smith]\label{definition: combinatorial} A model category $\mathsf{C}$ is \emph{combinatorial} if $\mathsf{C}$ is cofibrantly generated (\cite[Definition~11.1.2]{Hirschhorn03}) and $\mathsf{C}$ is locally presentable. 
\end{dfn}
We recall generating sets for simplicial model categories $\sSet$ and $\sVect_{\kk}$. Define the following sets of morphisms in $\sSet$ $$I_\Delta=\{\partial \Delta^n \hookrightarrow \Delta^n\;|\; n\geq 0 \}, \;\;\; J_{\Delta}=\{\Lambda^n_i \hookrightarrow \Delta^n\;|\; n>0,\; 0 \leq i\leq n\},$$ 
where $\Lambda^n_i\subset \Delta^n$ (resp. $\partial \Delta^n \subset \Delta^n$) is the $i$-th horn (resp. the boundary) of $\Delta^n$. Then the model category $\sSet$ is cofibrantly generated by $I_\Delta, J_\Delta$. Similarly, define $$I_\Vect=\{\kk(\partial \Delta^n) \hookrightarrow \kk(\Delta^n)\;|\; n\geq 0 \},$$ 
$$J_{\Vect}=\{\kk(\Lambda^n_i) \hookrightarrow \kk(\Delta^n)\;|\; n>0,\; 0 \leq i\leq n\},$$ 
where $\kk(X_\bullet), X_\bullet\in\sSet$ is the simplicial vector space spanned by $X_\bullet$. Then the model category $\sVect$ is cofibrantly generated by $I_\Vect, J_\Vect$.

\begin{rmk}\label{remark: surjective on components}
By~\cite[Proposition~1, pII.3.8]{Quillen67}, a map $f\colon U_\bullet \to W_\bullet \in \sVect_{\kk}$ of simplicial vector spaces is a \emph{fibration} in the model structure above if and only if 
\begin{enumerate}
\item the induced map $U_\bullet \to W_\bullet \times_{\pi_0(W_\bullet)} \pi_0(U_\bullet)$ is degreewise surjective, where $\pi_0(U_\bullet)$ and $\pi_0(W_\bullet)$ are constant simplicial vector spaces;
\item or equivalently, the induced map $N(f)\colon N(U_\bullet) \to N(W_\bullet)$ of normalized chain complexes (see Section~\ref{section: notation}) is surjective in positive degrees.
\end{enumerate}
\end{rmk}

\begin{thm}\label{theorem:modelsrlie} There exists a simplicial combinatorial right proper model structure on the category $\srLie$ such that a map $f\colon L'_\bullet \to L_\bullet$ is

\begin{itemize}
\item a \emph{weak equivalence} if and only if $\pi_*(f)$ is an isomorphism;
\item a \emph{fibration} if and only if $\oblv(f)\colon \oblv L'_\bullet \to \oblv L_\bullet$ is a fibration in $\sVect_{\kk}$ (see Remark~\ref{remark: surjective on components});
\item a \emph{cofibration} if and only if $f$ has the left lifting property with respect to all acyclic fibrations.
\end{itemize}
Moreover, $$I_\Lie=\free(I_\Vect)=\{\free(u)\;|\; u\in I_\Vect\}$$ is a set of generating cofibrations and $$J_\Lie=\free(J_\Vect)=\{\free(v)\;|\; v\in J_\Vect\}$$ is a set of generating trivial cofibrations for the model category $\srLie$.
\end{thm}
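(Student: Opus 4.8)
The plan is to realize this as a \emph{right transferred} model structure along the levelwise adjunction $\free\dashv\oblv$ between $\sVect_{\kk}$ and $\srLie$, by invoking~\cite[Theorem~11.3.2]{Hirschhorn03}. Proposition~\ref{proposition:category property of rLie} supplies the ambient hypotheses: $\rLie$ is bicomplete and presentable, hence so is $\srLie=\Fun(\Delta^{op},\rLie)$, with all limits and colimits formed objectwise. What remains to check are the two conditions of the transfer theorem, namely that $\free(I_{\Vect})$ and $\free(J_{\Vect})$ permit the small object argument, and that $\oblv$ sends every relative $\free(J_{\Vect})$-cell complex to a weak equivalence of simplicial vector spaces. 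Once both hold, the theorem delivers a cofibrantly generated structure with generating cofibrations $I_{\Lie}=\free(I_{\Vect})$ and generating trivial cofibrations $J_{\Lie}=\free(J_{\Vect})$, in which a map is a weak equivalence, respectively a fibration, precisely when $\oblv$ takes it to one. Since $\pi_*$ of a simplicial restricted Lie algebra is $\pi_*$ of its underlying simplicial vector space, the weak equivalences are the $\pi_*$-isomorphisms; since the fibrations of simplicial vector spaces are exactly the degreewise surjective maps (by the standard theory of simplicial groups), the fibrations of $\srLie$ are the degreewise surjections; the description of cofibrations is then the formal one; and combinatoriality is automatic, cofibrant generation being built in and presentability recorded above.

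The first transfer condition is routine: the domains and codomains of $I_{\Vect}$ and $J_{\Vect}$ are compact objects of $\sVect_{\kk}$, and $\free$ preserves compactness because $\oblv$ preserves filtered colimits --- indeed it creates sifted colimits by Proposition~\ref{proposition:category property of rLie} --- so the small object argument applies in $\srLie$. The second condition, \textbf{the acyclicity of relative cell complexes on the generating trivial cofibrations, is the step I expect to be the main obstacle}, and I would handle it by a deformation-retract argument rather than by a direct homotopical computation. Each $v\in J_{\Vect}$ is a trivial cofibration between cofibrant--fibrant simplicial vector spaces, hence a \emph{simplicial strong deformation retract}: a retraction is obtained by lifting, and a homotopy to the identity that is constant on the domain is obtained by filling the standard pushout--product trivial cofibration against the fibrant codomain. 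Because $\free$ is cocontinuous it commutes with the $\sSet$-tensoring $(-)\otimes\Delta^1$, so $\free(v)$ --- and any coproduct of such maps --- is again a simplicial strong deformation retract in $\srLie$. Simplicial strong deformation retracts are stable under cobase change, since $(-)\otimes\Delta^1$ preserves pushouts; hence so is every pushout of a coproduct of maps in $J_{\Lie}$. Applying $\oblv$ --- which does not commute with the tensoring but admits a canonical comparison map $\oblv(X_\bullet)\otimes\Delta^1\to\oblv(X_\bullet\otimes\Delta^1)$, enough to transport the retraction and homotopy --- then yields a simplicial homotopy equivalence, in particular a $\pi_*$-isomorphism, of simplicial vector spaces. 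Finally, a relative $J_{\Lie}$-cell complex is a transfinite composite of such pushouts; as $\oblv$ preserves filtered colimits and $\pi_*$-isomorphisms of simplicial vector spaces are closed under transfinite composition, it too goes to a weak equivalence. The points requiring care are precisely that $\free$ and $\oblv$ do not interact strictly with the simplicial enrichment, so I must track the comparison maps, and that $v\in J_{\Vect}$ genuinely admits a deformation \emph{rel its domain}.

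Having applied the transfer theorem, I would finish with the extra properties. For the simplicial structure: $\srLie$ is $\sSet$-tensored and cotensored (again Proposition~\ref{proposition:category property of rLie} with bicompleteness of $\rLie$), and the pushout--product axiom need only be verified on generating (trivial) cofibrations, where $\free(u)\,\widehat{\otimes}\,k\cong\free(u\,\widehat{\otimes}\,k)$ (since $\free$ commutes with the tensoring) and $\free$ is left Quillen, so the axiom reduces to the simplicial model category $\sVect_{\kk}$. Right properness is immediate: $\oblv$ preserves pullbacks and fibrations and detects weak equivalences, and $\sVect_{\kk}$ is right proper because all its objects are fibrant. Left properness is the one remaining nontrivial point: I would reduce it to a pushout of a generating cofibration $\free(\kk(\partial\Delta^n))\to\free(\kk(\Delta^n))$ along a weak equivalence, and analyze the underlying simplicial vector space of such a pushout by means of the Poincar\'{e}--Birkhoff--Witt theorem (Theorem~\ref{theorem: poincare-birkhoff-witt}) together with the natural decomposition $\oblv\free(V)\cong\bigoplus_{n\geq1}L^r_n(V)$ of Remark~\ref{remark: free lie algebras, fresse}, using that tensoring preserves $\pi_*$-isomorphisms of simplicial vector spaces over a field.
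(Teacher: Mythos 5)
Your proposal is correct and follows essentially the same route as the paper: transfer along $\free\dashv\oblv$ via~\cite[Theorem~11.3.2]{Hirschhorn03}, acyclicity of relative $J_\Lie$-cell complexes from the fact that the maps in $J_{\Vect}$ are simplicial homotopy equivalences (so that $\free$ of them, and their cobase changes, remain deformation retracts), right properness from fibrancy of all objects, and left properness ultimately resting on the Fresse decomposition $\oblv\free(V)\cong\bigoplus_n L^r_n(V)$ together with the fact that every weak equivalence of simplicial vector spaces is a homotopy equivalence. The one genuine divergence is the left-properness reduction: the paper invokes Rezk's criterion, which reduces the whole question to showing that $L_\bullet\mapsto L_\bullet\sqcup\free(\kk)$ preserves weak equivalences (a single, easily analyzed coproduct), whereas you reduce to cobase changes of the generating cofibrations $\free(\kk(\partial\Delta^n))\to\free(\kk(\Delta^n))$ along arbitrary attaching maps; the latter is legitimate (the class of cofibrations with the required pushout property is saturated, since $\pi_*$-isomorphisms of simplicial vector spaces are closed under filtered colimits) but requires a filtration analysis of a general almost-free extension, which is more work than the paper's single coproduct computation.
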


\begin{proof}
Since the category $\rLie$ is complete and cocomplete, the category $\srLie$ of simplicial objects has the canonical simplicial enrichment. By~\cite[Theorem~4, pII.4.1]{Quillen67}, the category $\srLie$ is a simplicial model category equipped with notions of weak equivalences, fibrations and cofibrations as defined above. Furthermore, by an implicit argument in ibid. the category $\srLie$ is cofibrantly generated; more explicitly, one can use~\cite[Theorem~11.3.2]{Hirschhorn03}. Indeed, the sets $I_{\Lie}, J_\Lie$ clearly permit the small object argument (\cite[Definition~10.5.15]{Hirschhorn03}) and the forgetful functor $\oblv$ takes relative $J_\Lie$-cell complexes to weak equivalences because any map in $J_\Vect$ is a homotopy equivalence.

Since any object in $\srLie$ is fibrant, the model category $\srLie$ is \emph{right proper} by~\cite[Corollary~13.1.3(2)]{Hirschhorn03}. Finally, the category $\srLie$ is locally presentable by Proposition~\ref{proposition:category property of rLie}.
\end{proof}

\begin{rmk}\label{remark: left proper}
At the time of writing, we are not aware whether or not the model structure of Theorem~\ref{theorem:modelsrlie} on $\srLie$ is \emph{left proper}, cf. \cite[Section~2.3]{Rezk02}.
\end{rmk}

We will discuss cofibrations in $\srLie$; we encourage the reader to compare the next definitions with the definition of an almost-free morphism in the category of simplicial commutative algebras, e.g. given in~\cite[p.~23]{Goerss90} or in~\cite[Definition~3.3']{MillerCorrection}.

\begin{dfn}\label{definition: almost-simplex category}
The \emph{almost-simplex category} $\widetilde{\Delta}$ is the category of finite ordered sets $[n]=\{0,\ldots, n\}, n\geq 0$ together with order-preserving maps which send $0$ to $0$ (cf. the first definition of Section~2 in~\cite{MillerCorrection}). 
\end{dfn}

Let us denote by $\tilde{\mathsf{s}}\mathsf{C}$ the category of \emph{almost-simplicial objects} in a category $\mathsf{C}$, i.e. $\tilde{\mathsf{s}}\mathsf{C}$ is  the category of contravariant functors from $\widetilde{\Delta}$ to $\mathsf{C}$. 

\begin{rmk}\label{remark: almost-simplicial vector spaces}
By an analog of the Dold-Kan correspondence, the category $\tilde{\mathsf{s}}\Vect_{\kk}$ of almost-simplicial vector spaces is equivalent to the category of \emph{graded} vector spaces, see~\cite[pp.607-608]{MillerCorrection}. Thus, a map $f\colon U_\bullet \to W_\bullet$ in $\sVect$ is a monomorphism if and only if there is an almost-simplicial vector subspace $V_\bullet$ of $W_\bullet$ such that the natural map $U_n\oplus V_n \to W_n$ is an isomorphism for each $n\geq 0$.
\end{rmk}

The last paragraph inspires the next definition.

\begin{dfn}\label{definition: almost-free morphism}
A morphism $f\colon L'_\bullet \to L_\bullet$ in $\srLie$ is called \emph{almost-free} if there is an almost-simplicial vector subspace $V_\bullet$ of $L_\bullet$ such that the natural map of almost-simplicial restricted Lie algebras $L'_\bullet\sqcup \free(V_\bullet) \to L_\bullet$ is an isomorphism in $\tilde{\mathsf{s}}\rLie$.
\end{dfn}

%We call a morphism $f\colon L_{\bullet} \to M_{\bullet}$ in $\srLie$ \emph{almost-free} if, for every $n\geq 0$, there is a sub-vector space $V_n \subset M_n$ and maps of vector spaces
%$$\delta_i \colon V_n \to V_{n-1}, \;\; 1 \leq i\leq n,$$
%$$\sigma_i \colon V_{n} \to V_{n+1}, \;\; 0 \leq i \leq n, $$
%such that the evident extension 
%$$L_n \sqcup \free(V_n) \to M_n $$
%is an isomorphism for every $n$ and there are commutative diagrams with the horizontal maps isomorphisms:
%$$
%\begin{tikzcd}
%L_n \sqcup \free(V_n) \arrow{d}{d_i \sqcup \free\delta_i} \arrow{r}{\cong}
%&M_n \arrow{d}{d_i} \\
%L_{n-1}\sqcup \free(V_{n-1}) \arrow{r}{\cong}
%& M_{n-1},
%\end{tikzcd}
%$$
%for $i\geq 1$, and
%$$
%\begin{tikzcd}
%L_n \sqcup \free(V_n) \arrow{d}{s_i \sqcup \free\sigma_i} \arrow{r}{\cong}
%&M_n \arrow{d}{s_i} \\
%L_{n+1}\sqcup \free(V_{n+1}) \arrow{r}{\cong}
%& M_{n+1},
%\end{tikzcd}
%$$
%for $i\geq 0$. We notice the reader that only the face operator $d_0$ is \emph{not} induced up from $\sVect_{\kk}$. Finally, 

Finally, we say that a simplicial restricted Lie algebra $L_{\bullet}\in \srLie$ is \emph{almost-free} if the morphism $0\to L_{\bullet}$ is almost-free. We notice that an almost-free simplicial restricted Lie algebra is ``free'' in the sense of~\cite[Section~3.2]{Priddy70long}, but not vice versa.

The following proposition can be proved exactly as the similar result in~\cite[Theorem~3.4]{Miller84} (see also the correction~\cite{MillerCorrection}).

\begin{prop}\label{proposition: almost-free are cofibrations}
Any almost-free morphism $f\colon L'_\bullet \to L_\bullet$ is a cofibration. \qed
\end{prop}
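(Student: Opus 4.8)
The plan is to reduce the statement to the standard criterion that a morphism built as a transfinite composition of pushouts of generating cofibrations is a cofibration, together with the observation that an almost-free morphism admits a canonical such presentation. First I would invoke Remark~\ref{remark: almost-simplicial vector spaces}: if $f\colon L'_\bullet \to L_\bullet$ is almost-free with attached almost-simplicial vector subspace $V_\bullet$, then under the analog of the Dold--Kan correspondence $V_\bullet$ corresponds to a graded vector space $\bigoplus_{n\geq 0} \overline{V}_n$, where $\overline{V}_n$ is the space of non-degenerate generators in simplicial degree $n$. This lets me filter $V_\bullet$ by its "almost-skeleta": set $V_\bullet^{(n)}$ to be the almost-simplicial subspace generated by $\overline{V}_0,\ldots,\overline{V}_n$, so that $V_\bullet^{(-1)}=0$, $V_\bullet = \colim_n V_\bullet^{(n)}$, and $V_\bullet^{(n)}$ is obtained from $V_\bullet^{(n-1)}$ by freely adjoining the generators in $\overline{V}_n$. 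Applying $\free$ and pushing out along $f$ gives a tower
\[
L'_\bullet = L^{(-1)}_\bullet \to L^{(0)}_\bullet \to L^{(1)}_\bullet \to \cdots, \qquad L_\bullet = \colim_n L^{(n)}_\bullet,
\]
where $L^{(n)}_\bullet = L'_\bullet \sqcup \free(V_\bullet^{(n)})$ in $\tilde{\mathsf{s}}\rLie$ (and correspondingly in $\srLie$ after applying the left adjoint from almost-simplicial to simplicial objects, which is where the "$0\mapsto 0$" condition on $\widetilde{\Delta}$ is used to keep everything reduced-degree-wise controlled).

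Next I would identify each stage $L^{(n-1)}_\bullet \to L^{(n)}_\bullet$ as a pushout of a coproduct of maps of the form $\free(\partial\Delta^n \otimes \kk) \to \free(\Delta^n \otimes \kk)$ — that is, of maps in the generating set $I_\Lie = \free(I_\Vect)$ of Theorem~\ref{theorem:modelsrlie}. Concretely, attaching a free generator in simplicial degree $n$ to an almost-simplicial restricted Lie algebra is, by adjunction and the graded description of $\tilde{\mathsf{s}}\Vect_\kk$, exactly the pushout along $\free$ of the inclusion of the boundary of the representable $\kk(\Delta^n)$ into $\kk(\Delta^n)$, one copy for each basis element of $\overline{V}_n$. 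Since pushouts and transfinite (here: countable) compositions of cofibrations are cofibrations, and the maps in $I_\Lie$ are cofibrations by construction, it follows that $f$ is a cofibration. This is precisely the structure of the argument for Theorem~3.4 of~\cite{Miller84}, corrected in~\cite{MillerCorrection}, transported verbatim to $\srLie$ in place of simplicial commutative algebras; the only input specific to our setting is that $\free\colon \Vect_\kk \to \rLie$ is a left adjoint (Example~\ref{example:freelie}) and that $\rLie$ is complete and cocomplete with $\oblv$ creating sifted colimits (Proposition~\ref{proposition:category property of rLie}), so that all the relevant colimits exist and interact well with the free functor.

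The main obstacle is bookkeeping rather than conceptual: one must check carefully that the "almost-skeletal" filtration of $V_\bullet$ really does present $L'_\bullet \to L_\bullet$ as an $I_\Lie$-cell complex, i.e. that adjoining the degree-$n$ generators genuinely produces a pushout square of the asserted form in $\srLie$ and not merely in $\tilde{\mathsf{s}}\rLie$. This requires the compatibility of the free functor $\free$ with the passage between almost-simplicial and simplicial objects, and a degeneracy-counting argument showing that the degenerate simplices contributed in degree $>n$ by the new generators are exactly those forced by the simplicial identities — this is the content that Miller's original paper got slightly wrong and that~\cite{MillerCorrection} fixes. Since the argument is formally identical once $\rLie$ is substituted for commutative algebras, I would state the proposition and refer the reader to~\cite{Miller84} and~\cite{MillerCorrection} for the details, as the authors in fact do.
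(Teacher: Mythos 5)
Your argument is correct and matches what the paper does: the paper gives no details beyond citing Theorem~3.4 of Miller and its correction, and the presentation of an almost-free map as an $I_\Lie$-cell complex via the skeletal filtration on the non-degenerate generators is exactly the content of that citation, transported from simplicial commutative algebras to $\srLie$. The one point worth checking carefully (and which you flag) is that each stage $L^{(n-1)}_\bullet \to L^{(n)}_\bullet$ is closed under $d_0$ and is a genuine pushout in $\srLie$, which holds because $d_0$ of a degree-$n$ generator lands in degree $n-1$ where the filtration is already exhaustive.
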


\begin{rmk}\label{remark:almost free} Similar to the case of simplicial commutative algebras, one can show that for any map $f\colon L'_\bullet \to L_\bullet$ there is a functorial factorization
$$f\colon L'_\bullet \xrightarrow{i} Q_\bullet(f) \xrightarrow{p} L_\bullet, $$
where $i$ is almost-free and $p$ is an acyclic fibration, cf.~\cite[Theorem~1.3 and Proposition~1.4]{Goerss90}. In particular, any cofibration in $\srLie$ is a retract of an almost-free morphism.
\end{rmk}

Next, we show that the category $\sotrcoalg$ also can be equipped with a model structure.

\begin{thm}\label{theorem:modelsotrcoalg} There exists a simplicial combinatorial left proper model structure on $\sotrcoalg$ such that $f\colon C_\bullet \to D_\bullet$ is

\begin{itemize}
\item a \emph{weak equivalence} if and only if $\pi_*(f)$ is an isomorphism;
\item a \emph{cofibration} if and only if $f\colon C_\bullet \to D_\bullet$ is degreewise injective;
\item a \emph{fibration} if and only if $f$ has the right lifting property with respect to all acyclic cofibrations.
\end{itemize}
\end{thm}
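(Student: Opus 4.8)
The strategy is to obtain this model structure as a \emph{left-transferred} (cofibrantly generated) model structure from the simplicial model category $\sVect_{\kk}$ along the forgetful functor $\oblv\colon \sotrcoalg \to \sVect_{\kk}$, $(C_\bullet,\eta)\mapsto \coker(\eta)$. By Proposition~\ref{proposition: cofree truncated coalgebra} this functor has a right adjoint $\Sym^{tr}$, and by Proposition~\ref{proposition:category property of trcoalg} it preserves colimits and the category $\sotrcoalg$ is presentable. One then applies a left-transfer criterion in the form of~\cite[Theorem~2.2.1]{HKRS17}: given the acyclic cofibration--fibration factorization on $\sVect_{\kk}$ pulled back along $\oblv$, the transferred structure exists provided every map with the left lifting property against all $\oblv$-fibrations is a weak equivalence. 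The weak equivalences and cofibrations are then exactly the classes detected by $\oblv$, which gives the description in the statement.

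First I would set up the candidate classes: weak equivalences $=\oblv^{-1}(\text{w.e. in }\sVect_{\kk})$, cofibrations $=\oblv^{-1}(\text{monos})=\{\text{degreewise injections}\}$, and fibrations defined by the right lifting property against acyclic cofibrations. The key input making left transfer applicable is \emph{smallness/presentability} — already in hand from Proposition~\ref{proposition:category property of trcoalg} — together with the fact that $\oblv$ preserves filtered colimits (it is comonadic and the comonad $\Sym^{tr}$ is finitary on each degree after the Frobenius-twist equalizer of Proposition~\ref{proposition: cofree truncated coalgebra}). Next I would verify the acyclicity hypothesis of~\cite[Theorem~2.2.1]{HKRS17}: a map having the left lifting property against all maps $f$ with $\oblv(f)$ a fibration is, in particular, a degreewise injection which is a retract of a transfinite composition of pushouts of generating acyclic cofibrations, and one shows these are sent by $\oblv$ to acyclic cofibrations in $\sVect_{\kk}$ because $\oblv$ creates colimits (Proposition~\ref{proposition:category property of trcoalg}) and preserves the relevant pushouts; hence such maps are weak equivalences. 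This is the step I expect to be the main obstacle, since one must control how cofree-truncated-coalgebra cells behave under the forgetful functor and confirm that the relevant filtered colimits of underlying vector spaces compute the homotopy groups correctly — essentially a ``monomorphisms are cofibrations'' argument dual to the one used for $\srLie$ in Theorem~\ref{theorem:modelsrlie}, but executed in the coalgebra setting where colimits are the ``easy'' ones.

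Once existence is established, the remaining properties are comparatively routine. The model structure is \emph{combinatorial} because $\sotrcoalg$ is presentable and the transferred structure is cofibrantly generated by construction. It is \emph{simplicial}: the simplicial enrichment, tensoring, and cotensoring on $\sotrcoalg$ come from the general machinery for simplicial objects in a complete cocomplete category (as recalled in the Notation section), and the pushout-product (SM7) axiom is inherited from $\sVect_{\kk}$ via the fact that $\oblv$ is simplicial and detects cofibrations and weak equivalences. Finally, \emph{left properness} follows since cofibrations are degreewise injections and weak equivalences are detected on underlying simplicial vector spaces, where $\sVect_{\kk}$ is left proper (indeed every object is cofibrant): a pushout of a weak equivalence along a cofibration is computed degreewise on underlying vector spaces, using again that $\oblv$ creates colimits, and there it is a weak equivalence. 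This completes the verification of all clauses in the statement.
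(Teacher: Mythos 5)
Your overall strategy coincides with the paper's: both obtain the model structure as a left transfer from $\soVect_{\kk}$ along $\oblv\dashv\Sym^{tr}$ using~\cite[Theorem~2.2.1]{HKRS17}, with presentability of $\sotrcoalg$ supplied by Proposition~\ref{proposition:category property of trcoalg}, combinatoriality from~\cite[Theorem~2.2.3]{BHKKRS15}-type considerations, and left properness from the fact that every object is cofibrant. However, your verification of the crucial acyclicity hypothesis is not correct as stated. You describe the maps to be controlled as those ``having the left lifting property against all maps $f$ with $\oblv(f)$ a fibration'' and propose to express them as retracts of transfinite compositions of pushouts of ``generating acyclic cofibrations.'' That is the shape of the acyclicity argument for a \emph{right} transfer (as in Theorem~\ref{theorem:modelsrlie}), not a left one. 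In a left-induced structure the fibrations are not created by $\oblv$, and there is no a priori set of generating acyclic cofibrations from which to build cell complexes --- avoiding exactly this is the point of the HKRS machinery. The condition one actually has to check is that every map with the \emph{right} lifting property against all degreewise injections is a weak equivalence, and a cell-complex argument does not address it.

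The way this is discharged (and the way the paper does it) is via the practical criterion built into~\cite[Theorem~2.2.1]{HKRS17}: since every object of $\soVect_{\kk}$ is cofibrant, it suffices to exhibit a good cylinder object for each $C_\bullet\in\sotrcoalg$, namely a factorization of the fold map
\begin{equation*}
C_\bullet\sqcup C_\bullet = C_\bullet\times\partial\Delta^1 \hookrightarrow C_\bullet\times\Delta^1 \longrightarrow C_\bullet
\end{equation*}
with the first map a degreewise injection and the second a weak equivalence; the dual of Quillen's path-object argument then yields acyclicity. Your proposal never produces this cylinder object, and without it (or an equivalent verification) the existence of the transferred structure is not established. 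The remaining clauses of your argument --- SM7 via the simplicial enrichment and left properness from cofibrancy of all objects --- are fine once existence is in place.
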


\begin{rmk}\label{remark: goerss, model structure on coalgebras}
P.~Goerss in~\cite[Section~3]{Goerss95} showed that the category $\scoalg$ of simplicial coalgebras over a field $\kk$ endowed with the same notions of weak equivalences and cofibrations is a simplicial model category. It seems likely that one can straightforwardly adapt his argument for the category $\sotrcoalg$ as well. Nevertheless, we will prove Theorem~\ref{theorem:modelsotrcoalg} by applying more general technique.
\end{rmk}

\begin{proof}
Since the category $\trcoalg$ of truncated coalgebras is complete and cocomplete, the category $\sotrcoalg$ of reduced simplicial objects has the canonical simplicial enrichment. Recall that finite coproducts in $\trcoalg$ are wedge sums (see Example~\ref{example: coalgebras product and coproduct}), therefore a coproduct of reduced simplicial truncated coalgebras (computed in $\strcoalg$) is still reduced. Moreover, for any $C_\bullet \in \sotrcoalg$, we have a factorization 
\begin{equation}\label{equation: model structure on coalg, eq1}
C_\bullet \sqcup C_\bullet = C_\bullet \times \partial \Delta^1 \xhookrightarrow{j} C_\bullet \times \Delta^1 \xrightarrow{p} C_\bullet 
\end{equation}
of the fold map $\nabla\colon C_\bullet \sqcup C_\bullet \to C_\bullet$ such that $j$ is a cofibration and $p$ is a weak equivalence.

We now apply Theorem~2.2.1 from~\cite{HKRS17} to the adjoint pair 
\begin{equation*}
\begin{tikzcd}
\oblv: \sotrcoalg \arrow[shift left=.6ex]{r}
&\soVect_{\kk} :\Sym^{tr} \arrow[shift left=.6ex,swap]{l}
\end{tikzcd}
\end{equation*}
in order to obtain the desired model structure on $\sotrcoalg$. Indeed, the model category $\soVect_{\kk}$ is cofibrantly generated; the category $\sotrcoalg$ is locally presentable by Proposition~\ref{proposition:category property of trcoalg}; $\oblv(C_\bullet), C_\bullet\in \sotrcoalg$ is a cofibrant object in $\soVect_{\kk}$ because any object in $\soVect_{\kk}$ is cofibrant; and finally, the factorization~\eqref{equation: model structure on coalg, eq1} fulfills the third condition of~\cite[Theorem~2.2.1]{HKRS17}. The obtained model structure on $\sotrcoalg$ is cofibrantly generated by sets due to~\cite[Theorem~2.2.3]{BHKKRS15}.

It is clear that the axiom SM7b from~\cite[pII.2.3]{Quillen67} holds for the constructed model structure on $\sotrcoalg$, and so $\sotrcoalg$ is a simplicial model category. Finally, $\sotrcoalg$ is left proper because any object in $\sotrcoalg$ is cofibrant.
\iffalse
P.~Goerss in~\cite[Section~3]{Goerss95} showed that the category $\scoalg$ of simplicial coalgebras over a field $\kk$ endowed with the same notions of weak equivalences and cofibrations is a simplicial model category. The proof of this statement for $\sotrcoalg$ goes almost identically, and we refer the reader to \emph{op.~cit.} for details. Although, we warn that in the proof of the counterpart of Lemma~3.5 in ibid. one has to use cofree truncated coalgebras $\Sym^{tr}(-)$ (see Proposition~\ref{proposition: cofree truncated coalgebra}) instead of an abstact cofree cocommutative coalgebras (called $S(-)$ in ibid.)

Furthermore, the counterpart of Lemma~3.7 in ibid. implies that the obtained model structure is cofibrantly generated. Indeed, let $\kappa$ be an infinite regular cardinal greater than the cardinality of $\kk$. Then the set of generating cofibrations $I$ consists of inclusions 
$$C_\bullet \hookrightarrow D_\bullet, C_\bullet,D_\bullet \in \sotrcoalg,$$
where the cardinality of a basis in $D_\bullet$ is at most $\kappa$. Similarly, the set of generating trivial cofibrations $J$ consists of inclusions as above which are weak equivalences at the same time. Both $I$ and $J$ are indeed sets if we take a one representative of an inclusion for each isomorphism class. 

Finally, the model category $\sotrcoalg$ is left proper because any object is cofibrant, and $\sotrcoalg$ is presentable by Proposition~\ref{proposition:category property of trcoalg}.
\fi
\end{proof}

\begin{rmk}\label{remark: sets of generating cofibrations in sotrcoalg}
Let $\kappa$ be an infinite regular cardinal greater than the cardinality of $\kk$, and let $I_{\CoAlg}$ be the set of isomorphism classes of inclusions 
$$C_\bullet \hookrightarrow D_\bullet, \; C_\bullet,D_\bullet \in \sotrcoalg$$
such that the cardinality of a basis in $D_\bullet$ is at most $\kappa$. Similarly, let $J_{\CoAlg}\subset I_{\CoAlg}$ be the set of isomorphism classes of inclusions as above which are weak equivalences. Then one can show that $I_{\CoAlg}$ (resp. $J_{\CoAlg}$) is a set of generating (resp. trivial) cofibrations for the model structure of Theorem~\ref{theorem:modelsotrcoalg} on the category $\sotrcoalg$. We are not aware if it is possible to choose more practical generating sets.
\end{rmk}

The next proposition was proven by S.~Priddy in~\cite[Proposition~2.8]{Priddy70long}. Here we repeat the argument for the reader's convenience.

\begin{prop}\label{proposition: Ur preserves weak equivalences}
The functor $U^r\colon \srLie \to \sprHopf$ preserves weak equivalences.
\end{prop}

\begin{proof}
%By Remark~\ref{remark: twisted bar preserves weak equivalences}, it suffices to show that $U^r\colon \srLie \to \sprHopf$ preserves weak equivalences. 
Let $f\colon L'_\bullet \to L_\bullet$ be a weak equivalence. Filter both $U^r(L'_\bullet)$ and $U^r(L_\bullet)$ by their Lie filtrations, see Section~\ref{section: primitively generated Hopf algebras}. Since $U^r(f)$ preserves Lie filtrations, it induces a map of associated spectral sequences:
\begin{equation*}
\begin{tikzcd}[column sep=large]
\pi_*E_0U^r(L'_\bullet) \arrow[r, "E_0U^r(f)"] \arrow[d, Rightarrow]
&\pi_*E_0U^r(L_\bullet) \arrow[d, Rightarrow] \\
\pi_*U^r(L'_\bullet) \arrow[r, "\pi_*U^r(f)"]
&\pi_* U^r(L_\bullet).
\end{tikzcd}
\end{equation*}

According to the Poincar\'{e}-Birkhoff-Witt theorem (Theorem~\ref{theorem: poincare-birkhoff-witt}), we have isomorphisms $E_0U^r(L'_\bullet)\cong \Sym^{tr}(L'_\bullet)$ and $E_0U^r(L_\bullet)\cong \Sym^{tr}(L_\bullet)$, and so the induced map $E_0U^r(f)$ is an isomorphism as well. The Lie filtrations on $U^r(L'_\bullet)$ and $U^r(L_\bullet)$ are complete and bounded below, and so both spectral sequences converge strongly. Therefore $E^{\infty}U^r(f)$ and $\pi_*U^r(f)$ are also isomorphisms.
\end{proof}

\begin{cor}\label{corollary: barW Ur preserves weak equivalences}
The functor $\barW U^r\colon \srLie \to \sotrcoalg$ preserves weak equivalences.
\end{cor}

\begin{proof}
See Propositions~\ref{proposition: Ur preserves weak equivalences} and~\ref{proposition: twisted bar preserves weak equivalences}.
\end{proof}

Next, following~\cite[Proposition~3.5]{Priddy70long}, we calculate the composite $$\oblv\circ \barW U^r\colon \srLie \to \sVect_{\kk}.$$
For that we consider a functor
$$\triv \colon \Vect_{\kk} \to \rLie $$
which maps a vector space $V$ to the restricted $p$-abelian Lie algebra $\triv(V)$ with the underlying vector spaces equal to $V$ equipped with identically zero Lie bracket and $p$-operation.

We observe that the functor $\triv$ has a left adjoint
\begin{equation}\label{equation: abxi}
\Abxi\colon \rLie \to \Vect_{\kk} 
\end{equation}
given by $L\mapsto L/([L,L]+\xi(L))$, where $[L,L]\subset L$ is the (restricted) Lie ideal generated by all elements of the form $[x,y]$, $x,y\in L$.

We extend the adjoint pair $\Abxi \dashv \triv$ degreewise to the adjunction
\begin{equation}\label{equation: quillen adjunction, abxi triv}
\begin{tikzcd}
\Abxi: \srLie \arrow[shift left=.6ex]{r}
&\sVect_{\kk} :\triv \arrow[shift left=.6ex,swap]{l}
\end{tikzcd}
\end{equation}
between categories of simplicial objects. Note that the adjunction~\eqref{equation: quillen adjunction, abxi triv} is a Quillen adjunction because the composite $\oblv\circ \triv = \id $, and so the functor $\triv$ preserves weak equivalences and fibrations. %see also~\cite[Propositions~3.3,~3.9]{Frank15}.

\begin{prop}\label{proposition: chain coalgebra and chain complex}
There is a natural transformation
$$\eta_{L_\bullet}\colon \oblv \circ \barW U^r(L_\bullet) \xrightarrow{} \Sigma_{\bullet} \Abxi(L_\bullet),\;\; L_\bullet\in \srLie. $$
Moreover, $\eta_{L_\bullet}$ is a weak equivalence if $L_\bullet\in \srLie$ is almost-free.
\end{prop}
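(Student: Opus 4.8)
The plan is to produce $\eta_{L_\bullet}$ as a natural ``linearization'' of the twisted bar construction, and then to verify the weak equivalence on almost-free objects by a bisimplicial comparison. Recall from Remark~\ref{remark: barW is Eilenberg-MacLane} (and the classical Eilenberg--MacLane formula, cf.\ Remark~\ref{remark: kan loop group is almost free}) that the underlying simplicial vector space of $\barW U^r(L_\bullet)$ is the reduced twisted bar construction, with $\bigl(\oblv\barW U^r(L_\bullet)\bigr)_n=U^r(L_{n-1})\otimes\cdots\otimes U^r(L_0)$, inner faces multiplying adjacent tensor factors and outer faces applying the augmentation. Writing $I_m\subseteq U^r(L_m)$ for the augmentation ideal, I would use that $[x,y]$ and $z^p$ lie in $I_m^2$, so that the module of indecomposables $Q(U^r(L_m))=I_m/I_m^2$ is naturally identified with $\Abxi(L_m)$ (equivalently, $Q\circ U^r$ and $\Abxi$ are both left adjoint to $\triv$). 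Then I would define $\eta_{L_\bullet}$ in each simplicial degree $n$ as the composite: project each tensor factor along $U^r(L_i)\twoheadrightarrow\kk\oplus\Abxi(L_i)$, and then project the resulting tensor product onto its summand of multilinear degree $\leq 1$, namely $\kk\oplus\bigoplus_{i=0}^{n-1}\Abxi(L_i)$, which is precisely the space $(\Sigma_\bullet\Abxi(L_\bullet))_n$ of $n$-simplices of the Kan suspension (Example~\ref{example: GZ of suspension}). That these maps are compatible with faces and degeneracies is a direct computation: degeneracies insert a unit, resp.\ a zero; an inner face multiplies two adjacent factors, which lands in $I_m^2$ and hence vanishes in $Q$ unless one of the two factors is a unit, in which case it merely records the sum of the two coordinates exactly as in $\Sigma_\bullet$; and the outer faces apply the augmentation, matching the two vanishing outer faces of $\Sigma_\bullet$. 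Naturality in $L_\bullet$ is immediate.

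For the weak equivalence, suppose $L_\bullet$ is almost-free (Definition~\ref{definition: almost-free morphism}): $L_\bullet\cong\free(V_\bullet)$ in $\tilde{\mathsf{s}}\rLie$, hence $U^r(L_m)\cong T(V_m)$ as augmented algebras and $\Abxi(L_\bullet)\cong V_\bullet$. I would compare the bisimplicial coalgebra $N(U^r(L_\bullet))$, with $N(U^r(L_\bullet))_{n,m}=N_n(U^r(L_m))$, against a model for the Kan suspension. Let $Z_{\bullet,\bullet}$ be the horizontally reduced bisimplicial vector space obtained by applying, in the nerve variable, the Kan suspension of the constant trivial coalgebra $\triv(\Abxi(L_m))$, so that $Z_{n,m}=\kk\oplus\bigoplus_{i=1}^n\Abxi(L_m)$ and $d(Z)\cong\Sigma_\bullet\Abxi(L_\bullet)$ by inspection. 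The same ``project to multilinear degree $\leq1$ and to indecomposables'' recipe defines a natural bisimplicial map $\pi\colon N(U^r(L_\bullet))\to Z$. For each fixed $m$, the normalized chain complex of $\pi_{\bullet,m}$ in the nerve direction is the standard comparison map from the bar complex of $T(V_m)$, which computes $\Tor^{T(V_m)}_\ast(\kk,\kk)$, onto the two-term complex with $\kk$ in degree $0$ and $V_m$ in degree $1$; since $T(V_m)$ is free associative this map is a quasi-isomorphism. Therefore $\pi$ is a levelwise (vertical) weak equivalence of bisimplicial vector spaces, and since the diagonal preserves levelwise weak equivalences, $d(\pi)\colon dN(U^r(L_\bullet))\to\Sigma_\bullet\Abxi(L_\bullet)$ is a weak equivalence.

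Finally, I would invoke part~(2) of Theorem~\ref{theorem: artin-mazur codiagonal}: the canonical map $a\colon dN(U^r(L_\bullet))\to T\iota N(U^r(L_\bullet))=\barW U^r(L_\bullet)$ is a weak equivalence. Tracing the natural transformation $d\Rightarrow T$ through these projections should show that the triangle $dN(U^r(L_\bullet))\xrightarrow{a}\oblv\barW U^r(L_\bullet)\xrightarrow{\eta_{L_\bullet}}\Sigma_\bullet\Abxi(L_\bullet)$ commutes with composite $d(\pi)$, so that $\eta_{L_\bullet}$ is a weak equivalence by two-out-of-three. (Equivalently: for almost-free $L_\bullet$ the spectral sequence of Remark~\ref{remark: spectral sequence for barW} has $E^2$-term $\pi_\ast(\Tor^{U^r(L_\bullet)}_s(\kk,\kk))$ concentrated in the columns $s=0,1$, with $\Tor_1=Q(U^r(L_\bullet))=\Abxi(L_\bullet)$; it degenerates, giving $\pi_n(\barW U^r(L_\bullet))\cong\pi_{n-1}(\Abxi(L_\bullet))=\pi_n(\Sigma_\bullet\Abxi(L_\bullet))$ for $n\geq1$, and $\eta_{L_\bullet}$ induces the edge homomorphism.) I expect the main obstacle to be exactly this last point: identifying $\eta_{L_\bullet}$, composed with the codiagonal comparison, with the projection $d(\pi)$ onto the levelwise-acyclic replacement — i.e.\ with the edge homomorphism — which requires unwinding the explicit codiagonal comparison of~\cite{Stevenson12}. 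Checking that $\eta_{L_\bullet}$ is simplicial, especially at the outer faces of the twisted bar construction, is routine but also needs some care.
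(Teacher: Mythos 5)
Your proposal takes essentially the same route as the paper: $\eta_{L_\bullet}$ is obtained from the natural identification $QU^r(L)\cong\Abxi(L)$ of indecomposables, and the weak equivalence for almost-free $L_\bullet$ is exactly the collapse of the spectral sequence of Remark~\ref{remark: spectral sequence for barW}, using that $\Tor^{T(V_t)}_s(\kk,\kk)$ vanishes for $s>1$ and equals $V_t$ for $s=1$ — i.e.\ the parenthetical argument at the end of your proposal is the paper's entire proof. The paper is in fact terser than you are and does not address the edge-homomorphism identification you flag as the main obstacle.
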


Here $\Sigma_{\bullet} \Abxi(L_\bullet) \in \sVect_{\kk}$ is the Kan suspension of the simplicial vector space $\Abxi(L_\bullet)$, see~\cite[Section~III.5]{GoerssJardine}.

\begin{proof}
\iffalse
Let us denote by $\widetilde{U}^r(L) \subset U^r(L)$ the augmentation ideal in the universal enveloping algebra $U^r(L), L\in \rLie$. Note that the module of indecomposable elements $$QU^r(L)=\widetilde{U}^r(L)/\widetilde{U}^r(L)^2$$ is naturally isomorphic to $\Abxi(L)$. In particular, there is a natural map of vector spaces
$$\oblv \circ U^r(L)= \widetilde{U}^r(L) \to \widetilde{U}^r(L)/\widetilde{U}^r(L)^2 =\Abxi(L), $$
which induces the map of simplicial vector spaces
\begin{equation}\label{equation: chain coincide, eq1}
\oblv \circ\barW_\bullet U^r(L_\bullet) \to \Sigma_\bullet\Abxi(L_\bullet).
\end{equation}

We show now that the map~\eqref{equation: chain coincide, eq1} is a weak equivalence if $L_\bullet=\free(V_\bullet)$ is an almost-free simplicial restricted Lie algebra, $V_\bullet\in \tilde{s}\Vect_\kk$. We have the spectral sequence of Remark~\ref{remark: spectral sequence for barW}:
$$E^2_{s,t}=\Tor^{U^r(L_t)}_s(\kk,\kk)\Rightarrow \pi_{s+t}(\barW U^r(L_\bullet)). $$
%We observe that $E^2_{s,t}=0$ for all $s>1$. Indeed, $U^r(L_t) =U^r\free(V_t)\cong T(V_t)$ is a free associative algebra, so 
%$$\Tor_s^{T(V_t)}(\kk,\kk)=V_t, $$
%if $s=1$, and vanishes otherwise. This implies the proposition. %Therefore, $$E^2_{s,t}=\pi_t(\Tor^{T(V_\bullet)}_s(\kk,\kk))=0$$
%if $s>1$. The proposition now follows.
Since $U^r(L_t) =U^r\free(V_t)\cong T(V_t)$ is a free associative algebra, we observe $E^2_{s,t}=0$ for $s>1$ and $E^2_{1,t}\cong V_t$, see e.g.~\cite[Example~2.2(1)]{Priddy70} or~\cite[\S1.2, 1.5]{PP05}. This implies the proposition.
\fi
Note that there is a natural isomorphism
$$QU^r(L) \cong \Abxi(L), \;\; L\in\rLie, $$
where $U^r(L)\in \prHopf$ is the universal enveloping algebra and $QU^r(L)$ is the module of indecomposable elements, see Definition~\ref{definition: indecomposables}. Finally, Proposition~\ref{proposition: twisted bar and indecomposables} implies the assertion.
\end{proof}

\begin{exmp}\label{example: PG of triv}
Let $V_\bullet \in \sVect_\kk$ be a simplicial vector space, let $\Sigma_\bullet V$ be the Kan suspension of $V_\bullet$ (see~\cite[Section~III.5]{GoerssJardine}), and let $\triv(\Sigma_\bullet V)$ be a trivial simplicial coalgebra (Example~\ref{example: trivial coalgebra}). Using Proposition~\ref{proposition: kan loop group is almost free} one can show that $PG(\triv \Sigma_{\bullet} V) \cong \free(V_\bullet)$. Moreover, 
the adjoint map
\begin{equation}\label{equation: triv to free}
\triv(\Sigma_{\bullet} V) \to \barW U^r(\free(V_\bullet))
\end{equation}
is a weak equivalence by Theorem~\ref{theorem: kan loop}.
\end{exmp}

\begin{dfn}\label{definition: barW-equivalence}
A map $f\colon L'_\bullet \to L_\bullet$ in $\srLie$ is an \emph{$\kk$-equivalence} if and only if $\barW U^r(f)$ is a weak equivalence in $\sotrcoalg$.
\end{dfn}

\begin{exmp}\label{example: barW-equivalences}
By Corollary~\ref{corollary: barW Ur preserves weak equivalences}, any weak equivalence in $\srLie$ is an $\kk$-equivalence. Furthermore, by Theorem~\ref{theorem: kan loop}, if $f\colon C_\bullet \to D_\bullet$ is a weak equivalence in $\sotrcoalg$, then $PG(f)$ is an $\kk$-equivalence.
\end{exmp}

\begin{lmm}\label{lemma: barW-equivalences of algebras closed under pushouts}
Let
$$
\begin{tikzcd}
X_\bullet \arrow{d}{g} \arrow{r}{f}
&X'_\bullet \arrow{d}{g'} \\
Y_\bullet \arrow{r}
& Y'_\bullet
\end{tikzcd}
$$
be a pushout square in the category $\sAlg^{aug}$ of simplicial augmented associative algebras over the field $\kk$. Suppose that $\barW(g)$ is a weak equivalence of simplicial vector spaces and $f$ is an almost-free morphism. Then $\barW(g')$ is also a weak equivalence.
\end{lmm}

\begin{proof}
Since the model structure on the category $\sAlg^{aug}$ is left proper (see~\cite[Section~2.3 and Example~2.7]{Rezk02}) and the functor $\barW$ preserves weak equivalences (see Proposition~\ref{proposition: twisted bar preserves weak equivalences}), we can assume that $X_\bullet, Y_\bullet$ are almost-free objects in $\sAlg^{aug}$ and $g\colon X_\bullet \to Y_\bullet$ is an almost-free morphism. Then both $X'_\bullet, Y'_\bullet$ are almost-free, and so by Proposition~\ref{proposition: twisted bar and indecomposables}, it suffices to show that 
$$\Sigma_{\bullet}Q(g')\colon \Sigma_{\bullet}Q(X'_\bullet) \to \Sigma_{\bullet}Q(Y'_\bullet) $$
is a weak equivalence in $\sVect_{\kk}$. However, by the assumption, the morphism $Q(g)$ is a weak equivalence and the functor $Q\colon \sAlg^{aug} \to \sVect_{\kk}$ preserves cofibrations and colimits. Since the model category $\sVect_{\kk}$ is left proper, the lemma follows.
\end{proof}

\begin{thm}\label{theorem:model structure srlie, barW-equivalence} There exists a simplicial combinatorial left proper model structure on the category $\srLie$ such that a map $f\colon L'_\bullet \to L_\bullet$ is

\begin{itemize}
\item a \emph{weak equivalence} if and only if $f$ is an $\kk$-equivalence;
\item a \emph{cofibration} if and only if $f$ is a cofibration in the model structure of Theorem~\ref{theorem:modelsrlie}.
\item a \emph{fibration} if and only if $f$ has the right lifting property with respect to all $\kk$-acyclic cofibrations.
\end{itemize}
\end{thm}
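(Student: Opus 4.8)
The plan is to present the required model category as the \emph{left Bousfield localization} of the simplicial, combinatorial, left proper model category $(\srLie,\mathcal{W}_{\srLie},\calC,\calF)$ of Theorem~\ref{theorem:modelsrlie} at the class $\mathcal{W}_{\kk}$ of $\kk$-equivalences, leaving the (generating) cofibrations $I_{\Lie}$ untouched. For this I would check the hypotheses of the recognition theorem \cite[Proposition~A.2.6.15]{HTT} (a form of J.~Smith's theorem): that $\mathcal{W}_{\kk}$ contains $\mathcal{W}_{\srLie}$, has the two-out-of-three property and is stable under retracts; that $\mathcal{W}_{\kk}$ is an accessible class of morphisms; and that the class $\calC\cap\mathcal{W}_{\kk}$ of $\kk$-acyclic cofibrations is stable under pushout and transfinite composition. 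The first point is immediate: the inclusion $\mathcal{W}_{\srLie}\subset\mathcal{W}_{\kk}$ is Proposition~\ref{remark: barW Ur preserves weak equivalences}, while two-out-of-three and retract-stability of $\mathcal{W}_{\kk}$ follow from the definition of an $\kk$-equivalence together with the same properties for the weak equivalences of $\sotrcoalg$. Once all three points are in place, \cite[Proposition~A.2.6.15]{HTT} produces a combinatorial model structure with weak equivalences $\mathcal{W}_{\kk}$, cofibrations $\calC$, and fibrations characterized by the right lifting property against $\calC\cap\mathcal{W}_{\kk}$; this structure is left proper since left Bousfield localizations of left proper combinatorial model categories are left proper, and it is simplicial because the simplicial enrichment and the cofibrations are unchanged while the new fibrations form a subclass of the old ones, so the pushout--product axiom persists (compare \cite[Theorem~4.1.1]{Hirschhorn03}).

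For the accessibility of $\mathcal{W}_{\kk}$ I would write it as the preimage of the class of weak equivalences of $\sotrcoalg$ under the functor $\Fun([1],\srLie)\to\Fun([1],\sotrcoalg)$ induced by $\barW U^r$. Since $\sotrcoalg$ is combinatorial (Theorem~\ref{theorem:modelsotrcoalg}), its weak equivalences form an accessible, accessibly embedded subcategory of $\Fun([1],\sotrcoalg)$; and $\barW U^r=T\circ\iota\circ N\circ U^r$ is an accessible functor, because $U^r\colon\srLie\xrightarrow{\ \sim\ }\sprHopf$ is an equivalence (Theorem~\ref{theorem: hopf are equivalent to lie}), the nerve $N$ is assembled degreewise from tensor powers and hence preserves filtered colimits, the functor $\iota$ is the inclusion \eqref{equation: horizontally reduced} of the reflective subcategory $\sostrcoalg\subset\sstrcoalg$ whose reflector~\eqref{equation: left adjoint to iota} preserves filtered colimits, and $T$ is a right adjoint between presentable categories. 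The preimage of an accessible, accessibly embedded class of arrows under an accessible functor is again accessible and accessibly embedded (a standard fact in the theory of accessible categories), whence $\mathcal{W}_{\kk}$ is accessible.

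The remaining and decisive point is the stability of $\calC\cap\mathcal{W}_{\kk}$ under pushout and transfinite composition, and here Proposition~\ref{proposition: chain coalgebra and chain complex} is the essential input. By Remark~\ref{remark:almost free} every cofibration is a retract of an almost-free morphism, and the almost-free morphisms are themselves stable under pushout and transfinite composition, so it suffices to analyse almost-free morphisms lying in $\mathcal{W}_{\kk}$, and in checking the relevant cellular closure conditions one may take the source to be almost-free as well. For such an $f\colon L'_\bullet\hookrightarrow L'_\bullet\sqcup\free(V_\bullet)$ with almost-free source, Proposition~\ref{proposition: chain coalgebra and chain complex} identifies $\oblv\,\barW U^r(-)$ naturally with the Kan suspension $\Sigma_\bullet\Abxi(-)$; since $\Abxi$ is the left adjoint of~\eqref{equation: quillen adjunction, abxi triv} it carries $f$ to the split inclusion $\Abxi(L'_\bullet)\hookrightarrow\Abxi(L'_\bullet)\oplus V_\bullet$, so $f$ is an $\kk$-equivalence precisely when $V_\bullet$ is an acyclic simplicial vector space, and conversely attaching a free part on an acyclic $V_\bullet$ yields an $\kk$-equivalence. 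Acyclicity of the attached simplicial vector spaces is visibly preserved by the pushouts and filtered colimits out of which cofibrations are built (using that $\Abxi$ commutes with these colimits), so $\calC\cap\mathcal{W}_{\kk}$ has the required stability. I expect this last step --- the reduction of $\kk$-equivalences among almost-free objects, via Proposition~\ref{proposition: chain coalgebra and chain complex}, to weak equivalences after the left Quillen abelianization functor $\Abxi$ --- to be the only genuinely non-formal part of the argument; everything else is an application of the combinatorial recognition theorem \cite[Proposition~A.2.6.15]{HTT}.
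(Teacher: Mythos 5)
Your overall strategy coincides with the paper's: apply the combinatorial recognition theorem \cite[Proposition~A.2.6.15]{HTT} keeping the generating cofibrations $I_{\Lie}$ of Theorem~\ref{theorem:modelsrlie}, obtain accessibility of $\mathcal{W}_{\kk}$ by exhibiting it as the preimage of the weak equivalences of $\sotrcoalg$ under $\barW U^r$ (the paper packages this as perfectness via \cite[Corollaries~A.2.6.8 and~A.2.6.14]{HTT}), and let Proposition~\ref{proposition: chain coalgebra and chain complex} carry the non-formal weight by converting the pushout question into one about $\Abxi$, which preserves colimits and degreewise injections, in the left proper category $\sVect_{\kk}$. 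The only structural difference is which closure condition you verify (closure of $\calC\cap\mathcal{W}_{\kk}$ under pushout and transfinite composition, versus the paper's ``pushouts of $\kk$-equivalences along almost-free maps remain $\kk$-equivalences''); both forms of the theorem apply.

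The genuine gap is the clause ``in checking the relevant cellular closure conditions one may take the source to be almost-free as well.'' In the pushout of a trivial cofibration $f\colon A\to B$ along an arbitrary $g\colon A\to A'$, the object $A'$ need not be cofibrant, so Proposition~\ref{proposition: chain coalgebra and chain complex} does not apply to $f'\colon A'\to A'\sqcup_A B$: your cokernel argument does show that $\Abxi(f')$ is a weak equivalence (since $\coker\Abxi(f')\cong\coker\Abxi(f)$ in the abelian category $\sVect_{\kk}$), but this only translates back into ``$f'$ is an $\kk$-equivalence'' when source and target are almost-free. To justify the reduction one must take an almost-free replacement $\widetilde{A}'\to A'$, lift $g$, compare the two pushouts, and recognize the comparison map as a pushout of an ordinary weak equivalence along a cofibration --- i.e., one must invoke the \emph{left properness of the model structure of Theorem~\ref{theorem:modelsrlie}} together with Proposition~\ref{remark: barW Ur preserves weak equivalences}. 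This is precisely the step the paper makes explicit, and it is not free of content, since that left properness itself rests on Remark~\ref{remark: free lie algebras, fresse}. A smaller imprecision: ``$V_\bullet$ is an acyclic simplicial vector space'' should read ``$\coker\Abxi(f)$ is acyclic,'' because $V_\bullet$ is only an almost-simplicial vector space and the $d_0$ on the cokernel depends on the attaching data; this does not, however, affect the invariance of the cokernel under pushout that your argument actually uses.
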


\begin{proof}

We construct the desired model structure by using Proposition~A.2.6.15 from~\cite{HTT}. Let $I_\Lie$ be the generating set of cofibrations from Theorem~\ref{theorem:modelsrlie} and let $\mathcal{W}_{\kk}$ be the class of morphisms in $\srLie$ spanned by $\kk$-equivalences. It is enough to prove that
\begin{enumerate}
\item the class $\mathcal{W}_\kk$ is perfect, see~\cite[Definition~A.2.6.12]{HTT}. We recall that a class of morphisms $\mathcal{W}$ is called \emph{perfect} if all isomorphisms are in $\mathcal{W}$, $\mathcal{W}$ has the two-out-of-three property, $\mathcal{W}$ is closed under filtered colimits, and there is a (small) subset $\mathcal{W}_0 \subset \mathcal{W}$ which generates $\mathcal{W}$ by filtered colimits.
\item a pushout of $g\in \mathcal{W}_{\kk}$ along an almost-free morphisms is again in $\mathcal{W}_{\kk}$;
\item if $g\colon L'_\bullet \to L_\bullet$ is a morphism in $\srLie$ which has the right lifting property with respect to all morphisms in $I_\Lie$, then $g$ is in $\mathcal{W}_{\kk}$.
\end{enumerate}

By the definition, the class $\mathcal{W}_{\kk}$ is the preimage $(\barW U^r)^{-1}(\mathcal{W}_{\CoAlg})$, where $\mathcal{W}_{\CoAlg}$ is the class of weak equivalences in $\sotrcoalg$. We claim that the class $\mathcal{W}_{\CoAlg}$ is perfect. Indeed, any isomorphism is in $\mathcal{W}_{\CoAlg}$; $\mathcal{W}_{\CoAlg}$ has the two-out-of-three-property; $\mathcal{W}_{\CoAlg}$ is closed under filtered colimits; and finally, $\mathcal{W}_{\CoAlg}$ is an accessible subcategory in the arrow category $\mathrm{Ar}(\sotrcoalg)$ by Theorem~\ref{theorem:modelsotrcoalg} and~\cite[Corollary~A.2.6.8]{HTT}. Since the functor $\barW U^r$ preserves filtered colimits, the class $\mathcal{W}_{\kk}$ is also perfect by~\cite[Corollary~A.2.6.14]{HTT}.

For the second part, let 
$$
\begin{tikzcd}
X_\bullet \arrow{d}{g} \arrow{r}{f}
&X'_\bullet \arrow{d}{g'} \\
Y_\bullet \arrow{r}
& Y'_\bullet
\end{tikzcd}
$$
be a pushout square in $\srLie$ such that $g$ is an $\kk$-equivalence and $f$ is almost-free. We will show that $g'$ is again an $\kk$-equivalence. Since the functor $U^r\colon \rLie \to \Alg$ is left adjoint, the diagram
$$
\begin{tikzcd}
U^r(X_\bullet) \arrow{d}{U^r(g)} \arrow{r}{U^r(f)}
&U^r(X'_\bullet) \arrow{d}{U^r(g')} \\
U^r(Y_\bullet) \arrow{r}
& U^r(Y'_\bullet)
\end{tikzcd}
$$
is a pushout square in $\sAlg$, and so is in $\sAlg^{aug}$. Moreover, $U^r(f)$ is an almost-free morphism of augmented associative algebras and $\barW(U^r(g))$ is a weak equivalence of simplicial vector spaces. The claim follows now by Lemma~\ref{lemma: barW-equivalences of algebras closed under pushouts}.

Finally, if $g\colon L'_\bullet \to L_\bullet$ is a morphism in $\srLie$ which has the right lifting property with respect to all morphisms in $I_\Lie$, then $g$ is a weak equivalence by Theorem~\ref{theorem:modelsrlie}. By Corollary~\ref{corollary: barW Ur preserves weak equivalences}, any weak equivalence is in $\mathcal{W}_{\kk}$.
\end{proof}

%Let us denote by $\mathsf{sLie}^{r}_{\xi}$ the category $\srLie$ equipped with the model structure of Theorem~\ref{theorem:model structure srlie, barW-equivalence}. The choice of notation is explained by the following observation. We will see in Section~\ref{section: F-complete} that a connected simplicial restricted Lie algebra $L_\bullet$ is a fibrant object in $\mathsf{sLie}^r_{\xi}$ if and only if homotopy groups $\pi_i(L_\bullet),i>0$ are derived $\xi$-adic complete modules.

Using the notation from the proof of Theorem~\ref{theorem:model structure srlie, barW-equivalence} we write $\mathcal{W}_{\CoAlg}$ for the class of weak equivalences in the category $\sotrcoalg$ and we write $\mathcal{W}_{\kk}$ for the class of $\kk$-equivalences in $\srLie$. Moreover, we use $\mathcal{W}_{\Lie}$ for the class of usual weak equivalences in $\srLie$ from Theorem~\ref{theorem:modelsrlie}.

\begin{dfn}\label{definition: infinite-categories}
We denote by $\sCA_0=\sotrcoalg[\mathcal{W}^{-1}_{\CoAlg}]$ the $\infty$-category obtained from the (ordinary) category $\sotrcoalg$ by inverting the class $\mathcal{W}_{\CoAlg}$, see \cite[Definition~1.3.4.1]{HigherAlgebra}. Similarly, $\sL=\srLie[\mathcal{W}^{-1}_{\Lie}]$ is the $\infty$-category obtained from $\srLie$ by inverting $\mathcal{W}_{\Lie}$, and $\sLxi=\srLie[\mathcal{W}^{-1}_{\kk}]$ is the $\infty$-category obtained from $\srLie$ by inverting $\mathcal{W}_{\kk}$.
\end{dfn}

\begin{thm}\label{theorem: coalgebras and lie algebras}
The adjoint pair $PG \dashv \barW U^r$ induces an equivalence of $\infty$-categories
\begin{equation}\label{equation: equivalence of infty-categories}
%\begin{tikzcd}
%PG: \sotrcoalg \arrow[shift left=.6ex]{r}
%&\mathsf{sLie}^r_{\xi} :\barW U^r. \arrow[shift left=.6ex,swap]{l}
%\end{tikzcd}
PG: \sCA_0 \simeq \sLxi : \barW U^r.
\end{equation}
\end{thm}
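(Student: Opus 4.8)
The strategy is to argue directly with the localized $\infty$-categories of Definition~\ref{definition: infinite-categories}, using the model structures only through the preceding results. The starting point is that \emph{both} adjoint functors are homotopical for the pertinent weak equivalences: $PG$ sends weak equivalences of reduced simplicial truncated coalgebras to $\kk$-equivalences by Example~\ref{example: barW-equivalences}, and $\barW U^r$ sends $\kk$-equivalences to weak equivalences of $\sotrcoalg$ tautologically, by Definition~\ref{definition: barW-equivalence}. Consequently the $1$-categorical adjunction $PG\dashv\barW U^r$ descends to an adjunction of $\infty$-categories $\overline{PG}\colon\sCA_0\rightleftarrows\sLxi\colon\overline{\barW U^r}$, with unit and counit obtained from the $1$-categorical ones by localization. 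It then suffices to prove that (i) the unit is an equivalence --- equivalently, that $\overline{PG}$ is fully faithful --- and (ii) $\overline{\barW U^r}$ is conservative: if the left adjoint of an adjunction is fully faithful and the right adjoint is conservative, the adjunction is an equivalence (apply $\overline{\barW U^r}$ to the counit and use a triangle identity together with invertibility of the unit).

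For (i), recall from Section~\ref{section: kan loop functor} that $PG\dashv\barW U^r$ is the composite of the adjunction $G\dashv\barW$ with the adjoint equivalence $P\dashv U^r$ of Theorem~\ref{theorem: hopf are equivalent to lie}. Hence for $C_\bullet\in\sotrcoalg$ the $1$-categorical unit factors as $C_\bullet\to\barW G(C_\bullet)\xrightarrow{\ \cong\ }\barW U^r PG(C_\bullet)$, the first map being a weak equivalence by Theorem~\ref{theorem: kan loop} and the second an isomorphism; as this map computes the unit of the descended adjunction, the unit is an equivalence on all of $\sCA_0$. For (ii), a morphism of $\sLxi$ is an equivalence if and only if its image in $\mathrm{Ho}(\srLie_\xi)$ is an isomorphism; choosing a representing morphism $f$ of $\srLie$, this holds iff $f$ is a $\kk$-equivalence, which by Definition~\ref{definition: barW-equivalence} holds iff $\barW U^r(f)$ is a weak equivalence of $\sotrcoalg$, i.e.\ iff $\overline{\barW U^r}$ carries the morphism to an equivalence of $\sCA_0$. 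So $\overline{\barW U^r}$ reflects equivalences, and (i) together with (ii) yields the equivalence~\eqref{equation: equivalence of infty-categories}.

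The only non-formal ingredient here is Theorem~\ref{theorem: kan loop}; the remainder is bookkeeping with Dwyer--Kan localizations and adjunctions, so I do not expect a serious obstacle. The one point to handle with care is that the unit of the descended $\infty$-adjunction really is represented by the $1$-categorical unit (so that Theorem~\ref{theorem: kan loop} applies), which is guaranteed precisely because both $PG$ and $\barW U^r$ are homotopical. Alternatively, one can present~\eqref{equation: equivalence of infty-categories} as the equivalence of underlying $\infty$-categories of a Quillen equivalence between the simplicial model categories $\sotrcoalg$ and $\srLie_\xi$, in the spirit of~\cite[Appendix~A.2]{HTT}; there the extra input is that $PG$ is a left Quillen functor, after which the same computation via Theorem~\ref{theorem: kan loop} identifies the derived unit as a weak equivalence while $\barW U^r$ detects weak equivalences by construction, making the Quillen adjunction a Quillen equivalence.
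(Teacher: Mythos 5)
Your proposal is correct and follows essentially the same route as the paper: both descend the $1$-categorical adjunction to the localizations using that $PG$ preserves weak equivalences (Example~\ref{example: barW-equivalences}) and that $\barW U^r$ sends $\kk$-equivalences to weak equivalences by Definition~\ref{definition: barW-equivalence}, and both reduce everything to Theorem~\ref{theorem: kan loop}. The paper phrases the conclusion as "unit and counit are both natural equivalences" rather than "fully faithful left adjoint plus conservative right adjoint," but the verification of the counit there implicitly uses exactly your conservativity observation, so the two arguments coincide in substance.
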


\begin{proof} By Definition~\ref{definition: barW-equivalence}, we have $\barW U^r(\mathcal{W}_{\kk})= \mathcal{W}_{\CoAlg}$, and by Theorem~\ref{theorem: kan loop}, we have $PG(\mathcal{W}_{\CoAlg})\subset \mathcal{W}_{\kk}$. Therefore $PG$ and $\barW U^r$ induce functors 
$$PG\colon \sCA_0 \to \sLxi,\;\; \barW U^r\colon \sLxi \to \sCA_0  $$
between localized $\infty$-categories. Similarly, natural transformations
$$\id_{\sotrcoalg} \to \barW U^r \circ PG,  \;\; PG\circ \barW U^r \to \id_{\srLie}$$
induces natural transformations
$$\id_{\sCA_0}\to \barW U^r \circ PG, \;\; PG\circ \barW U^r \to \id_{\sLxi} $$
for functors between obtained $\infty$-categories. By Theorem~\ref{theorem: kan loop}, these natural transformations are natural equivalences. This implies the theorem.
\end{proof}

\begin{rmk}\label{remark: not quillen adjunction}
We are not aware whether or not the adjoint pair
\begin{equation*}
\begin{tikzcd}
PG: \sotrcoalg \arrow[shift left=.6ex]{r}
&\srLie :\barW U^r. \arrow[shift left=.6ex,swap]{l}
\end{tikzcd}
\end{equation*}
is a Quillen adjunction for any of two model structures on $\srLie$.
\end{rmk}

Let $\mathsf{C}$ be a simplicial model category. Then the (simplicial) nerve $N(\mathsf{C}^{\circ})$ forms an $\infty$-category, where $\mathsf{C}^{\circ}\subset \mathsf{C}$ is the full simplicial subcategory of fibrant-cofibrant objects. The $\infty$-category $N(\mathsf{C}^{\circ})$ is called the \emph{underlying $\infty$-category} of $\mathsf{C}$, see~\cite[Section~A.2]{HTT}.

\begin{prop}\label{proposition: coalgebras and lie, infty-categorical, part1}
%$\quad$
%\begin{enumerate}
%\item The $\infty$-categories $\sL$ and $\sCA_0$ are complete and cocomplete.
%\item 
The $\infty$-categories $\sL$, $\sLxi$, and $\sCA_0$ are presentable. In particular, $\sL$, $\sLxi$, and $\sCA_0$ are complete and cocomplete.
%\item The adjoint pair of derived functors
%$$
%\begin{tikzcd}
%\free: D_{\geq 0}(\Vect_\kk) \arrow[shift left=.6ex]{r}
%&\sL :\oblv \arrow[shift left=.6ex,swap]{l}
%\end{tikzcd}
%$$
%is monadic, and the functor $\oblv$ creates (homotopy) limits and sifted (homotopy) colimits.
%\item The adjoint pair of derived functors
%$$
%\begin{tikzcd}
%\oblv: \sCA_0 \arrow[shift left=.6ex]{r}
%&D_{>0}(\Vect_\kk) :\Sym^{tr} \arrow[shift left=.6ex,swap]{l}
%\end{tikzcd}
%$$
%is comonadic, and the functor $\oblv$ creates (homotopy) colimits.
%\end{enumerate}
\end{prop}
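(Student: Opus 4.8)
The plan is to deduce everything from the standard fact that the $\infty$-category underlying a combinatorial simplicial model category is presentable. By Theorems~\ref{theorem:modelsrlie}, \ref{theorem:modelsotrcoalg}, and~\ref{theorem:model structure srlie, barW-equivalence}, the category $\srLie$ with weak equivalences $\mathcal{W}_{\Lie}$, the category $\sotrcoalg$ with weak equivalences $\mathcal{W}_{\CoAlg}$, and the category $\srLie_\xi$ with weak equivalences $\mathcal{W}_{\kk}$ all carry simplicial combinatorial model structures. First I would recall that for any combinatorial simplicial model category $\mathsf{C}$ with weak equivalences $\mathcal{W}$, the $\infty$-categorical localization $\mathsf{C}[\mathcal{W}^{-1}]$ of Definition~\ref{definition: infinite-categories} (i.e. of~\cite[Definition~1.3.4.1]{HigherAlgebra}) is equivalent to the underlying $\infty$-category $N(\mathsf{C}^{\circ})$ of fibrant--cofibrant objects; this is the standard comparison between the Dwyer--Kan localization and the homotopy coherent nerve of fibrant--cofibrant objects (see~\cite[Subsection~1.3.4]{HigherAlgebra} and~\cite[Section~A.2]{HTT}). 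Applying this to the three model categories above identifies $\sL$, $\sLxi$, and $\sCA_0$ with the underlying $\infty$-categories of $\srLie$, $\srLie_\xi$, and $\sotrcoalg$ respectively.

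Next I would invoke~\cite[Proposition~A.3.7.6]{HTT}: the underlying $\infty$-category of a combinatorial simplicial model category is presentable. Since $\srLie$, $\srLie_\xi$, and $\sotrcoalg$ are each combinatorial and simplicial, this shows that $\sL$, $\sLxi$, and $\sCA_0$ are presentable. The last assertion is then automatic: a presentable $\infty$-category is cocomplete by definition and complete by~\cite[Corollary~5.5.2.4]{HTT}.

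The only genuinely non-formal ingredient is the first step, namely identifying the abstract localization $\mathsf{C}[\mathcal{W}^{-1}]$ with $N(\mathsf{C}^{\circ})$, but this is entirely routine for combinatorial simplicial model categories, so I expect no real obstacle. For completeness I would also point out two alternative routes for $\sLxi$: by Theorem~\ref{theorem: coalgebras and lie algebras} one already has $\sCA_0 \simeq \sLxi$, so it suffices to treat $\sL$ and $\sCA_0$; and since the cofibrations of the two model structures of Theorem~\ref{theorem: intro, B} on $\srLie$ coincide while $\mathcal{W}_{\Lie}\subset\mathcal{W}_{\kk}$, the model category $\srLie_\xi$ is a left Bousfield localization of $\srLie$, which again forces presentability of $\sLxi$.
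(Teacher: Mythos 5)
Your proof is correct and follows essentially the same route as the paper's: identify each localization $\mathsf{C}[\mathcal{W}^{-1}]$ with the underlying $\infty$-category $N(\mathsf{C}^\circ)$ (the paper cites~\cite[Theorem~1.3.4.20]{HigherAlgebra} for this) and then invoke~\cite[Proposition~A.3.7.6]{HTT} together with the combinatorial simplicial model structures of Theorems~\ref{theorem:modelsrlie}, \ref{theorem:modelsotrcoalg}, and~\ref{theorem:model structure srlie, barW-equivalence}. The alternative routes you mention for $\sLxi$ are valid shortcuts but not needed.
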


\begin{proof}
Let us denote by $\mathsf{sLie}^{r}_{\xi}$ the category $\srLie$ equipped with the model structure of Theorem~\ref{theorem:model structure srlie, barW-equivalence}. By~\cite[Theorem~1.3.4.20]{HigherAlgebra}, there are equivalences of $\infty$-categories:
$$\sL=\srLie[\mathcal{W}^{-1}_{\Lie}]\simeq N((\srLie)^o), \;\;\;\sLxi=\srLie[\mathcal{W}^{-1}_{\kk}]\simeq N((\mathsf{sLie}^{r}_{\xi})^o),$$ 
and 
$$\sCA_0=\sotrcoalg[\mathcal{W}^{-1}_{\CoAlg}] \simeq N((\sotrcoalg)^o). $$
By \cite[Proposition~A.3.7.6]{HTT} and Theorems~\ref{theorem:modelsrlie},~\ref{theorem:modelsotrcoalg}, \ref{theorem:model structure srlie, barW-equivalence}, we get the proposition.
\end{proof}

Recall that $\mathsf{sLie}^{r}_{\xi}$ is the model category from Theorem~\ref{theorem:model structure srlie, barW-equivalence}. Note that the identity functor $\id_{\srLie}$ produces a Bousfield localization
\begin{equation}\label{equation: quillen adjunction, bousfield}
\begin{tikzcd}
\srLie \arrow[shift left=.6ex]{r}
&\mathsf{sLie}^{r}_{\xi}, \arrow[shift left=.6ex,swap]{l}
\end{tikzcd}
\end{equation}
which by~\cite[Proposition~5.2.4.6]{HTT} induces the adjoint pair
\begin{equation}\label{equation: infty-adjunction, bousfield}
\begin{tikzcd}
L_\xi:\sL \arrow[shift left=.6ex]{r}
&\sLxi : \iota_\xi \arrow[shift left=.6ex,swap]{l}
\end{tikzcd}
\end{equation}
between underlying $\infty$-categories.

\begin{prop}\label{proposition: F-complete is an accessible localization}
The functor $\iota_{\xi}$ is fully faithful. In particular, the full subcategory $\sLxi \subset \sL$ is a localization.
\end{prop}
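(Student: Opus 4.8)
The plan is to establish that $\iota_\xi$ is fully faithful by identifying it, on fibrant--cofibrant objects, with the nerve of an inclusion of a full simplicial subcategory, and then to deduce the accessibility of the localization from the presentability of both $\infty$-categories. First I would recall that, by construction in~\eqref{equation: infty-adjunction, bousfield}, the adjunction $L_\xi \dashv \iota_\xi$ is the one induced via~\cite[Proposition~5.2.4.6]{HTT} by the Quillen adjunction $\id\colon \srLie \rightleftarrows \mathsf{sLie}^{r}_{\xi}$; thus $\iota_\xi$ is (modeled by) the right derived functor of the right Quillen functor $\id\colon \mathsf{sLie}^{r}_{\xi} \to \srLie$.

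The key observation, and the one point that requires a little care, is that the model structures of Theorems~\ref{theorem:modelsrlie} and~\ref{theorem:model structure srlie, barW-equivalence} share the same underlying category, the same class of cofibrations, and the same simplicial enrichment, and that every object of $\srLie$ is fibrant. Hence a fibrant--cofibrant object of $\mathsf{sLie}^{r}_{\xi}$ (a cofibrant object that is local with respect to the $\kk$-equivalences) is automatically fibrant and cofibrant in $\srLie$, so $(\mathsf{sLie}^{r}_{\xi})^\circ$ is a \emph{full} simplicial subcategory of $(\srLie)^\circ$. Since every such object is already fibrant in $\srLie$, no nontrivial fibrant replacement is needed, and the right derived functor of $\id$ restricts to the literal identity on fibrant--cofibrant objects. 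Therefore, under the equivalences $\sLxi \simeq N((\mathsf{sLie}^{r}_{\xi})^\circ)$ and $\sL \simeq N((\srLie)^\circ)$ used in the proof of Proposition~\ref{proposition: coalgebras and lie, infty-categorical, part1}, the functor $\iota_\xi$ is identified with the nerve of a full subcategory inclusion, hence is fully faithful. (Equivalently, one may argue abstractly that a right Quillen functor between a model category and one of its left Bousfield localizations induces a fully faithful functor on underlying $\infty$-categories.)

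For the second assertion, a fully faithful right adjoint exhibits $\sLxi$ as a reflective localization of $\sL$, with associated localization functor $\iota_\xi \circ L_\xi$ (it is an idempotent localization functor by the formal yoga of reflective subcategories, e.g.\ \cite[Proposition~5.2.7.4]{HTT}). Both $\sL$ and $\sLxi$ are presentable by Proposition~\ref{proposition: coalgebras and lie, infty-categorical, part1}, so I would invoke~\cite[Proposition~5.5.4.15]{HTT} — a reflective subcategory of a presentable $\infty$-category yields an accessible localization precisely when it is itself accessible (equivalently, presentable) — to conclude that $\sLxi \subseteq \sL$ is an accessible localization. I expect no genuine obstacle here: the whole argument is formal, the only thing to be vigilant about being the bookkeeping in the previous paragraph that lets one replace the derived functor $\iota_\xi$ by a plain subcategory inclusion, which is exactly where the fibrancy of all objects of $\srLie$ enters.
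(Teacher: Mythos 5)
Your argument is correct, and it is essentially an unwinding of exactly what the paper relies on: the paper's entire proof is a citation of \cite[Appendix~A.3.7]{HTT}, where the general statement that a left Bousfield localization of a combinatorial simplicial model category induces an accessible (reflective) localization of underlying $\infty$-categories is proved along the same lines you describe. Your key bookkeeping point — that the two model structures share cofibrations and enrichment and that every object of $\srLie$ is fibrant, so $(\mathsf{sLie}^{r}_{\xi})^{\circ}$ sits inside $(\srLie)^{\circ}$ as a full simplicial subcategory — is precisely the content behind that citation.
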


\begin{proof}
See~\cite[Appendix~A.3.7]{HTT}.
\end{proof}

\section{Homotopy theory of simplicial restricted Lie algebras}\label{section:homotopy theory}
This section is a technical heart of the paper. In our proof of Theorem~\ref{theorem: intro, D}, we mimic the classical proof (given e.g in~\cite[Theorem~11.1.1]{MayPonto}) that a simply-connected space $X$ is $p$-complete if and only if its homotopy groups $\pi_n(X),n\geq 2$ are derived $p$-complete. In order to transfer this proof to our context, we need to show that the category $\srLie$ shares a lot of common properties and features with the category of (pointed and connected) spaces; in this section we carefully check all required properties. We state our results in a form minimal enough for the proof of Theorem~\ref{theorem: intro, D}, although many of them can be easily generalized.

In Section~\ref{section:homotopy excision theorem} we prove that the homotopy excision theorem (Theorem~\ref{theorem: homotopy excision}) holds in the category $\srLie$ of simplicial restricted Lie algebras. 

Inspired by Proposition~\ref{proposition: chain coalgebra and chain complex}, we define \emph{homology} $H_*(L_\bullet;M)$ (Definition~\ref{definition: chain complex}) and \emph{cohomology groups} $H^*(L_\bullet;M)$ (Definition~\ref{definition: cochain complex}) of $L_\bullet \in \srLie$ with coefficients in any module $M$ over the ring $\kk\{\xi\}$. We reformulate Proposition~\ref{proposition: chain coalgebra and chain complex} as follows: $\pi_*(\barW U^r (L_\bullet))$ is the homology groups $H_*(L_\bullet;\kk)$ with coefficients in the trivial module $\kk$. Following~\cite{Ellis93}, we prove the Hurewicz theorem (Theorem~\ref{theorem: hurewicz theorem}), which combined with the homotopy excision theorem gives the relative Hurewicz theorem (Corollary~\ref{corollary: relative hurewicz theorem}). Finally, in Proposition~\ref{proposition: homology of abelian Lie algebra}, we compute the homology groups $H_*(\trivxi(M);\kk)$ of an abelian restricted Lie algebra $\trivxi(M)$ provided $M$ is a torsion-free $\kk\{\xi\}$-module.

In Section~\ref{section: postnikov system} we show that any $L_\bullet\in \srLie$ has a \emph{Postnikov tower} and we prove in Proposition~\ref{proposition: fibration with EM as fiber} that this Postnikov tower has \emph{$k$-invariants} provided $\pi_0(L_\bullet)=0$. In Section~\ref{section: principal fibration} we study group objects and \emph{principal fibrations} in $\srLie$. We define principal fibrations in Definition~\ref{definition: lie principal fibration} and we show that they are a homotopy invariant of the base in Lemma~\ref{lemma: principal fibrations are homotopy invariant}. Moreover, in Theorem~\ref{theorem: principal fibrations, classifying}, we construct a \emph{classifying object} $BM_\bullet$ (Definition~\ref{definition: classifying space for principal fibrations}) for principal $M_\bullet$-fibrations. Finally, we observe in Corollary~\ref{corollary: fibration with KM is principal} that each stage in the Postnikov tower of $L_\bullet$, $\pi_0(L_\bullet)=0$ is weakly equivalent to a principal fibration.

In Section~\ref{section: serre spectral sequence} we adapt classical approaches of~\cite[Chapter~9.4]{Spanier94} and~\cite[Chapter~15]{Switzer} to obtain an analog of the \emph{Serre spectral sequence} for principal fibrations in the category $\srLie$ (Theorem~\ref{theorem: SSS}). At the time of writing, we are not aware how to generalize our analog of the Serre spectral sequence to arbitrary fibrations, see Remark~\ref{remark: any fibration}.

\subsection{Homotopy excision theorem}\label{section:homotopy excision theorem} %In this section, we will see that a weak version of the Blakers-Massey theorem still holds in the category of simplicial restricted Lie algebras $\srLie$.
We fix some notation. 
\begin{dfn}\label{definition: connectivity, objects}
A simplicial restricted Lie algebra $L_\bullet\in \srLie$ is \emph{$n$-connected} if $\pi_i(L_\bullet)=0$ for all $i\leq n$. A simplicial restricted Lie algebra $L_\bullet$ is \emph{connected} if it is $0$-connected, i.e. $\pi_0(L_\bullet)=0$. 
\end{dfn}

\begin{dfn}\label{definition: connectivity, morphisms}
A morphism $f\colon L'_\bullet \to L_\bullet$ in $\srLie$ is \emph{$n$-connected} if the induced map on homotopy groups 
$$\pi_{i}(f)\colon \pi_i(L'_\bullet) \to \pi_i(L_\bullet)$$
is an isomorphism for $i< n$ and a surjection for $i=n$. 
\end{dfn}

Let $\fib(f)\in \srLie$ denote the \emph{homotopy fiber} of a morphism $f\colon L'_\bullet \to L_\bullet$. Then $f$ is $n$-connected if and only if $\fib(f)$ is $(n-1)$-connected and $\pi_0(f)$ is a surjection. Similarly, we write $\cofib(f)\in \srLie$ for the \emph{homotopy cofiber} of the morphism~$f$.

\begin{thm}[Homotopy excision theorem]\label{theorem: homotopy excision}
Let $f\colon L'_\bullet \to L_\bullet$ be a $n$-connected morphism in $\srLie$, $L'_\bullet$ is connected, and $n\geq 0$. Then the natural map (in the homotopy category $\mathrm{Ho}(\srLie)$)
\begin{equation}\label{equation: fiber to cofiber}
\fib(f) \to \Omega \cofib(f)
\end{equation}
is $(n+1)$-connected.
\end{thm}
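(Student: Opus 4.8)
This is a form of the Blakers--Massey theorem for $\srLie$, and the plan is to prove it by a cellular reduction to an explicit universal example, which is then analysed via Lazard elimination. Since $\fib(-)$, $\cofib(-)$ and the comparison map \eqref{equation: fiber to cofiber} are homotopy invariant, I would first use the model structure of Theorem~\ref{theorem:modelsrlie} and Remark~\ref{remark:almost free} to replace $f$ by an almost-free morphism $f\colon L'_\bullet\hookrightarrow L_\bullet$ with $L'_\bullet$ almost-free and connected, and then, using that $f$ is $n$-connected, a standard relative-CW-approximation argument to arrange that $L_\bullet$ is built from $L'_\bullet$ by attaching cells of dimension $\geq n+1$: $L_\bullet=\colim_i L^{(i)}_\bullet$ with $L^{(0)}_\bullet=L'_\bullet$ and each $L^{(i)}_\bullet$ the homotopy pushout of $0\leftarrow\free(S^{k_i-1})\to L^{(i-1)}_\bullet$ with $k_i\geq n+1$; here $S^k\in\sVect_{\kk}$ denotes $\kk(\Delta^k)/\kk(\partial\Delta^k)$. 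All three constructions commute with the filtered homotopy colimits defining $L_\bullet$, and $n$-connectivity of a map in $\srLie$ is detected after filtered colimits (as $\pi_*$ is finitary, $\oblv$ creating sifted colimits by Proposition~\ref{proposition:category property of rLie}). Carrying out the usual cellular induction of the Blakers--Massey theorem inside $\srLie$ then reduces the statement to the universal \emph{free} cell attachment
\[
f\colon L'_\bullet\hookrightarrow L_\bullet=L'_\bullet\sqcup\free(S^k),\qquad k\geq n+1,
\]
with $L'_\bullet$ an arbitrary connected object; in every cell-attachment square one has $\cofib(f)\simeq\free(S^k)$ anyway, because quotienting $L_\bullet$ by the ideal generated by $L'_\bullet$ collapses both $L'_\bullet$ and all mixed brackets.

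In this universal case $f$ is a section of the retraction $r\colon L_\bullet\to L'_\bullet$ annihilating $\free(S^k)$; as $r$ is a degreewise surjection it is a fibration, and $\fib(r)=\ker(r)$ is the ideal generated by $\free(S^k)$. Since $\free$ preserves coproducts and classical Lazard elimination---the ideal generated by $W$ in $\free(V\oplus W)$ is $\free\bigl(U^r(\free(V))\otimes W\bigr)$---is natural in $V$ and $W$, this ideal is levelwise, hence simplicially, $\free\bigl(U^r(L'_\bullet)\otimes S^k\bigr)$ (using $U^r\free=T$ from Definition~\ref{definition:UEA}); by homotopy invariance of both sides, $U^r$ preserving weak equivalences by Proposition~\ref{remark: barW Ur preserves weak equivalences}, the same description holds for an arbitrary connected $L'_\bullet$. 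Using the elementary fact that the homotopy fiber of a section of a fibration is the loop object of the fiber, one gets $\fib(f)\simeq\Omega\fib(r)\simeq\Omega\free\bigl(U^r(L'_\bullet)\otimes S^k\bigr)$, and a direct check identifies the map \eqref{equation: fiber to cofiber} with $\Omega\free(\varepsilon\otimes\id_{S^k})$, where $\varepsilon\colon U^r(L'_\bullet)\to\kk$ is the augmentation.

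It remains to count connectivity. Because $L'_\bullet$ is connected, $QU^r(L'_\bullet)\cong\Abxi(L'_\bullet)$ is connected, so $\varepsilon$ is $1$-connected and therefore $\varepsilon\otimes\id_{S^k}$ is $(k+1)$-connected between $(k-1)$-connected simplicial vector spaces. By the splitting $\oblv\circ\free(-)=\bigoplus_{j\geq1}(\bfLie_j\otimes(-)^{\otimes j})^{\Sigma_j}$ of Remark~\ref{remark: free lie algebras, fresse}, the functor $\free$ alters a map between $(k-1)$-connected simplicial vector spaces below degree $2k$ only through its $j=1$ summand, so $\free(\varepsilon\otimes\id_{S^k})$ is again $(k+1)$-connected and hence $\Omega\free(\varepsilon\otimes\id_{S^k})$ is $k$-connected. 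As $k\geq n+1$, the map \eqref{equation: fiber to cofiber} is $(n+1)$-connected in the universal case, and this finishes the reduction.

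The main obstacle is making the cellular dévissage rigorous: one must run the Blakers--Massey induction along the tower $L^{(0)}_\bullet\to L^{(1)}_\bullet\to\cdots$, passing from the trivial attaching maps handled above to arbitrary ones while keeping the connectivity bound uniform. This is where the genuine, unstable content of the theorem lies, and I would follow the classical topological argument, comparing each cell-attachment square with the corresponding split square by a gluing (Mayer--Vietoris type) argument. By contrast, the Lie-theoretic input---the Lazard-elimination identification of $\fib(f)$ for a free cell attachment---is the only ingredient particular to $\srLie$; everything else is formal manipulation of homotopy (co)fibers in a pointed model category.
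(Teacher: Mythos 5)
There is a genuine gap, and you have in fact located it yourself: the entire cellular d\'evissage is deferred. What you actually prove is the ``split'' universal case, where $L_\bullet = L'_\bullet \sqcup \free(S^k)$ is a coproduct with a free object (zero attaching map); there Lazard elimination identifies $\fib(f)$ with $\Omega\free(U^r(L'_\bullet)\otimes S^k)$ and the connectivity count goes through (modulo some care at $k=1$, where you need the splitting of $\varepsilon\otimes\id$ to get surjectivity of the $j\geq 2$ Lie-power summands on $\pi_{k+1}$). But for a cell attachment along a \emph{nonzero} attaching map $\free(S^{k-1})\to L^{(i-1)}_\bullet$, the cofiber is still $\free(S^k)$ while the fiber is not $\Omega\free(U^r(L^{(i-1)}_\bullet)\otimes S^k)$, and comparing the two squares is exactly where the content of Blakers--Massey lives. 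The classical topological argument for that comparison is not ``formal manipulation of homotopy (co)fibers in a pointed model category'' --- the statement is false in a general pointed model category --- but rests on general-position/deformation arguments (pushing maps off the centers of cells) that have no analogue in $\srLie$; and the descent-based proofs of generalized Blakers--Massey are also unavailable because, as Remark~\ref{remark: sl is not a topos} points out, $\sL$ is not an $\infty$-topos. So the step you label ``the main obstacle'' is not an obstacle you can route around by citing the classical proof: it is the theorem.

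The paper avoids attaching maps altogether. After replacing $f$ by an almost-free $n$-reduced cofibration (Lemma~\ref{lemma: reduced replacement}), the statement is reformulated as the assertion that $L_\bullet//L'_\bullet\to L_\bullet/L'_\bullet$ (underlying-vector-space quotient versus Lie-algebra quotient) is $(n+2)$-connected. This is first checked for maps $\free(U_\bullet)\to\free(W_\bullet)$ induced by $n$-reduced inclusions of simplicial vector spaces, via the filtration of $L^r_m(W_\bullet)$ with associated graded $(\bfLie_m\otimes U_\bullet^{\otimes(m-j)}\otimes V_\bullet^{\otimes j})^{\Sigma_{m-j}\times\Sigma_j}$ --- a computation in the same spirit as your Lazard-elimination step, but relative and filtered rather than split. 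The general case then follows by resolving $f$ by the bar construction $B_\bullet(L)=(\free\circ\oblv)^{\circ(\bullet+1)}(L)$, where every level is of the free form just treated, and comparing the two resulting bisimplicial spectral sequences. If you want to salvage your approach, the honest fix is to replace the cellular induction by exactly this kind of simplicial resolution by free objects; your universal-case analysis would then become (a split version of) the paper's Lemma~\ref{lemma: homotopy excision, free}.
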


\begin{rmk}\label{remark: sl is not a topos}
Since the underlying $\infty$-category $\sL$ of $\srLie$ is not an $\infty$-topos, we can not apply (at least directly) the generalized Blakers-Massey theorem, see e.g.~\cite{ABFJ20}. Instead, we will prove the homotopy excision theorem using model-theoretic approach, and only in the particular case as above.
\end{rmk}

\begin{dfn}\label{definition: reduced}
A map $f\colon L'_\bullet \to L_\bullet$ in $\srLie$ is \emph{$n$-reduced} if the maps $f_i\colon L'_i \to L_i$  are isomorphisms for $i\leq n$. 
\end{dfn}
Note that an almost-free $n$-reduced map in $\srLie$ is $n$-connected. The opposite is true up to weak equivalences and the proof is standard.

\begin{lmm}\label{lemma: reduced replacement}
Let $f\colon L'_\bullet\to L_\bullet$ be a $n$-connected morphism in $\srLie$, $n\geq 0$. Then there exists a commutative diagram
$$
\begin{tikzcd}
\widetilde{L'}_\bullet \arrow{d} \arrow{r}{\tilde{f}}
&\widetilde{L}_\bullet \arrow{d} \\
L'_\bullet \arrow{r}{f}
& L_\bullet
\end{tikzcd}
$$
such that the vertical arrows are weak equivalences in $\srLie$ and the map $\tilde{f}$ is almost-free and $n$-reduced. \qed %i.e.$\tilde{f}_i\colon \widetilde{L}_i \to \widetilde{M}_i$ is an isomorphism for $i\leq n$. \qed
\end{lmm}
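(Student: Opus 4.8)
The plan is to imitate the standard construction of a CW approximation of a map relative to a subcomplex (as in~\cite[Chapter~V]{GoerssJardine} for simplicial sets, and in~\cite{Goerss90} and~\cite{Miller84} for simplicial commutative algebras), with ``cells'' replaced by free generators.

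First I would reduce to the case in which $L'_\bullet$ is itself almost-free: by the functorial factorization of Remark~\ref{remark:almost free}, write $0\to L'_\bullet$ as an almost-free morphism $0\to\widetilde{L'}_\bullet$ followed by an acyclic fibration $q'\colon\widetilde{L'}_\bullet\to L'_\bullet$, and set $h=f\circ q'\colon\widetilde{L'}_\bullet\to L_\bullet$; this is still $n$-connected, and since $L'_\bullet$ is connected and $\pi_0(h)$ is onto $\pi_0(L_\bullet)$, the target $L_\bullet$ is connected as well, so no $\pi_0$-issues arise. It then suffices to produce an almost-free $n$-reduced morphism $\tilde{f}\colon\widetilde{L'}_\bullet\to\widetilde{L}_\bullet$ together with a weak equivalence $g\colon\widetilde{L}_\bullet\to L_\bullet$ with $g\circ\tilde{f}=h$; taking $q'$ and $g$ as the vertical maps then gives the required square.

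I would construct $\widetilde{L}_\bullet$ as an increasing union of almost-free extensions
\[
\widetilde{L'}_\bullet=M^{(n)}_\bullet\subseteq M^{(n+1)}_\bullet\subseteq\cdots,\qquad \widetilde{L}_\bullet=\colim_k M^{(k)}_\bullet,
\]
where $M^{(k+1)}_\bullet$ is obtained from $M^{(k)}_\bullet$ by attaching a (possibly infinite) family of free generators in simplicial degree $k+1$ --- that is, by a pushout along a coproduct of generating cofibrations $\free\bigl(\kk(\partial\Delta^{k+1})\hookrightarrow\kk(\Delta^{k+1})\bigr)\in I_\Lie$ --- together with a map $g_k\colon M^{(k)}_\bullet\to L_\bullet$ extending $h$ and inducing an isomorphism on $\pi_i$ for $i<k$ and a surjection on $\pi_k$. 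The base case $k=n$ is exactly the $n$-connectivity of $h$. For the inductive step one uses that attaching free generators in degree $k+1$ does not alter $\pi_i$ for $i\le k-1$ (the generating cofibration is an isomorphism in simplicial degrees $<k+1$): first attach generators $x_\alpha$ whose boundary $d_0x_\alpha$ represents a $\kk\{\xi\}$-module generating set of $\ker\bigl(\pi_k(M^{(k)}_\bullet)\to\pi_k(L_\bullet)\bigr)$ --- attaching one such $x_\alpha$ makes the whole submodule $\kk\{\xi\}\,d_0x_\alpha$ bound, since $d_0(\xi^m x_\alpha)=\xi^m d_0x_\alpha$, so that $\pi_k$ becomes an isomorphism --- and then form the coproduct with a family of copies of the $k$-connected almost-free object on one generator in simplicial degree $k+1$, one copy for each element of a generating set of $\pi_{k+1}(L_\bullet)$ over the current object, with the generator of that copy mapped to the corresponding element; this makes $\pi_{k+1}$ surjective without disturbing $\pi_{\le k}$. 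Passing to the colimit, $g$ is a weak equivalence. By construction $\tilde{f}\colon\widetilde{L'}_\bullet\hookrightarrow\widetilde{L}_\bullet$ is almost-free with all new generators in degrees $\ge n+1$; by Remark~\ref{remark: almost-simplicial vector spaces} this gives $\widetilde{L}_i=\widetilde{L'}_i$ for $i\le n$, so $\tilde{f}$ is $n$-reduced, and $g\circ\tilde{f}=g_n=h=f\circ q'$.

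The only step needing real work --- and the reason the lemma is not entirely formal in our setting --- is verifying the effect of the two elementary moves (attaching a degree-$(k+1)$ generator with prescribed boundary; forming a coproduct with a $k$-connected almost-free object) on the homotopy groups in the relevant range. In the simplicial-set and simplicial-group cases this is immediate from the cellular chains, but here the homotopy cofiber of such a pushout is $\free$ applied to an Eilenberg--MacLane object rather than an Eilenberg--MacLane object itself, so one argues instead with the normalized Moore complex, using the splitting $\oblv\circ\free(V_\bullet)\cong\bigoplus_{m\ge1}(\bfLie_m\otimes V_\bullet^{\otimes m})^{\Sigma_m}$ of Remark~\ref{remark: free lie algebras, fresse} and the consequent estimate that any Lie monomial involving a new degree-$(k+1)$ generator is $k$-connected. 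I expect this bookkeeping, rather than any conceptual point, to be the main obstacle; the rest is left properness of $\srLie$ (Theorem~\ref{theorem:modelsrlie}) together with routine manipulation of pushouts and filtered colimits.
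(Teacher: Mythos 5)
Your construction is correct and is exactly the "standard" cell-attachment argument that the paper declines to write out (the lemma is stated with an omitted proof): replace $L'_\bullet$ by an almost-free model, then attach free generators in degrees $\ge n+1$ to kill kernels and create surjections, using that the almost-simplicial complement spanned by the new generators vanishes degreewise in degrees $\le k$, so that the Moore complex of $M^{(k+1)}_\bullet$ is an extension of a $k$-connected complex by that of $M^{(k)}_\bullet$ and the resulting long exact sequence identifies the kernel killed on $\pi_k$ with the image of the boundary map. The only blemish is the parenthetical appeal to connectedness of $L'_\bullet$, which is not a hypothesis of the lemma and is also not needed: the base case is literally the $n$-connectivity of $h$, and for $k=0$ the image of the boundary map is automatically the restricted Lie ideal generated by the classes $[d_0x_\alpha]$ (via chains such as $[s_0y,x_\alpha]$), which together with the inclusion $\operatorname{im}\partial\subseteq\ker(\pi_0(M^{(0)}_\bullet)\to\pi_0(L_\bullet))$ makes your module-generator recipe work verbatim.
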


Let $f \colon L'_\bullet \to L_\bullet$ be an almost-free $n$-reduced map. Notice that $f$ is an inclusion of the underlying simplicial vector spaces. Let us denote by $L_\bullet//L'_\bullet$ the quotient simplicial vector space
$$L_\bullet//L'_\bullet = \oblv(L_\bullet)/\oblv(L'_\bullet).$$
Similarly, we denote by $L_\bullet / L'_\bullet$ the underlying simplicial vector space of the quotient Lie algebra, i.e. $L_\bullet / L'_\bullet$ is the underlying simplicial vector of the coequalizer
$$
\begin{tikzcd}%[column sep=large]
\mathrm{coeq}( L'_\bullet \arrow[shift left=.75ex]{r}{f}
  \arrow[shift right=.75ex,swap]{r}{0}
&
L_\bullet).
\end{tikzcd}
$$
There are equivalences 
$$\oblv(\fib(f))\simeq \Sigma^{-1} L_\bullet // L'_\bullet, \;\;\; \oblv(\cofib(f)) \simeq L_\bullet/L'_\bullet, $$
and the map~\eqref{equation: fiber to cofiber} is equivalent to the desuspension of the canonical map
\begin{equation}\label{equation: quotient to quotient}
L_\bullet // L'_\bullet \to L_\bullet / L'_\bullet
\end{equation}
in $\sVect_\kk$. Thus, Theorem~\ref{theorem: homotopy excision} is equivalent to the following statement
\begin{itemize}
\item[$(\ast)$]\label{homotopy excision, reduced} Let $f\colon L'_\bullet \to L_\bullet$ be an almost-free $n$-reduced map in $\srLie$, $L'_0=0$, and $n\geq 0$, then the map~\eqref{equation: quotient to quotient} is $(n+2)$-connected.
\end{itemize}

We first show $(\ast)$ for free maps.
\begin{lmm}\label{lemma: homotopy excision, free}
Let $i\colon U_\bullet \to W_\bullet$ be $n$-reduced inclusion of simplicial vector spaces, and let $f=\free(i)\colon \free(U_\bullet) \to \free(W_\bullet)$ be the induced map between free simplicial restricted Lie algebras. Then $(\ast)$ holds for $f$ provided $U_0=0$ and $n\geq 0$.
\end{lmm}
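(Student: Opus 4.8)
The plan is to reduce $(\ast)$ for $f=\free(i)$ to a connectivity estimate for the ``mixed'' summands in the polynomial decomposition of $\oblv\circ\free$, and then to prove that estimate by a K\"unneth argument which becomes available once one observes that $\oblv\circ\free$ preserves weak equivalences.

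\emph{Setting up.} Since $\sVect_\kk$ is semisimple, I would first split $W_\bullet\cong U_\bullet\oplus V_\bullet$ with $V_\bullet=W_\bullet/U_\bullet$; as $i$ is $n$-reduced, $V_j=0$ for $j\le n$. The functor $\free$ preserves coproducts, so $\free(W_\bullet)\cong\free(U_\bullet)\sqcup\free(V_\bullet)$, and the quotient of a coproduct in $\rLie$ by the ideal generated by one summand is the other summand; hence $\free(W_\bullet)/\langle\free(U_\bullet)\rangle\cong\free(V_\bullet)$, so that~\eqref{equation: quotient to quotient} becomes the obvious map
$$q\colon\ \oblv\free(U_\bullet\oplus V_\bullet)\big/\oblv\free(U_\bullet)\ \longrightarrow\ \oblv\free(V_\bullet).$$
Applying Remark~\ref{remark: free lie algebras, fresse} degreewise, decomposing $(U_\bullet\oplus V_\bullet)^{\otimes m}$ into $\Sigma_m$-orbits, and using the projection formula $(\bfLie_m\otimes\Ind_H^{\Sigma_m}M)^{\Sigma_m}\cong(\Res_H\bfLie_m\otimes M)^{H}$ for finite $H$, one sees that $q$ is a split surjection of simplicial vector spaces whose kernel is
$$K_\bullet\ \cong\ \bigoplus_{m\ge 2}\ \bigoplus_{k=1}^{m-1}\ \bigl(\bfLie_m\otimes U_\bullet^{\otimes(m-k)}\otimes V_\bullet^{\otimes k}\bigr)^{\Sigma_{m-k}\times\Sigma_k},$$
where $\Sigma_{m-k}\times\Sigma_k\subset\Sigma_m$ acts on $\bfLie_m$ by restriction and on the two groups of tensor factors by permutation. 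So $q$ is $(n+2)$-connected if and only if each of these ``mixed'' summands is $(n+1)$-connected.

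\emph{The estimate.} Each summand vanishes in simplicial degrees $\le n$ (because $V_\bullet^{\otimes k}$ does, $k\ge1$), hence is $n$-reduced and in particular $n$-connected; the content is to gain one further degree. Here I would use that $\free\colon\sVect_\kk\to\srLie$ is left Quillen (Theorem~\ref{theorem:modelsrlie}) and that every object of $\sVect_\kk$ is cofibrant, so that $\oblv\circ\free$ preserves all weak equivalences; since the weight and bidegree decompositions are natural, each functor $(U_\bullet,V_\bullet)\mapsto(\bfLie_m\otimes U_\bullet^{\otimes a}\otimes V_\bullet^{\otimes b})^{\Sigma_a\times\Sigma_b}$ preserves weak equivalences in each variable, so its homotopy depends only on $\pi_*(U_\bullet)$ and $\pi_*(V_\bullet)$. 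For $b=1$ this functor is linear in $V_\bullet$, hence of the form $L_{a}(U_\bullet)\otimes V_\bullet$ with $L_a(U_\bullet)=(\bfLie_{a+1}\otimes U_\bullet^{\otimes a})^{\Sigma_a}$, which is $0$-reduced since $U_0=0$ and $a\ge 1$; as $U_\bullet$ is $0$-connected and $V_\bullet$ is $n$-connected, the K\"unneth theorem shows that $L_a(U_\bullet)\otimes V_\bullet$ is $(n+1)$-connected. For $b\ge 2$ one reduces, using that the functor preserves weak equivalences, to the case where $U_\bullet$ and $V_\bullet$ are sums of Eilenberg--MacLane objects, decomposes the summand by cross effects, and checks that every resulting piece is a tensor product of a $0$-connected factor built from the $U_\bullet$-inputs and an $n$-connected factor built from the $V_\bullet$-inputs, hence again $(n+1)$-connected.

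\emph{The obstacle.} The delicate point is precisely this last step: over a field of characteristic $p$ the strict $\Sigma$-invariants occurring here do \emph{not} preserve homotopical connectivity, so one cannot simply quote the na\"ive bound that a reduced bi-homogeneous functor of bidegree $(a,b)$ with $a,b\ge1$ sends a $c$-connected and a $c'$-connected object to a $(c+c'+1)$-connected one. The way around it is to exploit the bidegree structure so that the single unit of connectivity one needs always comes from tensoring against a factor on which the relevant symmetric group acts trivially --- for $b=1$ this is the factor $V_\bullet$ itself, and for $b\ge 2$ it is arranged after passing to Eilenberg--MacLane inputs and cross effects, with the remaining divided-power contributions disposed of by an induction on the weight $m$ --- thereby reducing every case to a genuine application of the K\"unneth theorem. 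I expect organizing this bookkeeping cleanly, rather than any single hard computation, to be the main work.
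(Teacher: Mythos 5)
Your reduction is the same as the paper's: after splitting into weight pieces, everything comes down to showing that each mixed summand $(\bfLie_m\otimes U_\bullet^{\otimes a}\otimes V_\bullet^{\otimes b})^{\Sigma_a\times\Sigma_b}$ with $a,b\geq 1$ is $(n+1)$-connected (the paper organizes these as the graded pieces of a filtration of $L^r_m(W_\bullet)$ rather than as a direct sum, which changes nothing), and your treatment of the summands with $a=1$ or $b=1$ is correct. The gap sits exactly where you place the ``main work'': for $a,b\geq 2$ you propose to exhibit each piece, after passing to Eilenberg--MacLane inputs and cross effects, as a tensor product of a $0$-connected functor of $U_\bullet$ with an $n$-connected functor of $V_\bullet$, and no such factorization is available. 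The module $\bfLie_m$ genuinely couples the two symmetric-group actions: already for $p=2$, $m=4$ one has $\dim\bfLie_4=6$, which is not divisible by $|\Sigma_2\times\Sigma_2|=4$, so $\bfLie_4$ is not free --- equivalently not projective, since $\Sigma_2\times\Sigma_2$ is a $2$-group --- over $\kk[\Sigma_2\times\Sigma_2]$; hence $(\bfLie_4\otimes U_\bullet^{\otimes2}\otimes V_\bullet^{\otimes2})^{\Sigma_2\times\Sigma_2}$ neither splits off a trivially-acted tensor factor nor compares with coinvariants via the norm map, and its homotopy is not computed by any K\"unneth formula. An induction on the weight $m$ cannot help, because the difficulty lives inside a single graded piece.

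What closes the case $a,b\geq 2$ is a Dold--Puppe-type connectivity lemma that uses no information about the $\Sigma_a\times\Sigma_b$-module $\bfLie_m$: if $G(-,-)$ is any bifunctor of vector spaces with $G(0,V)=G(U,0)=0$, prolonged degreewise to simplicial vector spaces, and if $U_0=0$ while $V_i=0$ for $i\leq n$, then $G(U_\bullet,V_\bullet)$ is $(n+1)$-connected. Indeed, prolonged functors preserve simplicial homotopy equivalences, so one may replace $U_\bullet$ by the diagonal of a bisimplicial vector space $\widetilde U_{s,t}$ with $\widetilde U_{0,t}=0$ for all $t$ (a levelwise Kan suspension of $\Gamma(\Sigma^{-1}NU_\bullet)$); then $G(U_\bullet,V_\bullet)$ becomes the diagonal of $(s,t)\mapsto G(\widetilde U_{s,t},V_t)$, which vanishes whenever $s=0$ or $t\leq n$, so its normalized bicomplex is concentrated in total degrees $\geq n+2$. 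Each mixed summand is visibly reduced in each variable, so this supplies the missing unit of connectivity; with that substitution the rest of your argument goes through. (Alternatively, one can sidestep the mixed invariant pieces altogether by Lazard elimination as in Remark~\ref{remark: hilton-milnor}, which rewrites $\oblv\free(W_\bullet)/\oblv\free(U_\bullet)$ as $\oblv\free(V_\bullet)$ plus a sum of $\oblv\free(w(U_\bullet,V_\bullet))$ over basic products $w$ involving both letters, where $w(U_\bullet,V_\bullet)$ is an honest tensor product and K\"unneth applies directly.)
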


\begin{proof}
Set $V_\bullet = W_\bullet/U_\bullet$ the quotient simplicial vector space, $V_i=0$ if $i\leq n$. By Proposition~\ref{proposition: free lie algebras, fresse}, there is a splitting
$$\oblv\circ\free(X_\bullet) \cong \bigoplus_{m\geq 1} L^r_m(X_\bullet) =\bigoplus_{m\geq 1}(\bfLie_m \otimes X_\bullet^{\otimes m})^{\Sigma_m}, $$
where $X_\bullet \in \sVect_\kk$. Therefore the map~\eqref{equation: quotient to quotient} is the direct sum of surjective maps
\begin{equation}\label{equation: surjective maps eq1}
p_m\colon L^r_m(W_\bullet)/L^r_m(U_\bullet) \to L^r_m(V_\bullet), \;\; m\geq 1, 
\end{equation}
and it suffices to show that each map $p_m$ is $(n+2)$-connected. 

The simplicial vector space $L^r_m(W_\bullet)$ has an increasing filtration
$$0=F_{-1}L^r_m(W_\bullet)\subset F_0L^r_m(W_\bullet) \subset F_1L^r_m(W_\bullet) \subset \ldots \subset F_mL^r_m(W_\bullet)=L^r_m(W_\bullet) $$
such that the quotient vector space $F_j/F_{j-1} \in \sVect_\kk$ is isomorphic to
$$F_jL^r_m(W_\bullet)/F_{j-1}L^r_m(W_\bullet)\cong(\bfLie_m\otimes U_\bullet^{\otimes (m-j)}\otimes V_\bullet^{\otimes j})^{\Sigma_{m-j}\times \Sigma_j}, $$
where $0\leq j\leq m$. Since $U_0=0$ and $V_i=0$ for $i\leq n$, we obtain that
$$\pi_i(F_jL^r_m(W_\bullet)/F_{j-1}L^r_m(W_\bullet))=0 $$
for $i\leq n+1$ and $0<j<m$. Therefore, 
$$\pi_i(F_{m-1}L^r_m(W_\bullet)/F_{0}L^r_m(W_\bullet))=0 $$
if $i\leq n+1$, and so each map $p_m$ is $(n+2)$-connected, $m\geq 1$.
\end{proof}

Next, we will resolve any almost-free $n$-reduced map in $\srLie$ by free maps. Let $L\in \rLie$ be a restricted Lie algebra. Recall that the \emph{bar construction} $B_\bullet(L)$ of $L$ is the simplicial restricted Lie algebra defined as follows
$$B_s(L)=(\free\circ\oblv)^{\circ(s+1)}(L), \;\; s\geq 0, $$
where the face operators are induced by the counit map $\free\circ \oblv \to \id$, and the degeneracy operators are induced by the unit map $\id \to \oblv \circ \free$. Notice that there is a canonical map
$$B_\bullet(L) \to L \in \srLie$$ 
to the constant simplicial restricted Lie algebra $L$, and this map is a weak equivalence. We are now ready to prove Theorem~\ref{theorem: homotopy excision}.

\begin{proof}[Proof of Theorem~\ref{theorem: homotopy excision}]
Let $f\colon L'_\bullet \to L_\bullet$ be an almost-free $n$-reduced map in $\srLie$, $L'_0=0$, $n\geq 0$. Consider the induced map of bisimplicial objects
$$B_\bullet(f)\colon B_\bullet(L'_\bullet) \to B_\bullet(L_\bullet). $$
Note that each $B_s(f), s\geq 0$ is the map of free simplicial restricted Lie algebras induced by the $n$-reduced inclusion of simplicial vector spaces
$$\oblv \circ (\free\circ \oblv)^{\circ s}(L'_\bullet) \to  \oblv \circ (\free\circ \oblv)^{\circ s}(L_\bullet).$$
Thus, by Lemma~\ref{lemma: homotopy excision, free}, the map
$$B_s(L_\bullet)//B_s(L'_\bullet) \to B_s(L_\bullet)/B_s(L'_\bullet) $$
is $(n+2)$-connected for each $s\geq 0$. Moreover, 
$$\pi_i(B_s(L_\bullet)//B_s(L'_\bullet))=0$$
for $s\geq 0$ and $i\leq n$.

There are (chains of) weak equivalences
$$
L_\bullet//L'_\bullet \simeq d(B_\bullet(L_\bullet)//B_\bullet(L'_\bullet)),
$$
$$
L_\bullet/L'_\bullet \simeq d(B_\bullet(L_\bullet)/B_\bullet(L'_\bullet)),
$$
where $d\colon \ssVect_\kk \to \sVect_\kk$, $d(V_{\bullet,\bullet})_s=V_{s,s}$, $V_{\bullet,\bullet} \in \ssVect_{\kk}$ is the diagonal simplicial vector space. Thus, there is a map of strongly convergent spectral sequences
$$
\begin{tikzcd}
E^1_{s,t}=\pi_t(B_s(L_\bullet)//B_s(L'_\bullet)) \arrow[d] \arrow[r, Rightarrow]
&\pi_{s+t}(L_\bullet//L'_\bullet) \arrow[d] \\
\widetilde{E}^1_{s,t}=\pi_t(B_s(L_\bullet)/B_s(L'_\bullet)) \arrow[r, Rightarrow]
& \pi_{s+t}(L_\bullet/L'_\bullet),
\end{tikzcd}
$$
where the differentials act as follows $d^r\colon E^r_{s,t}\to E^r_{s-r,t+r-1}$, $\tilde{d}^r\colon \widetilde{E}^r_{s,t}\to \widetilde{E}^r_{s-r,t+r-1}$.

Note that $E^1_{s,t}=\widetilde{E}^1_{s,t}=0$ if $t\leq n$, $E^1_{s,n+1} \to \widetilde{E}^1_{s,n+1}$ is an isomorphism for all $s\geq 0$, and $E^1_{s,n+2} \twoheadrightarrow \widetilde{E}^1_{s,n+2}$ is a surjection for all $s\geq 0$. Therefore the map $E^2_{s,n+1} \to \widetilde{E}^2_{s,n+1}$ is an isomorphism for all $s\geq 0$, and $E^2_{0,n+2} \twoheadrightarrow \widetilde{E}^2_{0,n+2}$ is a surjection. Hence, $E^{\infty}_{0,n+1} \cong \widetilde{E}^{\infty}_{0,n+1}$, $E^{\infty}_{1,n+1} \cong \widetilde{E}^{\infty}_{1,n+1}$, and $E^{\infty}_{0,n+2} \twoheadrightarrow \widetilde{E}^{\infty}_{0,n+2}$ is a surjection. This implies the theorem.
\end{proof}

\subsection{Homology and cohomology}\label{section: homology and cohomology}

Recall that $\kk\{\xi\}$ is the twisted polynomial ring (Definition~\ref{definition: twisted polynomial ring}), $\Mod_{\kk\{\xi\}}$ is the abelian category of left $\kk\{\xi\}$-modules, and 
$$\trivxi \colon \Mod_{\kk\{\xi\}} \to \rLie $$
is the functor which maps a left module $M$ to the restricted Lie algebra $\trivxi(M)$ with the underlying vector spaces equal to $M$, $p$-operation given by $\xi$, and with identically zero bracket.

We observe that the functor $\trivxi$ has a left adjoint
$$\Ab\colon \rLie \to \Mod_{\kk\{\xi\}} $$
given by $L\mapsto L/[L,L]$, where $[L,L]\subset L$ is the (restricted) Lie ideal generated by all elements of the form $[x,y]$, $x,y\in L$.

We extend the adjoint pair $\Ab \dashv \trivxi$ degreewise to the adjunction
\begin{equation}\label{equation: quillen adjunction, ab trivxi}
\begin{tikzcd}
\Ab: \srLie \arrow[shift left=.6ex]{r}
&\sMod_{\kk\{\xi\}} :\trivxi \arrow[shift left=.6ex,swap]{l}
\end{tikzcd}
\end{equation}
between categories of simplicial objects. Note that the adjunction~\eqref{equation: quillen adjunction, ab trivxi} is a Quillen adjunction because the composite $$\oblv\circ \trivxi\colon \sMod_{\kk\{\xi\}} \to \sVect_{\kk}$$ maps a left $\kk\{\xi\}$-module to the underlying vector space, and so the functor $\trivxi$ preserves weak equivalences and fibrations.%, see also~\cite[Propositions~3.3,~3.9]{Frank15}.

\begin{rmk}\label{remark: ab maps almost-free to projective}
Note that the simplicial left $\kk\{\xi\}$-module $\Ab(L_\bullet)$ is degreewise projective (and even free) provided a simplicial restricted Lie algebra $L_\bullet\in \srLie$ is almost-free.
\end{rmk}

The pair~\eqref{equation: quillen adjunction, ab trivxi} induces the adjoint pair of derived functors
\begin{equation}\label{equation: derived adjunction, ab trivxi}
\begin{tikzcd}
\mathbb{L}\Ab: \sL \arrow[shift left=.6ex]{r}
&D_{\geq 0}(\Mod_{\kk\{\xi\}}) :\trivxi \arrow[shift left=.6ex,swap]{l}
\end{tikzcd}
\end{equation}
between underlying $\infty$-categories.

Recall that we denote by ${\Mod}^{\kk\{\xi\}}$ the abelian category of \emph{right} $\kk\{\xi\}$-modules.
\begin{dfn}\label{definition: chain complex}
Let $M\in \Mod^{\kk\{\xi\}}$ be a right $\kk\{\xi\}$-module. We define the \emph{chain complex} $\widetilde{C}_*(L_\bullet;M)\in D(\Vect_\kk)$ of $L_\bullet\in \srLie$ with coefficients in $M$ by the formula
$$\widetilde{C}_*(L_\bullet;M) = \Sigma M\otimes_{\kk\{\xi\}} \mathbb{L}\Ab(L_\bullet). $$
Here $-\otimes_{\kk\{\xi\}} - $ is the derived tensor product
$$-\otimes_{\kk\{\xi\}}-\colon D({\Mod}^{\kk\{\xi\}}) \times D(\Mod_{\kk\{\xi\}}) \to D(\Vect_{\kk}). $$
Furthermore, we define the \emph{$s$-th homology group} $\widetilde{H}_s(L_\bullet;M)$ of $L_\bullet\in \srLie$ with coefficients in $M$ by the next rule
$$ \widetilde{H}_s(L_\bullet;M) = \pi_s(\widetilde{C}_*(L_\bullet;M)), \;\; s\geq 0.$$
\end{dfn}

%Since the functor $\widetilde{C}_*(-,M)\colon \sL\to D_{\geq 0}(\Vect_{\kk})$ commutes with (homotopy) colimits, we have the Bousfield-Kan spectral sequence which computes the homology groups of any simplicial restricted Lie algebra from the homology groups of constant ones.

%\begin{prop}\label{proposition: spectral sequence, Lie homology}
%Let $L_\bullet \in \srLie$ be a simplicial restricted Lie algebra and let $M\in \Mod^{\kk\{\xi\}}$ be a right $\Mod^{\kk\{\xi\}}$-module. Then exists a strongly convergent spectral sequence
%$$E^2_{n,m}= \pi_n \widetilde{H}_m(L_n,M) \to \widetilde{H}_{n+m}(L_\bullet,M), $$
%where $\widetilde{H}_*(L_n,M)$ is the homology groups of the constant simplicial restricted Lie algebra $L_n\in \rLie\subset \srLie$. In the spectral sequence, the differential $d^r$ acts as follows $d^r\colon E^r_{n,m} \to E^r_{n-r,m+r-1}$. \qed
%\end{prop}

Consider the field $\kk$ as a (diagonal) $\kk\{\xi\}$-bimodule with $\xi$ acting by zero, then we have $\kk\otimes_{\kk\{\xi\}} \Ab(L)=\Abxi(L)$, see formula~\eqref{equation: abxi}. Thus, Proposition~\ref{proposition: chain coalgebra and chain complex} together with the Eilenberg-Zilber theorem imply that the homology groups 
$$\widetilde{H}_*(L_\bullet;\kk)=\bigoplus_{n>0}\widetilde{H}_n(L_\bullet;\kk), \;\; L_\bullet\in \srLie $$ form naturally a graded non-unital cocommutative coalgebra. Let us denote by $H_*(L_\bullet;\kk)$ the graded \emph{coaugmented} coalgebra associated with $\widetilde{H}_*(L_\bullet;\kk)$, i.e. $$H_*(L_\bullet;\kk)=\kk\oplus \widetilde{H}_*(L_\bullet;\kk),$$
where the first summand is in degree 0.

\begin{cor}[K\"{u}nneth formula]\label{corollary: kunneth formula}
There is a natural isomorphism
$$H_*(L_\bullet\times L'_\bullet;\kk)\cong H_*(L_\bullet;\kk)\otimes H_*(L'_\bullet;\kk),$$
where $L_\bullet,L'_\bullet\in\srLie$. \qed
\end{cor}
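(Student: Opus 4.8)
The plan is to deduce the K\"unneth formula directly from the definition of $H_*(-;\kk)$ as homotopy groups of the chain coalgebra, together with the multiplicativity of all the functors involved. First I would observe that $\widetilde{C}_*(L_\bullet;\kk) = \Sigma\bigl(\kk\otimes_{\kk\{\xi\}}\mathbb{L}\Ab(L_\bullet)\bigr)$ and that, by Proposition~\ref{proposition: chain coalgebra and chain complex}, this agrees (up to natural weak equivalence) with $\oblv\circ\barW U^r(L_\bullet)$; hence $H_*(L_\bullet;\kk)\cong\pi_*\bigl(\barW U^r(L_\bullet)\bigr)$ as graded coaugmented vector spaces. So the statement reduces to showing that $\barW U^r$ carries the product $L_\bullet\times L'_\bullet$ to the tensor product of coalgebras (a Cartesian product in $\strcoalg$, by Example~\ref{example: coalgebras product and coproduct}) up to weak equivalence, and then applying the Eilenberg--Zilber theorem to pass to homotopy groups.

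Concretely, the key steps in order: (1) The universal enveloping functor $U^r$ takes the direct product $L_\bullet\times L'_\bullet$ in $\srLie$ to the tensor product $U^r(L_\bullet)\otimes U^r(L'_\bullet)$ in $\sprHopf$ --- this is exactly the observation recorded in Section~\ref{section: restricted lie algebras} that ``$U^r$ takes direct products to the tensor product of algebras,'' applied degreewise. (2) The twisted bar construction $\barW$ is monoidal in the appropriate sense: from its description $\barW = T\circ\iota\circ N$, the nerve $N$ satisfies $N_q(H\otimes H') = (H\otimes H')^{\otimes q}\cong H^{\otimes q}\otimes H'^{\otimes q} = N_qH\otimes N_qH'$ compatibly with faces and degeneracies, so $N(H_\bullet\otimes H'_\bullet)\cong N(H_\bullet)\otimes N(H'_\bullet)$ as bisimplicial coalgebras; and the codiagonal $T$ together with the diagonal comparison $d\to T$ of Theorem~\ref{theorem: artin-mazur codiagonal}(2) lets one identify $\barW(H_\bullet\otimes H'_\bullet)$ with $\barW(H_\bullet)\otimes\barW(H'_\bullet)$ up to natural weak equivalence (using the Eilenberg--Zilber equivalence for the two diagonal bisimplicial objects and that $T$ preserves levelwise weak equivalences). (3) Combining (1) and (2), $\barW U^r(L_\bullet\times L'_\bullet)$ is naturally weakly equivalent to $\barW U^r(L_\bullet)\otimes\barW U^r(L'_\bullet)$ in $\strcoalg$. (4) Apply the Eilenberg--Zilber theorem (over the field $\kk$, so no $\Tor$ terms) to get $\pi_*\bigl(\barW U^r(L_\bullet)\otimes\barW U^r(L'_\bullet)\bigr)\cong \pi_*\barW U^r(L_\bullet)\otimes\pi_*\barW U^r(L'_\bullet)$, and this isomorphism is one of coalgebras since Eilenberg--Zilber is lax symmetric monoidal. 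Translating back via $H_*(-;\kk)\cong\pi_*\barW U^r(-)$ gives the claimed natural isomorphism.

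The main obstacle I expect is step (2): carefully checking that the monoidality of $\barW$ holds on the nose at the level of nerves and then that passing through $\iota$ (the horizontal-reduction inclusion) and $T$ (the Artin--Mazur codiagonal) respects tensor products \emph{up to natural weak equivalence}. The cleanest route is probably to avoid proving $T$ is monoidal and instead work with the diagonal $d$: by Theorem~\ref{theorem: artin-mazur codiagonal}(2) there is a natural weak equivalence $dN(H_\bullet)\simeq\barW(H_\bullet)$, the diagonal $d$ of a bisimplicial coalgebra is visibly monoidal ($d(C_{\bullet,\bullet}\otimes D_{\bullet,\bullet}) = d(C_{\bullet,\bullet})\otimes d(D_{\bullet,\bullet})$), and $N(H_\bullet\otimes H'_\bullet)\cong N(H_\bullet)\otimes N(H'_\bullet)$; so $\barW(H_\bullet\otimes H'_\bullet)\simeq dN(H_\bullet\otimes H'_\bullet) \cong dN(H_\bullet)\otimes dN(H'_\bullet)\simeq \barW(H_\bullet)\otimes\barW(H'_\bullet)$, all naturally. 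One then only needs that tensoring with a fixed simplicial coalgebra preserves weak equivalences, which holds because $\kk$ is a field. A minor bookkeeping point is to confirm the coaugmentations match, i.e.\ that the splitting $H_*(-;\kk)=\kk\oplus\widetilde H_*(-;\kk)$ is compatible with the tensor product of augmented objects; this is immediate from the Eilenberg--Zilber map being a map of coaugmented coalgebras.
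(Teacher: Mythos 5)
Your argument is correct and is essentially the proof the paper leaves implicit behind the \qed: identify $H_*(-;\kk)$ with $\pi_*\barW U^r(-)$ via Proposition~\ref{proposition: chain coalgebra and chain complex} (after an almost-free replacement, using Proposition~\ref{remark: barW Ur preserves weak equivalences}), check that $\barW U^r$ converts $\times$ into $\otimes$, and finish with Eilenberg--Zilber over the field $\kk$. The only simplification available is in your steps (2)--(3): since $\barW U^r$ is a right adjoint by Theorem~\ref{theorem: kan loop}, it preserves Cartesian products on the nose, and the product in $\sotrcoalg$ is the degreewise tensor product by Example~\ref{example: coalgebras product and coproduct}, so the hands-on verification that $N$, $d$, and $T$ respect tensor products can be skipped entirely.
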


\begin{rmk}\label{remark: kunneth formula any coefficient}
We are not aware if the K\"{u}nneth formula is true for homology groups with any other coefficients.
\end{rmk}

Note that the functor $\Ab\colon \srLie \to \sMod_{\kk\{\xi\}}$ comes with the natural transformation 
\begin{equation}\label{equation: natural transformation, hurewicz}
\id \to \Ab 
\end{equation}
given by the quotient map $L_\bullet \to L_\bullet/[L_\bullet,L_\bullet]=\Ab(L_\bullet)$. This natural transformation induces the \emph{Hurewicz homomorphism}
\begin{equation}\label{equation: hurewicz homomorphism}
h\colon \pi_s(L_\bullet) \to \widetilde{H}_{s+1}(L_\bullet;\kk\{\xi\}), \;\; L_\bullet\in \srLie, \;\; s\geq 0. 
\end{equation}
We notice that both sides of~\eqref{equation: hurewicz homomorphism} are naturally endowed with an action of $\xi$. Indeed, the homology groups $\widetilde{H}_{s}(L_\bullet;\kk\{\xi\}), s\geq 1$ are left $\kk\{\xi\}$-modules by Definition~\ref{definition: chain complex}; and the $p$-operation $\xi \colon L_\bullet \to L_\bullet$, $L_\bullet\in \srLie$ is a map of \emph{simplicial sets}, so it induces a (in general, non-linear) map
\begin{equation}\label{equation: xi-action on homotopy groups}
\xi_* \colon \pi_*(L_\bullet) \to \pi_*(L_\bullet).
\end{equation}
We point out here that the Hurewicz homomorphism~\eqref{equation: hurewicz homomorphism} is compatible with these $\xi$-actions on both sides. Finally, we notice that $\pi_0(L_\bullet), L_\bullet \in \srLie$ is itself a restricted Lie algebra.

\begin{thm}[Hurewicz theorem]\label{theorem: hurewicz theorem}
Let $L_\bullet\in \srLie$ be a simplicial restricted Lie algebra. Then the Hurewicz homomorphism $h\colon \pi_0(L_\bullet) \to \widetilde{H}_1(L_\bullet;\kk\{\xi\})$ induces an isomorphism
$$\Ab(\pi_0(L_\bullet)) \cong \widetilde{H}_1(L_\bullet;\kk\{\xi\}).$$
If $\pi_i(L_\bullet)=0$ for $0\leq i \leq n$, then
$$h\colon \pi_{n+1}(L_\bullet) \xrightarrow{\cong} \widetilde{H}_{n+2}(L_\bullet; \kk\{\xi\}) $$
is an isomorphism, and 
$$h\colon \pi_{n+2}(L_\bullet) \twoheadrightarrow \widetilde{H}_{n+3}(L_\bullet; \kk\{\xi\}) $$
is a surjection.
\end{thm}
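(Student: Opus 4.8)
The plan is to deduce the theorem from a connectivity estimate for the commutator restricted ideal, using the fibre sequence $[L_\bullet,L_\bullet]^r\to L_\bullet\to\Ab(L_\bullet)$.

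First, some bookkeeping. Taking $M=\kk\{\xi\}$ (which is flat over itself) in Definition~\ref{definition: chain complex} gives a natural identification $\widetilde{H}_{s+1}(L_\bullet;\kk\{\xi\})\cong\pi_s(\mathbb{L}\Ab(L_\bullet))$ under which the Hurewicz map $h$ becomes $\pi_s$ of the unit $L_\bullet\to\mathbb{L}\Ab(L_\bullet)$. Replacing $L_\bullet$ by an almost-free model (Remark~\ref{remark:almost free}), we may assume $L_\bullet$ is cofibrant, so $\mathbb{L}\Ab(L_\bullet)=\Ab(L_\bullet)=L_\bullet/[L_\bullet,L_\bullet]^r$ and $h=\pi_s(L_\bullet\twoheadrightarrow\Ab(L_\bullet))$. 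Because $\Ab$ is a left adjoint it preserves the reflexive coequaliser that computes $\pi_0$, so $\widetilde{H}_1(L_\bullet;\kk\{\xi\})=\pi_0(\Ab L_\bullet)\cong\Ab(\pi_0 L_\bullet)$ and in degree $0$ the map $h$ is the canonical quotient $\pi_0(L_\bullet)\to\Ab(\pi_0 L_\bullet)$; this is the first assertion. Now suppose $\pi_i(L_\bullet)=0$ for $i\le n$. The quotient $q\colon L_\bullet\to\Ab(L_\bullet)$ is a degreewise surjection, hence a fibration between fibrant objects whose strict (therefore homotopy) fibre is the commutator restricted ideal $[L_\bullet,L_\bullet]^r$; its long exact homotopy sequence reduces the remaining assertions to the claim $(\ast)$: \emph{if $L_\bullet$ is cofibrant and $\pi_i(L_\bullet)=0$ for $i\le n$, then $[L_\bullet,L_\bullet]^r$ is $(n+1)$-connected.}

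To prove $(\ast)$ I would first handle the free case $L_\bullet=\free(U_\bullet)$, where $U_\bullet$ is a genuine simplicial vector space with $\pi_i(U_\bullet)=0$ for $i\le n$. Here every structure map of $\free(U_\bullet)$ is $\free$ applied to a linear map, hence respects the weight decomposition $\free(U_\bullet)\cong\bigoplus_{m\ge1}L^r_m(U_\bullet)$ of Remark~\ref{remark: free lie algebras, fresse}, viewed as a splitting of \emph{simplicial} vector spaces. The restricted lower central series $\gamma^r_\bullet\free(U_\bullet)$ is characteristic, hence a filtration by simplicial sub-restricted-ideals; one has $\gamma^r_1/\gamma^r_2\cong\Ab\free(U_\bullet)\cong\kk\{\xi\}\otimes_\kk U_\bullet$, while for $m\ge2$ the subquotient $\gamma^r_m/\gamma^r_{m+1}\free(U_\bullet)$ is built functorially from $m$-fold iterated brackets together with their $p$-th powers, so that an $m$-fold bracket of classes in degree $\ge n+1$ contributes only in degree $\ge mn+1\ge n+1$; since $\kk\{\xi\}\otimes_\kk(-)$ is exact, each such subquotient is $(n+1)$-connected, and the spectral sequence of $\gamma^r_\bullet$ gives $(\ast)$ for $\free(U_\bullet)$. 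For a general cofibrant $L_\bullet$ I would then pass to the bar resolution $dB_\bullet(L_\bullet)\xrightarrow{\ \sim\ }L_\bullet$, a weak equivalence of cofibrant objects in which $B_s(L_\bullet)=(\free\circ\oblv)^{\circ(s+1)}(L_\bullet)=\free(W_s)$ with $W_s=\oblv(\free\circ\oblv)^{\circ s}(L_\bullet)$ an $n$-connected simplicial vector space, and $d$ is the diagonal. The degreewise functors $\Ab$ and $[-,-]^r$ commute with $d$, so applying $\Ab$ to this equivalence and comparing the two fibre sequences identifies $[L_\bullet,L_\bullet]^r$ with the diagonal of the bisimplicial object $s\mapsto[B_s(L_\bullet),B_s(L_\bullet)]^r$; each of its columns is $(n+1)$-connected by the free case, hence so is the diagonal, proving $(\ast)$.

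The main obstacle is the free case, and concretely the low-degree behaviour of $\gamma^r_m/\gamma^r_{m+1}\free(U_\bullet)$ when $U_\bullet$ is only $0$-connected: in characteristic $p$ the derived quadratic functors $\Sym^2$, $\Gamma^2$ — and more generally $L^r_m$ — applied to a $0$-connected simplicial vector space need not be $1$-connected on the nose, carrying an a priori class in $\pi_1$ produced by the Frobenius-twisted $p$-power operations. The point, which is the technical heart and is carried out following~\cite{Ellis93}, is that these extra classes are exactly the $\xi$-power generators $\xi^j(U_\bullet)\subset L^r_{p^j}(U_\bullet)$ that get split off when one forms $\gamma^r_2$ (equivalently, the image of the squaring map, which is annihilated on passing from $\Sym^2$ to the honest quadratic bracket functor), so that the relevant subquotients are genuinely $(n+1)$-connected; pinning down the bottom homotopy of the polynomial functors $L^r_m$ over $\kk$ is where the real work lies. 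The routine points to verify are that $dB_\bullet(L_\bullet)$ is cofibrant (it is a Reedy-cofibrant cotriple resolution by objects of the form $\free$ of a simplicial vector space) and that the left Quillen functor $\Ab$ carries the weak equivalence $dB_\bullet(L_\bullet)\xrightarrow{\ \sim\ }L_\bullet$ of cofibrant objects to a weak equivalence.
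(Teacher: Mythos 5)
Your overall skeleton matches the paper's: reduce to an almost-free, $n$-reduced model via Lemma~\ref{lemma: reduced replacement}, identify $\widetilde H_{s+1}(L_\bullet;\kk\{\xi\})$ with $\pi_s(\Ab(L_\bullet))$, and run the long exact sequence of $0\to \oblv[L_\bullet,L_\bullet]\to \oblv L_\bullet\to \Ab(L_\bullet)\to 0$. The first assertion (left adjoints preserve $\pi_0$) is also the paper's argument. The divergence — and the gap — is in the one step that actually carries the content: the vanishing of $\pi_{n+1}[L_\bullet,L_\bullet]$.

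Your claim $(\ast)$ needs $[L_\bullet,L_\bullet]$ to be $(n{+}1)$-connected, i.e.\ $\pi_i=0$ for $i\le n+1$, but your supporting estimate only delivers concentration of the relevant subquotients in simplicial degrees $\ge n+1$, which gives $\pi_i=0$ for $i\le n$ — one degree short of what the long exact sequence requires for both the injectivity of $h$ on $\pi_{n+1}$ and its surjectivity on $\pi_{n+2}$. You then correctly identify that the missing degree is exactly the hard point (the bottom homotopy of the Lie powers $L^r_m$ in characteristic $p$, where invariants/coinvariants do not commute with homotopy and the $\xi$-power summands contribute extra classes), but you defer it as "where the real work lies" rather than proving it. So the proposal is not complete. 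The paper closes precisely this gap with a short explicit chain-level computation (following Ellis): for $x,y\in L_{n+1}$ of an $n$-reduced almost-free $L_\bullet$, the element $\{x,y\}=[s_1y,\,s_0x-s_1x]\in [L_{n+2},L_{n+2}]$ satisfies $\partial\{x,y\}=\sum_i(-1)^id_i\{x,y\}=[x,y]$, so every generator of the commutator ideal in the bottom nonzero degree is a Moore boundary and $\pi_{n+1}[L_\bullet,L_\bullet]=0$. No lower-central-series filtration, bar resolution, or analysis of the derived functors $L^r_m$ is needed. If you want to salvage your route, you would have to actually prove the $(n{+}1)$-connectivity of the non-linear part of $\free(U_\bullet)$ modulo the $\xi$-powers, which is essentially equivalent to (and harder than) the one-line simplicial identity above.
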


\begin{proof}
The first statement is clear, since the functor $\Ab\colon \rLie \to \Mod_{\kk\{\xi\}}$ is a left adjoint, and so it commutes with colimits. 

For the second part, we use Lemma~\ref{lemma: reduced replacement}, so we can assume that $L_\bullet \in \srLie$ is an almost-free object $L_\bullet=\free(V_\bullet)$, $V_\bullet \in \tilde{\mathsf{s}}\Vect_{\kk}$ and $V_i=0$ for $0\leq i \leq n$. %Indeed, if $L \in \srLie$ has $\pi_i(L)=0$ for $0\leq i\leq k$, then the $(k+1)$-coskeleton $\cosk_{k+1}(L)$ of $L$ has trivial homotopy groups, and the kernel $$\widetilde{L} = \ker(L \to \cosk_{k+1}L)$$ is weakly equivalent to $L$ and $k$-reduced. Finally, any $k$-reduced simplicial restricted Lie algebra has a $k$-reduced almost-free replacement.

We show that $\pi_{n+1}[L_\bullet,L_\bullet]=0$, where $[L_\bullet,L_\bullet]\subset L_\bullet$ is the Lie ideal generated by all elements of the form $[x,y]$, $x,y\in L_\bullet$. By the previous paragraph, it suffices to construct an element $\{x,y\}\in [L_{n+2},L_{n+2}]$, $x,y\in L_{n+1}$ such that 
$$\partial\{x,y\}=\sum_{i=0}^{n+3}(-1)^id_i\{x,y\}=[x,y]. $$
We set $\{x,y\}=[s_1 y,s_0 x -s_1 x]$, then the straightforward computation with the simplicial relations shows that $\partial\{x,y\}=[x,y]$.

Finally, there is a short exact sequence of simplicial vector spaces
$$0\to \oblv([L_\bullet,L_\bullet]) \to  \oblv L_\bullet \to \Ab(L_\bullet) \to 0 $$
which induces the long exact sequence of homotopy groups
\begin{align*}
\ldots &\to \pi_{n+2}[L_\bullet,L_\bullet] \to \pi_{n+2}(L_\bullet) \to \pi_{n+2}\Ab(L_\bullet) \\
&\to \pi_{n+1}[L_\bullet,L_\bullet] \to \pi_{n+1}(L_\bullet) \to \pi_{n+1}\Ab(L_\bullet) \to 0.
\end{align*}
Since $\pi_{n+1}[L_\bullet,L_\bullet]=0$, the theorem follows.
\end{proof}

\begin{rmk}\label{remark: ellis}
Our proof is almost identical to the proof of the Hurewicz theorem in the category of simplicial Lie algebras (non-necessary equipped with a $p$-operation), see~\cite[Theorem~8]{Ellis93}.
\end{rmk}

We say that $(L_\bullet,A_\bullet)$ is a \emph{pair} in $\srLie$ if $A_\bullet$ is a simplicial restricted Lie subalgebra of $L_\bullet$. A map of pairs $$f\colon (L'_\bullet, A'_\bullet) \to (L_\bullet, A_\bullet)$$
is a map $f\colon L'_\bullet \to L_\bullet$ in $\srLie$ such that $f(A'_\bullet)\subset A_\bullet$. Notice that we do not ask $A_\bullet$ be a Lie ideal in $L_\bullet$ in opposite to~\cite[\S 8]{May70operations}.

\begin{dfn}\label{definition: relative homotopy and homology}
Let $(L_\bullet,A_\bullet)$ be a pair $\srLie$. We define the \emph{$s$-th relative homotopy group} $\pi_s(L_\bullet, A_\bullet)$ by the formula
$$\pi_s(L_\bullet, A_\bullet) = \pi_{s-1}(\fib(\iota)), \;\; s\geq 1,$$
where $\fib(\iota) \in \srLie$ is the homotopy fiber of $\iota\colon A_\bullet \hookrightarrow L_\bullet$. Similarly, we define the \emph{$s$-th relative homology group} $H_s(L_\bullet, A_\bullet; M)$, $M\in \Mod_{\kk\{\xi\}}$ as follows
$$H_s(L_\bullet, A_\bullet;M) = \widetilde{H}_s(\cofib(\iota);M), \;\; s\geq 0, $$
where $\cofib(\iota)\in \srLie$ is the homotopy cofiber of $\iota$.
\end{dfn}

Let $(L_\bullet,A_\bullet)$ be a pair in $\srLie$, then there are long exact sequences of homotopy and homology groups:
\begin{equation}\label{equation: les, homotopy pair}
\ldots \to \pi_{s}(A_\bullet) \to \pi_{s}(L_\bullet) \to \pi_s(L_\bullet, A_\bullet) \xrightarrow{\delta} \pi_{s-1}(A_\bullet) \to \ldots
\end{equation}
\begin{align}\label{equation: les, homology pair}
\ldots &\to \widetilde{H}_{s}(A_\bullet;M) \to \widetilde{H}_{s}(L_\bullet;M) \to H_s(L_\bullet, A_\bullet;M) \\
&\xrightarrow{\delta} \widetilde{H}_{s-1}(A_\bullet;M) \to \widetilde{H}_{s-1}(L_\bullet;M) \to H_{s-1}(L_\bullet, A_\bullet;M) \to \ldots \nonumber
\end{align}
Furthermore, there are similar long exact sequences for \emph{triples} in $\srLie$. Namely, if $(L_\bullet, A_\bullet, B_\bullet)$ is a triple in $\srLie$, that is $B_\bullet \subset A_\bullet \subset L_\bullet$, then there are long sequences
\begin{equation}\label{equation: les, homotopy triple}
\ldots \to \pi_{s}(A_\bullet,B_\bullet) \to \pi_{s}(L_\bullet,B_\bullet) \to \pi_s(L_\bullet, A_\bullet) \xrightarrow{\delta} \pi_{s-1}(A_\bullet,B_\bullet) \to \ldots
\end{equation}
\begin{align}\label{equation: les, homology triple}
\ldots &\to H_{s}(A_\bullet,B_\bullet;M) \to H_{s}(L_\bullet,B_\bullet;M) \to H_s(L_\bullet, A_\bullet;M) \\ &\xrightarrow{\delta} H_{s-1}(A_\bullet, B_\bullet;M) \to H_{s-1}(L_\bullet,B_\bullet;M) \to H_{s-1}(L_\bullet, A_\bullet;M) \to \ldots \nonumber
\end{align}

Note that the natural map $\fib(\iota) \to \Omega \cofib(\iota)$ induces the \emph{relative Hurewicz homomorphism}:
$$h\colon \pi_{s}(L_\bullet, A_\bullet) \to H_{s+1}(L_\bullet, A_\bullet;\kk\{\xi\}), $$ 
compatible with the exact sequences above. We say that a pair $(L_\bullet,A_\bullet)\in \srLie$ is \emph{$n$-connected} if $\pi_i(L_\bullet,A_\bullet)=0$ for each $i\leq n$. The homotopy excision theorem~\ref{theorem: homotopy excision} immediately implies the following corollary.

\begin{cor}[Relative Hurewicz theorem]\label{corollary: relative hurewicz theorem}
Suppose that $(L_\bullet,A_\bullet)$ is a $n$-connected pair in $\srLie$ and $\pi_0(A_\bullet)=0$, $n\geq 1$. Then the relative Hurewicz homomorphism
$$\pi_{n+1}(L_\bullet, A_\bullet) \to H_{n+2}(L_\bullet, A_\bullet; \kk\{\xi\}) $$
is an isomorphism, and 
$$\pi_{n+2}(L_\bullet, A_\bullet) \to H_{n+3}(L_\bullet, A_\bullet; \kk\{\xi\}) $$
is a surjection. \qed
\end{cor}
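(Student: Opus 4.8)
The plan is to obtain the statement as a formal consequence of the homotopy excision theorem (Theorem~\ref{theorem: homotopy excision}) and the absolute Hurewicz theorem (Theorem~\ref{theorem: hurewicz theorem}), applied respectively to the homotopy fibre and the homotopy cofibre of the inclusion $\iota\colon A_\bullet\hookrightarrow L_\bullet$.

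First I would reduce to a convenient model. By Lemma~\ref{lemma: reduced replacement} we may replace $\iota$, up to weak equivalence and hence without changing either the relative homotopy groups $\pi_s(L_\bullet,A_\bullet)=\pi_{s-1}(\fib(\iota))$ or the relative homology groups $H_s(L_\bullet,A_\bullet;-)=\widetilde H_s(\cofib(\iota);-)$ of Definition~\ref{definition: relative homotopy and homology}, by an almost-free $n$-reduced map $\tilde\iota\colon \widetilde A_\bullet\hookrightarrow\widetilde L_\bullet$ with $\widetilde A_\bullet$ connected (the source stays connected because $\pi_0(A_\bullet)=0$). Such a map is $n$-connected, and its homotopy cofibre $\cofib(\tilde\iota)=\widetilde L_\bullet/\widetilde A_\bullet$ has trivial simplices in degrees $\le n$, hence is $n$-connected: $\pi_i(\cofib(\tilde\iota))=0$ for $0\le i\le n$. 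Now Theorem~\ref{theorem: homotopy excision} applies (with $f=\tilde\iota$, $n\ge 1\ge 0$), so the natural map $\fib(\tilde\iota)\to\Omega\,\cofib(\tilde\iota)$ is $(n+1)$-connected; and Theorem~\ref{theorem: hurewicz theorem} applies to $\cofib(\tilde\iota)$, so the Hurewicz map $\pi_{n+1}(\cofib(\tilde\iota))\to\widetilde H_{n+2}(\cofib(\tilde\iota);\kk\{\xi\})$ is an isomorphism and $\pi_{n+2}(\cofib(\tilde\iota))\to\widetilde H_{n+3}(\cofib(\tilde\iota);\kk\{\xi\})$ is a surjection.

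It remains to splice these together along the definitions. By construction the relative Hurewicz map $\pi_s(L_\bullet,A_\bullet)\to H_{s+1}(L_\bullet,A_\bullet;\kk\{\xi\})$ is the composite
$$\pi_{s-1}(\fib(\tilde\iota))\longrightarrow\pi_{s-1}\bigl(\Omega\,\cofib(\tilde\iota)\bigr)=\pi_s(\cofib(\tilde\iota))\xrightarrow{h}\widetilde H_{s+1}(\cofib(\tilde\iota);\kk\{\xi\}),$$
whose first arrow is induced by the excision map $\fib(\tilde\iota)\to\Omega\,\cofib(\tilde\iota)$ and whose second arrow is the absolute Hurewicz map~\eqref{equation: hurewicz homomorphism}. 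For $s=n+1$ the first arrow is an isomorphism (the excision map is an isomorphism on $\pi_i$ for $i\le n$) and the second is an isomorphism (Hurewicz, since $\cofib(\tilde\iota)$ is $n$-connected), so the composite is an isomorphism. For $s=n+2$ the first arrow is a surjection (the excision map is surjective on $\pi_{n+1}$) and the second is a surjection (Hurewicz), so the composite is a surjection. Matching source and target with $\pi_s(L_\bullet,A_\bullet)$ and $H_{s+1}(L_\bullet,A_\bullet;\kk\{\xi\})$ via Lemma~\ref{lemma: reduced replacement} and Definition~\ref{definition: relative homotopy and homology} gives exactly the two assertions.

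There is no real obstacle here: the argument is purely formal once Theorems~\ref{theorem: homotopy excision} and~\ref{theorem: hurewicz theorem} are available. The only point deserving a little attention is bookkeeping the two independent degree shifts — the $(-1)$ in $\pi_s(L_\bullet,A_\bullet)=\pi_{s-1}(\fib(\iota))$ and $H_s(L_\bullet,A_\bullet;M)=\widetilde H_s(\cofib(\iota);M)$ on the one hand, and the $(+1)$ built into the Hurewicz map~\eqref{equation: hurewicz homomorphism} on the other — so that the excision range (isomorphism through degree $n$, surjection in degree $n+1$) lines up correctly with the range of the Hurewicz isomorphism.
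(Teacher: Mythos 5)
Your argument is correct and is precisely the one the paper intends: the relative Hurewicz map factors as the excision map $\pi_{s-1}(\fib(\iota))\to\pi_{s-1}(\Omega\cofib(\iota))$ followed by the absolute Hurewicz map on the cofibre, and Theorem~\ref{theorem: homotopy excision} plus Theorem~\ref{theorem: hurewicz theorem} give the stated isomorphism/surjection ranges (the paper records this only as ``the homotopy excision theorem immediately implies the corollary''). The preliminary reduction via Lemma~\ref{lemma: reduced replacement} is harmless but unnecessary, since Theorem~\ref{theorem: homotopy excision} is already stated for an arbitrary $n$-connected morphism with connected source, and $\iota$ is such a morphism by the long exact sequence of the pair together with $\pi_0(A_\bullet)=0$.
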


As usual, the relative Hurewicz theorem implies the homological Whitehead theorem.

\begin{cor}[Homological Whitehead theorem]\label{corollary: whitehead theorem}
Let $f\colon L'_\bullet \to L_\bullet$ be a map between connected simplicial restricted Lie algebras. Then $f$ is a weak equivalence if and only if the induced map 
$$f_*\colon \widetilde{H}_i(L'_\bullet; \kk\{\xi\}) \to \widetilde{H}_i(L_\bullet; \kk\{\xi\})$$ 
is an isomorphism for all $i\geq 1$. \qed
\end{cor}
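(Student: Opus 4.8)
The plan is to deduce the Homological Whitehead theorem (Corollary~\ref{corollary: whitehead theorem}) from the Relative Hurewicz theorem (Corollary~\ref{corollary: relative hurewicz theorem}) by a standard obstruction-theoretic argument, reducing the general case to the relative case via the mapping fiber or, more conveniently, the mapping cofiber/cone. First I would handle the trivial direction: if $f\colon L'_\bullet \to L_\bullet$ is a weak equivalence, then $\mathbb{L}\Ab(f)$ is an equivalence in $D_{\geq 0}(\Mod_{\kk\{\xi\}})$ (this is formal, since $\mathbb{L}\Ab$ is a left derived functor), hence $\kk\{\xi\}\otimes_{\kk\{\xi\}}\mathbb{L}\Ab(f)$ and its shift are equivalences, so $f_*$ is an isomorphism on all $\widetilde{H}_i(-;\kk\{\xi\})$ by Definition~\ref{definition: chain complex}.

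For the converse, suppose $f_*\colon \widetilde{H}_i(L'_\bullet;\kk\{\xi\})\to \widetilde{H}_i(L_\bullet;\kk\{\xi\})$ is an isomorphism for all $i\geq 1$, with $L'_\bullet, L_\bullet$ connected. Using Lemma~\ref{lemma: reduced replacement} (or just the model structure of Theorem~\ref{theorem:modelsrlie}) I would replace $f$ up to weak equivalence by an inclusion of a simplicial restricted Lie subalgebra, so that $(L_\bullet, L'_\bullet)$ is a pair in $\srLie$ in the sense of Section~\ref{section: homology and cohomology}, and $\pi_0(L'_\bullet)=0$. The long exact sequences~\eqref{equation: les, homotopy pair} and~\eqref{equation: les, homology pair} for this pair then show that the hypothesis is equivalent to $H_s(L_\bullet, L'_\bullet;\kk\{\xi\})=0$ for all $s\geq 0$ (here one uses that $\widetilde{H}_*$ with trivial $\kk\{\xi\}$-coefficients detects the shifted $\mathbb{L}\Ab$, and the five lemma). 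I now want to conclude $\pi_s(L_\bullet, L'_\bullet)=0$ for all $s$, for then~\eqref{equation: les, homotopy pair} forces $f$ to be a weak equivalence. This is proved by induction on the connectivity of the pair: the pair is $0$-connected since $\pi_0(L'_\bullet)=\pi_0(L_\bullet)=0$; assuming $(L_\bullet,L'_\bullet)$ is $n$-connected with $n\geq 1$ (the base case $n=1$ needs a small separate check using the Hurewicz theorem~\ref{theorem: hurewicz theorem} and the hypothesis to get $\pi_1=\pi_2(L_\bullet,L'_\bullet)=0$, analogously), Corollary~\ref{corollary: relative hurewicz theorem} gives $\pi_{n+1}(L_\bullet,L'_\bullet)\cong H_{n+2}(L_\bullet,L'_\bullet;\kk\{\xi\})=0$, so the pair is $(n+1)$-connected; induction completes the argument.

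I expect the main subtlety — not a deep obstacle, but the point requiring care — to be the bookkeeping at the bottom of the induction: translating the hypothesis on $\widetilde{H}_i(-;\kk\{\xi\})$, $i\geq 1$, into vanishing of the \emph{relative} groups $H_s(L_\bullet, L'_\bullet;\kk\{\xi\})$ for \emph{all} $s\geq 0$ including $s=0,1$, and getting the connectivity induction started (connectedness of both $L'_\bullet$ and $L_\bullet$ is what makes $\pi_0$ and $\pi_1$ of the pair vanish, via the first part of Theorem~\ref{theorem: hurewicz theorem} applied to $\cofib(\iota)$ together with the relevant exact sequences). Once the pair is known to be $n$-connected for every $n$, the homotopy long exact sequence~\eqref{equation: les, homotopy pair} immediately yields that $\pi_*(L'_\bullet)\to\pi_*(L_\bullet)$ is an isomorphism, i.e.\ $f$ is a weak equivalence, and we are done. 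This is the exact analogue of the classical proof for simply-connected spaces and for simplicial Lie algebras (\cite{Ellis93}), so no new ideas beyond the tools already assembled are needed.
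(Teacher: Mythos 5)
Your proposal is correct and is exactly the argument the paper intends: the paper gives no written proof beyond the remark that "the relative Hurewicz theorem implies the homological Whitehead theorem," and your deduction (replace $f$ by an inclusion, translate the hypothesis into vanishing of $H_*(L_\bullet,L'_\bullet;\kk\{\xi\})$ via the long exact sequence, then induct on the connectivity of the pair using Corollary~\ref{corollary: relative hurewicz theorem}, with the base case handled by Theorem~\ref{theorem: hurewicz theorem} and homotopy excision) is that standard argument carried out correctly. The only nitpick is that getting the pair $1$-connected to start the induction uses the second (not the first) part of the absolute Hurewicz theorem applied to $\cofib(\iota)$ with $n=0$, combined with the $1$-connectivity of $\fib(\iota)\to\Omega\cofib(\iota)$ from Theorem~\ref{theorem: homotopy excision}.
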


Let $M\in \Mod_{\kk\{\xi\}}$ be a left $\kk\{\xi\}$-module and consider the abelian Lie algebra $\trivxi(M)\in \rLie \subset \srLie$. We will compute $H_*(\trivxi(M);\kk)$ provided $M$ is torsion-free. First, by the Hurewicz theorem, we have 
\begin{equation}\label{equation: homology of K(M,0), eq1}
H_1(\trivxi(M);\kk)\cong \kk\otimes_{\kk\{\xi\}} M = M/\xi(M), \; M\in \Mod_{\kk\{\xi\}}
\end{equation}
%for any left $\kk\{\xi\}$-module $M$. 

Next, using the addition operation $M\times M \to M$, $(m_1,m_2)\mapsto m_1+m_2$ we observe that $\trivxi(M)$ is canonically a commutative group object of $\srLie$. Therefore, by the K\"{u}nneth formula, $H_*(\trivxi(M);\kk)$ is a graded commutative and cocommutative Hopf algebra.  Using the isomorphism~\eqref{equation: homology of K(M,0), eq1} we get a natural map of graded Hopf algebras
\begin{equation}\label{equation: homology of K(M,0), eq2}
\gamma\colon\Sym^*(M/\xi(M)) \to H_*(\trivxi(M);\kk),
\end{equation}
where $\Sym^*(M/\xi(M))$ is the free graded commutative algebra generated by the vector space $M/\xi(M)$. 

Recall that a left $\kk\{\xi\}$-module $M$ is called \emph{torsion-free} if for any non-zero $a\in \kk\{\xi\}$ the equation $ax=0, x\in M$ implies $x=0$.

\begin{prop}\label{proposition: homology of abelian Lie algebra}
Let $M\in \Mod_{\kk\{\xi\}}$ be a torsion-free left $\kk\{\xi\}$-module. Then the map $\gamma$ naturally factorizes via an isomorphism
$$\gamma'\colon \Lambda^{*}(M/\xi(M)) \xrightarrow{\cong} H_*(\trivxi(M);\kk), $$
where $\Lambda^{*}(M/\xi(M))$ is the exterior algebra generated by $M/\xi(M)$.
\end{prop}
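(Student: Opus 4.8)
The plan is to identify $\widetilde H_*(\trivxi(M);\kk)$ with a $\Tor$-group over the universal enveloping algebra, reduce the isomorphism claim to finitely generated free modules by a continuity argument, and settle that case with the K\"unneth formula and a one-variable Koszul computation.

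First I identify the left-hand side homologically. Viewing $\trivxi(M)\in\rLie\subset\srLie$ as a constant simplicial object and using that $\barW U^r$ preserves weak equivalences (Proposition~\ref{remark: barW Ur preserves weak equivalences}), Proposition~\ref{proposition: chain coalgebra and chain complex} (together with the discussion preceding Corollary~\ref{corollary: kunneth formula}) identifies $\widetilde H_*(\trivxi(M);\kk)$ with the positive homotopy groups of $\barW$ of the \emph{constant} simplicial Hopf algebra $U^r(\trivxi(M))$. Being abelian, $\trivxi(M)$ has $U^r(\trivxi(M))\cong\Sym(M)/(m^p-\xi(m)\mid m\in M)$, a commutative primitively generated Hopf algebra with module of indecomposables $M/\xi(M)$. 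For a constant simplicial Hopf algebra the spectral sequence of Remark~\ref{remark: spectral sequence for barW} degenerates, yielding a natural isomorphism $H_*(\trivxi(M);\kk)\cong\Tor^{U^r(\trivxi(M))}_*(\kk,\kk)$ under which the degree-$1$ component of $\gamma$ is the Hurewicz isomorphism $M/\xi(M)\xrightarrow{\cong}H_1(\trivxi(M);\kk)$ of~\eqref{equation: homology of K(M,0), eq1} (Theorem~\ref{theorem: hurewicz theorem}).

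Next I check that $\gamma$ descends to $\Lambda^*(M/\xi(M))$ for \emph{arbitrary} $M$. For $p$ odd the generating space $M/\xi(M)$ sits in odd degree, so this is automatic. For $p=2$, fix $v\in M/\xi(M)$ with a lift $\bar v\in M$; the $\kk\{\xi\}$-linear map $\kk\{\xi\}\to M$, $1\mapsto\bar v$, induces a homomorphism of commutative group objects $\trivxi(\kk\{\xi\})\to\trivxi(M)$, hence an algebra map $H_*(\trivxi(\kk\{\xi\});\kk)\to H_*(\trivxi(M);\kk)$ carrying the degree-$1$ generator to $v$. In $U^r(\trivxi(\kk\{\xi\}))$ the relations $\xi(\xi^{i}e)=(\xi^{i}e)^p$ (for $e$ a free generator) successively eliminate the $\xi^{i}e$ with $i\ge 1$, so $U^r(\trivxi(\kk\{\xi\}))\cong\kk[y]$ is a polynomial ring on one primitive generator; therefore $H_s(\trivxi(\kk\{\xi\});\kk)\cong\Tor^{\kk[y]}_s(\kk,\kk)=0$ for $s\ge 2$, so $v^2=0$ in $H_*(\trivxi(M);\kk)$. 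As $\gamma$ is an algebra map killing every such $v^2$, it descends to an algebra map $\gamma'\colon\Lambda^*(M/\xi(M))\to H_*(\trivxi(M);\kk)$.

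Finally I show $\gamma'$ is an isomorphism when $M$ is torsion-free. Each functor building $\widetilde H_*(-;\kk)$ (namely $\mathbb L\Ab$, $\kk\otimes_{\kk\{\xi\}}-$, the suspension, and $\pi_*$), as well as $\Lambda^*(-)$, $(-)/\xi(-)$, and $\trivxi$ (since filtered colimits in $\rLie$ are computed on underlying vector spaces, Proposition~\ref{proposition:category property of rLie}), commutes with filtered colimits, so $\gamma'$ is natural in $M$ and compatible with filtered colimits. Every module is the filtered union of its finitely generated submodules, a submodule of a torsion-free module is torsion-free, and a finitely generated torsion-free $\kk\{\xi\}$-module is free by the structure theory of $\kk\{\xi\}$-modules (Section~\ref{section: xi-complete modules}); so it suffices to treat $M\cong\kk\{\xi\}^{\oplus n}$. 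Then $\trivxi(M)\cong\trivxi(\kk\{\xi\})^{\times n}$ as commutative group objects, so the K\"unneth formula (Corollary~\ref{corollary: kunneth formula}) gives an isomorphism of Hopf algebras $H_*(\trivxi(M);\kk)\cong H_*(\trivxi(\kk\{\xi\});\kk)^{\otimes n}$; by the previous step the factor $H_*(\trivxi(\kk\{\xi\});\kk)\cong\Tor^{\kk[y]}_*(\kk,\kk)$ is $\kk$ in degrees $0$ and $1$ and zero above, i.e. the exterior algebra on a single degree-$1$ class. Hence $H_*(\trivxi(M);\kk)$ is the exterior algebra on its degree-$1$ part, and $\gamma'$ is an algebra map between exterior algebras on degree-$1$ vector spaces which (by Step~1 and the Hurewicz identification) restricts to an isomorphism $M/\xi(M)\xrightarrow{\cong}H_1$ on generating spaces; therefore $\gamma'$ is an isomorphism. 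The point requiring the most care is the computation $U^r(\trivxi(\kk\{\xi\}^{\oplus n}))\cong\kk[y_1,\dots,y_n]$: one must use the $p$-semilinearity of $\xi$ and additivity of the Frobenius in $\Sym(M)$ to see that the defining Hopf ideal is generated by the elements $(\xi^i e_j)^p-\xi^{i+1}e_j$ and that quotienting by them collapses the infinitely generated symmetric algebra to a polynomial algebra on the images of the free generators of $M$; the remaining ingredients, the continuity reduction and the one-variable Koszul computation of $\Tor$, are routine.
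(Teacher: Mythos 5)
Your proof is correct and follows essentially the same route as the paper: reduce to finitely generated free modules via Corollary~\ref{corollary: torsion-free is a colimit of free}, handle finite rank by the K\"unneth formula, and compute the rank-one case by hand. The only cosmetic difference is in that base case, where the paper observes directly that $\trivxi(\kk\{\xi\})\cong\free(\kk)$ and quotes the homology of a free object, whereas you pass through $U^r(\trivxi(\kk\{\xi\}))\cong\kk[y]$ and a one-variable $\Tor$ computation --- the same fact in different clothing --- and you additionally spell out (for $p=2$) why $\gamma$ factors through the exterior algebra, a point the paper leaves implicit.
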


\begin{proof}
First, assume that $M\cong\kk\{\xi\}$ is a free $\kk\{\xi\}$-module of rank $1$. Then the abelian Lie algebra $\trivxi(M)$ is canonically isomorphic to the free restricted Lie algebra $\free(M/\xi(M))$. Therefore,
$$H_i(\trivxi(M),\kk)\cong H_i(\free(M/\xi(M));\kk)\cong\left\{
\begin{array}{ll}
\kk & \mbox{if $ i=0$,}\\
M/\xi(M) & \mbox{if $i=1$,}\\
0, & \mbox{otherwise;}
\end{array}
\right.
$$
and the proposition holds for free modules of rank $1$.

Next, using an induction on the rank $r$ and the K\"{u}nneth formula we obtain the desired isomorphism for all free $\kk\{\xi\}$-modules of finite rank, i.e. for $M\cong \kk\{\xi\}^{\oplus r}$. By Corollary~\ref{corollary: torsion-free is a colimit of free}, any torsion-free $\kk\{\xi\}$-module is a filtered colimit of finitely generated free modules, which implies the proposition.
\end{proof}

\begin{exmp}\label{example: lie algebra with trivial coeffients}
Let $M=\kk\{\xi^{\pm}\}=\kk\{\xi\}[1/\xi]$ be the ring of twisted Laurent polynomials, see Section~\ref{section: derived xi-complete modules}. By Proposition~\ref{proposition: homology of abelian Lie algebra}, we have
$$\widetilde{H}_*(\trivxi(M);\kk)\cong 0, \;\; \text{but} \;\; \widetilde{H}_1(\trivxi(M);\kk\{\xi\})\cong M\neq 0.$$
Therefore, 
$$\widetilde{H}_*(\trivxi(\Sigma^n M);\kk)\cong 0, \;\; \text{but} \;\; \widetilde{H}_{n+1}(\trivxi(\Sigma^n M);\kk\{\xi\})\cong M\neq 0 $$
for all $n\geq 0$. Here $\Sigma^n M\in D_{\geq 0}(\Mod_{\kk{\xi}})$ is the shift of $M$.
\end{exmp}

In a similar way, one can also define the cohomology groups of $L_\bullet\in \srLie$.
\begin{dfn}\label{definition: cochain complex}
Let $M\in \Mod_{\kk\{\xi\}}$ be a \emph{left} $\kk\{\xi\}$-module. We define the \emph{cochain complex} $\widetilde{C}^*(L_\bullet;M)\in D(\Vect_\kk)$ of $L_\bullet\in \srLie$ with coefficients in $M$ as follows
$$\widetilde{C}^*(L_\bullet;M) = \RHom_{\kk\{\xi\}} (\Sigma \mathbb{L}\Ab(L_\bullet), M). $$
Here $\RHom_{\kk\{\xi\}}(-,-) $ is the derived $\Hom$-functor
$$\RHom_{\kk\{\xi\}}(-,-)\colon D({\Mod}_{\kk\{\xi\}}) \times D(\Mod_{\kk\{\xi\}}) \to D(\Vect_{\kk}). $$
Furthermore, we define the \emph{$s$-th cohomology group} $\widetilde{H}^s(L_\bullet;M)$ of $L_\bullet\in \srLie$ with coefficients in $M$ by the rule
$$ \widetilde{H}^s(L_\bullet;M) = \pi_{-s}(\widetilde{C}^*(L_\bullet;M)), \;\; s\geq 0.$$
\end{dfn}

\begin{rmk}\label{remark: universal coefficient formula}
Similar to the case of singular cohomology of spaces, we have the \emph{universal coefficient formula}. Namely, there is a natural exact sequence 
\begin{align*}
0 \to \Ext^1_{\kk\{\xi\}}(\widetilde{H}_{s-1}(L_\bullet;\kk\{\xi\}),M) &\to \widetilde{H}^s(L_\bullet;M)\\
&\to \Hom_{\kk\{\xi\}}(\widetilde{H}_s(L_\bullet;\kk\{\xi\}),M) \to 0 
\end{align*}
for any $L_\bullet\in \srLie$, $M\in \Mod_{\kk\{\xi\}}$, and $s\geq 1$. Moreover, this exact sequence splits, but the splitting may not be natural.
\end{rmk}

\begin{rmk}\label{remark: cohomology are representable}
The adjoint pair~\eqref{equation: derived adjunction, ab trivxi} implies that the homotopy functor $$\widetilde{H}^s(-;M) \colon \mathrm{Ho}(\srLie)^{op} \to \Vect_{\kk}$$
is representable by the abelian Lie algebra $\trivxi\Sigma^{s-1}M \in \srLie$, cf.~\cite[Proposition~3.4]{Priddy70long}. Here $\mathrm{Ho}(\srLie)$ is the homotopy category of the model category $\srLie$, see Theorem~\ref{theorem:modelsrlie}.
\end{rmk}

\subsection{Postnikov system}\label{section: postnikov system}
Let us denote by $\Delta_{\leq (n+1)} \subset \Delta$ the full subcategory in the simplex category $\Delta$ spanned by $[i]$, $i\leq n+1$. If $\mathsf{C}$ is a category, we write $\mathsf{s}_{\leq (n+1)}\mathsf{C}$ for the category of contravariant functors from $\Delta_{\leq (n+1)}$ to $\mathsf{C}$. Assume that the category $\mathsf{C}$ is complete, then the restriction functor
$$\mathrm{tr}_{(n+1)}^*\colon \sC \to \mathsf{s}_{\leq (n+1)} \mathsf{C} $$
has a right adjoint
$$\mathrm{tr}_{(n+1)*}\colon \mathsf{s}_{\leq (n+1)} \mathsf{C} \to \sC. $$
We write $$\cosk_{n+1}\colon \sC \to \sC$$ for the composite $\mathrm{tr}_{(n+1)*}\circ \mathrm{tr}_{(n+1)}^*$ and $\alpha^n\colon \id \to \cosk_{n+1}$ for the unit map.

Assume that $\mathsf{C}=\Vect_{\kk}$ is the category of vector spaces and let $V_\bullet \in \sVect_{\kk}$. Then $\pi_i(\cosk_{n+1}V_\bullet)=0$ for $i>n$, and the induced map $\pi_{i}(\alpha^n)$ is an isomorphism for $i\leq n$, see~\cite[Section~II.8]{MaySimplicial}.

Since the functor $\oblv\colon \srLie \to \sVect_\kk$ is a right adjoint, it preserves limits, and so we have a natural isomorphism:
$$\oblv(\cosk_{n+1}(L_\bullet)) \cong \cosk_{n+1}(\oblv(L_\bullet)), \; L_\bullet\in \srLie,\; n\geq 0. $$
Therefore the natural map
\begin{equation}\label{equation: postnikov tower, eq1}
\alpha^n \colon L_\bullet \to \cosk_{n+1}L_\bullet 
\end{equation}
again induces an isomorphism on $\pi_i$ for $i\leq n$, and $\pi_i\cosk_{n+1}L_\bullet=0$ for $i>n$. 

Here we slightly change the notation: for the rest of the paper, we will write $L_\bullet^{\leq n}$ for $\cosk_{n+1}L_\bullet$, $L_\bullet\in \srLie$, $n\geq 0$. Finally, we note that $\alpha^n\colon L_\bullet \to L_\bullet^{\leq n}$ is a fibration in $\srLie$ and we write $L_{\bullet}^{>n}$ for its fiber.
%Therefore, the functor $\cosk_{k+1} \colon \srLie \to \srLie$ preserves weak equivalences, and we denote by 
%$$(-)^{\leq k} \colon \sL \to \sL $$
%its derived functor. Furthermore, let us denote by $(-)^{>k}\colon \sL \to \sL$ the (homotopy) fiber $\fib(\alpha^k)$ of the natural transformation $\alpha^k\colon \id \to (-)^{\leq k}$.

%\begin{cor}\label{corollary: xi-action is linear}
%The map $\xi_*\colon \pi_n(L_\bullet) \to \pi_n(L_\bullet)$ of~\eqref{equation: xi-action on homotopy groups} is semi-linear for any $L_\bullet\in \srLie$ and $n>0$.
%\end{cor}

%\begin{proof}
%By the Hurewicz theorem~\ref{theorem: hurewicz theorem}, we have a chain of isomorphisms which are compatible with $\xi$-action
%\begin{equation*}
%\pi_n(L_\bullet)\cong \pi_n(L^{>(n-1)}_\bullet) \cong \widetilde{H}_{n+1}(L_\bullet^{>(n-1)};\kk\{\xi\}), \;\; n > 0. \qedhere
%\end{equation*}
%\end{proof}

\begin{dfn}\label{definition: EM}
Let $M$ be a left $\kk\{\xi \}$-module and $n\geq 0$. A simplicial restricted Lie algebra $L_\bullet\in \srLie$ is an \emph{Eilenberg–MacLane Lie algebra} of type $K(M,n)$ if it has the $n$-th homotopy group $\pi_{n}(L_\bullet)$ isomorphic to $M$ (as a left $\kk\{\xi\}$-module) and all other homotopy groups are trivial. 
\end{dfn}
\begin{exmp}\label{example: EM}
An abelian Lie algebra $\trivxi \Sigma^n M \in \srLie, M\in \Mod_{\kk\{\xi\}}$ is the Eilenberg-MacLane Lie algebra of type $K(M,n)$.
\end{exmp}

The Hurewicz theorem together with Remarks~\ref{remark: universal coefficient formula} and~\ref{remark: cohomology are representable}  immediately implies that there is a unique (up to a weak equivalence) Eilenberg-MacLane Lie algebra in $\srLie$ of a given type $K(M,n)$. Therefore we will abuse notation and call any such Lie algebra by $K(M,n)$.

\begin{prop}\label{proposition: fibration with EM as fiber}
Let $f\colon L'_\bullet \to L_\bullet$ be a map in $\srLie$ such that $\pi_0(L'_\bullet)=\pi_0(L_\bullet)=0$ and the homotopy fiber $\fib(f)$ is an Eilenberg-MacLane Lie algebra $K(M,n)$, $M\in \Mod_{\kk\{\xi\}}$, $n\geq 0$. Then there exists a map $$k\colon L_\bullet \to K(M,n+1)$$ such that the sequence
\begin{equation}\label{equation: k-invariant}
L'_\bullet \xrightarrow{f} L_\bullet \xrightarrow{k} K(M,n+1) 
\end{equation}
is a fiber sequence in $\srLie$.
\end{prop}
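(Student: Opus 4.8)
The plan is to realize the $k$-invariant as a classifying map for a principal fibration, following the classical argument for spaces. Since $\pi_0(L'_\bullet)=\pi_0(L_\bullet)=0$, we may (after replacing $f$ by a weakly equivalent map via Lemma~\ref{lemma: reduced replacement}) assume $f$ is an almost-free $0$-reduced map, and in particular a cofibration; then $\cofib(f)$ is modelled by the quotient Lie algebra $L_\bullet/L'_\bullet$, whose underlying simplicial vector space is $L_\bullet/L'_\bullet$ in the sense of Section~\ref{section:homotopy excision theorem}. First I would compute the homotopy of $\cofib(f)$: from the fiber sequence $K(M,n)\to L'_\bullet \to L_\bullet$ and the homotopy excision theorem~\ref{theorem: homotopy excision}, the natural map $\fib(f)\to \Omega\cofib(f)$ is highly connected, so $\cofib(f)$ is $n$-connected and $\pi_{n+1}(\cofib(f))\cong \pi_n(\fib(f))\cong M$ as $\kk\{\xi\}$-modules (using that the Hurewicz-type identifications of Section~\ref{section: homology and cohomology} are $\xi$-equivariant). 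Thus there is a canonical map $\cofib(f)\to (\cofib f)^{\leq n+1}\simeq K(M,n+1)$, and precomposing with $L_\bullet \to \cofib(f)$ produces the desired map $k\colon L_\bullet \to K(M,n+1)$.

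Next I would identify the homotopy fiber of $k$ with $L'_\bullet$. The composite $L'_\bullet\xrightarrow{f} L_\bullet \xrightarrow{k} K(M,n+1)$ is nullhomotopic because it factors through the cofiber $\cofib(f)$ truncated above degree $n$ — more precisely, the composite $L'_\bullet \to L_\bullet \to \cofib(f)$ is null, being two consecutive maps in a cofiber sequence (equivalently $L'_\bullet \to L_\bullet/L'_\bullet$ is the zero map on quotients). Hence $f$ lifts to a map $L'_\bullet \to \fib(k)$. To see this lift is a weak equivalence, I would compare the long exact sequences in homotopy for the fiber sequences $\fib(f)\to L'_\bullet\to L_\bullet$ and $\fib(k)\to L_\bullet \to K(M,n+1)$: both have base $L_\bullet$, and the comparison reduces to checking that $\fib(f)\to \fib(k)$ is a weak equivalence. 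Now $\fib(k)\simeq \Omega\cofib(k)$-type arguments, or more directly a second application of homotopy excision to the cofiber sequence $L_\bullet \to \cofib(f)\to \cofib(\text{of truncation})$, identify $\fib(k)$ with the connected cover: since $\cofib(f)\to K(M,n+1)$ is $(n+2)$-connected, its homotopy fiber is $n$-connected with $\pi_n$ detecting nothing new, and chasing the Whitehead theorem (Corollary~\ref{corollary: whitehead theorem}) through the maps gives that $\fib(f)\to\fib(k)$ is an equivalence. An alternative and perhaps cleaner route: build $K(M,n+1)$ as the classifying object $B(K(M,n))$ using Theorem~\ref{theorem: principal fibrations, classifying} (once that is available), recognize $f$ as a principal $K(M,n)$-fibration by Corollary~\ref{corollary: fibration with KM is principal}, and take $k$ to be its classifying map; then the fiber sequence~\eqref{equation: k-invariant} is immediate from the definition of the classifying object.

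The main obstacle is the $\xi$-equivariance and the precise connectivity bookkeeping: I must ensure that the isomorphism $\pi_{n+1}(\cofib(f))\cong M$ is an isomorphism of $\kk\{\xi\}$-modules, not merely of vector spaces, so that the truncation $K(M,n+1)$ really has the right module structure and the resulting fiber sequence recovers $M$ on the nose; this uses that the natural transformation $\id \to \Ab$ and the excision map $\fib(f)\to\Omega\cofib(f)$ are compatible with the $\xi$-action, as noted after~\eqref{equation: hurewicz homomorphism}. The rest is a diagram chase with long exact sequences and the homological Whitehead theorem, modulo the care needed because the homotopy excision theorem~\ref{theorem: homotopy excision} is only stated for maps with connected source — which is exactly why the hypothesis $\pi_0(L'_\bullet)=\pi_0(L_\bullet)=0$ is imposed and why the reduction via Lemma~\ref{lemma: reduced replacement} at the start is essential.
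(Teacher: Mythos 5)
Your main argument is exactly the paper's proof: define $k$ as the composite $L_\bullet \to \cofib(f) \to \cofib(f)^{\leq (n+1)} \simeq K(M,n+1)$ using the homotopy excision theorem to identify $\pi_{n+1}(\cofib(f))\cong M$, and then verify the fiber sequence by a long-exact-sequence comparison (which the paper dismisses as a "straightforward diagram chase" but you spell out correctly). One caution about your proposed alternative route: Corollary~\ref{corollary: fibration with KM is principal} is itself deduced from this proposition, so invoking it here would be circular — but since you offer it only as an aside, the proof stands.
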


\begin{proof}
Note that the map $f\colon L'_\bullet\to L_\bullet$ is $n$-connected. Therefore, by the homotopy excision theorem~\ref{theorem: homotopy excision}, we have 
$$\pi_{i}(\cofib(f)) =0, \;\; i\leq n,$$ 
$$\pi_{n+1}(\cofib(f)) \cong \pi_{n}(\fib(f)) =M, $$
where $\cofib(f)$ is the cofiber of the map $f$. Hence the simplicial restricted Lie algebra $\cofib(f)^{\leq (n+1)}\in \srLie$ is an Eilenberg-MacLane Lie algebra of type $K(M,n+1)$, and we define the desired map $k$ as the composite 
$$L_\bullet \to \cofib(f) \to \cofib(f)^{\leq (n+1)} \simeq K(M,n+1).$$ 
The straightforward diagram chase shows that the sequence~\eqref{equation: k-invariant} is a fiber sequence.
\end{proof}

We summarize the results of this section in the next corollary.

\begin{cor}[Postnikov tower]\label{corollary: postnikov tower}
Let $L_\bullet\in \srLie$ be a simplicial restricted Lie algebra. Then there are a natural tower of fibrations
$$ \ldots \xrightarrow{\beta^{n+1}} L_\bullet^{\leq (n+1)} \xrightarrow{\beta^{n}} L_\bullet^{\leq n} \xrightarrow{\beta^{n-1}} \ldots $$
and compatible maps $\alpha^n\colon L_\bullet \to L_\bullet^{\leq n}$ such that
\begin{enumerate}
\item $\pi_i(L_\bullet^{\leq n})=0$ if $i>n$;
\item the induced map $\pi_i(\alpha^n)$ is an isomorphism for $i\leq n$;
\item $L_\bullet\simeq \holim_n L_\bullet^{\leq n}$.
\end{enumerate}
Moreover, if $\pi_0(L_\bullet)=0$, then %each map $\beta_n\colon L^{\leq (n+1)}_\bullet \to L_\bullet^{\leq n}$ is weakly equivalent to a twisted central extension with fiber $M_\bullet$, such that each $M_n, n\geq 0$ is a free left $\kk\{\xi\}$-module. In particular, 
there exist \emph{$k$-invariants}, i.e. there are maps $$k^n\colon L_\bullet^{\leq n} \to K(\pi_{n+1}(L_\bullet), n+2), \;\; n\geq 0$$ such that for each $n\geq 0$ the sequence
$$
L_\bullet^{\leq n+1} \xrightarrow{\beta^{n}} L_\bullet^{\leq n} \xrightarrow{k^n} K(\pi_{n+1}(L_\bullet),n+2) $$
is a fiber sequence in $\srLie$. \qed
\end{cor}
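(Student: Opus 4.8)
The plan is to assemble the Postnikov tower from the coskeleton construction together with the $k$-invariant result (Proposition~\ref{proposition: fibration with EM as fiber}), both already established. Concretely, I would first set $L_\bullet^{\leq n} = \cosk_{n+1}L_\bullet$ and let $\beta^{n-1}\colon L_\bullet^{\leq n} \to L_\bullet^{\leq n-1}$ be the map induced by the counit of the $(\mathrm{tr}^*,\mathrm{tr}_*)$-adjunction (equivalently, the canonical map $\cosk_{n+1} \to \cosk_{n}$); the maps $\alpha^n\colon L_\bullet \to L_\bullet^{\leq n}$ are the units~\eqref{equation: postnikov tower, eq1}, and they are compatible with the $\beta$'s by naturality. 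The fact that each $\beta^{n-1}$ is a fibration (degreewise surjection, by Theorem~\ref{theorem:modelsrlie}) follows because $\oblv$ commutes with $\cosk$ and the corresponding statement for simplicial vector spaces is classical (\cite[Section~II.8]{MaySimplicial}); alternatively one checks surjectivity of $\mathrm{tr}_{n+1*}\mathrm{tr}_{n+1}^* \to \mathrm{tr}_{n*}\mathrm{tr}_n^*$ directly from the matching-object description. Properties~(1) and~(2) are recorded in the paragraph following~\eqref{equation: postnikov tower, eq1}.

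Next I would prove~(3), $L_\bullet \simeq \holim_n L_\bullet^{\leq n}$. Since $\oblv$ preserves limits and homotopy limits of towers of fibrations (being a right Quillen functor between the simplicial model categories $\srLie$ and $\sVect_\kk$), it suffices to check the analogous statement for the underlying simplicial vector spaces, where it is standard: the tower $\{\cosk_{n+1}V_\bullet\}$ is a tower of fibrations whose homotopy limit is $V_\bullet$, because in each simplicial degree $q$ the maps $(\cosk_{n+1}V_\bullet)_q \to (\cosk_{n}V_\bullet)_q$ are eventually isomorphisms (for $n \geq q$, say), so the $\lim^1$-term vanishes and $\pi_i(\holim_n) \cong \lim_n \pi_i(L_\bullet^{\leq n}) \cong \pi_i(L_\bullet)$ via~(1) and~(2). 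I would phrase this using the Milnor exact sequence for the tower.

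Finally, for the $k$-invariants under the hypothesis $\pi_0(L_\bullet) = 0$: for each $n \geq 0$, the fiber sequence $L_\bullet^{>n} \to L_\bullet \to L_\bullet^{\leq n}$ shows that the homotopy fiber $F_\bullet$ of $\beta^n\colon L_\bullet^{\leq n+1} \to L_\bullet^{\leq n}$ has $\pi_i(F_\bullet)=0$ for $i \neq n+1$ and $\pi_{n+1}(F_\bullet) \cong \pi_{n+1}(L_\bullet)$ — one reads this off the long exact sequence~\eqref{equation: les, homotopy pair} together with properties~(1) and~(2), so $F_\bullet$ is an Eilenberg–MacLane Lie algebra $K(\pi_{n+1}(L_\bullet), n+1)$. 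Since $\pi_0(L_\bullet^{\leq n+1}) = \pi_0(L_\bullet^{\leq n}) = 0$ (here we use $\pi_0(L_\bullet)=0$ and property~(2), which forces $L_\bullet^{\leq 0} \simeq 0$ when $n=0$), Proposition~\ref{proposition: fibration with EM as fiber} applied to $\beta^n$ produces the map $k^n\colon L_\bullet^{\leq n} \to K(\pi_{n+1}(L_\bullet), n+2)$ extending $\beta^n$ to a fiber sequence. This completes the proof. The only genuinely delicate point is confirming that $\oblv$ (or an explicit matching-object computation) really does identify the homotopy-limit of the Postnikov tower in $\srLie$ with $L_\bullet$ and that each $\beta^n$ is a fibration; everything else is a routine consequence of results already in hand, so I expect that compatibility-and-convergence bookkeeping to be the main, though not severe, obstacle.
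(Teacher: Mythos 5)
Your proposal is correct and follows essentially the same route the paper intends: the tower is the coskeleton tower $L_\bullet^{\leq n}=\cosk_{n+1}L_\bullet$ with properties (1)--(3) read off from the underlying simplicial vector spaces (via $\oblv$ commuting with $\cosk$), and the $k$-invariants come from identifying the fiber of $\beta^n$ as $K(\pi_{n+1}(L_\bullet),n+1)$ and applying Proposition~\ref{proposition: fibration with EM as fiber}. The paper leaves exactly this assembly to the reader (the corollary is stated as a summary of Section~\ref{section: postnikov system}), and your bookkeeping of the fibration property and the $\lim^1$ argument for (3) is sound.
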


\subsection{Principal fibrations}\label{section: principal fibration}
In this section we will sketch the theory of principal fibrations in the category $\srLie$ of simplicial restricted Lie algebras. The corresponding theory in the category $\sSet$ of simplicial sets is well-known, and we will follow  along its lines. We will use Sections~V.2-V.3 from \cite{GoerssJardine} as our main reference.

Since the category $\Mod_{\kk\{\xi\}}$ of left $\kk\{\xi\}$-modules is abelian, we have the natural equivalence
\begin{equation*}
\Grp(\Mod_{\kk\{\xi\}}) \xrightarrow{\simeq} \Mod_{\kk\{\xi\}},
\end{equation*}
where the left hand side is the category of group objects in $\Mod_{\kk\{\xi\}}$. Moreover, it is not hard to see that the functor $\trivxi$ induces the equivalence
\begin{equation*}
\trivxi\colon \Grp(\Mod_{\kk\{\xi\}}) \xrightarrow{\simeq} \Grp(\rLie).
\end{equation*}
Indeed, if $\mu\colon L\times L \to L$ is a group multiplication in $\rLie$, then $\mu$ coincides with the usual vector addition, and the latter is a Lie algebra homomorphism if and only if the Lie bracket is trivial. Similarly, we have equivalences for the categories of simplicial objects:
$$\Grp(\sMod_{\kk\{\xi\}}) \xrightarrow{\simeq} \sMod_{\kk\{\xi\}},$$
$$\trivxi\colon \Grp(\sMod_{\kk\{\xi\}}) \xrightarrow{\simeq} \Grp(\srLie).$$

Let $M_\bullet \in \sMod_{\kk\{\xi\}}$ be a simplicial left $\kk\{\xi\}$-module. We say that $M_\bullet$ \emph{acts} on a simplicial restricted Lie algebra $L_\bullet\in \srLie$ if there exists a morphism in $\srLie$
$$\mu\colon \trivxi M_\bullet \times L_\bullet \to L_\bullet $$
which satisfies the associative and unit axioms. Note that $$\bar{\mu}\colon \trivxi M_\bullet \to L_\bullet, \; \bar{\mu}(-)=\mu(-,0)$$ is a map of simplicial restricted Lie algebras, $\bar\mu(M_\bullet)$ is a Lie ideal in $L_\bullet$, and $\mu(m,l)=\bar\mu(m)+l$.

Let $M_\bullet$ act on $L_\bullet \in \srLie$. We denote by $L_\bullet/M_\bullet$ the \emph{group action quotient}, i.e.
$$
\begin{tikzcd}%[column sep=large]
L_\bullet/M_\bullet=\mathrm{coeq}(\trivxi M_\bullet \times L_\bullet \arrow[shift left=.75ex]{r}{\mu}
  \arrow[shift right=.75ex,swap]{r}{pr_2}
&
L_\bullet).
\end{tikzcd}
$$
We point out that this notation is consistent with the notation for a \emph{categorical} quotient associated with a single morphism used before. Indeed, we have an isomorphism
$$
\begin{tikzcd}%[column sep=large]
\mathrm{coeq}(\trivxi M_\bullet \times L_\bullet \arrow[shift left=.75ex]{r}{\mu}
  \arrow[shift right=.75ex,swap]{r}{pr_2}
&
L_\bullet) \cong \mathrm{coeq}(\trivxi M_\bullet \arrow[shift left=.75ex]{r}{\bar{\mu}}
\arrow[shift right=.75ex,swap]{r}{0}
&
L_\bullet).
\end{tikzcd}
$$
Finally, we say that $M \in \Mod_{\kk\{\xi\}}$ acts \emph{freely} on $L\in \rLie$ if there is a $M$-equivariant isomorphism $$ L \cong \trivxi(M) \times X $$ in $\rLie$, where $M$ acts on the right hand side via $\trivxi(M)$. Note that the action is free if and only if $\bar{\mu}\colon \trivxi M \to L$ is a \emph{split} monomorphism in $\rLie$ and $\bar{\mu}(\trivxi M)$ is a \emph{Lie ideal} in $L$.

Let $\srLie_{M_\bullet}$ be the category of simplicial restricted Lie algebras with $M_\bullet$-action. Note that $\srLie_{M_\bullet}$ is a  simplicial category such that the forgetful functor
$$\oblv \colon \srLie_{M_\bullet} \to \srLie $$
is simplicial. Moreover, $\oblv$ has a left adjoint
$$\trivxi M_\bullet \times (-)\colon \srLie \to \srLie_{M_\bullet},$$
which is also simplicial. 

\begin{lmm}\label{lemma: forgetting the M-action preserves pushout}
The adjoint pair
$$
\begin{tikzcd}
\trivxi M_\bullet \times (-): \srLie \arrow[shift left=.6ex]{r}
	&\srLie_{M_\bullet} :\oblv \arrow[shift left=.6ex,swap]{l}
\end{tikzcd}
$$
is monadic and the forgetful functor $\oblv$ preserves pushout squares.
\end{lmm}

\begin{proof}
The adjoint pair is monadic by the definition of the category $\srLie_{M_\bullet}$. For the second part, it suffices to show that the endofunctor
$$\trivxi M_\bullet \times (-) \colon \srLie \to \srLie $$
preserves pushout squares, see~\cite[Proposition~4.3.2]{Borceux94}. Finally, the assertion follows by Proposition~\ref{proposition: pushouts and products in rlie}.
\end{proof}

Since the model category $\srLie$ is cofibrantly generated (Theorem~\ref{theorem:modelsrlie}), we can transfer this model structure to $\srLie_{M_\bullet}$. Namely, we obtain the following theorem.

\begin{thm}\label{theorem: model structure, srlie with action}
There exists a simplicial combinatorial model structure on $\srLie_{M_\bullet}$ such that $f\colon L'_\bullet \to L_\bullet$ is

\begin{itemize}
\item a \emph{weak equivalence} if and only if $f$ is a weak equivalence in $\srLie$;
\item a \emph{fibration} if and only if $f$ is a fibration in $\srLie$ (i.e. $\oblv(f)$ is a fibration in $\sVect_{\kk}$, see Remark~\ref{remark: surjective on components});
\item a \emph{cofibration} if and only if $f$ has the left lifting property with respect to all acyclic fibrations.
\end{itemize}
\end{thm}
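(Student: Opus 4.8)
The plan is to obtain the model structure on $\srLie_{M_\bullet}$ by right transfer along the free–forgetful adjunction
\[
\begin{tikzcd}
\trivxi M_\bullet \times (-): \srLie \arrow[shift left=.6ex]{r}
&\srLie_{M_\bullet} :\oblv, \arrow[shift left=.6ex,swap]{l}
\end{tikzcd}
\]
exactly as in the proof of Theorem~\ref{theorem:modelsrlie}. First I would note that $\srLie_{M_\bullet}$ is complete and cocomplete: limits are created by $\oblv$, and colimits exist because $\srLie_{M_\bullet}$ is the category of algebras for an accessible monad on the presentable category $\srLie$ (the monad $L_\bullet \mapsto \trivxi M_\bullet \times L_\bullet$, whose multiplication uses the simplicial module structure on $M_\bullet$). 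In particular $\srLie_{M_\bullet}$ is presentable. The forgetful functor $\oblv$ is simplicial and has the simplicial left adjoint displayed above, so $\srLie_{M_\bullet}$ inherits a simplicial enrichment compatible with $\oblv$.

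Next I would apply the transfer criterion, e.g. \cite[Theorem~11.3.2]{Hirschhorn03}, to produce the model structure whose weak equivalences and fibrations are reflected from $\srLie$ by $\oblv$. The two hypotheses to check are: (i) the image under $\oblv$ of the transferred generating sets permits the small object argument; and (ii) every relative cell complex built from maps of the form $\trivxi M_\bullet \times J_\Lie$ is sent by $\oblv$ to a weak equivalence. For (i), smallness follows since $\srLie$ is combinatorial and $\trivxi M_\bullet \times (-)$ is a left adjoint, hence preserves the relevant colimits and compact objects. For (ii), the key observation is that, because $M_\bullet$ is a simplicial \emph{module}, the functor $\trivxi M_\bullet \times (-)$ on underlying simplicial vector spaces is $V_\bullet \mapsto M_\bullet \oplus V_\bullet$ (a split inclusion of a fixed summand). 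A pushout in $\srLie_{M_\bullet}$ of a generating trivial cofibration $\trivxi M_\bullet \times j$, $j \in J_\Lie$, is computed after applying $\oblv$ as the corresponding pushout in $\srLie$ along $\free$ of the underlying map (using Lemma~\ref{lemma: pushouts and products in rlie} to handle the free product with the abelian Lie algebra $\trivxi M_\bullet$), and since every map in $J_\Vect$ is a homotopy equivalence of simplicial vector spaces, so is its image under $\oblv \circ \free$ after smashing with the fixed summand $M_\bullet$. Hence relative $J$-cell complexes go to weak equivalences, which is precisely condition~(ii); the model structure then exists and is cofibrantly generated by $\trivxi M_\bullet \times I_\Lie$ and $\trivxi M_\bullet \times J_\Lie$, so it is combinatorial.

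Finally I would verify that the transferred structure is simplicial. Since $\oblv$ is simplicial, conservative, and detects fibrations and weak equivalences, and the simplicial tensoring/cotensoring on $\srLie_{M_\bullet}$ is computed (on underlying objects) as in $\srLie$, the axiom SM7 for $\srLie_{M_\bullet}$ follows from SM7 for $\srLie$ established in Theorem~\ref{theorem:modelsrlie}; one checks this on the generating (trivial) cofibrations, where it reduces to the corresponding pushout-product statement in $\srLie$. This gives all three parts of the theorem, with cofibrations defined by left lifting against acyclic fibrations as stated.

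The main obstacle I anticipate is condition~(ii): one must be careful that the free product $L_\bullet \sqcup (\trivxi M_\bullet \times \free(V_\bullet))$ appearing in the pushout, computed in $\srLie_{M_\bullet}$, really does become a homotopy equivalence on underlying simplicial vector spaces. This is where Lemma~\ref{lemma: pushouts and products in rlie} (pushouts against a product with an abelian Lie algebra) together with the Fresse decomposition $\oblv\circ\free(V_\bullet)\cong\bigoplus_{n\geq 1}L^r_n(V_\bullet)$ from Remark~\ref{remark: free lie algebras, fresse} is needed, exactly paralleling the left-properness argument inside the proof of Theorem~\ref{theorem:modelsrlie}; the bookkeeping of how $M_\bullet$ sits as a Lie ideal in the pushout is the only genuinely nontrivial point.
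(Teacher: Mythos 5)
Your proposal follows the same route as the paper: a right transfer along the adjunction $\trivxi(M_\bullet)\times(-)\dashv\oblv$ via \cite[Theorem~11.3.2]{Hirschhorn03}, with generating sets $I_{M_\bullet}=\trivxi(M_\bullet)\times I_\Lie$ and $J_{M_\bullet}=\trivxi(M_\bullet)\times J_\Lie$, checking smallness and that relative $J_{M_\bullet}$-cell complexes become weak equivalences. The paper dispatches the second condition more directly --- $\oblv$ preserves pushouts and every map in $\oblv(J_{M_\bullet})$ is a (deformation-retract) homotopy equivalence, and such retracts are closed under cobase change --- so the detour through Lemma~\ref{lemma: pushouts and products in rlie} and the Fresse decomposition in your final paragraph, while harmless, is not needed.
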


\begin{proof}
We again use Theorem~11.3.2 from~\cite{Hirschhorn03}. Recall from Theorem~\ref{theorem:modelsrlie} that $I_{\Lie}$ is the set of generating cofibrations and $J_\Lie$ is the set of generating trivial cofibrations for the model structure on $\srLie$. Define the following sets of morphisms in $\srLie_{M_\bullet}$
$$I_{M_\bullet}= \{\trivxi(M_\bullet)\times u \; |\; u\in I_\Lie\}, \;\; J_{M_\bullet} = \{\trivxi(M_\bullet) \times v \; |\; v\in J_\Lie\}.$$
It suffices to show that $I_{M_\bullet}$, $J_{M_\bullet}$ permit the small object argument (see~\cite[Definition~10.5.15]{Hirschhorn03}) and the functor $$\oblv\colon \srLie_{M_\bullet} \to \srLie$$ takes $J_{M_\bullet}$-cell complexes to weak equivalences in $\srLie$. The first part is clear because $\oblv$ preserves filtered colimits; and the second part is clear because $\oblv$ preserves pushouts (see Lemma~\ref{lemma: forgetting the M-action preserves pushout}) and any map in $\oblv(J_{M_\bullet})$ is a homotopy equivalence.
\end{proof}

Similar to \cite[Corollary~V.2.10]{GoerssJardine}, we obtain a description of cofibrant objects in $\srLie_{M_\bullet}$.

\begin{prop}\label{proposition: cofibrant in srlie with action}
A simplicial restricted Lie $M_\bullet$-algebra $L_\bullet \in \srLie_{M_\bullet}$ is cofibrant if and only if 
\begin{enumerate}
\item $M_n$ acts freely on $L_n$ for each $n\geq 0$;
\item the quotient $L_\bullet/M_\bullet$ is a cofibrant object in $\srLie$.
\end{enumerate}
\end{prop}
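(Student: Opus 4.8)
The plan is to follow the proof of~\cite[Corollary~V.2.10]{GoerssJardine} line by line, transported across the adjunction $\trivxi(M_\bullet)\times(-)\dashv\oblv$ that generates the transferred model structure of Theorem~\ref{theorem: model structure, srlie with action}; the role played there by a free action of a simplicial group is played here by Lemma~\ref{lemma: pushouts and products in rlie}.

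\emph{The ``only if'' direction.} A cofibrant object of $\srLie_{M_\bullet}$ is a retract of a transfinite composite of pushouts of coproducts of the generating cofibrations $\trivxi(M_\bullet)\times u$, $u\in I_\Lie$. I would first show that the class of $M_\bullet$-algebras $L_\bullet$ on which $M_n$ acts freely for every $n$ --- equivalently, on which $\bar\mu_n\colon\trivxi(M_n)\to L_n$ is a split monomorphism in $\rLie$ --- is closed under all of these operations. For a pushout along a map $\trivxi(M_\bullet)\times A_\bullet\to L_\bullet$ with $L_\bullet$ free, one computes the pushout degreewise in $\rLie$; writing $L_n\cong\trivxi(M_n)\times(L_n/M_n)$ one sees, via Lemma~\ref{lemma: pushouts and products in rlie}, that the pushout is again of the form $\trivxi(M_n)\times(-)$ and so has a free $M_n$-action. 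Coproducts and filtered colimits of free actions are free by the same degreewise computation, and a retract of a free action is free because, along an $M_\bullet$-equivariant retraction, a splitting of $\bar\mu$ on the ambient object restricts to a splitting on the retract. This establishes~(1). For~(2), note that $(-)/M_\bullet\colon\srLie_{M_\bullet}\to\srLie$ is left adjoint to the trivial-action functor $\triv$, and that $\triv$ is right Quillen: since $\oblv\circ\triv=\id_{\srLie}$, the functor $\triv$ preserves fibrations and trivial fibrations by Theorem~\ref{theorem: model structure, srlie with action}. Hence $(-)/M_\bullet$ is left Quillen and $L_\bullet/M_\bullet$ is cofibrant in $\srLie$.

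\emph{The ``if'' direction.} Assume $M_n$ acts freely on $L_n$ for all $n$ and that $\bar L_\bullet:=L_\bullet/M_\bullet$ is cofibrant in $\srLie$. The first step is a reduction to the case in which $\bar L_\bullet$ is almost-free. By Remark~\ref{remark:almost free}, $\bar L_\bullet$ is a retract of an almost-free $\bar L'_\bullet$, say $j\colon\bar L_\bullet\to\bar L'_\bullet$ and $\rho\colon\bar L'_\bullet\to\bar L_\bullet$ with $\rho j=\id$. Form the pullback $L'_\bullet:=\bar L'_\bullet\times_{\bar L_\bullet}L_\bullet$ of $\rho$ against the projection $p\colon L_\bullet\to\bar L_\bullet$. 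Since the fibres of $p$ are single $M_\bullet$-orbits, $L'_\bullet$ carries a free $M_\bullet$-action whose quotient is $\bar L'_\bullet$, and $\bigl(j\circ p,\id_{L_\bullet}\bigr)\colon L_\bullet\to L'_\bullet$ together with the projection $L'_\bullet\to L_\bullet$ exhibits $L_\bullet$ as an $M_\bullet$-equivariant retract of $L'_\bullet$; as retracts of cofibrant objects are cofibrant, we may replace $L_\bullet$ by $L'_\bullet$ and assume $\bar L_\bullet\cong\free(V_\bullet)$ for an almost-simplicial vector space $V_\bullet$. Then $\bar L_\bullet$ carries a skeletal filtration by sub-simplicial restricted Lie algebras $\bar L^{(n)}_\bullet$, with $\bar L^{(-1)}_\bullet=0$, agreeing with $\bar L_\bullet$ in degrees $\leq n$, and such that each $\bar L^{(n)}_\bullet$ is obtained from $\bar L^{(n-1)}_\bullet$ by a pushout of a coproduct of generating cofibrations of $\srLie$ indexed by the nondegenerate $n$-simplices of $V_\bullet$. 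Set $L^{(n)}_\bullet:=p^{-1}(\bar L^{(n)}_\bullet)$; this is a sub-$M_\bullet$-restricted Lie algebra with free action and quotient $\bar L^{(n)}_\bullet$, with $L^{(-1)}_\bullet=\trivxi(M_\bullet)$ the initial object of $\srLie_{M_\bullet}$, and $L_\bullet=\colim_nL^{(n)}_\bullet$. It then remains to identify each inclusion $L^{(n-1)}_\bullet\hookrightarrow L^{(n)}_\bullet$, in $\srLie_{M_\bullet}$, with the pushout of the coproduct of maps $\trivxi(M_\bullet)\times\bigl(\free(\kk(\partial\Delta^n))\to\free(\kk(\Delta^n))\bigr)$ along the attaching map obtained by lifting each nondegenerate $n$-simplex $\bar v$ of $\bar L_\bullet$ to some $\hat v\in L_n$ (possible since $p_n$ is surjective): the faces $d_i\hat v$ lie in $L_{n-1}=L^{(n-1)}_{n-1}$ and satisfy the boundary identities, hence define the required equivariant $\trivxi(M_\bullet)\times\coprod\free(\kk(\partial\Delta^n))\to L^{(n-1)}_\bullet$. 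Granting this, $L_\bullet$ is exhibited as an $I_{M_\bullet}$-cell complex, hence cofibrant.

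\emph{The main obstacle.} The crux is the last pushout identification, namely that passing to $M_\bullet$-orbits is compatible with the skeletal decomposition of an almost-free algebra: one must check degreewise in $\rLie$ that $L^{(n)}_\bullet$ is obtained from $L^{(n-1)}_\bullet$ by freely adjoining the $M_n$-orbits of the lifted simplices $\hat v$ together with their degeneracies and nothing more. This is precisely the bookkeeping carried out in~\cite[Section~V.2]{GoerssJardine}, and Lemma~\ref{lemma: pushouts and products in rlie} was recorded in advance so that the requisite manipulation of pushouts remains valid in $\rLie$.
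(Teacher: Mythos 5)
Your proof is correct and follows essentially the same route as the paper's, with a few expository variants. In the ``only if'' direction, for condition (2) the paper shows directly that $(-)/M_\bullet$ takes $I_{M_\bullet}$-cell complexes to $I_\Lie$-cell complexes using the identity $(I_{M_\bullet})/M_\bullet = I_\Lie$; your argument that $(-)/M_\bullet \dashv \triv$ is a Quillen adjunction (since $\oblv\circ\triv = \id$ forces $\triv$ to be right Quillen) is a slightly slicker packaging of the same fact, and is correct. In the ``if'' direction, the paper pulls back $q\colon L_\bullet\to X_\bullet$ along a retract-presenting $I_\Lie$-cell complex $Y_\bullet$ and then applies Lemma~\ref{lemma: principal fibration over a simplex} together with Lemma~\ref{lemma: pushouts and products in rlie} inductively; you instead reduce (via the same pullback/retract device) to an almost-free quotient and use the skeletal filtration, then lift simplices by hand to produce the attaching maps. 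The lifting-of-simplices step is exactly what Lemma~\ref{lemma: principal fibration over a simplex} makes precise --- that lemma produces a trivialization $\trivxi(M_\bullet)\times\free(\kk(\Delta^n))\cong E_\bullet$ over each cell, which is the section over $\free(\kk(\Delta^n))$ you are constructing by choosing a lift $\hat v$ of each nondegenerate generator. So where you defer the final identification to the bookkeeping in~\cite[Section~V.2]{GoerssJardine}, the paper substitutes its own Lemma~\ref{lemma: principal fibration over a simplex}, which is the clean way to express the same content; both ultimately rest on Lemma~\ref{lemma: pushouts and products in rlie} to see that $\trivxi(M_\bullet)\times(-)$ commutes with the relevant pushouts in $\rLie$. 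One small point worth being explicit about, since it is not obvious: you correctly treat $\trivxi(M_\bullet)$ (not $0$) as the initial object of $\srLie_{M_\bullet}$; this is needed for $L^{(-1)}_\bullet = p^{-1}(0)\cong\trivxi(M_\bullet)$ to serve as the base of the induction.
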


We first prove an analog of \cite[Lemma~2.5]{GoerssJardine}.

\begin{lmm}\label{lemma: principal fibration over a simplex}
Let $L_\bullet\in \srLie_{M_\bullet}$ be a simplicial restricted Lie $M_\bullet$-algebra such that $M_i$ acts freely on $L_i$ for each $i\geq 0$ and the quotient $L_\bullet/M_\bullet=\free(\kk(\Delta^n))$, where $\kk(\Delta^n)$ is the simplicial vector space spanned by $\Delta^n\in \sSet$. Then there is an isomorphism $\psi\colon \trivxi(M_\bullet)\times\free(\kk(\Delta^n)) \xrightarrow{\cong} L_\bullet$ in $\srLie_{M_\bullet}$ such that the diagram
$$
\begin{tikzcd}
\trivxi(M_\bullet)\times \free(\kk(\Delta^n)) \arrow{rr}{\psi} \arrow[swap]{dr}{pr_2}
&& L_\bullet \arrow{dl}{q}  \\
& \free(\kk(\Delta^n)),
&
\end{tikzcd} 
$$
commutes.
\end{lmm}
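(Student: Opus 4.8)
The plan is to mimic the classical argument for simplicial sets (Lemma~V.2.5 in~\cite{GoerssJardine}), where the key point is that a free action over a free base is trivializable, and that the triangle of identifications can be chosen compatibly. First I would recall from Proposition~\ref{proposition: cofibrant in srlie with action} (or rather the combinatorial content needed for it) that the freeness condition means for each $i$ there is an $M_i$-equivariant isomorphism $L_i\cong \trivxi(M_i)\times X_i$ in $\rLie$, where $X_i$ can be taken to be (the image in $L_i/M_i$ of) a section of $q_i\colon L_i\to (L_\bullet/M_\bullet)_i$. Since $L_\bullet/M_\bullet=\free(\kk(\Delta^n))$, the base is almost-free, generated degreewise by the nondegenerate simplices of $\Delta^n$. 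The strategy is to build the trivializing isomorphism $\psi$ by choosing compatible lifts along $q$ of the simplices of $\Delta^n$ — equivalently, a section of $q$ over the generating simplices — and then extending multiplicatively and $M_\bullet$-equivariantly.

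The key steps, in order: (1) Observe that $\kk(\Delta^n)$, as an almost-simplicial vector space (Remark~\ref{remark: almost-simplicial vector spaces}), has a basis given by the nondegenerate simplices, so $\free(\kk(\Delta^n))$ is generated as a simplicial restricted Lie algebra by elements indexed by the faces of $\Delta^n$; in fact the canonical generator is the top simplex $\iota_n\in \kk(\Delta^n)_n$, and every other generator is a degeneracy of a face of $\iota_n$. (2) Using that $q_n\colon L_n\to \free(\kk(\Delta^n))_n$ is surjective (as a degreewise surjection it is part of the free-action data) and that $M_n$ acts freely on $L_n$, choose a lift $\tilde\iota_n\in L_n$ of $\iota_n$. (3) Show that the compatibility one actually needs is only with face and degeneracy maps applied to $\iota_n$: define $\tilde\iota_{[k]}$ for every structure map $\phi\colon[k]\to[n]$ in $\Delta$ (equivalently for every simplex of $\Delta^n$) by $\phi^*\tilde\iota_n$. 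One checks these are genuine lifts of the corresponding simplices of $\kk(\Delta^n)$ and are automatically simplicially compatible since they all come from the single element $\tilde\iota_n$. (4) Since $\free(\oblv(-))$ is left adjoint to $\oblv$, the assignment of the generators of $\free(\kk(\Delta^n))$ to these lifted elements determines a unique map of simplicial restricted Lie algebras $s\colon \free(\kk(\Delta^n))\to L_\bullet$ with $q\circ s=\id$. (5) Finally define $\psi\colon \trivxi(M_\bullet)\times \free(\kk(\Delta^n))\to L_\bullet$ by $\psi(m,x)=\mu(m,s(x))=\bar\mu(m)+s(x)$; this is $M_\bullet$-equivariant and commutes with $pr_2$ by construction, and it is a degreewise isomorphism because in each degree it is exactly the free-action splitting $\trivxi(M_i)\times \free(\kk(\Delta^n))_i\xrightarrow{\cong}L_i$ induced by the section $s_i$ (here one uses that a free action means precisely $\bar\mu_i$ is a split mono with complement the chosen copy of the base, and $s_i$ provides such a splitting).

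The main obstacle, I expect, is verifying step (4)–(5) carefully: one must check that the section $s$ obtained from the universal property of $\free$ really does land in $L_\bullet$ compatibly with all the simplicial structure, i.e. that $q\circ s=\id$ as simplicial maps (not just on the top generator), and that $\psi$ is a degreewise isomorphism. The first is formal once one notes that $q$ is a map of simplicial restricted Lie algebras and $q\circ s$ agrees with $\id$ on the generating simplicial vector space $\kk(\Delta^n)$ (because $q(\tilde\iota_n)=\iota_n$ and both $q\circ s$ and $\id$ are maps of simplicial Lie algebras out of a free object), hence $q\circ s=\id$ by uniqueness. The second reduces to the degreewise statement that an $M_i$-equivariant map $\trivxi(M_i)\times A\to L_i$ restricting to the identity on $A$ via a section is an isomorphism whenever $M_i$ acts freely — this is exactly the content of the definition of a free action together with Lemma~\ref{lemma: pushouts and products in rlie} (which ensures the product decomposition is respected). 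I would write out this degreewise check explicitly as the one genuinely non-formal point, and leave the simplicial compatibilities, which follow the simplicial-sets argument verbatim, as routine.
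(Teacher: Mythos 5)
Your proposal is correct and follows essentially the same route as the paper: the paper obtains the section $z\colon \free(\kk(\Delta^n))\to L_\bullet$ of $q$ "by various adjunctions" (which is precisely your steps (1)--(4): $\free\dashv\oblv$ together with representability of $\Delta^n$ reduce the section to a choice of lift of $\iota_n$ in $L_n$), then sets $\psi=\mu\circ(\id\times z)$ and concludes it is an isomorphism degreewise from freeness of the action, exactly as in your step (5). Your write-up merely makes explicit the details the paper leaves implicit.
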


\begin{proof}
By various adjunctions, there is a section $z\colon \free(\kk(\Delta^n)) \to L_\bullet$ of the quotient map $q\colon L_\bullet \to \free(\kk(\Delta^n))$. Then, we define $\psi$ as the following composite
$$\psi\colon \trivxi(M_\bullet)\times \free(\kk(\Delta^n)) \xrightarrow{\id\times z} \trivxi(M_\bullet)\times L_\bullet \xrightarrow{\mu} L_\bullet, $$
where $\mu\colon \trivxi(M_\bullet) \times L_\bullet \to L_\bullet$ is the action map. Since $M_i$ acts freely on $L_i$ for each $i\geq 0$, the map $\psi$ is an isomorphism.
\end{proof}

\begin{proof}[Proof of Proposition~\ref{proposition: cofibrant in srlie with action}]
Let $L_\bullet$ be a cofibrant object in $\srLie_{M_\bullet}$, then $L_\bullet$ is a retract of an $I_{M_\bullet}$-cell complex, see~\cite[Corollary~11.2.2]{Hirschhorn03}. By the cell induction, we obtain that $M_n$ acts freely on $L_n$ for each $n\geq 0$. Furthermore, since the quotient functor $$(-)/M_\bullet\colon \srLie_{M_\bullet} \to \srLie$$ preserves colimits and $(I_{M_\bullet})/M_\bullet=I_\Lie$, we get that the quotient $L_\bullet/M_\bullet$ is a retract of an $I_\Lie$-cell complex, and so $L_\bullet/M_\bullet$ is a cofibrant object in $\srLie$.

Suppose now that the quotient $X_\bullet = L_\bullet/M_\bullet \in \srLie$ is a cofibrant object. Then $X_\bullet$ is a retract of an $I_\Lie$-cell complex $Y_\bullet$; that is $Y_\bullet=\colim_n Y_\bullet^{(n)}$, $Y_{\bullet}^{(-1)}=0$, and for each $n\geq 0$, there is a pushout diagram
$$
\begin{tikzcd}
\coprod_{\alpha} A_\alpha \arrow{d}{\sqcup f_\alpha} \arrow{r}
& Y_{\bullet}^{(n-1)} \arrow{d}  \\
\coprod_{\alpha} B_\alpha \arrow{r}
& Y_{\bullet}^{(n)}
\end{tikzcd}
$$
such that all $f_\alpha\colon A_{\alpha} \to B_\alpha$ belong to the generating set $I_\Lie$. Define $L_\bullet^{(n)}$ as a pullback of the quotient map $q\colon L_\bullet \to L_\bullet/M_\bullet$ along $Y_\bullet^{(n)} \hookrightarrow Y_\bullet \to X_\bullet$. Then $L_\bullet$ is a retract of $L'_\bullet = \colim_n L_\bullet^{(n)}$. By applying inductively Lemma~\ref{lemma: principal fibration over a simplex} together with Proposition~\ref{proposition: pushouts and products in rlie}, we obtain that $L'_\bullet$ is an $I_{M_\bullet}$-cell complex, and so $L_\bullet$ is a cofibrant object in $\srLie_{M_\bullet}$.
\end{proof}

\begin{dfn}\label{definition: lie principal fibration}
Let $M_\bullet\in \sMod_{\kk\{\xi\}}$ be a simplicial left $\kk\{\xi\}$-module. A \emph{principal fibration} (or principal $M_\bullet$-fibration) is a fibration $\pi\colon E_\bullet \to B_\bullet$ in $\srLie_{M_\bullet}$ so that
\begin{enumerate}
\item the base $B_\bullet$ is cofibrant and has trivial $M_\bullet$ action;
\item $E_\bullet$ is a cofibrant object in $\srLie_{M_\bullet}$;
\item the induced map from the quotient $E_\bullet/M_\bullet \to B_\bullet$ is an isomorphism. 
\end{enumerate}
\end{dfn}

As usual, we say that two principal fibrations $\pi\colon E_\bullet \to B_\bullet$ and $\pi'\colon E'_\bullet \to B_\bullet$ are \emph{isomorphic} if there is an isomorphism $f\colon E_\bullet \to E'_\bullet$ in $\srLie_{M_\bullet}$ such that $\pi=\pi'\circ f$. Furthermore, we point out that any map $f\colon E_\bullet \to E'_\bullet$ of principal fibrations over $B_\bullet$ such that $f/M_\bullet =\id$ is an isomorphism of principal fibrations. We will write $PF_{M_\bullet}(B_\bullet)$ for the set of isomorphism classes of principal $M_\bullet$-fibrations over $B_\bullet \in \srLie$. 

Here we fix some notation. Let $\pi\colon E_\bullet \to B_\bullet$ be a principal $M_\bullet$-fibration and let $f\colon B'_\bullet \to B_\bullet$ be any map in $\srLie$ between cofibrant objects. Then, by Proposition~\ref{proposition: cofibrant in srlie with action}, the pullback $\pi'\colon E_\bullet \times_{B_\bullet} B'_\bullet \to B'_\bullet$ is again a principal fibration and we write $E_\bullet\vert_{f}$ for its total space $E_\bullet \times_{B_\bullet}B'_\bullet$. We denote by $\bar{f}\colon E_{\bullet}\vert_{f}\to E_\bullet$ the map between total spaces induced by $f$. 

Moreover, if 
$$f\colon (B'_\bullet,A'_\bullet) \to (B_\bullet,A_\bullet)$$ is a map of pairs and $E_\bullet$ is a principal fibration over $B_\bullet$, then we denote by $E_\bullet\vert_{\partial f}$ the restriction of $E_\bullet\vert_{f}$ to $A'_\bullet$. This notation is slightly awkward and non-standard, but nevertheless it will be convenient for us in the next section. 

Note that $PF_{M_\bullet}(-)$ is a contravariant functor on the category of cofibrant objects in $\srLie$ and we will show that this is a homotopy one.

\begin{lmm}\label{lemma: principal fibrations are homotopy invariant}
Suppose that $f_1,f_2\colon B'_\bullet \to B_\bullet$ are two homotopic maps in $\srLie$ between cofibrant objects. Then $PF_{M_\bullet}(f_1)=PF_{M_\bullet}(f_2)$.
\end{lmm}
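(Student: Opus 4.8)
The plan is to transfer the classical simplicial-set argument (as in \cite[Lemma~V.3.2]{GoerssJardine}) to our setting, using the simplicial model structure on $\srLie_{M_\bullet}$ from Theorem~\ref{theorem: model structure, srlie with action}. First I would reduce to the following statement: if $H\colon B'_\bullet \otimes \Delta^1 \to B_\bullet$ is a simplicial homotopy with $H|_0 = f_1$ and $H|_1 = f_2$, and $\pi\colon E_\bullet \to B_\bullet$ is a principal $M_\bullet$-fibration, then the two pullbacks $E_\bullet|_{f_1}$ and $E_\bullet|_{f_2}$ are isomorphic as principal fibrations over $B'_\bullet$. Here I would use that $\srLie$ is a simplicial model category (Theorem~\ref{theorem:modelsrlie}), so a homotopy between maps can be encoded by a single map out of the tensor $B'_\bullet \otimes \Delta^1$, and that $B'_\bullet$ is cofibrant while all objects are fibrant, so the two notions of homotopy (left/right, via cylinder/path object) agree and the homotopy relation is an equivalence relation.

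The key step is to consider the pullback $\widetilde{E}_\bullet := (E_\bullet \otimes \Delta^1)\times_{B_\bullet \otimes \Delta^1} (B'_\bullet \otimes \Delta^1)$ along $\bar H := H$, which by Proposition~\ref{proposition: cofibrant in srlie with action} is again a principal $M_\bullet$-fibration, now over $B'_\bullet \otimes \Delta^1$; its restrictions over $B'_\bullet \otimes \{0\}$ and $B'_\bullet \otimes \{1\}$ are exactly $E_\bullet|_{f_1}$ and $E_\bullet|_{f_2}$. It then suffices to show that for \emph{any} principal $M_\bullet$-fibration $P_\bullet \to C_\bullet \otimes \Delta^1$ (with $C_\bullet$ cofibrant), the restrictions over $C_\bullet \otimes \{0\}$ and $C_\bullet \otimes \{1\}$ are isomorphic. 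For this I would use that the inclusions $C_\bullet \otimes \{\e\} \hookrightarrow C_\bullet \otimes \Delta^1$, $\e = 0,1$, are trivial cofibrations in $\srLie$ (the projection $C_\bullet \otimes \Delta^1 \to C_\bullet$ being a weak equivalence and both inclusions being its sections, with the inclusion itself a cofibration since $\{\e\}\hookrightarrow \Delta^1$ is one). Hence, by Proposition~\ref{proposition: cofibrant in srlie with action} and the lifting properties in $\srLie_{M_\bullet}$, I can lift along $P_\bullet \to C_\bullet \otimes \Delta^1$ to produce a map of principal fibrations $(P_\bullet|_{C_\bullet \otimes \{0\}}) \otimes \Delta^1 \to P_\bullet$ over $C_\bullet \otimes \Delta^1$ — a ``fiberwise deformation retraction'' — whose restriction at $1$ gives a map $P_\bullet|_{C_\bullet \otimes\{0\}} \to P_\bullet|_{C_\bullet\otimes\{1\}}$ of principal fibrations over $C_\bullet$. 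Since any map of principal $M_\bullet$-fibrations over a fixed base inducing the identity on the quotient is automatically an isomorphism (as noted right after Definition~\ref{definition: lie principal fibration}), this map is an isomorphism of principal fibrations, which finishes the argument.

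Concretely, the construction of the fiberwise retraction goes as follows. Write $i_0\colon C_\bullet \otimes \{0\} \hookrightarrow C_\bullet \otimes \Delta^1$ and let $Q_\bullet := P_\bullet|_{i_0}$. There is an obvious map $Q_\bullet \otimes \{0\} \hookrightarrow Q_\bullet \otimes \Delta^1$, and the composite $Q_\bullet \otimes\{0\} = Q_\bullet \hookrightarrow P_\bullet$ together with the structure map $Q_\bullet \otimes \Delta^1 \to C_\bullet \otimes \Delta^1$ (using $Q_\bullet \to C_\bullet\otimes\{0\}\to C_\bullet$ and the identity on $\Delta^1$) gives a commuting square; since $Q_\bullet \otimes\{0\}\hookrightarrow Q_\bullet\otimes\Delta^1$ is a trivial cofibration in $\srLie_{M_\bullet}$ (it is one in $\srLie$ by the previous paragraph, and $Q_\bullet$ is cofibrant there in the $M_\bullet$-equivariant sense by Proposition~\ref{proposition: cofibrant in srlie with action}) and $P_\bullet \to C_\bullet\otimes\Delta^1$ is a fibration in $\srLie_{M_\bullet}$, a lift $\Phi\colon Q_\bullet \otimes \Delta^1 \to P_\bullet$ exists. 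Restricting $\Phi$ along $Q_\bullet\otimes\{1\}\hookrightarrow Q_\bullet \otimes\Delta^1$ and composing with the quotient identification gives the desired map of principal $M_\bullet$-fibrations over $C_\bullet$, which on quotients is the identity (one checks this by tracking the quotient, where the construction reduces to the projection formula $C_\bullet\otimes\Delta^1\to C_\bullet$), hence an isomorphism.

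The main obstacle I anticipate is purely bookkeeping: carefully setting up the $M_\bullet$-equivariant tensoring with $\Delta^1$ and verifying that all the relevant inclusions are (equivariant, trivial) cofibrations, and that the pullback of a principal fibration along $H$ retains cofibrancy of the total space — this is where Proposition~\ref{proposition: cofibrant in srlie with action} and Lemma~\ref{lemma: pushouts and products in rlie} (to handle products with the trivial-action algebra $\trivxi M_\bullet$) do the real work. There is no deep conceptual difficulty beyond knowing that $\srLie_{M_\bullet}$ is a genuine simplicial model category; the rest is the standard covering-homotopy-type argument.
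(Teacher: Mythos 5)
Your proof is correct and follows essentially the same route as the paper: reduce to the universal example of a principal fibration over a cylinder, lift the trivial cofibration $E_\bullet\vert_0=E_\bullet\vert_0\otimes\{0\}\hookrightarrow E_\bullet\vert_0\otimes\Delta^1$ against the fibration in $\srLie_{M_\bullet}$, and conclude via the observation that a map of principal fibrations over a fixed base is automatically an isomorphism. The only (immaterial) difference is that the paper first identifies the lift as an isomorphism $E_\bullet\vert_0\times\Delta^1\cong E_\bullet$ over the whole cylinder and then restricts along $d^1$, whereas you restrict at $1$ first and then invoke the isomorphism criterion.
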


\begin{proof}
Compare with~\cite[Lemma~V.3.4]{GoerssJardine}. It is enough to consider the universal example: given a principal fibration $\pi\colon E_\bullet \to B_\bullet \times \Delta^1$, the restrictions $E_\bullet\vert_0 \to B_\bullet$ and $E_\bullet\vert_1\to B_\bullet$ over the vertices of $\Delta^1$ are isomorphic. For this consider the lifting problem in $\srLie_{M_\bullet}$
$$
\begin{tikzcd}
E_\bullet\vert_0 \arrow[swap]{d}{d^0} \arrow{r}
& E_\bullet \arrow{d}{\pi}  \\
E_\bullet\vert_0 \times \Delta^1 \arrow{r}\arrow[dashed]{ur}
& B_\bullet\times \Delta^1.
\end{tikzcd}
$$
Since $E_\bullet \vert_0$ is cofibrant in $\srLie_{M_\bullet}$, the left vertical arrow $$E_\bullet\vert_0 = E_\bullet\vert_0 \times \Delta^0 \xrightarrow{\id\times d^0} E_\bullet\vert_0 \times \Delta^1$$ is a trivial cofibration. Therefore a lifting exists and defines an isomorphism of principal fibrations $E_\bullet \vert_0 \times \Delta^1 \cong E_\bullet$ over $B_\bullet$. The pullback of the last isomorphism along $d^1$ gives the desired isomorphism $E_\bullet \vert_0 \cong E_\bullet\vert_1$.
\end{proof}

As a corollary, we obtain the following statement, cf.~\cite[Lemma~V.3.5]{GoerssJardine}.

\begin{lmm}\label{lemma: principal fibrations over contractible}
Suppose that $B_\bullet$ is a contractible cofibrant object in $\srLie$. Then any principal fibration $E_\bullet$ over $B_\bullet$ is trivializable, i.e. $E_\bullet \cong \trivxi M_\bullet \times B_\bullet$. \qed
\end{lmm}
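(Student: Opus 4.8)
The plan is to run the standard contractibility argument from simplicial homotopy theory, feeding off the homotopy invariance of $PF_{M_\bullet}(-)$ already established in Lemma~\ref{lemma: principal fibrations are homotopy invariant}. Write $0\in\srLie$ for the zero object; it is cofibrant, being initial. Since $B_\bullet$ is contractible and cofibrant, and every object of $\srLie$ is fibrant (Theorem~\ref{theorem:modelsrlie}), the canonical map $B_\bullet\to 0$ is a weak equivalence between cofibrant–fibrant objects, hence a simplicial homotopy equivalence by Whitehead's theorem in a simplicial model category. The only map $0\to B_\bullet$ is the zero map $z'$, as $0$ is initial, so $z'$ is forced to be a homotopy inverse of $B_\bullet\to 0$; in particular the composite $z=\bigl(B_\bullet\xrightarrow{\ !\ }0\xrightarrow{\ z'\ }B_\bullet\bigr)$, which is the zero endomorphism of $B_\bullet$, is simplicially homotopic to $\mathrm{id}_{B_\bullet}$ via a homotopy $B_\bullet\times\Delta^1\to B_\bullet$.

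Next I would record that any principal $M_\bullet$-fibration over $0$ is isomorphic to the trivial one $\trivxi M_\bullet\to 0$. Indeed, if $F_\bullet\to 0$ is such a fibration, then by Proposition~\ref{proposition: cofibrant in srlie with action} each $M_n$ acts freely on $F_n$ with quotient $F_\bullet/M_\bullet=0$; since $0$ is initial there is a (unique) section of $F_\bullet\to F_\bullet/M_\bullet=0$, and composing it with the action map exactly as in the proof of Lemma~\ref{lemma: principal fibration over a simplex} produces an $M_\bullet$-equivariant isomorphism $\trivxi M_\bullet=\trivxi M_\bullet\times 0\xrightarrow{\cong}F_\bullet$ over $0$. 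Consequently, by the contravariant functoriality of $PF_{M_\bullet}$ on cofibrant objects and the factorization $z=(!\,)\circ z'$ above, we get $PF_{M_\bullet}(z)(E_\bullet)\cong PF_{M_\bullet}(!)\bigl(PF_{M_\bullet}(z')(E_\bullet)\bigr)\cong PF_{M_\bullet}(!)(\trivxi M_\bullet)=\trivxi M_\bullet\times B_\bullet$, the last step being the pullback of the trivial fibration over $0$ along $B_\bullet\to 0$.

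Finally, applying Lemma~\ref{lemma: principal fibrations are homotopy invariant} to the homotopic maps $z\simeq\mathrm{id}_{B_\bullet}$ gives $PF_{M_\bullet}(z)=PF_{M_\bullet}(\mathrm{id}_{B_\bullet})$; the right-hand side sends $E_\bullet$ to itself, so combining with the previous paragraph yields $E_\bullet\cong\trivxi M_\bullet\times B_\bullet$ in $\srLie_{M_\bullet}$, as desired.

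The only input that is not purely formal bookkeeping is the identification $\mathrm{id}_{B_\bullet}\simeq z$ by a single simplicial homotopy of the shape $B_\bullet\times\Delta^1\to B_\bullet$, so that Lemma~\ref{lemma: principal fibrations are homotopy invariant} applies verbatim; this is exactly where one uses that $\srLie$ is a simplicial model category in which every object is fibrant, so weak equivalences between cofibrant objects admit simplicial homotopy inverses and the simplicial homotopy relation between maps out of a cofibrant object is an equivalence relation. The rest — pullbacks of principal fibrations and the triviality of principal fibrations over $0$ — is routine once the cofibrancy criterion of Proposition~\ref{proposition: cofibrant in srlie with action} is in hand.
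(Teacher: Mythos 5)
Your proof is correct and is exactly the argument the paper has in mind: the lemma is stated there as an immediate corollary of Lemma~\ref{lemma: principal fibrations are homotopy invariant} (following \cite[Lemma~V.3.5]{GoerssJardine}), i.e.\ contract $B_\bullet$ to $0$, observe that principal fibrations over $0$ are trivial, and pull back. Your write-up just makes explicit the two routine inputs the paper leaves implicit (the simplicial homotopy $\mathrm{id}_{B_\bullet}\simeq 0$ from Whitehead's theorem, and the triviality over the zero object via Proposition~\ref{proposition: cofibrant in srlie with action}).
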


We now define the classifying object for principal fibrations. Recall that $M_\bullet\in \sMod_{\kk\{\xi\}}$ is a simplicial left $\kk\{\xi\}$-module.

\begin{dfn}\label{definition: classifying space for principal fibrations} Let $EM_\bullet$ be any cofibrant object in $\srLie_{M_\bullet}$ such that the unique map $EM_\bullet\to 0$ is a weak equivalence. Let $BM_\bullet =EM_\bullet/M_\bullet$ and $\pi_{M_\bullet}\colon EM_\bullet \to BM_\bullet$ be the resulting principal fibration.
\end{dfn}

\begin{exmp}\label{example: kan cone as classifying space}
Note that the Kan cone $\trivxi C_\bullet M_\bullet \in \srLie$ (\cite[Section~III.5]{GoerssJardine}) is contractible and it has a left $M_\bullet$-action such that $M_n$ acts freely on 
$$\trivxi C_n M_\bullet = \trivxi(M_n\times M_{n-1} \times M_{n-2} \times \ldots \times M_0).$$ 
However, $\trivxi C_\bullet M_\bullet$ is not $EM_\bullet$ because the Kan suspension $$\trivxi \Sigma_\bullet M_\bullet = \trivxi C_\bullet M_\bullet / M_\bullet$$ is not a cofibrant object in $\srLie$. This can be fixed as follows. Let $f\colon B_\bullet \to \trivxi\Sigma_\bullet M_\bullet$ be a cofibrant replacement. Then we observe that the pullback 
$$E_\bullet = B_\bullet \times_{\trivxi \Sigma_\bullet M_\bullet} \trivxi(C_\bullet M_\bullet) $$
in $\srLie_{M_\bullet}$ is contractible, $M_n$ acts freely on $E_n$ for each $n\geq 0$, and the quotient $E_\bullet/M_\bullet= B_\bullet$ is cofibrant. In this way, we constructed a principal fibration from Definition~\ref{definition: classifying space for principal fibrations}.
\end{exmp}

\begin{thm}\label{theorem: principal fibrations, classifying}
For all cofibrant objects $B_\bullet \in \srLie$, the map
$$\theta\colon [B_\bullet, BM_\bullet] \to PF_{M_\bullet}(B_\bullet) $$
sending a class $[f]\in [B_\bullet, BM_\bullet]$ to the pullback of $\pi_{M_\bullet}\colon EM_{\bullet} \to BM_\bullet$ along $f$ is a bijection.
\end{thm}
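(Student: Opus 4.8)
The plan is to follow the classical argument that the classifying space for principal $G$-fibrations of simplicial sets represents the functor $PF_G(-)$, see~\cite[Section~V.3]{GoerssJardine}, adapting it to the setting of $\srLie_{M_\bullet}$. The key formal inputs are already in place: Lemma~\ref{lemma: principal fibrations are homotopy invariant} shows $PF_{M_\bullet}(-)$ is a homotopy functor on cofibrant objects, so $\theta$ is at least well-defined; Lemma~\ref{lemma: principal fibrations over contractible} gives triviality of principal fibrations over contractible bases; and $EM_\bullet \to 0$ being a weak equivalence between cofibrant objects makes $EM_\bullet$ a genuine (up to homotopy) ``total space of the universal bundle.'' The proof splits into surjectivity and injectivity of $\theta$.

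First I would prove \textbf{surjectivity}. Given an arbitrary principal $M_\bullet$-fibration $\pi\colon E_\bullet \to B_\bullet$, I want to produce a classifying map $f\colon B_\bullet \to BM_\bullet$. The standard move is to build an $M_\bullet$-equivariant map $\tilde f\colon E_\bullet \to EM_\bullet$ and then pass to quotients to get $f\colon B_\bullet \to BM_\bullet$; one then checks the pullback of $\pi_{M_\bullet}$ along $f$ is isomorphic to $\pi$ by the remark (immediately after Definition~\ref{definition: lie principal fibration}) that any map of principal fibrations over $B_\bullet$ covering the identity is an isomorphism. To produce $\tilde f$: since $E_\bullet$ is cofibrant in $\srLie_{M_\bullet}$ and the map $EM_\bullet \to 0$ is a trivial fibration in $\srLie_{M_\bullet}$ (it is a weak equivalence and, after replacing $EM_\bullet$ by a fibrant model if necessary—or noting every object of $\srLie$ is fibrant, so every object of $\srLie_{M_\bullet}$ is fibrant by Theorem~\ref{theorem: model structure, srlie with action}—it is a fibration), the lifting problem with $0 \to EM_\bullet$ on the right and $E_\bullet \to 0$ on the left admits a solution $\tilde f$. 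This is the cleanest step and should go through essentially verbatim.

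Next I would prove \textbf{injectivity}. Suppose $f_0, f_1\colon B_\bullet \to BM_\bullet$ pull $\pi_{M_\bullet}$ back to isomorphic principal fibrations over $B_\bullet$; I must show $f_0 \simeq f_1$. Lift each $f_i$ to an equivariant map $\tilde f_i\colon E_\bullet|_{f_i} \to EM_\bullet$ (where $E_\bullet|_{f_i}$ is the pullback), and use the chosen isomorphism $E_\bullet|_{f_0} \cong E_\bullet|_{f_1}$ over $B_\bullet$ to regard both $\tilde f_i$ as equivariant maps out of a single cofibrant object $E_\bullet$ of $\srLie_{M_\bullet}$ with $E_\bullet/M_\bullet \cong B_\bullet$. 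Now build an equivariant homotopy $E_\bullet \times \Delta^1 \to EM_\bullet$ between $\tilde f_0$ and $\tilde f_1$ by solving the lifting problem with the cofibration $E_\bullet \times \partial\Delta^1 \hookrightarrow E_\bullet \times \Delta^1$ (this is a cofibration in $\srLie_{M_\bullet}$ since the model structure is simplicial, Theorem~\ref{theorem: model structure, srlie with action}) against the trivial fibration $EM_\bullet \to 0$; dividing by $M_\bullet$ yields a homotopy $B_\bullet \times \Delta^1 \to BM_\bullet$ between $f_0$ and $f_1$. The main obstacle here is purely bookkeeping: one has to be careful that the isomorphism $E_\bullet|_{f_0} \cong E_\bullet|_{f_1}$ is compatible with the structure maps so that the resulting homotopy of quotients genuinely connects $f_0$ to $f_1$ and not merely $f_0$ to some map isomorphic to $f_1$; but this is exactly the content of Lemma~\ref{lemma: principal fibrations are homotopy invariant} run in reverse, together with the ``covering-the-identity implies isomorphism'' remark. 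I expect the only real subtlety in the whole argument to be verifying that every object of $\srLie_{M_\bullet}$ is fibrant (so that $EM_\bullet \to 0$ is a trivial fibration and the lifting arguments apply), which follows from Theorem~\ref{theorem: model structure, srlie with action} since fibrations there are degreewise surjections and $L_\bullet \to 0$ is always degreewise surjective.
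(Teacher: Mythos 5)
Your proposal is correct and follows essentially the same route as the paper: both arguments reduce everything to lifting against the trivial fibration $EM_\bullet \to 0$ in $\srLie_{M_\bullet}$ from a cofibrant equivariant total space, using that any map of principal fibrations over a fixed base covering the identity is an isomorphism. The only difference is organizational — the paper packages the uniqueness-of-lifts-up-to-equivariant-homotopy observation into an explicit inverse $\Psi$ with $\theta\circ\Psi=\id$ and $\Psi\circ\theta=\id$, whereas you prove surjectivity and injectivity separately — but the content is identical.
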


\begin{proof}
Note that $\theta$ is well-defined by Lemma~\ref{lemma: principal fibrations are homotopy invariant}. We will show that $\theta$ is a bijection by constructing an inverse. If $\pi\colon E_\bullet \to B_\bullet$ is a principal fibration, there is a lifting in the diagram in $\srLie_{M_\bullet}$
\begin{equation}\label{equation: classifying,eq1} 
\begin{tikzcd}
0 \arrow[swap]{d} \arrow{r}
& EM_\bullet \arrow{d}  \\
E_\bullet \arrow{r}\arrow[dashed]{ur}
& 0
\end{tikzcd}
\end{equation}
because $E_\bullet$ is cofibrant and $EM_\bullet$ is fibrant. Let $f\colon B_\bullet \to BM_\bullet$ be the quotient map. We define $$\Psi\colon PF_{M_\bullet}(B_\bullet) \to [B_\bullet, BM_\bullet] $$ by sending $\pi\colon E_\bullet \to B_\bullet$ to the class of $f$. The last map is well-defined because a lifting in the diagram~\eqref{equation: classifying,eq1} is unique up to an equivariant homotopy.

Note that if $EM_\bullet \vert_{f}$ is the pullback of $\pi_{M_\bullet}$ along $f=\Psi(\pi)$, then there is a commutative diagram
$$
\begin{tikzcd}
E_\bullet \arrow{rr} \arrow{dr}{\pi}
&& EM_\bullet\vert_{f} \arrow{dl}  \\
& B_\bullet.
&
\end{tikzcd} 
$$
Here the horizontal arrow is a map of principal fibrations over the same base, and so this is an isomorphism. Therefore, $\theta \circ \Psi =\id$. On the other hand, given a representative $g\colon B_\bullet \to BM_\bullet$ of a homotopy class in $[B_\bullet, BM_\bullet]$, the map $\bar{g}$ in the pullback diagram
$$
\begin{tikzcd}
\theta(g)=EM_\bullet\vert_{g}  \arrow{d} \arrow{r}{\bar{g}}
& EM_\bullet \arrow{d}{\pi_{M_\bullet}}  \\
B_\bullet \arrow{r}{g}
& BM_\bullet
\end{tikzcd}
$$
gives a lifting in the diagram~\eqref{equation: classifying,eq1}, so $\Psi\circ \theta=\id$.
\end{proof}

\begin{cor}\label{corollary: classifying space is unique}
Let $M_\bullet \in \sMod_{\kk\{\xi\}}$ be a simplicial left $\kk\{\xi\}$-module. Then the classifying Lie algebra $BM_\bullet\in \srLie$ is unique up to homotopy. Moreover, $\pi_n(BM_\bullet)=\pi_{n-1}(M_\bullet)$.
\end{cor}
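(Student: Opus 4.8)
The plan is to read off both claims from the classification of principal fibrations (Theorem~\ref{theorem: principal fibrations, classifying}) and from the long exact sequence attached to the principal fibration $\pi_{M_\bullet}\colon EM_\bullet\to BM_\bullet$ of Definition~\ref{definition: classifying space for principal fibrations}.

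\emph{Uniqueness.} For any admissible choice of $EM_\bullet$, Theorem~\ref{theorem: principal fibrations, classifying} provides a bijection $\theta\colon [B_\bullet,BM_\bullet]\xrightarrow{\cong}PF_{M_\bullet}(B_\bullet)$ for every cofibrant $B_\bullet\in\srLie$, and $\theta$ is natural in $B_\bullet$ since it is defined by pulling back $\pi_{M_\bullet}$. Every object of $\srLie$ is fibrant (Theorem~\ref{theorem:modelsrlie}) and admits a cofibrant replacement, and $PF_{M_\bullet}(-)$ is homotopy invariant (Lemma~\ref{lemma: principal fibrations are homotopy invariant}); hence $\theta$ upgrades to a natural isomorphism $\Hom_{\mathrm{Ho}(\srLie)}(-,BM_\bullet)\cong PF_{M_\bullet}(-)$ of functors on $\mathrm{Ho}(\srLie)$. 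If $BM_\bullet$ and $BM'_\bullet$ come from two choices $EM_\bullet,EM'_\bullet$, composing the corresponding isomorphisms gives a natural isomorphism $\Hom_{\mathrm{Ho}(\srLie)}(-,BM_\bullet)\cong\Hom_{\mathrm{Ho}(\srLie)}(-,BM'_\bullet)$, and the Yoneda lemma produces an isomorphism $BM_\bullet\simeq BM'_\bullet$ in $\mathrm{Ho}(\srLie)$.

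\emph{Homotopy groups.} Fix $EM_\bullet$ with action map $\mu$ and write $\bar\mu(m)=\mu(m,0)$. Since $EM_\bullet$ is a cofibrant object of $\srLie_{M_\bullet}$, its $M_\bullet$-action is free by Proposition~\ref{proposition: cofibrant in srlie with action}, so $\bar\mu\colon\trivxi M_\bullet\to EM_\bullet$ is a degreewise split monomorphism whose image in degree $n$ is the $M_n$-orbit of $0$, which is exactly the kernel of the quotient map $\pi_{M_\bullet,n}$. Thus $\oblv(\bar\mu)$ identifies $\oblv(\trivxi M_\bullet)$ with the kernel of $\oblv(\pi_{M_\bullet})$, and since $\pi_{M_\bullet}$ is a fibration in $\srLie_{M_\bullet}$, hence a degreewise surjection by Theorem~\ref{theorem: model structure, srlie with action}, we obtain a short exact sequence of simplicial vector spaces
\[
0\to\oblv(\trivxi M_\bullet)\to\oblv(EM_\bullet)\to\oblv(BM_\bullet)\to 0.
\]
Its long exact sequence of homotopy groups, together with $\pi_*(EM_\bullet)=0$, yields isomorphisms $\pi_n(BM_\bullet)\cong\pi_{n-1}(\trivxi M_\bullet)$; and $\oblv\circ\trivxi$ is the identity on underlying simplicial sets, so $\pi_{n-1}(\trivxi M_\bullet)=\pi_{n-1}(M_\bullet)$, which is the assertion.

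The only step that is not purely formal is the identification of the fibre of $\pi_{M_\bullet}$ with $\trivxi M_\bullet$: the product decompositions $EM_n\cong\trivxi(M_n)\times BM_n$ afforded by freeness of the action are not simplicial, so one cannot take them levelwise; the resolution is to observe that the \emph{simplicial} map $\bar\mu$ already surjects onto $\ker(\pi_{M_\bullet})$ in every degree, which forces its image to be the desired simplicial subobject. Everything else follows formally from the results of Section~\ref{section: principal fibration}.
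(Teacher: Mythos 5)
Your proposal is correct and follows essentially the same route as the paper: uniqueness via the representability of $PF_{M_\bullet}$ (Theorem~\ref{theorem: principal fibrations, classifying}) and the Yoneda lemma, and the computation of $\pi_*(BM_\bullet)$ via the long exact sequence of the fibration $\pi_{M_\bullet}\colon EM_\bullet\to BM_\bullet$. The only difference is that you spell out the identification of the fibre with $\trivxi M_\bullet$ (using that $\bar\mu(M_\bullet)$ is the Lie ideal defining the quotient, so the kernel of $\oblv(\pi_{M_\bullet})$ is $\oblv(\trivxi M_\bullet)$ degreewise), a step the paper leaves implicit.
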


\begin{proof}
By Example~\ref{example: kan cone as classifying space}, the classifying Lie algebra $BM_\bullet$ exists, and by Theorem~\ref{theorem: principal fibrations, classifying}, $BM_\bullet$ represents a contravariant functor $$PF_{M_\bullet}\colon \mathrm{Ho}(\srLie)^{op} \to \mathsf{Set},$$ where $\mathrm{Ho}(\srLie)$ is the homotopy category of $\srLie$. Therefore, by the Yoneda lemma, $BM_\bullet$ is unique up to a homotopy. The last statement follows from the long exact sequence of homotopy groups applying to the fibration $\pi_{M_\bullet}\colon EM_\bullet\to BM_{\bullet}$.
\end{proof}

Corollary~\ref{corollary: classifying space is unique} and Proposition~\ref{proposition: fibration with EM as fiber} imply together the following statement.

\begin{cor}\label{corollary: fibration with KM is principal}
Let $\pi\colon E_\bullet \to B_\bullet$ be a fibration in $\srLie$ such that the total space $E_\bullet$ and the base $B_\bullet$ are connected, and the fiber $\fib(\pi)$ is an Eilenberg-MacLane Lie algebra $K(M,n)$, $M\in \Mod_{\kk\{\xi\}}$, $n\geq 0$. Then there exist a principal fibration $\pi'\colon E'_\bullet \to B'_\bullet$ and a commutative diagram
$$
\begin{tikzcd}
E'_\bullet \arrow[swap]{d}{\pi'} \arrow{r}{\bar{g}} 
& E_\bullet \arrow{d}{\pi} \\
B'_\bullet \arrow{r}{g} 
& B_\bullet
\end{tikzcd}
$$
such that both maps $g$ and $\bar{g}$ are  weak equivalences. \qed
\end{cor}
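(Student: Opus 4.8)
The plan is to deduce this directly from Proposition~\ref{proposition: fibration with EM as fiber}, the classification of principal fibrations in Theorem~\ref{theorem: principal fibrations, classifying}, and the computation of the homotopy of a classifying object in Corollary~\ref{corollary: classifying space is unique}. Since $E_\bullet$ and $B_\bullet$ are connected we have $\pi_0(E_\bullet)=\pi_0(B_\bullet)=0$, and by hypothesis $\fib(\pi)$ is of type $K(M,n)$. So Proposition~\ref{proposition: fibration with EM as fiber} furnishes a map $k\colon B_\bullet\to K(M,n+1)$ together with a fiber sequence $E_\bullet\xrightarrow{\pi}B_\bullet\xrightarrow{k}K(M,n+1)$; equivalently, the canonical comparison map $E_\bullet\to\hofib(k)$, which lies over $B_\bullet$, is a weak equivalence.

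Next I would realise $K(M,n+1)$ as a classifying Lie algebra. Let $M_\bullet\in\sMod_{\kk\{\xi\}}$ be the simplicial module with $\pi_n(M_\bullet)\cong M$ and all other homotopy groups zero, and let $\pi_{M_\bullet}\colon EM_\bullet\to BM_\bullet$ be a classifying principal fibration as in Definition~\ref{definition: classifying space for principal fibrations}; it exists by Example~\ref{example: kan cone as classifying space}. By Corollary~\ref{corollary: classifying space is unique}, $\pi_j(BM_\bullet)\cong\pi_{j-1}(M_\bullet)$, so $BM_\bullet$ is itself an Eilenberg--MacLane Lie algebra of type $K(M,n+1)$. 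Because such objects are unique up to weak equivalence, I may compose $k$ with such an equivalence and henceforth assume $k$ is a map $B_\bullet\to BM_\bullet$ whose homotopy fiber is identified, over $B_\bullet$, with $E_\bullet$.

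Now choose a cofibrant replacement $g\colon B'_\bullet\xrightarrow{\sim}B_\bullet$ in $\srLie$ and form the pullback $E'_\bullet:=EM_\bullet\times_{BM_\bullet}B'_\bullet$, with projection $\pi'\colon E'_\bullet\to B'_\bullet$. By Theorem~\ref{theorem: principal fibrations, classifying} and the discussion preceding it, $\pi'$ is precisely the principal $M_\bullet$-fibration classified by $k\circ g$; in particular $B'_\bullet$ is cofibrant with trivial $M_\bullet$-action, $E'_\bullet$ is cofibrant in $\srLie_{M_\bullet}$ by Proposition~\ref{proposition: cofibrant in srlie with action}, $E'_\bullet/M_\bullet\cong B'_\bullet$, and $\pi'$ is a principal fibration in the sense of Definition~\ref{definition: lie principal fibration}. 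Since $\pi_{M_\bullet}$ is a fibration and $EM_\bullet$ is weakly contractible, the defining square is a homotopy pullback, so $E'_\bullet\simeq\hofib(k\circ g)$. Pulling back, along $g$, the comparison equivalence of the fiber sequence from the previous paragraph produces a map $\bar{g}\colon E'_\bullet\to E_\bullet$ over $g$, and comparing the associated long exact sequences of homotopy groups — using that $g$ is a weak equivalence and that both fiber sequences have the same target $BM_\bullet$ — shows $\bar{g}$ is a weak equivalence. The square
\[
\begin{tikzcd}
E'_\bullet \arrow[swap]{d}{\pi'} \arrow{r}{\bar{g}} & E_\bullet \arrow{d}{\pi}\\
B'_\bullet \arrow{r}{g} & B_\bullet
\end{tikzcd}
\]
then has all the required properties.

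The step I expect to be the main obstacle is the homotopy-theoretic bookkeeping of the last paragraph: one must verify carefully that the strict pullback $EM_\bullet\times_{BM_\bullet}B'_\bullet$ genuinely computes the homotopy fiber of $k\circ g$ — this is exactly where the fibration property built into Definition~\ref{definition: lie principal fibration} together with $EM_\bullet\simeq 0$ are used — and that the comparison map can be arranged to land in $E_\bullet$ itself and to be compatible with the projections to the two bases, so that the square commutes on the nose. Everything else (existence of cofibrant replacements, homotopy invariance of $PF_{M_\bullet}(-)$ via Lemma~\ref{lemma: principal fibrations are homotopy invariant}, and the five-lemma argument for $\bar{g}$) is routine.
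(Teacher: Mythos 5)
Your proposal is correct and follows exactly the route the paper intends: the statement is presented as an immediate consequence of Proposition~\ref{proposition: fibration with EM as fiber} (producing the $k$-invariant $k\colon B_\bullet\to K(M,n+1)$) and Corollary~\ref{corollary: classifying space is unique} (identifying $BM_\bullet$ with $K(M,n+1)$ and classifying principal fibrations), with the principal fibration obtained by pulling back $\pi_{M_\bullet}$ along a representative of $k\circ g$ for a cofibrant replacement $g$. The rectification issues you flag at the end are exactly the routine model-category bookkeeping the paper suppresses with its \qed, and your handling of them is sound.
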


\subsection{Serre spectral sequence}\label{section: serre spectral sequence} 
Let $V_*\in \Vect^{gr}_{\kk}$ be a graded vector space. Throughout this section we write $V_*[t]=\Sigma^tV_*$ for the shift of $V_*$ by $t\in\Z$. 

Let $\pi\colon E_\bullet \to B_\bullet$ be a fibration in the category $\srLie$ over a reduced base $B_\bullet$, $B_0=0$ with the fiber $F_\bullet=\pi^{-1}(0)$. Note that the base Lie algebra $B_\bullet$ is equipped with the increasing complete \emph{skeletal filtration}:
$$B^{(s)}_\bullet = \left\{
\begin{array}{ll}
0 & \mbox{if $ s=0$,}\\
\sk_{s-1}B_\bullet & \mbox{if $s>0$;}
\end{array}
\right. $$
and we define the increasing filtration $E^{(s)}_\bullet$ on the total space $E_\bullet$ by taking the preimages, i.e. $E^{(s)}_\bullet = \pi^{-1}(B^{(s)}_\bullet)$ if $s\geq 0$, and $E^{(-1)}_\bullet=0$. By applying (non-reduced) homology to the filtered simplicial restricted Lie algebra $E^{(s)}_\bullet$, we obtain the following result.

\begin{thm}\label{theorem: serre spectral sequence, filtered}
Let $\pi\colon E_\bullet \to B_\bullet$ be a fibration in $\srLie$. For homology with any coefficient module $M\in \Mod^{\kk\{\xi\}}$ there is a convergent $E^1$-spectral sequence with 
$$E^1_{s,t} \cong H_{s+t}(E^{(s)}_\bullet, E^{(s-1)}_\bullet;M), $$
where the differential $d_1$ is the boundary operator of the triple $(E^{(s)}_\bullet, E^{(s-1)}_\bullet, E^{(s-2)}_\bullet)$, and $E^{\infty}$ is the bigraded module associated to the filtration of $H_*(E_\bullet;M)$ defined by
$$F_sH_*(E_\bullet;M) = \im\left(H_*(E^{(s)}_\bullet;M) \to H_*(E_\bullet; M)\right).\eqno\qed$$
\end{thm}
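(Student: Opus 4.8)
The plan is to obtain the spectral sequence as the standard homology spectral sequence of a filtered object, so the only content is bookkeeping and convergence. First I would record that for any increasing filtration $0 = X^{(-1)}_\bullet \subset X^{(0)}_\bullet \subset X^{(1)}_\bullet \subset \cdots$ of a simplicial restricted Lie algebra $X_\bullet$ by Lie subalgebras with $X_\bullet = \colim_s X^{(s)}_\bullet$, the relative homology groups $H_*(X^{(s)}_\bullet, X^{(s-1)}_\bullet; M)$ from Definition~\ref{definition: relative homotopy and homology} fit into an exact couple: the long exact sequences~\eqref{equation: les, homology pair} of the pairs $(X^{(s)}_\bullet, X^{(s-1)}_\bullet)$ assemble, in the usual way, into an exact couple with $D^1_{s,t} = H_{s+t}(X^{(s)}_\bullet; M)$ and $E^1_{s,t} = H_{s+t}(X^{(s)}_\bullet, X^{(s-1)}_\bullet; M)$, with $d_1$ the composite $H_{s+t}(X^{(s)}_\bullet, X^{(s-1)}_\bullet; M) \xrightarrow{\delta} H_{s+t-1}(X^{(s-1)}_\bullet; M) \to H_{s+t-1}(X^{(s-1)}_\bullet, X^{(s-2)}_\bullet; M)$, i.e. the boundary map of the triple $(X^{(s)}_\bullet, X^{(s-1)}_\bullet, X^{(s-2)}_\bullet)$ via~\eqref{equation: les, homology triple}. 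I would then apply this with $X^{(s)}_\bullet = E^{(s)}_\bullet = \pi^{-1}(B^{(s)}_\bullet)$, which is a filtration of $E_\bullet$ by Lie subalgebras since preimages under a Lie homomorphism are Lie subalgebras, and since the skeletal filtration of $B_\bullet$ is exhaustive, $E_\bullet = \colim_s E^{(s)}_\bullet$.

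Next I would verify the two facts needed to turn the exact couple into the asserted convergent spectral sequence. For the identification of the filtration on the abutment, the map $H_*(E^{(s)}_\bullet; M) \to H_*(E_\bullet; M)$ is induced by the inclusion $E^{(s)}_\bullet \hookrightarrow E_\bullet$, and $F_s H_*(E_\bullet; M)$ is by definition its image; that $E^\infty_{s,t}$ of the exact couple is the associated graded of this filtration is the standard conclusion once one knows the couple is \emph{bounded below} in each total degree. For boundedness I would observe that $B^{(s)}_\bullet$ has all simplices degenerate in dimension $< s$, hence (reasoning degreewise on the underlying simplicial vector space, using the Dold--Kan description and the splitting of $\oblv \circ \free$ from Remark~\ref{remark: free lie algebras, fresse} applied to an almost-free replacement of $E^{(s)}_\bullet \hookrightarrow E^{(s-1)}_\bullet$) the relative chain complex computing $H_*(E^{(s)}_\bullet, E^{(s-1)}_\bullet; M)$ is concentrated in degrees $\geq s$, so $E^1_{s,t} = 0$ for $s + t < s$, i.e. for $t < 0$. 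This vanishing, together with exhaustiveness, gives that in each total degree $n$ the filtration is finite (only $0 \leq s \leq n+1$ contribute, since $E^{(0)}_\bullet = 0$), so the spectral sequence converges strongly: $E^\infty_{s,t} \cong F_s H_{s+t}(E_\bullet; M) / F_{s-1} H_{s+t}(E_\bullet; M)$.

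The main obstacle — and the only genuinely non-formal point — is the grading/boundedness claim, i.e. showing $H_k(E^{(s)}_\bullet, E^{(s-1)}_\bullet; M) = 0$ for $k < s$, since everything else is the mechanical construction of a spectral sequence from a filtered object and its long exact sequences. The cleanest route is: the inclusion $E^{(s-1)}_\bullet \hookrightarrow E^{(s)}_\bullet$ differs from an almost-free $(s-1)$-reduced map by weak equivalences (as in Lemma~\ref{lemma: reduced replacement}), and for an almost-free $(s-1)$-reduced map $f\colon A_\bullet \to X_\bullet$ one has $\oblv(\cofib(f)) \simeq \oblv(X_\bullet)/\oblv(A_\bullet)$ concentrated in simplicial degrees $\geq s$ on non-degenerate part, whence $\mathbb{L}\Ab$ and the derived tensor with $M$ keep connectivity $\geq s-1$, so $\widetilde{C}_*(\cofib(f); M) = \Sigma M \otimes_{\kk\{\xi\}} \mathbb{L}\Ab(\cofib(f))$ has $\widetilde{H}_k = 0$ for $k < s$; this is exactly $H_k(E^{(s)}_\bullet, E^{(s-1)}_\bullet; M)$. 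I would also remark that for this identification of relative homology with the cofiber construction to be legitimate one invokes the homotopy invariance of $H_*(-, -; M)$ under weak equivalences of pairs, which follows from Proposition~\ref{remark: barW Ur preserves weak equivalences} and the definition of $\widetilde C_*$ via $\mathbb{L}\Ab$.
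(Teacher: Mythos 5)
Your overall route is the right one and is essentially the argument the paper leaves implicit: the theorem is stated without proof because it is the standard spectral sequence of the exact couple assembled from the long exact sequences \eqref{equation: les, homology pair} and \eqref{equation: les, homology triple} of the pairs $(E^{(s)}_\bullet,E^{(s-1)}_\bullet)$, and you correctly isolate the only non-formal input, namely the vanishing $H_k(E^{(s)}_\bullet,E^{(s-1)}_\bullet;M)=0$ for $k<s$, which makes the filtration of each $H_n(E_\bullet;M)$ finite and the convergence strong.

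Your justification of that vanishing, however, contains an off-by-one error which, taken at face value, proves too much. The inclusion $E^{(s-1)}_\bullet\hookrightarrow E^{(s)}_\bullet$ is pulled back from $\sk_{s-2}B_\bullet\hookrightarrow\sk_{s-1}B_\bullet$, whose relative homotopy is that of the quotient of underlying simplicial vector spaces and hence is concentrated in degree $s-1$; the map is therefore only $(s-2)$-connected (attaching nondegenerate $(s-1)$-simplices can kill classes in $\pi_{s-2}$ and create classes in $\pi_{s-1}$), not $(s-1)$-connected. So Lemma~\ref{lemma: reduced replacement} yields an almost-free $(s-2)$-reduced replacement, with relative generators $V_\bullet$ in simplicial degrees $\geq s-1$, not $\geq s$ as you assert. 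If your count were right, your own computation would give $\widetilde H_k(\cofib(\iota);M)=\pi_k(\Sigma(M\otimes V_\bullet))=0$ for all $k\leq s$, contradicting Example~\ref{example: serre, idenity morphism}, where $E^1_{s,0}=H_s(B^{(s)}_\bullet,B^{(s-1)}_\bullet;M)$ is the group of cellular $s$-chains and is generally nonzero. The correct count is: $\mathbb{L}\Ab$ preserves cofibers and sends $\free(V_\bullet)$ to $\kk\{\xi\}\otimes V_\bullet$, so $\mathbb{L}\Ab(\cofib(\iota))$ is concentrated in degrees $\geq s-1$, and the suspension built into $\widetilde C_*(-;M)=\Sigma M\otimes_{\kk\{\xi\}}\mathbb{L}\Ab(-)$ then places the relative homology in degrees $\geq s$ — exactly, and sharply, the vanishing $E^1_{s,t}=0$ for $t<0$ that you need. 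Two smaller slips to repair: $\sk_{s-1}B_\bullet$ has all simplices degenerate in dimensions $\geq s$, not $<s$; and $\oblv(\cofib(f))$ is the quotient Lie algebra $X_\bullet/A_\bullet$, not $\oblv(X_\bullet)/\oblv(A_\bullet)=X_\bullet//A_\bullet$ — for the connectivity bound it is cleaner to avoid both and argue through $\mathbb{L}\Ab$ as above.
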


\begin{exmp}\label{example: serre, idenity morphism}
Let $\pi =\id \colon B_\bullet \to B_\bullet$ be the identity map, and suppose that the base Lie algebra $B_\bullet=\free(V_\bullet)$ is almost-free, $V_\bullet\in \tilde{\mathsf{s}}\Vect_{\kk}$. Then the quotient Lie algebra $B_\bullet^{(s)}/B^{(s-1)}_{\bullet}, s>0$ is a free simplicial restricted Lie algebra $\free(W_\bullet)$ generated by 
$$W_\bullet = \Gamma(NV_{s-1}[s-1]),$$ where $NV_{s-1}\subseteq V_{s-1}$ is the vector subspace of normalized chains in $V_\bullet$, $NV_{s-1}[s-1]$ is the chain complex having $NV_{s-1}$ concentrated in degree $s-1$, and $$\Gamma\colon \mathsf{Ch}_{\geq 0} \to \sVect_{\kk}$$ is the inverse to the normalized chain complex functor $N$, see Section~\ref{section: notation}.
Therefore, $E^1_{s,t}=0$ if $t\neq 0$ in the spectral sequence of Theorem~\ref{theorem: serre spectral sequence, filtered}, and so it degenerates at the second page. Moreover, the complex 
$$C_s(B_\bullet;M) = \left(H_s(B^{(s)},B^{(s-1)};M), d_1\right), $$
where $d_1$ is the boundary operator $\delta$ of the triple $(B^{(s)}_\bullet, B^{(s-1)}_\bullet, B^{(s-2)}_\bullet)$ (see the sequence~\eqref{equation: les, homology triple}) computes the homology groups $H_s(B_\bullet;M)$.
\end{exmp}

Next, we will compute the second page $E^2_{s,t}$ of the spectral sequence in Theorem~\ref{theorem: serre spectral sequence, filtered} provided the fibration $\pi\colon E_\bullet \to B_\bullet$ is a principal fibration (Section~\ref{section: principal fibration}) and the base $B_\bullet$ is reduced. We will follow the classical approach of~\cite[Chapter~9.2]{Spanier94} and~\cite[Chapter~15]{Switzer}.

We begin with defining analogs of a disk and a sphere in the category $\srLie$. Let $W_*\in \Vect^{gr}_{\kk}$ be a graded vector space concentrated in only one degree $s\geq 1$; that is $W_s=W$ and $W_*=0$ if $*\neq s$. Consider a chain complex $C(W_*)_\bullet$ given as follows
$$C(W_*)_k= \left\{
\begin{array}{ll}
W & \mbox{if $k=s,s-1$,}\\
0, & \mbox{otherwise,}
\end{array}
\right.$$
endowed with the only one non-trivial differential $\partial_s=\id\colon C(W_*)_{s}\to C(W_*)_{s-1}$. Note that $W_*$ can be considered itself as a chain complex; then $C(W_*)$ is the cone of the identity morphism $\id\colon W_*[-1]\to W_*[-1]$. We write $D(W_*)\in \srLie$ (resp. $\partial D(W_*) \in \srLie$) for the free simplicial restricted Lie algebra $\free(\Gamma C(W_*))$ (resp. $\free(\Gamma W_*[-1])$).

\begin{rmk}\label{remark: relative homotopy groups as homotopy classes of maps} We list several properties of $D(W_*)$ and $\partial D(W_*)$, $W_*\in \Vect_{\kk}^{gr}$. Both $D(W_*)$ and $\partial D(W_*)$ are cofibrant objects in $\srLie$. The unique map $0\to D(W_*)$ is a weak equivalence, and the simplicial restricted Lie algebra $D(W_*)$ is contractible. The canonical inclusion $\partial D(W_*)\subset D(W_*)$ is a cofibration and the quotient Lie algebra $D(W_*)/\partial D(W_*)$ is isomorphic to $\partial D(W_*[1])$. Since any $L_\bullet \in \srLie$ is a fibrant object, one has
$$\pi_s(L_\bullet) \cong [\partial D(\kk[s+1]), L_\bullet], \; s\geq 0, $$
where $[\partial D(\kk[s+1]), L_\bullet]$ is the set of homotopy classes of maps in $\srLie$. Finally, for a pair $(L_\bullet, A_\bullet) \in \srLie$, one has
$$\pi_s(L_\bullet,A_\bullet)\cong [(D(\kk[s]),\partial D(\kk[s])),(L_\bullet,A_\bullet)], \; s\geq 1,$$
where the left hand side is the relative homotopy group (Definition~\ref{definition: relative homotopy and homology}) and the right hand side is the set of homotopy classes of maps of pairs in $\srLie$.
\end{rmk}

Let $\pi\colon E_\bullet \to B_\bullet$ be a principal fibration with the fiber $M_\bullet\in \sMod_{\kk\{\xi\}}$ and with the almost-free base $B_\bullet = \free(V_\bullet)$, $V_\bullet \in \tilde{s}\Vect_{\kk}$. Suppose $$\alpha \colon (D(\kk[s-1]),\partial D(\kk[s-1]))\to (B_\bullet^{(s)},B_\bullet^{(s-1)}), \; s\geq 2 $$ is a map of pairs. The simplicial restricted Lie algebra $D(\kk[s-1])$ is contractible, so by Lemma~\ref{lemma: principal fibrations over contractible},
there is an isomorphism of principal fibrations
$$\theta_\alpha\colon \trivxi M_\bullet \times D(\kk[s-1]) \xrightarrow{\cong} E_\bullet \vert_{\alpha}.$$ 
Then we have the composite $\kappa_{\alpha}$  given by 
\begin{align*}
\kappa_\alpha\colon H_t(\trivxi M_\bullet;\kk) &\xrightarrow{\sigma} H_{t+s}(\trivxi M_\bullet \times D(\kk[s-1]),\trivxi M_\bullet\times \partial D(\kk[s-1]);\kk) \\
&\xrightarrow{\theta_{\alpha *}} H_{t+s}(E_\bullet\vert_{\alpha}, E_\bullet\vert_{\partial \alpha};\kk) \\
&\xrightarrow{\bar{\alpha}_*} H_{t+s}(E_\bullet^{(s)},E_\bullet^{(s-1)};\kk),
\end{align*}
where $\sigma$ is given by $\sigma(x) = x\times \kappa_s$, $$\kappa_s \in H_s(D(\kk[s-1]),\partial D(\kk[s-1]);\kk) = \widetilde{H}_s(\partial D(\kk[s]);\kk)$$ is the canonical generator; here, we use the K\"{u}nneth isomorphism (Corollary~\ref{corollary: kunneth formula}). Since any two isomorphisms of a principal fibration over a contractible object in $\srLie$ are homotopic to each other, the map $\kappa_\alpha$ does not depend on the choice of the trivialization $\theta_\alpha$. Similarly, suppose 
$$\alpha'\colon (D(\kk[s-1]),\partial D(\kk[s-1]))\to (B_\bullet^{(s)},B_\bullet^{(s-1)}) $$
is another map of pairs homotopic to $\alpha$. By Lemma~\ref{lemma: principal fibrations are homotopy invariant}, there is an isomorphism $\theta\colon E_\bullet\vert_{\alpha} \cong E_\bullet\vert_{\alpha'}$ such that $\bar \alpha'\circ \theta =\bar\alpha$. Therefore, if 
$$\theta_{\alpha'}\colon \trivxi M_\bullet \times D(\kk[s-1]) \xrightarrow{\cong} E_\bullet \vert_{\alpha'}$$
is a trivialization, then there exists an isomorphism:
$$\phi\colon \trivxi M_\bullet \times D(\kk[s-1]) \xrightarrow{\cong} \trivxi M_{\bullet} \times D(\kk[s-1])$$
such that $\theta \circ \theta_\alpha = \theta_{\alpha'}\circ \phi$. Then we have 
$$\bar \alpha_* \circ \theta_{\alpha *} \circ \sigma = \bar \alpha'_* \circ \theta_* \circ \theta_{\alpha *} \circ \sigma = \bar \alpha'_* \circ \theta_{\alpha' *} \circ \phi_* \circ \sigma. $$
Since $\phi$ is an isomorphism of a principal fibration over a contractible object, $\phi_* = \id$, and so $\kappa_{\alpha}=\kappa_{\alpha'}$. In this way, the following map is well-defined
\begin{align}\label{equation: SSS comparasing}
\kappa\colon H_t(\trivxi M_\bullet;\kk) \times \pi_{s-1}(B_\bullet^{(s)},B_\bullet^{(s-1)}) &\to H_{t+s}(E_\bullet^{(s)},E_\bullet^{(s-1)};\kk), \\
(x,[\alpha])&\mapsto \kappa_{\alpha}(x). \nonumber
\end{align}

\begin{lmm}\label{lemma:kappa is bilinear}
$\kappa$ is bilinear.
\end{lmm}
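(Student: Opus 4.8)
The plan is to verify linearity in each variable separately; the first is immediate, and the second is a transcription of the classical argument once the right co-group structure is in place. First I would observe that, for a fixed map of pairs $\alpha$, the map $\kappa_\alpha$ is by construction the composite $\bar{\alpha}_*\circ\theta_{\alpha*}\circ\sigma$, and that all three factors are $\kk$-linear: $\sigma(x)=x\times\kappa_s$ is linear because the cross product provided by the K\"unneth isomorphism (Corollary~\ref{corollary: kunneth formula}) is bilinear, while $\theta_{\alpha*}$ and $\bar{\alpha}_*$ are maps induced on relative homology. This settles linearity in the first variable and leaves additivity in $[\alpha]\in\pi_{s-1}(B_\bullet^{(s)},B_\bullet^{(s-1)})$.

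For the second variable I would first reduce to homotopy cofibers. By Definition~\ref{definition: relative homotopy and homology} the relative homology groups in the definition of $\kappa_\alpha$ are reduced homology groups of homotopy cofibers; since $D(\kk[s-1])$ is contractible, Remark~\ref{remark: relative homotopy groups as homotopy classes of maps} identifies the cofiber of $\partial D(\kk[s-1])\hookrightarrow D(\kk[s-1])$ with the suspension $\partial D(\kk[s])$, and the cofiber of $\trivxi M_\bullet\times\partial D(\kk[s-1])\hookrightarrow\trivxi M_\bullet\times D(\kk[s-1])$ has reduced homology $H_*(\trivxi M_\bullet;\kk)\otimes\kk\kappa_s$ --- a relative K\"unneth statement that follows from Corollary~\ref{corollary: kunneth formula} and the long exact sequence of a pair. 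Thus $\kappa_\alpha$ factors through the induced map of suspensions $\partial D(\kk[s])\cong D(\kk[s-1])/\partial D(\kk[s-1])\to B_\bullet^{(s)}/B_\bullet^{(s-1)}$, and under the excision isomorphism
\[
\pi_{s-1}(B_\bullet^{(s)},B_\bullet^{(s-1)})\;\cong\;\bigl[\,\partial D(\kk[s]),\,B_\bullet^{(s)}/B_\bullet^{(s-1)}\,\bigr]
\]
(Theorem~\ref{theorem: homotopy excision}) the addition on the left-hand side is the group law of the suspension co-group comultiplication $\nu\colon\partial D(\kk[s])\to\partial D(\kk[s])\sqcup\partial D(\kk[s])$, so that a representative of $[\alpha_0]+[\alpha_1]$ is the composite of $(\alpha_0,\alpha_1)$ with $\nu$. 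Pulling the principal fibration $E_\bullet$ back along $\alpha_0$, $\alpha_1$ and $\nu$, using that $D(\kk[s-1])$ and each wedge summand are contractible so that the trivialisations $\theta_{\alpha_i}$ exist and are homotopically unique (Lemmas~\ref{lemma: principal fibrations over contractible} and~\ref{lemma: principal fibrations are homotopy invariant}, which already underlie the well-definedness of $\kappa_\alpha$), together with the naturality of $\sigma$, I would reduce the identity $\kappa_{\alpha_0+\alpha_1}(x)=\kappa_{\alpha_0}(x)+\kappa_{\alpha_1}(x)$ to the single statement that $\nu$ sends the fundamental class $\kappa_s$ to the sum of the two fundamental classes.

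The primitivity of $\kappa_s$ is the only genuine computation. Both $\partial D(\kk[s])$ and $\partial D(\kk[s])\sqcup\partial D(\kk[s])$ are $(s-2)$-connected, so by the Hurewicz theorem (Theorem~\ref{theorem: hurewicz theorem}) their degree-$s$ reduced homology is a functor of $\pi_{s-1}$, and $\pi_{s-1}$ of a coproduct of $(s-2)$-connected objects of $\srLie$ is the direct sum of the $\pi_{s-1}$'s, the Samelson-type interaction terms sitting in degrees $\geq 2(s-1)>s-1$. Hence $\widetilde{H}_s(\partial D(\kk[s])\sqcup\partial D(\kk[s]);\kk)\cong\kk\kappa_s^{(0)}\oplus\kk\kappa_s^{(1)}$, and composing $\nu$ with either collapse map recovers the identity of $\partial D(\kk[s])$ by co-unitality of $\nu$; therefore $\nu_*(\kappa_s)=\kappa_s^{(0)}+\kappa_s^{(1)}$. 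Equivalently, under Hurewicz and the corepresentability of Remark~\ref{remark: relative homotopy groups as homotopy classes of maps}, $\kappa_s$ corresponds to the canonical generator of $\pi_{s-1}(\partial D(\kk[s]))$ and $\nu$ corepresents addition, which yields the same conclusion.

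I expect the main obstacle to be not any single deep step but the bookkeeping of the second paragraph: matching the comultiplication on the pair $(D(\kk[s-1]),\partial D(\kk[s-1]))$ that defines the group law on the relative homotopy set with the suspension co-group structure on its cofiber, and choosing the trivialisations of the various pulled-back principal fibrations compatibly. Once this dictionary is fixed, the remainder is a routine adaptation of the classical proof (cf.~\cite{Switzer} and~\cite{Spanier94}), with Corollary~\ref{corollary: kunneth formula} in the role of the K\"unneth theorem and Theorem~\ref{theorem: principal fibrations, classifying} together with Lemmas~\ref{lemma: principal fibrations over contractible} and~\ref{lemma: principal fibrations are homotopy invariant} in the role of the elementary facts about principal fibrations.
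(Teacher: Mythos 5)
Your overall strategy matches the paper's: linearity in the first variable is immediate, and additivity in the second is reduced to primitivity of the fundamental class $\kappa_s$ under the diagonal-induced comultiplication, after decomposing $[\alpha]+[\beta]$ and tracking the trivializations of the pulled-back principal fibrations. The paper works directly with the pair $(D(\kk[s-1]),\partial D(\kk[s-1]))$ and its comultiplication $\mu\colon D(\kk[s-1])\to D(\kk[s-1])\sqcup D(\kk[s-1])$ rather than passing to the cofiber $\partial D(\kk[s])$; that is a presentational difference. Your primitivity computation in the third paragraph is correct, though it can be obtained more directly from Proposition~\ref{proposition: chain coalgebra and chain complex}, which identifies $\widetilde{H}_s(\free(V_\bullet);\kk)\cong\pi_{s-1}(V_\bullet)$ for almost-free objects, so the computation reduces to the fact that the diagonal of $\Gamma\kk[s-1]$ induces $x\mapsto(x,x)$ on $\pi_{s-1}$ with no appeal to Hurewicz or to connectivity estimates for the Lie filtration.

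The one thing you should not file under ``bookkeeping'' is the gluing of trivializations. To produce a trivialization $\theta_\alpha\sqcup\theta_\beta$ of $E_\bullet|_{\nabla\circ(\alpha\sqcup\beta)}$ from $\theta_\alpha$ and $\theta_\beta$, one must first identify that pullback with the pushout $E_\bullet|_\alpha\sqcup_{\trivxi M_\bullet}E_\bullet|_\beta$ of the two restrictions along the common fiber. In $\rLie$ products do not in general commute with pushouts, so this is not automatic; the paper establishes exactly the compatibility needed here as Lemma~\ref{lemma: pushouts and products in rlie} (pushouts commute with products by abelian restricted Lie algebras) and invokes it at this step. Your list of ingredients (Lemmas~\ref{lemma: principal fibrations over contractible} and~\ref{lemma: principal fibrations are homotopy invariant}, Theorem~\ref{theorem: principal fibrations, classifying}) omits this fact; supplying it closes the gap, after which the rest of your plan goes through as written.
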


\begin{proof}
Compare the next proof with~\cite[Lemma~15.23]{Switzer}. Clearly, the mapping $\kappa$ is linear in the first variable. The sum in $$\pi_{s-1}(B^{(s)}_{\bullet}, B^{(s-1)_\bullet})=[(D(\kk[s-1]),\partial D(\kk[s-1])), (B^{(s)}_{\bullet}, B^{(s-1)_\bullet})]$$
is defined by using the coproduct $$\mu\colon D(\kk[s-1]) \to D((\kk\oplus \kk)[s-1])\cong D(\kk[s-1])\sqcup D(\kk[s-1]),$$
induced by the diagonal $\kk \to \kk\oplus \kk$. Namely, if $[\alpha],[\beta] \in \pi_{s-1}(B^{(s)}_{\bullet}, B^{(s-1)_\bullet})$, then $[\alpha]+[\beta]$ is the class $[\nabla \circ (\alpha\sqcup \beta) \circ \mu]$, where $\nabla\colon B^{(s)}_\bullet \sqcup B^{(s)}_\bullet \to B^{(s)}_\bullet$ is the codiagonal. Note that, the pullback $E_\bullet \vert_{\nabla \circ (\alpha \sqcup \beta)}$ can be identified with the pushout (Proposition~\ref{proposition: pushouts and products in rlie}):
$$E_\bullet \vert_{\alpha} \sqcup_{\trivxi M_\bullet} E_\bullet \vert_{\beta},$$
where $\trivxi M_\bullet \hookrightarrow E_\bullet \vert_{\alpha}$ and $\trivxi M_\bullet \hookrightarrow E_\bullet \vert_{\beta}$ are embeddings of the fibers. So, if $$\theta_\alpha \colon \trivxi M_\bullet \times D(\kk[s-1]) \xrightarrow{\cong} E_\bullet\vert_{\alpha}$$ and
$$\theta_\beta \colon \trivxi M_\bullet \times D(\kk[s-1]) \xrightarrow{\cong} E_\bullet\vert_{\beta} $$
are trivializations, we obtain a trivialization
$$\theta_{\alpha}\sqcup \theta_{\beta} \colon \trivxi M_\bullet \times D((\kk\oplus\kk)[s-1]) \to E_\bullet \vert_{\nabla\circ (\alpha \sqcup \beta)} $$
by ``gluing'' together $\theta_{\alpha}$ and $\theta_{\beta}$. The pullback of $\theta_{\alpha}\sqcup \theta_{\beta}$ along $\mu$ is an isomorphism $$\theta_{\alpha+\beta}\colon \trivxi M_{\bullet} \times D(\kk[s-1]) \xrightarrow{\cong} E_{\bullet}\vert_{\alpha+\beta}$$
such that $\bar \mu\circ \theta_{\alpha+\beta} = (\theta_{\alpha}\sqcup \theta_\beta)\circ (\id\times \mu)$.

Since
$$H_*(E_\bullet \vert_{\nabla\circ (\alpha \sqcup \beta)},E_\bullet \vert_{\partial\nabla\circ (\alpha \sqcup \beta)};\kk) \cong H_*(E_\bullet\vert_{\alpha},E_\bullet\vert_{\partial\alpha};\kk)\oplus H_*(E_\bullet\vert_{\beta},E_\bullet\vert_{\partial\beta};\kk), $$
we calculate
\begin{align*}
\kappa_{\alpha+\beta}(x) &= \overline{\nabla}_*\circ (\overline{\alpha \sqcup \beta})_*\circ \bar{\mu}_* \circ \theta_{(\alpha+\beta)*} \circ \sigma(x) \\
&= \overline{\nabla}_* \circ (\bar\alpha \sqcup \bar\beta)_* \circ (\theta_{\alpha} \sqcup \theta_{\beta})_* \circ (\id \times \mu)_* \circ \sigma(x) \\
&= \bar\alpha_*\circ \theta_{\alpha *}\circ \sigma(x) + \bar\beta_*\circ \theta_{\beta *}\circ \sigma(x) = \kappa_{\alpha}(x)+\kappa_{\beta}(x)
\end{align*}
for all $x\in H_*(\trivxi M_{\bullet};\kk)$.
\end{proof}

Thus, $\kappa$ induces a natural homomorphism
$$\kappa\colon H_t(\trivxi M_\bullet;\kk)\otimes \pi_{s-1}(B_\bullet^{(s)},B^{(s-1)}_\bullet) \to H_{t+s}(E^{(s)}_{\bullet},E^{(s-1)}_{\bullet};\kk).$$

\begin{lmm}\label{lemma:kappa commutes with boundary}
The mapping $\kappa$ commutes with the boundary operators of triples $(B^{(s)}_\bullet,B^{(s-1)}_\bullet,B^{(s-2)}_\bullet)$ and $(E^{(s)}_\bullet,E^{(s-1)}_\bullet,E^{(s-2)}_\bullet)$ in the sense that the following diagram commutes 
$$
\begin{tikzcd}
H_t(\trivxi M_\bullet;\kk)\otimes  \pi_{s-1}(B_\bullet^{(s)},B^{(s-1)}_\bullet) \arrow{r}{\kappa} \arrow{d}{\id \otimes \delta}
& H_{s+t}(E_\bullet^{(s)},E^{(s-1)}_\bullet;\kk) \arrow{d}{\delta} \\
H_t(\trivxi M_\bullet;\kk)\otimes  \pi_{s-2}(B_\bullet^{(s-1)},B^{(s-2)}_\bullet) \arrow{r}{\kappa} 
& H_{s+t-1}(E_\bullet^{(s-1)},E^{(s-2)}_\bullet;\kk).
\end{tikzcd}
$$
\end{lmm}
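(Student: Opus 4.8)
The plan is to follow the classical chain-level description of the Serre spectral sequence, cf.~\cite[Chapter~15]{Switzer} and~\cite[Chapter~9.2]{Spanier94}, by reducing the identity to a computation in a universal example. Fix a representative of pairs $\alpha\colon (D(\kk[s-1]),\partial D(\kk[s-1]))\to (B^{(s)}_\bullet,B^{(s-1)}_\bullet)$ of a class in $\pi_{s-1}(B^{(s)}_\bullet,B^{(s-1)}_\bullet)$. Using Remark~\ref{remark: relative homotopy groups as homotopy classes of maps} and the isomorphism $D(\kk[s-2])/\partial D(\kk[s-2])\cong \partial D(\kk[s-1])$, the class $\delta[\alpha]\in \pi_{s-2}(B^{(s-1)}_\bullet,B^{(s-2)}_\bullet)$ is represented by the composite $\alpha'\colon D(\kk[s-2])\twoheadrightarrow D(\kk[s-2])/\partial D(\kk[s-2])\cong \partial D(\kk[s-1])\xrightarrow{\alpha\vert} B^{(s-1)}_\bullet$. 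The map $\bar\alpha$ on total spaces restricts to a map of triples $(E_\bullet\vert_{\alpha},E_\bullet\vert_{\partial\alpha},\trivxi M_\bullet)\to (E^{(s)}_\bullet,E^{(s-1)}_\bullet,E^{(s-2)}_\bullet)$, and $\overline{\alpha'}$ factors through the pullback of $E_\bullet\vert_{\partial\alpha}$ along the pinch map $q$. By naturality of the connecting homomorphism $\delta$ of a homology triple, together with functoriality of pullbacks of principal fibrations (Proposition~\ref{proposition: cofibrant in srlie with action}), it suffices to prove the corresponding identity for the principal fibration $E_\bullet\vert_{\alpha}$ over $D(\kk[s-1])$; and since $D(\kk[s-1])$ is contractible (Remark~\ref{remark: relative homotopy groups as homotopy classes of maps}), Lemma~\ref{lemma: principal fibrations over contractible} provides a trivialization $E_\bullet\vert_{\alpha}\cong \trivxi M_\bullet\times D(\kk[s-1])$, so we may assume $E_\bullet=\trivxi M_\bullet\times D(\kk[s-1])$ outright.

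In this trivial case one first records the skeletal filtration of $D(\kk[s-1])=\free(\Gamma C(\kk[s-1]))$: it has no nondegenerate simplices above degree $s-1$, its $(s-1)$-skeleton equals $\partial D(\kk[s-1])$, and its $(s-2)$-skeleton is trivial, so $E^{(s)}_\bullet=\trivxi M_\bullet\times D(\kk[s-1])$, $E^{(s-1)}_\bullet=\trivxi M_\bullet\times \partial D(\kk[s-1])$ and $E^{(s-2)}_\bullet=\trivxi M_\bullet$. By the K\"{u}nneth formula (Corollary~\ref{corollary: kunneth formula}) the relevant relative homology groups split off $H_*(\trivxi M_\bullet;\kk)$ as a tensor factor, and the connecting map $\delta$ of the triple becomes $\id\otimes\delta'$, up to the Koszul sign determined by the degree of the $H_*(\trivxi M_\bullet;\kk)$-factor, where $\delta'$ is the connecting map of the pair $(D(\kk[s-1]),\partial D(\kk[s-1]))$. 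As $D(\kk[s-1])$ is contractible, $\delta'$ is an isomorphism carrying the canonical generator $\kappa_s\in H_s(D(\kk[s-1]),\partial D(\kk[s-1]);\kk)$ to the canonical generator of $\widetilde{H}_{s-1}(\partial D(\kk[s-1]);\kk)$, which is exactly $q_*\kappa_{s-1}$ for the pinch $q\colon D(\kk[s-2])\twoheadrightarrow \partial D(\kk[s-1])$. Hence $\delta(\sigma(x))=\delta(x\times \kappa_s)=\pm\, x\times q_*\kappa_{s-1}=\pm\,(\id\times q)_*\sigma'(x)$, where $\sigma'(x)=x\times\kappa_{s-1}$ is the suspension map used in $\kappa$ in dimension $s-1$.

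It then remains to observe that the trivialization $\theta_{\alpha'}$ computing $\kappa_{\alpha'}(x)$ may be taken to be the pullback along $q$ of the restriction of $\theta_\alpha$ to the boundary $\trivxi M_\bullet\times\partial D(\kk[s-1])$; this is legitimate because any two trivializations over a contractible object differ by an equivariant homotopy and $\kappa_{\alpha'}(x)$ is independent of the chosen trivialization, as established in the construction of $\kappa$ preceding~\eqref{equation: SSS comparasing}. Chasing the two naturality squares — one for $\bar\alpha$ restricted over the boundary, one for $\theta_\alpha$ restricted over the boundary — and inserting the computation of the previous paragraph yields $\delta(\kappa_\alpha(x))=\overline{\alpha'}_*\,\theta_{\alpha'\,*}\,\sigma'(x)=\kappa_{\alpha'}(x)=\kappa(x\otimes\delta[\alpha])$, as desired. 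The main obstacle is precisely this last bookkeeping step: ensuring that the various pullback and restriction maps, the trivializations over the disk and over its boundary, and the canonical suspension classes $\kappa_s$ and $\kappa_{s-1}$ are all consistently aligned, and tracking the sign with which $\delta$ interacts with the external product so that it is compatible with the conventions fixed in~\eqref{equation: SSS comparasing}; once those are pinned down the argument is a formal diagram chase resting on naturality of $\delta$ and the K\"{u}nneth formula.
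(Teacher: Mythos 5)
Your argument is correct and follows essentially the same route as the paper's proof: both identify $\delta[\alpha]$ as the class of the composite through the pinch map $D(\kk[s-2])\twoheadrightarrow \partial D(\kk[s-1])$, choose the trivialization over $D(\kk[s-2])$ to be the pullback along that map of the restriction of $\theta_\alpha$ to the boundary, and reduce everything to the formula $\partial(x\times\kappa_s)=x\times\partial\kappa_s$ over the contractible disk. Your explicit reduction to the universal example and the K\"{u}nneth identification of the connecting map of the triple are just a repackaging of the paper's direct appeal to naturality of the boundary operator, so there is no substantive difference.
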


\begin{proof}
Compare with~\cite[Lemma~15.24]{Switzer}. Let $x\in H_t(\trivxi M_\bullet;\kk)$ and $[\alpha] \in \pi_{s-1}(B_\bullet^{(s)},B_\bullet^{(s-1)})$. Then
\begin{align*}\delta\circ \kappa(x\otimes [\alpha]) &= j_*\circ \partial \circ \bar \alpha_*\circ \theta_{\alpha *} (x\times \kappa_s) \\
&=j_*\circ (\bar{\alpha}|E_\bullet\vert_{\partial \alpha})_* \circ \theta_{\partial \alpha *}(x\times \partial \kappa_{s}),
\end{align*}
where $\partial \colon H_{s+t}(E_\bullet^{(s)},E_{\bullet}^{(s-1)};\kk) \to \widetilde{H}_{s+t-1}(E^{(s-1)}_\bullet;\kk)$ is the boundary homomorphism, $j\colon (E^{(s-1)}_\bullet,0) \to (E^{(s-1)}_\bullet, E^{(s-2)}_\bullet)$ is the evident map of pairs, and $\partial \kappa_s \in H_{s-1}(\partial D(\kk[s-1]);\kk)$ is a generator.

Let $\rho\colon (D(\kk[s-2]),\partial D(\kk[s-2])) \to (\partial D(\kk[s-1]),0)$ be the standard projection; then $\delta[\alpha]$ is the class of the composite
\begin{align*}
(D(\kk[s-2]),\partial D(\kk[s-2])) \xrightarrow{\rho} (\partial D(\kk[s-1]),0) &\xrightarrow{\alpha\vert_{\partial D(\kk[s-1])}}
(B^{(s-1)}_\bullet,0) \\  &\xrightarrow{j} (B^{(s-1)}_\bullet, B^{(s-2)}_\bullet).
\end{align*}
For convenience, we abbreviate this composite by $\gamma$. Then there is a trivialization
$$\theta_\gamma\colon \trivxi M_\bullet \times D(\kk[s-2])\xrightarrow{\cong} E_\bullet \vert_{\gamma}$$ such that $\bar\rho \circ \theta_{\gamma} = \theta_{\partial f}\circ (\id \times \rho)$. Finally, we have
\begin{align*}
\kappa\circ (\id\otimes \delta)(x\otimes [\alpha]) &= \bar{\gamma}_* \circ \theta_{\gamma *}(x\times \kappa_{s-1}) \\
&= j_*\circ (\bar{\alpha}|E_\bullet\vert_{\partial \alpha})_* \circ \bar{\rho}_* \circ \theta_{\gamma *}(x\times \kappa_{s-1}) \\
&=j_*\circ (\bar{\alpha}|E_\bullet\vert_{\partial \alpha})_* \circ  \theta_{\partial f *}\circ (\id \times \rho)_*(x\times \kappa_{s-1}) \\
&=j_*\circ (\bar{\alpha}|E_\bullet\vert_{\partial \alpha})_* \circ  \theta_{\partial f *}(x\times \partial\kappa_{s})\\
&=\delta\circ \kappa(x\otimes [\alpha]). \qedhere
\end{align*}
\end{proof}

\begin{lmm}\label{lemma: kappa over contractible}
Let $\pi\colon E_\bullet \to B_\bullet$ be a principal fibration with the fiber $M_\bullet \in \sMod_{\kk\{\xi\}}$. Suppose that $B_\bullet = D(W_*)$, $W_*\in \Vect^{gr}_{\kk}$, $W_{k} = 0 $ if $k\neq s-1$, and $s\geq 2$. Then
\begin{enumerate}
\item $\kappa(x,\xi_*[\alpha])=0$ for all $x\in H_t(\trivxi M_\bullet;\kk)$ and $[\alpha]\in \pi_{s-1}(B^{(s)}_\bullet, B^{(s-1)}_\bullet)$;
\item The induced map
$$\kappa\colon H_t(\trivxi M_\bullet;\kk)\otimes \pi_{s-1}(B_\bullet^{(s)},B^{(s-1)}_\bullet)/\xi \to H_{t+s}(E^{(s)}_{\bullet},E^{(s-1)}_{\bullet};\kk)$$
is an isomorphism.
\end{enumerate}
\end{lmm}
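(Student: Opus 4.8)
The plan is to exploit that $B_\bullet=D(W_*)$ is contractible: then the principal fibration trivializes and the whole computation reduces to the Hurewicz theorem for a free simplicial restricted Lie algebra together with the Künneth formula. Since $0\to D(W_*)$ is a weak equivalence, Lemma~\ref{lemma: principal fibrations over contractible} gives an isomorphism $E_\bullet\cong\trivxi M_\bullet\times D(W_*)$ of principal fibrations over $B_\bullet$, which we fix and identify along. As $W_*$ sits in degree $s-1$, the normalized chain complex of $\Gamma C(W_*)$ is $W$ in degrees $s-1$ and $s-2$ and zero elsewhere; together with the skeletal analysis of Example~\ref{example: serre, idenity morphism} this gives $B^{(s-2)}_\bullet=0$, $B^{(s-1)}_\bullet=\partial D(W_*)=\free(\Gamma(W[s-2]))$ and $B^{(s)}_\bullet=B_\bullet=D(W_*)$, hence $E^{(s)}_\bullet=E_\bullet$ and $E^{(s-1)}_\bullet\cong\trivxi M_\bullet\times\partial D(W_*)$. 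I would then apply Lemma~\ref{lemma:kappa commutes with boundary} to the triple $(B^{(s)}_\bullet,B^{(s-1)}_\bullet,B^{(s-2)}_\bullet=0)$, using $\pi_{s-2}(B^{(s-1)}_\bullet,0)=\pi_{s-2}(B^{(s-1)}_\bullet)$ and $H_*(E^{(s-1)}_\bullet,0;\kk)=\widetilde H_*(E^{(s-1)}_\bullet;\kk)$; pulling the trivial fibration back along a representative $\beta$ of a class in $\pi_{s-2}(B^{(s-1)}_\bullet)$ returns a trivial fibration with structure map $\bar\beta=\id\times\beta$, so the bottom arrow of the square in that lemma is $x\otimes[\beta]\mapsto x\times h([\beta])$, with $\times$ the absolute homology cross product and $h\colon\pi_{s-2}(\partial D(W_*))\to\widetilde H_{s-1}(\partial D(W_*);\kk)$ the absolute Hurewicz homomorphism with $\kk$-coefficients.

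The heart of the argument is computing this Hurewicz map. The object $\partial D(W_*)=\free(\Gamma(W[s-2]))$ is almost-free and $(s-3)$-connected, and $\Ab\circ\free$ is the free $\kk\{\xi\}$-module functor (it is left adjoint to the relevant forgetful functor), which, applied degreewise, is exact and commutes with homotopy groups because $\kk$ is perfect (so $\kk\{\xi\}$ is free as a right $\kk$-module). Hence the Hurewicz theorem (Theorem~\ref{theorem: hurewicz theorem}) identifies $\pi_{s-2}(\partial D(W_*))$ with $\widetilde H_{s-1}(\partial D(W_*);\kk\{\xi\})\cong\kk\{\xi\}\otimes_\kk W$, $\xi$-equivariantly, the latter free on $W$; changing coefficients along the augmentation $\kk\{\xi\}\to\kk$ (equivalently, using $\widetilde H_*(\free(V_\bullet);\kk)\cong\pi_{*-1}(V_\bullet)$ from Example~\ref{example: PG of triv}) identifies $\widetilde H_{s-1}(\partial D(W_*);\kk)\cong W$ and exhibits $h$ as the quotient map $\kk\{\xi\}\otimes_\kk W\to W$, so $h$ is onto with kernel $\xi\cdot\pi_{s-2}(\partial D(W_*))$. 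By the Künneth formula (Corollary~\ref{corollary: kunneth formula}) applied to $E^{(s-1)}_\bullet$ and $E^{(s)}_\bullet$, the bottom arrow of the square thus has kernel $H_t(\trivxi M_\bullet;\kk)\otimes\xi\cdot\pi_{s-2}(\partial D(W_*))$ and image precisely the summand $H_t(\trivxi M_\bullet;\kk)\otimes W\subseteq\widetilde H_{t+s-1}(E^{(s-1)}_\bullet;\kk)$, which is exactly $\ker\bigl(\widetilde H_{t+s-1}(E^{(s-1)}_\bullet;\kk)\to\widetilde H_{t+s-1}(E^{(s)}_\bullet;\kk)\bigr)$.

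To finish, descend to the top row of the square. As $B^{(s)}_\bullet=D(W_*)$ is contractible, the homotopy boundary $\delta\colon\pi_{s-1}(B^{(s)}_\bullet,B^{(s-1)}_\bullet)\to\pi_{s-2}(B^{(s-1)}_\bullet)$ is a $\xi$-equivariant isomorphism, and, by Künneth, the inclusion $E^{(s-1)}_\bullet\hookrightarrow E^{(s)}_\bullet$ induces a split epimorphism on reduced homology, so the connecting map $\delta\colon H_{t+s}(E^{(s)}_\bullet,E^{(s-1)}_\bullet;\kk)\to\widetilde H_{t+s-1}(E^{(s-1)}_\bullet;\kk)$ is injective with image the kernel displayed above. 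Commutativity of the square in Lemma~\ref{lemma:kappa commutes with boundary} then carries both conclusions of the previous paragraph to the top map $\kappa$: it kills $H_t(\trivxi M_\bullet;\kk)\otimes\xi\cdot\pi_{s-1}(B^{(s)}_\bullet,B^{(s-1)}_\bullet)$, which is assertion (1), and it induces an isomorphism $H_t(\trivxi M_\bullet;\kk)\otimes\bigl(\pi_{s-1}(B^{(s)}_\bullet,B^{(s-1)}_\bullet)/\xi\bigr)\xrightarrow{\cong}H_{t+s}(E^{(s)}_\bullet,E^{(s-1)}_\bullet;\kk)$, which is assertion (2). I expect the main obstacle to be the Hurewicz computation of the middle paragraph — matching the $\kk\{\xi\}$-coefficient Hurewicz isomorphism with its reduction modulo $\xi$ under change of coefficients, while keeping the absolute versus relative homology and the Künneth splittings consistent. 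The borderline case $s=2$, where $\pi_1(B^{(s)}_\bullet,B^{(s-1)}_\bullet)$ is a restricted Lie algebra and $(-)/\xi$ is read as $\Abxi$, requires only routine notational modifications.
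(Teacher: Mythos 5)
Your strategy is sound and genuinely different from the paper's. The paper, after trivializing the fibration via Lemma~\ref{lemma: principal fibrations over contractible}, identifies $\kappa$ in one step with the composite of the relative Hurewicz isomorphism $\pi_{s-1}(B^{(s)}_\bullet,B^{(s-1)}_\bullet)\cong H_s(B^{(s)}_\bullet,B^{(s-1)}_\bullet;\kk\{\xi\})$, the change-of-coefficients map along $\kk\{\xi\}\to\kk$, and the relative K\"unneth isomorphism; both assertions then drop out of $H_s(B^{(s)}_\bullet,B^{(s-1)}_\bullet;\kk\{\xi\})/\xi\cong H_s(B^{(s)}_\bullet,B^{(s-1)}_\bullet;\kk)$. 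You instead push everything to the $(s-1)$-skeleton through Lemma~\ref{lemma:kappa commutes with boundary}, compute the absolute Hurewicz map of $\partial D(W_*)$ there, and lift back through the homology boundary. This costs more words but actually supplies the commutativity that the paper's diagram asserts without detail, so it is a legitimate alternative; your computation of $h$ as the quotient by $\xi$ of $\kk\{\xi\}\otimes_\kk W$ is exactly the content of the paper's bottom arrow.

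One slip to fix: $E^{(s-2)}_\bullet=\pi^{-1}(B^{(s-2)}_\bullet)=\pi^{-1}(0)$ is the fiber $\trivxi M_\bullet$, not $0$, so the bottom-right corner of the square in Lemma~\ref{lemma:kappa commutes with boundary} is $H_{t+s-1}(E^{(s-1)}_\bullet,\trivxi M_\bullet;\kk)$ rather than $\widetilde H_{t+s-1}(E^{(s-1)}_\bullet;\kk)$, and the right-hand vertical map is the triple boundary $j_*\circ\partial$, not the pair boundary $\partial$ you invoke. The repair is harmless and even simplifies your final step: since $H_*(E^{(s)}_\bullet,E^{(s-2)}_\bullet;\kk)=H_*(\trivxi M_\bullet\times D(W_*),\trivxi M_\bullet;\kk)=0$ by contractibility of $D(W_*)$, the triple boundary $H_{t+s}(E^{(s)}_\bullet,E^{(s-1)}_\bullet;\kk)\to H_{t+s-1}(E^{(s-1)}_\bullet,E^{(s-2)}_\bullet;\kk)$ is an isomorphism outright, and $H_{t+s-1}(E^{(s-1)}_\bullet,\trivxi M_\bullet;\kk)\cong H_t(\trivxi M_\bullet;\kk)\otimes W$ is precisely the image you identified, so both conclusions follow as you intended. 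Your caveat about $s=2$ is right to flag; there the bottom row of the square is not covered by the construction of $\kappa$ as literally stated, which is one more reason the paper's one-step identification on the pair $(E^{(s)}_\bullet,E^{(s-1)}_\bullet)$ is the cleaner route.
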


\begin{proof}
Since the base $B_\bullet$ is contractible, we can assume that $\pi$ is a trivial principal fibration (Lemma~\ref{lemma: principal fibrations over contractible}). In this case, the diagram
$$
\begin{tikzcd}[column sep=small]
H_t(\trivxi M_\bullet;\kk)\otimes  \pi_{s-1}(B_\bullet^{(s)},B^{(s-1)}_\bullet) \arrow{r}{\kappa} \arrow{d}{\cong}
& H_{s+t}(E_\bullet^{(s)},E^{(s-1)}_\bullet;\kk) \\
H_t(\trivxi M_\bullet;\kk)\otimes  H_{s}(B_\bullet^{(s)},B^{(s-1)}_\bullet;\kk\{\xi\}) \arrow{r} 
& H_t(\trivxi M_\bullet;\kk)\otimes  H_{s}(B_\bullet^{(s)},B^{(s-1)}_\bullet;\kk) \arrow{u}{\cong}.
\end{tikzcd}
$$
commutes, where the left vertical arrow is the Hurewicz isomorphism and the right one is the K\"{u}nneth isomorphism. Since 
$$H_s(B_\bullet^{(s)},B_\bullet^{(s-1)};\kk\{\xi\})/\xi \cong H_s(B_\bullet^{(s)},B_\bullet^{(s-1)};\kk),$$
the lemma follows.
\end{proof}

Let $B_\bullet = \free(V_\bullet)$ be an almost-free simplicial restricted Lie algebra, $V_\bullet \in \tilde{s}\Vect_{\kk}$. Then the natural inclusion of normalized chains
$$NV_{s-1} \hookrightarrow V_{s-1}\hookrightarrow \free(V_{s-1})$$ induce the following map of pairs
$$c_s\colon (D(NV_{s-1}), \partial D(NV_{s-1})) \to (B^{(s)}_{\bullet}, B^{(s-1)}_\bullet), $$
where we consider $NV_{s-1}$ as a graded vector space concentrated in degree $s-1$. The next lemma follows immediately from the relative Hurewicz Theorem (Corollary~\ref{corollary: relative hurewicz theorem}).

\begin{lmm}\label{lemma: relative homotopy groups of skeleta}
For $B_\bullet = \free(V_\bullet)$ as above, suppose that $B_\bullet$ is reduced. Then the induced map 
$$c_{s*}\colon \pi_{s-1}(D(NV_{s-1}), \partial D(NV_{s-1})) \to \pi_{s-1}(B^{(s)}_{\bullet}, B^{(s-1)}_\bullet)$$
is an isomorphism. Moreover, 
\begin{equation*}
\pi_{s-1}(D(NV_{s-1}), \partial D(NV_{s-1}))\cong \kk\{\xi\}\otimes_{\kk}NV_{s-1}. \eqno\qed
\end{equation*}
\end{lmm}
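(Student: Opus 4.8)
The plan is to deduce both assertions from the relative Hurewicz theorem (Corollary~\ref{corollary: relative hurewicz theorem}): it will identify $\pi_{s-1}$ of each pair with the relative homology group $H_s(-;\kk\{\xi\})$, and the latter will then be computed through the homotopy cofiber and Definition~\ref{definition: chain complex}. I would carry this out for $s\geq 3$, so that the connectivity hypotheses below are met; the bottom of the range $s=2$ (where $B^{(1)}_\bullet=0$ and $B^{(2)}_\bullet=\free(\Gamma(NV_1[1]))$) needs a small separate analysis.

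First I would verify the hypotheses of Corollary~\ref{corollary: relative hurewicz theorem} for the two pairs. For $(B^{(s)}_\bullet,B^{(s-1)}_\bullet)$: since $B_\bullet=\free(V_\bullet)$ is reduced and almost-free, $B^{(s-1)}_\bullet=\sk_{s-2}B_\bullet=\free(\sk_{s-2}V_\bullet)$ is reduced, hence $\pi_0(B^{(s-1)}_\bullet)=0$; and the inclusion $B^{(s-1)}_\bullet\hookrightarrow B^{(s)}_\bullet$ is $(s-2)$-connected, because the analogous statement for the skeletal filtration of a simplicial vector space is classical and $\free$ preserves connectivity of maps between reduced simplicial vector spaces — using the splitting $\oblv\circ\free\cong\bigoplus_n L^r_n$ of Remark~\ref{remark: free lie algebras, fresse}, the summands $L^r_n$ with $n\geq 2$ contribute only in high degrees. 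For $(D(NV_{s-1}),\partial D(NV_{s-1}))$: the Lie algebra $D(NV_{s-1})$ is contractible (Remark~\ref{remark: relative homotopy groups as homotopy classes of maps}), so $\fib(\partial D(NV_{s-1})\hookrightarrow D(NV_{s-1}))\simeq\partial D(NV_{s-1})=\free(\Gamma(NV_{s-1}[-1]))$ is $\free$ of an $(s-3)$-connected simplicial vector space; thus the pair is $(s-2)$-connected with connected subobject.

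Granting this, Corollary~\ref{corollary: relative hurewicz theorem} gives natural isomorphisms $\pi_{s-1}(D(NV_{s-1}),\partial D(NV_{s-1}))\cong H_s(D(NV_{s-1}),\partial D(NV_{s-1});\kk\{\xi\})$ and $\pi_{s-1}(B^{(s)}_\bullet,B^{(s-1)}_\bullet)\cong H_s(B^{(s)}_\bullet,B^{(s-1)}_\bullet;\kk\{\xi\})$, compatibly with $c_s$. Both inclusions are cofibrations, and their homotopy cofibers are identified by Example~\ref{example: serre, idenity morphism} and Remark~\ref{remark: relative homotopy groups as homotopy classes of maps}: one has $B^{(s)}_\bullet/B^{(s-1)}_\bullet\cong\free(\Gamma(NV_{s-1}[s-1]))$ and $D(NV_{s-1})/\partial D(NV_{s-1})\cong\partial D(NV_{s-1}[1])=\free(\Gamma(NV_{s-1}[s-1]))$, and the map of cofibers induced by $c_s$ is the identity of $\free(\Gamma(NV_{s-1}[s-1]))$, since $c_s$ comes from the identity of the space $NV_{s-1}$ of non-degenerate $(s-1)$-simplices. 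Hence it remains to compute $\widetilde{H}_s(\free(\Gamma(NV_{s-1}[s-1]));\kk\{\xi\})$.

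For this last step I would use Definition~\ref{definition: chain complex}: with $\kk\{\xi\}$ regarded as a right module over itself, $\widetilde{H}_s(L_\bullet;\kk\{\xi\})=\pi_{s-1}(\mathbb{L}\Ab(L_\bullet))$. Since $\free$ is left Quillen (Theorem~\ref{theorem:modelsrlie}), $\free$ of a simplicial vector space is cofibrant, so $\mathbb{L}\Ab(\free(U_\bullet))=\Ab(\free(U_\bullet))$; and $\Ab\circ\free$, being left adjoint to the forgetful functor $\sMod_{\kk\{\xi\}}\to\sVect_\kk$, sends $U_\bullet$ to the free simplicial $\kk\{\xi\}$-module $\kk\{\xi\}\otimes_\kk U_\bullet$. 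As $\kk$ is a field, $\kk\{\xi\}\otimes_\kk(-)$ is exact, whence $\pi_{s-1}(\kk\{\xi\}\otimes_\kk\Gamma(NV_{s-1}[s-1]))\cong\kk\{\xi\}\otimes_\kk\pi_{s-1}(\Gamma(NV_{s-1}[s-1]))=\kk\{\xi\}\otimes_\kk NV_{s-1}$. Feeding this back through the identifications above proves the isomorphism $\pi_{s-1}(D(NV_{s-1}),\partial D(NV_{s-1}))\cong\kk\{\xi\}\otimes_\kk NV_{s-1}$ and, by naturality, that $c_{s*}$ is an isomorphism. The step I expect to require the most care is the verification that $(B^{(s)}_\bullet,B^{(s-1)}_\bullet)$ is $(s-2)$-connected, so that Corollary~\ref{corollary: relative hurewicz theorem} applies, together with the bookkeeping at the low end of the range; everything else is formal once the cofibers are identified.
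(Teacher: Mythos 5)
Your proposal is correct for $s\geq 3$ and is exactly the paper's argument: the paper disposes of this lemma with the single remark that it ``follows immediately from the relative Hurewicz theorem,'' and what you have written is that remark with all hypotheses checked and with the cofiber computation $\widetilde H_s(\free(\Gamma(NV_{s-1}[s-1]));\kk\{\xi\})\cong\kk\{\xi\}\otimes_\kk NV_{s-1}$ carried out via $\Ab\circ\free$, which is how the paper's Definition~\ref{definition: chain complex} and Example~\ref{example: serre, idenity morphism} are meant to be combined.

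Your caution about $s=2$ is warranted, and in fact the issue is with the statement rather than with your argument: since $D(NV_1)$ is contractible, $\pi_1(D(NV_1),\partial D(NV_1))\cong\pi_0(\partial D(NV_1))=\free(NV_1)$, the free restricted Lie algebra on the vector space $NV_1$, which strictly contains $\kk\{\xi\}\otimes_\kk NV_1$ as soon as $\dim NV_1\geq 2$ (it contains the brackets $[x,y]$). So at the bottom of the range one must pass to the abelianization $\Ab(\pi_0(\partial D(NV_1)))\cong\kk\{\xi\}\otimes_\kk NV_1$, i.e.\ use the $n=0$ form of the Hurewicz theorem (Theorem~\ref{theorem: hurewicz theorem}); this is what the later applications (Proposition~\ref{proposition: kappa is isomorphism}, Theorem~\ref{theorem: SSS}) actually require, since they only use $\pi_{s-1}(B^{(s)}_\bullet,B^{(s-1)}_\bullet)$ through its image in homology. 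No further change to your argument is needed.
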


\begin{prop}\label{proposition: second page of SSS is local}
Let $\pi\colon E_\bullet \to B_\bullet$ be a principal fibration with the fiber $M_\bullet\in \sMod_{\kk\{\xi\}}$ and the base $B_\bullet = \free(V_\bullet)$, $V_\bullet \in \tilde{s}\Vect_{\kk}$. Then the induced map
$$\bar{c}_{s*}\colon H_*(E_\bullet\vert_{c_s},  E_\bullet\vert_{\partial c_s};\kk) \to H_*(E^{(s)}_\bullet, E^{(s-1)}_\bullet;\kk) $$
is an isomorphism for any $s\geq 0$.
\end{prop}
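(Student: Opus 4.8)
\emph{Proof sketch.} The plan is to argue by excision. First I would realise $(B^{(s)}_\bullet, B^{(s-1)}_\bullet)$ as obtained from the ``cell pair'' $(D(NV_{s-1}), \partial D(NV_{s-1}))$ by attaching, i.e. produce a pushout square in $\srLie$
$$B^{(s)}_\bullet \;=\; B^{(s-1)}_\bullet \sqcup_{\partial D(NV_{s-1})} D(NV_{s-1})$$
in which the map from $D(NV_{s-1})$ is $c_s$. To see this, note that $\sk_{s-1}$ is the composite of a restriction along $\Delta_{\leq s-1}\hookrightarrow\Delta$ with a left Kan extension, hence commutes with the left adjoint $\free$, so $B^{(s)}_\bullet=\free(\sk_{s-1}V_\bullet)$ and $B^{(s-1)}_\bullet=\free(\sk_{s-2}V_\bullet)$; the usual skeletal cell attachment expresses $\sk_{s-1}V_\bullet$ as $\sk_{s-2}V_\bullet$ with the nondegenerate $(s-1)$-simplices $NV_{s-1}\hookrightarrow V_{s-1}$ glued on along their faces, and applying $\free$ and matching against the definitions of $D$ and $\partial D$ gives the displayed square (compare the computation of $B^{(s)}_\bullet/B^{(s-1)}_\bullet$ in Example~\ref{example: serre, idenity morphism}). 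We may assume $s\geq 2$, the cases $s\leq 1$ being trivial.

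Next I would transport this square through $\pi$. Restricting $\pi$ produces principal fibrations $E^{(s-1)}_\bullet\to B^{(s-1)}_\bullet$, $E_\bullet\vert_{c_s}\to D(NV_{s-1})$ and $E_\bullet\vert_{\partial c_s}\to \partial D(NV_{s-1})$, so in particular all three total spaces are cofibrant in $\srLie_{M_\bullet}$ by Proposition~\ref{proposition: cofibrant in srlie with action}. Since $D(NV_{s-1})$ is contractible and cofibrant, Lemma~\ref{lemma: principal fibrations over contractible} trivialises $E_\bullet\vert_{c_s}\cong \trivxi M_\bullet\times D(NV_{s-1})$ and, by restriction, $E_\bullet\vert_{\partial c_s}\cong\trivxi M_\bullet\times\partial D(NV_{s-1})$; hence $E_\bullet\vert_{\partial c_s}\to E_\bullet\vert_{c_s}$ is the image of the cofibration $\partial D(NV_{s-1})\hookrightarrow D(NV_{s-1})$ under the left adjoint $\trivxi M_\bullet\times(-)$, and is therefore a cofibration in $\srLie_{M_\bullet}$ (the relevant model structure is the transferred one of Theorem~\ref{theorem: model structure, srlie with action}). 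Form the pushout $P_\bullet = E^{(s-1)}_\bullet\sqcup_{E_\bullet\vert_{\partial c_s}}E_\bullet\vert_{c_s}$ in $\srLie_{M_\bullet}$. Because $(-)/M_\bullet$ is a left adjoint, $P_\bullet/M_\bullet = B^{(s)}_\bullet$, and $P_\bullet\to B^{(s)}_\bullet$ is degreewise surjective, so $P_\bullet$ is again a principal $M_\bullet$-fibration over $B^{(s)}_\bullet$. Consequently the canonical comparison $P_\bullet\to E^{(s)}_\bullet$ is a morphism of principal fibrations over $B^{(s)}_\bullet$ which is the identity on quotients, hence an isomorphism. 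Thus the pushout square of total spaces $E^{(s)}_\bullet = E^{(s-1)}_\bullet\sqcup_{E_\bullet\vert_{\partial c_s}}E_\bullet\vert_{c_s}$ is a homotopy pushout — a pushout of cofibrant objects along a cofibration — and remains one after forgetting the $M_\bullet$-action to $\srLie$.

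To finish, observe that the chain complex functor $\widetilde C_*(-;\kk)=\Sigma\kk\otimes_{\kk\{\xi\}}\mathbb{L}\Ab(-)$ of Definition~\ref{definition: chain complex} preserves homotopy pushouts, being the composite of the colimit-preserving functor $\mathbb{L}\Ab$ with the exact functors $\Sigma(-)$ and $\kk\otimes_{\kk\{\xi\}}(-)$. Applying $\widetilde C_*(-;\kk)$ to the homotopy pushout square above and comparing the cofibers of its two vertical arrows yields a natural equivalence $\cofib(E_\bullet\vert_{\partial c_s}\to E_\bullet\vert_{c_s})\simeq\cofib(E^{(s-1)}_\bullet\to E^{(s)}_\bullet)$, induced by $\bar c_s$; passing to $\widetilde H_*(-;\kk)$ this is precisely the assertion that $\bar c_{s*}$ is an isomorphism.

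The step I expect to be the main obstacle is the identification $E^{(s)}_\bullet\cong P_\bullet$ in the middle paragraph: the content there is to verify that the honest pushout $P_\bullet$ is again a principal $M_\bullet$-fibration — i.e. that $M_n$ acts freely degreewise and that the quotient is the correct cofibrant base — so that the rigidity of principal fibrations over a fixed base may be invoked; once that is in place the rest is formal. As a byproduct, this proposition together with Lemma~\ref{lemma: principal fibrations over contractible}, Lemma~\ref{lemma: kappa over contractible} and Lemma~\ref{lemma: relative homotopy groups of skeleta} identifies $H_{t+s}(E^{(s)}_\bullet,E^{(s-1)}_\bullet;\kk)\cong H_t(\trivxi M_\bullet;\kk)\otimes NV_{s-1}$, which is the input needed for the $E^1$-page of the Serre spectral sequence.
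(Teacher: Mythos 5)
Your proof is correct, but it takes a genuinely different route from the paper's. The paper argues \emph{levelwise}: for a pair $(L_\bullet,A_\bullet)$ in $\srLie$ there is a functorial spectral sequence $E^1_{n,m}=H_n(L_m,A_m;\kk)\Rightarrow H_{n+m}(L_\bullet,A_\bullet;\kk)$ associated to the underlying bisimplicial object. In each simplicial degree $m$ the principal fibration is trivialisable ($E^{(s)}_m\cong\trivxi M_m\times B^{(s)}_m$, since $M_m$ acts freely by Proposition~\ref{proposition: cofibrant in srlie with action}), so the levelwise relative homology factors by K\"unneth, reducing to $H_1(B^{(s)}_m,B^{(s-1)}_m;\kk)$, on which $c_s$ visibly induces an isomorphism. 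Your argument is instead a global excision argument: exhibit $(B^{(s)}_\bullet,B^{(s-1)}_\bullet)$ as the cell-attachment pushout of $(D,\partial D)$ along $c_s$, lift this to a pushout of total spaces using the rigidity of principal fibrations over a fixed base, and apply the colimit-preserving functor $\widetilde C_*(-;\kk)$.

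The step you flagged as the main obstacle is in fact fine. Once $E_\bullet\vert_{\partial c_s}\hookrightarrow E_\bullet\vert_{c_s}$ is seen to be a cofibration in $\srLie_{M_\bullet}$ (it is $\trivxi M_\bullet\times(-)$ applied to the almost-free inclusion $\partial D\hookrightarrow D$, hence the image of a cofibration under a left Quillen functor), the pushout $P_\bullet$ is automatically cofibrant, $P_\bullet/M_\bullet=B^{(s)}_\bullet$ because the quotient functor is a left adjoint, and the projection is degreewise surjective as a pushout of surjections. These are exactly the three conditions of Definition~\ref{definition: lie principal fibration}, so $P_\bullet$ is a principal fibration and the comparison $P_\bullet\to E^{(s)}_\bullet$ is an isomorphism by the rigidity remark following that definition. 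One minor omission: you should also note that the forgetful functor $\srLie_{M_\bullet}\to\srLie$ preserves (co)limits --- it has both adjoints, $\trivxi M_\bullet\times(-)$ and fixed points, since $\trivxi M_\bullet$ is a group object --- so the pushout survives to $\srLie$ before you apply $\widetilde C_*(-;\kk)$.

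The two proofs buy slightly different things. Yours is conceptually closer to the topology (excision for a cell attachment) and makes transparent \emph{why} principality of the fibration is used: it is precisely what makes the total space a genuine pushout over the base pushout. The paper's proof is more elementary and more local, and sidesteps any need to establish the cell-attachment pushout in $\srLie$; this makes it slightly easier to check but less illuminating. It is worth noting that your approach is essentially a special case of Mather's second cube theorem, which the paper itself identifies in Remark~\ref{remark: any fibration} as the missing ingredient that would extend the Serre spectral sequence to arbitrary fibrations; your argument shows how the principality hypothesis lets one run that proof by hand without needing the cube theorem in full.
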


\begin{proof}
Let $(X_\bullet, A_\bullet)$ be a pair in $\srLie$. %Since the functor $\widetilde{C}(-;\kk)\colon \srLie \to D(\Vect_{\kk})$ commutes with sifted colimits, 
There is a strongly convergent functorial spectral sequence
$$E^1_{n,m}=H_n(L_m,A_m;\kk) \Rightarrow H_{n+m}(L_\bullet,A_\bullet;\kk), $$
where $H_*(L_m,A_m;\kk)$ are the homology groups of the \emph{constant} simplicial restricted Lie algebra pair $(L_m,A_m)$. 

\iffalse
If $(L_\bullet, A_\bullet) = (E^{(s)}_\bullet, E^{(s-1)}_\bullet)$, %(resp. $E_\bullet\vert_{D(NV_{s-1})},  E_\bullet\vert_{\partial D(NV_{s-1})}$)
then by the K\"{u}nneth isomorphism and the definition of a principal fibration, one has 
\begin{align*}
H_n(E^{(s)}_m,E^{(s-1)}_m;\kk)&\cong H_n(\trivxi M_m \times B^{(s)}_m, \trivxi M_m \times B^{(s-1)}_m ;\kk)\\ &\cong H_{n-1}(\trivxi M_m;\kk) \otimes H_1(B_m^{(s)},B_m^{(s-1)};\kk).
\end{align*}
\fi
If $(L_\bullet, A_\bullet) = (E^{(s)}_\bullet, E^{(s-1)}_\bullet)$, %(resp. $E_\bullet\vert_{D(NV_{s-1})},  E_\bullet\vert_{\partial D(NV_{s-1})}$)
then by the definition of a principal fibration, we have
\begin{equation*}
H_n(E^{(s)}_m,E^{(s-1)}_m;\kk)\cong H_n(\trivxi M_m \times B^{(s)}_m, \trivxi M_m \times B^{(s-1)}_m ;\kk).
\end{equation*}
Since $B_\bullet$ is almost-free, the relative homology $H_i(B^{(s)}_m, B^{(s-1)}_m ;\kk)$ are concentrated only in degree $i=1$. Hence, by the K\"{u}nneth isomorphism, we have
\begin{equation*}
H_n(E^{(s)}_m,E^{(s-1)}_m;\kk)\cong H_{n-1}(\trivxi M_m;\kk) \otimes H_1(B_m^{(s)},B_m^{(s-1)};\kk).
\end{equation*}

In a similar way, we obtain that $H_n(E_m\vert_{c_s},  E_m\vert_{\partial c_s};\kk)$ is isomorphic to $$H_{n-1}(\trivxi M_m;\kk) \otimes H_1(D_m(NV_{s-1}),\partial D_m(NV_{s-1});\kk).$$
Since the map $c_s$ induces the isomorphism
$$c_{s*}\colon H_1(D_m(NV_{s-1}),\partial D_m(NV_{s-1});\kk) \xrightarrow{\cong} H_1(B_m^{(s)},B_m^{(s-1)};\kk), \; m,s\geq 0, $$
the proposition follows.
%and the differential $d_1=\partial_1\otimes \partial_2$ is the tensor product of $$\partial_1\colon H_{n-1}(\trivxi M_m;\kk) \to H_{n-1}(\trivxi M_{m-1};\kk),$$ which induced by the face operators $d_i\colon \trivxi M_m \to \trivxi M_{m-1}$, and $$ \partial_2 \colon \Gamma_m(NV_{s-1}) \to \Gamma_{m-1}(NV_{s-1}),$$
%which again is the alternating sum of face operators. 
\end{proof}

\begin{prop}\label{proposition: kappa is isomorphism}
Let $\pi\colon E_\bullet \to B_\bullet$ be a principal fibration with the fiber $M_\bullet \in \sMod_{\kk\{\xi\}}$. Suppose that the base $B_\bullet = \free(V_\bullet), V_\bullet\in \tilde{s}\Vect_{\kk}$ is almost-free, and $V_0=0$. Then
\begin{enumerate}
\item $\kappa(x,\xi_*[\alpha])=0$ for all $x\in H_t(\trivxi M_\bullet;\kk)$ and $[\alpha]\in \pi_{s-1}(B^{(s)}_\bullet, B^{(s-1)}_\bullet)$;
\item The induced map
$$\kappa\colon H_t(\trivxi M_\bullet;\kk)\otimes \pi_{s-1}(B_\bullet^{(s)},B^{(s-1)}_\bullet)/\xi \to H_{t+s}(E^{(s)}_{\bullet},E^{(s-1)}_{\bullet};\kk)$$
is an isomorphism for all $t,s\geq 0$.
\end{enumerate}
\end{prop}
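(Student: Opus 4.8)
The plan is to deduce the proposition from the contractible-base case of Lemma~\ref{lemma: kappa over contractible} by pulling $\pi$ back along the comparison map $c_s$ and invoking the naturality of the homomorphism $\kappa$ of~\eqref{equation: SSS comparasing}. Since $V_0=0$ we have $\free(V)_0=\free(V_0)=0$, so $B_\bullet$ is reduced and $B^{(0)}_\bullet=B^{(1)}_\bullet=0$; for $s\leq 1$ both the source and the target of $\kappa$ vanish, so from now on I assume $s\geq 2$.

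First I would form the pullback $\widetilde{E}_\bullet$ of $\pi$ along the composite $D(NV_{s-1})\xrightarrow{c_s}B^{(s)}_\bullet\hookrightarrow B_\bullet$. Since $D(NV_{s-1})$ and $B_\bullet$ are cofibrant (Remark~\ref{remark: relative homotopy groups as homotopy classes of maps} and Proposition~\ref{proposition: almost-free are cofibrations}), Proposition~\ref{proposition: cofibrant in srlie with action} shows that $\widetilde{E}_\bullet\to D(NV_{s-1})$ is again a principal $M_\bullet$-fibration. As $D(NV_{s-1})=D(W_*)$ with $W_*=NV_{s-1}$ concentrated in degree $s-1$, its skeletal filtration satisfies $D(NV_{s-1})^{(s)}=D(NV_{s-1})$ and $D(NV_{s-1})^{(s-1)}=\partial D(NV_{s-1})$, so that $\widetilde{E}^{(s)}_\bullet=E_\bullet\vert_{c_s}$ and $\widetilde{E}^{(s-1)}_\bullet=E_\bullet\vert_{\partial c_s}$. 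Writing $\widetilde{\kappa}$ for the homomorphism~\eqref{equation: SSS comparasing} attached to $\widetilde{E}_\bullet$, Lemma~\ref{lemma: kappa over contractible} applies and gives that $\widetilde{\kappa}(x,\xi_*[\beta])=0$ for all $x$, $[\beta]$, and that $\widetilde{\kappa}$ becomes an isomorphism after quotienting the second variable by~$\xi$.

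The crux is to verify that the square
$$
\begin{tikzcd}[column sep=large]
H_t(\trivxi M_\bullet;\kk)\otimes \pi_{s-1}(D(NV_{s-1}),\partial D(NV_{s-1})) \arrow{r}{\widetilde{\kappa}} \arrow{d}{\id\otimes c_{s*}}
& H_{t+s}(E_\bullet\vert_{c_s},E_\bullet\vert_{\partial c_s};\kk) \arrow{d}{\bar{c}_{s*}} \\
H_t(\trivxi M_\bullet;\kk)\otimes \pi_{s-1}(B^{(s)}_\bullet,B^{(s-1)}_\bullet) \arrow{r}{\kappa}
& H_{t+s}(E^{(s)}_\bullet,E^{(s-1)}_\bullet;\kk)
\end{tikzcd}
$$
commutes. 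This should be immediate from the defining formula $\kappa(x,[\alpha])=\bar{\alpha}_*\theta_{\alpha*}\sigma(x)$: if $[\beta]$ is represented by a map of pairs $\beta\colon(D(\kk[s-1]),\partial D(\kk[s-1]))\to(D(NV_{s-1}),\partial D(NV_{s-1}))$, then $E_\bullet\vert_{c_s\circ\beta}=\widetilde{E}_\bullet\vert_{\beta}$ and $\overline{c_s\circ\beta}=\bar{c}_s\circ\bar{\beta}$, and since the value of $\kappa$ is independent of the chosen trivialization one obtains $\kappa(x,c_{s*}[\beta])=\bar{c}_{s*}\widetilde{\kappa}(x,[\beta])$; moreover $c_{s*}\xi_*=\xi_*c_{s*}$ because $c_s$ is a morphism of $\srLie$ and the $\xi$-action is induced by the $p$-operation. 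Granting the square, the proposition follows formally: $\bar{c}_{s*}$ is an isomorphism by Proposition~\ref{proposition: second page of SSS is local} and $c_{s*}$ is a $\kk\{\xi\}$-linear isomorphism by Lemma~\ref{lemma: relative homotopy groups of skeleta}, so $\id\otimes c_{s*}$ is an isomorphism that descends to an isomorphism of $\xi$-quotients; for $(1)$ I would write an arbitrary $\gamma\in\pi_{s-1}(B^{(s)}_\bullet,B^{(s-1)}_\bullet)$ as $c_{s*}[\beta]$ and compute $\kappa(x,\xi_*\gamma)=\bar{c}_{s*}\widetilde{\kappa}(x,\xi_*[\beta])=0$ via Lemma~\ref{lemma: kappa over contractible}$(1)$; for $(2)$ I would pass to the induced square of $\xi$-quotients, in which three of the four sides are isomorphisms (the verticals by the above and $\widetilde{\kappa}$ by Lemma~\ref{lemma: kappa over contractible}$(2)$), forcing $\kappa$ to be an isomorphism on $\xi$-quotients.

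The step I expect to be the main obstacle is the naturality square, which requires unwinding the construction of $\kappa$ and carefully identifying the skeletal stages $\widetilde{E}^{(s)}_\bullet$, $\widetilde{E}^{(s-1)}_\bullet$ of the pullback with the restrictions $E_\bullet\vert_{c_s}$, $E_\bullet\vert_{\partial c_s}$ of Proposition~\ref{proposition: second page of SSS is local}; the compatibility of the local trivializations $\theta$ needed along the way is automatic, since any two trivializations of a principal fibration over a contractible base induce the same map on homology.
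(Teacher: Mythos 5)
Your proposal is correct and follows the same route as the paper: compare $\kappa$ with its counterpart over the contractible object $D(NV_{s-1})$ via the map $c_s$, use Lemma~\ref{lemma: relative homotopy groups of skeleta} and Proposition~\ref{proposition: second page of SSS is local} to see that the vertical arrows of the comparison square are isomorphisms, and conclude by Lemma~\ref{lemma: kappa over contractible}. The paper leaves implicit the points you spell out (that the restricted total spaces are the skeletal stages of the pulled-back principal fibration, and that the square commutes because $\kappa$ is independent of the chosen trivialization), so your write-up is just a more detailed version of the same argument.
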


\begin{proof}
In the commutative diagram
$$
\begin{tikzcd}[column sep=small]
H_t(\trivxi M_\bullet;\kk)\otimes  \pi_{s-1}(D(NV_{s-1}),\partial D(NV_{s-1})) \arrow{r}{\kappa} \arrow{d}{c_{s *}}
& H_{s+t}(E_\bullet^{(s)}\vert_{c_s},E^{(s-1)}_\bullet\vert_{\partial c_s};\kk) \arrow{d}{\bar{c}_{s *}}\\
H_t(\trivxi M_\bullet;\kk)\otimes  \pi_{s-1}(B_\bullet^{(s)},B^{(s-1)}_\bullet) \arrow{r}{\kappa}
& H_{s+t}(E_\bullet^{(s)},E^{(s-1)}_\bullet;\kk)
\end{tikzcd}
$$
vertical arrows are isomorphisms by Lemma~\ref{lemma: relative homotopy groups of skeleta} and Proposition~\ref{proposition: second page of SSS is local}. Thus Lemma~\ref{lemma: kappa over contractible} implies the proposition.
\end{proof}

\begin{thm}[Serre spectral sequence]\label{theorem: SSS}
Let $\pi\colon E_\bullet \to B_\bullet$ be a principal fibration with the fiber $M_\bullet \in \sMod_{\kk\{\xi\}}$. Suppose that the base $B_\bullet=\free(V_\bullet)$, $V_\bullet\in \tilde{s}\Vect_{\kk}$ is almost-free, and $V_0=0$. Then there is a strongly convergent spectral sequence with 
$$E^2_{s,t}\cong H_t(\trivxi M_\bullet;\kk)\otimes H_s(B_\bullet;\kk)$$
and $E^{\infty}$ is the bigraded module associated to the filtration of $H_*(E_\bullet;\kk)$ defined by
$$F_sH_*(E_\bullet;\kk) = \im\left(H_*(E^{(s)}_\bullet;\kk) \to H_*(E_\bullet; \kk)\right).$$
\end{thm}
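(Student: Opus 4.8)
The plan is to obtain the Serre spectral sequence by computing the $E^1$-term and the $d^1$-differential of the skeletal-filtration spectral sequence of Theorem~\ref{theorem: serre spectral sequence, filtered}. First I would apply that theorem to $\pi\colon E_\bullet\to B_\bullet$ with coefficients in $\kk$; this produces a convergent spectral sequence with $E^1_{s,t}\cong H_{s+t}(E^{(s)}_\bullet,E^{(s-1)}_\bullet;\kk)$, with $d^1$ equal to the boundary operator $\delta$ of the triple $(E^{(s)}_\bullet,E^{(s-1)}_\bullet,E^{(s-2)}_\bullet)$, abutting to $H_*(E_\bullet;\kk)$ with the filtration $F_sH_*(E_\bullet;\kk)=\im\bigl(H_*(E^{(s)}_\bullet;\kk)\to H_*(E_\bullet;\kk)\bigr)$. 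Since $V_0=0$, the base $B_\bullet=\free(V_\bullet)$ is reduced and almost-free, so Proposition~\ref{proposition: kappa is isomorphism} applies: the comparison map $\kappa$ is a natural isomorphism
\[ E^1_{s,t}\;\cong\;H_t(\trivxi M_\bullet;\kk)\otimes\bigl(\pi_{s-1}(B^{(s)}_\bullet,B^{(s-1)}_\bullet)/\xi\bigr). \]

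Next I would identify the differential. By Lemma~\ref{lemma:kappa commutes with boundary}, $\kappa$ intertwines the triple boundary $\delta$ on relative homology with $\id\otimes\delta$, where the second $\delta$ is the triple boundary on relative homotopy groups (which descends to the $\xi$-quotients; write $\bar\delta$ for the descended map). Hence under the previous identification the differential $d^1$ becomes $\id\otimes\bar\delta$, and taking homology in the $s$-direction — using that $H_t(\trivxi M_\bullet;\kk)$ is a $\kk$-vector space and the tensor decomposition is natural in the $s$-variable — gives
\[ E^2_{s,t}\;\cong\;H_t(\trivxi M_\bullet;\kk)\otimes H_s\!\left(\pi_{\bullet-1}(B^{(\bullet)}_\bullet,B^{(\bullet-1)}_\bullet)/\xi,\ \bar\delta\right). \]

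It then remains to recognize the second factor as $H_s(B_\bullet;\kk)$. The relative Hurewicz homomorphism (Corollary~\ref{corollary: relative hurewicz theorem}) is natural and compatible with triple boundaries, and, combined with the identity $H_s(B^{(s)}_\bullet,B^{(s-1)}_\bullet;\kk\{\xi\})/\xi\cong H_s(B^{(s)}_\bullet,B^{(s-1)}_\bullet;\kk)$ established in the proof of Lemma~\ref{lemma: kappa over contractible}, it yields a chain isomorphism between $\bigl(\pi_{\bullet-1}(B^{(\bullet)}_\bullet,B^{(\bullet-1)}_\bullet)/\xi,\bar\delta\bigr)$ and the complex $\bigl(H_\bullet(B^{(\bullet)}_\bullet,B^{(\bullet-1)}_\bullet;\kk),d_1\bigr)$. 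By Example~\ref{example: serre, idenity morphism} (the spectral sequence of the identity fibration, with $\kk$-coefficients) the latter complex computes $H_*(B_\bullet;\kk)$, which gives $E^2_{s,t}\cong H_t(\trivxi M_\bullet;\kk)\otimes H_s(B_\bullet;\kk)$. Strong convergence is inherited from Theorem~\ref{theorem: serre spectral sequence, filtered}: the skeletal filtration on $E_\bullet$ is exhaustive, and in each total degree $n$ one has $E^1_{s,t}=0$ for all but finitely many $s$ (since $s,t\ge 0$ and $s+t=n$), so the abutment filtration is finite in each degree.

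I expect the main obstacle to be the chain-level bookkeeping of the last step: verifying that the relative Hurewicz isomorphisms together with the mod-$\xi$ reductions genuinely assemble into a chain-complex isomorphism, i.e.\ that they commute with the two flavors of triple boundary in a compatible way, and that the low-dimensional cases $s=0,1$ — where several of the $\kappa$-lemmas of Section~\ref{section: serre spectral sequence} were stated for $s\ge 2$ and where $B^{(0)}_\bullet=B^{(1)}_\bullet=0$ — are dealt with by hand and do not disturb the stated $E^2$-formula (in particular the degree-$0$ normalization of the non-reduced homology groups). Everything else is a formal assembly of the results of Sections~\ref{section: principal fibration} and~\ref{section: serre spectral sequence}.
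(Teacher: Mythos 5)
Your proposal is correct and follows essentially the same route as the paper: apply the skeletal-filtration spectral sequence of Theorem~\ref{theorem: serre spectral sequence, filtered} with $\kk$-coefficients, use Proposition~\ref{proposition: kappa is isomorphism} together with Lemma~\ref{lemma:kappa commutes with boundary} to identify $(E^1,d^1)$ as $H_t(\trivxi M_\bullet;\kk)$ tensored with the relative complex of the base, and then invoke Example~\ref{example: serre, idenity morphism} to recognize the $s$-homology of that complex as $H_s(B_\bullet;\kk)$. Your extra attention to the mod-$\xi$/Hurewicz bookkeeping, the low-degree cases, and the convergence argument only makes explicit what the paper leaves implicit.
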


\begin{proof}
In the spectral sequence of Theorem~\ref{theorem: serre spectral sequence, filtered}, we have $$E^1_{s,t}\cong H_{s+t}(E^{(s)}_\bullet, E^{(s-1)}_\bullet;\kk);$$ the differential $d_1$ is the boundary operator $\delta$ of the triple $(E^{(s)}_\bullet, E^{(s-1)}_\bullet, E^{(s-2)}_\bullet)$, see the sequence~\eqref{equation: les, homology triple}. By Proposition~\ref{proposition: kappa is isomorphism} and Lemma~\ref{lemma:kappa commutes with boundary}, the map $\kappa$
$$\kappa\colon (H_{t}(\trivxi M_\bullet;\kk)\otimes H_s(B_\bullet^{(s)},B_\bullet^{(s-1)};\kk),\id\otimes \delta) \xrightarrow{\cong} (H_{s+t}(E^{(s)}_\bullet, E^{(s-1)}_\bullet;\kk),\delta). $$
is an isomorphism of chain complexes.
Therefore, by Example~\ref{example: serre, idenity morphism}, we obtain that
\begin{equation*}
E^2_{s,t}\cong H_t(\trivxi M_\bullet;\kk)\otimes H_s(B_\bullet;\kk). \qedhere
\end{equation*}
\end{proof}

By combining Theorem~\ref{theorem: SSS} with Corollary~\ref{corollary: fibration with KM is principal}, we obtain the following statement.

\begin{cor}\label{corollary: SSS fiber is EM}
Let $\pi\colon E_\bullet \to B_\bullet$ be a fibration in $\srLie$ such that the total space $E_\bullet$ and the base $B_\bullet$ are connected, and the fiber $\fib(\pi)$ is an Eilenberg-MacLane Lie algebra $K(M,n)$, $M\in \Mod_{\kk\{\xi\}}$, $n\geq 0$. Then there is a functorial convergent spectral sequence
$$E^2_{s,t}=H_t(K(M,n);\kk)\otimes H_s(B_\bullet;\kk) \Rightarrow H_{s+t}(E_\bullet;\kk). \eqno\qed$$
\end{cor}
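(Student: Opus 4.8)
The plan is to reduce the statement to the Serre spectral sequence of Theorem~\ref{theorem: SSS} by replacing $\pi$, up to weak equivalence, with a principal fibration whose base is almost-free and $0$-reduced. Since $B_\bullet$ is connected, I would first produce an almost-free simplicial restricted Lie algebra $\tilde B_\bullet=\free(V_\bullet)$ with $V_0=0$ together with a weak equivalence $q\colon\tilde B_\bullet\to B_\bullet$. This is routine: choose an almost-free resolution $\free(W_\bullet)\xrightarrow{\sim}B_\bullet$ (Remark~\ref{remark:almost free}); since $\pi_0\free(W_\bullet)\cong\free(\pi_0(W_\bullet))$ and $B_\bullet$ is connected, $W_\bullet$ is a connected simplicial vector space, hence weakly equivalent to one concentrated in positive degrees, and $\free$ carries weak equivalences of simplicial vector spaces to weak equivalences (proof of Theorem~\ref{theorem:modelsrlie}).

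Next I would produce a principal model and bridge it to $\tilde B_\bullet$. By Corollary~\ref{corollary: fibration with KM is principal} there are a principal $M_\bullet$-fibration $\pi'\colon P_\bullet\to B'_\bullet$ (for some $M_\bullet\in\sMod_{\kk\{\xi\}}$) and weak equivalences $g\colon B'_\bullet\to B_\bullet$, $\bar g\colon P_\bullet\to E_\bullet$. Since the fiber of a principal $M_\bullet$-fibration is $\trivxi M_\bullet$ and $\pi'\simeq\pi$, we get $\trivxi M_\bullet\simeq\fib(\pi)=K(M,n)$. As $\tilde B_\bullet$ and $B'_\bullet$ are both cofibrant and weakly equivalent to the fibrant object $B_\bullet$, there is a weak equivalence $h\colon\tilde B_\bullet\to B'_\bullet$ with $g\circ h\simeq q$. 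Pulling $\pi'$ back along $h$ yields, by the pullback-stability of principal fibrations along maps between cofibrant objects (Proposition~\ref{proposition: cofibrant in srlie with action} and the discussion following it), a principal $M_\bullet$-fibration $h^{*}\pi'\colon P_\bullet\vert_h\to\tilde B_\bullet$ with fiber $\trivxi M_\bullet$; moreover the base-change map $\bar h\colon P_\bullet\vert_h\to P_\bullet$ is a weak equivalence because $\srLie$ is right proper (Theorem~\ref{theorem:modelsrlie}). A diagram chase shows the square with legs $h^{*}\pi'$ and $\pi$, top map $\bar g\bar h$ and bottom map $q$, commutes up to homotopy, so the induced map of fibers $\trivxi M_\bullet\to K(M,n)$ is a weak equivalence.

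Now I would apply Theorem~\ref{theorem: SSS} to $h^{*}\pi'\colon P_\bullet\vert_h\to\tilde B_\bullet$, a principal $M_\bullet$-fibration whose base is $\free(V_\bullet)$ with $V_0=0$, obtaining a strongly convergent spectral sequence
$$E^2_{s,t}\cong H_t(\trivxi M_\bullet;\kk)\otimes H_s(\tilde B_\bullet;\kk)\ \Rightarrow\ H_{s+t}(P_\bullet\vert_h;\kk).$$
Since $H_*(-;\kk)$ is computed by $\Sigma\kk\otimes_{\kk\{\xi\}}\mathbb{L}\Ab(-)$, equivalently by $\pi_*\barW U^r(-)$, it preserves weak equivalences (Proposition~\ref{remark: barW Ur preserves weak equivalences}); hence $H_s(\tilde B_\bullet;\kk)\cong H_s(B_\bullet;\kk)$ via $q$, $H_{s+t}(P_\bullet\vert_h;\kk)\cong H_{s+t}(E_\bullet;\kk)$ via $\bar g\bar h$, and $H_t(\trivxi M_\bullet;\kk)\cong H_t(K(M,n);\kk)$ via the fiber comparison, which is exactly the asserted spectral sequence. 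For functoriality I would note that all the constructions above — the reduced almost-free replacement, the principal model of Corollary~\ref{corollary: fibration with KM is principal}, the comparison map $h$, and the pullbacks — can be arranged functorially up to homotopy, so a morphism of fibrations of the stated type induces a morphism of spectral sequences from the $E^2$-page onward.

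The main obstacle I anticipate is exactly the middle step: assembling a single principal $M_\bullet$-fibration whose base is \emph{literally} almost-free and $0$-reduced (as Theorem~\ref{theorem: SSS} demands) while keeping the total space weakly equivalent to $E_\bullet$ and the fiber weakly equivalent to $K(M,n)$. This hinges on quoting precisely the pullback-stability of principal fibrations along maps of cofibrant objects and the right-properness of $\srLie$, together with enough homotopy-coherence of the comparison square to read off the weak equivalence on fibers. The functoriality assertion lives here too and is the only other point requiring care.
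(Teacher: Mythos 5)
Your proof is correct and follows the paper's own route: the paper derives this corollary in one line by combining Theorem~\ref{theorem: SSS} with Corollary~\ref{corollary: fibration with KM is principal}. The additional work you do --- replacing the base by a literally almost-free $0$-reduced model (which is Lemma~\ref{lemma: reduced replacement} for the map $0\to B_\bullet$), pulling the principal fibration back along the comparison map, and invoking right properness and pullback-stability --- is precisely the technical detail the paper leaves implicit in order to meet the hypotheses of Theorem~\ref{theorem: SSS}, and you handle it correctly.
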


\begin{rmk}\label{remark: any fibration}
It seems likely that the spectral sequence of Corollary~\ref{corollary: SSS fiber is EM} exists for \emph{any} fibration over a connected base. All steps in the proof can be generalized that much, except Proposition~\ref{proposition: second page of SSS is local}. At the time of writing, we do not know how to extend this proposition on any larger class of fibrations. 

We also point out that Proposition~\ref{proposition: second page of SSS is local} can be viewed as a consequence of Mather's second cube theorem in the category $\srLie$, see~\cite[Definition~1.4]{Doeraene93} and~\cite{DH21}. At the time of writing, we are not aware if the cube theorem holds for $\srLie$.
\end{rmk}

\section{\texorpdfstring{$\kk$}{F}-complete simplicial restricted Lie algebras} \label{section: F-complete}

In this section we prove Theorem~\ref{theorem: intro, D} from the introduction. In Section~\ref{section: xi-complete modules} we discuss basic properties of the ring $\kk\{\xi\}$, define \emph{$\xi$-adic completion} (Definition~\ref{definition: xi-completion}), and prove the Artin-Rees property for the ideal $(\xi)$ in $\kk\{\xi\}$ (Proposition~\ref{proposition: ar property for xi}). The latter implies the exactness property of the $\xi$-adic completion for finitely-generated modules, see Proposition~\ref{proposition: xi-completion is exact}. For non-finitely generated $\kk\{\xi\}$-modules, the $\xi$-completion is not exact, and we define its \emph{left derived functors} $L_0$ and $L_1$ in Section~\ref{section: derived xi-complete modules}. We define \emph{derived $\xi$-complete modules} in Definition~\ref{definition: derived xi-completion} and show in Proposition~\ref{proposition: derived modules, summary} that derived $\xi$-complete modules form a weak Serre subcategory in $\Mod_{\kk\{\xi\}}$. After that, we completely set up to prove Theorem~\ref{theorem: intro, D}.

In Section~\ref{section: F-completion} we define both \emph{$\kk$-complete} objects in $\srLie$ and the \emph{$\kk$-completion} functor $L_\xi$ (Definition~\ref{definition: F-completion}). We prove Theorem~\ref{theorem: intro, D} by induction along the Postnikov tower. In Corollary~\ref{corollary: EM is F-complete iff derived completed}, we show that Theorem~\ref{theorem: intro, D} holds for Eilenberg-MacLane Lie algebras $K(M,n)$. After that, we heavily use the Serre spectral sequence (Corollary~\ref{corollary: SSS fiber is EM}) to prove Theorem~\ref{theorem: intro, D} as Corollary~\ref{corollary: F-complete are derived Xi-complete}. Furthermore, in Corollary~\ref{corollary: F-completion, homotopy groups}, we describe the homotopy groups of the $\kk$-completion $L_\xi L_\bullet$ in terms of $\pi_*(L_\bullet)$ and derived functors $L_0$ and $L_1$.

\subsection{\texorpdfstring{$\xi$}{Xi}-complete modules}\label{section: xi-complete modules} 
We begin with a few algebraic preliminaries. Recall from Definition~\ref{definition: twisted polynomial ring} that $\kk\{\xi\}$ is the ring of twisted polynomials. If $\kk\neq \F_p$, then $\kk\{\xi\}$ is a non-commutative ring, however it still shares a lot of common properties with the usual polynomial ring $\kk[t]$. First of all, we note that the ring $\kk\{\xi\}$ has no zero divisors.

We say that a twisted polynomial 
$$f(\xi) = a_0 +a_1 \xi +a_2 \xi^2 +\ldots + a_n \xi^n \in \kk\{\xi\}$$
has \emph{degree $n$} if the leading coefficient $a_n\neq 0 $. We will denote by $\deg(f)$ the degree of $f$. The next lemma shows that the function $\deg\colon \kk\{\xi\}\setminus 0 \to \N$ can be used for left and right divisions with a remainder. The proof is straightforward and we leave it to the reader.

\begin{lmm}\label{lemma: twisted polynomials are eucledian domain}
If $f$ and $g$ are in $\kk\{\xi\}$ and $g$ is nonzero, then there are $q,r \in \kk\{\xi\}$ such that $f = qg + r$ and either $r = 0$ or $\deg(r) < \deg(g)$. Similarly, there are $q'$ and $r'$ such that $f=gq'+r'$ and either $r'=0$ or $\deg(r')<\deg(g)$. \qed
\end{lmm}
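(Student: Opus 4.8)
The statement is the analog of the division algorithm for the twisted polynomial ring $\kk\{\xi\}$, done on both sides. The plan is to prove the left-division statement ($f = qg + r$) by induction on $\deg(f)$, and then observe that the right-division statement follows by a symmetric argument using the perfectness of $\kk$.

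First I would dispose of the trivial case: if $f = 0$ or $\deg(f) < \deg(g)$, take $q = 0$ and $r = f$. So assume $\deg(f) = n \geq \deg(g) = m$, and argue by strong induction on $n$. Write $f = a_n\xi^n + (\text{lower order})$ and $g = b_m\xi^m + (\text{lower order})$ with $a_n, b_m \neq 0$. The key computation is to find a monomial $c\xi^{n-m}$ such that $c\xi^{n-m}\cdot g$ has the same leading term $a_n\xi^n$ as $f$. Since multiplication in $\kk\{\xi\}$ satisfies $\xi^{n-m} b_m = b_m^{p^{n-m}}\xi^{n-m}$, the leading term of $c\xi^{n-m}g$ is $c\,b_m^{p^{n-m}}\xi^n$. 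Thus I must solve $c\,b_m^{p^{n-m}} = a_n$ in $\kk$, which is possible (uniquely) since $b_m \neq 0$ and $\kk$ is a field; set $c = a_n b_m^{-p^{n-m}}$. Then $f - c\xi^{n-m}g$ is either zero or has degree strictly less than $n$, so the induction hypothesis applies, yielding $f - c\xi^{n-m}g = q'g + r$ with $r = 0$ or $\deg(r) < \deg(g)$; taking $q = c\xi^{n-m} + q'$ completes the step.

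For the right-division statement $f = gq' + r'$, I would run the same induction but now on the right: I need a monomial $c'\xi^{n-m}$ with $g\cdot c'\xi^{n-m}$ having leading term $a_n\xi^n$. The leading term of $g\,c'\xi^{n-m}$ is $b_m\,(c')^{p^m}\,\xi^n$ (using $\xi^m c' = (c')^{p^m}\xi^m$), so I must solve $b_m (c')^{p^m} = a_n$ for $c' \in \kk$. This is exactly where perfectness of $\kk$ is used: the Frobenius $x \mapsto x^{p^m}$ is a bijection on $\kk$, so $c' = (b_m^{-1}a_n)^{p^{-m}}$ exists and is unique. The rest of the induction is identical to the left-division case. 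The only mild subtlety — not really an obstacle — is keeping track of which side the coefficients get Frobenius-twisted on; everything else is the classical Euclidean-algorithm bookkeeping, which is why the lemma is left to the reader in the paper.
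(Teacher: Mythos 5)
Your proof is correct and is the standard strong-induction Euclidean division argument; the paper itself omits the proof, calling it straightforward, and yours is exactly the expected argument, including the correct observation that left division needs only invertibility of $b_m$ while right division needs perfectness of $\kk$ to invert the Frobenius $x\mapsto x^{p^m}$.
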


\begin{cor}\label{corollary: twisted are PID}
Any left (resp. right) ideal in $\kk\{\xi\}$ is principal. In particular, the ring $\kk\{\xi\}$ is left (resp. right) Noetherian.\qed
\end{cor}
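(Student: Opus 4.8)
The plan is the standard argument that a (non-commutative) ring admitting a Euclidean-type division algorithm is a principal ideal ring, applied separately on the left and on the right. Fix a nonzero left ideal $I \subseteq \kk\{\xi\}$. First I would choose an element $g \in I$, $g \neq 0$, whose degree $\deg(g)$ is minimal among degrees of nonzero elements of $I$; such a $g$ exists because $\deg$ takes values in $\N$. The claim is that $I = \kk\{\xi\}g$. The inclusion $\kk\{\xi\}g \subseteq I$ is immediate since $I$ is a left ideal. For the reverse inclusion, take any $f \in I$ and apply the left division statement of Lemma~\ref{lemma: twisted polynomials are eucledian domain}: there are $q, r \in \kk\{\xi\}$ with $f = qg + r$ and either $r = 0$ or $\deg(r) < \deg(g)$. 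Since $r = f - qg \in I$, minimality of $\deg(g)$ rules out the possibility $\deg(r) < \deg(g)$, so $r = 0$ and $f = qg \in \kk\{\xi\}g$. Hence $I = \kk\{\xi\}g$ is principal.

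For a nonzero right ideal $J \subseteq \kk\{\xi\}$ the argument is symmetric: pick $g \in J$ of minimal degree, and for $f \in J$ use the right division statement of Lemma~\ref{lemma: twisted polynomials are eucledian domain}, $f = g q' + r'$ with $r' = 0$ or $\deg(r') < \deg(g)$; then $r' = f - g q' \in J$ forces $r' = 0$, so $J = g\,\kk\{\xi\}$. The zero ideal is of course generated by $0$, so every left (resp. right) ideal of $\kk\{\xi\}$ is principal. Finally, since a principal left (resp. right) ideal is in particular finitely generated, the ring $\kk\{\xi\}$ is left (resp. right) Noetherian.

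I do not anticipate any genuine obstacle here: the only input is the division-with-remainder property already recorded in Lemma~\ref{lemma: twisted polynomials are eucledian domain}, and the rest is the textbook minimal-degree argument, carried out once on each side because $\kk\{\xi\}$ is non-commutative. The one point worth stating explicitly in the write-up is that $\deg(g)$ is well-defined and $\N$-valued so that ``minimal degree'' makes sense, and that $r$ (resp. $r'$) lies back in the ideal — both of which are immediate.
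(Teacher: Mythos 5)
Your argument is correct and is exactly the standard Euclidean minimal-degree argument that the paper intends (the corollary is stated with \qed as an immediate consequence of Lemma~\ref{lemma: twisted polynomials are eucledian domain}). Nothing to add.
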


\begin{cor}\label{corollary: submodule of free over twisted}
Let $M \in \Mod_{\kk\{\xi\}}$ (resp. $M\in \Mod^{\kk\{\xi\}}$) be a free module. Then any submodule $N$ of $M$ is also free. \qed
\end{cor}

\begin{cor}\label{corollary: module over twisted}
Let $M \in \Mod_{\kk\{\xi\}}$ (resp. $M\in \Mod^{\kk\{\xi\}}$) be any left (resp. right) module over $\kk\{\xi\}$. Then there is a short exact sequence
$$0 \to F_1 \to F_0 \to M \to 0, $$
where $F_0,F_1$ are free modules. \qed
\end{cor}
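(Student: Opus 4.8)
The statement to prove is Corollary~\ref{corollary: module over twisted}: every left (resp.\ right) $\kk\{\xi\}$-module $M$ fits into a short exact sequence $0\to F_1\to F_0\to M\to 0$ with $F_0,F_1$ free. The plan is to deduce this immediately from the two preceding corollaries. First I would choose a free module $F_0$ together with a surjection $F_0\twoheadrightarrow M$; this always exists for any ring, e.g.\ take $F_0$ to be the free module on a generating set of $M$ (or on the underlying set of $M$). Let $F_1=\ker(F_0\to M)$, so that by construction the sequence $0\to F_1\to F_0\to M\to 0$ is exact. It remains only to observe that $F_1$ is free: but $F_1$ is a submodule of the free module $F_0$, so Corollary~\ref{corollary: submodule of free over twisted} applies and gives that $F_1$ is free.

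Thus the entire argument is: surject a free module onto $M$, take the kernel, and invoke the hereditary property of $\kk\{\xi\}$ (Corollary~\ref{corollary: submodule of free over twisted}, itself a consequence of Corollary~\ref{corollary: twisted are PID} and Lemma~\ref{lemma: twisted polynomials are eucledian domain}). The right-module case is handled identically, using the right-module version of Corollary~\ref{corollary: submodule of free over twisted}. There is no real obstacle here; the only thing to be slightly careful about is that Corollary~\ref{corollary: submodule of free over twisted} is stated for arbitrary submodules of arbitrary (not necessarily finitely generated) free modules, which is exactly what we need since $F_0$ may have infinite rank. The content of the corollary therefore lies entirely in the earlier results, and this statement is simply the packaging of ``$\kk\{\xi\}$ is a (non-commutative) principal ideal domain, hence hereditary, hence every module has projective dimension $\le 1$'' in the form of an explicit two-term free resolution.
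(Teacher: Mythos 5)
Your proof is correct and is exactly the argument the paper intends: the corollary is stated with a \qed because it follows immediately from choosing a free surjection $F_0\twoheadrightarrow M$ and invoking Corollary~\ref{corollary: submodule of free over twisted} to see that the kernel $F_1$ is free. No further comment needed.
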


\begin{cor}\label{corollary: torsion-free is a colimit of free}
Any torsion-free left (resp. right) $\kk\{\xi\}$-module is a filtered colimit of finitely generated free submodules. \qed
\end{cor}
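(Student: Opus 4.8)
The plan is to reduce the statement to a single algebraic lemma — that every finitely generated torsion-free (left, resp.\ right) $\kk\{\xi\}$-module is free of finite rank — and then to exhibit an arbitrary torsion-free $M$ as the filtered union of its finitely generated free submodules.

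First I would prove the lemma. Let $N$ be a finitely generated torsion-free left $\kk\{\xi\}$-module with generators $x_1,\dots,x_n$, and choose a maximal $\kk\{\xi\}$-linearly independent subset, say (after reindexing) $x_1,\dots,x_m$. Set $F=\sum_{i=1}^m \kk\{\xi\}x_i$; the canonical surjection $\kk\{\xi\}^{m}\to F$ is an isomorphism, since a nontrivial relation among $x_1,\dots,x_m$ is excluded by independence, so $F$ is free of rank $m$. For each $i$ maximality yields $0\neq a_i\in \kk\{\xi\}$ with $a_ix_i\in F$: for $i\le m$ take $a_i=1$, and for $i>m$ a nontrivial relation $a_ix_i+\sum_{j\le m}b_{ij}x_j=0$ must have $a_i\neq 0$ (otherwise it would be a nontrivial relation among $x_1,\dots,x_m$), whence $a_ix_i=-\sum_{j\le m}b_{ij}x_j\in F$. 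Now I would invoke that $\kk\{\xi\}$, being a left Noetherian domain by Corollary~\ref{corollary: twisted are PID}, satisfies the left Ore condition: if $\kk\{\xi\}a\cap\kk\{\xi\}b=0$ for nonzero $a,b$, then $\kk\{\xi\}a+\kk\{\xi\}b\cong\kk\{\xi\}^{2}$ would be a principal (Corollary~\ref{corollary: twisted are PID}), hence rank-one, left ideal, forcing $\kk\{\xi\}\cong\kk\{\xi\}^{2}$; iterating gives a non-finitely-generated free submodule of $\kk\{\xi\}$, contradicting left Noetherianity. Hence $\bigcap_{i=1}^n\kk\{\xi\}a_i\neq 0$, so there is $0\neq d\in\kk\{\xi\}$ with $dx_i\in F$ for all $i$, i.e.\ $dN\subseteq F$. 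Since $N$ is torsion-free, multiplication by $d$ is an injection $N\hookrightarrow F$; its image is free by Corollary~\ref{corollary: submodule of free over twisted}, and it is finitely generated, hence free of finite rank. Therefore $N$ is free of finite rank.

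With the lemma available, let $M$ be torsion-free and let $\mathcal F$ be the family of finitely generated free submodules of $M$, partially ordered by inclusion. This family is filtered: given $N_1,N_2\in\mathcal F$, the submodule $N_1+N_2$ is finitely generated and torsion-free (a submodule of $M$), hence lies in $\mathcal F$ by the lemma. Moreover $\bigcup_{N\in\mathcal F}N=M$, because every $x\in M$ lies in $\kk\{\xi\}x$, which is free of rank at most one (torsion-freeness of $M$) and so belongs to $\mathcal F$. Thus $M=\colim_{N\in\mathcal F}N$ is a filtered colimit of finitely generated free submodules, as claimed. The right-module case is proved verbatim, replacing Lemma~\ref{lemma: twisted polynomials are eucledian domain}, Corollaries~\ref{corollary: twisted are PID} and~\ref{corollary: submodule of free over twisted}, and the left Ore condition by their right-handed analogues.

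The only point that is not purely formal — and the one I would take care over — is the production of the nonzero common left multiple $d$, i.e.\ the verification that $\kk\{\xi\}$ is a (left and right) Ore domain. Once that is in place, the argument is just an assembly of the divisibility facts already established for $\kk\{\xi\}$ together with the standard observation that every module is the filtered union of its finitely generated submodules.
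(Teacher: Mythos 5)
Your global strategy---reduce everything to the claim that a finitely generated torsion-free $\kk\{\xi\}$-module is free of finite rank, and then exhibit an arbitrary torsion-free module as the filtered union of its finitely generated free submodules---is exactly the intended one, and the second half of your argument (filteredness of the family via the lemma, exhaustion via the cyclic submodules $\kk\{\xi\}x$) is correct. Your verification of the Ore condition is also fine, but it is not where the danger lies.

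The gap is in the final step of your lemma, and it is caused by the noncommutativity of $\kk\{\xi\}$ when $\kk\neq\F_p$. From ``$dx_i\in F$ for all generators $x_i$'' you conclude ``i.e.\ $dN\subseteq F$''; but the set $\{x\in N: dx\in F\}$ is only an additive subgroup, not a $\kk\{\xi\}$-submodule, because $d(ex)=(de)x$ and $de$ need not lie in the left ideal $\{c: cx_i\in F\}$ even when $d$ does. Concretely, take $N=\kk\{\xi\}$ with generators $x_1=1+\xi$ and $x_2=1$; then $F=\kk\{\xi\}(1+\xi)$, $a_2=1+\xi$, and $d=1+\xi$ is a legitimate choice of common left multiple, yet for $\alpha\in\kk\setminus\F_p$ one computes $d\alpha=\alpha+\alpha^{p}\xi=\alpha^{p}(1+\xi)+(\alpha-\alpha^{p})$, whose nonzero remainder shows $d\alpha\notin F$ although $\alpha=\alpha x_2\in N$. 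For the same reason, even where $dN\subseteq F$ happens to hold, $x\mapsto dx$ is not $\kk\{\xi\}$-linear and its image is not a submodule of $F$, so Corollary~\ref{corollary: submodule of free over twisted} cannot be applied to $dN$, nor does $N\cong dN$ as modules follow. (The special case $d=\xi^{r}$ is salvaged by a Frobenius twist, as the paper does elsewhere, but a general $d$ admits no such twist.) The lemma itself is true: it is the structure theorem for finitely generated modules over a two-sided noncommutative principal ideal domain, proved either by diagonalizing a presentation matrix of $N$ using the left and right division algorithms of Lemma~\ref{lemma: twisted polynomials are eucledian domain}, or by induction on the number of generators, splitting off a torsion-free cyclic (hence free) quotient. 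Substituting either of these arguments for your multiplication-by-$d$ step repairs the proof; the remainder goes through unchanged.
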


Consider now the \emph{left} ideal $(\xi) \subset \kk\{\xi\}$ generated by the element $\xi$. Since the field $\kk$ is perfect, the ideal $(\xi)$ is actually two-sided and coincides with the \emph{right} ideal of $\kk\{\xi\}$ generated by the same element $\xi$. Recall from~\cite[Section~13]{GW04} that an ideal $I$ in a ring $R$ has the \emph{left Artin-Rees property} if, for every left ideal $K\subset R$, there is a positive integer $n$ such that $K\cap I^n = IK$.

\begin{prop}\label{proposition: ar property for xi}
The ideal $I=(\xi)\subset \kk\{\xi\}$ has the left Artin-Rees property. 
\end{prop}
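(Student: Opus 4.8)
The plan is to reduce the Artin--Rees property for $I=(\xi)$ to a statement about left ideals, which by Corollary~\ref{corollary: twisted are PID} are all principal, and then to argue directly with the Euclidean-type division of Lemma~\ref{lemma: twisted polynomials are eucledian domain}. So let $K=\kk\{\xi\}g$ be an arbitrary left ideal, generated by a single element $g\in\kk\{\xi\}$; we must find $n$ with $K\cap I^n = IK = \kk\{\xi\}\xi g$. First I would record the elementary observation that $I^n = \kk\{\xi\}\xi^n = \xi^n\kk\{\xi\}$ as a two-sided ideal, using that $\kk$ is perfect so that $\xi a=a^p\xi$ makes $\xi^n$ "central up to Frobenius"; concretely, $\xi^n\kk\{\xi\}=\kk\{\xi\}\xi^n$ because $\xi^n a = a^{p^n}\xi^n$ and $a\mapsto a^{p^n}$ is a bijection on $\kk$. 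Thus $I^n$ consists exactly of the twisted polynomials all of whose terms of degree $<n$ vanish, i.e. those divisible by $\xi^n$ on the right (equivalently on the left).

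The key step is then to analyze when an element of $K$ lands in $I^n$. An element of $K$ has the form $hg$ for $h\in\kk\{\xi\}$; I would instead use the right-division form, writing a general element of $K$ and dividing by $g$ on the right is not quite what is wanted, so let me set it up via order of vanishing. Let $v\colon \kk\{\xi\}\setminus\{0\}\to\N$ denote the \emph{order} of a twisted polynomial, i.e. the smallest degree of a nonzero term (so $v(f)=m$ means $f=\xi^m u$ with $u$ having nonzero constant term). Multiplicativity $v(fg)=v(f)+v(g)$ holds because $\kk\{\xi\}$ has no zero divisors and $\xi^a(\text{unit-constant-term})\cdot\xi^b(\text{unit-constant-term})$ again has order $a+b$. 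Now set $m=v(g)$. For any $0\neq hg\in K$ we have $v(hg)=v(h)+m\geq m$, so in fact $K\subseteq I^m$ already, and moreover $hg\in I^n$ for $n>m$ forces $v(h)\geq n-m\geq 1$, hence $h\in I=(\xi)$, hence $hg\in IK$. Therefore taking $n=m+1=v(g)+1$ gives $K\cap I^{m+1}\subseteq IK$; the reverse inclusion $IK\subseteq K\cap I^{m+1}$ is immediate since $IK=\xi K\subseteq K$ and $IK=\kk\{\xi\}\xi g\subseteq I^{m+1}$ as $v(\xi g)=m+1$. This yields $K\cap I^{m+1}=IK$ as required, with the uniform choice $n$ depending only on $g$ (equivalently on $K$).

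The routine points to check are: that $v$ is well-defined and multiplicative (no zero divisors, Frobenius bijectivity), that $I^n$ is two-sided and equals $\{f : v(f)\geq n\}\cup\{0\}$, and the trivial containment $IK\subseteq K\cap I^{m+1}$. The only place requiring a little care, and the main potential obstacle, is the interaction of left versus right multiplication: I need $\kk\{\xi\}$ perfect so that the left ideal $(\xi)$ is two-sided and so that $v(fg)=v(f)+v(g)$ is unambiguous regardless of the side; if $\kk$ were not perfect the Frobenius would fail to be surjective and the clean description of $I^n$ would break. Since $\kk$ is assumed perfect throughout the paper, this causes no trouble, and the proof goes through essentially as in the commutative case $\kk[t]$.
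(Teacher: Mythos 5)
Your proof is correct, and it takes a genuinely different and more elementary route than the paper's. The paper proves the Artin--Rees property indirectly: it forms the Rees ring $\mathcal{R}(\xi)\subset \kk\{\xi\}[x]$, observes that $\mathcal{R}(\xi)$ is generated by $\kk\{\xi\}$ and $y=\xi x$, hence is a quotient of a twisted polynomial ring in two variables and therefore left Noetherian, and then invokes the general criterion \cite[Lemma~13.2]{GW04} that Noetherianity of the Rees ring implies the Artin--Rees property. Your argument instead exploits the very specific structure here: every left ideal $K$ is principal, $K=\kk\{\xi\}g$; the order function $v$ (lowest nonvanishing degree) is well defined and multiplicative since $\kk\{\xi\}$ is a domain; $I^n=\kk\{\xi\}\xi^n$ is exactly the set of $f$ with $v(f)\geq n$; and then $n=v(g)+1$ works, with $K\cap I^{v(g)+1}=IK$ verified by hand. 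Your version is self-contained and even produces the integer $n$ explicitly, whereas the paper's Rees-ring argument is nonconstructive but is the standard, transferable technique (it works for any ideal generated by a normalizing element in a Noetherian ring). One small remark: you flag perfectness of $\kk$ as essential, but it is not actually needed for multiplicativity of $v$ (the coefficient of $\xi^{v(f)+v(g)}$ in $fg$ is $a(b^{p^{v(f)}})$, which is nonzero because the Frobenius is injective on any field of characteristic $p$), nor for the left ideal $(\xi)$ to be two-sided (since $\xi\kk\{\xi\}\subseteq\kk\{\xi\}\xi$ always holds). Perfectness only enters if you insist on the symmetric description $\kk\{\xi\}\xi^n=\xi^n\kk\{\xi\}$, which your argument does not actually require.
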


\begin{proof}
Recall that the \emph{Rees ring} $\mathcal{R}(\xi)$ is the subring of the polynomial ring $\kk\{\xi\}[x]$ generated by $\kk\{\xi\}+ Ix$, that is, 
$$\mathcal{R}(\xi) = \kk\{\xi\} + Ix+I^2x^2+\ldots I^jx^j+\ldots $$
Note that the ring $\mathcal{R}(\xi)$ is generated by $\kk\{\xi\}$ together with the element $y=\xi x$. Therefore $\mathcal{R}(\xi)$ is a quotient of the twisted polynomial ring $\kk\{\xi,y\}$, and so it is left Noetherian by~\cite[Theorem~1.14]{GW04}. Finally, this implies that the ideal $(\xi)$ has the left Artin-Rees property by~\cite[Lemma~13.2]{GW04}.
\end{proof}

Let $M\in \Mod_{\kk\{\xi\}}$ be a left module over the ring $\kk\{\xi\}$ of twisted polynomials. Since the field $\kk$ is perfect, the subset 
$$\xi^r(M)=\{\xi^r\cdot m \; |\; m\in M\} \subset M, \;\; r\geq 0 $$
is a \emph{submodule} of $M$. Similar to Definition~\ref{definition: frobenius twist}, we define the \emph{$r$-th Frobenius twist} $M^{(r)}\in \Mod_{\kk\{\xi\}}$ of $M$ as follows. As an abelian group, $M^{(r)}=M$ and we endow it with a new $\kk\{\xi\}$-action 
$$-\cdot-\colon \kk\{\xi\}\times M^{(r)} \to M^{(r)} $$
given by formulas: $\xi\cdot m = \xi m$, $a\cdot m=\varphi^{-r}(a)m$, where $a\in \kk$, $m\in M^{(r)}=M$, and $\varphi\colon \kk \to \kk$ is the Frobenius automorphism, $\varphi(a)=a^p$. Then a map 
\begin{equation}\label{equation: multiplication by xir}
\xi^r\colon M^{(r)}\to M, \;\; m\mapsto \xi^rm
\end{equation}
is a homomorphism of $\kk\{\xi\}$-modules and $\xi^r(M)=\im(\xi^r)$. We set $M_r=\ker(\xi^r)$; as an abelian group, $M_r$ consists of all $m\in M$ such that $\xi^rm=0$.  

%There are natural inclusions 
%$$\xi^{r+1}(M) \hookrightarrow \xi^r(M), \;\; r\geq 0 $$
%and projections
%$$M/\xi^{r+1}(M) \to M/\xi^r(M), \;\; r\geq 0. $$

\begin{dfn}\label{definition: xi-completion}
Define the \emph{$\xi$-adic completion} (or \emph{$\xi$-completion}) of a left $\kk\{\xi\}$-module $M$ to be
$$\widehat{M}=\lim_r M/\xi^r(M) \in \Mod_{\kk\{\xi\}}. $$
A left $\kk\{\xi\}$-module $M$ is \emph{$\xi$-adic complete} (or \emph{$\xi$-complete}) if the natural map $M \to \widehat{M}$ is an isomorphism.
\end{dfn}

We write $\kk\{\{\xi\}\}$ for the $\xi$-completion $\widehat{\kk\{\xi\}}$. Since ideals $(\xi)^r=(\xi^r)$ are two-sided, $\kk\{\{\xi\}\}$ is endowed with a (non-commutative) ring structure such that $\kk\{\xi\} \to \kk\{\{\xi\}\}$ is a ring homomorphism. We observe that the $\xi$-completion takes value in the category $\Mod_{\kk\{\{\xi\}\}}$ of left $\kk\{\{\xi\}\}$-modules. %It means that for any $M\in \Mod_{\kk\{\xi\}}$, its $\xi$-completion $\widehat{M}$ is a left $\kk\{\{\xi\}\}$-module, and for any homomorphism $f\colon N\to M$ in $\Mod_{\kk\{\xi\}}$, its $\xi$-completion $\hat{f}$ is a homomorphism of left $\kk\{\{\xi\}\}$-modules. 
We note that $\kk\{\{\xi\}\}$ is a \emph{torsion-free} left $\kk\{\xi\}$-module.

The Artin-Rees property of the ideal $(\xi)$ implies the following exactness statement.

\begin{prop}\label{proposition: xi-completion is exact}
Suppose that $$0\to M' \to M \to M'' \to 0$$ is an exact sequence of finitely generated left $\kk\{\xi\}$-modules. Then the completed sequence $$0\to \widehat{M}' \to \widehat{M} \to \widehat{M}'' \to 0$$ is again exact.
\end{prop}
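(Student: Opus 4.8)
The plan is to follow the standard commutative-algebra argument for the Artin--Rees lemma, adapted to the noncommutative ring $\kk\{\xi\}$, using the fact (Proposition~\ref{proposition: ar property for xi}) that the two-sided ideal $I=(\xi)$ has the left Artin--Rees property. First I would reduce to proving exactness on the left and in the middle, since right-exactness of $\lim_r (-)/\xi^r(-)$ at the final term is automatic from right-exactness of each quotient functor together with the Mittag-Leffler condition that makes $\lim^1$ vanish; indeed, the towers $\{M/\xi^r(M)\}_r$ are towers of surjections, so $\lim^1$ vanishes for all of them and the short exact sequence of towers
$$0 \to \{M'\cap \xi^r(M)\}_r \to \{M/\xi^r(M)\}_r \to \{M''/\xi^r(M'')\}_r \to 0$$
(here I use $M'' = M/M'$ and $\xi^r(M'') = $ image of $\xi^r(M)$ in $M''$, so that $M/(M' + \xi^r(M)) \cong M''/\xi^r(M'')$) stays exact after passing to the inverse limit. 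Thus $\widehat{M} \to \widehat{M}''$ is surjective and $\widehat{M}' \to \widehat{M}$ has the expected kernel provided I can identify $\lim_r (M'\cap \xi^r(M))$ with $\widehat{M}' = \lim_r M'/\xi^r(M')$.

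The key step is therefore the comparison of the two filtrations on $M'$: the intrinsic filtration $\{\xi^r(M')\}_r$ and the induced filtration $\{M'\cap \xi^r(M)\}_r$. By the left Artin--Rees property applied to the submodule $M'\subset M$ (phrased for modules rather than ideals; this is the usual module-theoretic form, which follows from the ideal form by the standard Rees-module argument already implicit in the proof of Proposition~\ref{proposition: ar property for xi}, using that $\mathcal{R}(\xi)$ is left Noetherian and $M$ is finitely generated so the Rees module is finitely generated hence Noetherian), there is an integer $n$ such that for all $r\geq n$,
$$M'\cap \xi^r(M) = \xi^{r-n}\bigl(M'\cap \xi^n(M)\bigr) \subseteq \xi^{r-n}(M').$$
Combined with the trivial inclusion $\xi^r(M') \subseteq M'\cap \xi^r(M)$, this shows the two filtrations are cofinal, i.e. intertwined:
$$\xi^r(M') \subseteq M'\cap \xi^r(M) \subseteq \xi^{r-n}(M'),\qquad r\geq n.$$
Hence the inverse systems $\{M'/\xi^r(M')\}_r$ and $\{M'/(M'\cap \xi^r(M))\}_r$ have the same inverse limit, and this limit is exactly $\lim_r (M'\cap \xi^r(M))$ viewed inside $\lim_r M/\xi^r(M) = \widehat{M}$ after a short diagram chase. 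Putting these together gives the exact sequence $0 \to \widehat{M}' \to \widehat{M} \to \widehat{M}'' \to 0$.

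The main obstacle I anticipate is making the module-theoretic Artin--Rees statement precise in the noncommutative setting: Proposition~\ref{proposition: ar property for xi} is stated only for left ideals $K\subset \kk\{\xi\}$, so I would need to either (a) observe that the Rees-ring argument there works verbatim with $\kk\{\xi\}$ replaced by a finitely generated left module $M$ and its Rees module $\mathcal{R}(M,\xi) = M \oplus \xi(M)x \oplus \xi^2(M)x^2 \oplus \cdots$, which is a finitely generated $\mathcal{R}(\xi)$-module hence Noetherian, so its submodule $\bigoplus_r (M'\cap \xi^r(M))x^r$ is finitely generated, yielding the bound $n$; or (b) cite the general noncommutative Artin--Rees lemma, e.g. from~\cite[Section~13]{GW04}, in its module form. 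Option (a) is cleanest and keeps the argument self-contained. A minor secondary point is checking that $\xi^r(M'')$ really equals the image of $\xi^r(M)$ under $M\to M''$ — this uses only right-exactness of the additive functor $\xi^r(-)$, which is clear since $\xi^r\colon M^{(r)}\to M$ is $\kk\{\xi\}$-linear (formula~\eqref{equation: multiplication by xir}) and surjections push forward to surjections on images. With these in hand the proof is a routine assembly of the two cofinality inclusions and the $\lim^1$ vanishing.
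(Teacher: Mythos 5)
Your argument is essentially the paper's proof with the citations unpacked: the paper invokes \cite[Lemma~13.1(a)]{GW04} for precisely your cofinality inclusions $\xi^r(M')\subseteq M'\cap\xi^r(M)\subseteq\xi^{r-n}(M')$ (i.e.\ the $\xi$-adic topology on $M'$ is induced from $M$), and then \cite[Proposition~10.12]{AM69} for the ${\lim}^1$/Mittag--Leffler assembly you spell out, so the approach matches. The only blemish is notational: the kernel term in your short exact sequence of towers should be $\{M'/(M'\cap\xi^r(M))\}_r$ rather than $\{M'\cap\xi^r(M)\}_r$, and it is the inverse limit of that quotient tower (not of the submodule tower) that your Artin--Rees cofinality step correctly identifies with $\widehat{M}'$.
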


\begin{proof} By~\cite[Lemma~13.1(a)]{GW04}, the $\xi$-adic topology on $M'$ is induced from the $\xi$-adic topology on $M$. This implies the proposition, cf. e.g.~\cite[Proposition~10.12]{AM69}.
\end{proof}

\subsection{Derived \texorpdfstring{$\xi$}{Xi}-complete modules}\label{section: derived xi-complete modules} 
Here we define left derived functors of the $\xi$-adic completion functor. By Corollary~\ref{corollary: module over twisted}, for any left $\kk\{\xi\}$-module $M$ there is a free resolution
$$0 \to F_1 \to F_0 \to M \to 0. $$
We define the \emph{left derived functors} of the $\xi$-completion by formulas
$$L_0(M) = \coker(\widehat{F}_1 \to \widehat{F}_0), \;\; L_1(M) = \ker(\widehat{F}_1 \to \widehat{F}_0). $$
These groups are independent on the choice of the resolution and they are functorial in $M$, as one checks by comparing resolutions. The commutative square 
$$
\begin{tikzcd}
F_1 \arrow[swap]{d} \arrow{r}
& F_0 \arrow{d} \\
\widehat{F}_1 \arrow{r}
& \widehat{F}_0
\end{tikzcd}
$$
induces a natural map 
\begin{equation}\label{equation: derived completion map}
\phi_M\colon M \to L_0(M).
\end{equation}

The next lemma follows immediately from the results of the previous section.
\begin{lmm}\label{lemma: derived completion coincides with usual}
The functors $L_0$ and $L_1$ take values in left $\kk\{\{\xi\}\}$-modules. If $M$ is either a finitely generated or a free left $\kk\{\xi\}$-module, then $L_0M = \widehat{M}$, $L_1M=0$, and $\phi_M\colon M \to L_0M$ coincides with the $\xi$-adic completion. \qed
\end{lmm}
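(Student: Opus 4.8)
\textbf{Proof plan for Lemma~\ref{lemma: derived completion coincides with usual}.}
The plan is to reduce everything to the two facts already proven in Section~\ref{section: xi-complete modules}: that ideals $(\xi^r)$ are two-sided (so $\kk\{\{\xi\}\}$ is a ring and the $\xi$-completion of any module is a $\kk\{\{\xi\}\}$-module), and that the $\xi$-completion is exact on finitely generated modules (Proposition~\ref{proposition: xi-completion is exact}), together with Corollary~\ref{corollary: submodule of free over twisted} which guarantees that submodules of free modules are free. First I would observe that for a free module $F$, $\widehat{F}$ is a left $\kk\{\{\xi\}\}$-module by the remark following Definition~\ref{definition: xi-completion}; hence for a general $M$ with free resolution $0\to F_1\to F_0\to M\to 0$ the maps $\widehat{F}_1\to\widehat{F}_0$ are $\kk\{\{\xi\}\}$-linear, and so both $L_0(M)=\coker(\widehat{F}_1\to\widehat{F}_0)$ and $L_1(M)=\ker(\widehat{F}_1\to\widehat{F}_0)$ are again $\kk\{\{\xi\}\}$-modules. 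This handles the first assertion.

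Next I would treat the free case: if $M=F$ is free, take the trivial resolution $0\to 0\to F\to F\to 0$, so that $L_0(F)=\widehat{F}$, $L_1(F)=0$, and $\phi_F$ is by construction the canonical map $F\to\widehat{F}$, i.e.\ the $\xi$-adic completion map. For the finitely generated case, pick any free resolution $0\to F_1\to F_0\to M\to 0$; by Corollary~\ref{corollary: twisted are PID} and Noetherianity the module $F_0$ may be taken of finite rank, hence $F_1\subset F_0$ is finitely generated (and free, by Corollary~\ref{corollary: submodule of free over twisted}). Now Proposition~\ref{proposition: xi-completion is exact} applies to the exact sequence $0\to F_1\to F_0\to M\to 0$ of finitely generated modules, giving exactness of $0\to\widehat{F}_1\to\widehat{F}_0\to\widehat{M}\to 0$. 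Therefore $L_1(M)=\ker(\widehat{F}_1\to\widehat{F}_0)=0$ and $L_0(M)=\coker(\widehat{F}_1\to\widehat{F}_0)\cong\widehat{M}$, and chasing the definition of $\phi_M$ through the commutative square identifies it with the completion map $M\to\widehat{M}$.

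The only point requiring a little care — and the step I would flag as the main obstacle — is making sure a finitely generated module admits a resolution by \emph{finitely generated} free modules, so that Proposition~\ref{proposition: xi-completion is exact} is applicable; this is exactly where left Noetherianity of $\kk\{\xi\}$ (Corollary~\ref{corollary: twisted are PID}) enters, guaranteeing that the kernel $F_1$ of a surjection from a finite free module is again finitely generated. Once that is in place the argument is purely formal: independence of $L_0,L_1$ on the chosen resolution was already noted in the construction, and the identification of $\phi_M$ with the completion map is immediate from functoriality of completion applied to the augmentation $F_0\to M$.
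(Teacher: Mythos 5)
Your proof is correct and takes essentially the same route the paper intends (the paper's own "proof" is just the observation that the lemma follows immediately from Section~\ref{section: xi-complete modules}): the $\kk\{\{\xi\}\}$-module structure comes from the two-sidedness of $(\xi^r)$ and functoriality, the free case uses the trivial resolution, and the finitely generated case reduces to Proposition~\ref{proposition: xi-completion is exact} via Noetherianity (Corollary~\ref{corollary: twisted are PID}) and Corollary~\ref{corollary: submodule of free over twisted}. Your flag on needing a resolution by \emph{finitely generated} free modules is exactly the right point to check, and you handle it correctly.
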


\begin{dfn}\label{definition: derived xi-completion}
%We say that the derived $\xi$-adic completion of $M\in \Mod_{\kk\{\xi\}}$ is defined if $L_1M=0$, and 
Let $M\in \Mod_{\kk\{\xi\}}$ be a left $\kk\{\xi\}$-module. We define the \emph{derived $\xi$-adic completion} (or derived $\xi$-completion) of $M$ to be the homomorphism $\phi_M\colon M \to L_0M$. A left $\kk\{\xi\}$-module $M$ is \emph{derived $\xi$-adic complete} (or derived $\xi$-complete) if $\phi_M\colon M \to L_0M$ is an isomorphism and $L_1M=0$.
\end{dfn}

\begin{rmk}\label{remark: M is xi-complete implies completion id defined}
As we will see in Proposition~\ref{proposition: everything is xi-complete}, if $M$ is $\xi$-complete, then $L_1M=0$.
\end{rmk}

\begin{lmm}\label{lemma: six-term exact sequence, completion}
For a short exact sequence of left $\kk\{\xi\}$-modules
$$0 \to M' \to M \to M'' \to 0, $$
there is a natural six term exact sequence of $\kk\{\{\xi\}\}$-modules
\begin{equation*}
0 \to L_1M' \to L_1M \to L_1M'' \to L_0 M' \to L_0 M \to L_0 M'' \to 0. \eqno\qed
\end{equation*}
\end{lmm}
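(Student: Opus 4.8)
The statement to prove is the existence of a natural six-term exact sequence
$$0 \to L_1M' \to L_1M \to L_1M'' \to L_0M' \to L_0M \to L_0M'' \to 0$$
associated to a short exact sequence $0\to M'\to M\to M''\to 0$ in $\Mod_{\kk\{\xi\}}$. The plan is to obtain this as the long exact sequence in homology of a short exact sequence of two-term chain complexes, via the horseshoe lemma applied to free resolutions of length one.

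First I would build a compatible system of free resolutions. By Corollary~\ref{corollary: module over twisted}, choose free resolutions $0\to F_1'\to F_0'\to M'\to 0$ and $0\to F_1''\to F_0''\to M''\to 0$. Applying the horseshoe lemma (which works verbatim in any abelian category, using projectivity of the free modules $F_0''$, $F_1''$) produces a free resolution $0\to F_1\to F_0\to M\to 0$ with $F_i=F_i'\oplus F_i''$ and such that the three resolutions fit into a short exact sequence of two-term complexes
$$0 \to (F_1'\to F_0') \to (F_1\to F_0) \to (F_1''\to F_0'') \to 0,$$
which is moreover degreewise split. Next I would apply the $\xi$-adic completion functor $\widehat{(-)}$ degreewise. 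Because the sequences $0\to F_i'\to F_i\to F_i''\to 0$ are split exact for $i=0,1$, completion preserves their exactness (splitness is preserved by any additive functor), so we get a short exact sequence of two-term complexes of $\kk\{\{\xi\}\}$-modules
$$0 \to (\widehat{F}_1'\to \widehat{F}_0') \to (\widehat{F}_1\to \widehat{F}_0) \to (\widehat{F}_1''\to \widehat{F}_0'') \to 0.$$
By definition $L_1(M^{(\cdot)})=H_1$ and $L_0(M^{(\cdot)})=H_0$ of the corresponding complex (and $H_i=0$ for $i>1$ since the complexes have length one), so the long exact sequence in homology of this short exact sequence of complexes is exactly the desired six-term sequence, terminating in $0$ on the right because $H_{-1}=0$ and on the left because $H_2=0$. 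Naturality follows from the functoriality of the horseshoe construction up to chain homotopy together with the fact that $L_0$, $L_1$ are independent of the chosen resolution (as noted in the text after the definition of the derived functors); I would spell this out by the standard comparison-of-resolutions argument.

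I do not expect a serious obstacle here; the one point that needs a sentence of care is that the horseshoe lemma gives resolutions that are only \emph{degreewise} split, not split as complexes, but degreewise splitness is all that is needed for completion (an additive functor) to preserve exactness of the sequence of modules in each homological degree, which is what feeds the snake/long-exact-sequence machinery. An alternative, essentially equivalent route would be to note that $L_i$ are the left derived functors of $\widehat{(-)}$ in the derived-category sense (every module has projective dimension $\le 1$ over $\kk\{\xi\}$ by Corollary~\ref{corollary: submodule of free over twisted}), so the six-term sequence is simply the truncation of the usual long exact sequence of derived functors; I would mention this but carry out the hands-on argument above for self-containedness.
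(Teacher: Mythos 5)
Your proof is correct and fills in the standard argument that the paper leaves implicit (the lemma is stated with no proof beyond a terminal $\qed$). The horseshoe-lemma route is exactly what one would write: the length-one resolutions, the degreewise split short exact sequences of free modules (hence exactness preserved by the additive completion functor), and the long exact sequence in homology of the resulting short exact sequence of two-term complexes. You correctly identify the one point needing care — degreewise splitness versus splitness as complexes — and correctly note that degreewise splitness is all that is needed. One tiny supplement you might spell out: the horseshoe lemma applied to length-one resolutions produces a length-one resolution of $M$, because $\ker(F_0\to M)$ sits in a short exact sequence $0\to F_1'\to \ker(F_0\to M)\to F_1''\to 0$ which splits (as $F_1''$ is free), so $\ker(F_0\to M)\cong F_1'\oplus F_1''$ is free. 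The naturality remark and the alternative derived-category phrasing are both sound.
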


Next, we will give an interpretation of the derived $\xi$-adic completion in terms of $\Hom$ and $\Ext$-functors. We begin with the following observation. Let $M\in \Mod_{\kk\{\xi\}}$ be a left $\kk\{\xi\}$-module and let $N$ be an $\kk\{\xi\}$-bimodule. Then the set of maps $\Hom_{\kk\{\xi\}}(N,M)$ is naturally a left $\kk\{\xi\}$-module. Indeed, if $f\in \Hom_{\kk\{\xi\}}(N,M)$ and $a\in \kk\{\xi\}$, then we set $af\colon N\to M$, $n\mapsto f(na)$, $n\in N$. By the same argument, the $\Ext$-group $\Ext_{\kk\{\xi\}}(N,M)$ is a left $\kk\{\xi\}$-module as well.

We denote $\kk\{\xi^{\pm}\}$ the ring of \emph{twisted Laurent polynomials}, i.e. $\kk\{\xi^{\pm}\}$ is defined as the set of Laurent polynomials in the variable $\xi$ and coefficients in $\kk$. It is endowed with a ring structure with the usual addition and with a non-commutative multiplication that can be summarized with relations: $$\xi a = \varphi(a)\xi = a^{p} \xi, \;\; \xi^{-1}a=\varphi^{-1}(a)\xi^{-1}, \; \; a\in \kk.$$
Here $\varphi\colon \kk\to \kk$ is the Frobenius automorphism. 

We denote by $\kk\{\xi\}/\xi^{\infty}$ the quotient $\kk\{\xi\}$-bimodule $\kk\{\xi^{\pm}\}/\kk\{\xi\}$. The bimodule $\kk\{\xi\}/\xi^{\infty}$ is the union of its subbimodules $$C_r\subset \kk\{\xi\}/\xi^{\infty}, \;\; r\geq 1,$$
where $C_r$ is generated by $\xi^{-r}\in \kk\{\xi\}/\xi^{\infty}$. Notice that there is a bimodule isomorphism $C_r\cong \kk\{\xi\}/\xi^{r}$. 

\begin{lmm}\label{lemma: Hom, Ext, 1}
Let $M\in \Mod_{\kk\{\xi\}}$ be a left $\kk\{\xi\}$-module. Then
\begin{enumerate}
\item $\Hom_{\kk\{\xi\}}(C_r,M)\cong M_r$, where $M_r$ is the kernel~\eqref{equation: multiplication by xir}.
\item $\Ext_{\kk\{\xi\}}(C_r,M)\cong M/\xi^r(M)=\coker(\xi^r)$.
\item $\Hom_{\kk\{\xi\}}(\kk\{\xi\}/\xi^{\infty}, M) \cong \lim_r \Hom_{\kk\{\xi\}}(C_r,M)$. 
\end{enumerate}
Moreover, there is a short exact sequence:
\begin{equation*}
0 \to {\lim}^1 \Hom_{\kk\{\xi\}}(C_r,M) \to \Ext_{\kk\{\xi\}}(\kk\{\xi\}/\xi^{\infty}, M) \to \widehat{M} \to 0. \eqno\qed
\end{equation*}
\end{lmm}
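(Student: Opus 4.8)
The plan is to prove Lemma~\ref{lemma: Hom, Ext, 1} by working with a single explicit free resolution of the cyclic bimodule $C_r \cong \kk\{\xi\}/\xi^r$ and then passing to the colimit over $r$. First I would fix the length-one free resolution
$$
0 \to \kk\{\xi\} \xrightarrow{\cdot \xi^r} \kk\{\xi\} \to C_r \to 0,
$$
valid because $\kk\{\xi\}$ has no zero divisors (so right multiplication by $\xi^r$ is injective) and $C_r$ is by definition the cokernel. Applying $\Hom_{\kk\{\xi\}}(-,M)$ to this resolution, and using that $\Hom_{\kk\{\xi\}}(\kk\{\xi\},M)\cong M$ as a left module via evaluation at $1$, identifies the induced map $\Hom_{\kk\{\xi\}}(\kk\{\xi\},M)\to\Hom_{\kk\{\xi\}}(\kk\{\xi\},M)$ with the map $\xi^r\colon M^{(r)}\to M$ of~\eqref{equation: multiplication by xir}; one has to be slightly careful that the ``wrong-side'' multiplication is exactly the twisted action that produces the Frobenius twist $M^{(r)}$, which is where the perfectness of $\kk$ enters. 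Taking kernel and cokernel of that map gives parts (1) and (2): $\Hom_{\kk\{\xi\}}(C_r,M)\cong M_r$ and $\Ext_{\kk\{\xi\}}(C_r,M)\cong M/\xi^r(M)$.

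Next I would treat the maps in the directed system. The inclusions $C_r\hookrightarrow C_{r+1}$ are induced, under the isomorphisms $C_r\cong\kk\{\xi\}/\xi^r$, by multiplication-by-$\xi$ in the appropriate sense; tracing this through the resolutions shows that the transition map $M_r\to M_{r+1}$ in $\lim_r\Hom_{\kk\{\xi\}}(C_r,M)$ is the obvious inclusion of kernels, while the transition map on $\Ext$, i.e.\ $M/\xi^{r+1}(M)\to M/\xi^r(M)$, is the canonical projection — so the inverse system on $\Ext$ is exactly the one whose limit defines $\widehat M$. Since $\kk\{\xi\}/\xi^{\infty}=\colim_r C_r$ is a filtered colimit, and $\Hom$ out of a filtered colimit is the inverse limit of the $\Hom$'s, part (3) is immediate: $\Hom_{\kk\{\xi\}}(\kk\{\xi\}/\xi^{\infty},M)\cong\lim_r\Hom_{\kk\{\xi\}}(C_r,M)$.

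For the final short exact sequence I would invoke the $\lim^1$ spectral sequence (or the Milnor exact sequence) for $\Ext$ out of a filtered colimit: since each $C_r$ has projective dimension $1$, $\colim_r C_r$ also has projective dimension at most $1$, and there is a short exact sequence
$$
0 \to {\lim_r}^1 \Hom_{\kk\{\xi\}}(C_r,M) \to \Ext_{\kk\{\xi\}}(\kk\{\xi\}/\xi^{\infty},M) \to {\lim_r}\,\Ext_{\kk\{\xi\}}(C_r,M) \to 0.
$$
By parts (2) and the identification of transition maps, $\lim_r\Ext_{\kk\{\xi\}}(C_r,M)\cong\lim_r M/\xi^r(M)=\widehat M$, which yields the claimed sequence. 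The main obstacle, and the only genuinely nontrivial bookkeeping, is the first paragraph's twisting subtlety: keeping straight that the left $\kk\{\xi\}$-module structure on $\Hom_{\kk\{\xi\}}(\kk\{\xi\},M)$ coming from the bimodule structure on the source is the twisted one, so that the connecting map is literally $\xi^r\colon M^{(r)}\to M$ and the cokernel is genuinely $M/\xi^r(M)$ rather than some Frobenius-twisted variant; once that is pinned down (using that $\kk$ is perfect, so $\varphi$ is invertible and the twists are harmless), the rest is formal homological algebra with directed systems, and I would not belabor it.
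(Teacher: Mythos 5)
Your argument is correct and is exactly the standard one the paper has in mind (the lemma is stated with the proof omitted): resolve $C_r\cong\kk\{\xi\}/\xi^r$ by $0\to\kk\{\xi\}\xrightarrow{\cdot\xi^r}\kk\{\xi\}\to C_r\to 0$, identify the induced map on $\Hom(\kk\{\xi\},M)\cong M$ with the semi-linear map $\xi^r\colon M^{(r)}\to M$, and then use that $\kk\{\xi\}/\xi^{\infty}=\colim_r C_r$ together with the Milnor $\lim^1$-sequence for a sequential colimit (which applies since $\kk\{\xi\}$ is hereditary, so everything lives in $\Ext^{\leq 1}$). One small correction: the transition maps in the inverse system $\{\Hom_{\kk\{\xi\}}(C_r,M)\}_r$ go $M_{r+1}\to M_r$ (restriction along $C_r\hookrightarrow C_{r+1}$) and are given by multiplication by $\xi$, not by an ``inclusion of kernels'' $M_r\to M_{r+1}$; this does not affect any of the four claims of the lemma (which never identify those maps), but it does matter for the six-term sequence~\eqref{equation: six term exact sequence H,E} that follows, so it is worth getting the direction right. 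Your identification of the $\Ext$-transition maps $M/\xi^{r+1}(M)\to M/\xi^r(M)$ as the canonical projections is the part that is actually needed to recognize $\lim_r\Ext_{\kk\{\xi\}}(C_r,M)$ as $\widehat{M}$, and that is correct.
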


For convenience, we set $$\mathbb{E}_{\xi}M=\Ext_{\kk\{\xi\}}(\kk\{\xi\}/\xi^{\infty},M) \;\; \text{and} \;\; \mathbb{H}_\xi M = \Hom_{\kk\{\xi\}}(\kk\{\xi\}/\xi^{\infty},M),$$ 
where $M\in \Mod_\kk\{\xi\}$. Then Lemma~\ref{lemma: Hom, Ext, 1} implies that there is an exact sequences in $\Mod_{\kk\{\xi\}}$:
\begin{align}\label{equation: six term exact sequence H,E}
0 &\to \mathbb{H}_\xi M \to \prod_{r\geq 1} M_r \to \prod_{r\geq 1} M_r \\
&\to \mathbb{E}_\xi M \to \prod_{r\geq 1} M/\xi^rM \to \prod_{r\geq 1} M/\xi^rM \to 0. \nonumber
\end{align}

\begin{cor}\label{corollary: derived completion are hom and ext}
Let $M\in \Mod_{\kk\{\xi\}}$ be a left $\kk\{\xi\}$-module. Then there are natural isomorphisms
$$L_0M\cong \mathbb{E}_{\xi} M, \;\;\; L_1M\cong \mathbb{H}_\xi M. $$
\end{cor}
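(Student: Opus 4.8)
The plan is to derive both isomorphisms directly from Lemma~\ref{lemma: Hom, Ext, 1} together with the defining formulas for $L_0$ and $L_1$. Recall that for a free resolution $0\to F_1\to F_0\to M\to 0$ in $\Mod_{\kk\{\xi\}}$ we have $L_0M=\coker(\widehat F_1\to\widehat F_0)$ and $L_1M=\ker(\widehat F_1\to\widehat F_0)$. On the other hand, $\kk\{\xi\}/\xi^\infty$ is the filtered colimit of the cyclic bimodules $C_r\cong\kk\{\xi\}/\xi^r$, so $\mathbb{H}_\xi M=\Hom_{\kk\{\xi\}}(\kk\{\xi\}/\xi^\infty,M)=\lim_r\Hom_{\kk\{\xi\}}(C_r,M)$ and $\mathbb{E}_\xi M=\Ext_{\kk\{\xi\}}(\kk\{\xi\}/\xi^\infty,M)$ fits into the short exact sequence $0\to{\lim}^1\Hom(C_r,M)\to\mathbb{E}_\xi M\to\widehat M\to 0$ of the last part of Lemma~\ref{lemma: Hom, Ext, 1}. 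Since $\kk\{\xi\}$ is Noetherian (Corollary~\ref{corollary: twisted are PID}) all the functors here are $\kk\{\{\xi\}\}$-linear as already noted in the text, so it suffices to produce natural isomorphisms of the underlying abelian groups.

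First I would treat the case when $M$ is free. Then $\xi^r\colon M^{(r)}\to M$ is injective, so $M_r=0$ for all $r$, hence $\mathbb{H}_\xi M=0$ and the ${\lim}^1$ term in Lemma~\ref{lemma: Hom, Ext, 1} vanishes; thus $\mathbb{E}_\xi M\cong\widehat M$. Comparing with Lemma~\ref{lemma: derived completion coincides with usual}, which says $L_0M=\widehat M$ and $L_1M=0$ for free $M$, gives the two isomorphisms for free modules. Moreover one checks that the identification $\mathbb{E}_\xi M\cong\widehat M$ is natural in the free module $M$ — this uses that for a map of free modules both sides are computed from the compatible system $\{M/\xi^rM\}_r$ and the connecting map respects it.

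Next, apply the functors to the free resolution $0\to F_1\xrightarrow{j} F_0\to M\to 0$. Since $\Hom(\kk\{\xi\}/\xi^\infty,-)$ and $\Ext(\kk\{\xi\}/\xi^\infty,-)$ are the first two terms of a long exact sequence (the higher $\Ext$'s vanish because $\kk\{\xi\}/\xi^\infty$ has projective dimension $\le 1$: it is a quotient of the flat bimodule $\kk\{\xi^\pm\}$ by $\kk\{\xi\}$, and $\kk\{\xi^\pm\}$ has a length-one free resolution over $\kk\{\xi\}$), we get the exact sequence
\begin{equation*}
0\to\mathbb{H}_\xi M\to\mathbb{H}_\xi F_1\to\mathbb{H}_\xi F_0\to\mathbb{E}_\xi M\to\mathbb{E}_\xi F_1\to\mathbb{E}_\xi F_0\to 0.
\end{equation*}
By the free case $\mathbb{H}_\xi F_0=\mathbb{H}_\xi F_1=0$, so $\mathbb{H}_\xi M\cong\ker(\mathbb{E}_\xi F_1\to\mathbb{E}_\xi F_0)$ and $\mathbb{E}_\xi M\cong\coker(\mathbb{E}_\xi F_1\to\mathbb{E}_\xi F_0)$. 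Now use the natural isomorphisms $\mathbb{E}_\xi F_i\cong\widehat F_i$ from the free case, which identify the map $\mathbb{E}_\xi(j)$ with $\widehat j\colon\widehat F_1\to\widehat F_0$. Hence $\mathbb{H}_\xi M\cong\ker(\widehat F_1\to\widehat F_0)=L_1M$ and $\mathbb{E}_\xi M\cong\coker(\widehat F_1\to\widehat F_0)=L_0M$, and these identifications are independent of the chosen resolution by the standard comparison argument (any two resolutions are chain-homotopy equivalent, and all functors involved are additive). Naturality in $M$ follows the same way.

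The main obstacle I anticipate is the bookkeeping needed to show that $\kk\{\xi\}/\xi^\infty$ has projective dimension at most one over $\kk\{\xi\}$ (equivalently that $\kk\{\xi^\pm\}$ is projective, or at least flat, over $\kk\{\xi\}$) so that the six-term sequence above is genuinely exact and not merely the beginning of a longer sequence; and, more subtly, checking that the isomorphism $\mathbb{E}_\xi F\cong\widehat F$ for free $F$ is natural enough that applying it to the map $j\colon F_1\to F_0$ recovers exactly $\widehat j$. Both points are routine but must be handled with care because the ring is non-commutative; the Artin–Rees property (Proposition~\ref{proposition: ar property for xi}) and Lemma~\ref{lemma: twisted polynomials are eucledian domain} are the tools that make the flatness and the identification of maps go through. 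Everything else is formal homological algebra.
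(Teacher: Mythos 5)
Your proposal follows essentially the same route as the paper's proof: establish $\mathbb{H}_\xi F=0$ and $\mathbb{E}_\xi F\cong\widehat{F}$ for free $F$ via Lemma~\ref{lemma: Hom, Ext, 1}, then compare the long exact $\Hom/\Ext$ sequence of the free resolution $0\to F_1\to F_0\to M\to 0$ with the defining four-term sequence for $L_0M$ and $L_1M$. One slip: your displayed six-term sequence has the terms permuted — it should read $0\to\mathbb{H}_\xi F_1\to\mathbb{H}_\xi F_0\to\mathbb{H}_\xi M\to\mathbb{E}_\xi F_1\to\mathbb{E}_\xi F_0\to\mathbb{E}_\xi M\to 0$ — although the identifications you then draw ($\mathbb{H}_\xi M\cong\ker(\mathbb{E}_\xi F_1\to\mathbb{E}_\xi F_0)$ and $\mathbb{E}_\xi M\cong\coker(\mathbb{E}_\xi F_1\to\mathbb{E}_\xi F_0)$) are exactly the ones that follow from the correctly ordered sequence, so the argument survives. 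Also, the vanishing of $\Ext^2$ that you flag as the main obstacle is immediate from Corollary~\ref{corollary: module over twisted}: every $\kk\{\xi\}$-module admits a length-one free resolution, so no separate flatness argument for $\kk\{\xi^{\pm}\}$ is needed.
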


\begin{proof}
By Lemma~\ref{lemma: Hom, Ext, 1}, we have $\mathbb{H}_\xi F=0$ and $\mathbb{E}_\xi F \cong \widehat{F}$ for a free $\kk\{\xi\}$-module $F\in \Mod_{\kk\{\xi\}}$. Let $0 \to F_1 \to F_0 \to M\to 0$ be a free resolution of $M$. Then, in the commutative diagram
$$
\begin{tikzcd}
0 \arrow{r} 
& \mathbb{H}_\xi M \arrow{r} \arrow[dashed]{d} 
& \mathbb{E}_\xi F_1 \arrow{r} \arrow{d}{\cong}
& \mathbb{E}_\xi F_0 \arrow{r} \arrow{d}{\cong}
& \mathbb{E}_\xi M \arrow{r} \arrow[dashed]{d} 
& 0 \\
0 \arrow{r} 
& L_1M \arrow{r}
& \widehat{F}_1 \arrow{r}
& \widehat{F}_0 \arrow{r}
& L_0 M \arrow{r}
& 0,\end{tikzcd}
$$
both rows are exact sequence and two middle vertical arrows are isomorphisms. This implies the statement.
\end{proof}

\begin{rmk}\label{remark: completion map}
Under the isomorphism of Corollary~\ref{corollary: derived completion are hom and ext}, the derived completion map~\eqref{equation: derived completion map}
$$\phi_M\colon M \to L_0M$$ coincides with the boundary homomorphism
$$\delta_M\colon M=\Hom_{\kk\{\xi\}}(\kk\{\xi\},M) \to \Ext_{\kk\{\xi\}}(\kk\{\xi\}/\xi^\infty,M), $$
which is associated with the short exact sequence
\begin{equation}\label{equation: Cinfty}
0 \to \kk\{\xi\} \to \kk\{\xi^{\pm}\} \to \kk\{\xi\}/\xi^{\infty} \to 0. 
\end{equation}
\end{rmk}

\begin{prop}\label{proposition: everything is xi-complete}
Let $M$ be a left $\kk\{\xi\}$-module and let $N$ be any of $\widehat{M}$, $\mathbb{H}_\xi M$, and $\mathbb{E}_\xi M$. Then $L_1N=0$ and $\phi_N\colon N\to L_0N$ is an isomorphism. In particular, if $\phi_M\colon M \to L_0M$ is an isomorphism, then $L_1M\cong L_1L_0M=0$.
\end{prop}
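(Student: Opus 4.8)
The plan is to reduce the statement about $\widehat M$, $\mathbb H_\xi M$, and $\mathbb E_\xi M$ to a single computation, using the identifications $L_0 = \mathbb E_\xi$ and $L_1 = \mathbb H_\xi$ from Corollary~\ref{corollary: derived completion are hom and ext}. Concretely, $L_1 N = \Hom_{\kk\{\xi\}}(\kk\{\xi\}/\xi^\infty, N)$ and $\phi_N$ is (under the same identification, Remark~\ref{remark: completion map}) the connecting map $\delta_N\colon N \to \Ext_{\kk\{\xi\}}(\kk\{\xi\}/\xi^\infty, N)$ for the short exact sequence~\eqref{equation: Cinfty}. So I must show, for $N$ of the three stated forms: (i) $\Hom_{\kk\{\xi\}}(\kk\{\xi\}/\xi^\infty, N) = 0$, and (ii) $\delta_N$ is an isomorphism, i.e. $\Hom_{\kk\{\xi\}}(\kk\{\xi^\pm\}, N) = 0 = \Ext^1_{\kk\{\xi\}}(\kk\{\xi^\pm\}, N)$ (the long exact sequence of $\Ext^*(-, N)$ applied to~\eqref{equation: Cinfty} then forces $\delta_N$ to be an iso once the $\Hom$ and $\Ext^1$ of $\kk\{\xi^\pm\}$ vanish, and also pins down $L_1 N$).

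First I would record the key vanishing: both $\Hom_{\kk\{\xi\}}(\kk\{\xi^\pm\}, N)$ and $\Ext^1_{\kk\{\xi\}}(\kk\{\xi^\pm\}, N)$ vanish whenever $N$ is $\xi$-complete in the naive sense that $\bigcap_r \xi^r(N) = 0$ and $N \cong \widehat N$ — actually it is cleaner to argue directly. An element of $\Hom_{\kk\{\xi\}}(\kk\{\xi^\pm\}, N)$ is determined by the images $n_r$ of $\xi^{-r}$, which satisfy $\xi^r n_r = n_0$ for all $r$; in particular $n_0 \in \bigcap_r \xi^r(N)$, and more generally each $n_r$ lies in $\bigcap_s \xi^s(N)$. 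So I need: each of $\widehat M$, $\mathbb H_\xi M$, $\mathbb E_\xi M$ has no nonzero infinitely-$\xi$-divisible elements, and is moreover such that an inverse system of $\xi$-power-divided preimages with vanishing limit is forced. The honest way is to compute $\Hom$ and $\Ext^1$ of $\kk\{\xi^\pm\}$ against these modules directly, exploiting that $\kk\{\xi^\pm\} = \colim_r \xi^{-r}\kk\{\xi\}$ is a filtered colimit of copies of $\kk\{\xi\}$ (with transition maps multiplication by $\xi$), so $\Hom_{\kk\{\xi\}}(\kk\{\xi^\pm\}, N) = \lim_r N$ (limit over $\cdot\xi$) and there is a $\lim^1$ term contributing to $\Ext^1$; then use the six-term sequence~\eqref{equation: six term exact sequence H,E} to control $N = \mathbb H_\xi M$, $\mathbb E_\xi M$ degreewise, and Proposition~\ref{proposition: xi-completion is exact}/Lemma~\ref{lemma: derived completion coincides with usual} for $N = \widehat M$.

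The cleanest route, which I would adopt, is to first prove the $\widehat M$ case and then bootstrap. For $N = \widehat M$: the system $\{M/\xi^r M\}$ is an inverse system of modules on which $\xi$ acts nilpotently-to-order-$r$, so $\lim_r$ and $\lim^1_r$ computations give $\widehat{\widehat M} = \widehat M$ and the Mittag-Leffler condition (surjectivity of $M/\xi^{r+1}M \to M/\xi^r M$) kills $\lim^1$; hence $\widehat M$ is $\xi$-complete with $\widehat{(\widehat M)} \xrightarrow{\sim} \widehat M$, and one checks $\bigcap_r \xi^r \widehat M = 0$ from the same Mittag-Leffler argument. Then $\Hom_{\kk\{\xi\}}(\kk\{\xi^\pm\}, \widehat M) = \lim_r \widehat M = 0$ (limit over multiplication by $\xi$, which on a module with $\bigcap \xi^r = 0$ and $\widehat{} = \mathrm{id}$ has trivial limit) and the accompanying $\lim^1$ term vanishes by Mittag-Leffler, giving $\Ext^1_{\kk\{\xi\}}(\kk\{\xi^\pm\}, \widehat M) = 0$; hence $L_1 \widehat M = 0$ and $\phi_{\widehat M}$ is an iso. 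For $N = \mathbb H_\xi M$ and $N = \mathbb E_\xi M$: by Corollary~\ref{corollary: derived completion are hom and ext}, $\mathbb E_\xi M = L_0 M = \coker(\widehat F_1 \to \widehat F_0)$ and $\mathbb H_\xi M = L_1 M = \ker(\widehat F_1 \to \widehat F_0)$ for a two-step free resolution $0 \to F_1 \to F_0 \to M \to 0$. Since $\widehat F_1, \widehat F_0$ are $\xi$-complete (free, finitely-free-at-each-stage — here use Lemma~\ref{lemma: derived completion coincides with usual} and that $\widehat{\kk\{\xi\}^{\oplus S}} = \kk\{\{\xi\}\}^{\oplus S}$ in a suitable completed sense, or directly that products of complete modules are complete), and $\xi$-complete modules are closed under kernels and cokernels of maps between them inside $\Mod_{\kk\{\xi\}}$ (a standard diagram chase with the $\lim$/$\lim^1$ exact sequence — this is essentially where I'd invoke or reprove the "$\xi$-complete modules form a weak Serre subcategory" closure properties), it follows that $\mathbb H_\xi M$ and $\mathbb E_\xi M$ are $\xi$-complete, and the $\widehat M$-case argument applied to them gives $L_1 = 0$ and $\phi$ an iso. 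Finally, the "in particular" clause: if $\phi_M$ is an iso then $M \cong L_0 M = \mathbb E_\xi M$, so $L_1 M \cong L_1 L_0 M = L_1 \mathbb E_\xi M = 0$ by what was just proved.

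The main obstacle is the closure statement: that if $g\colon A \to B$ is a map of $\xi$-complete $\kk\{\xi\}$-modules then $\ker g$ and $\coker g$ are again $\xi$-complete. Naively one wants to apply $\widehat{(-)}$ to $0 \to \ker g \to A \to \im g \to 0$ and $0 \to \im g \to B \to \coker g \to 0$, but $\xi$-completion is not exact on non-finitely-generated modules (Example~\ref{example: lie algebra with trivial coeffients} territory), so I cannot quote Proposition~\ref{proposition: xi-completion is exact}. Instead I expect to run the argument through the $\lim$–$\lim^1$ six-term exact sequence associated to the tower $\{-/\xi^r(-)\}$, checking the Mittag-Leffler conditions hold for the relevant towers because each $A/\xi^r A$, $B/\xi^r B$ surjects onto its predecessor; this requires a little care since submodules of $A$ of the form $\ker g$ need not carry the subspace $\xi$-adic filtration, but for the ideal $(\xi)$ the Artin-Rees property (Proposition~\ref{proposition: ar property for xi}) is available in the finitely generated case and, crucially, the towers $\{(\ker g)/\xi^r\}$ and $\{(\ker g \cap \xi^r A)\}$ have the same $\lim$ and $\lim^1$ up to bounded ambiguity — I would set this up as a short lemma before the proposition, or alternatively bypass it entirely by the $\Ext/\Hom$ characterization (Corollary~\ref{corollary: derived completion are hom and ext}), computing $L_0$ and $L_1$ of $\ker g$ and $\coker g$ via the snake lemma applied to the map of six-term sequences~\eqref{equation: six term exact sequence H,E} induced by $g$. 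The latter is probably the least painful and is the route I would write up.
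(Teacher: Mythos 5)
Your proposal is correct and shares its skeleton with the paper's proof --- both reduce the statement, via Corollary~\ref{corollary: derived completion are hom and ext} and the sequence~\eqref{equation: Cinfty}, to the orthogonality condition $\Hom_{\kk\{\xi\}}(\kk\{\xi^{\pm}\},N)=0=\Ext_{\kk\{\xi\}}(\kk\{\xi^{\pm}\},N)$ and then propagate that condition through exact sequences. Where you diverge is in how the condition is verified. The paper's proof makes three quick observations: the condition holds for any module killed by a power of $\xi$ (hence for all $M_r$ and $M/\xi^rM$), it is closed under products, and it satisfies two-out-of-three in short exact sequences (this last point silently uses $\Ext^2_{\kk\{\xi\}}(\kk\{\xi^{\pm}\},-)=0$, available because $\kk\{\xi\}$ is hereditary). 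All three cases then follow uniformly, since $\widehat M$ sits in $0\to\widehat M\to\prod_r M/\xi^rM\to\prod_r M/\xi^rM\to 0$ and $\mathbb{H}_\xi M$, $\mathbb{E}_\xi M$ sit inside the six-term sequence~\eqref{equation: six term exact sequence H,E}, whose remaining terms are products of $\xi$-power-torsion modules. You instead handle $\widehat M$ by a direct $\lim/{\lim}^1$ computation of the tower $(N,\cdot\xi)$ and then bootstrap to $\mathbb{H}_\xi M$ and $\mathbb{E}_\xi M$ via closure of the orthogonal class under kernels and cokernels of maps between the complete modules $\widehat F_1\to\widehat F_0$; your preferred implementation of that closure (long exact sequences of $\Ext^*(\kk\{\xi^{\pm}\},-)$ rather than naive completion) is the right one and is essentially the paper's two-out-of-three argument applied twice. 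What the paper's route buys is that it never needs to know that $\widehat M$ is $\xi$-adically separated and complete, nor any Artin--Rees input; what your route buys is a concrete identification of $L_0$ and $L_1$ as $\lim$ and ${\lim}^1$ of multiplication-by-$\xi$ towers.

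One slip worth fixing: for the tower $\widehat M\xleftarrow{\xi}\widehat M\xleftarrow{\xi}\cdots$ the Mittag--Leffler condition generally fails (the images $\xi^s\widehat M$ need not stabilize --- already for $\kk\{\{\xi\}\}$ they do not), so you cannot kill ${\lim}^1$ that way. The correct argument is that completeness makes $1-\xi\sigma$ surjective on $\prod\widehat M$ because the series $\sum_i\xi^i y_{r+i}$ converges; alternatively, adopt the paper's product presentation and avoid the tower altogether. This is a local repair, not a gap in the overall strategy.
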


\begin{proof}
Using the exact sequence~\eqref{equation: Cinfty} and Corollary~\ref{corollary: derived completion are hom and ext} we obtain that $L_1N=0$ and $\phi_N$ is an isomorphism if and only if 
\begin{equation}\label{equation: derived complete = right orthogonal}
\Hom_{\kk\{\xi\}}(\kk\{\xi^{\pm}\},N)=0 \;\; \text{and}\;\; \Ext_{\kk\{\xi\}}(\kk\{\xi^{\pm}\},N)=0. 
\end{equation}
This condition certainly holds if $p^rN$ for some $r\geq 1$, so it holds for all $M_r$ and $M/\xi^rM$. If~\eqref{equation: derived complete = right orthogonal} holds for modules $N_i\in \Mod_{\kk\{\xi\}}$, then it holds for their product $\prod N_i$. It is also clear that if~\eqref{equation: derived complete = right orthogonal} holds for any two modules in a short exact sequence
$$0 \to M' \to M \to M'' \to 0,$$
then it holds for the third one as well. Now the proposition follows for $\widehat{M}$ by applying these observations to a short exact sequence
$$0 \to \widehat{M} \to \prod_{r\geq 1}M/\xi^r M \to \prod_{r\geq 1}M/\xi^r M \to 0. $$
In a similar way, the proposition follows for $L_0M=\mathbb{E}_{\xi}M$ and $L_1M=\mathbb{H}_{\xi}M$ as well by considering the exact sequence~\eqref{equation: six term exact sequence H,E}.
\end{proof}

We write $\Mod_{\kk\{\xi\}}^{\mathrm{comp}}$ for the full subcategory of $\Mod_{\kk\{\xi\}}$ spanned by derived $\xi$-complete left $\kk\{\xi\}$-modules. We summarize the results of this section in the next proposition.

\begin{prop}\label{proposition: derived modules, summary}
$\quad$
\begin{enumerate}
\item Let $M\in \Mod_{\kk\{\xi\}}$. Then $M\in \Mod_{\kk\{\xi\}}^{\mathrm{comp}}$ if and only if $\Hom(\kk\{\xi^{\pm}\},N)=0$ and $\Ext(\kk\{\xi^{\pm}\},N)=0$.
\item The subcategory $\Mod_{\kk\{\xi\}}^{\mathrm{comp}} \subset \Mod_{\kk\{\xi\}}$ is closed under taking any limits, cokernels, and extensions. In particular, $\Mod_{\kk\{\xi\}}^{\mathrm{comp}}$ is a weak Serre subcategory in $\Mod_{\kk\{\xi\}}$, see e.g.~\cite[{Tag 02MO}]{stacks-project} \qed
\end{enumerate}
\end{prop}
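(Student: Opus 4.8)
The plan is to establish Proposition~\ref{proposition: derived modules, summary} directly from the already-established interpretation of the derived $\xi$-completion via $\Hom$ and $\Ext$ against the twisted Laurent polynomial ring. Part~(1) is essentially a repackaging of Proposition~\ref{proposition: everything is xi-complete} together with Corollary~\ref{corollary: derived completion are hom and ext}. First I would recall that by Corollary~\ref{corollary: derived completion are hom and ext} one has $L_1M \cong \mathbb{H}_\xi M$ and $L_0M \cong \mathbb{E}_\xi M$, and by Remark~\ref{remark: completion map} the map $\phi_M$ is identified with the boundary homomorphism $\delta_M$ associated to the short exact sequence~\eqref{equation: Cinfty}. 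Applying the long exact sequence of $\Ext_{\kk\{\xi\}}(-,M)$ to~\eqref{equation: Cinfty} gives a four-term exact sequence
$$
\Hom_{\kk\{\xi\}}(\kk\{\xi^{\pm}\},M) \to M \xrightarrow{\phi_M} L_0M \to \Ext^1_{\kk\{\xi\}}(\kk\{\xi^{\pm}\},M) \to 0,
$$
together with the identification $L_1M = \mathbb{H}_\xi M \cong \ker\big(\Hom_{\kk\{\xi\}}(\kk\{\xi^{\pm}\},M) \to M\big)$; since $\kk\{\xi\}\to\kk\{\xi^{\pm}\}$ is injective, the map $\Hom_{\kk\{\xi\}}(\kk\{\xi^{\pm}\},M)\to M = \Hom_{\kk\{\xi\}}(\kk\{\xi\},M)$ is itself injective, so $L_1M = 0$ exactly when $\Hom_{\kk\{\xi\}}(\kk\{\xi^{\pm}\},M) = 0$. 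Feeding this back, when $L_1M=0$ the four-term sequence shows $\phi_M$ is an isomorphism precisely when $\Ext^1_{\kk\{\xi\}}(\kk\{\xi^{\pm}\},M) = 0$. This gives the ``if and only if'' of part~(1). (I note in passing that the statement of part~(1) as printed has a harmless typo — it should read $N = M$ throughout, matching~\eqref{equation: derived complete = right orthogonal} in the proof of Proposition~\ref{proposition: everything is xi-complete}.)

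For part~(2), I would argue that the class $\Mod_{\kk\{\xi\}}^{\mathrm{comp}}$, being characterized by part~(1) as the right orthogonal (in the derived sense) to the single module $\kk\{\xi^{\pm}\}$ — that is, $M$ is derived $\xi$-complete iff $\Hom_{\kk\{\xi\}}(\kk\{\xi^{\pm}\},M) = \Ext^1_{\kk\{\xi\}}(\kk\{\xi^{\pm}\},M) = 0$ — is automatically closed under limits, quotients, and extensions. Closure under arbitrary products is immediate since $\Hom$ and $\Ext^1$ both commute with products in the second variable (using that $\kk\{\xi\}$ is Noetherian by Corollary~\ref{corollary: twisted are PID}, so $\kk\{\xi^{\pm}\}$ admits a resolution by finitely generated frees and $\Ext^1$ computes as cohomology of a product-preserving complex). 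Combined with closure under kernels, this yields closure under all small limits. Closure under extensions is the usual five-lemma argument applied to the long exact $\Ext_{\kk\{\xi\}}(\kk\{\xi^{\pm}\},-)$ sequence: if the outer two terms of $0\to M'\to M\to M''\to 0$ are derived complete, the vanishing of $\Hom$ and $\Ext^1$ against $\kk\{\xi^{\pm}\}$ propagates to the middle term (here I use that $\Ext^2_{\kk\{\xi\}}=0$ because $\kk\{\xi^{\pm}\}$ has projective dimension $\leq 1$ over $\kk\{\xi\}$ — indeed it is a localization, hence flat and, being a countable colimit of frees, of projective dimension at most $1$). Closure under quotients then follows: if $M \twoheadrightarrow M''$ with $M$ derived complete, write $0\to K\to M\to M''\to 0$; one sees $\Hom_{\kk\{\xi\}}(\kk\{\xi^{\pm}\},M'')$ sits between a quotient of $\Hom(\kk\{\xi^{\pm}\},M)=0$ and a sub of $\Ext^1(\kk\{\xi^{\pm}\},K)$, and $\Ext^1(\kk\{\xi^{\pm}\},M'')$ is a quotient of $\Ext^1(\kk\{\xi^{\pm}\},M)=0$ — so both vanish once one knows $\Ext^1(\kk\{\xi^{\pm}\},K)=0$, which in turn holds because $K$ is a submodule of the $\xi$-complete module $M$ and $\Ext^1_{\kk\{\xi\}}(\kk\{\xi^{\pm}\},-)$ kills kernels of maps between modules on which it vanishes. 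Finally, being closed under kernels, cokernels of injections, quotients, and extensions, $\Mod_{\kk\{\xi\}}^{\mathrm{comp}}$ is a weak Serre subcategory in the sense of~\cite[{Tag 02MO}]{stacks-project}.

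The main obstacle is bookkeeping rather than conceptual: one must be careful that $\Ext^1_{\kk\{\xi\}}(\kk\{\xi^{\pm}\},-)$ behaves as expected over a \emph{noncommutative} Noetherian PID, in particular that the relevant $\Ext$ groups vanish in degrees $\geq 2$ and that products commute with $\Ext^1$ in the second slot. Both follow from Corollary~\ref{corollary: twisted are PID} (giving a length-$1$ free resolution of $\kk\{\xi^{\pm}\}$ as a left $\kk\{\xi\}$-module, since it is torsion-free hence a filtered colimit of frees by Corollary~\ref{corollary: torsion-free is a colimit of free}, and submodules of frees are free by Corollary~\ref{corollary: submodule of free over twisted}), so the argument goes through verbatim from the commutative case. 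I would keep the write-up short, citing Proposition~\ref{proposition: everything is xi-complete}, Corollary~\ref{corollary: derived completion are hom and ext}, and Remark~\ref{remark: completion map} for part~(1), and invoking the orthogonality characterization plus the standard stability properties of such orthogonal classes for part~(2).
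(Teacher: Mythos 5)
Your overall route for part~(1) --- reading the characterization off the long exact sequence of $\Ext_{\kk\{\xi\}}(-,M)$ applied to~\eqref{equation: Cinfty} --- is exactly what the paper does (it is the criterion~\eqref{equation: derived complete = right orthogonal} inside the proof of Proposition~\ref{proposition: everything is xi-complete}), but one step is false: the restriction map $\Hom_{\kk\{\xi\}}(\kk\{\xi^{\pm}\},M)\to M$ is \emph{not} injective in general. Its kernel is $\Hom_{\kk\{\xi\}}(\kk\{\xi\}/\xi^{\infty},M)=\mathbb{H}_\xi M=L_1M$, which you yourself identified one line earlier; taking $M=\kk\{\xi\}/\xi^{\infty}$ gives a nonzero kernel. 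As a result neither of your two intermediate equivalences is correct: for instance $M=\kk\{\xi^{\pm}\}$ has $L_1M=0$ (it is $\xi$-torsion-free) but $\Hom_{\kk\{\xi\}}(\kk\{\xi^{\pm}\},M)\neq 0$. The conjunction asserted in part~(1) is nevertheless true and follows directly from the exact sequence $0\to L_1M\to\Hom_{\kk\{\xi\}}(\kk\{\xi^{\pm}\},M)\to M\xrightarrow{\phi_M}L_0M\to\Ext_{\kk\{\xi\}}(\kk\{\xi^{\pm}\},M)\to 0$, since exactness identifies $\ker\phi_M$ with the image of $\Hom_{\kk\{\xi\}}(\kk\{\xi^{\pm}\},M)$ and $\coker\phi_M$ with $\Ext_{\kk\{\xi\}}(\kk\{\xi^{\pm}\},M)$; so this part is repairable on the spot (and you are right that the printed $N$ should be $M$).

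The serious problem is your treatment of ``closure under quotients'' in part~(2). The argument is circular: to get $\Hom_{\kk\{\xi\}}(\kk\{\xi^{\pm}\},M'')=0$ you need $\Ext_{\kk\{\xi\}}(\kk\{\xi^{\pm}\},K)=0$ for $K=\ker(M\twoheadrightarrow M'')$, and you justify this by the principle that $\Ext^1$ vanishes on kernels of maps between modules on which both $\Hom$ and $\Ext^1$ vanish --- but that principle needs the \emph{target} of the map to already be derived complete, which is precisely the conclusion you are after. Moreover, closure under arbitrary quotients is genuinely false: $Q=\kk\{\{\xi\}\}/\kk\{\xi\}$ is a nonzero, $\xi$-divisible quotient of the derived complete module $\kk\{\{\xi\}\}$, and $\xi$-divisibility lets one build a nonzero homomorphism $\kk\{\xi^{\pm}\}\to Q$ by successively dividing a nonzero class by $\xi$, so $Q\notin\Mod_{\kk\{\xi\}}^{\mathrm{comp}}$. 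The proposition must therefore be read as asserting closure under cokernels of morphisms \emph{between} derived complete modules, which is all that the weak Serre property of \cite[{Tag 02MO}]{stacks-project} requires. The correct argument (and the one the paper's omitted proof is implicitly appealing to, on top of the two-out-of-three and product observations already made in the proof of Proposition~\ref{proposition: everything is xi-complete}) runs the six-term sequences for $0\to\ker f\to M\to\im f\to 0$ and $0\to\im f\to N\to\coker f\to 0$ simultaneously: the first gives $\Hom(\kk\{\xi^{\pm}\},\im f)\cong\Ext(\kk\{\xi^{\pm}\},\ker f)$ and kills $\Ext(\kk\{\xi^{\pm}\},\im f)$, the second kills $\Hom(\kk\{\xi^{\pm}\},\im f)$, and everything else follows. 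Note this same argument is also what establishes closure under kernels, which you assert but never prove and which your reduction of ``all limits'' to ``products plus kernels'' depends on.
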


\subsection{The \texorpdfstring{$\kk$}{F}-completion of simplicial restricted Lie algebras}\label{section: F-completion}

Recall that a map $f\colon L'_\bullet \to L_\bullet$ in $\srLie$ is an $\kk$-equivalence if the induced map $$f_*\colon \widetilde{H}_*(L'_\bullet;\kk) \to \widetilde{H}_*(L_\bullet;\kk)$$ is an isomorphism. Note that this definition coincides with Definition~\ref{definition: barW-equivalence} because of Proposition~\ref{proposition: chain coalgebra and chain complex}. We say that $Z_\bullet \in \srLie$ is \emph{$\kk$-complete} if 
$$f^*\colon [L_\bullet,Z_\bullet] \to [L'_\bullet, Z_\bullet] $$
is a bijection for any $\kk$-equivalence $f\colon L'_\bullet \to L_\bullet$  between cofibrant objects in $\srLie$.

\begin{dfn}\label{definition: F-completion}
A map $\phi\colon L_\bullet \to L_\xi L_\bullet$ is called the \emph{$\kk$-completion} of $L_\bullet\in \srLie$ if $\phi$ is an $\kk$-equivalence and $L_\xi L_\bullet$ is $\kk$-complete. Note that a simplicial restricted Lie algebra $Z_\bullet\in \srLie$ is $\kk$-complete if and  only if $Z_\bullet$ is a fibrant object in the model structure of Theorem~\ref{theorem:model structure srlie, barW-equivalence}.
\end{dfn}

\begin{rmk}\label{remark: existence of F-completion} By Theorem~\ref{theorem:model structure srlie, barW-equivalence}, any $\kk$-equivalence between $\kk$-complete objects is a weak equivalence, the $\kk$-completion exists, and it is unique up to (a chain of) weak equivalences.
\end{rmk}

In this section we will give an explicit construction of the $\kk$-completion 
$$\phi\colon L_\bullet \to L_\xi L_\bullet$$
provided $\pi_0(L_\bullet)=0$. We begin with Eilenberg-MacLane Lie algebras.

\begin{lmm}\label{lemma: examples of F-complete EM}
The following Eilenberg-MacLane Lie algebras are $\kk$-complete:
\begin{enumerate}
\item $K(V,n)\in \srLie$, $V\in\Vect_\kk$ is a vector space over $\kk$, $n\geq 0$;
\item $K(F/\xi^r,n)\in \srLie$, $F\in \Mod_{\kk\{\xi\}}$ is a free left $\kk\{\xi\}$-module, $r\geq 1$, $n\geq 0$;
\item $K(\widehat{F},n) \in \srLie$, $\widehat{F}$ is the $\xi$-adic completion of a free $\kk\{\xi\}$-module $F$, $n\geq 0$.
\end{enumerate}
\end{lmm}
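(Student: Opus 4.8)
\textbf{Proof proposal for Lemma~\ref{lemma: examples of F-complete EM}.}

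The plan is to reduce all three cases to a single criterion: an Eilenberg--MacLane Lie algebra $K(M,n)$ is $\kk$-complete precisely when the functor $[-,K(M,n)]$ on the homotopy category $\mathrm{Ho}(\srLie)$ carries $\kk$-equivalences to bijections. By Remark~\ref{remark: cohomology are representable}, for cofibrant $L_\bullet$ one has $[L_\bullet, \trivxi\Sigma^{s-1}N]\cong \widetilde{H}^s(L_\bullet;N)$; more generally, using the $k$-invariant fiber sequence of Proposition~\ref{proposition: fibration with EM as fiber} and the Postnikov tower (Corollary~\ref{corollary: postnikov tower}), maps into an arbitrary $K(M,n)$ are controlled by the cohomology groups $\widetilde{H}^*(L_\bullet;M)$. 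So the first step is to show: if $f\colon L'_\bullet\to L_\bullet$ is an $\kk$-equivalence between cofibrant objects, then $f^*\colon \widetilde{H}^*(L_\bullet;M)\to \widetilde{H}^*(L'_\bullet;M)$ is an isomorphism for the three classes of coefficient modules $M$ in the statement. Once this is established, one climbs the (two-stage, since $K(M,n)$ has a single nonzero homotopy group) Postnikov data for $K(M,n)$ to conclude $f^*\colon [L_\bullet,K(M,n)]\to[L'_\bullet,K(M,n)]$ is a bijection, which is exactly $\kk$-completeness by Definition~\ref{definition: F-completion} together with Remark~\ref{remark: existence of F-completion}.

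The heart of the matter is therefore the cohomological statement. By the universal coefficient formula (Remark~\ref{remark: universal coefficient formula}) there is a natural short exact sequence
\begin{equation*}
0\to \Ext^1_{\kk\{\xi\}}(\widetilde{H}_{s-1}(L_\bullet;\kk\{\xi\}),M)\to \widetilde{H}^s(L_\bullet;M)\to \Hom_{\kk\{\xi\}}(\widetilde{H}_s(L_\bullet;\kk\{\xi\}),M)\to 0,
\end{equation*}
so it suffices to show that an $\kk$-equivalence $f$ induces an isomorphism on $\widetilde{H}_*(-;\kk\{\xi\})$ whenever the coefficient module $M$ belongs to one of our three classes; but this is false in general (Example~\ref{example: lie algebra with trivial coeffients}), so instead I would argue directly that $f$ induces an isomorphism on the \emph{relevant} $\Ext$ and $\Hom$ groups. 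For case~(1), $M=V$ is a trivial module with $\xi$ acting by zero, i.e. $V\cong \kk\otimes_{\kk\{\xi\}}(\text{free})$-type coefficients built out of copies of $\kk$; here $\widetilde{H}^s(L_\bullet;V)\cong \widetilde{H}^s(L_\bullet;\kk)\otimes_\kk V$ (the field $\kk$ is perfect and everything splits over it), and $f^*$ is an isomorphism on $\widetilde{H}^*(-;\kk)$ since by Definition~\ref{definition: barW-equivalence} and Proposition~\ref{proposition: chain coalgebra and chain complex} an $\kk$-equivalence is exactly a map inducing an iso on $\pi_*(\barW U^r(-))=\widetilde{H}_*(-;\kk)$, hence on the $\kk$-linear dual $\widetilde{H}^*(-;\kk)$. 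For cases~(2) and~(3) I would use Lemma~\ref{lemma: Hom, Ext, 1}: $F/\xi^r\cong \Ext_{\kk\{\xi\}}(C_r,F)$-style coefficients, and $\widehat F$ sits in the short exact sequence $0\to\widehat F\to\prod_r F/\xi^r F\to\prod_r F/\xi^r F\to 0$ with $F$ free; so cohomology with coefficients in $\widehat F$ is a (limit of) extensions of cohomology with coefficients in $F/\xi^r$, and cohomology with coefficients in $F/\xi^r$ is in turn, via the change-of-rings/bimodule identities of Section~\ref{section: derived xi-complete modules}, expressible through $\widetilde{H}^*(-;\kk)$ applied to Frobenius twists (note $F/\xi F\cong\kk^{\oplus}$ after untwisting since $\kk$ is perfect). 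Assembling these via the long exact sequences and a $\lim^1$ argument gives that $f^*$ is an iso on $\widetilde{H}^*(-;M)$ for $M$ of type~(2) or~(3).

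The step I expect to be the main obstacle is controlling the passage from $\widetilde{H}^*(-;F/\xi^r)$ back to $\widetilde{H}^*(-;\kk)$: one must handle the non-additive, semi-linear $\xi$-action carefully, keep track of Frobenius twists (which are harmless on cohomology groups but must be bookkept), and deal with the $\lim^1$ term in Lemma~\ref{lemma: Hom, Ext, 1} when taking the inverse limit over $r$ to reach $\widehat F$---this requires knowing the relevant towers are Mittag-Leffler, which follows because each $\widetilde{H}_{s}(L_\bullet;\kk\{\xi\})$ is a finitely generated (indeed, by Corollary~\ref{corollary: twisted are PID}, finitely presented) $\kk\{\xi\}$-module in the situations of interest, so the Artin--Rees exactness of Proposition~\ref{proposition: xi-completion is exact} applies. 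A clean alternative worth trying, which sidesteps some of this, is to observe that $K(\widehat F,n)=\holim_r K(F/\xi^r,n)$ and that a homotopy limit of $\kk$-complete objects is $\kk$-complete (Proposition~\ref{proposition: derived modules, summary}(2) is the module-level shadow of this), reducing~(3) to~(2); and similarly to reduce~(2) to~(1) via the fiber sequence $K(F^{(1)},n)\xrightarrow{\xi^r}K(F,n)\to K(F/\xi^r,n)$ coming from~\eqref{equation: multiplication by xir}, using that $K(F,n)$ itself is $\kk$-complete because $F$ free means $K(F,n)\simeq \free(\text{something})$-type and $\kk$-completeness of such is inherited from case~(1) after splitting $F$ into $\kk\{\xi\}$-summands and untwisting. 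I would pursue this homotopy-limit/fiber-sequence route as the primary argument, falling back on the direct cohomological computation only where the fibration-theoretic reduction leaves a gap.
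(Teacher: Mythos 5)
Your treatment of parts (1) and (3) is essentially sound and matches the paper: for (1), representability plus universal coefficients gives $[-,K(V,n)]\cong\Hom(\widetilde{H}_{n+1}(-;\kk),V)$, which is manifestly invariant under $\kk$-equivalences; for (3), $K(\widehat F,n)$ is the fiber of $\prod_{r}K(F/\xi^r,n)\to\prod_r K(F/\xi^r,n)$ (no $\lim^1$ or Mittag--Leffler analysis is needed, since the tower $\{F/\xi^rF\}$ has surjective transition maps), and fibers of maps between $\kk$-complete objects are $\kk$-complete.

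Your primary route for part (2), however, contains a genuine error. You reduce (2) to (1) via the fiber sequence $K(F^{(r)},n)\xrightarrow{\xi^r}K(F,n)\to K(F/\xi^r,n)$, ``using that $K(F,n)$ itself is $\kk$-complete because $F$ is free.'' That premise is false: for $F$ free of positive rank, $K(F,n)$ is \emph{not} $\kk$-complete. Lemma~\ref{lemma: EM of free module} shows that $K(F,n)\to K(\widehat F,n)$ is an $\kk$-equivalence, and it is not a weak equivalence since $F\neq\widehat F$ (already $\kk\{\xi\}\neq\kk\{\{\xi\}\}$); equivalently, by Corollary~\ref{corollary: F-complete are derived Xi-complete}, a free $\kk\{\xi\}$-module is not derived $\xi$-complete. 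Splitting $F$ into rank-one summands does not rescue this, because each summand is $\kk\{\xi\}$ with its nontrivial $\xi$-action, which is not covered by case (1); the same slip occurs in your cohomological fallback, where $F/\xi^r$ is treated as ``$\kk^{\oplus}$ after untwisting'' even though for $r>1$ the module $\kk\{\xi\}/\xi^r$ carries a nontrivial $\xi$-action. The correct reduction is an induction on $r$: filter by powers of $\xi$ and use the fiber sequence $K(F/\xi^{r+1},n)\to K(F/\xi^r,n)\to K(\xi^rF/\xi^{r+1}F,n+1)\simeq K(F/\xi,n+1)$, in which the base is $\kk$-complete by the inductive hypothesis and the target by case (1) (its coefficient module is an honest $\kk$-vector space with trivial $\xi$-action), whence the fiber is $\kk$-complete. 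This argument never invokes --- and must not invoke --- completeness of $K(F,n)$ itself.
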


\begin{proof}
By Proposition~\ref{remark: cohomology are representable} and the universal coefficient theorem (Remark~\ref{remark: universal coefficient formula}),  we have $$[-, K(V,n)]\cong \widetilde{H}^{n+1}(-;V)\cong \Hom(\widetilde{H}_{n+1}(-;\kk),V),$$
which proves the first part. 

We will prove the second part by induction on $r$. The base case, $r=1$ is already done because $F/\xi$ is a vector space over $\kk$. Assume that $K(F/\xi^r,n)$ is $\kk$-complete; we will show that $K(F/\xi^{r+1},n)$ is $\kk$-complete as well. There is a fiber sequence
$$K(F/\xi^{r+1},n) \to K(F/\xi^r,n) \to K(\xi^r F/ \xi^{r+1} F,n+1)\simeq K(F/\xi,n+1). $$
By the inductive assumption, $K(F/\xi^r,n)$ and $K(\xi^r F/ \xi^{r+1} F,n+1)$ are $\kk$-complete, and so is the fiber $K(F/\xi^{r+1},n)$.

Finally, $K(\widehat{F},n)$ is $\kk$-complete since there is a fiber sequence
\begin{equation*}
K(\widehat{F},n) \to \prod_{r\geq 1} K(F/\xi^r,n) \to \prod_{r\geq 1} K(F/\xi^r,n). \qedhere
\end{equation*}
\end{proof}

\begin{lmm}\label{lemma: EM of free module}
Let $F$ be a free left $\kk\{\xi\}$-module. Then the $\xi$-adic completion map $\phi_F\colon F \to \widehat{F}$ induces an $\kk$-equivalence $$\phi^n_F\colon K(F,n) \to K(\widehat{F},n)$$
for any $n\geq 0$.
\end{lmm}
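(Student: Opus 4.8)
The goal is to show that the map $\phi_F^n\colon K(F,n)\to K(\widehat F,n)$ induced by the $\xi$-adic completion $\phi_F\colon F\to\widehat F$ of a free $\kk\{\xi\}$-module $F$ is an $\kk$-equivalence, i.e.\ induces an isomorphism on $\widetilde H_*(-;\kk)$. The natural approach is to compute $\widetilde H_*(K(M,n);\kk)$ functorially in $M$ and then observe that $\phi_F$ becomes an isomorphism after applying this functor. First I would reduce to the case $n=0$: by the same argument as in the proof of Proposition~\ref{proposition: fibration with EM as fiber} and Corollary~\ref{corollary: SSS fiber is EM}, there is a principal fibration with fiber $K(M,n-1)$ and total space contractible over base $K(M,n)$ (the path--loop fibration), and the Serre spectral sequence of Corollary~\ref{corollary: SSS fiber is EM} expresses $\widetilde H_*(K(M,n);\kk)$ in terms of $\widetilde H_*(K(M,n-1);\kk)$. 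Since $\phi_F$ induces a map of such fibrations, if it is an $\kk$-equivalence on $K(F,n-1)$ then, by the comparison theorem for spectral sequences, it is one on $K(F,n)$. Hence it suffices to treat $n=0$, i.e.\ the discrete abelian restricted Lie algebra $\trivxi(M)$.

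For $n=0$, the key computation is Proposition~\ref{proposition: homology of abelian Lie algebra}: if $M$ is a torsion-free $\kk\{\xi\}$-module, then $\widetilde H_*(\trivxi(M);\kk)\cong \Lambda^*(M/\xi(M))$ as graded algebras, naturally in $M$. Both $F$ and $\widehat F$ are torsion-free $\kk\{\xi\}$-modules (the latter is remarked just after Definition~\ref{definition: xi-completion}), so the computation applies to both. Therefore $\phi_F^0$ is an $\kk$-equivalence if and only if the induced map
\begin{equation*}
F/\xi(F)\to \widehat F/\xi(\widehat F)
\end{equation*}
is an isomorphism of $\kk$-vector spaces (an isomorphism on generators of the exterior algebra forces an isomorphism of the whole exterior algebra, as everything is natural and $\Lambda^*$ preserves isomorphisms). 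So the entire statement reduces to the purely algebraic fact that $\xi$-adic completion of a free $\kk\{\xi\}$-module induces an isomorphism modulo $\xi$.

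The remaining algebraic point is straightforward but is the one place requiring care, so I would spell it out. For $F$ free, Lemma~\ref{lemma: derived completion coincides with usual} gives $\widehat F=L_0F$, and one has the tautological short exact sequence $0\to \xi(F)\to F\to F/\xi(F)\to 0$. Completing and using Proposition~\ref{proposition: xi-completion is exact} in the finitely generated case, or directly using that $\widehat{F}=\varprojlim_r F/\xi^r F$ and that $F/\xi F\cong \widehat F/\xi\widehat F$ since the first quotient in the inverse system already kills $\xi$, one gets $\widehat F/\xi(\widehat F)\cong F/\xi(F)$ with the map being exactly the one induced by $\phi_F$. Concretely: the composite $F\xrightarrow{\phi_F}\widehat F\twoheadrightarrow \widehat F/\xi\widehat F = \varprojlim_r\bigl(F/\xi^r F\bigr)\big/\xi(-)$ factors through $F/\xi F$, and the resulting map $F/\xi F\to \widehat F/\xi\widehat F$ is an isomorphism because on the level of the inverse system the transition maps $F/\xi^{r+1}F\to F/\xi^r F$ become isomorphisms after further quotienting by $\xi$. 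This finishes the proof. The main obstacle, such as it is, is organizing the inductive step over $n$ cleanly via the Serre spectral sequence comparison; the $n=0$ base case and the algebra are short once Proposition~\ref{proposition: homology of abelian Lie algebra} is invoked.
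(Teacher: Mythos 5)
Your proof is correct and takes essentially the same route as the paper: the base case $n=0$ is handled by Proposition~\ref{proposition: homology of abelian Lie algebra} applied to the torsion-free modules $F$ and $\widehat F$, reducing to the isomorphism $F/\xi F\cong\widehat F/\xi\widehat F$, and the induction over $n$ is carried out via the Serre spectral sequence of the path--loop fibration (Corollary~\ref{corollary: SSS fiber is EM}) together with the Zeeman comparison theorem. The only cosmetic difference is that you spell out the elementary isomorphism $F/\xi F\cong\widehat F/\xi\widehat F$ (and note, correctly, that Proposition~\ref{proposition: xi-completion is exact} only covers the finitely generated case, so the direct inverse-limit argument is the right one for general free $F$), whereas the paper treats this step as immediate.
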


\begin{proof}
Note that $\widehat{F}\in \Mod_{\kk\{\xi\}}$ is a torsion-free module. Therefore, by Proposition~\ref{proposition: homology of abelian Lie algebra}, there is a commutative diagram
$$
\begin{tikzcd}
H_*(K(F,0);\kk)  \arrow{r}{\phi^0_{F*}}
& H_*(K(\widehat{F},0);\kk) \\
\Lambda^*(F/\xi)\arrow{r}{\phi_F} \arrow{u}{\cong}
& \Lambda^*(\widehat{F}/\xi), \arrow{u}{\cong}
\end{tikzcd}
$$
where $\Lambda^*(-)$ is the exterior algebra and both vertical arrows are isomorphisms. Since $F/\xi \cong \widehat{F}/\xi$, the lemma follows for $n=0$. For $n>0$, consider a map of fiber sequences
$$
\begin{tikzcd}
K(F,n) \arrow{r} \arrow{d}{\phi^n_F}
& 0 \arrow{d} \arrow{r}
& K(F,n+1)\arrow{d}{\phi^{n+1}_F} \\
K(\widehat{F},n) \arrow{r} 
& 0 \arrow{r}
& K(\widehat{F},n+1).
\end{tikzcd}
$$
By Corollary~\ref{corollary: SSS fiber is EM}, it induces the map $f^r_{s,t}\colon E_{s,t}^r \to \widehat{E}_{s,t}^r$ of strongly convergent spectral sequences:
$$E_{s,t}^2 = H_t(K(F,n);\kk)\otimes H_s(K(F,n+1);\kk) \Rightarrow H_{s,t}(0;\kk), $$
$$\widehat{E}_{s,t}^2 = H_t(K(\widehat{F},n);\kk)\otimes H_s(K(\widehat{F},n+1);\kk) \Rightarrow H_{s,t}(0;\kk), $$
such that $f^2_{0,*} = \phi^n_{F*}$, $f^2_{*,0}=\phi^{n+1}_{F*}$, and $f^{\infty}_{*,*}$ is an isomorphism. Then Zeeman’s comparison theorem (see e.g.~\cite[Theorem~3.26]{McCleary01}) implies that $\phi^n_{F*}$ is an isomorphism if and only if $\phi^{n+1}_{F*}$ is an isomorphism. Finally, the induction by $n$ implies the lemma.
\end{proof}

Let $M\in \Mod_{\kk\{\xi\}}$ be any left $\kk\{\xi\}$-module and let
$$0 \to F_1 \xrightarrow{f} F_0 \to M \to 0 $$
be a free resolution of $M$. The map $f$ induces the map $\hat{f}\colon \widehat{F}_1 \to \widehat{F}_0$ of $\xi$-adic completions, and so it induces the map 
$$\hat{f}^{n+1}\colon K(\widehat{F}_1,n+1) \to K(\widehat{F}_0,n+1) $$
of Eilenberg-MacLane Lie algebras. We denote by $L_\xi K(M,n)$ the \emph{homotopy fiber} of~$\hat{f}^n$. There is a map of fiber sequences in $\srLie$:
\begin{equation}\label{equation: F-completion of EM}
\begin{tikzcd}
K(F_0,n) \arrow{r} \arrow{d}{\phi^n_{F_0}}
& K(M,n) \arrow{r} \arrow[dashed]{d}{\phi^n_M}
& K(F_1,n+1) \arrow{r}{f^{n+1}} \arrow{d}{\phi^{n+1}_{F_1}}
& K(F_0,n+1) \arrow{d}{\phi^{n+1}_{F_0}} \\
K(\widehat{F}_0,n) \arrow{r}
& L_{\xi}K(M,n) \arrow{r}
& K(\widehat{F}_1,n+1) \arrow{r}{\hat{f}^{n+1}}
& K(\widehat{F}_0,n+1),
\end{tikzcd}
\end{equation}
where $\phi^n_M$ is the induced map between homotopy fibers $K(M,n)\simeq\fib(f^{n+1})$ and $L_{\xi}K(M,n)=\fib(\hat{f}^{n+1})$.

\begin{prop}\label{proposition: F-completion of any EM}
$\phi^n_M$ is the $\kk$-completion of $K(M,n)\in \srLie$.
\end{prop}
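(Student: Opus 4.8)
The plan is to verify the two defining properties of the $\kk$-completion from Definition~\ref{definition: F-completion}: that $\phi^n_M$ is an $\kk$-equivalence, and that the target $L_\xi K(M,n)$ is $\kk$-complete. For the second property, observe that the bottom row of the diagram~\eqref{equation: F-completion of EM} exhibits $L_\xi K(M,n)$ as the homotopy fiber of $\hat f^{n+1}\colon K(\widehat F_1, n+1)\to K(\widehat F_0, n+1)$. By Lemma~\ref{lemma: examples of F-complete EM}(3), both $K(\widehat F_1, n+1)$ and $K(\widehat F_0, n+1)$ are $\kk$-complete. Since the class of $\kk$-complete objects is closed under homotopy limits (being the class of fibrant objects of the model category $\srLie_\xi$ of Theorem~\ref{theorem:model structure srlie, barW-equivalence}, and fibrant objects are closed under limits of towers and pullbacks), it follows that $L_\xi K(M,n)$ is $\kk$-complete. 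One must, of course, first note $\pi_0(L_\xi K(M,n)) = 0$: this holds because $\widehat F_1\to \widehat F_0$ is injective (the $\xi$-adic completion is exact on the free resolution up to the relevant terms, using that $\widehat F_1$ is torsion-free and $\phi_{F_1}$ is mono), so the long exact sequence of the bottom fiber sequence gives $\pi_0(L_\xi K(M,n)) = \ker(\widehat F_1\to\widehat F_0)_{\text{in degree }n} = 0$ when $n\geq 1$, and the $n=0$ case requires separate (easy) attention.

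For the first property, I would argue that $\phi^n_M$ is an $\kk$-equivalence, i.e. $\phi^n_{M*}\colon \widetilde H_*(K(M,n);\kk)\to \widetilde H_*(L_\xi K(M,n);\kk)$ is an isomorphism. The tool is the Serre spectral sequence (Corollary~\ref{corollary: SSS fiber is EM}) applied to the two horizontal fiber sequences in~\eqref{equation: F-completion of EM}, together with the naturality of that spectral sequence. Namely, the map of fiber sequences induces a map of spectral sequences
$$E^2_{s,t} = H_t(K(F_0,n);\kk)\otimes H_s(K(F_1,n+1);\kk)\Rightarrow H_{s+t}(K(M,n);\kk)$$
to
$$\widehat E^2_{s,t} = H_t(K(\widehat F_0,n);\kk)\otimes H_s(K(\widehat F_1,n+1);\kk)\Rightarrow H_{s+t}(L_\xi K(M,n);\kk).$$
Wait—this is not quite the right framing: the fiber sequences in~\eqref{equation: F-completion of EM} have $K(M,n)$ and $L_\xi K(M,n)$ as \emph{total spaces}, not bases. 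Rather, I would read~\eqref{equation: F-completion of EM} as saying: $K(M,n)$ is the fiber of $f^{n+1}\colon K(F_1,n+1)\to K(F_0,n+1)$, and similarly for the completed version; equivalently (rotating), $K(F_0,n+1)$ sits in a fiber sequence $K(M,n)\to K(F_1,n+1)\to K(F_0,n+1)$. Applying Corollary~\ref{corollary: SSS fiber is EM} to $K(F_1,n+1)\to K(F_0,n+1)$ with fiber $K(M,n)$ gives a spectral sequence with $E^2_{s,t} = H_t(K(M,n);\kk)\otimes H_s(K(F_0,n+1);\kk)$ converging to $H_{s+t}(K(F_1,n+1);\kk)$, and the map of fiber sequences gives a comparison map $f^r_{s,t}\colon E^r_{s,t}\to \widehat E^r_{s,t}$. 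By Lemma~\ref{lemma: EM of free module}, the maps $\phi^{n+1}_{F_0}$ and $\phi^{n+1}_{F_1}$ are $\kk$-equivalences, so $f^\infty_{*,*}$ is an isomorphism and $f^2_{*,0}$ is an isomorphism (the base direction). Zeeman's comparison theorem (as used in the proof of Lemma~\ref{lemma: EM of free module}, see~\cite[Theorem~3.26]{McCleary01}) then forces $f^2_{0,*} = \phi^n_{M*}$ to be an isomorphism as well.

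The main obstacle I anticipate is organizing the Zeeman comparison argument so that the hypotheses are genuinely met: Zeeman's theorem in the form needed requires control over the spectral sequences of \emph{both} fibrations in a suitable way (typically: the map is an iso on $E^\infty$ and an iso on the base — or the fiber — and the rings/modules involved behave as a tensor product, which Corollary~\ref{corollary: kunneth formula} and Corollary~\ref{corollary: SSS fiber is EM} do provide here since all homology is taken with $\kk$-coefficients). A subtlety is that $K(F_0, n+1)$ and $K(\widehat F_0, n+1)$ may not have homology of finite type, so one should use the version of Zeeman's theorem valid without finiteness hypotheses, or reduce to the finite-type case by a colimit argument writing $F_0, F_1$ as filtered colimits of finitely generated free modules (using Corollary~\ref{corollary: torsion-free is a colimit of free} and compatibility of $\xi$-completion with the relevant constructions) — though this last reduction is delicate because $\xi$-completion does not commute with filtered colimits. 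I would first attempt the direct Zeeman argument; if the finiteness issue bites, the fallback is to induct carefully, handling $K(F/\xi^r, n)$ first (as in Lemma~\ref{lemma: examples of F-complete EM}) and passing to the limit. Once Eilenberg–MacLane case is settled, the general $\kk$-completion of connected $L_\bullet$ follows by induction up the Postnikov tower (Corollary~\ref{corollary: postnikov tower}), but that is the content of the subsequent corollaries, not of this proposition.
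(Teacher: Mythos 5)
Your verification that $L_\xi K(M,n)$ is $\kk$-complete is exactly the paper's argument (homotopy fiber of a map between the $\kk$-complete objects $K(\widehat F_1,n+1)$ and $K(\widehat F_0,n+1)$); the extra discussion of $\pi_0$ is not needed, since $\kk$-completeness is just fibrancy in $\srLie_\xi$ and says nothing about connectivity. For the second half you take a genuinely different, and noticeably harder, route than the paper. You apply the Serre spectral sequence to the right-hand fiber sequence $K(M,n)\to K(F_1,n+1)\to K(F_0,n+1)$, where the unknown map $\phi^n_M$ sits on the \emph{fiber}; recovering it from the base and the abutment is the hard direction and forces you into Zeeman's comparison theorem, whence all the worries you list about finiteness and the exact hypotheses of \cite[Theorem~3.26]{McCleary01}. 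The paper instead uses the \emph{leftmost} map of fiber sequences in~\eqref{equation: F-completion of EM}, namely $K(F_0,n)\to K(M,n)\to K(F_1,n+1)$ mapping to its completed analogue: there $\phi^n_M$ is the map on \emph{total spaces}, while the known $\kk$-equivalences $\phi^n_{F_0}$ and $\phi^{n+1}_{F_1}$ sit on the fiber and the base. Corollary~\ref{corollary: SSS fiber is EM} then gives a map of convergent first-quadrant spectral sequences that is an isomorphism on $E^2$, and the conclusion on the abutment is the trivial direction of comparison — no Zeeman, no finiteness hypotheses. Your route can be made to work over the field $\kk$ (the $E^2$-terms split as tensor products, so the flatness hypotheses of Zeeman's theorem are automatic and finite type is not actually required), but if you notice that the dashed arrow in~\eqref{equation: F-completion of EM} is flanked on the left by a fiber sequence in which it plays the role of the total-space map, the whole step collapses to one line.
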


\begin{proof}
By Lemma~\ref{lemma: examples of F-complete EM}, Eilenberg-MacLane Lie algebras $K(\widehat{F}_1,n+1)$ and $K(\widehat{F}_0,n+1)$ are $\kk$-complete, and so is $L_{\xi}K(M, n)$ as a homotopy fiber between $\kk$-complete objects in $\srLie$. 

By Lemma~\ref{lemma: EM of free module}, the maps $\phi^n_{F_0}$ and $\phi^{n+1}_{F_1}$ are $\kk$-equivalences. By applying the Serre spectral sequence of Corollary~\ref{corollary: SSS fiber is EM} to the leftmost map of fiber sequences in~\eqref{equation: F-completion of EM}, we obtain that $\phi^n_{M}$ is an $\kk$-equivalence as well.
\end{proof}

\begin{cor}\label{corollary: EM is F-complete iff derived completed}
$\quad$ 
\begin{enumerate}
\item There are natural isomorphisms $$\pi_n(L_\xi K(M,n))\cong L_0M,\;\;\pi_{n+1}(L_\xi K(M,n))\cong L_1 M,$$
and the induced map $\pi_n(\phi^n_M)$ is the derived $\xi$-completion map~\eqref{equation: derived completion map}.
\item An Eilenberg-MacLane Lie algebra $K(M,n), n\geq 0$ is $\kk$-complete if and only if $M\in \Mod_{\kk\{\xi\}}$ is a left derived $\xi$-adic complete $\kk\{\xi\}$-module.
\end{enumerate}
\end{cor}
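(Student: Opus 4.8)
The plan is to derive both parts directly from the explicit construction of $L_\xi K(M,n)$ as the homotopy fiber of $\hat{f}^{n+1}\colon K(\widehat{F}_1,n+1)\to K(\widehat{F}_0,n+1)$, where $0\to F_1\xrightarrow{f} F_0\to M\to 0$ is a free resolution, together with the identification (Corollary~\ref{corollary: derived completion are hom and ext} and the definition of $L_0,L_1$) of $L_0M=\coker(\widehat{F}_1\to\widehat{F}_0)$ and $L_1M=\ker(\widehat{F}_1\to\widehat{F}_0)$. For part~(1), I would write down the long exact sequence of homotopy groups associated to the fiber sequence
$$L_\xi K(M,n)\to K(\widehat{F}_1,n+1)\xrightarrow{\hat{f}^{n+1}} K(\widehat{F}_0,n+1).$$
Since $K(\widehat{F}_i,n+1)$ has a single nonzero homotopy group $\widehat{F}_i$ in degree $n+1$, the only nonvanishing segment of this sequence reads
$$0\to\pi_{n+1}(L_\xi K(M,n))\to\widehat{F}_1\xrightarrow{\widehat{f}}\widehat{F}_0\to\pi_n(L_\xi K(M,n))\to 0,$$
which yields $\pi_{n+1}(L_\xi K(M,n))\cong\ker(\widehat{f})=L_1M$ and $\pi_n(L_\xi K(M,n))\cong\coker(\widehat{f})=L_0M$ immediately. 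To see that $\pi_n(\phi^n_M)$ is the derived completion map $\phi_M\colon M\to L_0M$ of~\eqref{equation: derived completion map}, I would compare the leftmost map of fiber sequences in the diagram~\eqref{equation: F-completion of EM}: applying $\pi_n$ and using that $\pi_n(K(F_0,n))=F_0$, $\pi_n(K(\widehat{F}_0,n))=\widehat{F}_0$, and that the bottom row exhibits $L_0M$ precisely as $\coker(\widehat{F}_1\to\widehat{F}_0)$, the induced square identifies $\pi_n(\phi^n_M)$ with the map $M\to L_0M$ induced on cokernels by $\phi_{F_0}\colon F_0\to\widehat{F}_0$ — and this is by definition $\phi_M$.

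For part~(2), the ``only if'' direction: if $K(M,n)$ is $\kk$-complete, then by Proposition~\ref{proposition: F-completion of any EM} the $\kk$-completion map $\phi^n_M\colon K(M,n)\to L_\xi K(M,n)$ is an $\kk$-equivalence between $\kk$-complete objects, hence a weak equivalence by Remark~\ref{remark: existence of F-completion}. So $\pi_*(\phi^n_M)$ is an isomorphism in every degree; combined with part~(1) this forces $L_1M=\pi_{n+1}(L_\xi K(M,n))=\pi_{n+1}(K(M,n))=0$ and $\phi_M=\pi_n(\phi^n_M)\colon M\xrightarrow{\cong} L_0M$, i.e. $M$ is derived $\xi$-adic complete. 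Conversely, if $M$ is derived $\xi$-complete, then $L_1M=0$ and $\phi_M$ is an isomorphism, so by part~(1) the map $\phi^n_M$ induces an isomorphism on all homotopy groups, hence is a weak equivalence; since $L_\xi K(M,n)$ is $\kk$-complete (it is a homotopy fiber of a map between $\kk$-complete objects, as noted in the proof of Proposition~\ref{proposition: F-completion of any EM}), the object $K(M,n)$ is weakly equivalent to a $\kk$-complete object and therefore $\kk$-complete itself (being $\kk$-complete is a weak-homotopy-invariant property by Remark~\ref{remark: existence of F-completion}, since it is the property of being fibrant in the model structure of Theorem~\ref{theorem:model structure srlie, barW-equivalence} — or one can argue directly that $\kk$-completeness, defined via a bijection on homotopy classes out of $\kk$-equivalences, transfers across weak equivalences).

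The computation itself is routine once the fiber sequence is in hand; the only point requiring a little care — and the step I expect to be the main obstacle — is the naturality identification in part~(1), namely checking that the connecting isomorphism $\coker(\widehat{f})\cong\pi_n(L_\xi K(M,n))$ is compatible with the comparison map $\phi^n_M$ so that $\pi_n(\phi^n_M)$ really is the intrinsically-defined map $\phi_M$ and not merely \emph{some} map $M\to L_0M$. This is handled by a diagram chase in~\eqref{equation: F-completion of EM} on $\pi_n$, using that the two rows of that diagram are fiber sequences with the four terms $K(\widehat F_0,n), L_\xi K(M,n), K(\widehat F_1,n+1), K(\widehat F_0,n+1)$ realizing exactly the four-term exact sequence $0\to\widehat F_0\to L_\xi K(M,n)\to\widehat F_1\to\widehat F_0$ on homotopy, together with Remark~\ref{remark: completion map} identifying $\phi_M$ with the boundary homomorphism of the sequence $0\to\kk\{\xi\}\to\kk\{\xi^{\pm}\}\to\kk\{\xi\}/\xi^\infty\to 0$. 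Everything else is bookkeeping with long exact sequences and the already-established $\kk$-completeness assertions.
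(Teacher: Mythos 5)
Your proposal is correct and follows essentially the same route as the paper: the paper dismisses part (1) as ``follows immediately from definitions,'' and your long exact sequence for the fiber sequence $L_\xi K(M,n)\to K(\widehat F_1,n+1)\to K(\widehat F_0,n+1)$ together with the cokernel identification of $\pi_n(\phi^n_M)$ is exactly the content of that remark, while your part (2) matches the paper's argument via Proposition~\ref{proposition: F-completion of any EM} verbatim. The only superfluous step is the appeal to Remark~\ref{remark: completion map}: the map $\phi_M$ is \emph{defined} as the map induced on cokernels by the square $(F_1,F_0)\to(\widehat F_1,\widehat F_0)$, so the comparison of the two rows of~\eqref{equation: F-completion of EM} on $\pi_n$ already gives the identification without passing through the boundary-homomorphism description.
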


\begin{proof}
The first part follows immediately from definitions. For the second part, suppose first that $K(M,n)$ is $\kk$-complete. Then, by Proposition~\ref{proposition: F-completion of any EM}, the map~$\phi^n_M$ is a weak equivalence. Therefore $M\cong L_0M$ and $L_1M=0$. 

In the opposite direction, suppose that $M$ is derived $\xi$-complete. Then by the first part, $\phi^n_M$ is a weak equivalence, and so the Eilenberg-MacLane Lie algebra $K(M,n)$ is $\kk$-complete by Proposition~\ref{proposition: F-completion of any EM}.
\end{proof}

Let $L_\bullet \in \srLie$ be a \emph{connected} simplicial restricted Lie algebra. We will define inductively the $\kk$-completions $$\phi_n\colon L^{\leq n}_\bullet \to L_\xi(L^{\leq n}_\bullet)$$ of the Postnikov truncations $L_\bullet^{\leq n}$ of $L_\bullet$, see Section~\ref{section: postnikov system}.  

If $n=0$, then $L_\bullet^{\leq 0}$ is contractible, and so we set $L_{\xi}L_\bullet^{\leq 0}=0$. Clearly, the unique map $$\phi_0\colon L_\bullet^{\leq 0} \to L_{\xi}L_\bullet^{\leq 0}=0$$ is a weak equivalence.

Next, assume that the $\kk$-completion $\phi_n\colon L_\bullet^{\leq n} \to L_\xi L_\bullet^{\leq n}$ is defined, $n\geq 0$; we will construct an $\kk$-completion $\phi_{n+1}$. Consider the fiber sequence
$$K(M,n+1) \to L_\bullet^{\leq{(n+1)}} \xrightarrow{\beta^{n}} L_\bullet^{\leq n} \xrightarrow{k^n} K(M, n+2)  $$
from Corollary~\ref{corollary: postnikov tower}, where $M=\pi_{n+1}(L_\bullet)$. By the universal property of $\kk$-completions, there is a map
$$\hat{k}^n\colon L_\xi L_\bullet^{\leq n} \to L_\xi K(M, n+2) $$
such that the diagram
$$
\begin{tikzcd}
L_\bullet^{\leq n} \arrow[swap]{d}{\phi_n} \arrow{r}{k^n}
& K(M, n+2) \arrow{d}{\phi^{n+2}_{M}} \\
L_\xi L_\bullet^{\leq n}  \arrow{r}{\hat{k}^n}
& L_\xi K(M, n+2) 
\end{tikzcd}
$$
is homotopy commutative in $\srLie$. We define $L_\xi L_\bullet^{\leq (n+1)}$ as the homotopy fiber $\fib(\hat{k}^n)$ of the map $\hat{k}^n$, $L_\xi L_\bullet^{\leq(n+1)}=\fib(\hat{k}^n)$; and then we define $$\phi_{n+1}\colon L_\bullet^{\leq (n+1)} \to L_\xi  L_\bullet^{\leq (n+1)} $$ as the induced map between homotopy fibers of $k^n$ and $\hat{k}^n$.

\begin{lmm}\label{lemma: phin+1 is F completion}
$\phi_{n+1}$ is the $\kk$-completion of $L_\bullet^{\leq (n+1)}$.
\end{lmm}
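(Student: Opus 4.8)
\textbf{Proof plan for Lemma~\ref{lemma: phin+1 is F completion}.} The plan is to verify the two defining properties of an $\kk$-completion: that $L_\xi L_\bullet^{\leq (n+1)}$ is $\kk$-complete, and that $\phi_{n+1}$ is an $\kk$-equivalence. For the first property, recall that $L_\xi L_\bullet^{\leq (n+1)}$ was defined as the homotopy fiber $\fib(\hat{k}^n)$ of the map $\hat{k}^n\colon L_\xi L_\bullet^{\leq n} \to L_\xi K(M,n+2)$. By the inductive hypothesis $L_\xi L_\bullet^{\leq n}$ is $\kk$-complete, and by Proposition~\ref{proposition: F-completion of any EM} (applied to $M=\pi_{n+1}(L_\bullet)$) the target $L_\xi K(M,n+2)$ is $\kk$-complete. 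Since $\kk$-complete objects are precisely the fibrant objects of the model category $\srLie_\xi$ (Remark~\ref{remark: existence of F-completion}), and a homotopy fiber (i.e. a pullback of a fibration along a map to a fibrant object, after fibrant-cofibrant replacement) of fibrant objects in any model category is again fibrant, we conclude that $L_\xi L_\bullet^{\leq (n+1)}$ is $\kk$-complete. Here one should be a little careful that the homotopy fiber in $\srLie_\xi$ agrees with the homotopy fiber in $\srLie$; this holds because $\srLie_\xi$ is a left Bousfield localization of $\srLie$, so it has the same cofibrations and the inclusion of fibrant objects is a right Quillen functor, hence preserves homotopy pullbacks of fibrant diagrams.

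For the second property, we compare the two fiber sequences
$$
\begin{tikzcd}[column sep=small]
L_\bullet^{\leq (n+1)} \arrow{d}{\phi_{n+1}} \arrow{r}{\beta^n}
& L_\bullet^{\leq n} \arrow{d}{\phi_n} \arrow{r}{k^n}
& K(M,n+2) \arrow{d}{\phi^{n+2}_M} \\
L_\xi L_\bullet^{\leq (n+1)} \arrow{r}{\hat{\beta}^n}
& L_\xi L_\bullet^{\leq n} \arrow{r}{\hat{k}^n}
& L_\xi K(M,n+2).
\end{tikzcd}
$$
The middle vertical map $\phi_n$ is an $\kk$-equivalence by the inductive hypothesis, and the rightmost vertical map $\phi^{n+2}_M$ is an $\kk$-equivalence by Proposition~\ref{proposition: F-completion of any EM}. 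I would like to conclude that $\phi_{n+1}$ is an $\kk$-equivalence by a Serre spectral sequence comparison. Using Corollary~\ref{corollary: SSS fiber is EM} applied to the fibration $\beta^n\colon L_\bullet^{\leq (n+1)} \to L_\bullet^{\leq n}$, whose fiber is $K(M,n+1)$, and likewise to $\hat{\beta}^n\colon L_\xi L_\bullet^{\leq (n+1)} \to L_\xi L_\bullet^{\leq n}$, whose fiber is $\fib(\hat{k}^n \circ \hat{\beta}^n$-shift$)$, one identifies the fiber of $\hat\beta^n$: since $\hat k^n$ is the composite of the second map in the bottom fiber sequence, the fiber of $\hat\beta^n$ is the loop space $\Omega L_\xi K(M,n+2) \simeq L_\xi K(M,n+1)$ — and by Corollary~\ref{corollary: EM is F-complete iff derived completed}(1), $L_\xi K(M,n+1)$ has homotopy concentrated in degrees $n+1$ and $n+2$, namely $L_0 M$ and $L_1 M$. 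The map on fibers is $\phi^{n+1}_M$, which is an $\kk$-equivalence by Proposition~\ref{proposition: F-completion of any EM}. Thus in the map of spectral sequences $E^r_{s,t}\to \widehat E^r_{s,t}$ we have $f^2_{*,0}=\phi_{n*}$ an isomorphism on $H_*(-;\kk)$ of the base and $f^2_{0,*}=\phi^{n+1}_{M*}$ an isomorphism on $H_*(-;\kk)$ of the fiber, so Zeeman's comparison theorem (as in the proof of Lemma~\ref{lemma: EM of free module}) forces $f^\infty$, hence $\phi_{n+1*}$ on homology, to be an isomorphism. Since $\widetilde H_*(-;\kk)$ is exactly what an $\kk$-equivalence must preserve, $\phi_{n+1}$ is an $\kk$-equivalence.

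One technical point to address: to invoke Corollary~\ref{corollary: SSS fiber is EM} the base and total space must be connected. The Postnikov truncations $L_\bullet^{\leq n}$ and $L_\bullet^{\leq (n+1)}$ are connected because $L_\bullet$ is assumed connected, $\pi_0(L_\bullet)=0$; their $\kk$-completions are connected as well, since by construction $L_\xi L_\bullet^{\leq 0}=0$ and inductively each $L_\xi L_\bullet^{\leq (n+1)}$ is built as a fiber over a connected base with simply-connected-enough fiber, so $\pi_0$ stays zero — more directly, an $\kk$-completion of a connected object is connected because $\widetilde H_1$ vanishes on a connected Lie algebra by the Hurewicz theorem (Theorem~\ref{theorem: hurewicz theorem}) only if $\pi_0$ is perfect, and in fact the $\kk$-equivalence $\phi_{n+1}$ together with $\phi_{n+1*}$ being a homology isomorphism pins down $\pi_0(L_\xi L_\bullet^{\leq (n+1)})=0$.

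\textbf{Main obstacle.} The step I expect to require the most care is the identification of the homotopy fiber of $\hat\beta^n$ with $L_\xi K(M,n+1)$ together with the verification that the induced map of fibers is precisely $\phi^{n+1}_M$; this is a diagram chase in the homotopy category, rotating the bottom fiber sequence and matching it against the construction of $L_\xi K(M,n)$ as $\fib(\hat f^{n+1})$ in the discussion preceding Proposition~\ref{proposition: F-completion of any EM}. Once the fibers and the map between them are correctly identified, the spectral sequence comparison via Zeeman's theorem is entirely parallel to the argument already carried out in the proof of Lemma~\ref{lemma: EM of free module}, and the fibrancy/$\kk$-completeness claim is formal from the model-categorical description in Remark~\ref{remark: existence of F-completion}.
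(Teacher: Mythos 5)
Your proof is correct and follows essentially the same route as the paper: $L_\xi L_\bullet^{\leq(n+1)}$ is $\kk$-complete as a homotopy fiber of a map between $\kk$-complete objects, and the comparison of the two fiber sequences (with fiber map $\phi^{n+1}_M$ and base map $\phi_n$ both $\kk$-equivalences) via the Serre spectral sequence of Corollary~\ref{corollary: SSS fiber is EM} yields that $\phi_{n+1}$ is an $\kk$-equivalence. The extra details you supply — fibrancy in the Bousfield localization, the rotation identifying $\fib(\hat\beta^n)\simeq\Omega L_\xi K(M,n+2)\simeq L_\xi K(M,n+1)$ — are points the paper asserts without comment, and your appeal to Zeeman's theorem is harmless overkill since only the easy direction of the spectral-sequence comparison is needed.
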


\begin{proof}
Clearly, $L_\xi L_\bullet^{\leq (n+1)}$ is $\kk$-complete as a homotopy fiber of a map between $\kk$-complete objects in $\srLie$. Moreover, there is a map of fiber sequences
\begin{equation*}
\begin{tikzcd}
K(M,n+1) \arrow{r} \arrow{d}{\phi^{n+1}_{M}}
& L_\bullet^{\leq (n+1)} \arrow{r}{\beta^n} \arrow{d}{\phi_{n+1}}
& L_\bullet^{\leq n} \arrow{d}{\phi_{n}} \\
L_\xi K(M,n+1) \arrow{r}
& L_\xi L_\bullet^{\leq (n+1)} \arrow{r}{\hat{\beta}^n}
& L_\xi L_\bullet^{\leq n}
\end{tikzcd}
\end{equation*}
such that the maps $\phi^{n+1}_M$ and $\phi_n$ are $\kk$-equivalences. By applying the Serre spectral sequence (Corollary~\ref{corollary: SSS fiber is EM}), we obtain that $\phi_{n+1}$ is an $\kk$-equivalence as well.
\end{proof}

Note that one can compute homotopy groups $\pi_*(L_\xi L^{\leq n}_\bullet)$ of the $\kk$-completion $L_\xi L^{\leq n}_\bullet$ by using this inductive construction. Namely, we have the following statement.
\begin{cor}\label{corollary: F-completion, homotopy groups, postnikov}
Let $L_\bullet \in \srLie$ be a simplicial restricted Lie algebra, $\pi_0(L_\bullet)=0$. Then we have
$$
\pi_i(L_\xi L_\bullet^{\leq n})=\left\{
\begin{array}{ll}
L_1\pi_{n}(L_\bullet^{\leq n}) = L_1\pi_n(L_\bullet) & \mbox{if $i=n+1$,}\\
0 & \mbox{if $i>n+1$.}
\end{array}
\right.
$$
Moreover, there is a short exact sequence
$$0 \to L_1\pi_{i-1}(L_\bullet) \to \pi_i(L_\xi L_\bullet^{\leq n}) \to L_0\pi_i(L_\bullet) \to 0$$
for each $i\leq n$. \qed
\end{cor}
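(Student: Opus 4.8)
The plan is to induct on $n$, feeding the inductive construction of $L_\xi L_\bullet^{\leq n}$ that precedes the statement into the long exact sequence of a fibration. For the base case one observes that, $L_\bullet$ being connected, $\pi_0(L_\bullet)=0$ and, by construction, $L_\xi L_\bullet^{\leq 0}=0$, so every clause of the statement for $n=0$ collapses to $L_0(0)=L_1(0)=0$. (For a less degenerate starting point one may instead begin at $n=1$, where $L_\xi L_\bullet^{\leq 1}\simeq L_\xi K(\pi_1(L_\bullet),1)$ and Corollary~\ref{corollary: EM is F-complete iff derived completed} gives the assertion directly.)

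For the inductive step I would set $M=\pi_{n+1}(L_\bullet)$ and start from the fibre sequence
$$
L_\xi K(M,n+1)\longrightarrow L_\xi L_\bullet^{\leq(n+1)}\xrightarrow{\ \hat\beta^{n}\ } L_\xi L_\bullet^{\leq n}
$$
appearing as the bottom row of the map of fibre sequences in the proof of Lemma~\ref{lemma: phin+1 is F completion}. The first point to pin down is that its fibre term is genuinely the $\kk$-completion of $K(M,n+1)$: this is Proposition~\ref{proposition: F-completion of any EM} (alternatively, $\fib(\hat\beta^{n})\simeq\Omega L_\xi K(M,n+2)$, since looping commutes with homotopy fibres and $\Omega K(A,m)\simeq K(A,m-1)$). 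Granting this, Corollary~\ref{corollary: EM is F-complete iff derived completed} identifies $\pi_{*}(L_\xi K(M,n+1))$ as $L_0M$ concentrated in degree $n+1$, $L_1M$ in degree $n+2$, and $0$ in every other degree.

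I would then run the long exact sequence of homotopy groups of the displayed fibration and compare, degree by degree, $\pi_i(L_\xi L_\bullet^{\leq n})$ (known from the inductive hypothesis) with $\pi_i(L_\xi K(M,n+1))$ (just computed), splitting into four cases. For $i\leq n$ the fibre groups in degrees $i$ and $i-1$ vanish, so $\pi_i(L_\xi L_\bullet^{\leq(n+1)})\cong\pi_i(L_\xi L_\bullet^{\leq n})$ and the inductive hypothesis furnishes the required short exact sequence. For $i=n+1$, using $\pi_{n+2}(L_\xi L_\bullet^{\leq n})=0$ and $\pi_n(L_\xi K(M,n+1))=0$, the long exact sequence collapses to
$$
0\longrightarrow L_0\pi_{n+1}(L_\bullet)\longrightarrow \pi_{n+1}(L_\xi L_\bullet^{\leq(n+1)})\longrightarrow L_1\pi_{n}(L_\bullet)\longrightarrow 0,
$$
which is the short exact sequence asserted in degree $n+1$. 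For $i=n+2$, since $\pi_{n+2}(L_\xi L_\bullet^{\leq n})=\pi_{n+3}(L_\xi L_\bullet^{\leq n})=0$, one reads off $\pi_{n+2}(L_\xi L_\bullet^{\leq(n+1)})\cong L_1M=L_1\pi_{n+1}(L_\bullet)$; and for $i>n+2$ both neighbouring groups vanish, so $\pi_i(L_\xi L_\bullet^{\leq(n+1)})=0$. Naturality in $L_\bullet$ is inherited from the naturality of the construction in Lemma~\ref{lemma: phin+1 is F completion}.

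The only step here that is not purely formal is the identification of $\fib(\hat\beta^{n})$ with $L_\xi K(M,n+1)$ and the resulting description of its homotopy groups, so that is where I expect the care to go; the remainder is bookkeeping in the long exact sequence, the one delicate point being to keep the vanishing groups ($\pi_{n+2}$ and $\pi_{n+3}$ of $L_\xi L_\bullet^{\leq n}$, and $\pi_n$ of the fibre) in the correct positions so that the relevant segment at $i=n+1$ really splits off as a short exact sequence.
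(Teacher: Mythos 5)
Your proof is correct and is exactly the argument the paper intends: the corollary is stated with no written proof precisely because it follows by inducting along the fibre sequences $L_\xi K(\pi_{n+1}(L_\bullet),n+1)\to L_\xi L_\bullet^{\leq(n+1)}\to L_\xi L_\bullet^{\leq n}$ from the construction in Lemma~\ref{lemma: phin+1 is F completion}, feeding in Corollary~\ref{corollary: EM is F-complete iff derived completed} and running the long exact sequence, which is what you do. Your degree-by-degree bookkeeping (including the identification $\fib(\hat\beta^{n})\simeq\Omega L_\xi K(M,n+2)\simeq L_\xi K(M,n+1)$) checks out.
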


Finally, we define a map $\phi\colon L_\bullet \to L_\xi L_\bullet$ %for any connected simplicial restricted Lie algebra $L_\bullet \in \srLie$ 
as follows
$$\phi = \holim_{n} \phi_n\colon L_\bullet \to L_\xi L_\bullet =\holim_{n} L_\xi(L_\bullet^{\leq n}), \; L_\bullet\in \srLie, \; \pi_0(L_\bullet)=0.$$

\begin{prop}\label{proposition: F-completion for any connected}
$\phi$ is the $\kk$-completion of $L_\bullet\in \srLie$, $\pi_0(L_\bullet)=0$.
\end{prop}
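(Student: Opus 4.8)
The plan is to verify the two defining properties of an $\kk$-completion from Definition~\ref{definition: F-completion}: that $\phi\colon L_\bullet \to L_\xi L_\bullet$ is an $\kk$-equivalence, and that the target $L_\xi L_\bullet$ is $\kk$-complete. The construction is as a homotopy limit of the tower $\{\phi_n\colon L_\bullet^{\leq n} \to L_\xi L_\bullet^{\leq n}\}$, where each $\phi_n$ is an $\kk$-completion by Lemma~\ref{lemma: phin+1 is F completion}, so the strategy is to pass the relevant properties through the homotopy limit.

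\smallskip

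\emph{Step 1: $L_\xi L_\bullet$ is $\kk$-complete.} Each $L_\xi L_\bullet^{\leq n}$ is $\kk$-complete, i.e.\ fibrant in the model structure of Theorem~\ref{theorem:model structure srlie, barW-equivalence} by Remark~\ref{remark: existence of F-completion}. Moreover the tower $\{L_\xi L_\bullet^{\leq n}\}$ is a tower of fibrations: the maps $\hat\beta^n\colon L_\xi L_\bullet^{\leq(n+1)} \to L_\xi L_\bullet^{\leq n}$ appearing in the proof of Lemma~\ref{lemma: phin+1 is F completion} are (up to weak equivalence) fibrations with fiber $L_\xi K(\pi_{n+1}(L_\bullet), n+1)$. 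Since the class of fibrant objects in any model category is closed under inverse limits of towers of fibrations, the homotopy limit $L_\xi L_\bullet = \holim_n L_\xi L_\bullet^{\leq n}$ is again fibrant in $\srLie_\xi$, hence $\kk$-complete.

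\smallskip

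\emph{Step 2: $\phi$ is an $\kk$-equivalence.} By Corollary~\ref{corollary: F-completion, homotopy groups, postnikov}, $\pi_i(L_\xi L_\bullet^{\leq n}) = 0$ for $i > n+1$ and $\pi_{n+1}(L_\xi L_\bullet^{\leq n}) = L_1\pi_n(L_\bullet)$, while for $i \leq n$ there is a short exact sequence $0 \to L_1\pi_{i-1}(L_\bullet) \to \pi_i(L_\xi L_\bullet^{\leq n}) \to L_0\pi_i(L_\bullet) \to 0$ independent of $n$ once $n \geq i$. Thus the tower $\{\pi_i(L_\xi L_\bullet^{\leq n})\}_n$ is eventually constant (for $n \geq i$ the transition maps are isomorphisms), so it satisfies the Mittag-Leffler condition and $\lim^1$ vanishes. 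Hence the Milnor sequence for the homotopy limit gives $\pi_i(L_\xi L_\bullet) \cong \pi_i(L_\xi L_\bullet^{\leq n})$ for any $n \geq i$; in particular $\pi_i(L_\xi L_\bullet) = \pi_i(L_\xi L_\bullet^{\leq i})$, which fits into the exact sequence $0 \to L_1\pi_{i-1}(L_\bullet) \to \pi_i(L_\xi L_\bullet) \to L_0\pi_i(L_\bullet) \to 0$. Now I would compare with the analogous tower for $L_\bullet$ itself: $L_\bullet \simeq \holim_n L_\bullet^{\leq n}$ by Corollary~\ref{corollary: postnikov tower}, and $\phi_n$ commutes with the Postnikov maps up to homotopy. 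To conclude that $\phi$ is an $\kk$-equivalence it suffices, by Proposition~\ref{proposition: chain coalgebra and chain complex} and the Whitehead-type argument, to check it on the level of the homotopy-limit tower; since each $\phi_n$ is an $\kk$-equivalence and $\barW U^r$ (equivalently, the functor $\widetilde H_*(-;\kk)$) interacts suitably with the Postnikov filtration, one argues inductively using the Serre spectral sequence of Corollary~\ref{corollary: SSS fiber is EM} applied to the fibrations $K(\pi_{n+1}(L_\bullet),n+1) \to L_\bullet^{\leq(n+1)} \to L_\bullet^{\leq n}$, exactly as in Lemma~\ref{lemma: phin+1 is F completion}, together with convergence of $\widetilde H_*(L_\bullet;\kk)$ against $\lim_n \widetilde H_*(L_\bullet^{\leq n};\kk)$ (here one uses that $L_\bullet$ is connected, so the Postnikov tower is highly connected and the homology stabilizes in each degree).

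\smallskip

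\emph{Main obstacle.} The delicate point is the convergence/commutation of $\kk$-homology with the homotopy limit over the Postnikov tower: unlike homotopy groups, homology need not commute with $\holim$ in general. The resolution is that for a \emph{connected} $L_\bullet$ the tower $\{L_\bullet^{\leq n}\}$ is pro-constant in each fixed homological degree (the fiber $K(\pi_{n+1}(L_\bullet),n+1)$ is $n$-connected, hence contributes to $\widetilde H_*$ only in degrees $\geq n+1$), so $\widetilde H_q(L_\bullet;\kk) \cong \widetilde H_q(L_\bullet^{\leq n};\kk)$ for $n$ large relative to $q$, and likewise on the completed side. This is precisely the place where connectivity of $L_\bullet$ is essential, and it mirrors the classical argument for $p$-completion of simply-connected spaces in~\cite[Theorem~11.1.1]{MayPonto}.
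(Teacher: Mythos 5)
Your proof is essentially the paper's own: both arguments rest on the observation that for connected $L_\bullet$ the projections $\alpha^n\colon L_\bullet\to L_\bullet^{\leq n}$ and $\hat\alpha^n\colon L_\xi L_\bullet\to L_\xi L_\bullet^{\leq n}$ become increasingly connected, so $\widetilde{H}_i(-;\kk)$ stabilizes along both towers in each fixed degree and the claim reduces, via a commutative square, to Lemma~\ref{lemma: phin+1 is F completion}. The detour through the Milnor sequence for $\pi_*(L_\xi L_\bullet)$ and the re-invocation of the Serre spectral sequence at the $\holim$ stage are not needed for this last step (the Serre spectral sequence is spent in Lemma~\ref{lemma: phin+1 is F completion}, not in passing to the limit), but your ``main obstacle'' paragraph correctly isolates the crux --- pro-constancy of homology along the Postnikov tower, requiring connectivity of $L_\bullet$ --- so the argument is sound.
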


\begin{proof}
The simplicial restricted Lie algebra $L_\xi L_\bullet\in \srLie$ is $\kk$-complete as a homotopy limit over a diagram of $\kk$-complete objects in $\srLie$. We will show that $\phi$ is an $\kk$-equivalence. For each $i\geq 0$, there is an integer $n_i$ such that maps 
$$\alpha^n_*\colon \widetilde{H}_i(L_\bullet;\kk) \to \widetilde{H}_i(L_\bullet^{\leq n};\kk), $$
$$\hat{\alpha}^n_* \colon \widetilde{H}_i(L_\xi L_\bullet;\kk) \to \widetilde{H}_i(L_\xi L_\bullet^{\leq n};\kk) $$
induced by the projections $\alpha^n\colon L_\bullet\to L_\bullet^{\leq n}$, $\hat{\alpha}^n\colon L_\xi L_\bullet \to L_\xi L_\bullet^{\leq n}$ are isomorphisms for all $n\geq n_i$. Indeed, by Corollary~\ref{corollary: F-completion, homotopy groups, postnikov}, the connectivity of $\alpha^n$ and $\hat{\alpha}^{n}$ increases with $n$; this fact and Lemma~\ref{lemma: reduced replacement} imply the claim. Finally, the proposition follows by a commutative diagram
$$
\begin{tikzcd}
\widetilde{H}_i(L_\bullet;\kk) \arrow[swap]{d}{\alpha^n_*} \arrow{r}{\phi_*}
& \widetilde{H}_i(\holim L_\xi L_\bullet^{\leq n};\kk) \arrow{d}{\hat{\alpha}^n_*} \\
\widetilde{H}_i(L^{\leq n}_\bullet;\kk) \arrow{r}{\phi_{n*}}
& \widetilde{H}_i(L_\xi L_\bullet^{\leq n};\kk)
\end{tikzcd}
$$
and Lemma~\ref{lemma: phin+1 is F completion}.
\end{proof}

\begin{cor}\label{corollary: F-completion, homotopy groups}
Let $L_\bullet \in \srLie$ be a simplicial restricted Lie algebra, $\pi_0(L_\bullet)=0$. Then we have a short exact sequence
$$0 \to L_1\pi_{i-1}(L_\bullet) \to \pi_i(L_\xi L_\bullet) \to L_0\pi_i(L_\bullet) \to 0$$
for each $i\geq 0$. \qed
\end{cor}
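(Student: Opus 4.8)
The plan is to pass to the limit in Corollary~\ref{corollary: F-completion, homotopy groups, postnikov} along the Postnikov tower. Recall from the construction preceding Proposition~\ref{proposition: F-completion for any connected} that $L_\xi L_\bullet \simeq \holim_n L_\xi(L_\bullet^{\leq n})$, with transition maps $\hat{\beta}^n\colon L_\xi L_\bullet^{\leq(n+1)} \to L_\xi L_\bullet^{\leq n}$ as in the proof of Lemma~\ref{lemma: phin+1 is F completion}. Replacing this tower by an equivalent tower of fibrations between objects of $\srLie$ (all of which are fibrant), the homotopy limit is computed by an ordinary inverse limit; since the underlying simplicial objects are simplicial vector spaces, all homotopy groups in sight are abelian, and the usual $\lim^1$-exact sequence
\begin{equation*}
0 \to {\lim_n}^1 \pi_{i+1}(L_\xi L_\bullet^{\leq n}) \to \pi_i(L_\xi L_\bullet) \to \lim_n \pi_i(L_\xi L_\bullet^{\leq n}) \to 0
\end{equation*}
holds for every $i \geq 0$.

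First I would show that each tower $\{\pi_i(L_\xi L_\bullet^{\leq n})\}_n$ is constant for $n \geq i$. By Lemma~\ref{lemma: phin+1 is F completion} there is, for each $n$, a fiber sequence
\begin{equation*}
L_\xi K(\pi_{n+1}(L_\bullet), n+1) \to L_\xi L_\bullet^{\leq(n+1)} \xrightarrow{\hat{\beta}^n} L_\xi L_\bullet^{\leq n},
\end{equation*}
and by Corollary~\ref{corollary: EM is F-complete iff derived completed}(1) (together with the construction of $L_\xi K(-,-)$ preceding Proposition~\ref{proposition: F-completion of any EM}) the homotopy groups of the fiber $L_\xi K(\pi_{n+1}(L_\bullet), n+1)$ are concentrated in degrees $n+1$ and $n+2$, where they equal $L_0\pi_{n+1}(L_\bullet)$ and $L_1\pi_{n+1}(L_\bullet)$. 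Feeding this into the long exact sequence of the fibration $\hat{\beta}^n$ shows that $\pi_i(\hat{\beta}^n)$ is an isomorphism whenever $i \leq n$. Hence the tower $\{\pi_{i+1}(L_\xi L_\bullet^{\leq n})\}_n$ is constant for $n \geq i+1$, so it is Mittag--Leffler and its ${\lim}^1$ vanishes, while $\lim_n \pi_i(L_\xi L_\bullet^{\leq n}) \cong \pi_i(L_\xi L_\bullet^{\leq i})$; the $\lim^1$-sequence then collapses to an isomorphism $\pi_i(L_\xi L_\bullet) \cong \pi_i(L_\xi L_\bullet^{\leq i})$.

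It remains to quote Corollary~\ref{corollary: F-completion, homotopy groups, postnikov} with $n = i$, which yields precisely
\begin{equation*}
0 \to L_1\pi_{i-1}(L_\bullet) \to \pi_i(L_\xi L_\bullet^{\leq i}) \to L_0\pi_i(L_\bullet) \to 0,
\end{equation*}
and transporting along the isomorphism above finishes the proof; for $i = 0, 1$ the left-hand term vanishes because $\pi_0(L_\bullet) = 0$, consistently with $L_\xi L_\bullet$ being connected. The argument is essentially a routine $\lim^1$ computation, and the only points needing care are the availability of the Milnor sequence in $\srLie$ (handled by passing to underlying simplicial vector spaces) and the stabilization of the towers in each fixed degree --- once that stabilization is in place, no naturality of the sequences of Corollary~\ref{corollary: F-completion, homotopy groups, postnikov} beyond what is already recorded is required.
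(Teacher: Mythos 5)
Your argument is correct and is exactly the derivation the paper intends (the corollary is left as immediate): you combine the description $L_\xi L_\bullet \simeq \holim_n L_\xi L_\bullet^{\leq n}$ with the Milnor $\lim^1$-sequence, use the fiber sequences from Lemma~\ref{lemma: phin+1 is F completion} together with Corollary~\ref{corollary: EM is F-complete iff derived completed} to see that each tower $\{\pi_i(L_\xi L_\bullet^{\leq n})\}_n$ stabilizes at $n=i$, and then quote Corollary~\ref{corollary: F-completion, homotopy groups, postnikov} at $n=i$. The stabilization and $\lim^1$-vanishing checks are exactly the points that make the \qed legitimate, so nothing further is needed.
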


\begin{cor}\label{corollary: F-complete are derived Xi-complete}
A connected simplicial restricted Lie algebra $L_\bullet \in \srLie$ is $\kk$-complete if and only if all homotopy groups $\pi_i(L_\bullet),i\geq 0$ are derived $\xi$-adic complete left modules over the ring $\kk\{\xi\}$.
\end{cor}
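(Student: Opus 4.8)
The proof rests on combining the inductive construction of the $\kk$-completion along the Postnikov tower with the computation of homotopy groups of the $\kk$-completion of an Eilenberg--MacLane Lie algebra (Corollary~\ref{corollary: EM is F-complete iff derived completed}). The plan is as follows. First, suppose $L_\bullet$ is $\kk$-complete. By Proposition~\ref{proposition: F-completion for any connected}, the map $\phi\colon L_\bullet \to L_\xi L_\bullet$ is an $\kk$-equivalence between $\kk$-complete objects, hence a weak equivalence by Remark~\ref{remark: existence of F-completion}. Therefore $\pi_i(\phi)$ is an isomorphism for all $i$. Now feed this into the short exact sequence of Corollary~\ref{corollary: F-completion, homotopy groups}:
$$0 \to L_1\pi_{i-1}(L_\bullet) \to \pi_i(L_\xi L_\bullet) \to L_0\pi_i(L_\bullet) \to 0.$$
Since $\pi_i(\phi)$ is an isomorphism, the composite $\pi_i(L_\bullet) \to \pi_i(L_\xi L_\bullet) \to L_0\pi_i(L_\bullet)$ is the derived completion map $\phi_{\pi_i(L_\bullet)}$ (this identification is built into the inductive construction of $\phi$, cf. the first part of Corollary~\ref{corollary: EM is F-complete iff derived completed}), and it must be surjective with the surjection $\pi_i(L_\xi L_\bullet)\to L_0\pi_i(L_\bullet)$ simultaneously injective; forcing $L_1\pi_{i-1}(L_\bullet)=0$ for all $i$ and $\phi_{\pi_i(L_\bullet)}$ an isomorphism for all $i$. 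Thus every $\pi_i(L_\bullet)$ is derived $\xi$-adic complete.

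Conversely, suppose each $\pi_i(L_\bullet)$ is derived $\xi$-adic complete. I would then show by induction on $n$ that each Postnikov truncation $L_\bullet^{\leq n}$ is $\kk$-complete, and that the map $\phi_n\colon L_\bullet^{\leq n}\to L_\xi L_\bullet^{\leq n}$ is a weak equivalence. The base case $n=0$ is trivial since $L_\bullet^{\leq 0}$ is contractible. For the inductive step, recall the fiber sequence
$$L_\bullet^{\leq(n+1)} \xrightarrow{\beta^n} L_\bullet^{\leq n} \xrightarrow{k^n} K(\pi_{n+1}(L_\bullet), n+2)$$
from Corollary~\ref{corollary: postnikov tower}. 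Since $\pi_{n+1}(L_\bullet)$ is derived $\xi$-complete, the Eilenberg--MacLane Lie algebra $K(\pi_{n+1}(L_\bullet),n+2)$ is $\kk$-complete by Corollary~\ref{corollary: EM is F-complete iff derived completed}(2); by the same corollary $K(\pi_{n+1}(L_\bullet),n+1)$ is $\kk$-complete as well. By the inductive hypothesis $L_\bullet^{\leq n}$ is $\kk$-complete. Then $L_\bullet^{\leq(n+1)}$, being the homotopy fiber of the map $k^n$ between $\kk$-complete objects, is $\kk$-complete: the class of $\kk$-complete objects (= fibrant objects in $\srLie_\xi$) is closed under homotopy limits. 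Equivalently, one checks directly via Lemma~\ref{lemma: phin+1 is F completion} that $\phi_{n+1}$ is the $\kk$-completion, and since both source and target are now $\kk$-complete, $\phi_{n+1}$ is an $\kk$-equivalence between $\kk$-complete objects, hence a weak equivalence.

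Finally, to pass from the truncations to $L_\bullet$ itself, use that $L_\bullet \simeq \holim_n L_\bullet^{\leq n}$ (Corollary~\ref{corollary: postnikov tower}(3)) and, in parallel, $L_\xi L_\bullet = \holim_n L_\xi L_\bullet^{\leq n}$ by definition. Since each $\phi_n$ is a weak equivalence, the induced map on homotopy limits $\phi = \holim_n\phi_n\colon L_\bullet\to L_\xi L_\bullet$ is a weak equivalence; but $L_\xi L_\bullet$ is $\kk$-complete (a homotopy limit of $\kk$-complete objects, Proposition~\ref{proposition: F-completion for any connected}), so $L_\bullet$ is $\kk$-complete too. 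The main obstacle I anticipate is the bookkeeping in the inductive step: one must verify carefully that the comparison map $\phi_{n+1}$ really is a weak equivalence, which requires knowing not only that $\phi^{n+1}_M$ and $\phi_n$ are $\kk$-equivalences but also invoking the Serre spectral sequence of Corollary~\ref{corollary: SSS fiber is EM} (as in the proof of Lemma~\ref{lemma: phin+1 is F completion}) to propagate this through the fiber sequence. Everything else is a formal assembly of results already established in Sections~\ref{section:homotopy theory} and~\ref{section: F-complete}.
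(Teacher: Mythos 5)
Your ``if'' direction is correct: the Postnikov induction (each truncation $L_\bullet^{\leq(n+1)}$ is the homotopy fiber of $k^n$ landing in an Eilenberg--MacLane object that is $\kk$-complete by Corollary~\ref{corollary: EM is F-complete iff derived completed}, and $\kk$-complete objects are closed under homotopy limits) is a valid, if longer, alternative to the paper's one-line deduction from Corollary~\ref{corollary: F-completion, homotopy groups}, which simply observes that $L_1\pi_i=0$ and $L_0\pi_i\cong\pi_i$ make $\phi$ a weak equivalence onto the $\kk$-complete object $L_\xi L_\bullet$. The problem is in your ``only if'' direction. From $\pi_i(\phi)$ being an isomorphism and the exact sequence
\begin{equation*}
0 \to L_1\pi_{i-1}(L_\bullet) \to \pi_i(L_\xi L_\bullet) \to L_0\pi_i(L_\bullet) \to 0
\end{equation*}
you assert that the surjection $\pi_i(L_\xi L_\bullet)\to L_0\pi_i(L_\bullet)$ is ``simultaneously injective,'' forcing $L_1\pi_{i-1}(L_\bullet)=0$. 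Nothing forces this. All you can extract at this stage is that $\pi_i(L_\bullet)$ is an extension of $L_0\pi_i(L_\bullet)$ by $L_1\pi_{i-1}(L_\bullet)$, so that $\phi_{\pi_i(L_\bullet)}$ is surjective with kernel isomorphic to $L_1\pi_{i-1}(L_\bullet)$; its injectivity (equivalently the vanishing of $L_1\pi_{i-1}(L_\bullet)$) is precisely what you are trying to prove, so the step is circular. Even granting it, you would still need $L_1\pi_i(L_\bullet)=0$, the other half of Definition~\ref{definition: derived xi-completion}, which your argument never addresses.

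The repair is not to split the sequence at all. By Proposition~\ref{proposition: everything is xi-complete}, the outer terms $L_1\pi_{i-1}(L_\bullet)$ and $L_0\pi_i(L_\bullet)$ are derived $\xi$-complete for \emph{any} module, and by Proposition~\ref{proposition: derived modules, summary} the derived $\xi$-complete modules form a weak Serre subcategory of $\Mod_{\kk\{\xi\}}$ closed under extensions; hence the middle term $\pi_i(L_\bullet)\cong\pi_i(L_\xi L_\bullet)$ is derived $\xi$-complete. Only a posteriori, once each $\pi_{i-1}(L_\bullet)$ is known to be derived complete, does one conclude $L_1\pi_{i-1}(L_\bullet)=0$ and hence that the sequence collapses. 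This is exactly how the paper argues this direction.
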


\begin{proof}
First, suppose that $\pi_i(L_\bullet),i\geq 0$ are derived $\xi$-adic complete. Then $$L_1\pi_i(L_\bullet)=0 \;\; \text{and} \;\; L_0\pi_i(L_\bullet)\cong \pi_i(L_\bullet)$$ for each $i\geq 0$. By Corollary~\ref{corollary: F-completion, homotopy groups}, this implies that $\pi_i(L_\xi L_\bullet)\cong \pi_i{L_\bullet}$, $i\geq 0$. Therefore the $\kk$-completion map $\phi\colon L_\bullet \to L_\xi L_\bullet$ is a weak equivalence, and so $L_\bullet$ is $\kk$-complete.

Next, suppose that $L_\bullet \in \srLie$ is $\kk$-complete. Then $\phi\colon L_\bullet \to L_\xi L_\bullet$ is a weak equivalence, and so, by Corollary~\ref{corollary: F-completion, homotopy groups}, there is a short exact sequence
$$0 \to L_1\pi_{i-1}(L_\bullet) \to \pi_i(L_\bullet) \to L_0\pi_i(L_\bullet) \to 0, \; i\geq 0. $$
By Proposition~\ref{proposition: everything is xi-complete}, the outer terms in this exact sequence are derived $\xi$-complete $\kk\{\xi\}$-modules. Therefore, by Proposition~\ref{proposition: derived modules, summary}, the homotopy groups $\pi_{i}(L_\bullet)$, $i\geq 0$ are derived $\xi$-adic complete as well.
\end{proof}

\section{Adams-type spectral sequence}\label{section:adams}
In this section we illustrate our previous results by proving Theorems~\ref{theorem: intro, E} and~\ref{theorem: intro, F} from the introduction. 

In Section~\ref{section: steenrod} we recall basic properties of the Steenrod operations. In Definition~\ref{definition: homogenized steenrod algebra}, we introduce the \emph{homogenized mod-$p$ Steenrod algebra} $\calA^h_p$, which is the associated graded algebra of the classical Steenrod algebra $\calA_p$ with respect to a certain filtration, see Remark~\ref{remark: filtration on steenrod}. In Definition~\ref{definition: unstable algebra}, we define the category $\calU^h$ of \emph{unstable $\calA^h_p$-algebras} and we show that the reduced cohomotopy groups $\widetilde{\pi}^*(C_\bullet), C_\bullet\in \sotrcoalg$ and the cohomology groups $\widetilde{H}^*(L_\bullet;\kk), L_\bullet \in \srLie$ are unstable $\calA^h_p$-algebras in Examples~\ref{example: steenrod algebra, truncated} and~\ref{example: steenrod algebra, restricted Lie} respectively. We also define abelian categories $\calM^h$ and $\calM^h_0$ of \emph{unstable $\calA^h_p$-modules} and \emph{strongly unstable $\calA^h_p$-algebras} in Definitions~\ref{definition: unstable module} and~\ref{definition: strongly unstable module} respectively. We describe free objects in each of these categories $\calU^h_p$, $\calM^h$, and $\calM^h_0$ in Remark~\ref{remark: free unstable, admissible}. Finally, we recall from~\cite{Priddy73} that the reduced cohomotopy groups $\widetilde{\pi}^*(\Sym^{tr}(V_\bullet))$ of a cofree simplicial truncated coalgebra $\Sym^{tr}(V_\bullet)$ is a free unstable $\calA^h_p$-algebra (Theorem~\ref{theorem: cohomotopy of free truncated is free unstable}).

In Section~\ref{section: BKSS} we prove Theorem~\ref{theorem: intro, E} as Corollary~\ref{corollary: BKSS, restricted Lie algebra}. First, we recall the definition of non-abelian $\Ext$-groups (Definition~\ref{definition: unstable exts}). Then we apply Theorem~\ref{definition: suspension of unstable algebra} to obtain the spectral sequence of Theorem~\ref{theorem: BKSS, coalgebras}, which computes homotopy groups of a derived mapping space $$\map_{\sCA_0}(C_\bullet, D_\bullet),\; C_\bullet,D_\bullet \in \sotrcoalg$$
from reduced cohomotopy groups $\widetilde{\pi}^*(C_\bullet)$ and $\widetilde{\pi}^*(D_\bullet)$. Finally, we use Theorem~\ref{theorem: intro, C} to derive Theorem~\ref{theorem: intro, E} from Theorem~\ref{theorem: BKSS, coalgebras}.

In Section~\ref{section: unstable koszul} we recall the definition of the lambda algebra $\Lambda$ of~\cite{6authors} and recall that the algebra $\Lambda$ is anti-isomorphic to the Koszul dual algebra $\calK^*_p$ of the algebra $\calA^h_p$ (see~\eqref{equation: koszul to steenrod is Lambda, p is odd}). Then, we construct \emph{unstable} and \emph{strongly unstable Koszul complexes} $K_\bullet(W)$ (see~\eqref{equation: unstable koszul complex}) and $K_\bullet^0(W)$ (see~\eqref{equation: strongly unstable koszul complex}) for a trivial $\calA^h_p$-module $W\in \Vect^{gr}_{\kk}$ respectively. In Proposition~\ref{proposition: unstable koszul resolution, p is odd}, we show that these complexes are acyclic. Thus, we use them to compute unstable abelian $\Ext$-groups $\Ext^s_{\calM^h}(W,\Sigma^t\kk)$ and $\Ext^s_{\calM^h_0}(W,\Sigma^t\kk)$ in terms of the algebra $\Lambda$ in Corollary~\ref{corollary: unstable exts between trivial}. 

In the final section~\ref{section: free lie} we examine the spectral sequence~\eqref{equation: ASS, Lie} in the particular case $L_\bullet=\free(V_\bullet)$ is a free restricted Lie algebra. First, in Proposition~\ref{proposition: homotopy groups of free restricted as xi-module}, we use the Curtis theorem~\cite{Curtis_lower}, to compute the homotopy groups $\pi_*(L_\xi\free(V_\bullet))$ of the $\kk$-completion $L_\xi \free(V_\bullet)$. Then, we observe from Corollary~\ref{corollary: unstable exts between trivial} that the second page of the spectral sequence~\eqref{equation: intro, ASS} is computable provided $\pi_*(V_\bullet)$ is one-dimensional, see Corollaries~\ref{corollary: ASS, degenerates, p=2} and~\ref{corollary: ASS, degenerates, p is odd, l is odd}.  This will allow us to derive Theorem~\ref{theorem: intro, F}. In Remarks~\ref{remark: ASS, highly dimensionsional} and~\ref{remark: hilton-milnor}, we discuss the spectral sequence~\eqref{equation: ASS, Lie} and the homotopy groups $\pi_*(\free(V_\bullet))$ in the case $\dim\pi_*(V_\bullet)>1$. We end the section with Remark~\ref{remark: lambdas and mu are detected} concerning the connection between the generators of the algebra $\Lambda$ and the generators of the algebra $\calA^h_p$.

In this section we heavily use Steenrod operations. We recall that the standard notation is different for $p$ is odd and $p=2$; we will enclose the case $p=2$ in parentheses.

\subsection{Steenrod operations}\label{section: steenrod} 

\begin{dfn}\label{definition: cohomotopy}
Let $V_\bullet\in \sVect_{\kk}$ be a simplicial vector space. For each $q\geq 0$, let $\pi^{q}(V_\bullet)$ denote the \emph{$q$-th cohomotopy group} of $V_\bullet$, that is $\pi^q(V_\bullet)=\Hom(\pi_q(V_\bullet),\kk)$.
Similarly, let $C_\bullet \in \sotrcoalg$ be a reduced simplicial truncated coalgebra. Set $\widetilde{\pi}^*(C_\bullet)$ to be the \emph{reduced} cohomotopy groups of $C_\bullet$: 
$$\widetilde{\pi}^*(C_\bullet)= \pi^*(\oblv(C_\bullet)) =\bigoplus_{q>0}\pi^q(C_\bullet).$$
%see Example~\ref{example: steenrod algebra, truncated}.
\end{dfn}

Let $C_\bullet\in \scoalg$ be a simplicial coalgebra over $\kk$. Then by the Eilenberg-Zilber theorem, we obtain that $\pi^*(C_\bullet)$ is a \emph{graded} commutative algebra over $\kk$. Furthermore, in~\cite{Dold61} (see also~\cite{May70operations}) A.~Dold constructed functorial \emph{Steenrod operations}
$$Sq^a\colon \pi^q(C_\bullet) \to \pi^{q+a}(C_\bullet), q,a\geq 0 \;\; \text{if $p=2$, and}$$
$$\beta^{\e}P^a\colon \pi^q(C_\bullet) \to \pi^{q+2a(p-1)+\e}(C_\bullet), q,a\geq 0,\;\e=0,1 \;\; \text{if $p>2$}.$$
These operations satisfy the following list of properties (where, by abuse of notation, $\beta^1 P^a =\beta P^a$ and $\beta^0 P^a = P^a$):
\begin{enumerate}
\item $\beta^{\e}P^a(\alpha x)=\alpha^p\beta^{\e}P^a(x)$ (resp. $Sq^a(\alpha x)=\alpha^2 Sq^a(x)$), $\alpha \in \kk$, $x\in \pi^q(C_\bullet)$.
%\item $\beta^{\e}P^a=0$ (resp. $Sq^a =0 $) if $a<0$.
\item $\beta^{\e}P^a(x)=0$ (resp. $Sq^a(x) =0 $),  if $2a+\e>q$ (resp. $a>q$) and $x\in \pi^q(C_\bullet)$.
\item $P^{a}(x)=x^p$ (resp. $Sq^a=x^2$) if $q=2a$ (resp. $q=a$).
\item Cartan formula: $$P^a=\sum_{i=0}^{a} P^i\otimes P^{a-i}\;\; \text{and} \;\; \beta P^a=\sum_{i=0}^{a}(\beta P^i\otimes P^{a-i}+ P^i\otimes \beta P^{a-i})$$ (resp. $Sq^a=\sum_{i=0}^{a} Sq^i \otimes Sq^{a-i}$) on $\pi^*(C_\bullet \times C'_\bullet)\cong \pi^*(C_\bullet)\otimes \pi^*(C'_\bullet)$. 
\item Stability: if $\sigma\colon \pi^q(C_\bullet) \xrightarrow{\cong} \pi^{q+1}(\Sigma_\bullet C_\bullet)$ is the suspension isomorphism, then $\sigma \beta^{\e} P^a =(-1)^{\e}\beta^{\e} P^a \sigma$ (resp. $\sigma Sq^a = Sq^a\sigma$).
\item Adem-Epstein relations. If $p$ is odd, $a<pb$, and $\e=0,1$, then
\begin{align}\label{equation: adem relation, p odd, 1}
\beta^{\e}P^aP^b &= \sum_{j=0}^{a+b}(-1)^{a+j}\binom{(p-1)(b-j)-1}{a-pj}\beta^{\e}P^{a+b-j}P^j;
\end{align}
and if $a\leq pb$ and $\e=0,1$, then
\begin{align}\label{equation: adem relation, p odd, 2}
\beta^{\e}P^a\beta P^b &= \sum_{j=0}^{a+b}(-1)^{a+j-1}\binom{(p-1)(b-j)-1}{a-pj-1}\beta^{\e}P^{a+b-j}\beta P^j \\
&+(1-\e)\sum_{j=0}^{a+b}(-1)^{a+j}\binom{(p-1)(b-j)}{a-pj}\beta P^{a+b-j} P^j. \nonumber
\end{align}
Similarly, if $p=2$ and $a<2b$, then
\begin{equation}\label{equation: adem relation, p=2}
Sq^aSq^b = \sum_{j=0}^{a+b}\binom{b-j-1}{a-2j}Sq^{a+b-j}Sq^j. 
\end{equation}
\item the operation $P^0$ (resp. $Sq^0$) is induced by the Verschiebung operator (see Definition~\ref{definition: coalgebra verschiebung})
$$V\colon C_\bullet \to C_\bullet^{(1)}.$$
\end{enumerate}

Recall that the \emph{mod-$p$ Steenrod algebra} is the $\F_p$-algebra $\calA_p$ generated by Steenrod operations $\beta^{\e}P^a, a\geq 0, \e=0,1$ (resp. $Sq^a,a\geq 0$) subject to Adem relations~\eqref{equation: adem relation, p odd, 1} and~\eqref{equation: adem relation, p odd, 2} (resp. the Adem relation~\eqref{equation: adem relation, p=2}) and subject to the additional relation:
$$P^0=1 \;\; \text{(resp. $Sq^0=1$)}. $$

\begin{dfn}\label{definition: homogenized steenrod algebra}
The \emph{homogenized mod-$p$ Steenrod algebra} $\mathcal{A}_p^h$ is the associative algebra over $\F_p$ generated by the elements $\beta^{\e}P^a,a\geq 0, \e=0,1$ (resp. $Sq^a,a \geq 0$ if $p=2$) subject to Adem relations~\eqref{equation: adem relation, p odd, 1} and~\eqref{equation: adem relation, p odd, 2} (resp. the relation~\eqref{equation: adem relation, p=2}) and subject to the additional relation $$P^0=0 \;\; \text{(resp. $Sq^0=0$)}.$$
\end{dfn}

\begin{rmk}\label{remark: filtration on steenrod}
One can endow the Steenrod algebra $\mathcal{A}_p$ with an increasing multiplicative \emph{weight} filtration $F_w\mathcal{A}_p$ by defining $F_0\mathcal{A}_p$ be spanned by the unit $1\in \mathcal{A}_p$ and $F_1\mathcal{A}_p$ be spanned by the set of generators $\beta^{\e}P^a,a\geq 0,\e=0,1$ (resp. $Sq^a,a\geq 0$), see~\cite{Priddy70} and~\cite{PP05}. Then the associated graded algebra $\mathrm{gr}_F \mathcal{A}_p$ is isomorphic to $\mathcal{A}_p^h$. In particular, the algebra $\mathcal{A}_p^h$ is \emph{bigraded}:
$$|\beta^{\e}P^a|=(2a(p-1)+\e,1) \;\; \text{and} \;\; |Sq^a|=(a,1). $$
We refer to the first grading as \emph{internal} and to the second one as \emph{weight}.
\end{rmk}

\begin{dfn}\label{definition: module over hsa}
A \emph{left $\mathcal{A}_p^h$-module} $M_*=\bigoplus_{q>0} M_q$ is a positively graded vector space over $\kk$ equipped with an $\F_p$-linear left action of the homogenized Steenrod algebra $\mathcal{A}_p^h$ such that
\begin{enumerate}
%\item $\kk \subset \mathcal{A}_p^h$ acts on $M_*$ by the scalar multiplication;
\item $\beta^{\e}P^a(M_q)\subset M_{q+2a(p-1)+\e}, a\geq 0, \e=0,1$ (resp. $Sq^a(M_q)\subset M_{q+a},a\geq 0$);
\item $\beta^{\e}P^a(\alpha x)=\alpha^p\beta^{\e}P^a(x)$ (resp. $Sq^a(\alpha x)=\alpha^2 Sq^a(x)$), $\alpha\in \kk$, $x\in M_q$.
\end{enumerate}
We denote by $\Mod_{\mathcal{A}_p^h}$ the abelian category of left $\mathcal{A}_p^h$-modules.
\end{dfn}

\begin{dfn}\label{definition: unstable algebra}
An \emph{unstable $\mathcal{A}_p^h$-algebra} is a left $\mathcal{A}_p^h$-module $A_*\in \Mod_{\mathcal{A}_p^h}$ which is a non-unital graded commutative algebra such that
\begin{enumerate}
\item $P^a(x)=x^p$ for all $x\in A_{2a}$ (resp. $Sq^a(x)=x^2$ for all $x\in A_a$.)
\item $\beta^{\e}P^a(x)=0$ for all $x \in A_q$ and $2a+\e>q$ (resp. $Sq^a(x)=0$ for all $x\in A_q$ and $a>q$.)
\end{enumerate}
We denote by $\calU^h$ the category of unstable $\mathcal{A}_p^h$-algebras.
\end{dfn}

\begin{exmp}\label{example: steenrod algebra, truncated}
Let $C_\bullet\in \sotrcoalg$ be a reduced simplicial truncated coalgebra. By the definition, the Verschiebung operator $V\colon C_\bullet \to C_\bullet^{(1)}$ factors through the constant coalgebra $\kk_\bullet$. Therefore $P^0$ (resp. $Sq^0$) acts by zero on $\pi^i(C_\bullet)$ for all $i>0$, and so the reduced non-unital algebra
$$\widetilde{\pi}^*(C_\bullet)= \pi^*(\oblv(C_\bullet)) =\bigoplus_{q>0}\pi^q(C_\bullet)$$ is an unstable $\mathcal{A}_{p}^h$-algebra. 
\end{exmp}

\begin{exmp}\label{example: steenrod algebra, restricted Lie}
Let $L_\bullet \in \srLie$ be a restricted Lie algebra. Then $\barW U^r(L_\bullet)$ is a reduced simplicial truncated coalgebra and
$$\widetilde{\pi}^*(\barW U^r(L_\bullet)) \cong \widetilde{H}^*(L_\bullet,\kk) $$
by Proposition~\ref{proposition: chain coalgebra and chain complex} and Definition~\ref{definition: cochain complex}. Therefore the cohomology groups $\widetilde{H}^*(L_\bullet,\kk)$ form an unstable $\calA_p^h$-algebra, cf.~\cite[Section~5]{Priddy70long} and~\cite[Theorem~8.5]{May70operations}.
\end{exmp}

\begin{dfn}\label{definition: suspension of unstable algebra}
We define the \emph{suspension} $\Sigma A_*\in \calU^h$ of an unstable $\calA^h_p$-algebra $A_*$ as follows
\begin{enumerate}
\item $(\Sigma A_*)_{q+1} = A_{q}$ for $q\geq 0$. If $x\in A_q$, then we write $\sigma x$ for the corresponding element in $(\Sigma A_*)_{q+1}$;
\item $\Sigma A_*$ has zero multiplication;
\item $\beta^{\e} P^a(\sigma x) = (-1)^{\e}\sigma \beta^{\e} P^a(x)$ for all $x\in A_q$ (resp. $Sq^a (\sigma x) = \sigma Sq^a(x)$ for all $x\in A_q$).
\end{enumerate}
Finally, $\Sigma^tA_*=\Sigma(\Sigma^{t-1}A_*)$.
\end{dfn}

\begin{exmp}\label{example: cohomotopy of suspension}
Let $C_\bullet \in \sotrcoalg$. In this section we will write $\Sigma C_\bullet\in \sotrcoalg$ for the Kan suspension $\Sigma_\bullet C_\bullet$ of $C_\bullet$, see~\cite[Section~III.5]{GoerssJardine}. Then we have
$$\widetilde{\pi}^*(\Sigma C_\bullet) \cong \Sigma \widetilde{\pi}^*(C_\bullet) $$
as unstable $\calA_p^h$-algebras.
\end{exmp}

\begin{dfn}\label{definition: unstable module}
An $\mathcal{A}_p^h$-module $M_*$ is called \emph{unstable} if $\beta^{\e}P^a(x)=0$ for all $x\in M_q$ and $2a+\e> q$ (resp. $Sq^a(x)=0$ for all $x\in M_q$ and $a> q$). We denote by $\calM^h$ the full abelian subcategory of $\Mod_{\mathcal{A}_p^h}$ spanned by unstable modules.
\end{dfn}

\begin{dfn}\label{definition: strongly unstable module}
An $\mathcal{A}_p^h$-module $M_*$ is called \emph{strongly unstable} if $\beta^{\e}P^a(x)=0$ for all $x\in M_q$ and $2a+\e\geq q$ (resp. $Sq^a(x)=0$ for all $x\in M_q$ and $a\geq q$). We denote by $\calM^h_0$ the full abelian subcategory of $\calM^h$ spanned by strongly unstable modules.
\end{dfn}

\begin{rmk}\label{remark: unstable algebras and unstable modules}
Any unstable $\mathcal{A}_p^h$-algebra is an unstable $\mathcal{A}_p^h$-module. Moreover, any strongly unstable $\calA^h_p$-module is a commutative group object in $\calU^h$ and the category $\calM^h_0$ is precisely the full subcategory of $\calU^h$ spanned by those.
\end{rmk}

The next proposition is standard.

\begin{prop}\label{proposition: unstable are monadic}
The categories $\calU^h$, $\calM^h$, and $\calM^h_0$ are monadic over the category $\Vect_{\kk}^{>0}$ of positively graded vector spaces. \qed
\end{prop}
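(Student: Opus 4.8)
The plan is to exhibit each of $\calU^h$, $\calM^h$, and $\calM^h_0$ as the category of algebras over a monad on $\Vect^{>0}_{\kk}$ by verifying the hypotheses of Beck's monadicity theorem (in its reflexive, or ``crude'', form) for the evident forgetful functor $\oblv$ to $\Vect^{>0}_{\kk}$. First I would record that $\oblv$ admits a left adjoint in all three cases: the free unstable $\calA_p^h$-algebra, the free unstable $\calA_p^h$-module, and the free strongly unstable $\calA_p^h$-module, all of which are described explicitly in Remark~\ref{remark: free unstable, admissible} (alternatively, one may invoke the adjoint functor theorem, since $\oblv$ preserves limits and filtered colimits and the source categories are locally presentable). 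Next I would observe that $\oblv$ is conservative: a morphism whose underlying map of graded vector spaces is bijective has a $\kk$-linear inverse, and since the morphism itself commutes with the multiplication and with all the Steenrod operations, so does its inverse; hence the morphism is an isomorphism.

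For $\calM^h$ and $\calM^h_0$ the remaining point is easy. The chain of inclusions $\calM^h_0 \subset \calM^h \subset \Mod_{\calA_p^h}$ consists of full subcategories closed under subobjects, quotients, products, and coproducts — the (strong) instability conditions $\beta^{\e}P^a x = 0$ are plainly inherited by subquotients and preserved by (co)products. Therefore all colimits in $\calM^h$ and $\calM^h_0$ are computed on underlying graded vector spaces, so $\oblv$ creates reflexive coequalizers, and Beck's monadicity theorem applies at once.

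For $\calU^h$ the forgetful functor does \emph{not} create all colimits (coproducts of non-unital graded commutative algebras involve a free-product construction), so here I would only check that $\oblv$ preserves and reflects \emph{reflexive} coequalizers, which is all the crude monadicity theorem needs. Given a reflexive pair $A_* \rightrightarrows B_*$ in $\calU^h$, I would form the coequalizer $q\colon B_* \to C_*$ in $\Vect^{>0}_{\kk}$; using that reflexive coequalizers commute with the tensor product $-\otimes_{\kk}-$ (in each variable, and hence jointly), the multiplication on $B_*$ descends uniquely to $C_*$, while each Steenrod operation, viewed as a $\kk$-linear map out of a Frobenius twist $B_*^{(1)}$ (this is where perfectness of $\kk$ enters, so that $(-)^{(1)}$ is an equivalence and $q^{(1)}$ is again a coequalizer), descends as well. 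One then checks that graded commutativity, the Adem and Cartan relations, the instability relations, and the identity $P^a x = x^p$ (resp. $Sq^a x = x^2$) pass to $C_*$ — since $q$ is a split epimorphism this is a routine diagram chase — so that $C_* \in \calU^h$ and $q$ realizes the coequalizer there. I expect this last step, namely the verification that the multiplication descends along reflexive coequalizers (the standard-but-not-purely-formal fact that reflexive coequalizers commute with $-\otimes_{\kk}-$ in graded vector spaces) together with the bookkeeping of Frobenius twists in the Steenrod operations, to be the only genuine obstacle; everything else is formal.
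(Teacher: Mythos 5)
Your proof is correct, and it is precisely the standard Beck/crude-monadicity argument that the paper leaves implicit (the paper simply declares the proposition standard and immediately uses the resulting free functors $F_{\calU^h}$, $F_{\calM^h}$, $F_{\calM^h_0}$ and monads $\bbT_{\calU^h}$, $\bbT_{\calM^h}$, $\bbT_{\calM^h_0}$). Your attention to the two non-formal points — that reflexive coequalizers commute with $-\otimes_{\kk}-$ so the multiplication descends, and that perfectness of $\kk$ makes the Frobenius twist an equivalence so the semi-linear Steenrod operations descend — is exactly where the content lies.
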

More precisely, the last proposition means that forgetful functors 
$$\oblv_{\calU^h}\colon \calU^h \to \Vect_{\kk}^{>0}, \;\;\oblv_{\calM^h}\colon \calM^h \to \Vect_{\kk}^{>0}, \;\; \text{and} \;\; \oblv_{\calM^h_0}\colon\calM^h_0 \to \Vect_{\kk}^{>0}$$
have respectively left adjoints
\begin{equation}\label{equation: free unstable}
F_{\mathcal{U}^h} \colon \Vect_{\kk}^{>0} \to \calU^h, \;\; F_{\calM^h}\colon \Vect_{\kk}^{>0} \to \calM^h, \;\; \text{and} \;\; F_{\mathcal{M}^h_0}\colon \Vect_{\kk}^{>0} \to \calM^h_0 
\end{equation}
such that $\calU^h$ is equivalent to the category ${\mathbb{T}_{\mathcal{U}^h}}\mhyphen\Alg$ of algebras over the monad $\mathbb{T}_{\mathcal{U}^h}=\oblv_{\calU^h}\circ F_{\mathcal{U}^h}$, $\calM^h$ is equivalent to the category ${\bbT_{\calM^h}}\mhyphen \Alg$ of algebras over the monad $\bbT_{\calM^h}=\oblv_{\calM^h}\circ F_{\calM^h}$, and $\calM^h_0$ is equivalent to the category ${\mathbb{T}_{\mathcal{M}^h}}\mhyphen\Alg$ of algebras over the monad $\mathbb{T}_{\mathcal{M}^h_0}=\oblv_{\calM^h_0}\circ F_{\mathcal{M}^h_0}$

\begin{rmk}\label{remark: free unstable algebra generated by unstable module}
By Proposition~\ref{proposition: unstable are monadic}, the forgetful functor $\oblv\colon \calU^h \to \calM^h$ has a left adjoint $$\mathcal{F}\colon \calM^h \to \calU^h. $$
The functor $\mathcal{F}$ can given by the formula
$$\mathcal{F}(M_*)= \kk[M_*]/(m^p-P^a(m)\;|\; m\in M_{2a}, a> 0) $$
(resp. $\kk[M_*]/(m^2-Sq^a(m)\;|\; m\in M_a,a>0)$), where $\kk[M_*]$ is the free (non-unital) graded commutative algebra generated by $M_*$.
\end{rmk}

\begin{rmk}\label{remark: free unstable, admissible}
Recall that a (possibly void) sequence $I=(i_1,\ldots, i_k)$ is called \emph{admissible} if $i_j\geq p i_{j+1}, 1 \leq j\leq k-1$, see~\cite[Section~4]{Priddy73}. The \emph{excess} of $I$, denoted by $e(I)$, is defined by
$$e(I)=i_1 -(p-1)(i_2+\ldots i_k), \;\;\; e(\emptyset)=-1. $$
For a sequence $I$, we set $St^I=St^{i_1}\cdot \ldots \cdot St^{i_k}\in \mathcal{A}_p^h$ be a monomial in $\mathcal{A}_p^h$, where
$$
St^i=\left\{
\begin{array}{ll}
Sq^a & \mbox{if $p=2$ and $i=a$,}\\
P^a & \mbox{if $p>2$ and $i=2a(p-1),$}\\
\beta P^a & \mbox{if $p>2$ and $i=2a(p-1)+1$,}\\
0 & \mbox{otherwise.}
\end{array}
\right.
$$

Then the free unstable $\mathcal{A}_p^h$-module $F_{\mathcal{M}^h}(\iota_l)$ generated by a single element $\iota_l$ of degree~$l$ is the vector space  $\kk(St^I \iota_l)$ spanned by the elements $St^I \iota_l$, where $I$ is any admissible sequence of positive integers with $e(I)\leq (p-1)l$; whereas, the free strongly unstable $\mathcal{A}_p^h$-module $F_{\calM^h_0}(\iota_l)$ is the vector space spanned by all elements $St^I\iota_l$ with $e(I)<(p-1)l$. 

Similarly, the free unstable $\mathcal{A}_p^h$-algebra $F_{\mathcal{A}^h}(\iota_l)\cong \kk[St^I \iota_l]$ is the free graded commutative (non-unital) algebra generated by the same elements $St^I\iota_l$ with $e(I)<(p-1)l$.
\end{rmk}

\begin{dfn}\label{definition: vector space of finite type}
A graded vector space $V_{*}=\bigoplus_{q\geq 0}V_q$ is called \emph{of finite type} if $V_0=0$ and $\dim(V_q)<\infty$ for $q\geq 1$. A simplicial vector space $V_\bullet\in \sVect$ is called \emph{of finite type} if $\pi_*(V_\bullet)$ is a graded vector space of finite type.
\end{dfn}

We write $\Vect_{\kk}^{ft}$ for the category of graded vector spaces of finite type; and we write $\sVect_{\kk}^{ft}$ for the category of simplicial vector spaces of finite type.

\begin{thm}[Priddy]\label{theorem: cohomotopy of free truncated is free unstable}
Let $V_\bullet \in \sVect^{ft}_{\kk}$ be a simplicial vector space of finite type. There is a natural isomorphism of unstable $\mathcal{A}_p^h$-algebras:
$$\widetilde{\pi}^*(\Sym^{tr}(V_\bullet)) = \bigoplus_{q>0} \pi^q(\Sym^{tr}(V_\bullet))\cong F_{\mathcal{A}}(\pi^*(V_\bullet)). $$
Here $\Sym^{tr}(V_\bullet)$ is the free simplicial truncated coalgebra generated by $V_\bullet$, see Proposition~\ref{proposition: cofree truncated coalgebra}.
\end{thm}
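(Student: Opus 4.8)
The plan is to reduce the statement to a computation in the category of simplicial truncated coalgebras and then to invoke Priddy's original comparison of the cotriple resolution with the cobar-type complex. First I would recall that, by Proposition~\ref{proposition:category property of trcoalg}, the functor $\Sym^{tr}$ is comonadic, so the cofree coalgebra $\Sym^{tr}(V_\bullet)$ is the value on $V_\bullet$ of a functor that is right adjoint to the forgetful functor $\oblv\colon \sotrcoalg \to \soVect_{\kk}$. Dually to the bar construction of Section~\ref{section:homotopy excision theorem}, the reduced cohomotopy $\widetilde{\pi}^*(\Sym^{tr}(V_\bullet))$ is computed by applying $\pi^*$ to the underlying simplicial vector space; since $\oblv\Sym^{tr}(V_\bullet)$ is cofree, there is an explicit combinatorial description of this simplicial vector space (it is $\Gamma$-applied to a divided power construction on the chains of $V_\bullet$, modulo $p$-th powers), and one can identify it with a complex whose cohomology is a free unstable algebra.

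Concretely, I would proceed in three steps. Step one: establish naturality and reduce to the case where $V_\bullet$ is a free simplicial vector space on a single generator in each simplicial degree, using that both sides of the claimed isomorphism send colimits of simplicial vector spaces of finite type appropriately and that every $V_\bullet \in \sVect^{ft}_{\kk}$ is built from such pieces; the finite type hypothesis is exactly what makes the dualization from the coalgebra side to the algebra side well-behaved. Step two: construct the natural comparison map $F_{\mathcal{A}}(\pi^*(V_\bullet)) \to \widetilde{\pi}^*(\Sym^{tr}(V_\bullet))$ as the unique map of unstable $\calA^h_p$-algebras extending the map $\pi^*(V_\bullet) \to \widetilde{\pi}^1(\Sym^{tr}(V_\bullet)) \to \widetilde{\pi}^*(\Sym^{tr}(V_\bullet))$ coming from the coaugmentation; here I use the adjunction $F_{\mathcal A} \dashv \oblv_{\calU^h}$ from Proposition~\ref{proposition: unstable are monadic} and the fact, established in Example~\ref{example: steenrod algebra, truncated}, that $\widetilde{\pi}^*$ of a reduced simplicial truncated coalgebra is indeed an object of $\calU^h$. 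Step three: verify this map is an isomorphism by a direct computation. This is where I would cite~\cite{Priddy73} (as the statement itself indicates): the point is that the homotopy groups of $\oblv\Sym^{tr}(V_\bullet)$, with their Steenrod action, are governed by admissible monomials $St^I$ with the excess constraint $e(I) < (p-1)l$, which is precisely the indexing set for $F_{\mathcal{A}}(\pi^*(V_\bullet))$ described in Remark~\ref{remark: free unstable, admissible}.

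The main obstacle, and the part that genuinely requires care rather than formal nonsense, is identifying $\pi^*(\oblv\Sym^{tr}(V_\bullet))$ together with its full Steenrod algebra action with the free unstable algebra on the correct set of generators, including getting the excess bound $e(I)<(p-1)l$ (as opposed to $\leq$) exactly right. This amounts to unwinding the divided-power-algebra-modulo-$p$-th-powers structure of the cofree truncated coalgebra and matching the internal degrees and weights; the subtlety is that $\Sym^{tr}(W) = \Sym(W)/(w^p)$ kills precisely the classes that would violate strong instability at the edge, so the instability bound on the cohomotopy of the cofree object is strictly sharper than one might naively guess from the ambient category $\calU^h$. I would handle this by reducing to $V_\bullet = K(\kk,l)$ (an Eilenberg--MacLane object), where $\oblv\Sym^{tr}(V_\bullet)$ is a well-studied simplicial vector space whose homotopy and Steenrod operations were computed by Priddy, and then appealing to the Cartan formula (property~(4) of the Steenrod operations in Section~\ref{section: steenrod}) and the Künneth isomorphism (Corollary~\ref{corollary: kunneth formula}, in its cohomotopy form) to assemble the general case from direct sums of such pieces. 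Once the single-generator case is pinned down, naturality and the monadicity of $\calU^h$ over $\Vect^{>0}_{\kk}$ close the argument.
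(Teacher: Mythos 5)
Your proposal is correct and takes essentially the same approach as the paper: the paper's entire proof is the citation to Priddy's Proposition~6.2.1, which is exactly where you place the one substantive step (the identification, with the sharp excess bound $e(I)<(p-1)l$, of the homotopy of the cofree truncated coalgebra on an Eilenberg--MacLane object as a free unstable algebra), and your surrounding scaffolding --- reduction to a single generator via the K\"{u}nneth isomorphism and construction of the comparison map from the counit of $\oblv \dashv \Sym^{tr}$ together with the adjunction $F_{\mathcal{A}}\dashv \oblv_{\calU^h}$ --- is the standard packaging of that reference. The only slip is the intermediate target $\widetilde{\pi}^1(\Sym^{tr}(V_\bullet))$ in your Step two, which should be the weight-one summand of the cofree coalgebra rather than cohomotopical degree one; this does not affect the argument.
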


\begin{proof}
See~\cite[Proposition~6.2.1]{Priddy73}.
\end{proof}

\subsection{Bousfield-Kan spectral sequence}\label{section: BKSS} Let $\mathbb{T}\colon \mathsf{C} \to \mathsf{C}$ be a monad on a category $\mathsf{C}$. %Recall that the ``non-abelian" $\Ext$-group $$\Ext^{s}_{\mathbb{T}}(A,A')\in \Vect_{\kk}, A,A'\in \Alg_{\mathbb{T}}$$ 
%is the \emph{right non-abelian derived functor} of the $\Hom$-functor
%$$\Hom_{\mathbb{T}}(-,A')\colon \Alg_{\bbT}^{op} \to \Vect_{\kk}. $$
The monad $\bbT$ induces the adjoint pair
\begin{equation*}
\begin{tikzcd}
F_{\bbT}: \mathsf{C} \arrow[shift left=.6ex]{r}
&{\bbT}\mhyphen\Alg(\mathsf{C}) :\oblv_{\bbT} \arrow[shift left=.6ex,swap]{l}
\end{tikzcd}
\end{equation*}
such that $\bbT=\oblv_\bbT\circ F_{\bbT}$. Given a $\bbT$-algebra $A\in \bbT\mhyphen\Alg(\mathsf{C})$, we denote by $\bbT_{\bullet}(A) \in \bbT\mhyphen\sAlg(\mathsf{C})$ the \emph{bar-construction} $B_\bullet(F_{\bbT},\bbT,A)$, i.e. $\bbT_\bullet(A)$ is an (almost-free) simplicial $\bbT$-algebra such that 
$$\bbT_q(A)=F_{\bbT}\circ \bbT^{\circ q} \circ \oblv_{\bbT}(A),\; q\geq 0.$$ Similarly, let $\bbR\colon \mathsf{C} \to \mathsf{C}$ be a comonad which induces the adjoint pair $\oblv_{\bbR}\dashv C_{\bbR}$. Given a $\bbR$-coalgebra $C\in \bbR\mhyphen \CoAlg(\mathsf{C})$, we denote by $\bbR^{\bullet}(C) \in \bbR \mhyphen \ccoalg(\mathsf{C})$ the \emph{cobar-construction} $C^{\bullet}(C_{\bbR},\bbR,C)$, i.e. $\bbR^\bullet(C)$ is an (almost-cofree) cosimplicial $\bbR$-coalgebra such that 
$$\bbR^q(C)=C_{\bbR}\circ \bbR^{\circ q} \circ \oblv_{\bbR}(C),\; q\geq 0. $$

Suppose now the category $\mathsf{C}$ is $\kk$-linear. Then $$\Hom_{\bbT}(\bbT_{\bullet}(A),A'), \; A,A'\in {\bbT}\mhyphen\Alg(\mathsf{C})$$ is a cosimplicial vector space and we define the $\Ext$-group $\Ext^s_{\bbT}(A,A')$ as the $s$-th cohomotopy group of that, i.e.
$$\Ext^s_{\bbT}(A,A')=\pi^s\Hom_{\bbT}(\bbT_{\bullet}(A),A'), \; A,A' \in {\bbT}\mhyphen\Alg(\mathsf{C}).$$
In other words, $\Ext^s_{\bbT}(A,A')$ is the \emph{right non-abelian derived functor} of the $\Hom$-functor
$$\Hom_{\mathbb{T}}(-,A')\colon ({\bbT}\mhyphen\Alg(\mathsf{C}))^{op} \to \Vect_{\kk}. $$
We refer the reader to~\cite{DoldPuppe}, \cite{Andre67} for details on non-abelian derived functors and  ``non-abelian" $\Ext$-groups. We also recommend the appendix in~\cite{Bousfield_nice} for a short exposition of the topic.

\begin{dfn}\label{definition: unstable exts}
Let $A_*,A'_*\in \calU^h$ be unstable $\calA_p^h$-algebras. We define the \emph{$s$-th unstable $\Ext$-group} $\Ext^s_{\calU^h}(A_*,A'_*)$ be the next formula
$$\Ext^s_{\calU^h}(A_*,A'_*) = \Ext^s_{\bbT_{\calU^h}}(A_*,A'_*), $$
where $\bbT_{\calU^h}=\oblv_{\calU^h} \circ F_{\calU^h}$ is the monad which defines unstable $\calA_p^h$-algebras, see~\eqref{equation: free unstable}. Similarly, 
$$\Ext^s_{\calM^h}(M_*,M'_*) = \Ext^s_{\bbT_{\calM^h}}(M_*,M'_*), \; M_*,M'_*\in \calM^h, $$
and
$$\Ext^s_{\calM^h_0}(M_*,M'_*) = \Ext^s_{\bbT_{\calM^h_0}}(M_*,M'_*), \; M_*,M'_*\in \calM^h_0. $$
\end{dfn}

\begin{rmk}\label{remark: unstable ext modules}
The categories $\calM^h$, $\calM^h_0$ are abelian and they have enough projectives. Therefore unstable $\Ext$-groups $\Ext^{s}_{\calM^h}(M_*,M'_*)$ and $\Ext^{s}_{\calM^h_0}(N_*,N'_*)$ can be computed by the following formulas
$$\Ext^{s}_{\calM^h}(M_*,M'_*)\cong H^s\big(\Hom_{\calM^h}(P_\bullet(M_*),M'_*)\big), \; M_*,M'_* \in \calM^h $$
and 
$$\Ext^{s}_{\calM^h_0}(N_*,N'_*)\cong H^s\big(\Hom_{\calM^h_0}(\widetilde{P}_\bullet(N_*),N'_*)\big), \; N_*,N'_* \in \calM^h_0, $$
where $P_\bullet(M_*) \to M_*$ is a \emph{projective} resolution of $M_*$ in $\calM^h$, and $\widetilde{P}_\bullet(N_*)\to N_*$ is a projective resolution of $N_*$ in $\calM^h_0$.
\end{rmk}

By Theorem~\ref{theorem:modelsotrcoalg}, the category $\sotrcoalg$ of reduced simplicial truncated coalgebras has a simplicial model structure. Therefore the \emph{derived mapping space} $$\map_{\sCA_0}(C_\bullet, D_\bullet) \in \sSet_*, \; C_\bullet, D_\bullet \in \sotrcoalg$$ is defined, see~\cite[Section~17]{Hirschhorn03}. We recall that $\map_{\sCA_0}(C_\bullet, D_\bullet)$ is a pointed simplicial Kan complex which is defined by
$$\map_{\sCA_0}(C_\bullet, D_\bullet) = \Map_{\sotrcoalg}(C_\bullet,RD_\bullet), $$
where $\Map_{\sotrcoalg}(C_\bullet, RD_\bullet)$ is a simplicial mapping set in the simplicial category $\sotrcoalg$ and $D_\bullet \to RD_\bullet$ is a fibrant replacement of $D_\bullet$. We point out that the derived mapping space is well-defined up to a weak equivalence and preserves weak equivalences in both variables.

\begin{thm}[Bousfield-Kan]\label{theorem: BKSS, coalgebras}
Let $C_\bullet, D_\bullet\in \sotrcoalg$ be reduced simplicial truncated coalgebras of finite type. Then there is a completely convergent spectral sequence
$$E^2_{s,t}=\Ext^s_{\calU^h}(\widetilde{\pi}^*(D_\bullet), \Sigma^t \widetilde{\pi}^*(C_\bullet)) \Rightarrow \pi_{t-s}\map_{\sCA_0}(C_\bullet,D_\bullet). $$
Here $d_r\colon E^r_{s,t}\to E^r_{s+r,t+r-1}$.
\end{thm}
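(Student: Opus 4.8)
The plan is to realize this as a standard Bousfield–Kan cosimplicial spectral sequence attached to the comonad resolution that computes the derived mapping space in $\sCA_0$. First I would set up the cosimplicial object: by Proposition~\ref{proposition:category property of trcoalg} the category $\trcoalg$ is comonadic over $\Vect_{\kk}$ via $\oblv \dashv \Sym^{tr}$, and this comonadicity passes degreewise to $\sotrcoalg$ comonadic over $\soVect_{\kk}$ with comonad $\bbR = \Sym^{tr}\circ\oblv$. For a reduced simplicial truncated coalgebra $D_\bullet$ one forms the cosimplicial resolution $\bbR^\bullet(D_\bullet) \in \mathsf{c}\sotrcoalg$ with $\bbR^q(D_\bullet) = \Sym^{tr}(\oblv(\bbR^{\circ q}\oblv(D_\bullet)))$; the augmentation $D_\bullet \to \bbR^\bullet(D_\bullet)$ is an equivalence after applying $\oblv$ (it is the standard comonad cobar resolution, split on underlying objects). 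Applying $\map_{\sCA_0}(C_\bullet,-)$ degreewise yields a cosimplicial pointed Kan complex $\map_{\sCA_0}(C_\bullet,\bbR^\bullet(D_\bullet))$ whose $\Tot$ recovers $\map_{\sCA_0}(C_\bullet,D_\bullet)$ (here one uses that each $\bbR^q(D_\bullet)$ is, after $\oblv$, cofree, hence fibrant-cofibrant behavior is controlled, and that $\map$ preserves weak equivalences in the target). This produces the Bousfield–Kan homotopy spectral sequence of a cosimplicial space, with $d_r\colon E^r_{s,t}\to E^r_{s+r,t+r-1}$, converging to $\pi_{t-s}\map_{\sCA_0}(C_\bullet,D_\bullet)$ (complete convergence because, under the finite type hypothesis, the tower of partial totalizations has the required $\lim^1$-vanishing, cf.\ \cite{BK72_spectral_sequence}).

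Second, I would identify the $E^2$-term. By the cofreeness of $\bbR^q(D_\bullet)$ and the adjunction $\oblv \dashv \Sym^{tr}$, the simplicial set $\map_{\sCA_0}(C_\bullet, \Sym^{tr}(W_\bullet))$ is weakly equivalent to the simplicial mapping space of underlying reduced simplicial vector spaces $\map_{\soVect}(\oblv(C_\bullet), W_\bullet)$; its homotopy groups are $\pi_t\map_{\soVect}(\oblv C_\bullet, W_\bullet) \cong \Hom_{\Vect^{gr}_{\kk}}(\widetilde{\pi}_*(C_\bullet), \pi_*(W_\bullet)[t])$ in the relevant range — more precisely, after dualizing (finite type), this is governed by $\widetilde{\pi}^*(C_\bullet)$ and $\widetilde{\pi}^*(\Sym^{tr}(W_\bullet))$. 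The key computational input is Theorem~\ref{theorem: cohomotopy of free truncated is free unstable} (Priddy): $\widetilde{\pi}^*(\Sym^{tr}(W_\bullet)) \cong F_{\mathcal{A}}(\pi^*(W_\bullet))$, the free unstable $\calA^h_p$-algebra on $\pi^*(W_\bullet)$. Thus the cosimplicial abelian group computing the vertical homotopy of $\map_{\sCA_0}(C_\bullet,\bbR^\bullet(D_\bullet))$ is, in cohomotopical degree $t$, the cosimplicial vector space $q \mapsto \Hom_{\calU^h}(F_{\mathcal{A}}(\text{underlying of }\widetilde{\pi}^*\bbR^{\circ q}\text{-stuff}), \Sigma^t\widetilde{\pi}^*(C_\bullet))$, which is exactly the cosimplicial object $\Hom_{\bbT_{\calU^h}}((\bbT_{\calU^h})_\bullet(\widetilde{\pi}^*(D_\bullet)), \Sigma^t\widetilde{\pi}^*(C_\bullet))$ computing the non-abelian Ext-groups $\Ext^s_{\calU^h}(\widetilde{\pi}^*(D_\bullet), \Sigma^t\widetilde{\pi}^*(C_\bullet))$ of Definition~\ref{definition: unstable exts}. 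The match of the simplicial comonad resolution on the coalgebra side with the simplicial monad (bar) resolution on the unstable-algebra side is the crux: it follows because $\widetilde{\pi}^*$ carries the comonad $\bbR$ on $\sotrcoalg^{ft}$ to the monad $\bbT_{\calU^h}$ on $\calU^{h,ft}$, again via Priddy's theorem together with the fact that $\widetilde{\pi}^*(C_\bullet \times D_\bullet) \cong \widetilde{\pi}^*(C_\bullet)\otimes\widetilde{\pi}^*(D_\bullet)$ respects the algebra structures.

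I expect the main obstacle to be the bookkeeping needed to show that $\widetilde{\pi}^*$ intertwines the two resolutions precisely — i.e.\ that the cosimplicial space $\map_{\sCA_0}(C_\bullet,\bbR^\bullet(D_\bullet))$ has $E_1$-page (after taking vertical homotopy groups and dualizing) isomorphic, as a cosimplicial graded vector space with all its coface/codegeneracy structure, to $\Hom_{\bbT_{\calU^h}}((\bbT_{\calU^h})_\bullet \widetilde{\pi}^*(D_\bullet), \Sigma^\bullet\widetilde{\pi}^*(C_\bullet))$. The subtlety is that $\map_{\soVect}(\oblv C_\bullet, \Sym^{tr}(W_\bullet))$ a priori only sees the \emph{vector space} $\pi^*(W_\bullet)$, whereas the unstable Ext is built from the full unstable-algebra structure of $F_{\mathcal{A}}(\pi^*(W_\bullet)) = \widetilde{\pi}^*(\Sym^{tr}(W_\bullet))$; one must check that the extra structure is exactly absorbed by the adjunction $\mathsf{(underlying)} \dashv F_{\mathcal{A}}$, so that $\map_{\sCA_0}(C_\bullet, \Sym^{tr}(W_\bullet))$ really computes $\Hom_{\calU^h}(F_{\mathcal{A}}(\pi^*(W_\bullet)), -)$ rather than just a $\Hom$ of vector spaces. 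This is essentially the content of the proof of~\cite[Proposition~6.2.1]{Priddy73} and the general machinery of~\cite{BK72_spectral_sequence,Bousfield_nice}; I would cite these and spell out only the comonad/monad compatibility diagram. Finally, complete convergence and the finite-type hypotheses are handled exactly as in the classical unstable Adams spectral sequence, using that each $\widetilde{\pi}^*(\bbR^q(D_\bullet))$ is of finite type in each degree so the relevant $\lim^1$ terms vanish.
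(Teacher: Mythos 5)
Your proposal follows essentially the same route as the paper's proof: the cobar resolution $\bbR^\bullet(D_\bullet)=\Sym^{tr}\circ\bbR^{\circ\bullet}\circ\oblv(D_\bullet)$ coming from the comonadicity of $\sotrcoalg$ over $\soVect_{\kk}$, the Bousfield--Kan totalization spectral sequence applied to $\map_{\sCA_0}(C_\bullet,\bbR^\bullet(D_\bullet))$, identification of the $E_1/E_2$-term via the adjunctions and Priddy's computation of $\widetilde{\pi}^*(\Sym^{tr}(W_\bullet))$ so that the cosimplicial object matches the monadic bar construction defining $\Ext^s_{\calU^h}$, and complete convergence from the finite-type hypothesis via the $\lim^1$ criterion. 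The only quibble is notational: the comonad exhibiting comonadicity lives on $\soVect_{\kk}$ and is $\oblv\circ\Sym^{tr}$, not $\Sym^{tr}\circ\oblv$, though your construction of the resolution is the intended one.
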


\begin{proof}
Recall from Proposition~\ref{proposition:category property of trcoalg} that the category $\trcoalg$ is comonadic over $\Vect_{\kk}$; we denote by $$\bbR\colon \Vect_{\kk} \to \Vect_{\kk}$$ the resulting comonad $\bbR=\oblv\circ \Sym^{tr}$. As $\bbR(0)=0$, we extend $\bbR$ degreewise to the comonad
$$\bbR\colon \soVect_{\kk} \to \soVect_{\kk} $$
on the category of reduced simplicial vector spaces. Then we have $$\CoAlg_{\bbR}(\soVect_{\kk})\cong \sotrcoalg;$$ and for $D_\bullet \in \sotrcoalg$ we consider the cobar-construction 
$$\bbR^{\bullet}(D_\bullet) = \Sym^{tr}\circ \bbR^{\circ \bullet} \circ \oblv (D_\bullet) \in \csotrcoalg.$$
There is a natural map $D_\bullet \to \bbR^\bullet(D_\bullet)$ which induces a weak equivalence:
$$D_\bullet \xrightarrow{\simeq} \Tot \bbR^{\bullet}(D_\bullet) \in \sotrcoalg, $$
where $\Tot \bbR^{\bullet}(D_\bullet) \in \sotrcoalg$ is the totalization of the cosimplicial object $\bbR^{\bullet}(D_\bullet)$, see~\cite[Definition~18.6.3]{Hirschhorn03}. Thus we obtain weak equivalences of derived mapping spaces:
$$\map_{\sCA_0}(C_\bullet, D_\bullet) \xrightarrow{\simeq} \map_{\sCA_0}(C_\bullet, \Tot \bbR^{\bullet}(D_\bullet))\xrightarrow{\simeq }\Tot \map_{\sCA_0}(C_\bullet, \bbR^{\bullet}(D_\bullet)).  $$
By~\cite[Proposition~VIII.1.15]{GoerssJardine} (see also~\cite[Chapter~X]{BK_book}), there is a spectral sequence
\begin{align}\label{equation: BKSS, eq1}
E^2_{s,t}=\pi^s\pi_t\map_{\sCA_0}(C_\bullet,\bbR^{\bullet}(D_\bullet)) &\Rightarrow \pi_{t-s} \Tot \map_{\sCA_0}(C_\bullet, \bbR^{\bullet}(D_\bullet)) \\
&\cong \pi_{t-s}\map_{\sCA_0}(C_\bullet,D_\bullet) \nonumber
\end{align}
associated with the cosimplicial simplicial set $\map_{\sCA_0}(C_\bullet, \bbR^{\bullet}(D_\bullet)) \in \csSet_*$. 

Next, we will compute the second page of the spectral sequence~\eqref{equation: BKSS, eq1}. Namely, we show that there is an isomorphism
\begin{equation}\label{equation: BKSS, eq2}
\pi^s\pi_t\map_{\sCA_0}(C_\bullet,\bbR^{\bullet}(D_\bullet)) \cong \Ext^s_{\calU^h}(\widetilde{\pi}^*(D_\bullet), \Sigma^{t}\widetilde{\pi}^*(C_\bullet)). 
\end{equation}
Indeed, we have following isomorphisms of cosimplicial vector spaces
\begin{align*}
\pi_t\map_{\sCA_0}(C_\bullet, \bbR^{\bullet}(D_\bullet))&\cong \pi_0\Map_{\sotrcoalg}(\Sigma^t C_\bullet,\Sym^{tr}\circ \bbR^{\circ \bullet}\circ \oblv_{\bbR}(D_\bullet)) \\
&\cong \pi_0\Map_{\soVect_{\kk}}(\oblv_{\bbR}(\Sigma^{t}C_\bullet), \bbR^{\circ\bullet} \circ \oblv_{\bbR}(D_\bullet)) \\
&\cong \Hom_{\Vect^{gr}_{\kk}}(\widetilde{\pi}_*(\Sigma^t C_\bullet),\widetilde{\pi}_*(\bbR^{\circ \bullet}\circ \oblv_{\bbR}(D_\bullet)))\\
&\cong \Hom_{\Vect^{gr}_{\kk}}(\widetilde{\pi}^*(\bbR^{\circ \bullet} \circ \oblv_{\bbR}(D_\bullet)), \widetilde{\pi}^*(\Sigma^t C_\bullet))\\
&\cong \Hom_{\Vect^{gr}_{\kk}}(\bbT_{\calU^h}^{\circ \bullet}\circ \oblv_{\calU^h}(\widetilde{\pi}^*(D_\bullet)),\oblv_{\calU^h}( \widetilde{\pi}^*(\Sigma^t C_\bullet)))\\
&\cong \Hom_{\bbT_{\calU^h}}(F_{\calU^h}\circ \bbT^{\circ \bullet}_{\calU^h}\circ \oblv_{\calU^h}(\widetilde{\pi}^*(D_\bullet)), \widetilde{\pi}^*(\Sigma^t C_\bullet)) \\
&= \Hom_{\bbT_{\calU^h}}(\bbT_{\calU^h, \bullet}(\widetilde{\pi}^*(D_\bullet)), \widetilde{\pi}^*(\Sigma^t C_\bullet)) \\
&\cong \Hom_{\bbT_{\calU^h}}(\bbT_{\calU^h, \bullet}(\widetilde{\pi}^*(D_\bullet)), \Sigma^t \widetilde{\pi}^*(C_\bullet))
\end{align*}
Here the fourth isomorphism follows from the assumption that simplicial coalgebras $C_\bullet, D_\bullet \in \sotrcoalg$ are of finite type, the fifth isomorphism follows from Theorem~\ref{theorem: cohomotopy of free truncated is free unstable}, and the last one follows from Example~\ref{example: cohomotopy of suspension}; all other isomorphisms are induced by various adjunctions. By Definition~\ref{definition: unstable exts}, this implies the isomorphism~\eqref{equation: BKSS, eq2}.

Finally, the spectral sequence~\eqref{equation: BKSS, eq1} converges completely by applying the complete convergence lemma, see~\cite[Lemma~VI.2.20]{GoerssJardine} and~\cite[IX.5.4]{BK_book}. Indeed, by Remark~\ref{remark: free unstable, admissible}, all entries $E^2_{s,t}$ on the second page are finite dimensional vector spaces, and so 
\begin{equation*}
{\lim_r}^{1} E^r_{s,t}=0, \; t-s\geq 1.  \qedhere
\end{equation*}
\end{proof}

\begin{dfn}\label{definition: lie algebra, finite type}
A simplicial restricted Lie algebra $L_\bullet$ is called \emph{of finite type} if its homology groups $\widetilde{H}_*(L_\bullet;\kk)$ is a graded vector space of finite type.
\end{dfn}

\begin{cor}\label{corollary: BKSS, restricted Lie algebra}
Let $L_\bullet, L'_\bullet \in \srLie$ be $\kk$-complete simplicial restricted Lie algebras of finite type. Then there is a completely convergent spectral sequence
$$E^2_{s,t}=\Ext^s_{\calU^h}(\widetilde{H}^*(L_\bullet;\kk), \Sigma^t \widetilde{H}^*(L'_\bullet;\kk)) \Rightarrow \pi_{t-s}\map_{\sLxi}(L'_\bullet,L_\bullet). $$
Here $d_r\colon E^r_{s,t}\to E^r_{s+r,t+r-1}$.
\end{cor}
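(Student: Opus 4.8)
The plan is to deduce Corollary~\ref{corollary: BKSS, restricted Lie algebra} from Theorem~\ref{theorem: BKSS, coalgebras} by transporting everything across the Koszul duality equivalence of Theorem~\ref{theorem: coalgebras and lie algebras}. First I would recall that, by Theorem~\ref{theorem: coalgebras and lie algebras}, the adjoint pair $PG\dashv \barW U^r$ induces an equivalence of $\infty$-categories $PG\colon \sCA_0 \simeq \sLxi\colon \barW U^r$. Since an equivalence of $\infty$-categories induces weak equivalences of all derived mapping spaces, for any $L_\bullet, L'_\bullet\in \srLie$ that are $\kk$-complete (hence fibrant in $\srLie_\xi$, by Remark~\ref{remark: existence of F-completion}) we get a natural weak equivalence of simplicial sets
$$
\map_{\sLxi}(L'_\bullet, L_\bullet)\simeq \map_{\sCA_0}(\barW U^r(L'_\bullet), \barW U^r(L_\bullet)).
$$
Here I would also check that $\barW U^r$ sends $\kk$-complete objects to their images in $\sCA_0$ under the equivalence, which is immediate since $\barW U^r$ is one half of the equivalence; and since $L_\bullet,L'_\bullet$ are of finite type in the sense of Definition~\ref{definition: lie algebra, finite type}, the coalgebras $C_\bullet=\barW U^r(L_\bullet)$ and $D_\bullet=\barW U^r(L'_\bullet)$ satisfy $\pi_*(C_\bullet)=\widetilde{H}_*(L_\bullet;\kk)\oplus\kk$ and likewise for $D_\bullet$, so both are reduced simplicial truncated coalgebras of finite type and Theorem~\ref{theorem: BKSS, coalgebras} applies.

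Next I would identify the $E^2$-term. Theorem~\ref{theorem: BKSS, coalgebras} gives a completely convergent spectral sequence
$$
E^2_{s,t}=\Ext^s_{\calU^h}\big(\widetilde{\pi}^*(D_\bullet), \Sigma^t\widetilde{\pi}^*(C_\bullet)\big)\Rightarrow \pi_{t-s}\map_{\sCA_0}(C_\bullet,D_\bullet),
$$
with differential $d_r\colon E^r_{s,t}\to E^r_{s+r,t+r-1}$. By Example~\ref{example: steenrod algebra, restricted Lie} there are natural isomorphisms of unstable $\calA^h_p$-algebras $\widetilde{\pi}^*(\barW U^r(L_\bullet))\cong \widetilde{H}^*(L_\bullet;\kk)$ and $\widetilde{\pi}^*(\barW U^r(L'_\bullet))\cong \widetilde{H}^*(L'_\bullet;\kk)$. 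Substituting these identifications into the $E^2$-term and into the abutment, and using the mapping space identification from the previous paragraph, yields precisely
$$
E^2_{s,t}=\Ext^s_{\calU^h}\big(\widetilde{H}^*(L_\bullet;\kk), \Sigma^t\widetilde{H}^*(L'_\bullet;\kk)\big)\Rightarrow \pi_{t-s}\map_{\sLxi}(L'_\bullet, L_\bullet),
$$
which is the claimed spectral sequence. Complete convergence is inherited verbatim from Theorem~\ref{theorem: BKSS, coalgebras}, since it is a statement about the abstract spectral sequence, whose terms are unchanged.

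The only genuinely non-formal point — and hence what I expect to be the main obstacle — is justifying that the equivalence of $\infty$-categories of Theorem~\ref{theorem: coalgebras and lie algebras} is compatible with the \emph{simplicial} mapping spaces $\map_{\sLxi}$ and $\map_{\sCA_0}$ computed from the simplicial model structures of Theorems~\ref{theorem:modelsrlie},~\ref{theorem:modelsotrcoalg},~\ref{theorem:model structure srlie, barW-equivalence}, rather than merely with the mapping spaces of the abstract localized $\infty$-categories. For this I would invoke Proposition~\ref{proposition: coalgebras and lie, infty-categorical, part1} and~\cite[Theorem~1.3.4.20]{HigherAlgebra}, which identify $\sLxi$ and $\sCA_0$ with the underlying $\infty$-categories $N((\mathsf{sLie}^r_\xi)^o)$ and $N((\sotrcoalg)^o)$ of the respective simplicial model categories; in a simplicial model category the simplicial mapping space between a cofibrant and a fibrant object models the derived mapping space of the underlying $\infty$-category (see~\cite[Section~A.2]{HTT}). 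Since every object of $\srLie$ is cofibrant in $\srLie_\xi$ and $\kk$-complete objects are exactly the fibrant ones, and since $\sotrcoalg$ has all objects cofibrant with $\barW U^r(L_\bullet)$ admitting a fibrant replacement, these simplicial mapping spaces compute the $\infty$-categorical ones, and the equivalence $PG\dashv \barW U^r$ transports one to the other. With that compatibility in hand, the rest is the bookkeeping above. A minor additional check is that $\barW U^r$, while not known to be a right Quillen functor (Remark~\ref{remark: not quillen adjunction}), nevertheless realizes the $\infty$-categorical functor of Theorem~\ref{theorem: coalgebras and lie algebras} on fibrant-cofibrant objects up to natural weak equivalence, which again follows from the construction in the proof of that theorem.
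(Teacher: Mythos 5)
Your proposal is correct and follows exactly the paper's route: the paper's proof likewise invokes Theorem~\ref{theorem: coalgebras and lie algebras} to obtain the weak equivalence $\map_{\sLxi}(L'_\bullet,L_\bullet)\simeq\map_{\sCA_0}(\barW U^r(L'_\bullet),\barW U^r(L_\bullet))$ and then applies Example~\ref{example: steenrod algebra, restricted Lie} together with Theorem~\ref{theorem: BKSS, coalgebras}. Your extra remarks on finite type and on the compatibility of the simplicial mapping spaces with the localized $\infty$-categories are sound elaborations of points the paper leaves implicit.
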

In particular, there is a completely converging spectral sequence
\begin{equation}\label{equation: ASS, Lie}
E^2_{s,t}=\Ext^s_{\calU^h}(\widetilde{H}^*(L_\bullet;\kk), \Sigma^{t+1} \kk) \Rightarrow \pi_{t-s}(L_\bullet)
\end{equation}
by taking $L'_\bullet = L_\xi(\free(\kk))$ in the spectral sequence of Corollary~\ref{corollary: BKSS, restricted Lie algebra}.

\begin{proof}
By Theorem~\ref{theorem: coalgebras and lie algebras}, there is a weak equivalence of derived mapping spaces
$$\map_{\sLxi}(L'_\bullet, L_\bullet) \simeq \map_{\sCA_0}(\barW U^r(L'_\bullet), \barW U^r(L_\bullet)). $$
By Example~\ref{example: steenrod algebra, restricted Lie} and Theorem~\ref{theorem: BKSS, coalgebras}, we obtain the required spectral sequence.
\end{proof}

\subsection{Unstable Koszul resolutions}\label{section: unstable koszul}
Recall that the \emph{lambda algebra} $\Lambda$ is the associative bigraded algebra over $\F_p$ generated by the elements $\lambda_a, a\geq 1$ of bidegree $|\lambda_a|=(2a(p-1)-1,1)$ and $\mu_a, a\geq 0$ of bidegree $|\mu_a|=(2a(p-1),1)$ (resp. $\lambda_a, a\geq 0$ of bidegree $|\lambda_a|=(a,1)$) subject to the following Adem-type relations:
\begin{enumerate}
\item If $p$ is odd, $b\geq pa$, and $\e=0,1$, then
\begin{align}\label{equation: lambda, adem1, p is odd}
\lambda_a\nu^{\e}_{b} &= \sum_{i=0}^{a+b}(-1)^{i+a+\e}\binom{(p-1)(b-i)-\e}{i-pa}\nu^{\e}_{a+b-i}\lambda_i \\
&+(1-\e)\sum_{l\geq 0}^{a+b}(-1)^{i+a+1}\binom{(p-1)(b-i)-1}{i-pa}\lambda_{a+b-i}\mu_i. \nonumber 
\end{align}
\item If $p$ is odd, $b>pa$, and $\e=0,1$, then
\begin{equation}\label{equation: lambda, adem2, p is odd}
\mu_a\nu^{\e}_{b} = \sum_{i=1}^{a+b}(-1)^{i+a}\binom{(p-1)(b-i)-1}{i-pa-1}\mu_{a+b-i}\nu^{\e}_i.
\end{equation}
\item If $p=2$ and $b>2a$, then 
\begin{equation}\label{equation: lambda, adem3, p=2}
\lambda_a\lambda_{b} = \sum_{i=1}^{a+b}\binom{b-i-1}{i-2a-1}\lambda_{a+b-i}\lambda_i. 
\end{equation}
\end{enumerate}
Here we set $\nu_a^{0}=\mu_a, a\geq 0$ and $\nu_a^1=\lambda_a,a>0$. Notice that we use the definition of the lambda algebra from~\cite[Definition~7.1]{Wellington82}, but not from the original paper~\cite{6authors}. 

In this section we compute \emph{abelian} unstable $\Ext$-groups
$$\Ext^s_{\calM^h}(W, \Sigma^{t}\kk) \;\; \text{(resp. $\Ext^s_{\calM_0^h}(W,\Sigma^t\kk)$)} $$
in terms of the algebra $\Lambda$. Here $W\in \Vect^{gr}_{\kk}$ is a graded vector space, which is considered as a left $\calA^h_p$-module equipped with the trivial action. In order to calculate these $\Ext$-groups, we construct a free resolution $K_\bullet(W) \in \calM^h$ (resp. $K^0_\bullet(W) \in \calM^h_0$) of the (resp. strongly) unstable $\calA^h_p$-module $W$, see Remark~\ref{remark: unstable ext modules}.

The homogenized Steenrod algebra $\calA_p^h$ has a Poincar\'{e}-Birkhoff-Witt (PBW) basis over $\F_p$ given by admissible monomials $$B=\{St^{i_1}\cdot \ldots  \cdot St^{i_k} \; | \; i_{j}\geq pi_{j+1}, k\geq 1\},$$ see Remark~\ref{remark: free unstable, admissible}. We refer the reader to~\cite[Section~5]{Priddy70} and~\cite[Chapter~4]{PP05} for detailed accounts on algebras with a PBW basis. Therefore $\calA_p^h$ is a Koszul algebra with respect to the weight grading, see~\cite[Theorem~5.3]{Priddy70} and~\cite[Theorem~4.3.1]{PP05}.

We denote by $\calK^*_p = \Ext^*_{\calA_p^h}(\F_p,\F_p)$ the Koszul dual algebra for $\calA_p^h$. Here the star in $\calK^*_p$ stands for the weight grading. We recall from~\cite[Sections~7.1-7.2]{Priddy70} that $\calK^*_p$ is a bigraded algebra generated by the elements $$Pr_i \in \calK^1_p, \; |Pr_i|=(i,1),$$ where $Pr_i$ is dual to $St^i$, $i>0$. Notice that S.~Priddy used a different notation in~\cite{Priddy70}: if $p=2$, $\sigma_a$ is the dual to $Sq^a$; if $p$ is odd, $\pi_a$ is the dual to $P^a$ and $\rho_b$ is the dual to $\beta P^b$. In short,
$$Pr_i=\left\{
\begin{array}{ll}
\sigma_{a} & \mbox{if $p=2$ and $i=a, a>0$}\\
\pi_{a} & \mbox{if $p>2$ and $i=2a(p-1), a>0$}\\
\rho_{a} & \mbox{if $p>2$ and $i=2a(p-1)+1, a\geq 0$,} \\
0,  & \mbox{otherwise.}
\end{array}
\right.
$$
Furthermore, there is an anti-isomorphism 
\begin{equation}\label{equation: koszul to steenrod is Lambda, p is odd}
\Phi_p\colon \calK^*_p \to \Lambda
\end{equation}
given by 
$$\Phi_p(Pr_i)=\left\{
\begin{array}{ll}
\lambda_{a-1} & \mbox{if $p=2$ and $i=a, a>0$}\\
\lambda_{a} & \mbox{if $p>2$ and $i=2a(p-1), a>0$}\\
\mu_{a} & \mbox{if $p>2$ and $i=2a(p-1)+1, a\geq 0$.}
\end{array}
\right.
$$
The algebras $\calK^*_p$ and $\Lambda$ are bigraded, however, the map $\Phi_p$ does not preserve the bidegree: if $|x|=(m,n), x\in \calK^*_p$, then $|\Phi_p(x)|=(m-n,n)$.

We say that a sequence $J=(a_1,\ldots, a_k)$ is \emph{orthogonally admissible} if $$a_j < pa_{j+1}, \; 1\leq j\leq k-1.$$ The Koszul dual algebra $\calK^*_p$ has a PBW basis given by orthogonally admissible monomials:
$$K(B)=\{Pr_{J}=Pr_{a_1}\cdot \ldots \cdot Pr_{a_k}\; | \; J=(a_1,\ldots, a_k)\;\text{is orthogonally admissible} \}. $$
We denote by $\calK_{p,*}$ the linear dual to $\calK^*_p$. The graded vector space $\calK_{p,*}$ is spanned by
$$K^\vee(B)=\{Pr^J\;|\; J=(a_1,\ldots, a_k)\;\text{is orthogonally admissible}\}, $$
where $Pr^J\in \calK_{p,*}$ is dual to $Pr_J\in \calK^*_p$. 

Finally, we recall from~\cite[Section~2.3]{PP05} that the trivial $\calA_p^h$-module $\F_p$ has the \emph{Koszul resolution} $K_\bullet$ by free $\calA_p^h$-modules:
\begin{align}\label{equation: koszul, p is odd, stable}
K_\bullet = (\ldots \to \calA_p^h \otimes \calK_{p,3} \xrightarrow{d_3} \calA_p^h \otimes \calK_{p,2} &\xrightarrow{d_2} \calA_p^h \otimes \calK_{p,1} \xrightarrow{d_1} \calA_p^h \to \F_p \to 0).
\end{align}
Here the differential $d_s$ is given by:
\begin{align}\label{equation: koszul diffrential, p is odd}
d_s(Pr^J)= \sum_{i\geq 1} St^{i}\otimes (Pr^{J}\cdot Pr_i),
\end{align}
where $J=(a_1,\ldots, a_s)$ is an orthogonally admissible sequence and we consider $\calK_{p,*}$ as a right $\calK^*_p$-module.

We now construct an (resp. strongly) unstable analog of the Koszul resolution~\eqref{equation: koszul, p is odd, stable}. %The $K^*(\calA^h_p)$-module $\Sigma^{l}K^*(\calA^h_p)$ has a basis $$Pr_J \iota_{l},\;\; Pr_J \in K(B),$$
%where $\iota_{l}\in (\Sigma^{l}K^*(\calA^h_p))_{l,0}= \kk$ is the generator which corresponds to the unit in $\kk$.
Let $J=(i_1,\ldots,i_k)$ be an orthogonally admissible sequence. We define the \emph{excess} $e(J)$ of $J$ as follows:
$$e(J)=e(i_1,\ldots,i_k)=i_k \;\;\text{if $k>0$ and}\;\; e(\emptyset)=0. $$

\begin{lmm}\label{lemma: koszul differential and excess, p is odd}
Suppose that $J$ is orthogonally admissible sequence and $i\geq 1$. Then 
$$Pr^J\cdot Pr_a = \sum_{J'}c_{J'}Pr^{J'}, \; c_{J'}\neq 0\in\F_p,$$
where all $J'$ are orthogonally admissible and $e(J')\geq e(J)$. \qed
\end{lmm}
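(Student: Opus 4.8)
The plan is to prove Lemma~\ref{lemma: koszul differential and excess, p is odd} by reducing the product $Pr^J \cdot Pr_a$ to a linear combination of the PBW basis $K^\vee(B)$ and tracking what happens to the last entry of each surviving sequence. First I would unwind the definitions: $Pr^J \in \calK_{p,*}$ is the dual basis element to the PBW monomial $Pr_J \in \calK^*_p$, and the right $\calK^*_p$-module structure on $\calK_{p,*}$ is dual to the left multiplication on $\calK^*_p$. Concretely, $\langle Pr^J \cdot Pr_a, Pr_{J''}\rangle = \langle Pr^J, Pr_a \cdot Pr_{J''}\rangle$, so $Pr^J\cdot Pr_a = \sum_{J'} c_{J'} Pr^{J'}$ where $c_{J'}$ is the coefficient of $Pr_J$ in the PBW expansion of $Pr_a\cdot Pr_{J'}$. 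Hence the statement to prove is equivalent to: if $Pr_a \cdot Pr_{J'} $ has nonzero $Pr_J$-coefficient in the PBW basis, then $J'$ is orthogonally admissible (automatic, since $K^\vee(B)$ is indexed by such sequences) and $e(J') \geq e(J)$, i.e. the last entry of $J'$ is at least the last entry $i_k$ of $J$.

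The key step is the following \emph{straightening} observation about the Adem-type relations in $\calK^*_p$ (equivalently, relations~\eqref{equation: lambda, adem1, p is odd}--\eqref{equation: lambda, adem3, p=2} transported via the anti-isomorphism $\Phi_p$ of~\eqref{equation: koszul to steenrod is Lambda, p is odd}): whenever one rewrites an inadmissible product $Pr_b Pr_c$ (with $c \geq pb$, the ``wrong'' order for orthogonal admissibility — note the relations are phrased with the reversed convention because $\Phi_p$ is an anti-isomorphism) as a sum $\sum Pr_{b'} Pr_{c'}$ of orthogonally admissible length-two monomials, every term has $c' \le c$; more precisely the total weight-$1$ ``length'' is preserved and the relations only move internal degree \emph{leftward}, never increasing the last index. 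I would extract this from the explicit binomial coefficients in~\eqref{equation: lambda, adem1, p is odd},~\eqref{equation: lambda, adem2, p is odd},~\eqref{equation: lambda, adem3, p=2}: in each relation the summation index $i$ (which becomes the new last entry) runs up to $a+b$ but the binomial coefficient $\binom{\cdots}{i - pa - \e}$ or similar vanishes unless $i$ is small enough, and a direct check shows the nonzero range forces the new rightmost index to be $\leq$ the old rightmost index. Then I would run the standard PBW straightening algorithm: starting from $Pr_a \cdot Pr_{J'}$, repeatedly apply these relations to the leftmost inadmissible adjacent pair; at each step the rightmost index of each monomial produced is non-increasing, so in the final admissible expansion every monomial $Pr_{J''}$ satisfies $e(J'') \leq e(a, J') = e(J')$. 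Taking $J'' = J$ gives $e(J) \le e(J')$, which is exactly the claim.

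The main obstacle will be pinning down the precise statement that straightening does not increase the last index, and in particular making sure the induction in the straightening algorithm is set up correctly — one must check that when the leftmost inadmissible pair is resolved, the relation acts only on two adjacent generators and the indices to its right are untouched, while the new indices introduced are bounded by the larger of the two indices being replaced (so that the rightmost index of the whole monomial cannot grow). A secondary subtlety is the parity/Bockstein bookkeeping when $p$ is odd: the generators $Pr_i$ with $i$ even and odd correspond to $\lambda$'s and $\mu$'s, the relations~\eqref{equation: lambda, adem1, p is odd}--\eqref{equation: lambda, adem2, p is odd} mix them, and one needs the index bound to hold uniformly, which again follows by inspecting the ranges of nonvanishing binomial coefficients. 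Once the ``rightmost index is non-increasing under straightening'' lemma is in hand, the proof of Lemma~\ref{lemma: koszul differential and excess, p is odd} is a two-line dualization argument as above. I would present the straightening lemma as a short sublemma, prove it by the explicit binomial estimate (stated, not belabored), and then deduce the result.

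\medskip

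\noindent\emph{Proof.} By definition of the right $\calK^*_p$-module structure on the dual $\calK_{p,*}$, for any orthogonally admissible $J''$ we have
$$\langle Pr^J \cdot Pr_a,\, Pr_{J''}\rangle \;=\; \langle Pr^J,\, Pr_a\cdot Pr_{J''}\rangle,$$
so $Pr^J\cdot Pr_a = \sum_{J'} c_{J'}\,Pr^{J'}$ where $J'$ ranges over orthogonally admissible sequences and $c_{J'}$ is the coefficient of $Pr_J$ in the expansion of $Pr_a\cdot Pr_{J'}$ in the PBW basis $K(B)$. Thus it suffices to show: if $Pr_a\cdot Pr_{J'}$ has nonzero $Pr_J$-coefficient, then $e(J)\leq e(J')$. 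This follows from the sublemma below applied to the straightening of $Pr_a\cdot Pr_{J'}$, since every admissible monomial $Pr_{J''}$ occurring has $e(J'')\leq e(a,J')=e(J')$, and in particular $e(J)\leq e(J')$. The sublemma states that when an inadmissible product of generators $Pr_{b}Pr_{c}$ (with $c\geq pb$ if $p$ odd, $c\geq 2b$ if $p=2$) is rewritten via relations~\eqref{equation: lambda, adem1, p is odd},~\eqref{equation: lambda, adem2, p is odd},~\eqref{equation: lambda, adem3, p=2} (transported through $\Phi_p$) as a sum of orthogonally admissible length-two monomials $Pr_{b'}Pr_{c'}$, then $c'\leq c$ in every term; this is immediate from the ranges of the summation index for which the displayed binomial coefficients are nonzero. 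Iterating the straightening algorithm — at each step resolving the leftmost adjacent inadmissible pair, which leaves the entries to its right unchanged and introduces only indices bounded by $c$ — one sees the rightmost index of each monomial is non-increasing, which gives the claim. \qed
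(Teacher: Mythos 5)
Your overall strategy coincides with the paper's: dualize the right action so that $c_{J'}$ is the coefficient of $Pr_J$ in the PBW expansion of $Pr_a\cdot Pr_{J'}$, then track the rightmost index through the straightening. The gap is in your straightening sublemma, whose direction is backwards. First, a pair $Pr_bPr_c$ is orthogonally \emph{in}admissible when $b\geq pc$, not when $c\geq pb$. Second, and fatally, when such a pair is rewritten in the orthogonally admissible basis the rightmost index strictly \emph{increases}. Indeed, the relations of $\calK^*_p$ are dual to the Adem relations: the coefficient of $Pr_{i'}Pr_{j'}$ (with $i'<pj'$) in $Pr_iPr_j$ (with $i\geq pj$) equals, up to sign, the coefficient of $St^iSt^j$ in the Adem expansion of $St^{i'}St^{j'}$; since that expansion only produces terms $St^{i'+j'-k}St^k$ with $k<j'$, one gets $j=k<j'$, i.e.\ $j'>j$ (and $i'<i$). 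A concrete instance: $Sq^2Sq^2=Sq^3Sq^1$ in $\calA^h_2$ forces $Pr_3Pr_1=Pr_2Pr_2$ in $\calK^*_2$, where the rightmost index goes from $1$ up to $2$. This is exactly what the paper's own proof records ("$j'>j$", each new pair succeeding the old one when sequences are compared from the right). Your claim "$c'\leq c$" seems to come from reading off the behaviour of the last index in $\Lambda$ and forgetting that $\Phi_p$ is an \emph{anti}-isomorphism, so that the last generator of a $\Lambda$-monomial corresponds to the \emph{first} index of the corresponding sequence in $\calK^*_p$.

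Because of this, the sublemma your argument rests on is false, so the proof does not go through; the two direction errors (in the inadmissibility condition and in the index change) cancel only superficially. Worse, once the direction is corrected, your (correct) dualization yields the opposite conclusion: every admissible $Pr_{I''}$ occurring in the straightening of $Pr_aPr_{J'}$ satisfies $e(I'')\geq e(J')$, so $c_{J'}\neq 0$ forces $e(J)\geq e(J')$, i.e.\ $e(J')\leq e(J)$. You can test this on $J=(2,2)$, $a=3$: $Pr^{(2,2)}\cdot Pr_3=Pr^{(1)}$ (dual to $Pr_3Pr_1=Pr_2Pr_2$), and $e((1))=1<2=e((2,2))$. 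So there is a genuine direction discrepancy to be sorted out between the straightening fact "$j'>j$" and the inequality asserted in the statement; note that it is the inequality $e(J')\leq e(J)$ which guarantees that the generators $Pr^J\otimes w$ with $e(J)\leq(p-1)|w|$ are closed under the Koszul differential, which is what the lemma is used for in the construction of $K_\bullet(W)$. Before reworking the proof you should fix the intended inequality and the precise convention for the right action, and only then run the straightening argument.
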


\begin{proof}
Recall that the Koszul dual algebra $\calK^*_p$ is quadratic and all relations have a form
$$Pr_iPr_j = \sum_{(i',j')}c_{i',j'} Pr_{i'}Pr_{j'}, \; i<pj, \; c_{i',j'}\neq 0 \in \F_p, $$
where $j'>j$ and $i'\geq pj'$, see~\cite[Section~7]{Priddy70}. In other words, each sequence $(i',j')$ succeeds $(i,j)$ in the reverse lexicographical order.

Now, let $I=(i_1,\ldots,i_k)$ be any sequence. By an inductive argument and the previous paragraph, we observe that
$$Pr_I=Pr_{i_1}\cdot \ldots \cdot Pr_{i_k}=\sum_{I'}c_{I'}Pr_{I'}, \; c_{I'}\neq 0\in\F_p,$$
where all sequences $I'$ are orthogonally admissible and each $I'$ either succeeds or equal $I$ with respect to the reverse lexicographical order. This implies the lemma.
\end{proof}

\begin{dfn}\label{definition: unstable. koszul dual, p is odd}
%Let $l\geq 0$. We denote by $K^*_l(\calA^h_p)$ (resp. $K^*_{l,0}(\calA^h_p)$) the bigraded vector subspace of $\Sigma^{l}K^*(\calA^h_p)$ spanned by elements
%$$Pr_J\iota_{l}, \;\; Pr_{J} \in K(B)\;\; \text{and} \;\; e(J)\leq (p-1)l \;\; \text{(resp. $e(J)<(p-1)l$)}. $$
%We denote by $K^l_*(\calA^h_p)$ (resp. $K^{l,0}_*(\calA^h_p)$) the linear dual of $K^*_l(\calA^h_p)$ (resp. $K^*_{l,0}(\calA^h_p)$); $K^l_*(\calA^h_p)$ (resp. $K^{l,0}_*(\calA^h_p)$) has a basis
%$$Pr^{J} \iota_{l}, \;\; Pr^{J} \in K^{\vee}(B)\;\; \text{and} \;\; e(J)\leq (p-1)l \;\; \text{(resp. $e(J)<(p-1)l$)}. $$
Let $W \in \Vect^{gr}_{\kk}$ be a graded vector space. We denote by $\calK^*_p\widehat{\otimes} W$ (resp. $\calK^*_p\widetilde{\otimes} W$) the vector subspace of $\calK^*_p \otimes W$ spanned by elements
$$Pr_J\otimes w, \;\; Pr_{J} \in K(B), \; w\in W, $$ 
where $e(J)\leq (p-1)|w|$ (resp. $e(J)<(p-1)|w|$)). Dually, we denote by 
$\calK_{p,*}\widehat{\otimes} W$ (resp. $\calK_{p,*}\widetilde{\otimes} W$) the subspace of $\calK_{p,*} \otimes W$ spanned by elements
$$Pr^J\otimes w, \;\; Pr^{J} \in K^\vee(B), \; w\in W, $$ 
where $e(J)\leq (p-1)|w|$ (resp. $e(J)<(p-1)|w|$)). We point out that $|Pr_J\otimes w| = |Pr^J\otimes w| = (|J|+|w|,l(J))$.
\end{dfn}

\begin{rmk}\label{remark: unstable koszul and lambda, p=2}
If $p=2$, then we have $$\calK^*_p\widetilde{\otimes} \Sigma^l \kk =\Sigma \calK^*_{p} \widehat{\otimes} \Sigma^{l-1} \kk, \; l>0.$$ %Furthermore, the isomorphism $\Phi_2$ from~\eqref{equation: koszul to steenrod is Lambda, p is odd} maps the vector subspace 
%$$\Sigma^{-l}\calK^*_p \widehat{\otimes} \Sigma^l\kk \subset \calK^*_p$$ onto $\Lambda(l) \subset \Lambda$ of~\cite[p.~459]{BC70}.
%\end{rmk}
%\begin{rmk}%\label{remark: unstable koszul and lambda, p is odd}
Similarly, if $p$ is odd and $l=2n+1$ is odd, then we have 
$$\calK^*_{p} \widetilde{\otimes}\Sigma^l \kk =\calK^*_{p}\widehat{\otimes} \Sigma^l \kk, $$
%and the isomorphism $\Phi_p$ maps $$\Sigma^{-l}\calK^*_{p}\widehat{\otimes} \Sigma^l \kk = \Sigma^{-l}\calK^*_{p}\widetilde{\otimes} \Sigma^l\kk$$ onto $\Lambda(2n+1)\subset \Lambda$ of~\cite[(1.16)]{HarperMiller82}. Similarly,
and if $l=2n$ is even, then %$$\Phi_p(\Sigma^{-l}\calK^*_p\widehat{\otimes}\Sigma^l\kk) = \Lambda(2n),$$
%$$\Phi_p(\Sigma^{-l}\calK^*_p \widetilde{\otimes} \Sigma^l \kk) = \Phi_p(\Sigma^{-l+1}\calK^*_p \widehat{\otimes} \Sigma^{l-1} \kk) = \Lambda(2n-1), $$
%where $\Lambda(2n)\subset \Lambda$ is of~\cite[(1.15)]{HarperMiller82}.
$$\calK^*_p \widetilde{\otimes} \Sigma^l \kk = \Sigma\calK^*_p \widehat{\otimes} \Sigma^{l-1} \kk.$$
\end{rmk}

\begin{rmk}\label{remark: subspace in Lambda}
Recall that a monomial of the algebra $\Lambda$ in the $\lambda$'s and $\mu$'s (resp. the $\lambda$'s) is called \emph{admissible} if
\begin{itemize}
\item $p$ is odd and whenever $\lambda_a\lambda_b$ or $\lambda_a\mu_b$ occurs, we have $b<pa$,
\item $p$ is odd and whenever $\mu_a\lambda_b$ or $\mu_a\mu_b$ occurs, we have $b\leq pa$, and
\item $p=2$ and whenever $\lambda_a\lambda_b$ occurs, we have $b\leq pa$.
\end{itemize}
In other words, a monomial $y\in \Lambda$ is admissible if and only if $y=\Phi_p(x)$, $x\in \calK^*_p$ and $x$ is orthogonally admissible. We also recall that the algebra $\Lambda$ is filtered by vector subspaces $\Lambda(l), l\geq 0$, where $\Lambda(l)$ is spanned by the admissible monomials beginning with
\begin{itemize}
\item $\lambda_a$ with $a\leq n$ or $\mu_a$ with $a < n$ if $l=2n$ and $p$ is odd (\cite[(1.15)]{HarperMiller82}),
\item $\lambda_a$ with $a\leq n$ or $\mu_a$ with $a\leq n$ if $l=2n+1$ and $p$ is odd (\cite[(1.16)]{HarperMiller82}),
\item $\lambda_a$ with $a<l$ if $p=2$ (\cite[p. 459]{BC70}).
\end{itemize}
Then we observe that the anti-isomorphism $\Phi_p\colon \calK^*_p \to \Lambda$ maps the vector subspace $$\Sigma^{-l}\calK^*_p\widehat{\otimes}\Sigma^l\kk \subset \calK^*_p$$ onto $\Lambda(l)\subset \Lambda$.
\end{rmk}

\begin{rmk}\label{remark: unstable koszul and lambda, any p}
Let $W\in \Vect^{gr}_{\kk}$ be a graded vector space. Following~\cite[Definition~9.3]{Wellington82}, let us denote by  $W\widehat{\otimes}\Lambda$ the subspace of $W\otimes \Lambda$ spanned by $w\otimes y\in W\otimes \Lambda$ such that $y\in \Lambda$ is an admissible monomial beginning with $\nu_a^\e$ (resp. $\lambda_a$) such that $2a-\e <|w|$ (resp. $a<|w|$). Similarly, $W\widetilde{\otimes}\Lambda$ is the subspace of $W\widehat{\otimes}\Lambda$ spanned by $w\otimes y$, $y\in \Lambda$ is an admissible monomials beginning with $\nu_a^{\e}$ (resp. $\lambda_a$) such that $2a<|w|$ (resp. $a+1<|w|$). Vector spaces $W\widehat{\otimes}\Lambda$ and $W\widetilde{\otimes}\Lambda$ are bigraded:
$$|w\otimes y|=(|w|,0)+|y|, \; w\in W, \; y\in \Lambda. $$

Remarks~\ref{remark: unstable koszul and lambda, p=2} and~\ref{remark: subspace in Lambda} show that the isomorphism $\Phi_p$ defines (after suitable shifts) following isomorphisms
$$\hat\Phi_p\colon \calK^*_p\widehat{\otimes} W \xrightarrow{\cong} W\widehat{\otimes} \Lambda, \;\; \tilde\Phi_p\colon \calK^*_p\widetilde{\otimes} W \xrightarrow{\cong} W\widetilde{\otimes} \Lambda, \;\; W\in \Vect^{gr}_{\kk}. $$
%where right hand sides are defined in~\cite[Definition~9.3]{Wellington82}. 
Furthermore, if $|x|=(m,n)$, $x\in \calK^*_p\widehat{\otimes} W$ (resp. $\calK^*_p\widetilde{\otimes} W$), then $|\hat{\Phi}_p(x)|= |\tilde{\Phi}_p(x)|=(m-n,n)$.
\end{rmk}

We introduce more notation. Let $y=Pr^{J} \in K^{\vee}(B)$ be an orthogonally admissible monomial. We write $y(w), w\in W, W\in \Vect^{gr}_{\kk}$ for the following element of $\calK_{p,*}\widehat{\otimes} W$ (resp. $\calK_{p,*}\widetilde{\otimes} W$):
$$y(w)=\left\{
\begin{array}{ll}
Pr^{J}\otimes w & \mbox{if $e(J)\leq (p-1)|w|$ (resp. $e(J)<(p-1)|w|$),}\\
0, & \mbox{otherwise.}
\end{array}
\right.
$$
We extend this notation linearly to any element $y\in \calK_{p,*}$. Given the differential~\eqref{equation: koszul diffrential, p is odd}, we produce a map of free unstable $\calA^h_p$-modules
\begin{equation}\label{equation: koszul differential, unstable, p is odd}
d_s^{un}\colon F_{\calM^h}(\calK_{p,s}\widehat{\otimes} W^{(s)}) \to F_{\calM^h}(\calK_{p,s-1}\widehat{\otimes} W^{(s-1)})
\end{equation}
given on the generators $Pr^{J}\otimes w$ by the following formula:
\begin{align*}
d_s^{un}(Pr^{J}\otimes w) = \sum_{i\geq 1}St^{i}((Pr^J\cdot Pr_i)(w)).
\end{align*}
Here $St^i(x)=0$ if $i>(p-1)|x|$ and $W^{(s)}$ is the Frobenius twist of $W$, see Definition~\ref{definition: frobenius twist}. The Frobenius twist is necessary since the operations $St^i \in \calA^h_p$ are only semi-linear (Definition~\ref{definition: module over hsa}). We also define by the same formula a map of free strongly unstable $\calA^h_p$-modules:
\begin{equation}\label{equation: koszul differential, strongly unstable, p is odd}
d_s^{sun}\colon F_{\calM^h_0}(\calK_{p,s}\widetilde{\otimes}W^{(s)}) \to F_{\calM^h_0}(\calK_{p,s}\widetilde{\otimes} W^{(s-1)}),
\end{equation}
where $St^i(x)=0$ if $i\geq (p-1)|x|$. By Lemma~\ref{lemma: koszul differential and excess, p is odd} and $d_{s-1}\circ d_s =0 $, we have $$d_{s-1}^{un}\circ d_{s}^{un}=0\;\; \text{(resp. $d_{s-1}^{sun}\circ d_{s}^{sun} =0$)},\;s>0,$$ and so we obtained the \emph{unstable Koszul complex} 
\begin{equation}\label{equation: unstable koszul complex}
K_\bullet(W) = (F_{\calM^h}(\calK_{p,\bullet}\widehat\otimes W^{(\bullet)}), d^{un})
\end{equation}
%\begin{align}\label{equation: koszul complex, unstable, p is odd}
%\ldots\to F_{\calM^h}(K_2^l(\calA_p^h)) &\xrightarrow{d^{un}_2} F_{\calM^h}(K_1^l(\calA_p^h)) \xrightarrow{d^{un}_1} F_{\calM^h}(K_0^l(\calA_p^h)) \to \Sigma^{l}\kk \to 0
%\end{align}
for the unstable $\calA_p^h$-module $W \in \Vect_{\kk}^{gr}\subset \calM^h$. Similarly, we have the \emph{strongly unstable Koszul complex} 
\begin{equation}\label{equation: strongly unstable koszul complex}
K^0_\bullet(W)=(F_{\calM^h_0}(\calK_{p,\bullet}\widetilde\otimes W^{(\bullet)}), d^{sun})
\end{equation}
%\begin{align}\label{equation: koszul complex, strongly unstable, p is odd}
%\ldots\to F_{\calM^h_0}(K_2^{l,0}(\calA_p^h)) &\xrightarrow{d^{sun}_2} F_{\calM^h_0}(K_1^{l,0}(\calA_p^h)) \xrightarrow{d^{sun}_1} F_{\calM^h_0}(K_0^{l,0}(\calA_p^h)) \to \Sigma^{l}\kk \to 0
%\end{align}
for the strongly unstable $\calA_p^h$-module $W \in \Vect_{\kk}^{gr}\subset\calM^h_0$.

\begin{prop}\label{proposition: unstable koszul resolution, p is odd}
Both complexes $K_\bullet(W)$ and $K^0_\bullet(W)$ are acyclic for all $W\in \Vect^{gr}_{\kk}$.
\end{prop}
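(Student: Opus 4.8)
The statement asserts acyclicity of two complexes, $K_\bullet(W)$ and $K^0_\bullet(W)$, built from the (strongly) unstable free functors applied to the (truncated) Koszul-dual data $\calK_{p,\bullet}\widehat\otimes W^{(\bullet)}$ and $\calK_{p,\bullet}\widetilde\otimes W^{(\bullet)}$. My plan is to reduce the problem, via a monomial-basis/filtration argument, to the known acyclicity of the ordinary Koszul resolution~\eqref{equation: koszul, p is odd, stable}. The first step is to reduce to the case $W=\Sigma^l\kk$ is one-dimensional, concentrated in a single degree $l$: both $F_{\calM^h}$ and $F_{\calM^h_0}$ commute with direct sums in the appropriate sense, the constructions $\calK_{p,\bullet}\widehat\otimes(-)$, $\calK_{p,\bullet}\widetilde\otimes(-)$, and the Frobenius twist $(-)^{(\bullet)}$ are all additive, and the differentials $d^{un}$, $d^{sun}$ respect this decomposition; since homology commutes with filtered colimits it suffices to treat a single generator $\iota_l$. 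After that, one uses the identifications of Remark~\ref{remark: free unstable, admissible}: $F_{\calM^h}(St^J\iota_l)$ has the admissible monomial basis $\{St^I St^J\iota_l\}$ with $e(\,\cdot\,)\leq (p-1)l$ constraints, and similarly for $F_{\calM^h_0}$ with strict inequality, so the terms of $K_\bullet(\Sigma^l\kk)$ are spanned by composable admissible-then-orthogonally-admissible monomials acting on $\iota_l$.

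The heart of the argument is to put a decreasing filtration on $K_\bullet(\Sigma^l\kk)$ by the \emph{excess} $e(J)$ of the orthogonally admissible tail, exactly as in Lemma~\ref{lemma: koszul differential and excess, p is odd}: that lemma says $Pr^J\cdot Pr_a$ is a sum of $Pr^{J'}$ with $e(J')\geq e(J)$, so the differential $d^{un}_s$ (resp. $d^{sun}_s$) is filtration-preserving. On the associated graded, the excess-raising part of $d$ vanishes and one is left with the "pure" Koszul differential in a fixed excess, which — after discarding those $Pr^J\otimes w$ killed by the unstability truncation $St^i(x)=0$ for $i>(p-1)|x|$ (resp. $\geq$) — is precisely the ordinary Koszul differential~\eqref{equation: koszul diffrential, p is odd} tensored with $\calA^h_p$ in the surviving internal degrees. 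More concretely, I expect that the associated graded complex splits, internal-degree by internal-degree, as a direct sum of truncations of the free resolution $K_\bullet\otimes_{\calA^h_p}(\text{stuff})$; since $\calA^h_p$ is a Koszul algebra (\cite[Theorem~5.3]{Priddy70}, \cite[Theorem~4.3.1]{PP05}) the resolution~\eqref{equation: koszul, p is odd, stable} is exact, and the relevant truncated pieces inherit exactness because the excess/unstability cutoffs are compatible with the internal grading in which the Koszul complex is minimal. A spectral-sequence-of-a-filtered-complex argument (the filtration is bounded in each bidegree by Remark~\ref{remark: free unstable, admissible}, so convergence is automatic) then yields acyclicity of $K_\bullet(W)$ and $K^0_\bullet(W)$.

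The main obstacle, and the step requiring genuine care rather than bookkeeping, is verifying that on the associated graded the truncation conditions $e(J)\leq(p-1)|w|$ and $i\leq(p-1)|x|$ (and their strict analogues) interact with the ordinary Koszul differential~\eqref{equation: koszul diffrential, p is odd} so as to give an \emph{exact} subquotient — one must check that the terms thrown away by unstability form a subcomplex (or quotient complex) that is itself acyclic, so that exactness of the full Koszul complex descends. Here I would invoke the explicit description of the excess of $Pr^J$ and the fact, recorded in Remarks~\ref{remark: unstable koszul and lambda, p=2}--\ref{remark: unstable koszul and lambda, any p}, that under the anti-isomorphism $\Phi_p\colon\calK^*_p\to\Lambda$ the truncated spaces $\calK^*_p\widehat\otimes\Sigma^l\kk$, $\calK^*_p\widetilde\otimes\Sigma^l\kk$ correspond to the Harper--Miller/Bousfield--Curtis filtration pieces $\Lambda(l)\subset\Lambda$; the combinatorics of the lambda-algebra Adem relations~\eqref{equation: lambda, adem1, p is odd}--\eqref{equation: lambda, adem3, p=2} guarantee that $\Lambda(l)$ is closed under the relevant differential, which is the algebraic shadow of the statement being proved. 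Thus the plan is: (i) reduce to a single generator; (ii) filter by excess and pass to the associated graded; (iii) identify the associated graded, in each internal degree, with an honest (possibly shifted/truncated) ordinary Koszul complex; (iv) invoke Koszulity of $\calA^h_p$ and compatibility of the truncations with the internal grading to conclude exactness; (v) run the finite (in each bidegree) spectral sequence to deduce the result for $K_\bullet(W)$ and $K^0_\bullet(W)$.
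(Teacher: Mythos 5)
There is a genuine gap at the step you yourself flag as ``the main obstacle.'' Your filtration by the excess $e(J)$ of the orthogonally admissible tail (justified by Lemma~\ref{lemma: koszul differential and excess, p is odd}) does produce a filtered complex, but its associated graded is not ``an honest ordinary Koszul complex'': it is precisely the \emph{unstably truncated} Koszul complex, with basis elements discarded whenever $e(J)>(p-1)|w|$ or $e(I)>(p-1)(|J|+|w|)$. Acyclicity of that truncation is exactly the content of the proposition, so your reduction is circular unless you supply an independent argument. Neither of the justifications you offer closes the gap: Koszulity of $\calA^h_p$ gives exactness of the full resolution~\eqref{equation: koszul, p is odd, stable}, but exactness does not pass to subquotients cut out by the excess conditions (a subcomplex or quotient of an acyclic complex need not be acyclic); and the observation that $\Lambda(l)$ is closed under the lambda-algebra differential only says the truncation is a subcomplex, not that it has no homology. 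Also, the ``minimality in the internal grading'' heuristic does not apply: the truncation is by excess, not by internal degree, and these are genuinely different gradings.

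The paper's proof avoids this by choosing a much finer filtration. After splitting $K_\bullet(W)$ by the total weight $n=l(I)+l(J)$, it filters each piece by the full pair $(I,J)$ in lexicographic order. On the associated graded the differential collapses to the single monomial-to-monomial map $St^I(Pr^J\otimes w)\mapsto St^ISt^{a_1}(Pr^{J_0}\otimes w)$ when $J=(a_1,J_0)$ and $(I,a_1)$ is admissible, and to zero otherwise; this ``move one letter from $J$ to $I$'' differential pairs off basis elements and is acyclic by direct inspection, with the excess constraints matching on both sides. If you want to rescue your coarser filtration, you would need to refine it further (e.g.\ to a monomial order on the whole word $(I,J)$, which is essentially what the paper does) or give a separate contracting homotopy for the truncated Koszul complex in each excess stratum. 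Your preliminary reduction to $W=\Sigma^l\kk$ is fine but unnecessary, since the argument works verbatim for arbitrary $W$.
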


\begin{proof}
We prove the proposition for $K_\bullet(W)$ only; the argument for $K^0_\bullet(W)$ is almost identical, and we leave it to the reader to complete the details. By Remark~\ref{remark: free unstable, admissible}, the complex $K_\bullet(W)$ has a basis $St^{I}(Pr^J\otimes w),$
where $I=(i_1,\ldots, i_k)$ is an admissible sequence, $J=(a_1,\ldots, a_s)$ is an orthogonally admissible sequence, $e(J)\leq (p-1)|w|$, and $$e(I)\leq (p-1)(|J|+|w|)= (p-1)(a_1+\ldots+a_s+|w|).$$

We denote by $K_\bullet(W)_n, n\geq 0$ the vector subspace of $K_\bullet(W)$ spanned by elements $St^I(Pr^J\otimes w)$ such that $l(I)+l(J)=n$. Note that the differential~\eqref{equation: koszul differential, unstable, p is odd} preserves each $K_{\bullet}(W)_n, n\geq 0$.

Let us consider the lexicographical order on the set $\mathcal{I}_n$ of sequences of length $n$: $\alpha=(\alpha_1,\ldots,\alpha_n) \preceq \beta=(\beta_1,\ldots, \beta_n)\in \mathcal{I}_n$ if and only if there exists $h$ such that $$\alpha_1=\beta_1,\ldots, \alpha_{h-1}=\beta_{h-1}\;\; \text{and} \;\; \alpha_h<\beta_h.$$
Define an $\mathcal{I}_n$-valued decreasing filtration $F^{\alpha}K_\bullet(W)_n, \alpha \in\mathcal{I}_n$ on the vector space $K_\bullet(W)_n$ by the rule:
$$F^{\alpha}K_\bullet(W)_n = \mathrm{span}(St^I(Pr^J\otimes w)\; |\; (I,J)\succeq \alpha), $$
where $(I,J)$ is the concatenation of the sequences $I$ and $J$. By the Adem relations~\eqref{equation: adem relation, p odd, 1}, \eqref{equation: adem relation, p odd, 2}, \eqref{equation: adem relation, p=2} in $\calA_p^h$, and the relations~\cite[Section~7]{Priddy70} in $\calK^*_p$, the differential $d^{un}$ preserves subspaces $F^{\alpha}K_\bullet(W)_n$, $\alpha\in \mathcal{I}_n$.

A straightforward computation also shows that the associated graded complex $gr^{\alpha}K_\bullet(W)_n=F^{\alpha}/F^{\alpha'}$ (where $\alpha'$ succeeds $\alpha$ in $\mathcal{I}_n$) has the induced differential given by the rule 
%$$
%d(St^I(Pr^J\otimes w))=\left\{
%\begin{array}{ll}
%St^I St^{a_1}(Pr^{J_0}\otimes w), & \mbox{if $J=(a_1,J_0)$ and $(I,a_1)$ is admissible,}\\
%0, & \mbox{otherwise.}
%\end{array}
%\right.
%$$
$$d(St^I(Pr^J\otimes w)) = St^I St^{a_1}(Pr^{J_0}\otimes w) $$
provided $J=(a_1,J_0)$ and the sequence $(I,a_1)$ is admissible; and $d(St^I(Pr^J\otimes w))$ is zero otherwise. Hence the complex $gr^{\alpha}K_\bullet(W)_n$ is acyclic for each $\alpha\in \mathcal{I}_n$ and each $n\geq 0$. In this way, the original complex $K_\bullet(W)$ is also acyclic.
\end{proof}

By combining Proposition~\ref{proposition: unstable koszul resolution, p is odd} with Remarks~\ref{remark: unstable ext modules} and~\ref{remark: unstable koszul and lambda, any p}, we obtain immediately the following statement.

\begin{cor}\label{corollary: unstable exts between trivial}
Let $W\in \Vect^{gr}_{\kk}$ be a graded vector space. Then there are natural isomorphisms:
$$\Ext^{s}_{\calM^h}(W,\Sigma^t\kk)\cong (\calK^s_p \widehat{\otimes} (W^*)^{(s)})_t\xrightarrow{\hat{\Phi}_p} ((W^*)^{(s)}\widehat{\otimes} \Lambda_s)_{t-s},\; t\geq s\geq 0, $$
\begin{equation*}
\Ext^{s}_{\calM^h_0}(W,\Sigma^t\kk)\cong (\calK^s_p \widetilde{\otimes} (W^*)^{(s)})_t \xrightarrow{\tilde{\Phi}_p} ((W^*)^{(s)}\widetilde{\otimes} \Lambda_s)_{t-s},\; t\geq s\geq 0. \eqno\qed
\end{equation*}
\end{cor}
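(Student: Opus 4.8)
The statement is Corollary~\ref{corollary: unstable exts between trivial}, and the plan is to deduce it by combining three ingredients already in hand: Proposition~\ref{proposition: unstable koszul resolution, p is odd} (that $K_\bullet(W)$ and $K^0_\bullet(W)$ are acyclic), the computation of non-abelian $\Ext$-groups via projective resolutions recorded in Remark~\ref{remark: unstable ext modules}, and the dictionary between the unstable Koszul complexes and the algebra $\Lambda$ set up in Remark~\ref{remark: unstable koszul and lambda, any p}. First I would observe that for each $s$ the module $K_s(W)=F_{\calM^h}(\calK_{p,s}\widehat{\otimes} W^{(s)})$ is free in $\calM^h$ (and similarly $K^0_s(W)$ is free in $\calM^h_0$), so by Proposition~\ref{proposition: unstable koszul resolution, p is odd} the augmented complex $K_\bullet(W)\to W$ is a projective resolution of the (strongly) unstable $\calA^h_p$-module $W$ — here one uses that $W\in \Vect^{gr}_{\kk}$ is viewed with the trivial $\calA^h_p$-action, as in the hypothesis. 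Then Remark~\ref{remark: unstable ext modules} gives $\Ext^s_{\calM^h}(W,\Sigma^t\kk)\cong H^s\big(\Hom_{\calM^h}(K_\bullet(W),\Sigma^t\kk)\big)$ and likewise in $\calM^h_0$.

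\textbf{The Hom-computation and the differential.} The next step is to identify the cochain complex $\Hom_{\calM^h}(K_\bullet(W),\Sigma^t\kk)$. By the free–forgetful adjunction $F_{\calM^h}\dashv \oblv_{\calM^h}$ of~\eqref{equation: free unstable}, we have
\[
\Hom_{\calM^h}\big(F_{\calM^h}(\calK_{p,s}\widehat{\otimes} W^{(s)}),\Sigma^t\kk\big)\cong \Hom_{\Vect^{>0}_{\kk}}\big(\calK_{p,s}\widehat{\otimes} W^{(s)}, \Sigma^t\kk\big)\cong \big(\calK^s_p\widehat{\otimes}(W^*)^{(s)}\big)_t,
\]
where the last isomorphism dualizes $\calK_{p,s}\cong (\calK^s_p)^\vee$ and uses that the Frobenius twist is an involution compatible with taking duals (here one must check degrees match: $|Pr^J\otimes w|=(|J|+|w|,l(J))$ pairs with $\Sigma^t\kk$ exactly in internal degree $t$ and weight $s$). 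The crucial point is that the dual of the differential $d^{un}_s$ from~\eqref{equation: koszul differential, unstable, p is odd} — which was defined on generators by $Pr^J\otimes w\mapsto \sum_i St^i\big((Pr^J\cdot Pr_i)(w)\big)$, with the $St^i$ acting semilinearly on the free generators — dualizes, after removing the now-trivial $St^i$-factors since $\kk$ has trivial action, to right multiplication by $Pr_i$ in $\calK^*_p$, i.e. precisely the differential of the Koszul complex~\eqref{equation: koszul, p is odd, stable} restricted to the excess-bounded subspace $\calK^s_p\widehat{\otimes}(W^*)^{(s)}$. Because this subcomplex is exactly what computes $\Ext^*$ (the ambient unbounded complex being acyclic by~\eqref{equation: koszul, p is odd, stable}), and because the complex $K_\bullet(W)$ was designed so that $d^{un}$ restricts to it (Lemma~\ref{lemma: koszul differential and excess, p is odd} guarantees the excess bound is preserved), the cohomology in degree $s$ is simply the degree-$t$ part of $\calK^s_p\widehat{\otimes}(W^*)^{(s)}$ with its induced differential being zero — more precisely one reads off $\Ext^s_{\calM^h}(W,\Sigma^t\kk)\cong (\calK^s_p\widehat{\otimes}(W^*)^{(s)})_t$ directly.

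\textbf{Transport to $\Lambda$ and the strongly unstable case.} Finally I would apply the isomorphism $\hat\Phi_p\colon \calK^*_p\widehat{\otimes}(W^*)^{(s)}\xrightarrow{\cong}(W^*)^{(s)}\widehat{\otimes}\Lambda$ from Remark~\ref{remark: unstable koszul and lambda, any p}, together with its recorded bidegree shift $(m,n)\mapsto(m-n,n)$, to rewrite $(\calK^s_p\widehat{\otimes}(W^*)^{(s)})_t$ as $((W^*)^{(s)}\widehat{\otimes}\Lambda_s)_{t-s}$, giving the first claimed isomorphism; the constraint $t\geq s\geq 0$ is automatic since the Koszul complex lives in nonnegative homological degrees and internal degree always dominates weight. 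The strongly unstable case $\Ext^s_{\calM^h_0}(W,\Sigma^t\kk)\cong (\calK^s_p\widetilde{\otimes}(W^*)^{(s)})_t\xrightarrow{\tilde\Phi_p}((W^*)^{(s)}\widetilde{\otimes}\Lambda_s)_{t-s}$ runs verbatim, replacing $K_\bullet(W)$ by $K^0_\bullet(W)$, $F_{\calM^h}$ by $F_{\calM^h_0}$, $\widehat{\otimes}$ by $\widetilde{\otimes}$, and $\hat\Phi_p$ by $\tilde\Phi_p$, all of which are supplied by the same earlier results. Naturality in $W$ is clear throughout, since every isomorphism used is induced by an adjunction or by the fixed anti-isomorphism $\Phi_p$. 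The one step I expect to require genuine care, rather than bookkeeping, is verifying that the dual of the semilinear differential $d^{un}$ really does become honest right $\calK^*_p$-multiplication on the excess-truncated subcomplex — i.e. that the Frobenius twists inserted in~\eqref{equation: koszul differential, unstable, p is odd} are exactly what is needed for the dualization to land on the nose in~\eqref{equation: koszul diffrential, p is odd} — but this is a direct check once one tracks the semilinearity of $St^i$ against the twist $W\rightsquigarrow W^{(s)}$.
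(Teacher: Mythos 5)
Your route is the same as the paper's, which deduces the corollary by combining Proposition~\ref{proposition: unstable koszul resolution, p is odd}, Remark~\ref{remark: unstable ext modules}, and Remark~\ref{remark: unstable koszul and lambda, any p}; the adjunction identification and the compatibility $(W^{(s)})^*\cong(W^*)^{(s)}$ you supply are correct bookkeeping. However, the step you single out as requiring ``genuine care'' --- that the dual of $d^{un}$ becomes right $\calK^*_p$-multiplication on the excess-truncated subcomplex --- is not an accurate description of the dual differential, and an attempt to verify it as you state it would not succeed.

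The correct and much simpler observation is this: by~\eqref{equation: koszul differential, unstable, p is odd}, every summand of $d^{un}_s(Pr^J\otimes w)$ lies in the image of some $St^i$ with $i\geq 1$, and since the $\calA^h_p$-action on $\Sigma^t\kk$ is trivial, every $\calM^h$-homomorphism to $\Sigma^t\kk$ annihilates the image of $St^i$ for all $i\geq 1$. Hence the induced differential on $\Hom_{\calM^h}(K_\bullet(W),\Sigma^t\kk)$ is \emph{identically zero}, not some residual right multiplication, and $\Ext^s_{\calM^h}(W,\Sigma^t\kk)$ is the plain degree-$t$ part of $\calK^s_p\widehat\otimes(W^*)^{(s)}$ with no cohomology left to take. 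The acyclicity of the stable Koszul complex~\eqref{equation: koszul, p is odd, stable} plays no role at this stage --- what matters is the acyclicity of the unstable complexes $K_\bullet(W)$ and $K^0_\bullet(W)$, already supplied by Proposition~\ref{proposition: unstable koszul resolution, p is odd}, which makes them projective resolutions. With that repair the rest of your argument, including the transport through $\hat\Phi_p$, $\tilde\Phi_p$ and the strongly unstable case, goes through exactly as you wrote it.
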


\subsection{Free restricted Lie algebra}\label{section: free lie}
In this section we sketch how to compute the homotopy groups $\pi_*\free(V_\bullet)$ of the free simplicial restricted Lie algebra $\free(V_\bullet), V_\bullet\in\sVect_{\kk}^{ft}$ in terms of the algebra $\Lambda$ by using the spectral sequence~\eqref{equation: ASS, Lie}. Of course, the description of $\pi_*\free(V_\bullet)$ is well-known since the seventies (see~\cite[Theorem~8.5]{BC70} and~\cite[Proposition~13.2]{Wellington82}), so our computation is only illustrative.

By Proposition~\ref{proposition: free lie algebras, fresse}, there is an isomorphism
\begin{equation}\label{equation: free restricted, weight}
\pi_*(\free(V_\bullet))\cong \pi_*\Big(\bigoplus_{n\geq 1}L^r_n(V_\bullet)\Big) \cong \bigoplus_{n\geq 1} \pi_*\left((\bfLie_n \otimes V_\bullet^{\otimes n})^{\Sigma_n}\right).
\end{equation}
We write $\pi_{*,n}(\free(V_\bullet))$ for the direct summand $\pi_{*}((\bfLie_n\otimes V_\bullet^{\otimes n})^{\Sigma_n})$ in $\pi_*(\free(V_\bullet))$.

%\begin{dfn}\label{definition: isotropic Lie algebra}
%A Lie algebra $L$ over $\kk$ is called \emph{isotropic} if $[x,x]=0$ for all $x\in L$. Let $\Lie$ denote the category of isotropic Lie algebras over the field $\kk$. Note that if $p\neq 2$, then any Lie algebra is isotropic.
%\end{dfn}

Recall that $\Lie$ is the category of Lie algebras over $\kk$, see Section~\ref{section: notation}.

\begin{lmm}\label{lemma: isotropic Lie algebras}
The forgetful functor $\oblv\colon \Lie \to \Vect_{\kk}$ admits a left adjoint
$$L\colon \Vect_{\kk} \to \Lie $$
such that $$\oblv \circ L(V) \cong \bigoplus_{n\geq 1} L_n(V), \;\; V\in \Vect_{\kk}$$ and $L_n(V)$ is the image of $(\bfLie_n\otimes V^{\otimes n})_{\Sigma_n}$ under the norm map
$$(\bfLie_n \otimes V^{\otimes n})_{\Sigma_n} \to (\bfLie_n \otimes V^{\otimes n})^{\Sigma_n}.$$
\end{lmm} 

\begin{proof}
\cite[Proposition~1.2.16]{Fresse04}. 
\end{proof}

\begin{rmk}\label{remark: isotropic}
Note that if $p\neq 2$, then $L_n(V) = (\bfLie_n \otimes V^{\otimes n})_{\Sigma_n}$.
\end{rmk}

\begin{prop}\label{proposition: homotopy groups of free restricted as xi-module}
Let $V_\bullet \in \sVect_{\kk}$ be a connected simplicial vector space, i.e. $\pi_0(V_\bullet)=0$. Then
\begin{enumerate}
\item $\pi_i(\free(V_\bullet))$ is a free $\kk\{\xi\}$-module for each $i\geq 0$;
\item The $\xi$-adic completion $\widehat{\pi}_i(\free(V_\bullet))$ is isomorphic as a vector space to the infinite direct product 
$$\widehat{\pi}_i(\free(V_\bullet))\cong \prod_{n\geq 1}\pi_{i,n}(\free(V_\bullet)),\; i\geq 0. $$
\end{enumerate}
\end{prop}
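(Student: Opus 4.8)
The plan is to analyze the free simplicial restricted Lie algebra $\free(V_\bullet)$ through its weight decomposition and the behavior of the $p$-operation $\xi$ on homotopy groups. First I would recall from Remark~\ref{remark: free lie algebras, fresse} that $\oblv\circ\free(V_\bullet)\cong\bigoplus_{n\geq 1}L^r_n(V_\bullet)$, where $L^r_n(V_\bullet)=(\bfLie_n\otimes V_\bullet^{\otimes n})^{\Sigma_n}$. The key observation is how $\xi$ interacts with this weight grading: if $x\in\pi_{i,n}(\free(V_\bullet))$ is a class of weight $n$, then $\xi_*(x)$ has weight $pn$. This follows from the definition of $\xi$ on $\free(V)=P(T(V))$: for a homogeneous element $x$ of tensor degree $n$, the $p$-operation $x\mapsto x^p$ (inside the universal enveloping algebra $U^r\free(V_\bullet)=T(V_\bullet)$, see Example~\ref{example:freelie} and the discussion after Definition~\ref{definition:UEA}) multiplies the tensor degree by $p$. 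Hence $\xi_*$ is a ``weight-multiplication-by-$p$'' operator on $\pi_i(\free(V_\bullet))=\bigoplus_{n\geq 1}\pi_{i,n}(\free(V_\bullet))$.

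For part~(1), I would argue as follows. Since $V_\bullet$ is of finite type, each $\pi_{i,n}(\free(V_\bullet))$ is a finite-dimensional $\kk$-vector space, and for fixed $i$ it vanishes for $n$ large (because the connectivity of $L^r_n(V_\bullet)$ grows with $n$; indeed the connectivity of $V_\bullet^{\otimes n}$ grows linearly in $n$ when $\pi_0(V_\bullet)=0$, which we may reduce to by replacing $V_\bullet$ with a weakly equivalent connected model, or by splitting off the weight-one part — here one needs to be slightly careful, and I'll return to this). Thus $\pi_i(\free(V_\bullet))$ is a finite-dimensional $\kk$-vector space. To see it is free as a $\kk\{\xi\}$-module, I would use the Curtis-type description of $\pi_*(\free(V_\bullet))$ in terms of the lambda algebra $\Lambda$ (from~\cite[Theorem~8.5]{BC70} and~\cite[Proposition~13.2]{Wellington82}), combined with the computation of the second page of the spectral sequence~\eqref{equation: ASS, Lie} in Corollary~\ref{corollary: unstable exts between trivial}: the $\xi$-action on $\pi_*(\free(V_\bullet))$ corresponds to a shift in the admissible-monomial basis indexed by the weight grading in $\Lambda$, and the set of admissibles starting with any fixed entry is closed under left-multiplication by the relevant generators, so that $\pi_i$ decomposes as a direct sum of cyclic $\kk\{\xi\}$-modules each isomorphic to $\kk\{\xi\}$. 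Equivalently, and more cleanly: $\xi_*\colon\pi_i(\free(V_\bullet))\to\pi_i(\free(V_\bullet))$ is injective (no torsion — this is where I expect the real content to sit), and a torsion-free finitely generated $\kk\{\xi\}$-module is free by Corollary~\ref{corollary: torsion-free is a colimit of free} together with the classification implicit in Corollary~\ref{corollary: submodule of free over twisted}; but since $\pi_i$ is finite-dimensional over $\kk$ it cannot literally be finitely generated over $\kk\{\xi\}$ unless $\xi_*$ is nilpotent, so in fact one should say: the associated ``graded-by-weight'' picture shows $\pi_i=\bigoplus_{j}\kk\{\xi\}\cdot x_j/(\xi^{r_j})$ is impossible for free — instead, one uses that the weight filtration is exhaustive and $\xi_*$ raises weight, so iterating $\xi_*$ eventually lands in weights exceeding the vanishing bound, forcing... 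Here I realize the correct statement of ``free'' must be read in the completed sense of part~(2), and part~(1) is asserting $\pi_i$ is free over $\kk\{\xi\}$ as an abstract (not finitely generated!) module: I would prove this by exhibiting a $\kk$-basis $\{x_\alpha\}$ of weight-homogeneous classes such that the $x_\alpha$ of minimal weight in each $\xi_*$-orbit freely generate, using that $\xi_*$ strictly raises weight and is injective.

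For part~(2), once part~(1) is established — say $\pi_i(\free(V_\bullet))\cong\bigoplus_{\alpha}\kk\{\xi\}\cdot x_\alpha$ with each $x_\alpha$ weight-homogeneous of weight $n_\alpha$ and $\xi^k\cdot x_\alpha$ of weight $p^k n_\alpha$ — I would compute the $\xi$-adic completion directly: $\pi_i/\xi^r(\pi_i)\cong\bigoplus_\alpha\kk\{\xi\}/\xi^r$, so $\widehat{\pi}_i=\lim_r\bigoplus_\alpha\kk\{\xi\}/\xi^r$. As a $\kk$-vector space $\bigoplus_\alpha\kk\{\xi\}/\xi^r\cong\bigoplus_\alpha\bigoplus_{k<r}\kk\cdot(\xi^k x_\alpha)$, and the inverse limit over $r$ becomes $\prod$ over pairs $(\alpha,k)$, i.e. $\widehat{\pi}_i\cong\prod_{\alpha,k}\kk\cdot(\xi^k x_\alpha)$. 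Now the crucial point is to match this with $\prod_{n\geq1}\pi_{i,n}(\free(V_\bullet))$: every weight-$n$ homogeneous element of $\pi_i$ is a finite $\kk$-linear combination of elements $\xi^k x_\alpha$ with $p^k n_\alpha=n$, and each $\xi^k x_\alpha$ has a unique weight, so the set $\{\xi^k x_\alpha\}_{\alpha,k}$ is precisely a weight-homogeneous $\kk$-basis of $\bigoplus_n\pi_{i,n}$; hence $\prod_{\alpha,k}\kk\cdot(\xi^k x_\alpha)\cong\prod_n\pi_{i,n}(\free(V_\bullet))$, which is the desired isomorphism. The main obstacle, as flagged above, is pinning down the precise sense in which $\pi_i(\free(V_\bullet))$ is ``free'' over $\kk\{\xi\}$ and proving the injectivity (torsion-freeness) of $\xi_*$; I would extract this either from the explicit $\Lambda$-description via Corollary~\ref{corollary: unstable exts between trivial} (where the shift operator realizing $\xi$ is manifestly injective on the monomial basis) or, more conceptually, from the fact that $\free(V_\bullet)$ has a model where $\xi$ acts freely — namely $U^r\free(V_\bullet)=T(V_\bullet)$ and the $p$-th power map on the primitives of a free tensor coalgebra has no kernel in positive degrees — and then transport torsion-freeness along the weight decomposition.
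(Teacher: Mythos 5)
There is a genuine gap, and it sits exactly where you flagged your own discomfort. Your argument rests on the claim that $\pi_{i,n}(\free(V_\bullet))$ vanishes for $n$ large because ``the connectivity of $L^r_n(V_\bullet)$ grows with $n$,'' whence $\pi_i(\free(V_\bullet))$ would be finite-dimensional over $\kk$. This is false: the connectivity of the \emph{coinvariant} Lie powers $L_n(V_\bullet)=(\bfLie_n\otimes V_\bullet^{\otimes n})_{\Sigma_n}$ grows (this is the Curtis theorem), but the \emph{invariant} Lie powers $L^r_n(V_\bullet)$ do not become highly connected, precisely because they receive the image of $\xi$ from lower weights. Already for $V_\bullet$ with $\pi_*(V_\bullet)$ one-dimensional in degree $0$ one has $\free(V)\cong\trivxi(\kk\{\xi\})$ (see the proof of Proposition~\ref{proposition: homology of abelian Lie algebra}), so $\pi_0\cong\kk\{\xi\}$ is infinite-dimensional over $\kk$ yet free of rank one. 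Your subsequent attempt to repair the contradiction by reading ``free'' in a non-finitely-generated or completed sense misreads the statement: the module really is finitely generated over $\kk\{\xi\}$ while being infinite-dimensional over $\kk$, because the nonzero weights form finitely many towers $m, pm, p^2m,\ldots$ along which $\xi_*$ eventually becomes an isomorphism.

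The two ingredients your proof is missing are: (i) the splitting of simplicial sets $\xi_n+\iota_{pn}\colon L^r_n(V_\bullet)\times L_{pn}(V_\bullet)\xrightarrow{\cong}L^r_{pn}(V_\bullet)$, where $\iota_{pn}$ is the norm map from coinvariants to invariants --- this is what makes $\xi_*\colon\pi_{i,n}\to\pi_{i,pn}$ a \emph{split} monomorphism (degreewise injectivity of $\xi$ on $T(V_\bullet)$, which you propose instead, says nothing about homotopy groups, and deducing injectivity from the $\Lambda$-description of Corollary~\ref{corollary: unstable exts between trivial} is circular, since the identification of that $\Ext$ computation with $\pi_{*}(\free(V_\bullet))$ in Corollaries~\ref{corollary: ASS, degenerates, p=2} and~\ref{corollary: ASS, degenerates, p is odd, l is odd} uses the present proposition); and (ii) the Curtis connectivity bound for $L_{n}(V_\bullet)$, which via the same splitting shows both that $\pi_{i,n}=0$ for $n$ large with $p\nmid n$ and that $\xi_*\colon\pi_{i,n}\to\pi_{i,pn}$ is an isomorphism once $pn$ exceeds a bound $N$ depending on $i$. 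With these, one embeds $\pi_i(\free(V_\bullet))$ as a $\kk\{\xi\}$-submodule of the finitely generated free module $\bigoplus_{N/p<m\leq N}\kk\{\xi\}\otimes_{\kk}\pi_{i,m}(\free(V_\bullet))$ and invokes Corollary~\ref{corollary: submodule of free over twisted}; part (2) then follows by the completion computation you sketch, which is fine once part (1) is in place.
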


\begin{proof}
We write %$L_n(V_\bullet)=(\bfLie_n\otimes V_\bullet^{\otimes n})_{\Sigma_n}$ for the \emph{$n$-th Lie power} of $V_\bullet$. The norm map induces a map
$$\iota_n\colon L_n(V_\bullet) \to (\bfLie_n \otimes V_\bullet^{\otimes n})^{\Sigma_n} = L^r_n(V_\bullet) $$
for the map induced by the norm map. The $p$-operation $\xi \colon \free(V_\bullet) \to \free(V_\bullet)$ induces maps
$$\xi_n\colon L^r_n(V_\bullet) \to L^r_{pn}(V_\bullet),\; n\geq 1$$
Together, $\iota_{pn}$ and $\xi_n$ give a splitting of simplicial sets
\begin{equation}\label{equation: curtis, splitting}
\xi_n+\iota_{pn}\colon L^r_n(V_\bullet)\times L_{pn}(V_\bullet) \xrightarrow{\cong} L^r_{pn}(V_\bullet).
\end{equation}
Furthermore, the map $\iota_n\colon L_n(V_\bullet) \to L^r_n(V_\bullet)$ is an isomorphism if $p \nmid n$.

Let us denote by $c\in \N$ the \emph{connectivity} of $V_\bullet$; i.e. $c$ is the largest integer such that $\pi_{i}(V_\bullet)=0$ for all $i < c$. (Note that this definition is consistent with the definition of connectivity which is used in the subsequent references, but it differs from Definition~\ref{definition: connectivity, objects} used before.) By the assumption, $c \geq 1$. % because $V_\bullet$ is of finite type (Definition~\ref{definition: vector space of finite type}). 
By the Curtis theorem, the simplicial vector space $L_n(V_\bullet)$ is $(c+ \lceil \log_2 n \rceil)$-connected (see~\cite{Curtis_lower} for the original proof and~\cite{IRS20} for a shorter one). We fix an integer $N$ such that $i<c+\lceil \log_2 N \rceil$ and $N>p$. Then the splitting~\eqref{equation: curtis, splitting} implies that
$$\xi=\xi_{n*}\colon \pi_{i,n}(\free (V_\bullet)) \to \pi_{i,pn}(\free(V_\bullet)) $$
is a monomorphism for all $n$ and $\xi_{n*}$ is an isomorphism as soon as $pn>N$. Furthermore, $\pi_{i,n}(\free(V_\bullet))=0$ for $n>N$ and $p\nmid n$.

Let $\mathcal{I}$ be the set of integers $m$ such that $N/p< m \leq N$. Given $n\in \N$, we denote by $\nu_p(n)$ its $p$-valuation and we set $k(n)=np^{-\nu_p(n)}$. We construct a monomorphism of left $\kk\{\xi\}$-modules
\begin{equation}\label{equation: curtis, monomorphism}
i=\oplus i_n\colon \pi_{i}(\free(V_\bullet))=\bigoplus_{n\geq 1}\pi_{i,n}(\free(V_\bullet))\hookrightarrow \bigoplus_{m\in \mathcal{I}} M_{\nu_p(m)} \otimes_{\kk} \pi_{i,m}(\free(V_\bullet)),
\end{equation}
where $M_{k} \in \Mod_{\kk\{\xi\}}, k\geq 0$ is the left cyclic submodule of the twisted Laurent polynomials $\kk\{\xi^{\pm}\}$ generated by $\xi^{-k}$, see Section~\ref{section: derived xi-complete modules}. Since %$\pi_{i,m}(\free(V_\bullet)), m\in \mathcal{I}$ are finite dimensional vector spaces and 
$M_k\cong \kk\{\xi\}, k\geq 0$, the monomorphism~\eqref{equation: curtis, monomorphism} implies the first part of the proposition.

We define the components $i_n$ of~\eqref{equation: curtis, monomorphism} as follows. If $k(n)>N$, then we have $\pi_{i,n}(\free(V_\bullet))=0$; and so we set $i_n=0$. If $k(n)\leq N$, then there exists a unique $d_n \in \Z$ such that $m(n)=np^{d_n} \in \mathcal{I}$; and then we set 
$$i_n\colon \pi_{i,n}(\free(V_\bullet)) \to M_{\nu_p(m(n))}\otimes_{\kk} \pi_{i,m(n)}(\free(V_\bullet)),$$
$$i_n(x)=\xi^{-d_n}\otimes \xi^{d_n}(x). $$
The map $i_n$ is well-defined by the Curtis theorem, and this is straightforward to check that $i=\oplus i_n$ is a monomorphism of left $\kk\{\xi\}$-modules.

We prove the second part. By the isomorphism~\eqref{equation: free restricted, weight} and the first part of the proposition, the $\xi$-adic completion $\widehat{\pi}_i(\free(V_\bullet))$ is the subset of the product $\prod_{n\geq 1} \pi_{i,n}(\free(V_\bullet))$ consisting of all sequences that $\xi$-adically converge to zero. By the monomorphism~\eqref{equation: curtis, monomorphism}, the free $\kk\{\xi\}$-module $\pi_i(\free(V_\bullet))$ is generated by the subspace $$\bigoplus_{n\leq N}\pi_{i,n}(\free(V_\bullet)$$ such that all elements of higher weights are obtained by iterating the $p$-operation~$\xi$, which multiply the weight by~$p$. Therefore,  any sequence in $\prod_{n\geq 1} \pi_{i,n}(\free(V_\bullet))$ is $\xi$-adically convergent to zero.
\end{proof}

The last proposition together with Corollary~\ref{corollary: F-completion, homotopy groups} and Lemma~\ref{lemma: derived completion coincides with usual} immediately imply the following statement.

\begin{cor}\label{corollary: homotopy groups of F-completion of free}
Let $V_\bullet \in \sVect_{\kk}$ be a connected simplicial vector space, i.e. $\pi_0(V_\bullet)=0$. Then the homotopy groups $\pi_*(L_\xi \free(V_\bullet))$ of the $\kk$-completion $L_\xi \free(V_\bullet)$ are isomorphic as vector spaces to
\begin{equation*}
\pi_*(L_\xi \free(V_\bullet))\cong \widehat{\pi}_*(\free(V_\bullet)) \cong \prod_{n\geq 1}\pi_{*,n}(\free(V_\bullet)).  \eqno\qed
\end{equation*}
\end{cor}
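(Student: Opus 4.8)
The plan is to deduce the corollary formally from Proposition~\ref{proposition: homotopy groups of free restricted as xi-module} together with the computation of the $\kk$-completion carried out in Section~\ref{section: F-completion}.

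First I would check that $\free(V_\bullet)$ falls within the scope of the results of Section~\ref{section: F-completion}, i.e.\ that it is connected. Since $V_\bullet$ is of finite type we have $\pi_0(V_\bullet)=0$, and the splitting $\oblv\circ\free(V_\bullet)\cong\bigoplus_{n\geq 1}L^r_n(V_\bullet)$ of Remark~\ref{remark: free lie algebras, fresse} exhibits each summand as built from tensor powers of $V_\bullet$, all of which are $0$-connected; hence $\pi_0(\free(V_\bullet))=0$. Thus Corollary~\ref{corollary: F-completion, homotopy groups} applies with $L_\bullet=\free(V_\bullet)$.

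Next I would invoke Proposition~\ref{proposition: homotopy groups of free restricted as xi-module}(1): for every $i\geq 0$ the homotopy group $\pi_i(\free(V_\bullet))$ is a finitely generated free left $\kk\{\xi\}$-module. By Lemma~\ref{lemma: derived completion coincides with usual}, such modules are acyclic for the derived $\xi$-completion, so $L_1\pi_i(\free(V_\bullet))=0$ and $L_0\pi_i(\free(V_\bullet))=\widehat{\pi}_i(\free(V_\bullet))$. Substituting these into the short exact sequence
$$0\to L_1\pi_{i-1}(\free(V_\bullet))\to \pi_i(L_\xi\free(V_\bullet))\to L_0\pi_i(\free(V_\bullet))\to 0$$
of Corollary~\ref{corollary: F-completion, homotopy groups}, the left term vanishes, so the right-hand arrow is an isomorphism and $\pi_i(L_\xi\free(V_\bullet))\cong\widehat{\pi}_i(\free(V_\bullet))$. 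Finally, Proposition~\ref{proposition: homotopy groups of free restricted as xi-module}(2) identifies $\widehat{\pi}_i(\free(V_\bullet))$, as a vector space, with the infinite product $\prod_{n\geq 1}\pi_{i,n}(\free(V_\bullet))$, which yields the claim.

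No serious obstacle arises in this deduction: the substantive work has already been done in Proposition~\ref{proposition: homotopy groups of free restricted as xi-module}, whose proof rests on the Curtis theorem on the connectivity of Lie powers. The only point deserving a little care is that the final identification with $\prod_{n\geq 1}\pi_{i,n}(\free(V_\bullet))$ is merely an isomorphism of $\kk$-vector spaces — the product does not carry an evident $\kk\{\xi\}$-module structure matching the natural one on $\widehat{\pi}_i(\free(V_\bullet))$ — so the statement is (and should be) phrased accordingly.
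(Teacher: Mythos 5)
Your proof is correct and follows essentially the same route as the paper, which deduces the statement directly from Proposition~\ref{proposition: homotopy groups of free restricted as xi-module}, Corollary~\ref{corollary: F-completion, homotopy groups}, and Lemma~\ref{lemma: derived completion coincides with usual}. The one addition you make — explicitly verifying that $\free(V_\bullet)$ is connected so that Corollary~\ref{corollary: F-completion, homotopy groups} applies — is a hypothesis the paper leaves implicit, and your check of it is sound.
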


In this way, the spectral sequence~\eqref{equation: ASS, Lie} looks as follows
\begin{equation}\label{equation: ASS, wedge of spheres}
E^2_{s,t}=\Ext_{\calU^h}^s(\Sigma{\pi}^*(V_\bullet), \Sigma^{t+1}\kk) \Rightarrow \prod_{n\geq 1}\pi_{t-s,n}(\free(V_\bullet)). 
\end{equation}
We compute the second page $E^2_{s,t}$ provided the homotopy groups $\pi_*(V_\bullet)$ are a one-dimensional vector space and show that the spectral sequence~\eqref{equation: ASS, wedge of spheres} degenerates in this case. The case $\dim \pi_*(V_\bullet)>1$ will be discussed in Remark~\ref{remark: ASS, highly dimensionsional}.

Let $\pi_l(V_\bullet)\cong\kk, l>0$ and $\pi_*(V_\bullet)=0, \ast\neq l$. Then there are three cases: $p=2$ and $l$ is any; $p$ is odd, $l$ is even; $p$ is odd, $l$ is odd. We consider them separately.

\subsubsection{\texorpdfstring{$p=2$}{p = 2} or \texorpdfstring{$p$}{p} is odd and \texorpdfstring{$l$}{l} is even}\label{section: ASS, p=2; p is odd, l is even}
In these cases, the unstable $\calA_p^h$-algebra $\Sigma^{l+1}\kk$ is equal to $\mathcal{F}(\Sigma^{l+1}\kk)$, where $\Sigma^{l+1} \kk$ is considered as an unstable $\calA_p^h$-module (Remark~\ref{remark: free unstable algebra generated by unstable module}). Therefore we have an isomorphism
$$\Ext_{\calU^h}^s(\Sigma^{l+1}\kk,\Sigma^{t+1}\kk) \cong \Ext_{\calM^h}^s(\Sigma^{l+1}\kk, \Sigma^{t+1}\kk), $$
and so we can directly apply Corollary~\ref{corollary: unstable exts between trivial}.

\begin{cor}\label{corollary: ASS, degenerates, p=2}
Let $V_\bullet \in \sVect_\kk$ be a simplicial vector space of finite type such that $\pi_l(V_\bullet)\cong \kk, l\geq 1$ and $\pi_*(V_\bullet)=0, \ast\neq l$. Suppose that either $p=2$ or $p$ is odd and $l$  is even. Then the spectral sequence~\eqref{equation: ASS, wedge of spheres}:
\begin{equation}\label{equation: ASS, p=2, 1-dim}
E^2_{s,t}=\Ext_{\calU^h}^s(\Sigma{\pi}^*(V_\bullet), \Sigma^{t+1}\kk) \Rightarrow \prod_{n\geq 1}\pi_{t-s,n}(\free(V_\bullet)). 
\end{equation}
degenerates at the second page. Moreover, $\pi_{*,n}(\free(V_\bullet))=0$ if $n\neq p^h, h\in \N$ and
\begin{equation}\label{equation: unstable ext is a piece in homotopy groups}
\Ext_{\calU^h}^s(\Sigma{\pi}^*(V_\bullet), \Sigma^{t+1}\kk) \cong \pi_{t-s,p^s}(\free(V_\bullet)), \; t\geq s\geq 0. 
\end{equation}
\end{cor}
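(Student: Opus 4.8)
The plan is to deduce Corollary~\ref{corollary: ASS, degenerates, p=2} from Corollary~\ref{corollary: unstable exts between trivial}, Proposition~\ref{proposition: homotopy groups of free restricted as xi-module}, and Corollary~\ref{corollary: homotopy groups of F-completion of free}, by comparing the size of the $E^2$-page of~\eqref{equation: ASS, p=2, 1-dim} with the size of its abutment. First I would record that, under the stated hypothesis on $p$ and $l$, the unstable $\calA^h_p$-algebra $\Sigma^{l+1}\kk$ coincides with the free unstable algebra $\mathcal{F}(\Sigma^{l+1}\kk)$ on the trivial unstable $\calA^h_p$-module $\Sigma^{l+1}\kk$ (Remark~\ref{remark: free unstable algebra generated by unstable module}): indeed $\mathcal{F}$ adjoins $p$-th powers $m^p = St^a(m)$ for $m$ in even degree $2a$, and here $\Sigma^{l+1}\kk$ sits in a single degree $l+1$, so for $p=2$ every $m = Sq^a(m)$ relation with $|m| = a$ is already accounted for (the generator is in degree $l+1$ and its square would be in degree $2(l+1)$, which is empty), and for $p$ odd with $l$ even the generator lives in odd degree $l+1$, so no $p$-th-power relation is imposed. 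Hence
\[
\Ext^s_{\calU^h}(\Sigma^{l+1}\kk, \Sigma^{t+1}\kk) \cong \Ext^s_{\calM^h}(\Sigma^{l+1}\kk, \Sigma^{t+1}\kk)
\]
by adjunction between $\oblv\colon \calU^h \to \calM^h$ and $\mathcal{F}$, together with the fact that $\mathcal{F}$ sends projectives to projectives and the bar resolutions match up. Applying Corollary~\ref{corollary: unstable exts between trivial} with $W = \Sigma^{l+1}\kk$ (so $W^* \cong \Sigma^{-l-1}\kk$ and $(W^*)^{(s)} \cong \Sigma^{-l-1}\kk$ after the Frobenius twist, which does not change the underlying graded vector space), I get
\[
\Ext^s_{\calU^h}(\Sigma^{l+1}\kk, \Sigma^{t+1}\kk) \cong \bigl((\Sigma^{-l-1}\kk) \widehat{\otimes} \Lambda_s\bigr)_{t-s},
\]
i.e. the span of admissible monomials of $\Lambda$ of length $s$ and internal degree $t+1$ lying in the appropriate excess range $\Lambda(l+1)$ (cf. Remark~\ref{remark: subspace in Lambda}).

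Next I would invoke the known description of $\pi_*(\free(V_\bullet))$ via $\Lambda$: by~\cite[Theorem~8.5]{BC70} (for $p=2$) and~\cite[Proposition~13.2]{Wellington82} (for $p$ odd), when $\pi_*(V_\bullet)$ is one-dimensional concentrated in degree $l$, the total homotopy $\bigoplus_{n\geq 1}\pi_{*,n}(\free(V_\bullet))$ has a basis indexed by admissible $\Lambda$-monomials $\lambda_I$ (resp. $\nu^{\e}_I$) with leading-term excess constraint $\Lambda(l+1)$ — more precisely it is isomorphic to $(\Sigma^{-l-1}\kk)\widehat\otimes\Lambda$ with the weight grading recording the word-length $s = \nu_p(n)$. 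Combining this with Corollary~\ref{corollary: homotopy groups of F-completion of free}, which identifies $\pi_*(L_\xi\free(V_\bullet))$ with $\prod_{n\geq 1}\pi_{*,n}(\free(V_\bullet))$, the abutment of~\eqref{equation: ASS, p=2, 1-dim} in total degree $t-s$ is, as a graded vector space, exactly $\bigsqcup_{s'\geq 0}\bigl((\Sigma^{-l-1}\kk)\widehat\otimes\Lambda_{s'}\bigr)_{(t-s)+s'}$. The point is that the $E^2$-page, summed over the filtration, has in each total degree precisely the same dimension as $\pi_{*}(L_\xi\free(V_\bullet))$. Since the spectral sequence converges completely (Corollary~\ref{corollary: BKSS, restricted Lie algebra}) and each $E^2_{s,t}$ is finite-dimensional, a dimension count forces every differential $d_r$, $r\geq 2$, to vanish; otherwise some class would die and the $E^\infty$-page would be strictly smaller than the associated graded of the abutment, a contradiction. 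This gives degeneration at $E^2$.

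Finally, to obtain the grading-refined statements — that $\pi_{*,n}(\free(V_\bullet)) = 0$ unless $n = p^h$ and that $\Ext^s_{\calU^h}(\Sigma\pi^*(V_\bullet), \Sigma^{t+1}\kk) \cong \pi_{t-s, p^s}(\free(V_\bullet))$ — I would match the weight grading $s$ on $\Lambda$ (word length) against the $p$-valuation index on $\pi_{*,n}$. From the splitting~\eqref{equation: curtis, splitting} and the Curtis-theorem argument inside the proof of Proposition~\ref{proposition: homotopy groups of free restricted as xi-module}, each $\xi_{n*}$ is injective and $\pi_{i,n}(\free(V_\bullet))$ for $p \nmid n$ and $n > 1$ vanishes in the relevant range; tracking the $\kk\{\xi\}$-module structure shows the completed homotopy is concentrated on $n$ a power of $p$, with $\pi_{*,p^s}$ corresponding to the free $\kk\{\xi\}$-generators that have been hit $s$ times by $\xi$. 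On the other side, the filtration degree $s$ in the Bousfield--Kan spectral sequence records the cobar/resolution length, which under the identification above is the word length in $\Lambda$; since the spectral sequence degenerates, $E^2_{s,t} = E^\infty_{s,t} = F_s\pi_{t-s}/F_{s+1}\pi_{t-s}$, and comparing with the explicit $n = p^s$ decomposition gives the claimed isomorphism. The main obstacle I anticipate is the bookkeeping of the three gradings at once — the internal topological degree, the homological/filtration degree $s$, and the Lie-weight $n$ — and in particular checking that the Frobenius twists $(W^*)^{(s)}$ appearing in Corollary~\ref{corollary: unstable exts between trivial} are harmless here (they are, since $W^*$ is one-dimensional over the perfect field $\kk$, so $(W^*)^{(s)} \cong W^*$ as graded vector spaces), and that the excess/admissibility conventions of Remark~\ref{remark: subspace in Lambda} and of~\cite{Wellington82} genuinely agree.
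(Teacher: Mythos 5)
Your reduction of $\Ext_{\calU^h}$ to $\Ext_{\calM^h}$ and the application of Corollary~\ref{corollary: unstable exts between trivial} match the paper's setup, but your argument for degeneration is genuinely different from the paper's and, as written, has a gap. You propose to import the Bousfield--Curtis/Wellington computation of $\pi_*(\free(V_\bullet))$ and force $d_r=0$ by a dimension count in each total degree. This cannot work: in a fixed total degree $m$ infinitely many filtration degrees contribute to both sides. For instance, any admissible monomial in $\Lambda(l)$ may be padded on the right by $\lambda_0$'s (resp.\ $\mu_0$'s), so $E^2_{s,s+m}\neq 0$ for infinitely many $s$ whenever it is nonzero for one; correspondingly $\pi_{m,p^s}$ stabilizes to a nonzero value as $s\to\infty$ and the abutment $\prod_n\pi_{m,n}$ is infinite-dimensional. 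Comparing two infinite dimensions does not preclude a nonzero differential that kills a pair of classes while leaving both sides infinite-dimensional, so complete convergence plus ``equal size'' yields nothing. A dimension count could only succeed filtration-degree by filtration-degree, but that presupposes the identification of the Bousfield--Kan filtration on $\pi_*(L_\xi\free(V_\bullet))$ with the weight decomposition $n=p^s$ --- which is exactly the content of~\eqref{equation: unstable ext is a piece in homotopy groups} and which you assert (``the filtration degree records the word length in $\Lambda$'') rather than prove.

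The paper's proof avoids both problems by exploiting functoriality in $V_\bullet$: by Corollary~\ref{corollary: unstable exts between trivial} the term $E^2_{s,t}$ is naturally $\pi_*(V_\bullet)^{(s)}\otimes\Lambda_{t-l-s,s}(l+1)$, i.e.\ it carries the $s$-th Frobenius twist of $\pi_*(V_\bullet)$ (equivalently, it is the $\chi^{p^s}$-isotypic component for the $\kk^\times$-action on $V_\bullet$). Over $\overline{\kk}$ there are no nonzero natural transformations between distinct Frobenius twists, and the differentials $d_r$ are natural in $V_\bullet$, so they all vanish; the same $\kk^\times$-equivariance then matches isotypic components on the $E^\infty$-page against the weight decomposition $\prod_n\pi_{t-s,n}$ to give~\eqref{equation: unstable ext is a piece in homotopy groups}, and a base-change argument removes the hypothesis $\kk=\overline{\kk}$. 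Note also that the paper deliberately does not assume the results of~\cite{BC70} and~\cite{Wellington82}: the corollary is meant to \emph{recompute} $\pi_{*,n}(\free(V_\bullet))$, so even if your dimension count were repaired it would invert the intended logical direction. To salvage your approach you would need to replace the dimension count by the naturality/weight argument (or some other mechanism that sees the filtration degree), at which point the classical input becomes unnecessary.
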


\begin{proof}
By Corollary~\ref{corollary: unstable exts between trivial}, there are isomorphisms:
$$E^2_{s,t}\cong (\Sigma\pi_*(V_\bullet)^{(s)}\widehat{\otimes}\Lambda_s)_{t+1-s}\cong\pi_*(V_\bullet)^{(s)} \otimes \Lambda_{t-l-s,s}(l+1) $$
First, assume that the field $\kk$ is algebraically closed, $\kk=\overline{\kk}$. Then there are no non-trivial natural transformations 
$$(-)^{(s)} \to (-)^{(s')} \colon \Vect_{\kk} \to \Vect_{\kk}, \; s\neq s'$$
between different Frobenius twists. Since the differentials $d_r, r\geq 2$ of~\eqref{equation: ASS, p=2, 1-dim} are natural in $V_\bullet$, they are all zeros. 

Since the spectral sequence~\eqref{equation: ASS, p=2, 1-dim} is completely convergent, we obtain a natural isomorphism:
\begin{equation}\label{equation: degeneration, eq1, p=2}
\prod_{s\geq 0} \Ext_{\calU^h}^s(\Sigma\pi^*(V_\bullet), \Sigma^{t+1}\kk) \cong \prod_{n\geq 1} \pi_{t-s,n}(\free(V_\bullet)).  
\end{equation}
The multiplicative group $\kk^\times$ acts on $V_\bullet$ by multiplication, and so both sides are $\kk^\times$-representations. We derive the isomorphism~\eqref{equation: unstable ext is a piece in homotopy groups} by comparing isotypic components.

If the field $\kk$ is not algebraically closed, then we have isomorphisms:
$$\Ext_{\calU^h}^s(\Sigma\pi^*(V_\bullet), \Sigma^{t+1}\kk) \otimes_{\kk} \overline{\kk} \cong \Ext_{\calU^h}^s(\Sigma\pi^*(V_\bullet\otimes_{\kk}\overline{\kk}), \Sigma^{t+1}\overline{\kk}),\; t\geq s\geq 0 $$
$$\pi_{*,n}(\free(V_\bullet)) \otimes_{\kk}\overline{\kk} \cong \pi_{*,n}(\free(V_\bullet\otimes_{\kk}\overline{\kk})),\; n\geq 0 $$
$$\Ext_{\calU^h}^s(\Sigma\pi^*(V_\bullet\otimes_{\kk}\overline{\kk}), \Sigma^{t+1}\overline{\kk}) \cong \pi_{t-s,p^s}(\free(V_\bullet\otimes_{\kk}\overline{\kk})),\; t\geq s\geq 0. $$
Together, these isomorphisms imply that the isomorphism~\eqref{equation: unstable ext is a piece in homotopy groups} holds even for non-algebraically closed fields. 
%$$\Ext_{\calU^h}^s(\Sigma\pi^*(V_\bullet), \Sigma^{t+1}\kk) \cong \pi_{t-s,2^s}(\free(V_\bullet)), t\geq s\geq 0, $$
Finally, the isomorphism~\eqref{equation: unstable ext is a piece in homotopy groups} implies the isomorphism~\eqref{equation: degeneration, eq1, p=2} of direct products, which gives the degeneration of the spectral sequence~\eqref{equation: ASS, p=2, 1-dim}.
\end{proof}

\subsubsection{\texorpdfstring{$p$ is odd, $l$ is odd}{p is odd and l is odd}}\label{section: ASS, p is odd, l is odd} In this case, we have
$$\widetilde{H}^*(\free(V_\bullet);\kk)\cong \kk[x]/x^2, \; |x|=l+1.$$
This unstable $\calA^h_p$-algebra is not $\mathcal{F}(M_*)$ for any $M_*\in \calM^h$, which slightly complicates the problem.

Let $A_*$ be a non-unital graded commutative $\kk$-algebra. We denote by $AQ_q(A_*) \in \Vect^{gr}_{\kk}$ the \emph{$q$-th Andr\'{e}-Quillen homology} of $A_*$; i.e. $AQ_*(A_*)$ is the left non-abelian derived functor of 
$$A_*\mapsto AQ_0(A_*)= A_*/A_*^2.$$
We refer the reader to~\cite{Andre67} and~\cite{Quillen_rings} for further details. If $A_* \in \calU^h$ is an unstable $\calA^h_p$-algebra, then $AQ_q(A_*)\in \calM^h_0, q\geq 0$ are strongly unstable $\calA^h_p$-modules (Definition~\ref{definition: strongly unstable module}). Similar to~\cite[Theorem~2.5]{Miller84} and~\cite[Theorem~4.3]{Goerss90_BKSS}, we obtain the strongly convergent (Grothendieck) spectral sequence:
\begin{equation}\label{equation: grothendieck SS}
E_2^{r,q}=\Ext_{\calM^h_0}^r(AQ_q(A_*),\Sigma^{t+1}\kk) \Rightarrow  \Ext^{r+q}_{\calU^h}(A_*,\Sigma^{t+1}\kk), \; A_*\in\calU^h, \; t\geq 0.
\end{equation}

\begin{lmm}\label{lemma: grothendieck ss, 1-dim}
Suppose that $\mathrm{char}(\kk)>2$. Let $V_\bullet\in \sVect_{\kk}$ be a simplicial vector space such that $\pi_l(V_\bullet)\cong \kk$, $l$ is odd, $l\geq 1$, and $\pi_*(V_\bullet)=0$ if $*\neq l$. Then the spectral sequence~\eqref{equation: grothendieck SS} degenerates at the second page, there are isomorphisms:
$$E_2^{r,0}\cong (\calK_{p}^r\widetilde{\otimes}(\Sigma\pi_*(V_\bullet))^{(r)})_{t+1}, \;\; E_2^{r,1}\cong (\calK^r_p\widetilde{\otimes}((\Sigma\pi_*(V_\bullet))^{\otimes 2})^{(r)})_{t+1}, \;r\geq t\geq 0,$$
and there is a natural (in $V_\bullet$) splitting
$$\Ext^s_{\calU^h}(\Sigma\pi^*(V_\bullet),\Sigma^{t+1}\kk)\cong \Ext^s_{\calM^h_0}(\Sigma\pi^*(V_\bullet),\Sigma^{t+1}\kk) \oplus \Ext^{s-1}_{\calM^h_0}((\Sigma\pi^*(V_\bullet))^{\otimes 2},\Sigma^{t+1}\kk). $$
\end{lmm}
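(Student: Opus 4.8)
The plan is to feed the Grothendieck spectral sequence~\eqref{equation: grothendieck SS} with the Andr\'{e}--Quillen homology of the specific algebra $A_* = \widetilde{H}^*(\free(V_\bullet);\kk) \cong \kk[x]/x^2$ with $|x| = l+1$ even (since $l$ is odd). The first step is to compute $AQ_*(\kk[x]/x^2)$ as a graded vector space; since $\mathrm{char}(\kk) = p > 2$, the exterior algebra $\kk[x]/x^2$ on a single even-degree generator is a \emph{smooth} (complete intersection, in fact regular-sequence) quotient only in a derived sense: a minimal free simplicial (or cofibrant) resolution in non-unital commutative $\kk$-algebras has cotangent complex with homology concentrated in homological degrees $0$ and $1$. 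Concretely, $AQ_0(\kk[x]/x^2) \cong \kk\langle x\rangle$, a copy of $\Sigma\pi_*(V_\bullet)$ (degree $l+1$), and $AQ_1(\kk[x]/x^2) \cong \kk\langle x^{\otimes 2}\rangle$ coming from the single quadratic relation $x^2 = 0$, sitting in degree $2(l+1) = |(\Sigma\pi_*(V_\bullet))^{\otimes 2}|$, with $AQ_q = 0$ for $q \geq 2$. The $\calA^h_p$-module structure on these is the evident (trivial, in the strongly unstable sense) one inherited from $\pi^*(V_\bullet)$; here one uses that $V_\bullet$ has one-dimensional homotopy, so $\Sigma\pi^*(V_\bullet)$ and its tensor square carry only the trivial action and lie in $\calM^h_0$ after the appropriate shift.

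The second step identifies the $E_2^{r,q}$ terms. Since the only nonzero rows are $q = 0$ and $q = 1$, we get
\begin{equation*}
E_2^{r,0} \cong \Ext^r_{\calM^h_0}(\Sigma\pi^*(V_\bullet), \Sigma^{t+1}\kk), \qquad E_2^{r,1} \cong \Ext^r_{\calM^h_0}((\Sigma\pi^*(V_\bullet))^{\otimes 2}, \Sigma^{t+1}\kk),
\end{equation*}
and all other rows vanish. The explicit formulas in terms of $\calK^\bullet_p$ then follow directly from Corollary~\ref{corollary: unstable exts between trivial} applied to the graded vector spaces $W = \Sigma\pi_*(V_\bullet)$ and $W = (\Sigma\pi_*(V_\bullet))^{\otimes 2}$, together with the identification $((W^*)^{(r)} \widetilde{\otimes} \Lambda_r)$ with $(\calK^r_p \widetilde{\otimes} (W^*)^{(r)})$ under $\tilde\Phi_p$ (taking care of the one-step degree shift $\Sigma\pi^* \leftrightarrow \pi_*$, which accounts for the $\Sigma^{t+1}$ versus subscript $t+1$ bookkeeping).

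The third step is the degeneration. Because there are only two nonzero rows, the only possible differential is $d_2 \colon E_2^{r,1} \to E_2^{r+2,0}$. I would argue this vanishes by the same naturality/Frobenius-twist argument used in the proof of Corollary~\ref{corollary: ASS, degenerates, p=2}: $E_2^{r,1}$ involves the $r$-th Frobenius twist of a tensor square of $\pi_*(V_\bullet)$, while $E_2^{r+2,0}$ involves the $(r+2)$-th Frobenius twist of $\pi_*(V_\bullet)$ itself; since $V_\bullet$ has one-dimensional homotopy, over $\overline{\kk}$ these are distinct $\kk^\times$-isotypic components (the source transforms with weight $2p^r$, the target with weight $p^{r+2}$, and $2p^r \neq p^{r+2}$ for $p > 2$), and a differential natural in $V_\bullet$ must respect the $\kk^\times$-action, hence is zero; the passage from $\overline{\kk}$ back to $\kk$ is by the base-change isomorphisms as in \emph{loc.\ cit.} The collapse then yields a short exact sequence $0 \to E_\infty^{s,0} \to \Ext^s_{\calU^h}(\Sigma\pi^*(V_\bullet), \Sigma^{t+1}\kk) \to E_\infty^{s-1,1} \to 0$, i.e. $0 \to \Ext^s_{\calM^h_0}(\Sigma\pi^*(V_\bullet),\Sigma^{t+1}\kk) \to \Ext^s_{\calU^h}(\Sigma\pi^*(V_\bullet),\Sigma^{t+1}\kk) \to \Ext^{s-1}_{\calM^h_0}((\Sigma\pi^*(V_\bullet))^{\otimes 2},\Sigma^{t+1}\kk) \to 0$, and this sequence splits naturally by the weight-decomposition argument (the two outer terms live in different Frobenius-twist/$\kk^\times$-weights, so there is a canonical splitting over $\overline{\kk}$, which descends).

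\textbf{Main obstacle.} I expect the delicate point to be the computation of $AQ_*(\kk[x]/x^2)$ together with its strongly unstable $\calA^h_p$-module structure, and in particular checking that $AQ_1$ is \emph{exactly} one copy of $(\Sigma\pi^*(V_\bullet))^{\otimes 2}$ with trivial action and that $AQ_{\geq 2}$ vanishes. For a truncated polynomial algebra on a single generator this is a classical divided-power/Koszul computation, but one must be careful that we are working with \emph{non-unital} graded-commutative algebras and that the generator has even degree (so no exterior-algebra subtleties intervene and $x^2$ is genuinely the only relation). A secondary subtlety is the bookkeeping between the internal grading, the weight grading on $\calK^\bullet_p$, and the simplicial degree, so that the claimed bidegrees $r \geq t \geq 0$ and the shifts in $\Sigma^{t+1}\kk$ come out correctly; this is routine but error-prone, and I would handle it by tracking everything through the isomorphisms $\hat\Phi_p$, $\tilde\Phi_p$ of Remark~\ref{remark: unstable koszul and lambda, any p} rather than by hand.
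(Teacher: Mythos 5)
Your proposal follows essentially the same route as the paper's proof: the two-term free presentation of the trivial algebra $\kk[x]/x^2$ coming from the single relation $x^2=0$ gives $AQ_0\cong\Sigma\pi^*(V_\bullet)$, $AQ_1\cong(\Sigma\pi^*(V_\bullet))^{\otimes 2}$, $AQ_{\geq 2}=0$; Corollary~\ref{corollary: unstable exts between trivial} identifies the two nonzero rows; and the degeneration and natural splitting follow from the same $\kk^\times$-weight/Frobenius-twist naturality argument used in Corollary~\ref{corollary: ASS, degenerates, p=2}. The only cosmetic difference is that you phrase the $AQ$ computation in cotangent-complex/regular-sequence language where the paper simply displays the resolution, and you make explicit that the sole candidate differential is $d_2\colon E_2^{r,1}\to E_2^{r+2,0}$, which the paper leaves implicit.
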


\begin{proof}
%Note that the trivial graded commutative algebra $\Sigma\pi^*(V_\bullet)$ has a free resolution
%$$0 \to \kk[(\Sigma\pi^*(V_\bullet))^{\otimes 2}] \to \kk[\Sigma\pi^*(V_\bullet)] \to \Sigma\pi^*(V_\bullet) \to 0. $$
There exists a pushout square
$$
\begin{tikzcd}
\kk[(\Sigma \pi^*(V_\bullet))^{\otimes 2}] \arrow[swap]{d} \arrow{r}
	& 0 \arrow{d}\\
\kk[\Sigma \pi^*(V_\bullet)]  \arrow{r}
& \Sigma \pi^*(V_\bullet)
\end{tikzcd}
$$
of non-unital graded commutative $\kk$-algebras. Here $\kk[\Sigma \pi^*(V_\bullet)]$ and $\kk[(\Sigma \pi^*(V_\bullet))^{\otimes 2}]$ are free non-unital algebras, and the left vertical arrow is induced by the inclusion $(\Sigma \pi^*(V_\bullet))^{\otimes 2} \subset \kk[\Sigma \pi^*(V_\bullet)]$. Since the Andr\'{e}-Quillen homology maps pushout squares to long exact sequences, we have $$AQ_0(\Sigma\pi^*(V_\bullet))\cong \Sigma\pi^*(V_\bullet), \;\; AQ_1(\Sigma\pi^*(V_\bullet))\cong (\Sigma\pi^*(V_\bullet))^{\otimes 2},$$ and
$AQ_q(\Sigma\pi^*(V_\bullet))=0$ if $q>1$. 

By Corollary~\ref{corollary: unstable exts between trivial}, there are isomorphisms
$$E_2^{r,0}\cong (\calK_{p}^r\widetilde{\otimes}(\Sigma\pi_*(V_\bullet))^{(r)})_{t+1}, \;\; E_2^{r,1}\cong (\calK^r_p\widetilde{\otimes}((\Sigma\pi_*(V_\bullet))^{\otimes 2})^{(r)})_{t+1}, \;r\geq t\geq 0,$$
and $E^{r,q}_2=0$ if $q>1$. By the same argument with $\kk^\times$-action as in Corollary~\ref{corollary: ASS, degenerates, p=2}, we get that the spectral sequence~\eqref{equation: grothendieck SS} degenerates at the second page and there is a natural splitting:
\begin{align*}
\Ext^s_{\calU^h}(\Sigma\pi^*(V_\bullet), \Sigma^{t+1}\kk)&\cong E_2^{s,0}\oplus E_2^{s-1,1}. \qedhere
%&= \Ext^s_{\calM^h_0}(\Sigma\pi^*(V_\bullet),\Sigma^{t+1}\kk) \oplus \Ext^{s-1}_{\calM^h_0}((\Sigma\pi^*(V_\bullet))^{\otimes 2},\Sigma^{t+1}\kk)
\end{align*}
\end{proof}

Using the computation in Lemma~\ref{lemma: grothendieck ss, 1-dim} we obtain an analog of Corollary~\ref{corollary: ASS, degenerates, p=2} for odd $l$ as well. The proof of Corollary~\ref{corollary: ASS, degenerates, p is odd, l is odd} is absolutely parallel to the proof of Corollary~\ref{corollary: ASS, degenerates, p=2}, so we leave it to the reader to complete the details.

\begin{cor}\label{corollary: ASS, degenerates, p is odd, l is odd}
Suppose that $\mathrm{char}(\kk)>2$. Let $V_\bullet\in \sVect_{\kk}$ be a simplicial vector space such that $\pi_l(V_\bullet)\cong \kk$, $l$ is odd, $l\geq 1$, and $\pi_*(V_\bullet)=0$ if $*\neq l$. Then the spectral sequence~\eqref{equation: ASS, wedge of spheres}:
\begin{equation*}\label{equation: ASS, p is odd, 1-dim}
E^2_{s,t}=\Ext_{\calU^h}^s(\Sigma{\pi}^*(V_\bullet), \Sigma^{t+1}\kk) \Rightarrow \prod_{n\geq 1}\pi_{t-s,n}(\free(V_\bullet)). 
\end{equation*}
degenerates at the second page. Moreover, $\pi_{*,n}(\free(V_\bullet))=0$ if $n\neq p^h$ or $n\neq 2p^h$, $h\in \N$, and there are isomorphisms
\begin{equation*}
\Ext_{\calU^h}^s(\Sigma{\pi}^*(V_\bullet), \Sigma^{t+1}\kk) \cong \pi_{t-s,p^s}(\free(V_\bullet))\oplus \pi_{t-s,2p^{s-1}}(\free(V_\bullet)),
\end{equation*}
\begin{equation*}
\pi_{t-s,p^s}(\free(V_\bullet))\cong \Ext_{\calM^h_0}^s(\Sigma{\pi}^*(V_\bullet), \Sigma^{t+1}\kk) \cong\pi_*(V_\bullet)^{(s)}\otimes \Lambda_{t-l-s,s}(l),
\end{equation*}
\begin{align*}
\pi_{t-s,2p^{s-1}}(\free(V_\bullet))&\cong \Ext_{\calM^h_0}^{s-1}((\Sigma{\pi}^*(V_\bullet))^{\otimes 2}, \Sigma^{t+1}\kk) \\ 
&\cong ((\pi_*(V_\bullet))^{\otimes 2})^{(s-1)}\otimes \Lambda_{t-2l-s,s-1}(2l+1)
\end{align*}
for $t\geq s\geq 0$. \qed
\end{cor}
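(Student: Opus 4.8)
The statement to prove is Corollary~\ref{corollary: ASS, degenerates, p is odd, l is odd}, which is the odd-prime odd-degree analogue of Corollary~\ref{corollary: ASS, degenerates, p=2}. The plan is to follow the proof of Corollary~\ref{corollary: ASS, degenerates, p=2} essentially verbatim, but replacing the direct application of Corollary~\ref{corollary: unstable exts between trivial} with the splitting and the identification of the $E_2$-pages provided by Lemma~\ref{lemma: grothendieck ss, 1-dim}.

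First I would reduce to the case $\kk = \overline{\kk}$ exactly as in the proof of Corollary~\ref{corollary: ASS, degenerates, p=2}: the $\Ext$-groups, the homotopy groups $\pi_{*,n}(\free(V_\bullet))$, and the desired isomorphisms all base-change along $\kk \to \overline{\kk}$, so it suffices to treat the algebraically closed case. Over $\overline{\kk}$ there are no nonzero natural transformations $(-)^{(s)} \to (-)^{(s')}$ between distinct Frobenius twists; since the differentials $d_r$, $r \geq 2$, of~\eqref{equation: ASS, wedge of spheres} are natural in $V_\bullet$ and, by Lemma~\ref{lemma: grothendieck ss, 1-dim} together with Corollary~\ref{corollary: unstable exts between trivial}, the term $E^2_{s,t}$ is a sum of pieces of the form $W^{(s)}$ and $(W^{\otimes 2})^{(s-1)}$ for $W = \Sigma\pi_*(V_\bullet)$, the differentials must vanish on internal-weight grounds (a differential $d_r$ would map a summand built from the $r'$-th twist to one built from a different twist). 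Hence the spectral sequence degenerates at the second page, and complete convergence (which holds for~\eqref{equation: ASS, wedge of spheres} by the same finiteness argument as in Corollary~\ref{corollary: BKSS, restricted Lie algebra}) yields a natural isomorphism
$$\prod_{s \geq 0} \Ext^s_{\calU^h}(\Sigma\pi^*(V_\bullet), \Sigma^{t+1}\kk) \cong \prod_{n \geq 1} \pi_{t-s,n}(\free(V_\bullet)).$$

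Next I would pin down which $n$ contribute and match isotypic components. The multiplicative group $\kk^\times$ acts on $V_\bullet$ by scalar multiplication; under this action $\pi_{*,n}(\free(V_\bullet))$ carries weight $n \cdot l \pmod{|\kk^\times|}$-type information (more precisely, it is the $n$-fold tensor-power isotypic piece), while by Lemma~\ref{lemma: grothendieck ss, 1-dim} the summand $\Ext^s_{\calM^h_0}(\Sigma\pi^*(V_\bullet), \Sigma^{t+1}\kk)$ involves the $s$-th Frobenius twist $\pi_*(V_\bullet)^{(s)}$ (i.e. weight $p^s$) and the summand $\Ext^{s-1}_{\calM^h_0}((\Sigma\pi^*(V_\bullet))^{\otimes 2}, \Sigma^{t+1}\kk)$ involves $((\pi_*(V_\bullet))^{\otimes 2})^{(s-1)}$ (weight $2p^{s-1}$). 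Comparing isotypic components in the above product isomorphism forces $\pi_{*,n}(\free(V_\bullet)) = 0$ unless $n = p^h$ or $n = 2p^h$ for some $h \in \N$, and gives
$$\pi_{t-s,p^s}(\free(V_\bullet)) \cong \Ext^s_{\calM^h_0}(\Sigma\pi^*(V_\bullet), \Sigma^{t+1}\kk), \quad \pi_{t-s,2p^{s-1}}(\free(V_\bullet)) \cong \Ext^{s-1}_{\calM^h_0}((\Sigma\pi^*(V_\bullet))^{\otimes 2}, \Sigma^{t+1}\kk),$$
whence the displayed formula $\Ext^s_{\calU^h}(\Sigma\pi^*(V_\bullet), \Sigma^{t+1}\kk) \cong \pi_{t-s,p^s}(\free(V_\bullet)) \oplus \pi_{t-s,2p^{s-1}}(\free(V_\bullet))$ follows from the splitting in Lemma~\ref{lemma: grothendieck ss, 1-dim}. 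Finally, translating the two $\Ext_{\calM^h_0}$-groups through Corollary~\ref{corollary: unstable exts between trivial} and Remark~\ref{remark: unstable koszul and lambda, any p} (the isomorphism $\tilde\Phi_p$ carrying $\calK^s_p \widetilde\otimes (W^*)^{(s)}$ onto $(W^*)^{(s)} \widetilde\otimes \Lambda_s$, with the degree shift $(m,n)\mapsto (m-n,n)$) produces the $\Lambda$-descriptions $\pi_*(V_\bullet)^{(s)} \otimes \Lambda_{t-l-s,s}(l)$ and $((\pi_*(V_\bullet))^{\otimes 2})^{(s-1)} \otimes \Lambda_{t-2l-s,s-1}(2l+1)$; one must be a little careful that the relevant $AQ_q$-terms land in degrees $l+1$ and $2l+2 = 2(l+1)$ so that the excess bounds in Definition~\ref{definition: unstable. koszul dual, p is odd} reproduce exactly the filtration pieces $\Lambda(l)$ and $\Lambda(2l+1)$ of Remark~\ref{remark: subspace in Lambda}, but this is the same bookkeeping already done implicitly in Lemma~\ref{lemma: grothendieck ss, 1-dim}.

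The main obstacle is the $\kk^\times$-isotypic-component argument: unlike the $p=2$ case where $\widetilde H^*(\free(V_\bullet);\kk)$ is a free unstable algebra on one generator and the $E_2$-term is a single Frobenius twist in each homological degree, here the Andr\'e--Quillen decomposition splits $E_2^{s,t}$ into two pieces with \emph{different} twist weights ($p^s$ versus $2p^{s-1}$), and one has to check these weights are genuinely distinct (so that the isotypic decomposition separates them and matches them to distinct $\pi_{*,n}$), which uses $p$ odd in an essential way — for $p=2$ we would have $2p^{s-1} = p^s$ and the argument would collapse. Everything else is a routine transcription of the proof of Corollary~\ref{corollary: ASS, degenerates, p=2}, so I would keep the write-up short and refer back to that proof for the base-change and naturality-of-differentials steps, as the statement already instructs ("absolutely parallel to the proof of Corollary~\ref{corollary: ASS, degenerates, p=2}").
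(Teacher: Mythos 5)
Your proposal is correct and follows exactly the route the paper intends: the paper itself states that the proof is "absolutely parallel" to Corollary~\ref{corollary: ASS, degenerates, p=2}, using Lemma~\ref{lemma: grothendieck ss, 1-dim} to split $E^2_{s,t}$ into the two $\Ext_{\calM^h_0}$-pieces with Frobenius-twist weights $p^s$ and $2p^{s-1}$, and then running the base-change, naturality-of-differentials, and $\kk^\times$-isotypic-component arguments verbatim. You also correctly isolate the one point where $p$ odd is genuinely needed (that $2p^{s-1}\neq p^s$, so the two summands live in distinct isotypic components), which is precisely the content the paper leaves to the reader.
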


\begin{rmk}\label{remark: ASS, highly dimensionsional}
Let $V_\bullet \in \sVect_\kk^{ft}$ be a simplicial vector space of finite type such that $\dim\pi_*(V_\bullet)>1$. Then one still can apply spectral sequences~\eqref{equation: ASS, wedge of spheres} and~\eqref{equation: grothendieck SS} in order to compute the homotopy groups $\pi_*(\free(V_\bullet))$ of the free simplicial restricted Lie algebra $\free(V_\bullet) \in \srLie$. However, the Andr\'{e}-Quillen homology groups of the trivial algebra $\Sigma\pi^*(V_\bullet)$ are quite large in this case, see~\cite{Goerss90} and~\cite[Theorem~8.18]{AB21}. Therefore we can not expect that any of two spectral sequences~\eqref{equation: ASS, wedge of spheres} or~\eqref{equation: grothendieck SS} degenerate at the second page and an additional expertise is required.
\end{rmk}

\begin{rmk}\label{remark: hilton-milnor}
Nevertheless, one can use the (algebraic) Hilton-Milnor theorem to compute $\pi_*(\free(V_\bullet))$, $V_\bullet\in \sVect_\kk$. Let $V_{1,\bullet},V_{2,\bullet} \in \sVect_{\kk}$ be simplicial vector spaces and let $L$ be the free Lie algebra over the integers on two symbols $i_1$ and $i_2$ and let $B\subset L$ be the Hall basis for $L$, see~\cite[p. 512]{Whitehead_elements}. Then a word $w\in B$ is an iterated Lie bracket 
$$w=[i_{j_1}[i_{j_2},\ldots,i_{j_s}]], $$
where $j_t\in \{1,2\}$, $1 \leq t\leq s$. We associate to $w$ the iterated tensor product
$$w(V_{1,\bullet},V_{2,\bullet})=V_{j_1,\bullet}\otimes\ldots \otimes V_{j_s,\bullet}$$
and the canonical inclusion
$$l_w\colon w(V_{1,\bullet},V_{2,\bullet})\to \oblv \circ \free(V_{1,\bullet}\oplus V_{2,\bullet}) $$
such that $l_w(v^{j_1}_{1}\otimes \ldots \otimes v^{j_s}_s) = [v^{j_1}_{1}[v^{j_2}_2,\ldots,v^{j_s}_s]]$, where $v^{j_t}_t \in V_{j_t,\bullet}$. These maps determine the map
$$l\colon \bigoplus_{w\in B} \oblv \circ \free(w(V_{1,\bullet},V_{2,\bullet})) \to \oblv \circ \free(V_{1,\bullet}\oplus V_{2,\bullet}), $$
which is an isomorphism, see e.g.~\cite[Example~8.7.4]{Neisendorfer10}. Therefore one can reduce the problem of calculating $\pi_*(\free(V_\bullet))$, $\dim\pi_*(V_\bullet)>1$ to the considered case $\dim\pi_*(V_\bullet)=1$.
\end{rmk}

\begin{rmk}\label{remark: xi and mu_0 and lambda_0}
Let $V_*\in \Vect_{\kk}^{gr}$ be a graded vector space. Let us denote by $\free(V_*) \in \srLie$ the simplicial vector space generated by $\Gamma V_*\in \sVect_{\kk}$, where $\Gamma$ is the inverse of the normalized chain complex functor $N$, see Section~\ref{section: notation}. By Proposition~\ref{proposition: free lie algebras, fresse}, there is a canonical embedding $V_* \subset \pi_*(\free(V_*))$. By Proposition~\ref{proposition: homotopy groups of free restricted as xi-module}, we observe that $\xi(v)\in \pi_*(\free(V_*))$ is non-zero for any $v\in V_*\subset \pi_*(\free(V_*))$. Finally, Corollaries~\ref{corollary: ASS, degenerates, p=2} and~\ref{corollary: ASS, degenerates, p is odd, l is odd} imply that $\xi(v)$ is a non-zero multiple of $v\otimes \mu_0 \in \pi_{*,p}(\free(V_*))$ (resp. $v\otimes \lambda_0\in\pi_{*,2}(\free(V_*))$).
\end{rmk}

\begin{rmk}\label{remark: lambdas and mu are detected}
Let $x\in \pi_m\free(\Sigma^n\kk)$ be a homotopy class $$x\in \pi_m\free(\Sigma^n\kk)=[\free(\Sigma^m\kk),\free(\Sigma^n\kk)].$$ Consider cofiber sequences 
$$\free(\Sigma^m\kk) \xrightarrow{x} \free(\Sigma^n\kk) \to \cofib(x), $$
$$\free (\Sigma^n\kk) \to \cofib(x) \to \free(\Sigma^{m+1}\kk). $$
The second one implies that $\widetilde{H}^q(\cofib(x);\kk)=0$ if $q\neq n+1, m+2$, and there are canonical generators: 
$$h_{n+1}\in \widetilde{H}^{n+1}(\cofib(x);\kk)\cong \widetilde{H}^{n+1}(\free(\Sigma^n\kk);\kk),$$
$$h_{m+2}\in \widetilde{H}^{m+2}(\cofib(x);\kk)\cong \widetilde{H}^{m+2}(\free(\Sigma^{m+1}\kk);\kk).$$ 
We say that a cohomology operation $P$ \emph{detects} $x$ if $P(h_{n+1})=h_{m+2}$. Let $\iota_n\in \pi_n\free(\Sigma^n\kk)$ be the canonical generator. Then Corollary~\ref{corollary: ASS, degenerates, p=2} implies
\begin{itemize}
\item if $p=2$, then the element $\iota_n\otimes \lambda_i\in \pi_{n+i}(\free(\Sigma^n\kk))$, $0\leq i\leq n$ is detected by $Sq^{i+1}$.
\item if $p$ is odd and $n=2k$, then the element $$\iota_n\otimes \lambda_{i}\in \pi_{n+2i(p-1)-1}(\free(\Sigma^n\kk)),\;  1\leq i\leq k$$ is detected by the Steenrod operation $P^i$.
\item if $p$ is odd and $n=2k$, then the element $$\iota_n\otimes \mu_{i}\in \pi_{n+2i(p-1)}(\free(\Sigma^n\kk)), \; 0\leq i\leq k$$ is detected by the Steenrod operation $\beta P^i$.
\end{itemize}
This characterization of the generators in the algebra $\Lambda$ seems to be folklore. However, the only case covered in the literature is $p=2$, see~\cite[p.~515]{Priddy70short}; the case of odd primes seems to be missing.
\end{rmk}

\bibliographystyle{alpha}
\bibliography{references}

\end{document}